\declaretheoremstyle[
spaceabove=\topsep, spacebelow=\topsep,
headfont=\scshape,
notefont=\mdseries, notebraces={(}{)},
bodyfont=\itshape,
headpunct=.~---,
postheadspace=0.5em,
qed=$\blacktriangle$
]{mythmstyle}
\declaretheorem[style=mythmstyle, name=Theorem, numberwithin=subsection]{thm}
\declaretheorem[style=mythmstyle, name=Lemma, sibling=thm]{lem}
\declaretheorem[style=mythmstyle, name=Proposition, sibling=thm]{prop}
\declaretheorem[style=mythmstyle, name=Proposition-Definition, sibling=thm]{propdef}
\declaretheorem[style=mythmstyle, name=Corollary, sibling=thm]{cor}
\declaretheoremstyle[
spaceabove=\topsep, spacebelow=\topsep,
headfont=\scshape,
notefont=\mdseries, notebraces={(}{)},
bodyfont=\normalfont,
headpunct=.~---,
postheadspace=0.5em,
qed=$\blacktriangle$
]{mydefstyle}
\declaretheorem[style=mydefstyle, name=Definition, sibling=thm]{defin}
\declaretheorem[style=mydefstyle, name=Examples, sibling=thm]{exs}
\declaretheorem[style=mydefstyle, name=Remark, sibling=thm]{rk}
\declaretheorem[style=mydefstyle, name=Remarks, sibling=thm]{rks}
\declaretheorem[style=mydefstyle, name=Notation, sibling=thm]{nb}
\declaretheorem[style=mydefstyle, name=Notations, sibling=thm]{nbs}
\declaretheorem[style=mydefstyle, name=Convention, sibling=thm]{conv}
\declaretheoremstyle[
spaceabove=\topsep, spacebelow=\topsep,
headfont=\scshape,
notefont=\mdseries, notebraces={(}{)},
bodyfont=\normalfont,
headpunct=.,
postheadspace=0.5em,
qed=$\blacktriangle$
]{mynoheadstyle}
\declaretheorem[style=mynoheadstyle, name={}, sibling=thm]{noh}
\numberwithin{equation}{section}
\newcommand{\M}{\mathcal{M}}
\newcommand{\B}{\mathcal{B}}
\newcommand{\K}{\mathcal{K}}
\newcommand{\Lin}{\mathcal{L}}
\newcommand{\s}[1]{\mathscr{{#1}}}
\newcommand{\f}[1]{\mathfrak{{#1}}}
\newcommand{\GR}{{\mathbb R}}
\newcommand{\GC}{{\mathbb C}}
\newcommand{\GN}{{\mathbb N}}
\newcommand{\QG}{\mathbb{G}}
\newcommand{\QH}{\mathbb{H}}
\newcommand{\Cstar}{\mathrm{C}^*}
\newcommand{\rmc}{{\rm C}}
\newcommand{\tens}{\otimes}
\newcommand{\cst}[5]{{#1}\underset{{#3}}{_{#2}\tens_{#4}}{#5}}
\newcommand{\reltens}[4]{{#1}{\,_{#2}\tens_{#3}\,}{#4}}
\newcommand{\fprod}[4]{{#1}{\,_{#2}\star_{#3}\,}{#4}}
\newcommand{\nsf}{n.s.f.~}
\newcommand{\GNS}{G.N.S.~}
\newcommand{\ext}{\mathrm{ext}}
\newcommand{\id}{\mathrm{id}}
\newcommand{\restr}[2]{{#1}\!\!\restriction_{#2}}
\newcommand{\ind}{{\rm Ind}_{\QG_1}^{\QG_2}}
\newcommand{\iind}{{\rm Ind}_{\QG_2}^{\QG_1}}
\newcommand{\er}{{\cal E}_{A,R}}
\newcommand{\kk}{{\sf KK}}
\newcommand{\der}{{\rm Der}}
\newcommand{\raisemath}[1]{\mathpalette{\raisem@th{#1}}}
\newcommand{\raisem@th}[3]{\raisebox{#1}{$#2#3$}}
\DeclareSymbolFont{rsfso}{U}{rsfso}{m}{n}
\DeclareSymbolFontAlphabet{\mathscr}{rsfso}
\begin{document}

\pagestyle{scrheadings}

\clearscrheadfoot
\cohead{\footnotesize{{\sc MEASURED QUANTUM GROUPOIDS ON A FINITE BASIS AND EQUIVARIANT K-THEORY}}}
\rohead{\thepage}
\cehead{\footnotesize{{\sc J.\ CRESPO}}}
\lehead{\thepage}


\setcounter{tocdepth}{2} 

    \title{\rmfamily\normalfont\spacedallcaps{Measured quantum groupoids on a finite basis and equivariant Kasparov theory}}
    \author{\small{by}\\[1em] \spacedlowsmallcaps{jonathan crespo}}
    \date{\small{\today}}
  
    \maketitle
    
	\thispagestyle{empty}
	
    \begin{abstract}
        \noindent In this article, we generalize to the case of measured quantum groupoids on a finite basis some important results concerning equivariant Kasparov theory for actions of locally compact quantum groups \cite{BS1,BS2}. To every pair $(A,B)$ of C*-algebras continuously acted upon by a regular measured quantum groupoid on a finite basis $\cal G$, we associate a $\cal G$-equivariant Kasparov theory group $\kk_{\cal G}(A,B)$. The Kasparov product generalizes to this setting. By applying recent results concerning actions of regular measured quantum groupoids on a finite basis \cite{BC,C2}, we obtain two canonical homomorphisms $J_{\cal G}:\kk_{\cal G}(A,B)\rightarrow\kk_{\widehat{\cal G}}(A\rtimes{\cal G},B\rtimes{\cal G})$ and $J_{\widehat{\cal G}}:\kk_{\widehat{\cal G}}(A,B)\rightarrow\kk_{\cal G}(A\rtimes\widehat{\cal G},B\rtimes\widehat{\cal G})$ inverse of each other through the Morita equivalence coming from a version of the Takesaki-Takai duality theorem \cite{BC,C2}.
       We investigate in detail the case of colinking measured quantum groupoids. In particular, if $\QG_1$ and $\QG_2$ are two monoidally equivalent regular locally compact quantum groups, we obtain a new proof of the canonical equivalence of the associated equivariant Kasparov categories \cite{BC}.

\medbreak        
        
        \noindent{\bf Keywords} Locally compact quantum groups, measured quantum groupoids, monoidal equivalence, equivariant Kasparov theory.
    \end{abstract}
     
    \tableofcontents

    
\section*{Introduction}\addcontentsline{toc}{section}{Introduction}

The notion of monoidal equivalence of compact quantum groups has been introduced by Bichon, De Rijdt and Vaes in \cite{BRV}. Two compact quantum groups $\QG_1$ and $\QG_2$ are said to be monoidally equivalent if their categories of representations are equivalent as monoidal $\Cstar$-categories. They have proved that $\QG_1$ and $\QG_2$ are monoidally equivalent if, and only if, there exists a unital $\Cstar$-algebra equipped with commuting continuous ergodic actions of full multiplicity of $\QG_1$ on the left and of $\QG_2$ on the right. Among the applications of monoidal equivalence to the geometric theory of free discrete quantum groups, we mention the contributions to randow walks and their associated boundaries \cite{VV,RV}, CCAP property and Haagerup property \cite{DFY}, the Baum-Connes conjecture and K-amenability \cite{V1,VeVo}.

\medskip

In his Ph.D.~thesis \cite{DC}, De Commer has extended the notion of monoidal equivalence to the locally compact case. Two locally compact quantum groups $\QG_1$ and $\QG_2$ (in the sense of Kustermans and Vaes \cite{KV2}) are said to be monoidally equivalent if there exists a von Neumann algebra equipped with a left Galois action of $\QG_1$ and a right Galois action of $\QG_2$ that commute. He proved that this notion is completely encoded by a measured quantum groupoid (in the sense of Enock and Lesieur \cite{E08}) on the basis $\GC^2$. Such a groupoid is called a colinking measured quantum groupoid.\hfill\break
The measured quantum groupoids  have been introduced and studied by Lesieur and Enock (see \cite{E08,Le}). Roughly speaking, a measured quantum groupoid (in the sense of Enock-Lesieur) is an octuple ${\cal G}=(N,M,\alpha,\beta,\Gamma,T,T',\nu)$, where $N$ and $M$ are von Neumann algebras (the basis $N$ and $M$ are the algebras of the groupoid corresponding respectively to the space of units and the total space for a classical groupoid), $\alpha$ and $\beta$ are faithful normal *-homomorphisms from $N$ and $N^{\rm o}$ (the opposite algebra) to $M$ (corresponding to the source and target maps for a classical groupoid) with commuting ranges, $\Gamma$ is a coproduct taking its values in a certain fiber product, $\nu$ is a normal semi-finite weight on $N$ and $T$ and $T'$ are operator-valued weights satisfying some axioms. \hfill\break
In the case of a finite-dimensional basis $N$, the definition has been greatly simplified by De Commer \cite{DC2,DC} and we will use this point of view in this article. Broadly speaking, we can take for $\nu$ the non-normalized Markov trace on the $\Cstar$-algebra $N=\bigoplus_{1\leqslant l\leqslant k}{\rm M}_{n_l}(\GC)$\index[symbol]{ma@${\rm M}_{n_l}(\GC)$, square matrices of order $n_l$ with entries in $\GC$}. The relative tensor product of Hilbert spaces (resp.\ the fiber product of von Neumann algebras) is replaced by the ordinary tensor product of Hilbert spaces (resp.\  von Neumann algebras). The coproduct takes its values in $M\tens M$ but is no longer unital.

\medskip

In \cite{BC}, the authors introduce a notion of (strongly) continuous actions on $\Cstar$-algebras of measured quantum groupoids on a finite basis. They extend the construction of the crossed product, the dual action and give a version of the Takesaki-Takai duality generalizing the Baaj-Skandalis duality theorem \cite{BS2} in this setting.\hfill\break
If a colinking measured quantum groupoid ${\cal G}$, associated with a monoidal equivalence of two locally compact quantum groups $\QG_1$ and $\QG_2$, acts (strongly) continuously on a $\Cstar$-algebra $A$, then $A$ splits up as a direct sum $A=A_1\oplus A_2$ of $\Cstar$-algebras and the action of ${\cal G}$ on $A$ restricts to an action of $\QG_1$ (resp.\ $\QG_2$) on $A_1$ (resp.\ $A_2$).\hfill\break
They also extend the induction procedure to the case of monoidally equivalent regular locally compact quantum groups. To any continuous action of $\QG_1$ on a $\Cstar$-algebra $A_1$, they associate canonically a $\Cstar$-algebra $A_2$ endowed with a continuous action of $\QG_2$. As important consequences of this construction, we mention the following:
\begin{itemize}
	\item a one-to-one functorial correspondence between the continuous actions of the quantum groups $\QG_1$ and $\QG_2$, which generalizes the compact case \cite{RV} and the case of deformations by a 2-cocycle \cite{NT14};
	\item a complete description of the continuous actions of colinking measured quantum groupoids;
	\item the equivalence of the categories $\kk_{\QG_1}$ and $\kk_{\QG_2}$, which generalizes to the regular locally compact case a result of Voigt \cite{V1}.
\end{itemize}
The proofs of the above results rely crucially on the regularity of the quantum groups $\QG_1$ and $\QG_2$. They prove that the regularity of $\QG_1$ and $\QG_2$ is equivalent to the regularity of the associated colinking measured quantum groupoid in the sense of \cite{E05} (see also \cite{Tim1,Tim2}). 

\medskip

In \cite{C2}, the author generalizes to the case of (semi-)regular measured quantum groupoid on a finite basis some important properties of (semi-)regular locally compact quantum groups \cite{BS2,Ba95}, which then allow him to generalize some crucial results of \cite{BSV} concerning actions of (semi-)regular locally compact quantum groups. More precisely, if $\cal G$ is a regular measured quantum groupoid on a finite basis then any weakly continuous action of $\cal G$ is necessarily continuous in the strong sense.\newline
Let $\cal G$ be a measured quantum groupoid on a finite basis. The author provides a notion of action of $\cal G$ on Hilbert C*-modules in line with the corresponding notion for quantum groups \cite{BS1}. By using the previous result, if $\cal G$ is regular then any action of $\cal G$ on a Hilbert C*-module is necessarily continuous. The author defines the notion of $\cal G$-equivariant Morita equivalence between $\cal G$-C*-algebras. By applying the version of the Takesaki-Takai duality theorem obtained in \cite{BC}, the author finally obtains that any $\cal G$-C*-algebra $A$ is $\cal G$-equivariantly Morita equivalent to its double crossed product $(A\rtimes{\cal G})\rtimes\widehat{\cal G}$ in a canonical way.

\medskip

In this article, we generalize to the setting of measured quantum groupoid on a finite basis some crucial results concerning equivariant Kasparov theory for actions of quantum groups \cite{BS1}. More precisely, we define the equivariant Kasparov groups $\kk_{\cal G}(A,B)$ for any pair of $\cal G$-C*-algebras $(A,B)$ and extend the functorial properties and the Kasparov product in this framework. For all pair of $\cal G$-C*-algebras (resp.\ $\widehat{\cal G}$-C*-algebras), we build a homomorphism $J_{\cal G}:\kk_{\cal G}(A,B)\rightarrow\kk_{\widehat{\cal G}}(A\rtimes{\cal G},B\rtimes{\cal G})$ (resp.\ $J_{\widehat{\cal G}}:\kk_{\widehat{\cal G}}(A,B)\rightarrow\kk_{\cal G}(A\rtimes\widehat{\cal G},B\rtimes\widehat{\cal G})$). We also prove that $J_{\cal G}$ and $J_{\widehat{\cal G}}$ are inverse of each other through the Morita equivalences obtained in \cite{C2}. The rest of the paper is dedicated to  the applications of the above theory to monoidal equivalence. In particular, we provide a new proof of the equivalence of the equivariant Kasparov categories $\kk_{\QG_1}$ and $\kk_{\QG_2}$ when $\QG_1$ and $\QG_2$ are monoidally equivalent regular locally compact quantum groups \cite{BC} (see also \cite{V1} for the compact case). It should be mentioned that the equivariant Kasparov theory for actions of locally compact topological groupoid has been studied by Le Gall in \cite{LG}.

\medskip

This article is organized as follows.\newline
$\bullet$ {\it Chapter 1.} We recall the general conventions and notations used throughout this paper.\newline
$\bullet$ {\it Chapter 2.} We make an overview of the theory of locally compact quantum groups (cf.\ \cite{KV2} and \cite{BS2}). We recall the construction of the Hopf $\Cstar$-algebra associated with a locally compact quantum group and the notion of action of locally compact quantum groups in the $\Cstar$-algebraic setting. We also recall the notion of equivariant Hilbert C*-modules (cf.\ \cite{BS1}).\newline
$\bullet$ {\it Chapter 3.} We make a very brief survey of the theory of measured quantum groupoid (cf.\ \cite{Le,E08}) and we recall the simplified definition and the associated C*-algebraic structure in the case where the basis is finite-dimensional (cf.\ \cite{DC2,DC}). In the last section, we make an outline of the reflection technique across a Galois object provided by De Commer (cf.\ \cite{DC,DC3}), the construction and the structure of the colinking measured quantum groupoid associated with monoidally equivalent locally compact quantum groups. We also recall the precise description of the C*-algebraic structure of colinking measured quantum groupoids (cf.\ \cite{BC}).\newline
$\bullet$ {\it Chapter 4.} In the first section of this chapter, we recall the definitions and the main results of \cite{BC,C2} concerning the notion of continuous action of measured quantum groupoids on a finite basis on C*-algebras. We also recall the version of the Takesaki-Takai duality theorem obtained in \cite{BC} in this framework. The second section is dedicated to a brief overview of  C*-algebras acted upon by a colinking measured quantum groupoid (cf.\ \cite{BC}).\newline
$\bullet$ {\it Chapter 5.} In the first section, we recall the notion of action of measured quantum groupoids on a finite basis on Hilbert C*-modules (cf.\ \cite{C2}). In the second section, we study the crossed product construction in this setting and we state a version of the Takesaki-Takai duality theorem. The last section begins with a reminder of the case of a colinking measured quantum groupoid (cf.\ \cite{BC,C2}). The structure of the double crossed product is investigated at the end of this section.\newline
$\bullet$ {\it Chapter 6.} In this chapter, we give the definition and some properties of equivariant Kasparov groups by a regular measured quantum groupoid on a finite basis. We generalize to our setting the Kasparov technical theorem, which allows us to build the Kasparov product. In the last section, we build the so-called ``descent morphisms'' $J_{\cal G}$ and $J_{\widehat{\cal G}}$ and prove that they are inverse of each other up to Morita equivalences.\newline
$\bullet$ {\it Chapter 7.} We apply the previous results to the case of a colinking measured quantum groupoid $\cal G$ associated with two monoidally equivalent regular locally compact quantum groups $\QG_1$ and $\QG_2$. We obtain canonical equivalences between the equivariant Kasparov categories associated with $\cal G$, $\QG_1$ and $\QG_2$. In particular, we provide a new proof of the isomorphism obtained in \S 4.5 \cite{BC}.\newline
$\bullet$ {\it Chapter 8.} In the appendix of this article, we have assembled a very brief review of the main notions and notations of the non-commutative measure and integration theory. We can also find some notations and important results used throughout this paper.

\begin{center}
{\bf Acknowledgements}
\end{center}

The author is indebted to his advisor Prof.\ S.\ Baaj for helpful comments and advice. He is also very grateful to Prof.\ K.\ De Commer for sharing his expertise in various topics concerning measured quantum groupoids and for the financial support of the F.W.O.

\section{Preliminary notations}\label{sectionNotations}

We specify here some elementary notations and conventions used in this article. For more notations, we refer the reader to the appendix and the index of this article.

\medskip

$\bullet$ For all subset $X$ of a normed vector space $E$, we denote $\langle X\rangle$ (resp.\ $[X]$) the linear span (resp.\ closed linear span) of $X$ in $E$. If $X$ and $Y$ are two sets, we define the set $XY:=\{xy\,;\, x\in X,\, y\in Y\}$, where $xy$ denotes the product/composition of $x$ and $y$ or the evaluation of $x$ at $y$ (when these operations make sense). If $X$ is a subset of a *-algebra $A$, we denote by $X^*$ the subset $\{x^*\,;\, x\in X\}$ of $A$. If $X$ is a subset of a C*-algebra $A$, we denote by ${\rm C}^*(X)$ the C*-subalgebra of $A$ generated by $X$.\hfill\break
$\bullet$ We denote by $\tens$ the tensor product of Hilbert spaces, the tensor product of von Neumann algebras, the minimal tensor product of C*-algebras or the external tensor product of Hilbert C*-modules. We also denote by $\odot$ (resp.\ $\odot_A$) the algebraic tensor product over the field of complex numbers $\GC$ (resp.\ an algebra $A$).\newline
$\bullet$ Let $A$ and $B$ be C*-algebras. We denote by $\M(A)$ (resp.\ $\widetilde{A}$) the C*-algebra consisting of the multipliers of $A$ (resp.\ the C*-algebra obtained from $A$ by adjunction of a unit element). If $J$ is a closed two-sided ideal of $A$, let $\M(A;J)$ be the strictly closed C*-subalgebra of $\M(A)$ consisting of the elements $m$ of $\M(A)$ such that the relations $mA\subset J$ and $Am\subset J$ hold. Note that the restriction homomorphism from $\M(A)$ to $\M(J)$ identifies $\M(A;J)$ to a C*-subalgebra of $\M(J)$. We denote by $\widetilde{\M}(A\tens B):=\M(\widetilde{A}\tens B;A\tens B)$ (or $\widetilde{\M}_B(A\tens B)$ in case of ambiguity) the $B$-relative multiplier algebra of $A$ and identify $\widetilde{\M}(A\tens B)$ to a stricly closed C*-subalgebra of $\M(A\tens B)$ (\S 1 \cite{BS1}). \newline
Let $\pi:A\rightarrow\M(B)$ be a (possibly degenerate) *-homomorphism. For all C*-algebra $D$, there exists a unique strictly continuous *-homomorphism $\pi\tens\id_D:\widetilde{\M}(A\tens D)\rightarrow\M(B\tens D)$ satisfying the relation
$
(\pi\tens\id_D)(x)(1_B\tens d)=(\pi\tens\id_D)(x(1_A\tens d))
$
for all $x\in\widetilde{\M}(A\tens D)$ and $d\in D$. 
Indeed, denote by $\widetilde{\pi}$ the unital extension of $\pi$ to $\widetilde{A}$. The non-degenerate *-homomorphism $\widetilde{\pi}\tens\id_D:\widetilde{A}\tens D\rightarrow\M(B\tens D)$ uniquely extends to $\M(\widetilde{A}\tens D)$. By restricting to $\widetilde{\M}(A\tens D)$, we obtain the desired extension of $\pi\tens\id_D$ (\S 1 \cite{BS1}).\newline
$\bullet$ If $x$ and $y$ are two elements of an algebra $A$, we denote by $[x,\,y]$ the commutator of $x$ and $y$, {\it i.e.} the element of $A$ defined by $[x,\,y]:=xy-yx$.

\medskip

All Hilbert spaces considered in this article are complex and separable. All inner products are assumed to be anti-linear in the first variable and linear in the second variable. Let $\s H$ and $\s K$ be Hilbert spaces.\newline
$\bullet$ Let $\B(\s H,\s K)$ (resp.\ $\K(\s H,\s K)$) be the Banach space of bounded (resp.\ compact) linear operators from $\s H$ to $\s K$. For $\xi\in\s K$ and $\eta\in\s H$, we denote by $\theta_{\xi,\eta}$ the rank-one operator defined by $\theta_{\xi,\eta}(\zeta):=\langle\eta,\,\zeta\rangle\xi$ for all $\zeta\in\s H$. We have $\K(\s H,\s K)=[\theta_{\xi,\eta}\,;\,\xi\in\s K,\,\eta\in\s K]$. We denote by $\B(\s H):=\B(\s H,\s H)$ (resp.\ $\K(\s H):=\K(\s H,\s H)$) the C*-algebra of bounded (resp.\ compact) linear operators on $\s H$. Recall that $\K(\s H)$ is a closed two-sided ideal of $\B(\s H)$ and $\B(\s H)=\M(\K(\s H))$.\newline 
$\bullet$ We denote by $\Sigma_{\s K\tens\s H}$ (or simply $\Sigma$) the flip map, that is to say the unitary operator $\s K\tens\s H\rightarrow\s H\tens\s K\,;\, \xi\tens\eta\mapsto\eta\tens\xi$.\hfill\break
$\bullet$ For $u\in\B(\s H)$, we denote by ${\rm Ad}(u)$ the bounded operator on $\B(\s H)$ defined for all $x\in\B(\s H)$ by ${\rm Ad}(u)(x):=uxu^*$.

\medskip

In this article, we will use the notion of (right) Hilbert C*-module over a C*-algebra and their tensor products (internal and external). All the definitions and conventions are those of \cite{Kas1}. In particular, let $\s E$ and $\s F$ be two Hilbert C*-modules over a C*-algebra $A$.\hfill\break
\noindent$\bullet$ We denote by $\Lin(\s E,\s F)$ the Banach space consisting of all adjointable operators from $\s E$ to $\s F$ and $\Lin(\s E)$ the C*-algebra $\Lin(\s E,\s E)$.\hfill\break
$\bullet$ For $\xi\in\s F$ and $\eta\in\s E$, we denote by $\theta_{\xi,\eta}$ the elementary operator of $\Lin(\s E,\s F)$ defined by $\theta_{\xi,\eta}(\zeta):=\xi\langle\eta,\,\zeta\rangle_A$ for all $\zeta\in\s E$. Let $\K(\s E,\s F):=[\theta_{\xi,\eta}\,;\,\xi\in\s F,\,\eta\in\s E]$ be the Banach space of ``compact'' adjointable operators. Denote by $\K(\s E)$ the C*-algebra $\K(\s E,\s E)$ consisting of the ``compact'' adjointable operators of $\Lin(\s E)$. Recall that $\K(\s E)$ is a closed two-sided ideal of $\Lin(\s E)$ and $\Lin(\s E)=\M(\K(\s E))$.\hfill\break
$\bullet$ Let $\s E^*:=\K(\s E,A)$. We have $\s E^*=\{T\in\Lin(\s E,A)\,;\,\exists\,\xi\in\s E,\,\forall\,\eta\in\s E,\, T(\eta)=\langle\xi,\,\eta\rangle_A\}$. We will identify $\s E=\K(A,\s E)\subset\Lin(A,\s E)$. Up to this identification, for $\xi\in\s E$ the operator $\xi^*\in\s E^*$ satisfies $\xi^*(\eta)=\langle\xi,\,\eta\rangle_A$ for all $\eta\in\s E$. We recall that $\s E^*$ is a Hilbert $\K(\s E)$-module for the inner product defined by $\langle T,\, T'\rangle_{\K(\s E)}:=T^*\circ T'$ for $T,T'\in\s E^*$ and the right action defined by composition of maps.

\medskip

In this article, we will also use the leg numbering notation. Let $\s H$ be a Hilbert space and $T\in\B(\s H\tens\s H)$. We define the operators $T_{12},T_{13},T_{23}\in\B(\s H\tens\s H)$ by setting $T_{12}:=T\tens 1$, $T_{23}:=1\tens T$ and $T_{13}:=(\Sigma\tens 1)(1\tens T)(\Sigma\tens 1)$. We can generalize the leg numbering notation for operators acting on any tensor product of Hilbert spaces and for adjointable operators acting on any external tensor product of Hilbert C*-modules over possibly different C*-algebras.

\section{Locally compact quantum groups}

\setcounter{thm}{0}

\numberwithin{thm}{section}
\numberwithin{prop}{section}
\numberwithin{lem}{section}
\numberwithin{cor}{section}
\numberwithin{propdef}{section}
\numberwithin{nb}{section}
\numberwithin{nbs}{section}
\numberwithin{rk}{section}
\numberwithin{rks}{section}
\numberwithin{defin}{section}
\numberwithin{ex}{section}
\numberwithin{exs}{section}
\numberwithin{noh}{section}
\numberwithin{conv}{section}

For the notations and conventions used in this article concerning the non-commutative integration theory and the canonical objects of the Tomita-Takesaki theory, we refer the reader to the appendix of this article (cf.\ \S\ref{integration}).
\begin{defin}\cite{KV2}
A locally compact quantum group\index[notion]{locally compact quantum group} is a pair $\QG=({\rm L}^{\infty}(\QG),\Delta)$, where ${\rm L}^{\infty}(\QG)$ is a von Neumann algebra and $\Delta:{\rm L}^{\infty}(\QG)\rightarrow{\rm L}^{\infty}(\QG)\tens{\rm L}^{\infty}(\QG)$ is a unital normal *-homomorphism satisfying the following conditions:
\begin{enumerate}
\item $(\Delta\tens\id)\Delta=(\id\tens\Delta)\Delta$;
\item there exist \nsf weights $\varphi$ and $\psi$ on ${\rm L}^{\infty}(\QG)$ such that:
\begin{enumerate}
\item $\varphi$ is left invariant, {\it i.e.}\ $\varphi((\omega\tens\id)\Delta(x))=\varphi(x)\omega(1)$, for all $\omega\in{\rm L}^{\infty}(\QG)_*^+$ and $x\in{\f M}_{\varphi}^+$,
\item $\psi$ is right inveriant, {\it i.e.}\ $\psi((\id\tens\omega)\Delta(x))=\psi(x)\omega(1)$, for all $\omega\in{\rm L}^{\infty}(\QG)_*^+$ and $x\in{\f M}_{\psi}^+$.
\end{enumerate}
\end{enumerate}
A left (resp.\ right) invariant \nsf weight on ${\rm L}^{\infty}(\QG)$ is called a left (resp.\ right) Haar weight on $\QG$.
\end{defin}

\begin{noh}
For a locally compact quantum group $\QG$, there exists a unique left (resp.\ right) Haar weight on $\QG$ up to a positive scalar \cite{KV2}. Let us fix a locally compact quantum group $\QG:=({\rm L}^{\infty}(\QG),\Delta)$. Let us fix a left Haar weight $\varphi$ on $\QG$. Let $({\rm L}^2(\QG),\pi,\Lambda)$ be the \GNS construction for $({\rm L}^{\infty}(\QG),\varphi)$. The left regular representation of $\QG$ is the multiplicative unitary \cite{KV2,BS2} $W\in\B({\rm L}^2(\QG)\tens {\rm L}^2(\QG))$ defined by
\[
W^*(\Lambda(x)\tens\Lambda(y))=(\Lambda\tens\Lambda)(\Delta(y)(x\tens 1)), \quad \text{for all } x,\, y\in{\f N}_{\varphi}.
\]
By identifying ${\rm L}^{\infty}(\QG)$ with its image by the \GNS representation $\pi$, we obtain:
\begin{itemize}
\item ${\rm L}^{\infty}(\QG)$ is the strong closure of the algebra $\{(\id\tens\omega)(W)\,;\, \omega\in\B({\rm L}^2(\QG))_*\}$;
\item $\Delta(x)=W^*(1\tens x)W$, for all $x\in{\rm L}^{\infty}(\QG)$.\qedhere
\end{itemize}
\end{noh}

\begin{noh}
The Hopf-von Neumann algebra $({\rm L}^{\infty}(\QG),\Delta)$ admits \cite{KV2} a unitary antipode map $R_{\QG}:{\rm L}^{\infty}(\QG)\rightarrow{\rm L}^{\infty}(\QG)$ and we can choose for right Haar weight on $\QG$ the weight $\psi$ defined by $\psi(x):=\varphi(R_{\QG}(x))$, for all $x\in {\rm L}^{\infty}(\QG)_+$. The Connes cocycle derivative \cite{Co2,Vaes1} of $\psi$ with respect to $\varphi$ is given by 
\[
(D\psi\,:\,D\varphi)_t:=\nu^{\,{\rm i}t^2/2}d^{\,{\rm i}t},\quad \text{for all } t\in\GR,
\] 
where $\nu>0$ is the scaling constant of $\QG$ and the operator $d\eta M$ is the modular element of $\QG$ \cite{KV2}. Let 
$
{\f N}_{\varphi}^d:=\{x\in M\,;\, xd^{1/2} \text{ is bounded and its closure } \overline{x d^{1/2}} \text{ belongs to } {\f N}_{\varphi}\}.
$
The \GNS construction \cite{Vaes1} for $({\rm L}^{\infty}(\QG),\psi)$ is given by $({\rm L}^2(\QG),\id,\Lambda_{\psi})$, where $\Lambda_{\psi}$ is the closure of the map ${\f N}_{\varphi}^d\rightarrow{\rm L}^2(\QG)\,;\,x\mapsto\Lambda(\overline{xd^{1/2}})$. We recall that $J_{\psi}=\nu^{\,{\rm i}/4}J$, where $J$ denotes the modular conjugation for $\varphi$.
\end{noh}

\begin{noh}
The right regular representation of the quantum group $\QG$ is the multiplicative unitary $V\in\B({\rm L}^2(\QG)\tens{\rm L}^2(\QG))$ defined by
\[
V(\Lambda_{\psi}(x)\tens\Lambda_{\psi}(y))=(\Lambda_{\psi}\tens\Lambda_{\psi})(\Delta(x)(1\tens y)),\quad \text{for all } x,\, y\in{\f N}_{\psi}.\qedhere
\]
\end{noh}

\begin{defin}
The quantum group $\widehat{\QG}$ dual of $\QG$ is defined by the Hopf-von Neumann algebra $({\rm L}^{\infty}(\widehat{\QG}),\widehat{\Delta})$, where:
\begin{itemize}
\item ${\rm L}^{\infty}(\widehat{\QG})$ is the strong closure of the algebra $\{(\id\tens\omega)(V)\,;\, \omega\in\B({\rm L}^2(\QG)\}$;
\item the coproduct $\widehat{\Delta}:{\rm L}^{\infty}(\widehat{\QG})\rightarrow{\rm L}^{\infty}(\widehat{\QG})\tens{\rm L}^{\infty}(\widehat{\QG})$ is defined by $\widehat{\Delta}(x):=V^*(1\tens x)V$ for all $x\in{\rm L}^{\infty}(\widehat{\QG})$.
\end{itemize} 
The quantum group $\widehat{\QG}$ admits left and right Haar weights \cite{KV2} and we can take the Hilbert space ${\rm L}^2(\QG)$ for \GNS space. We denote by $\widehat{J}$ the modular conjugation of the left Haar weight on $\widehat{\QG}$.
\end{defin}

		\subsection{Hopf C*-algebras associated with a quantum group}
	
\setcounter{thm}{0}

\numberwithin{thm}{subsection}
\numberwithin{prop}{subsection}
\numberwithin{lem}{subsection}
\numberwithin{cor}{subsection}
\numberwithin{propdef}{subsection}
\numberwithin{nb}{subsection}
\numberwithin{nbs}{subsection}
\numberwithin{rk}{subsection}
\numberwithin{rks}{subsection}
\numberwithin{defin}{subsection}
\numberwithin{ex}{subsection}
\numberwithin{exs}{subsection}
\numberwithin{noh}{subsection}
\numberwithin{conv}{subsection}	

We associate \cite{BS2,KV2} with the quantum group $\QG$ two Hopf C*-algebras $(S,\delta)$ and $(\widehat{S},\widehat{\delta})$ defined by:
\begin{itemize}
\item $S$ (resp.\ $\widehat{S}$) is the norm closure of the algebra $\{(\omega\tens\id)(V)\,;\,\omega\in\B({\rm L}^2(\QG))_*\}$ (resp.\ $\{(\id\tens\omega)(V)\,;\,\omega\in\B({\rm L}^2(\QG))_*\}$);
\item the coproduct $\delta:S\rightarrow\M(S\tens S)$ (resp.\ $\widehat\delta:\widehat{S}\rightarrow\M(\widehat{S}\tens \widehat{S})$) is given by: 
\[
\delta(x):=V(x\tens 1)V^*,\quad \text{for all } x\in S \quad \text{{\rm(}resp.\ }\widehat{\delta}(x):=V^*(1\tens x)V,\quad \text{for all } x\in\widehat{S}\text{{\rm)}}.
\]
\end{itemize}
We call $(S,\delta)$ (resp.\ $(\widehat{S},\widehat{\delta})$) the Hopf C*-algebra (resp.\ dual Hopf C*-algebra) associated with $\QG$. We can also denote by ${\rm C}_0(\QG):=S$ the Hopf C*-algebra of $\QG$. Note that ${\rm C}_0(\widehat{\QG})=\widehat{S}$. 

\begin{nbs}
\begin{itemize}
\item Consider the unitary operator $U:=\widehat{J}J\in\B({\rm L}^2(\QG))$. Since $U=\nu^{\,{\rm i}/4}J\widehat{J}$, we have $U^*=\nu^{-{\rm i}/4}U$. In particular, ${\rm Ad}(U)={\rm Ad}(U^*)$ on $\B({\rm L}^2(\QG))$.
\item We have the following non-degenerate faithful representation of the C*-algebra $S$ (resp.\ $\widehat{S}$):
\begin{align*}
L & :S\rightarrow\B({\rm L}^2(\QG))\; ;\; y\mapsto y;\quad R:S\rightarrow\B({\rm L}^2(\QG))\;;\;y\mapsto UL(y)U^*\\
\text{{\rm(}resp.\ }\rho & :\widehat{S}\rightarrow\B({\rm L}^2(\QG))\; ; \; x\mapsto x;\quad \lambda:\widehat{S}\rightarrow\B({\rm L}^2(\QG))\;;\;x\mapsto U\rho(x)U^*\text{{\rm)}}.\qedhere
\end{align*}
\end{itemize}
\end{nbs}
		
It follows from 2.15 \cite{KV2} that 
$
W=\Sigma(U\tens 1)V(U^*\tens 1)\Sigma
$
and $[W_{12},\,V_{23}]=0$. The right regular representation of $\widehat{\QG}$ is the multiplicative unitary $\widetilde{V}:=\Sigma(1\tens U)V(1\tens U^*)\Sigma$.

\begin{nb}\label{notC(V)}
Let $\s H$ be a Hilbert space and $X\in\B(\s H\tens\s H)$. We denote by ${\cal C}(X)$\index[symbol]{ca@${\cal C}(-)$} the norm closure of the subspace $\{(\id\tens\omega)(\Sigma X)\,;\, \omega\in\B(\s H)_*\}$ of $\B(\s H)$. If $X$ is a multiplicative unitary, then $\{(\id\tens\omega)(\Sigma X)\,;\, \omega\in\B(\s H)_*\}$ is a subalgebra of $\B(\s H)$ \cite{BS2}.
\end{nb}

\begin{defin}
\cite{BS2} The locally compact quantum group $\QG$ is said to be regular if $\K({\rm L}^2(\QG))={\cal C}(V)$.
\end{defin}

Note that the quantum group $\QG$ is regular if, and only if, $\K({\rm L}^2(\QG))={\cal C}(W)$.

		\subsection{Continuous actions of locally compact quantum groups}
		
We use the notations introduced in the previous paragraph. Let $A$ be a C*-algebra.

\begin{defin}
\begin{enumerate}
\item An action of the quantum group $\QG$ on $A$ is a non-degenerate *-homomorphism $\delta_A:A\rightarrow\M(A\tens S)$ satisfying $(\delta_A\tens\id_S)\delta_A=(\id_A\tens\delta)\delta_A$.
\item An action $\delta_A$ of $\QG$ on $A$ is said to be strongly (resp.\ weakly) continuous if 
\[
[\delta_A(A)(1_A\tens S)]=A\tens S \quad (\text{resp.\ } A=[(\id_A\tens\omega)\delta_A(A)\,;\,\omega\in\B({\rm L}^2(\QG))_*]).
\]
\item A $\QG$-C*-algebra is a pair $(A,\delta_A)$, where $A$ is a C*-algebra and $\delta_A:A\rightarrow\M(A\tens S)$ is a strongly continuous action of $\QG$ on $A$.\qedhere
\end{enumerate}
\end{defin}

If $\QG$ is regular, any weakly continuous action of $\QG$ is necessarily continuous in the strong sense, cf.\ 5.8 \cite{BSV}.

\begin{nbs}
Let $\delta_A:A\rightarrow\M(A\tens S)$ (resp.\ $\delta_A:A\rightarrow\M(A\tens\widehat{S})$) be a strongly continuous action of $\QG$ (resp.\ $\widehat{\QG}$) on the C*-algebra $A$. Denote by $\pi_L$ (resp.\ $\widehat{\pi}_{\lambda}$) the representation of $A$ on the Hilbert $A$-module $A\tens{\rm L}^2(\QG)$ defined by $\pi_L:=(\id_A\tens L)\delta_A$ (resp.\ $\widehat{\pi}_{\lambda}:=(\id_A\tens\lambda)\delta_A$).
\end{nbs}

\begin{defin}(cf.\ 7.1 \cite{BS2})
Let $(A,\delta_A)$ be a $\QG$-C*-algebra (resp.\ $\widehat{\QG}$-C*-algebra). We call (reduced) crossed product of $A$ by $\QG$ (resp.\ $\widehat{\QG}$), the C*-subalgebra $A\rtimes\QG$ (resp.\ $A\rtimes\widehat{\QG}$) of $\Lin(A\tens{\rm L}^2(\QG))$ generated by the products $\pi_L(a)(1_A\tens\rho(x))$ (resp.\ $\widehat{\pi}_{\lambda}(a)(1_A\tens L(x))$) for $a\in A$ and $x\in\widehat S$ (resp.\ $x\in S$).
\end{defin}

The crossed product $A\rtimes\QG$ (resp.\ $A\rtimes\widehat{\QG}$) is endowed with a strongly continuous action of $\widehat{\QG}$ (resp.\ $\QG$), cf.\ 7.3 \cite{BS2}. If $\QG$ is regular, then the Takesaki-Takai duality extends to this setting, cf.\ 7.5 \cite{BS2}.

\begin{defin}
Let $A$ and $B$ be two C*-algebras. Let $\delta_A:A\rightarrow\M(A\tens S)$ and $\delta_B:B\rightarrow\M(B\tens S)$ be two actions of $\QG$ on $A$ and $B$ respectively. A non-degenerate *-homomorphism $f:A\rightarrow\M(B)$ is said to be $\QG$-equivariant if $(f\tens\id_S)\delta_A=\delta_B\circ f$. We denote by ${\sf Alg}_{\QG}$ the category whose objects are the $\QG$-C*-algebras and the morphisms are the $\QG$-equivariant non-degenerate *-homomorphisms.
\end{defin}

		\subsection{Equivariant Hilbert C*-modules and bimodules}

\paragraph{Preliminaries.} In this paragraph, we briefly recall some classical notations and elemen\-tary facts concerning Hilbert C*-modules. Let $A$ be a C*-algebra and $\s E$ a Hilbert $A$-module.

\begin{nbs}\index[symbol]{ia@$\iota_A,\,\iota_{\s E},\,\iota_{\s E^*},\,\iota_{\K(\s E)}$, canonical morphisms}
Let us consider the following maps:
\begin{itemize}
\item $\iota_A:A\rightarrow\K(\s E\oplus A)$, the *-homomorphism given by $\iota_A(a)(\xi\oplus b)=0\oplus ab$ for all $a,b\in A$ and $\xi\in\s E$;
\item $\iota_{\s E}:\s E\rightarrow\K(\s E\oplus A)$, the bounded linear map given by $\iota_{\s E}(\xi)(\eta\oplus a)=\xi a\oplus 0$ for all $a\in A$ and $\xi,\eta\in\s E$;
\item $\iota_{\s E^*}:\s E^*\rightarrow\K(\s E\oplus A)$, the bounded linear map given by $\iota_{\s E^*}(\xi^*)(\eta\oplus a)=0\oplus\xi^*\eta$ for all $\xi,\eta\in\s E$ and $a\in A$;
\item $\iota_{\K(\s E)}:\K(\s E)\rightarrow\K(\s E\oplus A)$, the *-homomorphism given by $\iota_{\K(\s E)}(k)(\eta\oplus a)=k\eta\oplus 0$ for all $k\in\K(\s E)$, $\eta\in\s E$ and $a\in A$.\qedhere
\end{itemize}
\end{nbs}

The result below follows from straightforward computations.

\begin{prop}\label{prop32}
We have the following statements:
\begin{enumerate}
\item $\iota_{\s E}(\xi a)=\iota_{\s E}(\xi)\iota_A(a)$ and $\iota_A(a)\iota_{\s E^*}(\xi^*)=\iota_{\s E^*}(a\xi^*)$ for all $\xi\in\s E$ and $a\in A$;
\item $\iota_{\s E^*}(\xi^*)=\iota_{\s E}(\xi)^*$ and $\iota_{\K(\s E)}(\theta_{\xi,\eta})=\iota_{\s E}(\xi)\iota_{\s E}(\eta)^*$ for all $\xi,\eta\in\s E$;
\item $\iota_{\s E}(\xi)^*\iota_{\s E}(\eta)=\iota_A(\langle\xi,\,\eta\rangle)$ for all $\xi,\eta\in\s E$;
\item $\K(\s E\oplus A)$ is the C*-algebra generated by the set $\iota_A(A)\cup\iota_{\s E}(\s E)$.\qedhere
\end{enumerate}
\end{prop}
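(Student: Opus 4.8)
\textbf{Proof proposal for Proposition~\ref{prop32}.}

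The plan is to verify each of the four assertions by unwinding the definitions of $\iota_A$, $\iota_{\s E}$, $\iota_{\s E^*}$, $\iota_{\K(\s E)}$ on an arbitrary element $\eta\oplus a$ of $\s E\oplus A$, using only the Hilbert-module axioms over $A$ and the standard identification of $\s E$, $\s E^*$ and $\K(\s E)$ inside $\Lin(\s E\oplus A)$. The key computational facts I would keep in mind are: the inner product on $\s E$ is $A$-linear in the second variable and conjugate-linear in the first; the adjoint of an operator on $\s E\oplus A$ is computed componentwise with respect to the inner product $\langle \xi\oplus a,\,\eta\oplus b\rangle = \langle\xi,\eta\rangle_A + a^*b$; and $\K(\s E\oplus A)$ is generated by the rank-one operators $\theta_{\zeta,\zeta'}$ with $\zeta,\zeta'\in\s E\oplus A$.

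First, for (1), I would apply both sides to $\eta\oplus b$: the left side of the first identity sends $\eta\oplus b$ to $(\xi a)b\oplus 0 = \xi(ab)\oplus 0$, while $\iota_{\s E}(\xi)\iota_A(a)$ first sends $\eta\oplus b$ to $0\oplus ab$, then to $\xi(ab)\oplus 0$; the second identity is dual and handled the same way (or deduced from the first by taking adjoints once (2) is known). For (2), I would compute $\langle \iota_{\s E}(\xi)(\eta\oplus a),\,\zeta\oplus b\rangle$ and $\langle \eta\oplus a,\,\iota_{\s E^*}(\xi^*)(\zeta\oplus b)\rangle$ and check they agree, giving $\iota_{\s E^*}(\xi^*)=\iota_{\s E}(\xi)^*$; the identity $\iota_{\K(\s E)}(\theta_{\xi,\eta})=\iota_{\s E}(\xi)\iota_{\s E}(\eta)^*$ then follows by evaluating the right side on $\zeta\oplus a$: $\iota_{\s E}(\eta)^*(\zeta\oplus a)=\iota_{\s E^*}(\eta^*)(\zeta\oplus a)=0\oplus\langle\eta,\zeta\rangle_A$, then $\iota_{\s E}(\xi)(0\oplus\langle\eta,\zeta\rangle_A)=\xi\langle\eta,\zeta\rangle_A\oplus 0=\theta_{\xi,\eta}(\zeta)\oplus 0$, which is exactly $\iota_{\K(\s E)}(\theta_{\xi,\eta})(\zeta\oplus a)$. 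For (3), evaluating $\iota_{\s E}(\xi)^*\iota_{\s E}(\eta)$ on $\zeta\oplus a$ gives $\iota_{\s E}(\xi)^*(\eta a\oplus 0)=0\oplus\langle\xi,\eta a\rangle_A = 0\oplus\langle\xi,\eta\rangle_A a = \iota_A(\langle\xi,\eta\rangle_A)(\zeta\oplus a)$.

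For (4), I would argue that $\K(\s E\oplus A)$ is the closed linear span of the rank-one operators $\theta_{\zeta,\zeta'}$ with $\zeta=\xi\oplus a$, $\zeta'=\eta\oplus b$, and expand each such $\theta_{\zeta,\zeta'}$ bilinearly into the four pieces $\theta_{\xi\oplus 0,\eta\oplus 0}$, $\theta_{\xi\oplus 0,0\oplus b}$, $\theta_{0\oplus a,\eta\oplus 0}$, $\theta_{0\oplus a,0\oplus b}$; by parts (1)--(3) these lie in $\iota_{\K(\s E)}(\K(\s E))$, $\iota_{\s E}(\s E)$, $\iota_{\s E}(\s E)^*$ and $\iota_A(A)$ respectively, and since $\K(\s E)=[\theta_{\xi,\eta}]=[\iota_{\s E}(\xi)\iota_{\s E}(\eta)^*]$ is already contained in the C*-algebra generated by $\iota_{\s E}(\s E)$, everything lies in $\rmc^*(\iota_A(A)\cup\iota_{\s E}(\s E))$; the reverse inclusion is clear since all four maps land in $\K(\s E\oplus A)$. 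I do not anticipate a genuine obstacle here — the statement is, as the paper says, a matter of straightforward computation — so the only ``hard part'' is bookkeeping: being careful about which slot of the inner product is conjugate-linear when verifying the adjoint relations in (2), and making sure in (4) that the density argument is phrased in terms of closed linear spans rather than finite sums.
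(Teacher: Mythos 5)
Your proof is correct, and it is precisely the routine componentwise verification that the paper has in mind when it omits the argument with the remark that the result ``follows from straightforward computations.'' All four parts check out, including the bilinear decomposition of the rank-one operators in (4) and the use of factorization to see that $\iota_A(A)$ and $\iota_{\s E}(\s E)$ land inside $\K(\s E\oplus A)$.
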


\begin{rks}\label{rk3}
\begin{enumerate}
\item For $a\in A$, $\xi\in\s E$ and $k\in\K(\s E)$, the operators $\iota_A(a)$, $\iota_{\s E}(\xi)$, $\iota_{\s E^*}(\xi^*)$ and $\iota_{\K(\s E)}(k)$ can be represented by 2-by-2 matrices acting on the Hilbert $A$-module $\s E\oplus A$ as follows:
\[
\iota_A(a)=\begin{pmatrix}0 & 0\\0 & a\end{pmatrix}\!;\quad \iota_{\s E}(\xi)=\begin{pmatrix}0 & \xi\\0 & 0\end{pmatrix}\!;\quad \iota_{\s E^*}(\xi^*)=\begin{pmatrix}0 & 0\\\xi^* & 0\end{pmatrix}\!;\quad \iota_{\K(\s E)}(k)=\begin{pmatrix}k & 0\\0 & 0\end{pmatrix}\!.
\]
Moreover, any operator $x\in\K(\s E\oplus A)$ can be written in a unique way as follows:
\[
x=\begin{pmatrix}k & \xi\\ \eta^* & a\end{pmatrix}\!, \quad \text{with } k\in\K(\s E),\ \xi,\eta\in\s E \; \text{and}\; a\in A.
\]
\item Note that $\iota_A$ and $\iota_{\K(\s E)}$ extend uniquely to strictly/*-strongly continuous unital *-homomor\-phisms $\iota_{A}:\M(A)\rightarrow\Lin(\s E\oplus A)$ and $\iota_{\K(\s E)}:\Lin(\s E)\rightarrow\Lin(\s E\oplus A)$. Besides, we have 
$
\iota_{A}(m)(\xi\oplus a)=0\oplus ma
$
and
$
\iota_{\K(\s E)}(T)(\xi\oplus a)=T\xi\oplus 0
$
for all $m\in\M(A)$, $T\in\Lin(\s E)$, $\xi\in\s E$ and $a\in A$. 
\item $\iota_{\s E^*}$ admits an extension to a bounded linear map $\iota_{\s E^*}:\Lin(\s E,A)\rightarrow\Lin(\s E\oplus A)$ in a straightforward way. Similarly, up to the identification $\s E=\K(A,\s E)$, we can also extend $\iota_{\s E}$ to a bounded linear map $\iota_{\s E}:\Lin(A,\s E)\rightarrow\Lin(\s E\oplus A)$.
\item As in 1, we can represent the operators $\iota_{A}(m)$, $\iota_{\K(\s E)}(T)$, $\iota_{\s E^*}(S)$ and $\iota_{\s E}(S^*)$, for $m\in\M(A)$, $T\in\Lin(\s E)$ and $S\in\Lin(A,\s E)$, by 2-by-2 matrices. Moreover, any operator $x\in\Lin(\s E\oplus A)$ can be written in a unique way as follows:
\[
x=\begin{pmatrix}T & S'\\ S^* & m\end{pmatrix}\!, \quad \text{with } T\in\Lin(\s E),\ S,S'\in\Lin(A,\s E) \; \text{and} \; m\in\M(A).\qedhere
\]
\end{enumerate}
\end{rks}

By using the matrix notations described above, we derive easily the following useful technical lemma:

\begin{lem}\label{ehmlem1}
Let $x\in\mathcal{L}(\s{E}\oplus A)$ {\rm(}resp.\ $x\in\K(\s E\oplus A))$. We have:
\begin{enumerate}
 \item $x\in\iota_{\s E}(\mathcal{L}(A,\s{E}))$ {\rm(}resp.\ $\iota_{\s E}(\s E))$ if, and only if, $x\iota_\s{E}(\xi)=0$ for all $\xi\in \s{E}$ and $\iota_A(a)x=0$ for all $a\in A$; in that case, we have $\iota_{A}(m)x=0$ for all $m\in\M(A)$;
 \item $x\in\iota_{\K(\s{E})}(\mathcal{L}(\s{E}))$ {\rm(}resp.\ $\iota_{\K(\s E)}(\K(\s E)))$ if, and only if, $x\iota_A(a)=0$ and $\iota_A(a)x=0$ for all $a\in A$; in that case, we have $x\iota_{A}(m)=\iota_{A}(m)x=0$ for all $m\in\M(A)$.\qedhere
\end{enumerate}
\end{lem}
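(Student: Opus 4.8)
The plan is to work entirely with the $2$-by-$2$ matrix picture of $\Lin(\s E\oplus A)$ recorded in Remark~\ref{rk3}, so that both statements reduce to transparent matrix computations. Write a general element $x\in\Lin(\s E\oplus A)$ as $x=\begin{pmatrix}T & S'\\ S^* & m\end{pmatrix}$ with $T\in\Lin(\s E)$, $S,S'\in\Lin(A,\s E)$ and $m\in\M(A)$; in the compact case $T\in\K(\s E)$, $S,S'\in\s E$ and $m\in A$. Then $\iota_{\s E}(\eta)=\begin{pmatrix}0 & \eta\\ 0 & 0\end{pmatrix}$ for $\eta\in\s E$ (and more generally for $\eta\in\Lin(A,\s E)$), $\iota_A(a)=\begin{pmatrix}0 & 0\\ 0 & a\end{pmatrix}$, and $\iota_{\K(\s E)}(R)=\begin{pmatrix}R & 0\\ 0 & 0\end{pmatrix}$. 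The images $\iota_{\s E}(\Lin(A,\s E))$ and $\iota_{\K(\s E)}(\Lin(\s E))$ are exactly the sets of matrices whose only nonzero entry is the top-right, resp.\ top-left, corner.

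First I would prove part~(1). Computing the products, $x\iota_{\s E}(\xi)=\begin{pmatrix}0 & T\xi\\ 0 & S^*\xi\end{pmatrix}$ and $\iota_A(a)x=\begin{pmatrix}0 & 0\\ aS^* & am\end{pmatrix}$. Hence $x\iota_{\s E}(\xi)=0$ for all $\xi\in\s E$ forces $T=0$ (as $T$ is adjointable on $\s E$, vanishing on all of $\s E$ means $T=0$) and $S^*=0$ on $\s E$, i.e.\ $S=0$; while $\iota_A(a)x=0$ for all $a\in A$ forces $aS^*=0$ and $am=0$ for all $a\in A$, and since $A$ acts non-degenerately (essentially, $\s E$ and $A$ are genuine Hilbert $A$-modules) this gives $m=0$. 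Thus $x=\begin{pmatrix}0 & S'\\ 0 & 0\end{pmatrix}=\iota_{\s E}(S')$ with $S'\in\Lin(A,\s E)$, and in the compact case $S'\in\s E$. The converse is immediate from the matrix shapes. For the last clause, if $x=\iota_{\s E}(S')$ then $\iota_A(m)x=\begin{pmatrix}0 & 0\\ 0 & m\end{pmatrix}\begin{pmatrix}0 & S'\\ 0 & 0\end{pmatrix}=0$ for every $m\in\M(A)$, using the extension of $\iota_A$ to $\M(A)$ from Remark~\ref{rk3}(2).

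Part~(2) is entirely parallel. One computes $x\iota_A(a)=\begin{pmatrix}0 & S'a\\ 0 & ma\end{pmatrix}$ and $\iota_A(a)x=\begin{pmatrix}0 & 0\\ aS^* & am\end{pmatrix}$. The conditions $x\iota_A(a)=0=\iota_A(a)x$ for all $a\in A$ then give $S'a=0$, $ma=0$, $aS^*=0$, $am=0$ for all $a$; non-degeneracy of the $A$-action kills $S'$, $S$ and $m$ (for $m$: $am=ma=0$ for all $a\in A$ forces $m=0$ in $\M(A)$). Hence $x=\begin{pmatrix}T & 0\\ 0 & 0\end{pmatrix}=\iota_{\K(\s E)}(T)$ with $T\in\Lin(\s E)$, and $T\in\K(\s E)$ in the compact case. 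Conversely such an $x$ visibly annihilates $\iota_A(A)$ on both sides, and moreover $x\iota_A(m)=\iota_A(m)x=0$ for all $m\in\M(A)$ by the same block computation with the strictly continuous extension of $\iota_A$.

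There is essentially no obstacle here: the only point requiring a word of justification is that a block entry (an adjointable map $\s E\to\s E$ or $A\to\s E$, or a multiplier of $A$) that is annihilated by all of $A$ on the appropriate side must vanish, which is just non-degeneracy of the module/algebra structures; and that the matrix decomposition of an arbitrary element of $\Lin(\s E\oplus A)$ is the one asserted in Remark~\ref{rk3}(4), which we are entitled to use. So the ``hard part'' is merely bookkeeping: making sure the compact-case refinements ($T\in\K(\s E)$, $S,S'\in\s E$, $m\in A$) are read off correctly from the same computations, which follows because all the operations performed (matrix multiplication, taking corners) preserve the ideal $\K(\s E\oplus A)$ and its corner description in Remark~\ref{rk3}(1).
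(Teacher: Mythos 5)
Your proof is correct and is exactly the argument the paper intends: the lemma is stated there as an easy consequence of the $2$-by-$2$ matrix notation of Remark \ref{rk3}, with no further proof given, and your block computations (together with the non-degeneracy observations killing the unwanted entries) fill in precisely those details.
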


Let us recall the notion of relative multiplier module, cf.\ 2.1 \cite{BS1}.

\begin{defin}
Let $A$ and $B$ be two C*-algebras and let $\s E$ be a Hilbert C*-module over $A$. Up to the identification $\s E\tens B=\K(A\tens B,\s E\tens B)$, we define $\widetilde{\M}(\s E\tens B)$\index[symbol]{mb@$\widetilde{\M}(\s E\tens B)$, relative multiplier module} to be the following subspace of $\Lin(A\tens B,\s E\tens B)$:
\[\{T\in\Lin(A\tens B,\s E\tens B) \; ; \; \forall x\in B, \; (1_{\s E}\tens x)T\in\s E\tens B\;\;\mathrm{and} \;\; T(1_A\tens x)\in\s E\tens B\}.\]
Note that $\widetilde{\M}(\s E\tens B)$ is a Hilbert C*-module over $\widetilde{\M}(A\tens B)$, whose $\widetilde{\M}(A\tens B)$-valued inner product is given by: 
\[
\langle\xi,\eta\rangle:=\xi^*\circ\eta,\quad \text{for all }\xi,\eta\in\widetilde{\M}(\s E\tens B)\subset\Lin(A\tens B,\s E\tens B).
\]
Note also that we have $\K(\widetilde{\M}(\s E\tens B))\subset\widetilde{\M}(\K(\s E)\tens B)$.
\end{defin}

\begin{propdef}\label{not4}
Let $B\subset\B(\s K)$ be a C*-algebra of operators on a Hilbert space $\s K$. For all $T\in\Lin(A\tens B,\s E\tens B)$ and $\omega\in\B(\s K)_*$, there exists a unique $(\id_{\s E}\tens\omega)(T)\in\Lin(A,\s E)$ such that
\[
\iota_{\s E}(\id_{\s E}\tens\omega)(T)=(\id_{\K(\s E\oplus A)}\tens\omega)(\iota_{\s E\tens B}(T))\in\Lin(\s E\oplus A),
\]
where $\iota_{\s E\tens B}:\Lin(A\tens B,\s E\tens B)\rightarrow\Lin((\s E\tens B)\oplus(A\tens B))=\M(\K(\s E\oplus A)\tens B)$. If $B\subset\B(\s K)$ is non-degenerate and $T\in\widetilde{\M}(\s E\tens B)$, then we have $(\id_{\s E}\tens\omega)(T)\in\s E$.
\end{propdef}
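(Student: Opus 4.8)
The plan is to transport the statement to the $2\times 2$-matrix description of $\Lin(\s E\oplus A)$ set up in Remarks \ref{rk3} and then to quote the standard properties of the slice map $\id\tens\omega$ on multiplier algebras from \S 1 \cite{BS1}. Throughout I write $C:=\K(\s E\oplus A)$, so that $\M(C)=\Lin(\s E\oplus A)$, and, since $\K(B)=B$, $\M(C\tens B)=\Lin\big((\s E\tens B)\oplus(A\tens B)\big)$ while $C\tens B=\K\big((\s E\tens B)\oplus(A\tens B)\big)$ decomposes into the four corners $\K(\s E)\tens B$, $\s E\tens B$, $\s E^*\tens B$, $A\tens B$ exactly as in Remarks \ref{rk3} applied to the Hilbert $(A\tens B)$-module $\s E\tens B$. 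I also introduce the two complementary projections $p:=\iota_{\K(\s E)}(1_{\Lin(\s E)})$ and $q:=\iota_A(1_{\M(A)})$ of $\M(C)$ provided by the unital extensions of Remark \ref{rk3}.2. Inspecting the matrix forms (Remark \ref{rk3}.4 and Proposition \ref{prop32}) one gets $p\,\M(C)\,q=\iota_{\s E}\big(\Lin(A,\s E)\big)$ and $p\,C\,q=\iota_{\s E}(\s E)$, and in particular $\iota_{\s E}$ is injective on $\Lin(A,\s E)$; this last point already gives the \emph{uniqueness} of $(\id_{\s E}\tens\omega)(T)$.

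For \emph{existence}, the key observation is that, in the matrix picture of $\Lin\big((\s E\tens B)\oplus(A\tens B)\big)$, the operator $\iota_{\s E\tens B}(T)$ has its only non-zero entry $T$ in the $(1,2)$-slot, so that $\iota_{\s E\tens B}(T)=(p\tens 1)\,\iota_{\s E\tens B}(T)\,(q\tens 1)$ inside $\M(C\tens B)$, where $p\tens 1$, $q\tens 1$ denote the canonical images of $p$, $q$. Applying the slice map $\id_{C}\tens\omega\colon\M(C\tens B)\to\M(C)$ of \S 1 \cite{BS1}, and using that it is a bimodule map over $\M(C)$ (which follows from its strict continuity and the module property over $C$), one obtains
\[
(\id_{C}\tens\omega)\big(\iota_{\s E\tens B}(T)\big)=p\,(\id_{C}\tens\omega)\big(\iota_{\s E\tens B}(T)\big)\,q\in p\,\M(C)\,q=\iota_{\s E}\big(\Lin(A,\s E)\big).
\]
Hence there is a unique $(\id_{\s E}\tens\omega)(T)\in\Lin(A,\s E)$ with $\iota_{\s E}\big((\id_{\s E}\tens\omega)(T)\big)$ equal to this operator, which is the required identity.

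For the last assertion, assume $B\subset\B(\s K)$ is non-degenerate and $T\in\widetilde{\M}(\s E\tens B)$. Since $\iota_{\s E\tens B}(T)$ sits in the $(1,2)$-slot, the two defining conditions $(1_{\s E}\tens x)T\in\s E\tens B$ and $T(1_A\tens x)\in\s E\tens B$ for $x\in B$ translate, through the corner decomposition of $C\tens B$, into $(1_{\widetilde C}\tens x)\,\iota_{\s E\tens B}(T)\in C\tens B$ and $\iota_{\s E\tens B}(T)\,(1_{\widetilde C}\tens x)\in C\tens B$ for all $x\in B$; that is, $\iota_{\s E\tens B}(T)$ belongs to the relative multiplier algebra $\widetilde{\M}_{B}(C\tens B)=\M\big(\widetilde C\tens B;C\tens B\big)$. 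As $B$ is non-degenerate, the slice map maps $\widetilde{\M}_{B}(C\tens B)$ into $C$ (\S 1 \cite{BS1}); combined with the factorization of $\iota_{\s E}\big((\id_{\s E}\tens\omega)(T)\big)$ obtained above, this gives $\iota_{\s E}\big((\id_{\s E}\tens\omega)(T)\big)\in p\,C\,q=\iota_{\s E}(\s E)$, whence $(\id_{\s E}\tens\omega)(T)\in\s E$. (Alternatively, one may conclude by Lemma \ref{ehmlem1}.1 once it is known that this operator lies in $C$.)

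The only genuinely non-formal ingredients are the existence of the slice map $\id_{C}\tens\omega$ on $\M(C\tens B)$ and its basic properties — boundedness, strict continuity, the module property over $\M(C)$, and the fact that it carries $\widetilde{\M}_{B}(C\tens B)$ into $C$ when $B$ is non-degenerate — all of which are contained in \S 1 of \cite{BS1} and which I simply quote. Granting these, everything else is routine bookkeeping with the matrix corners of Remarks \ref{rk3} (and Lemma \ref{ehmlem1}), and I expect no further obstacle.
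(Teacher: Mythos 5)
Your argument is correct and is essentially the paper's own proof: the paper deduces the statement in one line from Lemma \ref{ehmlem1}.1 together with the fact that $\iota_{\s E\tens B}$ carries $\widetilde{\M}(\s E\tens B)$ into $\widetilde{\M}(\K(\s E\oplus A)\tens B)$, and your corner-projection computation $x=pxq$ is just the explicit, matrix-form version of the annihilation criterion in that lemma (as you yourself note in your parenthetical remark). No gap.
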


\begin{proof}
This is a direct consequence of \ref{ehmlem1} 1 and the fact that $\iota_{\s E\tens B}(T)\in\widetilde{\M}(\K(\s E\oplus A)\tens B)$ if $T\in\widetilde{\M}(\s E\tens B)$.
\end{proof}

\paragraph{Notion of equivariant Hilbert C*-module.}

In this paragraph, we recall the notion of equivariant Hilbert C*-module through the three equivalent pictures developed in \S 2 \cite{BS1}. Let us fix a $\QG$-C*-algebra $(A,\delta_A)$ and a Hilbert $A$-module $\s E$.

\begin{defin}
An action of the locally compact quantum group $\QG$ on $\s E$ is a linear map $\delta_{\s E}:\s E\rightarrow\widetilde{\M}(\s E\tens S)$ such that:
\begin{enumerate}
\item $\delta_{\s E}(\xi)\delta_A(a)=\delta_{\s E}(\xi a)$ and $\delta_A(\langle\xi,\,\eta\rangle)=\langle\delta_{\s E}(\xi),\,\delta_{\s E}(\eta)\rangle$, for all $a\in A$ and $\xi,\,\eta\in\s E$;
\item $[\delta_{\s E}(\s E)(A\tens S)]=\s E\tens S$;
\item the linear maps $\delta_{\s E}\tens\id_S$ and $\id_{\s E}\tens\delta$ extend to linear maps from $\Lin(A\tens S,\s E\tens S)$ to $\Lin(A\tens S\tens S,\s E\tens S\tens S)$ and we have $(\delta_{\s E}\tens\id_S)\delta_{\s E}=(\id_{\s E}\tens\delta)\delta_{\s E}$.
\end{enumerate}
An action $\delta_{\s E}$ of $\QG$ on $\s E$ is said to be continuous if we have 
$
[(1_{\s E}\tens S)\delta_{\s E}(\s E)]=\s E\tens S.
$
A $\QG$-equivariant Hilbert $A$-module is a Hilbert $A$-module $\s E$ endowed with a continuous action $\delta_{\s E}:\s E\rightarrow\widetilde{\M}(\s E\tens S)$ of $\QG$ on $\s E$.
\end{defin}

\begin{noh}
The datum of a continuous action of $\QG$ on $\s E$ is equivalent to that of a continuous action $\delta_{\K(\s E\oplus A)}:\K(\s E\oplus A)\rightarrow\M(\K(\s E\oplus A)\tens S)$ of $\QG$ on the linking C*-algebra $\K(\s E\oplus A)$ such that the *-homomorphism $\iota_A:A\rightarrow\K(\s E\oplus A)$ is $\QG$-equivariant, cf.\ 2.7 \cite{BS1}.
\end{noh}

\begin{noh}\label{unitary}
If $\delta_{\s E}$ is an action of $\QG$ on $\s E$, we have the unitary operator $\s V\in\Lin(\s E\tens_{\delta_A}(A\tens S),\s E\tens S)$ defined by $\s V(\xi\tens_{\delta_A}x):=\delta_{\s E}(\xi)x$ for all $\xi\in\s E$ and $x\in A\tens S$. This unitary satisfies the relation 
\[
(\s V\tens_{\GC}\id_S)(\s V\tens_{\delta_A\tens\id_S}1)=\s V\tens_{\id_A\tens\,\delta}1 \quad \text{in} \quad \Lin(\s E\tens_{\delta_A^2}(A\tens S\tens S),\s E\tens S\tens S), 
\]
where $\delta_A^2:=(\delta_A\tens\id_S)\delta_A=(\id_A\tens\delta)\delta_A$, cf.\ 2.3 and 2.4 (a) \cite{BS1} for the details. Conversely, if there exists a unitary operator $\s V\in\Lin(\s E\tens_{\delta_A}(A\tens S),\s E\tens S)$ satisfying the above relation and the fact that $\s VT_{\xi}\in\widetilde{\M}(\s E\tens S)$ for all $\xi\in\s E$, where $T_{\xi}\in\Lin(A\tens S,\s E\tens_{\delta_A}(A\tens S))$ is defined by $T_{\xi}(x):=\xi\tens_{\delta_A}x$ for all $x\in A\tens S$, then the map $\delta_{\s E}:\s E\rightarrow\widetilde{\M}(\s E\tens S)\,;\,\xi\mapsto\s VT_{\xi}$ is an action of $\QG$ on $\s E$, cf.\ 2.4 (b) \cite{BS1}.
\end{noh}

\begin{noh}
An action of $\QG$ on $\s E$ determines an action $\delta_{\K(\s E)}:\K(\s E)\rightarrow\widetilde{\M}(\K(\s E)\tens S)$ of $\QG$ on $\K(\s E)$ defined by $\delta_{\K(\s E)}(k)=\s V(k\tens_{\delta_A}1)\s V^*$ for all $k\in\K(\s E)$, where $\s V$ is the unitary operator associated to the action, cf.\ 2.8 \cite{BS1}. Moreover, if $\s E$ is a $\QG$-equivariant Hilbert module, then $\K(\s E)$ turns into a $\QG$-C*-algebra.
\end{noh}


\section{Measured quantum groupoids}

\setcounter{thm}{0}

\numberwithin{thm}{section}
\numberwithin{prop}{section}
\numberwithin{lem}{section}
\numberwithin{cor}{section}
\numberwithin{propdef}{section}
\numberwithin{nb}{section}
\numberwithin{nbs}{section}
\numberwithin{rk}{section}
\numberwithin{rks}{section}
\numberwithin{defin}{section}
\numberwithin{ex}{section}
\numberwithin{exs}{section}
\numberwithin{noh}{section}
\numberwithin{conv}{section}

For reminders concerning the relative tensor product of Hilbert spaces and the fiber product of von Neumann algebras, we refer the reader to the appendix of this article (cf.\ \ref{tensorproduct}).

\begin{defin}(cf.\ 3.7 \cite{E08}, 4.1 \cite{Le})
We call a measured quantum groupoid an octuple ${\cal G}=(N, M, \alpha, \beta, \Gamma, T, T', \nu)$, where:
\begin{itemize}
	\item $M$ and $N$ are von Neumann algebras;
	\item $\Gamma:M\rightarrow\fprod{M}{\beta}{\alpha}{M}$ is a faithful normal unital *-homomorphism, called the coproduct;
	\item $\alpha:N\rightarrow M$ and $\beta:N^{\rm o}\rightarrow M$ are faithful normal unital *-homormorphisms, called the range and source maps of $\cal G$;
	\item $T:M_+\rightarrow\alpha(N)_+^{\rm ext}$ and $T':M_+\rightarrow\beta(N^{\rm o})_+^{\rm ext}$ are \nsf operator-valued weights;
	\item $\nu$ is a \nsf weight on $N$;
\end{itemize}
such that the following conditions are satisfied:
\begin{enumerate}
	\item $[\alpha(n'),\,\beta(n^{\rm o})]=0$, for all $n,n'\in N$;
	\item $\Gamma(\alpha(n))=\reltens{\alpha(n)}{\beta}{\alpha}{1}$ and $\Gamma(\beta(n^{\rm o}))=\reltens{1}{\beta}{\alpha}{\beta(n^{\rm o})}$, for all $n\in N$;
	\item $\Gamma$ is coassociative, {\it i.e.\ }$(\fprod{\Gamma}{\beta}{\alpha}{\id})\Gamma=(\fprod{\id}{\beta}{\alpha}{\Gamma})\Gamma$;
	\item the \nsf weights $\varphi$ and $\psi$ on $M$ given by $\varphi=\nu\circ\alpha^{-1}\circ T$ and $\psi=\nu\circ\beta^{-1}\circ T'$ satisfy:
		\begin{itemize}
			\item $\forall x\in\f M_T^+, \, T(x)=(\fprod{\id}{\beta}{\alpha}{\varphi})\Gamma(x), \quad \forall x\in\f M_{T'}^+,\,T'(x)=(\fprod{\psi}{\beta}{\alpha}{\id})\Gamma(x)$,
			\item $\sigma^{\varphi}_t$ and $\sigma^{\psi}_s$ commute for all $s,t\in\GR$.\qedhere
		\end{itemize}
\end{enumerate}
\end{defin}

Let ${\cal G}=(N, M, \alpha, \beta, \Gamma, T, T', \nu)$ be a measured quantum groupoid. We denote by $(\s H,\pi,\Lambda)$ the \GNS construction for $(M,\varphi)$ where $\varphi:=\nu\circ\alpha^{-1}\circ T$. Let $(\sigma_t)_{t\in\GR}$, $\nabla$ and $J$ be respectively the modular automorphism group, the modular operator and the modular conjugation for $\varphi$. In the following, we identify $M$ with its image by $\pi$ in $\B(\s H)$. 
\begin{itemize}
	\item We have a coinvolutive *-antiautomomorphism $R_{\cal G}:M\rightarrow M$\index[symbol]{r@$R_{\cal G}$, unitary coinverse} such that $R_{\cal G}^2=\id_M$ (cf.\ 3.8 \cite{E08}).
\end{itemize}
From now on, we will assume that $T'=R_{\cal G}\circ T\circ R_{\cal G}$ and then also $\psi=\varphi\circ R_{\cal G}$.
\begin{itemize}
	\item There exist self-adjoint positive non-singular operators $\lambda$ and $d$ respectively affiliated to ${\cal Z}(M)$\index[symbol]{z@${\cal Z}(-)$, center} and $M$ such that
	 $
	 (D\psi:D\varphi)_t=\lambda^{{\rm i}t^2/2}d^{\,{\rm i}t}
	 $
for all $t\in\GR$. The operators $\lambda$ and $d$ are respectively called the scaling operator and the modular operator of $\cal G$. 
	\item The \GNS construction for $(M,\psi)$ is given by $(\s H,\pi_{\psi},\Lambda_{\psi})$, where: $\Lambda_{\psi}$ is the closure of the operator which sends any element $x\in M$ such that $x d^{1/2}$ is closable and its closure $\overline{x d^{1/2}}\in\f N_{\varphi}$ to $\Lambda_{\varphi}(\overline{x d^{1/2}})$; $\pi_{\psi}:M\rightarrow\B(\s H)$ is given by the formula $\pi_{\psi}(a)\Lambda_{\psi}(x)=\Lambda_{\psi}(ax)$.
	\item The modular conjugation $J_{\psi}$ for $\psi$ is given by $J_{\psi}=\lambda^{{\rm i}/4}J$.
	\item We will denote by
	$
	\s W_{\cal G}:\reltens{\s H}{\beta}{\alpha}{\s H}\rightarrow\reltens{\s H}{\alpha}{\widehat{\beta}}{\s H}
	$
	the pseudo-multiplicative unitary of $\cal G$\index[symbol]{wa@$\s W_{\cal G}$, pseudo-multiplicative unitary} (cf.\ 3.3.4 \cite{Val}, 3.6 \cite{E08}). 
\end{itemize}

\begin{propdef}(cf.\ 3.10 \cite{E08}) We define the (Pontryagin) dual of $\cal G$ to be the measured quantum groupoid $\widehat{\cal G}:=(N,\widehat{M},\alpha,\widehat{\beta},\widehat{\Gamma},\widehat{T},\widehat{R}\circ\widehat{T}\circ\widehat{R},\nu)$, where:
\begin{itemize}
	\item $\widehat{M}$ is the von Neumann algebra generated by 
	$
	\{(\omega\star\id)(\s W_{\cal G})\,;\,\omega\in\B(\s H)_*\}\subset\B(\s H);
	$
	\item $\widehat\beta:N^{\rm o}\rightarrow\widehat{M}$ is given by $\widehat\beta(n^{\rm o}):=J\alpha(n^*)J$ for all $n\in N$;
	\item $\widehat{\Gamma}:\widehat{M}\rightarrow\fprod{\widehat{M}}{\widehat\beta}{\alpha}{\widehat{M}}$ is given for all $x\in\widehat{M}$ by
	$
	\widehat{\Gamma}(x):=\sigma_{\alpha\widehat\beta}\s W_{\cal G}(\reltens{x}{\beta}{\alpha}{1})\s W_{\cal G}^*\sigma_{\widehat\beta\alpha};
	$
	\item there exists a unique \nsf weight $\widehat\varphi$ on $\widehat{M}$ whose \GNS construction is $(\s H,\id,\Lambda_{\widehat\varphi})$, where $\Lambda_{\widehat\varphi}$ is the closure of the operator 
	$
	(\omega\star\id)(\s W_{\cal G})\mapsto a_{\varphi}(\omega)
	$
defined for normal linear forms $\omega$ in a dense subspace of 
$
{\s I}_{\varphi}=\{\omega\in\B(\s H)_*\,;\,\exists k\in\GR_+,\,\forall x\in\f N_{\varphi},\, |\omega(x^*)|^2\leqslant k\varphi(x^*x)\}
$
and $a_{\varphi}(\omega)\in\s H$ satisfies
	$
	\omega(x^*)=\langle\Lambda_{\varphi}(x),\,a_{\varphi}(\omega)\rangle
	$
	for all $x\in\f N_{\varphi}$;
	\item $\widehat{T}$ is the unique \nsf operator-valued weight from $\widehat{M}$ to $\alpha(N)$ such that $\widehat\varphi=\nu\circ\alpha^{-1}\circ\widehat{T}$ and $\widehat{T}'=R_{\widehat{\cal G}}\circ\widehat{T}\circ R_{\widehat{\cal G}}$, where $R_{\widehat{\cal G}}:\widehat{M}\rightarrow\widehat{M}$ is given by $R_{\widehat{\cal G}}(x):=Jx^*J$ for all $x\in\widehat{M}$.
\end{itemize}
The pseudo-multiplicative unitary $\s W_{\widehat{\cal G}}$ of $\widehat{\cal G}$ is given by 
$
\s W_{\widehat{\cal G}}=\sigma_{\beta\alpha}\s W_{\cal G}^*\sigma_{\widehat\beta\alpha}.
$
\end{propdef}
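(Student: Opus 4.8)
This Proposition-Definition is the Pontryagin duality theorem for measured quantum groupoids; a complete proof is the content of 3.10 \cite{E08} (resting on \cite{Le,Val}), so here I only outline its architecture. The entire structure of ${\cal G}$ is encoded in the pseudo-multiplicative unitary $\s W_{\cal G}:\reltens{\s H}{\beta}{\alpha}{\s H}\rightarrow\reltens{\s H}{\alpha}{\widehat\beta}{\s H}$, which satisfies a pentagon-type relation over the basis $N$ together with the intertwining relations with $\alpha$, $\beta$ and $\widehat\beta$; the plan is to read off every item of the statement from $\s W_{\cal G}$ and its pentagon equation. First I would analyse the second leg of $\s W_{\cal G}$: using the pentagon relation one shows that $\{(\omega\star\id)(\s W_{\cal G})\,;\,\omega\in\B(\s H)_*\}$ is already a $*$-subalgebra of $\B(\s H)$, so that its weak closure $\widehat M$ is a von Neumann algebra; that $\widehat\beta(n^{\rm o}):=J\alpha(n^*)J$ defines a faithful normal unital $*$-homomorphism $N^{\rm o}\rightarrow\widehat M$ (using that ${\rm Ad}(J)$ is a $*$-antiautomorphism and $\alpha$ a $*$-homomorphism); and that $\alpha(N)$ and $\widehat\beta(N^{\rm o})$ have commuting ranges inside $\widehat M$.

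Next, one checks that the formula $\widehat\Gamma(x)=\sigma_{\alpha\widehat\beta}\s W_{\cal G}(\reltens{x}{\beta}{\alpha}{1})\s W_{\cal G}^*\sigma_{\widehat\beta\alpha}$ maps $\widehat M$ into the fiber product $\fprod{\widehat M}{\widehat\beta}{\alpha}{\widehat M}$, is a unital normal faithful $*$-homomorphism satisfying $\widehat\Gamma(\alpha(n))=\reltens{\alpha(n)}{\widehat\beta}{\alpha}{1}$ and $\widehat\Gamma(\widehat\beta(n^{\rm o}))=\reltens{1}{\widehat\beta}{\alpha}{\widehat\beta(n^{\rm o})}$, with coassociativity $(\fprod{\widehat\Gamma}{\widehat\beta}{\alpha}{\id})\widehat\Gamma=(\fprod{\id}{\widehat\beta}{\alpha}{\widehat\Gamma})\widehat\Gamma$ being a direct consequence of the pentagon relation for $\s W_{\cal G}$. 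The analytic heart of the argument is the construction of the invariant weights: one introduces the Fourier-type map $\omega\mapsto a_\varphi(\omega)$ on a dense subspace of $\s I_\varphi$, proves that $(\omega\star\id)(\s W_{\cal G})\mapsto a_\varphi(\omega)$ is closable with closure $\Lambda_{\widehat\varphi}$, and identifies $\Lambda_{\widehat\varphi}$ as a GNS map for a \nsf weight $\widehat\varphi$ on $\widehat M$; one then establishes left invariance of $\widehat\varphi$ relative to $\widehat\Gamma$. From left invariance one extracts the \nsf operator-valued weight $\widehat T:\widehat M_+\rightarrow\alpha(N)_+^{\rm ext}$ with $\widehat\varphi=\nu\circ\alpha^{-1}\circ\widehat T$ and $\widehat T=(\fprod{\id}{\widehat\beta}{\alpha}{\widehat\varphi})\widehat\Gamma$ on its domain, sets $\widehat T':=R_{\widehat{\cal G}}\circ\widehat T\circ R_{\widehat{\cal G}}$ with $R_{\widehat{\cal G}}(x)=Jx^*J$, and verifies the required commutation of $\sigma^{\widehat\varphi}$ with $\sigma^{\widehat\psi}$, where $\widehat\psi:=\widehat\varphi\circ R_{\widehat{\cal G}}$. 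All of this rests on the calculus of operator-valued weights, spatial derivatives and the Tomita--Takesaki machinery recalled in the appendix, and is where essentially all the difficulty lies.

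For the last assertion I would identify $\s W_{\widehat{\cal G}}$ by uniqueness. The operator $\sigma_{\beta\alpha}\s W_{\cal G}^*\sigma_{\widehat\beta\alpha}$ is a unitary $\reltens{\s H}{\widehat\beta}{\alpha}{\s H}\rightarrow\reltens{\s H}{\alpha}{\beta}{\s H}$, which, since $\widehat{\widehat{\cal G}}={\cal G}$ so that $\widehat{\widehat\beta}=\beta$ under the biduality identification, is exactly the target required of a pseudo-multiplicative unitary for $\widehat{\cal G}$ (whose source map is $\widehat\beta$ and range map $\alpha$); flipping legs and taking adjoints in the pentagon relation for $\s W_{\cal G}$ yields the pentagon relation for $\sigma_{\beta\alpha}\s W_{\cal G}^*\sigma_{\widehat\beta\alpha}$ adapted to the basis maps of $\widehat{\cal G}$; and one checks that this operator has $\widehat M$ as the algebra generated by the relevant leg, acts on the GNS space $(\s H,\id,\Lambda_{\widehat\varphi})$ of $\widehat\varphi$, and implements $\widehat\Gamma$. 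By the uniqueness of the pseudo-multiplicative unitary attached to a measured quantum groupoid together with its chosen left Haar weight, $\s W_{\widehat{\cal G}}=\sigma_{\beta\alpha}\s W_{\cal G}^*\sigma_{\widehat\beta\alpha}$. The main obstacle throughout is the weight theory of the second paragraph --- showing that $\widehat\varphi$ is a well-defined \nsf left-invariant weight and controlling its modular automorphism group; once the dual weights are available, the coproduct, the operator-valued weights and the formula for $\s W_{\widehat{\cal G}}$ follow by essentially formal manipulations with the pentagon equation and the relations between $J$, $\alpha$, $\beta$ and $\widehat\beta$.
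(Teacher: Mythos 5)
The paper states this Proposition-Definition without proof, as a recollection of 3.10 \cite{E08}, and your outline is a faithful summary of the construction carried out there: the second-leg algebra and the coproduct from the pentagon relation, the dual weight via the Fourier-type map $\omega\mapsto a_{\varphi}(\omega)$, and the operator-valued weight $\widehat{T}$ extracted from left invariance. The only soft spot is the appeal to $\widehat{\widehat{\cal G}}={\cal G}$ when identifying the target of $\s W_{\widehat{\cal G}}$ --- biduality is a consequence of the construction rather than an input --- but in \cite{E08} the formula $\s W_{\widehat{\cal G}}=\sigma_{\beta\alpha}\s W_{\cal G}^*\sigma_{\widehat\beta\alpha}$ is verified directly, so this does not affect correctness.
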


We will denote by $\widehat{J}$ the modular conjugation for $\widehat\varphi$. Note that the scaling operator of $\widehat{\cal G}$ is $\lambda^{-1}$. In particular, we have $\lambda^{{\rm i}t}\in{\cal Z}(M)\cap{\cal Z}(\widehat{M})$ for all $t\in\GR$. 
\begin{itemize}
\item Let $\widehat\alpha(n):=J\beta(n^{\rm o})^*J=\widehat{J}\,\widehat\beta(n^{\rm o})^*\widehat{J}$ for $n\in N$. We recall the following relations (cf.\ 3.11 (v) \cite{E08}):
	$
	M\cap\widehat{M}=\alpha(N)$, $M\cap\widehat{M}'=\beta(N^{\rm o})$,
	$M'\cap\widehat{M}=\widehat\beta(N^{\rm o})$ and $M'\cap\widehat{M}'=\widehat\alpha(N)$.
\item Let $U:=\widehat{J}J\in\B(\s H)$\index[symbol]{u@$U$}. Then, $U^*=\lambda^{-i/4}U$ and $U^2=\lambda^{i/4}$ (cf.\ 3.11 (iv) \cite{E08}). In particular, $U$ is unitary. We have
$
\widehat\alpha(n)=U\alpha(n)U^*
$
and
$
\widehat\beta(n^{\rm o})=U\beta(n^{\rm o})U^*
$
for all $n\in N$. Since $\lambda^{-{\rm i}/4}\in{\cal Z}(M)$, we also have 
$
\widehat\alpha(n)=U^*\alpha(n)U
$
and
$
\widehat\beta(n^{\rm o})=U^*\beta(n^{\rm o})U
$
for all $n\in N$.
\end{itemize}

\begin{propdef}(cf.\ 3.12 \cite{E08})
\begin{itemize}
\item The octuple $(N^{\rm o},M,\beta,\alpha,\varsigma_{\beta\alpha}\circ\Gamma,R_{\cal G}\circ T\circ R_{\cal G},T,\nu^{\rm o})$ is a measured quantum groupoid denoted by $\cal G^{\rm o}$ and called the opposite of $\cal G$. The pseudo-multiplicative unitary of $\cal G^{\rm o}$ is given by $\s W_{\cal G^{\rm o}}=(_{\beta}\reltens{\widehat J}{\alpha}{\widehat\alpha}{\widehat J}_{\widehat\beta})\s W_{\cal G}(_{\beta}\reltens{\widehat J}{\alpha}{\alpha}{\widehat J}_{\beta})$.
\item  Let $C_M:M\rightarrow M'$ be the canonical *-antihomomorphism given by $C_M(x):=Jx^*J$ for all $x\in M$. Let us define:
	\[
	\Gamma^{\rm c}:=(\fprod{C_M}{\beta}{\alpha}{C_M})\circ\Gamma\circ C_M^{-1};\quad R_{\cal G}^{\rm c}:=C_M\circ R_{\cal G}\circ C_M^{-1};\quad T^{\rm c}=C_M\circ T\circ C_M^{-1}.
	\]
	Then, the octuple $(N^{\rm o},M',\widehat\beta,\widehat\alpha,\Gamma^{\rm c},T^{\rm c},R_{\cal G}^{\rm c}T^{\rm c}R_{\cal G}^{\rm c},\nu^{\rm o})$ is a measured quantum groupoid denoted by ${\cal G}^{\rm c}$ and called the commutant of $\cal G$. The pseudo-multiplicative unitary $\s W_{{\cal G}^{\rm c}}$ of ${\cal G}^{\rm c}$ is given by
	$
	\s W_{{\cal G}^{\rm c}}=(_{\widehat{\beta}}\reltens{J}{\alpha}{\alpha}{J}_{\widehat\beta})\s W_{\cal G}({}_{\beta}\reltens{J}{\widehat\alpha}{\alpha}{J}_{\widehat\beta}).
	$\qedhere
\end{itemize}
\end{propdef}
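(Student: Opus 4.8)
The plan is to verify, for each of the two octuples, the conditions in the definition of a measured quantum groupoid, all of which are inherited from the data of $\cal G$ by transport of structure; the statement is essentially 3.12 \cite{E08}, so I only indicate the main points. \emph{For the opposite $\cal G^{\rm o}$:} since $(N^{\rm o})^{\rm o}=N$, the maps $\beta\colon N^{\rm o}\to M$ and $\alpha\colon N\to M$ play the roles of range and source, and condition~1 for $\cal G^{\rm o}$ coincides with condition~1 for $\cal G$. Applying the flip $\varsigma_{\beta\alpha}$ to the two identities of condition~2 for $\cal G$ gives condition~2 for $\cal G^{\rm o}$, and conjugating the coassociativity relation $(\fprod{\Gamma}{\beta}{\alpha}{\id})\Gamma=(\fprod{\id}{\beta}{\alpha}{\Gamma})\Gamma$ by the appropriate flips yields coassociativity of $\varsigma_{\beta\alpha}\circ\Gamma$. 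For condition~4 one checks that the \nsf weight on $M$ built from $(R_{\cal G}\circ T\circ R_{\cal G},\nu^{\rm o})$ is $\psi$ and the one built from $(T,\nu^{\rm o})$ is $\varphi$; since, by our standing assumption, $T'=R_{\cal G}\circ T\circ R_{\cal G}$, the invariance identities and the commutation of $\sigma^{\varphi}$ and $\sigma^{\psi}$ for $\cal G^{\rm o}$ are exactly those of $\cal G$ with the roles of $\varphi$ and $\psi$ interchanged.

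\emph{For the commutant $\cal G^{\rm c}$:} the $*$-antiautomorphism $C_M={\rm Ad}(J)\circ(\,\cdot\,)^*$ maps $M$ onto $M'$, and by the very definitions $\widehat\beta(n^{\rm o})=J\alpha(n^*)J$ and $\widehat\alpha(n)=J\beta(n^{\rm o})^*J$ it carries $\alpha$ to $\widehat\beta$ and $\beta$ to $\widehat\alpha$; hence $\widehat\beta,\widehat\alpha$ are indeed the range and source maps of the octuple $(N^{\rm o},M',\widehat\beta,\widehat\alpha,\Gamma^{\rm c},\dots)$. All the conditions now transport through $C_M$: conditions~1 and 2 follow from the corresponding conditions for $\cal G$ and the formulas for $\widehat\alpha,\widehat\beta$ together with $J^2=1$; coassociativity of $\Gamma^{\rm c}=(\fprod{C_M}{\beta}{\alpha}{C_M})\circ\Gamma\circ C_M^{-1}$ follows from that of $\Gamma$ and $C_M^{-1}=C_M$, once the order-reversal of $C_M$ is tracked through the fiber product; and the Haar weights of $\cal G^{\rm c}$ are $\varphi\circ C_M^{-1}$ and $\psi\circ C_M^{-1}$, whose modular automorphism groups are the $C_M$-images of $\sigma^{\varphi}$ and $\sigma^{\psi}$ and therefore commute.

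\emph{For the pseudo-multiplicative unitaries:} recall that $\s W_{\cal G}$ is the pseudo-multiplicative unitary of $\cal G$, determined through the \GNS construction $(\s H,\pi,\Lambda)$ for $(M,\varphi)$ and implementing $\Gamma$. To identify $\s W_{\cal G^{\rm o}}$ (resp.\ $\s W_{\cal G^{\rm c}}$) it suffices to check that the operator written in the statement is a pseudo-multiplicative unitary of $\cal G^{\rm o}$ (resp.\ $\cal G^{\rm c}$) whose associated coproduct is $\varsigma_{\beta\alpha}\circ\Gamma$ (resp.\ $\Gamma^{\rm c}$); since the pseudo-multiplicative unitary is uniquely determined by the groupoid data, this yields the formula. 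The verification is a direct computation using $U=\widehat J J$, $U^2=\lambda^{{\rm i}/4}$, the intertwining relations $\widehat\alpha(n)=U\alpha(n)U^*$ and $\widehat\beta(n^{\rm o})=U\beta(n^{\rm o})U^*$, and the behaviour of $\s W_{\cal G}$ under conjugation by $J$ and $\widehat J$ on each leg, after passing from the \GNS data of $\varphi$ to that of $\psi$ via $J_{\psi}=\lambda^{{\rm i}/4}J$ (resp.\ to that of $\varphi\circ C_M^{-1}$). The one genuine difficulty is bookkeeping: one must keep track of which of the four maps $\alpha,\beta,\widehat\alpha,\widehat\beta$ sits on which leg of each relative tensor product and fiber product, and handle correctly the order-reversal introduced by $C_M$ in the commutant case; once these conventions are fixed, every step is transport of structure from $\cal G$, as carried out in 3.12 \cite{E08}.
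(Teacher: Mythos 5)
Your proposal is correct and follows the same route as the source the paper relies on: the paper states this Proposition-Definition without proof, citing 3.12 of \cite{E08}, and your transport-of-structure verification (swapping legs via $\varsigma_{\beta\alpha}$ for the opposite, conjugating by $C_M={\rm Ad}(J)\circ(\cdot)^*$ for the commutant, and identifying the pseudo-multiplicative unitaries by checking the displayed operators against the \GNS-level defining formula) is exactly the argument carried out there. The only point to phrase carefully is that the pseudo-multiplicative unitary is pinned down by its \GNS definition from the left-invariant weight, not merely by the property of implementing the coproduct, but your ``direct computation'' is precisely that check, so nothing is missing.
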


\begin{nbs}\label{pseudomult}
For a given measured quantum groupoid $\cal G$, we will need the following pseudo-multiplicative unitaries:\index[symbol]{va@$\widehat{\s V}$, $\s V$, $\widetilde{\s V}$}
 \[
 \widehat{\s V}  := {\s W}_{\cal G}; \quad   {\s V}  := {\s W}_{\widehat{({\cal G}^{\rm o})}} =   {\s W}_{(\widehat{\cal G})^{\rm c}}; \quad 
  \widetilde{\s V}  := {\s W}_{({\cal G}^{\rm o})^{\rm c}}.\qedhere
 \]
\end{nbs}

\begin{conv}\label{rkconv} 
Henceforth, we will refer to $(\widehat{\cal G})^{\rm c}$ instead of $\widehat{\cal G}$ as the dual of ${\cal G}$ since this groupoid is better suited for right actions of $\cal G$. We have
\[
(\widehat{\cal  G})^{\rm c} = (N^{\rm o}, \widehat M',  \beta, \widehat\alpha, \widehat\Gamma^{\rm c}, \widehat T^{\rm c}, \widehat T^{\rm c}{'}, \nu^{\rm o}),
\]
where the coproduct and the operator-valued weights are given by:
\begin{itemize}
	\item $\widehat\Gamma^{\rm c}(x) = ({\s W}_{(\widehat{\cal  G} )^{\rm c}})^* (\reltens{1}{\beta}{\alpha}{x}) {\s W}_{(\widehat{\cal  G} )^{\rm c}}$, for all $x\in \widehat M'$;
	\item $\widehat T^{\rm c}  = C_{\widehat M} \circ \widehat T \circ C_{\widehat M}^{-1}$, where 
$C_{\widehat M}  : \widehat M  \rightarrow \widehat M'$ ; $x \mapsto \widehat J x^* \widehat J$;
	\item $\widehat T^{\rm c}{'}  = R_{(\widehat{\cal  G} )^{\rm c}} \circ  \widehat T^{\rm c} \circ R_{(\widehat{\cal  G} )^{\rm c}}$.
\end{itemize}
Note also that the commutant weight  $\widehat{\varphi}^{\rm c} := \nu^{\rm o} \circ \beta^{-1} \circ \widehat T^{\rm c}$ derived from the weight $\widehat\varphi$ is left invariant for the coproduct $\widehat\Gamma^{\rm c}$.
In the following, we will simply denote by $\widehat{\cal G}$ the dual groupoid of $\cal G$ (since no ambiguity will arise with the Pontryagin dual). Note that the bidual groupoid is $({\cal G}^{\rm o})^{\rm c}=({\cal G}^{\rm c})^{\rm o}$. 
\end{conv} 

		\subsection{Case where the basis is finite-dimensional}\label{MQGfinitebasis}
	
\setcounter{thm}{0}

\numberwithin{thm}{subsection}
\numberwithin{prop}{subsection}
\numberwithin{lem}{subsection}
\numberwithin{cor}{subsection}
\numberwithin{propdef}{subsection}
\numberwithin{nb}{subsection}
\numberwithin{nbs}{subsection}
\numberwithin{rk}{subsection}
\numberwithin{rks}{subsection}
\numberwithin{defin}{subsection}
\numberwithin{ex}{subsection}
\numberwithin{exs}{subsection}
\numberwithin{noh}{subsection}
\numberwithin{conv}{subsection}
	
In \cite{DC}, De Commer provides an equivalent definition of a measured quantum groupoid on a finite basis. This definition is far more tractable since it allows us to circumvent the use of relative tensor products and fiber products.

\medskip
 
In the following, we fix a finite-dimensional C*-algebra
$
N:=\bigoplus_{1\leqslant l\leqslant k}{\rm M}_{n_l}(\GC)
$
endowed with the non-normalized Markov trace $\epsilon:=\bigoplus_{1\leqslant l\leqslant k}n_l\cdot{\rm Tr}_l$, where ${\rm Tr}_l$ denotes the non-normalized trace on ${\rm M}_{n_l}(\GC)$.\index[symbol]{ta@${\rm Tr}_l$, non-normalized Markov trace on ${\rm M}_{n_l}(\GC)$}

\medskip

We refer to \S \ref{tensorproduct} of the appendix for the definitions of $v_{\beta\alpha}$ and $q_{\beta\alpha}$.
Let us a fix a measured quantum groupoid ${\cal G}=(N,M,\alpha,\beta,\Gamma,T,T',\epsilon)$. We have a unital normal *-isomorphism
$
\fprod{M}{\beta}{\alpha}{M} \rightarrow q_{\beta\alpha}(M\tens M)q_{\beta\alpha}\,;\,
x \mapsto v_{\beta\alpha}^*xv_{\beta\alpha}
$
(cf.\ \ref{propcoiso}). Let $\Delta:M\rightarrow M\tens M$ be the (non necessarily unital) faithful normal *-homomor\-phism given by
$
\Delta(x)=v_{\beta\alpha}^*\Gamma(x)v_{\beta\alpha}
$
for all $x\in M$. We have $\Delta(1)=q_{\beta\alpha}$. This has led De Commer to the following equivalent definition of a measured quantum groupoid on a finite basis.

\begin{defin}\label{defMQG}(cf.\ 11.1.2 \cite{DC})
A measured quantum groupoid on the finite-dimen\-sional basis 
$
N
$
is an octuple ${\cal G}=(N,M,\alpha,\beta,\Delta,T,T',\epsilon)$, where:
\begin{itemize}
	\item $M$ is a von Neumann algebra, $\alpha:N\rightarrow M$ and $\beta:N^{\rm o}\rightarrow M$ are unital faithful normal *-homomorphisms;
	\item $\Delta:M\rightarrow M\tens M$ is a faithful normal *-homomorphism;
	\item $T:M_+\rightarrow\alpha(N)_+^{\ext}$ and $T':M_+\rightarrow\beta(N^{\rm o})_+^{\ext}$ are \nsf operator-valued weights;
\end{itemize}
such that the following conditions are satisfied:
\begin{enumerate}
	\item $[\alpha(n'),\,\beta(n^{\rm o})]=0$, for all $n,n'\in N$;
	\item $\Delta(1)=q_{\beta\alpha}$;
	\item $(\Delta\tens\id)\Delta=(\id\tens\Delta)\Delta$;
	\item $\Delta(\alpha(n))=\Delta(1)(\alpha(n)\tens 1)$ and $\Delta(\beta(n^{\rm o}))=\Delta(1)(1\tens\beta(n^{\rm o}))$, for all $n\in N$;
	\item the \nsf weights $\varphi$ and $\psi$ on $M$ given by $\varphi:=\epsilon\circ\alpha^{-1}\circ T$ and $\psi:=\epsilon\circ\beta^{-1}\circ T'$ satisfy:
		\[
		T(x)=(\id\tens\varphi)\Delta(x)\quad\text{for all } x\in\f M_T^+,\quad T'(x)=(\psi\tens\id)\Delta(x),\quad\text{for all } x\in\f M_{T'}^+;
		\]
	\item $\sigma_t^T\circ\beta=\beta$ and $\sigma_t^{T'}\circ\alpha=\alpha$, for all $t\in\GR$.\qedhere
\end{enumerate}
\end{defin}		

Let us fix a measured quantum groupoid ${\cal G}=(N,M,\alpha,\beta,\Delta,T,T',\epsilon)$.

\begin{nbs}
Let us consider the injective bounded linear map 
\[
\iota_{\,\widehat\alpha\alpha}^{\beta}:\B(\reltens{\s H}{\widehat\alpha}{\beta}{\s H},\reltens{\s H}{\beta}{\alpha}{\s H})  \rightarrow\B(\s H\tens\s H)\quad ;\quad
X  \mapsto v_{\beta\alpha}^*Xv_{\widehat\alpha\beta}.
\]
Similarly, we also define $\iota_{\!\beta\widehat\beta}^{\alpha}$ and $\iota_{\widehat\beta\beta}^{\widehat\alpha}$. Let\index[symbol]{vb@$V$, $W$, $\widetilde{V}$, multiplicative partial isometries}
\[
V:=\iota_{\,\widehat\alpha\alpha}^{\beta}(\s V),\quad W:=\iota_{\!\beta\widehat\beta}^{\alpha}(\widehat{\s V})\quad \text{and} \quad
\widetilde{V}:=\iota_{\widehat\beta\beta}^{\widehat\alpha}(\widetilde{\s V}),
\]
where $\s V=\s W_{\widehat{\cal G}}$, $\widehat{\s V}=\s  W_{\cal G}$ and $\widetilde{\s V}=\s W_{({\cal G}^{\rm o})^{\rm c}}$ (cf.\ \ref{pseudomult}).
\end{nbs}

In what follows, we recall the main properties satisfied by $V$, $W$ and $\widetilde{V}$. The proof of the results below are derived from the properties satisfied by the pseudo-multiplicative unitaries $\s V$, $\widehat{\s V}$ and $\widetilde{\s V}$ (cf.\ \cite{E08}, \S 11 \cite{DC} and \S 2 \cite{BC}).

\begin{prop}\label{inifinproj}(cf.\ 3.11 (iii), 3.12 (v), (vi) \cite{E08}, 2.2 \cite{BC}) 
The operators $V,W$ and $\widetilde{V}$ are multiplicative partial isometries acting on $\s H\tens\s H$ such that:
\begin{enumerate}
\item $W = \Sigma (U \otimes 1) V (U^* \otimes 1)\Sigma$,\quad $\widetilde V = \Sigma (1 \otimes U) V (1 \otimes U^*)\Sigma=(U\tens U)W(U^*\tens U^*)$;
\item $V^* = (J \otimes \widehat J) V (J \otimes \widehat J)$,\quad  $W^* = (\widehat J \otimes J) W (\widehat J \otimes J)$;
\item the initial and final projections are given by
\[
V^* V =q_{\widehat\alpha\beta}=\widetilde V \widetilde V^*,\quad W^*W  =q_{\beta\alpha}= V V^*,\quad W W^* = q_{\alpha\widehat\beta} \quad \text{and} \quad \widetilde V^*\widetilde V  = q_{\widehat\beta \widehat\alpha}.\qedhere
\]
\end{enumerate}
\end{prop}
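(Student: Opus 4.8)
The plan is to \emph{transport}, through the maps $\iota_{\widehat\alpha\alpha}^{\beta}$, $\iota_{\beta\widehat\beta}^{\alpha}$ and $\iota_{\widehat\beta\beta}^{\widehat\alpha}$, the identities already established for the pseudo-multiplicative unitaries $\widehat{\s V}=\s W_{\cal G}$, $\s V=\s W_{(\widehat{\cal G})^{\rm c}}$ and $\widetilde{\s V}=\s W_{({\cal G}^{\rm o})^{\rm c}}$ in the general theory (cf.\ 3.11, 3.12 \cite{E08}), using the descriptions of the dual, opposite and commutant groupoids recalled above. The preliminary step is to assemble a ``dictionary'' of elementary facts about the embeddings: that $v_{\beta\alpha}^{*}$ is an isometry $\reltens{\s H}{\beta}{\alpha}{\s H}\hookrightarrow\s H\tens\s H$ with $v_{\beta\alpha}v_{\beta\alpha}^{*}=1$ and $v_{\beta\alpha}^{*}v_{\beta\alpha}=q_{\beta\alpha}$ (and likewise for the variants); that each $\iota_{\bullet\bullet}^{\bullet}$ is compatible with products and adjoints (relating the different $\iota$'s accordingly); that the relative flips $\sigma$ are carried to $\Sigma$ precomposed with suitable $v$'s (e.g.\ $v_{\alpha\beta}^{*}\sigma_{\beta\alpha}=\Sigma v_{\beta\alpha}^{*}$ and its analogues); and that an operator of the form ${}_{\bullet}\reltens{x}{\bullet}{\bullet}{y}_{\bullet}$ on a relative tensor product is carried to $x\tens y$ suitably flanked by $v$'s. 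One then records $\widehat\alpha={\rm Ad}(U)\circ\alpha$ and $\widehat\beta={\rm Ad}(U)\circ\beta$, together with $U^{*}=\lambda^{-{\rm i}/4}U$, $U^{2}=\lambda^{{\rm i}/4}$ and $\lambda^{{\rm i}t}\in{\cal Z}(M)\cap{\cal Z}(\widehat M)$, so that ${\rm Ad}(U)={\rm Ad}(U^{*})$ on $\B(\s H)$ and conjugation by $U\tens1$ (resp.\ $1\tens U$) carries $q_{\beta\alpha}$ to $q_{\widehat\alpha\beta}$ (resp.\ to $q_{\beta\widehat\beta}$).

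With this in hand, part (3) is nearly automatic. Writing $V=v_{\beta\alpha}^{*}\s V v_{\widehat\alpha\beta}$ and using that $\s V$ is a \emph{unitary} between the two relative tensor products together with $v_{\beta\alpha}v_{\beta\alpha}^{*}=1$, one gets $V^{*}V=v_{\widehat\alpha\beta}^{*}\s V^{*}\s V v_{\widehat\alpha\beta}=v_{\widehat\alpha\beta}^{*}v_{\widehat\alpha\beta}=q_{\widehat\alpha\beta}$ and $VV^{*}=v_{\beta\alpha}^{*}\s V\s V^{*}v_{\beta\alpha}=q_{\beta\alpha}$; the same computation for $W=v_{\alpha\widehat\beta}^{*}\widehat{\s V}v_{\beta\alpha}$ and $\widetilde V=v_{\widehat\alpha\beta}^{*}\widetilde{\s V}v_{\widehat\beta\widehat\alpha}$ yields $W^{*}W=q_{\beta\alpha}$, $WW^{*}=q_{\alpha\widehat\beta}$, $\widetilde V\widetilde V^{*}=q_{\widehat\alpha\beta}$ and $\widetilde V^{*}\widetilde V=q_{\widehat\beta\widehat\alpha}$, and the coincidences $W^{*}W=q_{\beta\alpha}=VV^{*}$ and $\widetilde V\widetilde V^{*}=q_{\widehat\alpha\beta}=V^{*}V$ are then just a comparison of the values already computed. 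For part (2), I would take from 3.11 \cite{E08} the spatial form of the ``unitary coinverse'' --- the identity expressing $\s W_{\cal G}^{*}$ as $\s W_{\cal G}$ conjugated on both sides by a relative-tensor operator built from $J$ and $\widehat J$ --- apply $\iota_{\beta\widehat\beta}^{\alpha}$ (and the corresponding statement for $\s V$), and the dictionary then yields the asserted $W^{*}=(\widehat J\tens J)W(\widehat J\tens J)$ and $V^{*}=(J\tens\widehat J)V(J\tens\widehat J)$.

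Part (1) is where the substitution is most involved. Chaining the recalled formulas, $\s W_{(\widehat{\cal G})^{\rm c}}$ equals $\s W_{\cal G}^{*}$ conjugated on both sides by $\sigma$'s and by anti-unitary operators built from $\widehat J$; applying $\iota_{\widehat\alpha\alpha}^{\beta}$, converting the $\sigma$'s into $\Sigma$'s via $v_{\alpha\beta}^{*}\sigma_{\beta\alpha}=\Sigma v_{\beta\alpha}^{*}$, and absorbing the change-of-basis maps into conjugation by $U\tens1$ via $\widehat\alpha={\rm Ad}(U)\alpha$ and $\widehat\beta={\rm Ad}(U)\beta$, one reaches precisely $W=\Sigma(U\tens1)V(U^{*}\tens1)\Sigma$ --- the exact analogue of the quantum-group identity 2.15 \cite{KV2}. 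The relation $\widetilde V=\Sigma(1\tens U)V(1\tens U^{*})\Sigma$ is obtained the same way from the formula for $\s W_{({\cal G}^{\rm o})^{\rm c}}$, and $\widetilde V=(U\tens U)W(U^{*}\tens U^{*})$ follows by combining the two: substituting the expression for $W$, using $\Sigma(U\tens1)=(1\tens U)\Sigma$, and cancelling the resulting factor $U^{2}\tens1$ against $U^{*2}\tens1$, which is licit because $U^{2}=\lambda^{{\rm i}/4}$ is central in $M$ and $\widehat M$.

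The main obstacle is not any one of the three assertions separately but the dictionary of the first paragraph: pinning down the precise commutation relations among the various isometries $v_{\beta\alpha}$, $v_{\alpha\beta}$, $v_{\widehat\alpha\beta}$, $v_{\widehat\beta\widehat\alpha}$, \dots\ under $\sigma$ and $\Sigma$, and verifying that the anti-unitary ``corrections'' appearing in Enock's identities for $\s W_{\cal G}$, $\s W_{\widehat{\cal G}}$, $\s W_{{\cal G}^{\rm o}}$, $\s W_{{\cal G}^{\rm c}}$ collapse cleanly to the conjugations by $J\tens\widehat J$, $\widehat J\tens J$, $U\tens1$, $1\tens U$ displayed in the statement --- which is exactly where $U^{*}=\lambda^{-{\rm i}/4}U$ (hence ${\rm Ad}(U)={\rm Ad}(U^{*})$) and the centrality of $\lambda^{{\rm i}t}$ come in. Once that bookkeeping is fixed --- and it is essentially carried out in \S 2 \cite{BC} and \S 11 \cite{DC} --- each of (1)--(3) reduces to a short substitution.
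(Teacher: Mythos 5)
Your proposal is correct and follows exactly the route the paper intends: the paper gives no proof of this proposition but explicitly states that the results ``are derived from the properties satisfied by the pseudo-multiplicative unitaries $\s V$, $\widehat{\s V}$ and $\widetilde{\s V}$'' via the embeddings $\iota_{\,\widehat\alpha\alpha}^{\beta}$, $\iota_{\!\beta\widehat\beta}^{\alpha}$, $\iota_{\widehat\beta\beta}^{\widehat\alpha}$, citing 3.11--3.12 of Enock and 2.2 of Baaj--Crespo, which is precisely your transport argument. Your computation of the initial and final projections from $v_{\gamma\pi}v_{\gamma\pi}^{*}=1$, $v_{\gamma\pi}^{*}v_{\gamma\pi}=q_{\gamma\pi}$ and the unitarity of the pseudo-multiplicative unitaries, and your use of the centrality of $\lambda^{{\rm i}/4}=U^{2}$ to pass between the two expressions for $\widetilde V$, match the intended derivation.
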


\begin{prop}(cf.\ 3.8, 3.12 \cite{E08})
\begin{enumerate}
\item The von Neumann algebra $M$ {\rm(}resp.\ $\widehat{M}${\rm)} is the weak closure of
$\{(\id\tens\omega)(W)\,;\,\omega\in\B(\s H)_*\}$ {\rm(}resp.\ $\{(\omega\tens\id)(W)\,;\,\omega\in\B(\s H)_*\}${\rm)}.
\item We have
$
W\in M\tens\widehat{M}
$, 
$
V\in\widehat{M}'\tens M,
$
and
$
\widetilde{V}\in M'\tens\widehat{M}'.
$
In particular, we have the commutation relations $[W_{12},\, V_{23}] =  0$ and $[V_{12},\, \widetilde V_{23}] =  0$.
\item The coproduct $\Delta:M\rightarrow M\tens M$ of $\cal G$ (resp.\ $\widehat{\Delta}:\widehat{M}'\rightarrow\widehat{M}'\tens \widehat{M}'$ of $\widehat{\cal G}$) satisfies 
\begin{align*}
\Delta(x) &= V (x \otimes 1) V^* = W^* (1 \otimes x) W,\quad \text{for all } x\in M \\
\text{{\rm(}resp.\ } \widehat{\Delta}(x) &=V^* (1\otimes x) V = \widetilde V (x \otimes 1) \widetilde V^*,\quad \text{for all } x\in\widehat{M}'\text{{\rm)}}.\qedhere
\end{align*}
\end{enumerate}
\end{prop}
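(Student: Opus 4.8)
The statement packages together three families of facts: (1) that $M$ and $\widehat M$ are recovered as the weak closures of the slices of $W$; (2) the leg-membership relations $W\in M\tens\widehat M$, $V\in\widehat M'\tens M$, $\widetilde V\in M'\tens\widehat M'$ together with the commutation relations $[W_{12},V_{23}]=0$ and $[V_{12},\widetilde V_{23}]=0$; and (3) the coproduct formulas $\Delta(x)=V(x\tens 1)V^*=W^*(1\tens x)W$ and the dual analogue. My plan is to derive everything from the corresponding statements for the pseudo-multiplicative unitaries $\s W_{\cal G}$, $\s W_{\widehat{\cal G}}$, $\s W_{({\cal G}^{\rm o})^{\rm c}}$ recorded in Enock's work (cf.\ 3.8, 3.12 \cite{E08}) by transporting them along the isometries $\iota_{\beta\widehat\beta}^{\alpha}$, $\iota_{\widehat\alpha\alpha}^{\beta}$, $\iota_{\widehat\beta\beta}^{\widehat\alpha}$ that implement the identification of the relevant corners of $\B(\s H\tens\s H)$ with $\B$ of the relative tensor products. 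Concretely, $W=\iota_{\beta\widehat\beta}^{\alpha}(\widehat{\s V})=v_{\alpha\widehat\beta}^*\,\s W_{\cal G}\,v_{\beta\widehat\beta}$ (and similarly for $V$, $\widetilde V$), and I would simply rewrite each abstract identity in terms of the ordinary tensor product using the intertwining properties of the $v$'s and the fact, from \ref{propcoiso}-type statements in the appendix, that conjugation by $v_{\beta\alpha}$ sends the fiber product $\fprod{M}{\beta}{\alpha}{M}$ onto $q_{\beta\alpha}(M\tens M)q_{\beta\alpha}$.

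For part (1): Enock's theory gives $M=[(\id\star\omega)(\s W_{\cal G})]''$ and $\widehat M=[(\omega\star\id)(\s W_{\cal G})]''$ by the very definition of $\widehat{\cal G}$ (the propdef above literally defines $\widehat M$ this way). I would show that applying a slice $\id\tens\omega$ to $W$ produces, up to the identification by $v$'s, exactly a relative slice $\id\star\omega'$ of $\s W_{\cal G}$ for a suitable $\omega'$, and conversely; since these ordinary slices land in $M$ (resp.\ $\widehat M$) and their weak closure is all of $M$ (resp.\ $\widehat M$), the claim follows. Here I must be a little careful that passing from the corner $q_{\alpha\widehat\beta}(M\tens\cdots)$ back to $\B(\s H)$ does not shrink the generated von Neumann algebra — but $q_{\beta\alpha}\in\alpha(N)\tens\beta(N^{\rm o})\subset M\tens\widehat M$ acts as a unit on the slices, so no information is lost.

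For part (2): the leg relations $W\in M\tens\widehat M$ etc.\ are immediate once part (1) is in hand (slice in the other leg). The commutation relations $[W_{12},V_{23}]=0$ and $[V_{12},\widetilde V_{23}]=0$ should be read off from the "pentagonal"/weak-Hopf relations among $\s W_{\cal G}$, $\s W_{\widehat{\cal G}}$, $\s W_{({\cal G}^{\rm o})^{\rm c}}$ in 3.12 \cite{E08}, again transported along the $v$'s; alternatively, since $W\in M\tens\widehat M$ and $V\in\widehat M'\tens M$, the first legs live in $M$ and $\widehat M'$ respectively, which commute (as $M$ and $\widehat M'$ commute — one of the corners computed from $M\cap\widehat M'=\beta(N^{\rm o})$, $M'\cap\widehat M=\widehat\beta(N^{\rm o})$), while the second legs in $\widehat M$ and $M$ act on disjoint tensor factors; so $[W_{12},V_{23}]=0$ is actually a formal consequence of the leg memberships, and likewise $[V_{12},\widetilde V_{23}]=0$ from $V\in\widehat M'\tens M$, $\widetilde V\in M'\tens\widehat M'$. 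I would present it this way, which avoids reproving a pentagon relation.

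For part (3): the formula $\Delta(x)=v_{\beta\alpha}^*\Gamma(x)v_{\beta\alpha}$ is the definition of $\Delta$ given just above \ref{defMQG}, and Enock's $\Gamma(x)=\sigma\,\s W_{\cal G}(\reltens{x}{\beta}{\alpha}{1})\,\s W_{\cal G}^*\,\sigma$ combined with the analogous expression via $\s W_{\widehat{\cal G}}$ (equivalently, the implementation of $\Gamma$ by both $\s W_{\cal G}$ and its "opposite" leg) transports, under conjugation by $v_{\beta\alpha}$, to exactly $V(x\tens 1)V^*$ and $W^*(1\tens x)W$. The dual formulas follow by applying the same reasoning to $\widehat{\cal G}$ in the Convention \ref{rkconv} form, using $\s W_{(\widehat{\cal G})^{\rm c}}=\s V$ and $\s W_{((\widehat{\cal G})^{\rm c})^{\rm c\,\cdots}}$; concretely $\widehat\Delta(x)=V^*(1\tens x)V=\widetilde V(x\tens 1)\widetilde V^*$ for $x\in\widehat M'$.

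\textbf{Expected main obstacle.} The bookkeeping is the whole difficulty: one has to keep straight which corner projection ($q_{\beta\alpha}$, $q_{\alpha\widehat\beta}$, $q_{\widehat\alpha\beta}$, $q_{\widehat\beta\widehat\alpha}$) each of $W,V,\widetilde V$ lives under — this is exactly the content of \ref{inifinproj}(3), which I would invoke — and then to verify that the isometries $v_{\beta\alpha}$, $v_{\alpha\widehat\beta}$, etc.\ intertwine the abstract relative-tensor-product operations (the fiber-product coproduct, relative slices, the maps $\reltens{x}{\beta}{\alpha}{1}$) with the corresponding corner-compressed ordinary-tensor-product operations. Once the appendix lemmas on $v_{\beta\alpha}$ and $q_{\beta\alpha}$ (referenced as \ref{propcoiso} and \S\ref{tensorproduct}) are granted, each assertion becomes a short computation with no genuine analytic content; the risk is purely in getting the left/right and hat/no-hat conventions consistent with \ref{inifinproj} and \ref{pseudomult}. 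I would therefore organize the proof as: (i) record the four corner identifications; (ii) translate Enock's defining formulas for $\widehat M$, $\Gamma$, $\widehat\Gamma$ through them to get (1) and (3); (iii) deduce (2) formally from (1) plus the commutation of $M$ with $\widehat M'$.
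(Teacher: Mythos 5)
Your overall strategy is exactly what the paper does: it gives no proof of its own, citing 3.8 and 3.12 of \cite{E08} and noting that each assertion is obtained by transporting the corresponding statement about the pseudo-multiplicative unitaries $\s W_{\cal G}$, $\s W_{\widehat{\cal G}}$, $\s W_{({\cal G}^{\rm o})^{\rm c}}$ through the corner identifications $\iota^{\beta}_{\,\widehat\alpha\alpha}$, $\iota^{\alpha}_{\!\beta\widehat\beta}$, $\iota^{\widehat\alpha}_{\widehat\beta\beta}$ built from the coisometries $v$ and projections $q$ of the appendix. Your parts (1) and (3) are fine as plans, and your idea of deducing the commutation relations formally from the leg memberships rather than re-transporting a pentagon identity is legitimate.

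There is, however, one concrete error in how you justify $[W_{12},V_{23}]=0$. You assert that ``$M$ and $\widehat M'$ commute''; this is false — $M\cap\widehat M'=\beta(N^{\rm o})$ records their intersection, not a commutation relation — and you also pair off the wrong legs: the first leg of $W_{12}$ sits in tensor position $1$ while the first leg of $V_{23}$ sits in position $2$, so these never interact. The only position shared by $W_{12}$ and $V_{23}$ is position $2$, where $W\in M\tens\widehat M$ contributes an element of $\widehat M$ and $V\in\widehat M'\tens M$ contributes an element of $\widehat M'$; these commute because $\widehat M'$ is by definition the commutant of $\widehat M$. Likewise for $[V_{12},\widetilde V_{23}]=0$ the shared position carries $M$ (second leg of $V$) against $M'$ (first leg of $\widetilde V$). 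With this corrected pairing your formal argument goes through; as written, the justification is wrong even though the conclusion is right.
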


\begin{prop}\label{prop34}(cf.\ 3.2.\ (i), 3.6.\ (ii) \cite{E08} and 11.1.2 \cite{DC})
For all $n\in N$, we have:
\begin{enumerate}
	\item $[V, \alpha(n) \otimes 1] =0$,\quad $[V, \widehat \beta(n^{\rm o}) \otimes 1] = 0$,\quad   $[V, 1 \otimes \widehat\alpha(n)] = 0$,\quad $[V,  1 \otimes \widehat \beta(n^{\rm o})] = 0$;
	\item $V(1 \otimes \alpha(n)) =  (\widehat\alpha(n) \otimes 1) V$,\quad $V(\beta(n^{\rm o}) \otimes 1) =  (1 \otimes \beta(n^{\rm o}))V$;
	\item $[W, \widehat \beta(n^{\rm o}) \otimes 1] = 0$,\quad $[W, \widehat\alpha(n) \otimes 1] = 0$,\quad $[W, 1 \otimes \beta(n^{\rm o})]= 0$,\quad $[W, 1 \otimes \widehat\alpha(n)] = 0$;
	\item $W(1 \otimes \widehat \beta(n^{\rm o})) =  (\beta(n^{\rm o}) \otimes 1) W$,\quad $W(\alpha(n) \otimes 1) =  (1 \otimes \alpha(n))W$;
	\item $[\widetilde V, \alpha(n) \otimes 1] = 0$,\quad $[\widetilde V, \beta(n^{\rm o}) \otimes 1] = 0$,\quad $[\widetilde V, 1 \otimes \alpha(n)] = 0$,\quad $[\widetilde V, 1 \otimes \widehat \beta(n^{\rm o})] =  0$;
	\item $\widetilde V(1 \otimes \beta(n^{\rm o})) =  (\widehat \beta(n^{\rm o}) \otimes 1) \widetilde V$,\quad $\widetilde V(\widehat\alpha(n) \otimes 1) =  (1 \otimes \widehat\alpha(n))\widetilde V$.\qedhere
\end{enumerate}
\end{prop}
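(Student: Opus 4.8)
The plan is to separate the twelve relations into the six \emph{commutation} relations (items 1, 3, 5) and the six \emph{intertwining} relations (items 2, 4, 6), and to treat the two families by entirely different arguments. For the commutation relations I would use nothing beyond the localisation of the legs recorded just above — $V\in\widehat M'\tens M$, $W\in M\tens\widehat M$, $\widetilde V\in M'\tens\widehat M'$ — together with the identities $M\cap\widehat M=\alpha(N)$, $M\cap\widehat M'=\beta(N^{\rm o})$, $M'\cap\widehat M=\widehat\beta(N^{\rm o})$ and $M'\cap\widehat M'=\widehat\alpha(N)$ (3.11 (v) \cite{E08}). For instance $\alpha(n)\in\alpha(N)\subset\widehat M$, and $\widehat M$ commutes with $\widehat M'$, so $\alpha(n)\tens1$ commutes with every element of $\widehat M'\tens M$, in particular with $V$, which gives $[V,\alpha(n)\tens1]=0$; the same two-line argument — identify the relevant leg, observe that the map in question takes values in the commutant of that leg's algebra — yields all the relations of items 1, 3 and 5 one after another.

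For the intertwining relations I would first treat $V$. Here I go back to $\s V=\s W_{\widehat{\cal G}}$ and the formula $V=\iota_{\widehat\alpha\alpha}^{\beta}(\s V)=v_{\beta\alpha}^{*}\,\s V\,v_{\widehat\alpha\beta}$. The compatibility of the pseudo-multiplicative unitary $\s W_{\widehat{\cal G}}$ with the range and source maps (3.2 (i), 3.6 (ii) \cite{E08}), combined with the way the partial isometries $v_{\beta\alpha}$, $v_{\widehat\alpha\beta}$ conjugate the multiplication operators by $\alpha(n)$, $\beta(n^{\rm o})$, $\widehat\alpha(n)$, $\widehat\beta(n^{\rm o})$ onto the corresponding leg-actions on the relative tensor products (cf.\ \ref{tensorproduct}), should produce $V(1\tens\alpha(n))=(\widehat\alpha(n)\tens1)V$ and $V(\beta(n^{\rm o})\tens1)=(1\tens\beta(n^{\rm o}))V$; this is exactly the translation already carried out in \S 11 \cite{DC} and \S 2 \cite{BC}. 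Rather than redo this for $W$ and $\widetilde V$, I would then deduce items 4 and 6 from item 2 using Proposition \ref{inifinproj}: writing $V=(U^{*}\tens1)\Sigma W\Sigma(U\tens1)$ and conjugating each relation of item 2, and using $U\alpha(n)U^{*}=\widehat\alpha(n)$, $U\beta(n^{\rm o})U^{*}=\widehat\beta(n^{\rm o})$, the centrality of $\lambda^{-{\rm i}/4}=(U^{*})^{2}$ (so that e.g.\ $\widehat\alpha(n)U^{*}=U^{*}\alpha(n)$), and $\Sigma(x\tens1)\Sigma=1\tens x$, $\Sigma(1\tens x)\Sigma=x\tens1$, one reads off $W(\alpha(n)\tens1)=(1\tens\alpha(n))W$ and $W(1\tens\widehat\beta(n^{\rm o}))=(\beta(n^{\rm o})\tens1)W$. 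Conjugating instead by $\Sigma(1\tens U)$, or using $\widetilde V=(U\tens U)W(U^{*}\tens U^{*})$, gives item 6.

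The main obstacle is the translation step for $V$: nothing is deep, but one must keep careful track of which of the four maps $\alpha,\beta,\widehat\alpha,\widehat\beta$ acts on which leg and on which side of the operator, and verify that conjugating the pseudo-multiplicative-unitary axioms by the $v$'s, and later by $U$ and $\Sigma$, lands on precisely the stated relations and not on variants with the legs, or the roles of $\alpha$ and $\widehat\alpha$, interchanged. The possible worry that $U^{2}=\lambda^{{\rm i}/4}\neq1$ is harmless, since $\lambda^{{\rm i}t}\in{\cal Z}(M)\cap{\cal Z}(\widehat M)$ commutes with all the operators occurring in these relations, so that ${\rm Ad}(U)$ and ${\rm Ad}(U^{*})$ agree throughout.
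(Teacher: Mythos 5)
Your proposal is correct, and it is worth noting that the paper itself offers no proof of this proposition: it is recalled with a bare citation to 3.2 (i), 3.6 (ii) of \cite{E08} and 11.1.2 of \cite{DC}, the surrounding text saying only that the relations ``are derived from the properties satisfied by the pseudo-multiplicative unitaries.'' Your reconstruction is therefore more self-contained than what the paper records. The two halves of your argument check out: items 1, 3 and 5 do follow in two lines each from the leg localisations $V\in\widehat M'\tens M$, $W\in M\tens\widehat M$, $\widetilde V\in M'\tens\widehat M'$ stated in the immediately preceding proposition, combined with $M\cap\widehat M=\alpha(N)$, $M\cap\widehat M'=\beta(N^{\rm o})$, $M'\cap\widehat M=\widehat\beta(N^{\rm o})$, $M'\cap\widehat M'=\widehat\alpha(N)$; and the deduction of items 4 and 6 from item 2 via $W=\Sigma(U\tens1)V(U^*\tens1)\Sigma$ and $\widetilde V=(U\tens U)W(U^*\tens U^*)$ is a routine computation that I verified goes through, your handling of $U^2=\lambda^{{\rm i}/4}$ being exactly the point one needs (e.g.\ $U\widehat\alpha(n)=\lambda^{{\rm i}/4}\alpha(n)U^*=\alpha(n)U$). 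This reduces the external input to the two intertwining relations of item 2 for $V$, which is precisely the content of the translation in \S 11 \cite{DC}. The only caveat is that the leg-localisation statements you lean on are themselves recalled without proof and, in the source literature, are established alongside (not prior to) the commutation relations with $\alpha,\beta,\widehat\alpha,\widehat\beta$; so your derivation is a valid re-ordering of the paper's recalled facts rather than an independent proof from first principles, but within the paper's internal logic it is sound and arguably the cleaner presentation.
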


\begin{prop}\label{NonComRel}(cf.\ 11.1.4 \cite{DC})
For all $n\in N$, we have:
\begin{enumerate}
	\item $W(\beta(n^{\rm o})\tens 1)=W(1\tens\alpha(n))$,\quad $(1\tens\widehat{\beta}(n^{\rm o}))W=(\alpha(n)\tens 1)W$;
	\item $V(1\tens\beta(n^{\rm o}))=V(\widehat{\alpha}(n)\tens 1)$,\quad $(1\tens\alpha(n))V=(\beta(n^{\rm o})\tens 1)V$;
	\item $\widetilde{V}(\widehat{\beta}(n^{\rm o})\tens 1)=\widetilde{V}(1\tens\widehat{\alpha}(n))$,\quad $(1\tens\beta(n^{\rm o}))\widetilde{V}=(\widehat{\alpha}(n)\tens 1)\widetilde{V}$.\qedhere
\end{enumerate}
\end{prop}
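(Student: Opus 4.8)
The plan is to derive all six identities from two ingredients. The first is the pair of initial- and final-projection formulas for $V$, $W$, $\widetilde V$ recorded in Proposition~\ref{inifinproj}.3: since these operators are partial isometries, those formulas give
\[
W=Wq_{\beta\alpha}=q_{\alpha\widehat\beta}W,\qquad V=Vq_{\widehat\alpha\beta}=q_{\beta\alpha}V,\qquad \widetilde V=\widetilde Vq_{\widehat\beta\widehat\alpha}=q_{\widehat\alpha\beta}\widetilde V.
\]
The second is the ``balancing'' relation carried by each projection $q_{\gamma\delta}$, reflecting that it is the projection of $\s H\tens\s H$ onto the copy of the relative tensor product $\reltens{\s H}{\gamma}{\delta}{\s H}$: for all $n\in N$,
\[
q_{\gamma\delta}(\gamma(n^{\rm o})\tens 1)=q_{\gamma\delta}(1\tens\delta(n)),\qquad (\gamma(n^{\rm o})\tens 1)q_{\gamma\delta}=(1\tens\delta(n))q_{\gamma\delta},
\]
where $\gamma(n^{\rm o})$ is read as $\gamma(n)$ when $\gamma$ is a range-type map (here $\alpha$ or $\widehat\alpha$), and the second relation is the adjoint of the first applied to $n^*$. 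These come from the defining property of the isometries $v_{\gamma\delta}$ recalled in \S\ref{tensorproduct}; in the finite-dimensional model this is a short computation with $v_{\beta\alpha}$ and $q_{\beta\alpha}$.

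Granting these, the six identities fall out by inserting the appropriate projection and applying the matching balancing relation. For example, for~(1):
\[
W(\beta(n^{\rm o})\tens 1)=Wq_{\beta\alpha}(\beta(n^{\rm o})\tens 1)=Wq_{\beta\alpha}(1\tens\alpha(n))=W(1\tens\alpha(n)),
\]
\[
(1\tens\widehat\beta(n^{\rm o}))W=(1\tens\widehat\beta(n^{\rm o}))q_{\alpha\widehat\beta}W=(\alpha(n)\tens 1)q_{\alpha\widehat\beta}W=(\alpha(n)\tens 1)W.
\]
The relations in~(2) are obtained in the same way, using $q_{\widehat\alpha\beta}$ on the right of $V$ for the first and $q_{\beta\alpha}$ on the left of $V$ for the second; the relations in~(3) use $q_{\widehat\beta\widehat\alpha}$ on the right of $\widetilde V$ and $q_{\widehat\alpha\beta}$ on the left of $\widetilde V$. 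In each case the two module maps appearing in the balancing relation are exactly the pair indexing the projection, which is why the identities come out precisely as stated.

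A more conceptual phrasing avoids the projections: $\widehat{\s V}=\s W_{\cal G}$ is a unitary $\reltens{\s H}{\beta}{\alpha}{\s H}\to\reltens{\s H}{\alpha}{\widehat\beta}{\s H}$, hence automatically intertwines the $N$-module structures defining these two relative tensor products, and transporting this through the isometries $v_{\beta\alpha}$, $v_{\alpha\widehat\beta}$ (i.e.\ applying $\iota_{\beta\widehat\beta}^{\alpha}$, cf.\ \S\ref{MQGfinitebasis}) yields~(1); likewise~(2) follows from $\s V=\s W_{\widehat{\cal G}}:\reltens{\s H}{\widehat\alpha}{\beta}{\s H}\to\reltens{\s H}{\beta}{\alpha}{\s H}$ and~(3) from $\widetilde{\s V}=\s W_{({\cal G}^{\rm o})^{\rm c}}:\reltens{\s H}{\widehat\beta}{\widehat\alpha}{\s H}\to\reltens{\s H}{\widehat\alpha}{\beta}{\s H}$. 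Either way there is no analytic content beyond Proposition~\ref{inifinproj}; the only real care needed is the bookkeeping — reading off, for each $q_{\gamma\delta}$, which module map sits on which leg and whether it is evaluated on $N$ or on $N^{\rm o}$, and confirming that the finite-dimensional model of \S\ref{tensorproduct} reproduces the balancing relations with the conventions in force here. That is where I expect essentially all of the (routine) effort to go.
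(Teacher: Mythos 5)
The paper gives no proof of this proposition — it only cites 11.1.4 of De Commer's thesis and the preceding remark that these identities are derived from the properties of the pseudo-multiplicative unitaries — and your argument is a correct, self-contained version of exactly that derivation: the initial/final projections from Proposition \ref{inifinproj}.3 plus the balancing identity $q_{\gamma\delta}(\gamma(n^{\rm o})\tens 1)=q_{\gamma\delta}(1\tens\delta(n))$ (and its left-hand analogue), which for the non-normalized Markov trace follows immediately from the explicit formula for $q_{\gamma\delta}$ in \ref{Projection}/\ref{ProjectionCAlg} (or, equivalently, from the relation $\reltens{\gamma(y^{\rm o})\eta}{\gamma}{\pi}{\xi}=\reltens{\eta}{\gamma}{\pi}{\pi(y)\xi}$, the modular group of a trace being trivial). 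All six cases check out with the projections you indicate, so no gap.
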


		\subsection{Weak Hopf C*-algebras associated with a measured quantum groupoid}\label{WHC*A}
		
We recall the definitions and the main results concerning the weak Hopf $\Cstar$-algebras associated with a measured quantum groupoid on a finite basis, cf.\ \S 11.2 \cite{DC} (with different notations and conventions, cf.\ \S 2.3 \cite{BC}). Let us fix a measured quantum groupoid ${\cal G}=(N,M,\alpha,\beta,\Delta,T,T',\epsilon)$ on the finite-dimensional basis $N=\bigoplus_{1\leqslant l\leqslant k}{\rm M}_{n_l}(\GC)$.

\begin{nbs}
With the notations of \S\ref{MQGfinitebasis}, we denote by $S$ (resp.\ $\widehat{S}$) the norm closure of the subalgebra\index[symbol]{sb@$S$, $\widehat{S}$, weak Hopf C*-algebras}
\[
\{(\omega\tens\id)(V)\,;\,\omega \in \B(\s H)_\ast\} \quad \text{{\rm(}resp.}\,\{(\id\tens\omega)(V)\, ;\, \omega \in \B(\s H)_\ast\}\text{{\rm)}}.
\]
According to \S 11.2 \cite{DC}, we have the following statements:
\begin{itemize}
\item the Banach space $S$ (resp.\ $\widehat{S}$) is a non-degenerate C*-subalgebra of $\B(\s H)$, weakly dense in $M$ (resp.\ $\widehat{M}'$);
\item the C*-algebra $S$ (resp.\ $\widehat{S}$) is endowed with the faithful non-degenerate *-representa\-tions:\index[symbol]{l@$L$, $R$, $\rho$, $\lambda$, canonical representations of $S$ and $\widehat{S}$} 
\begin{align*}
L&:S\rightarrow\B(\s H)\,;\,x\mapsto x;\quad R:S\rightarrow\B(\s H)\,;\,x\mapsto UL(x)U^* \\
\text{{\rm(}resp.\ }\rho&:\widehat{S}\rightarrow\B(\s H)\,;\,x\mapsto x;\quad \lambda:\widehat{S}\rightarrow\B(\s H)\,;\,x\mapsto U\rho(x)U^*\text{{\rm)}};
\end{align*}
\item $\alpha(N)\subset\M(S)$, $\beta(N^{\rm o})\subset\M(S)$, $\beta(N^{\rm o})\subset\M(\widehat{S})$ and $\widehat{\alpha}(N)\subset\M(\widehat{S})$; 
\item $V\in\M( \widehat S \otimes S)$, $W \in \M( S \otimes \lambda(\widehat S))$ and $\widetilde V \in \M(R(S) \otimes \widehat S)$;
\item $\Delta$ {\rm (}resp.\ $\widehat{\Delta}${\rm )} restricts to a strictly continuous *-homomorphism $\delta:S\rightarrow\M(S\tens S)$ (resp.\ $\widehat{\delta}:\widehat{S}\rightarrow\M(\widehat{S}\tens\widehat{S})$), which uniquely extends to a strictly continuous *-homomorphism $\delta:\M(S)\rightarrow\M(S\tens S)$ (resp.\ $\widehat{\delta}:\M(\widehat{S})\rightarrow\M(\widehat{S}\tens\widehat{S})$) satisfying $\delta(1_S)=q_{\beta\alpha}$ (resp.\ $\widehat{\delta}(1_{\widehat{S}})=q_{\widehat\alpha\beta}$);
\item $\delta$ (resp.\ $\widehat{\delta}$) is coassociative and satisfies  
$
[\delta(S)(1_S \otimes S)] = \delta(1_S) (S \otimes S)=[\delta(S)(S \otimes 1_S)]
$
(resp.\ $[\widehat\delta(\widehat S)(1_{\widehat S} \otimes \widehat S)]  = \widehat\delta(1_{\widehat S}) (\widehat S \otimes \widehat S) = [\widehat\delta(\widehat S)(\widehat S \otimes 1_{\widehat S})]$);
\item the unital faithful *-homomorphisms $\alpha:N\rightarrow\M(S)$ and $\beta:N^{\rm o}\rightarrow\M(S)$ satisfy
	\[
	\delta(\alpha(n))=\delta(1_S)(\alpha(n)\tens 1_S) \; \text{ and } \; \delta(\beta(n^{\rm o}))=\delta(1_S)(1_S\tens\beta(n^{\rm o})),\; \text{ for all } n\in N;
	\]
\item the unital faithful *-homomorphisms $\beta:N^{\rm o}\rightarrow\M(\widehat{S})$ and $\widehat{\alpha}:N\rightarrow\M(\widehat{S})$ satisfy
	\[
	\widehat{\delta}(\beta(n^{\rm o}))=\widehat{\delta}(1_{\widehat{S}})(\beta(n^{\rm o})\tens 1_{\widehat{S}})\; \text{ and } \; \widehat{\delta}(\widehat{\alpha}(n))=\widehat{\delta}(1_{\widehat{S}})(1_{\widehat{S}}\tens\widehat{\alpha}(n)),\; \text{ for all } n\in N.\qedhere
	\]
\end{itemize}
\end{nbs}

\begin{rks}\label{rk18}
{\setlength{\baselineskip}{1.1\baselineskip}
The C*-algebras $S$ and $\widehat{S}$ are separable. Indeed, the Hilbert space $\s H$ is separable and we have $S=[(\omega_{\xi,\eta}\tens\id)(V)\,;\,\xi,\eta\in\s H]$ and $\widehat{S}=[(\id\tens\omega_{\xi,\eta})(V)\,;\,\xi,\eta\in\s H]$ (cf.\ \S \ref{integration}). In particular, the C*-algebras $S$ and $\widehat{S}$ are $\sigma$-unital.\qedhere
\par}
\end{rks}

\begin{defin}
With the above notations, we call the pair $(S,\delta)$ {\rm(}resp.\ $(\widehat{S},\widehat{\delta})${\rm)} the weak Hopf $\Cstar$-algebra {\rm(}resp.\ dual weak Hopf $\Cstar$-algebra{\rm)} associated with the measured quantum groupoid ${\cal G}$.
\end{defin}

\begin{rk}
With the notations of the above definition, the pair $(\widehat{S},\widehat{\delta})$ is the weak Hopf $\Cstar$-algebra of $\widehat{\cal G}$ while its dual weak Hopf $\Cstar$-algebra is the pair $(R(S),\delta_R)$, where $R(S)=USU^*$ and the coproduct $\delta_R$ is given by $\delta_R(y):=\widetilde{V}^*(1\tens y)\widetilde{V}$ for all $y\in R(S)$.
\end{rk}

\begin{defin}\label{DefRegMQG}
(cf.\ 4.7 \cite{E05}, 2.37 \cite{BC}) The measured quantum groupoid $\cal G$ is said to be regular if $\K_{\beta}={\cal C}(V)$ (cf.\ \ref{notC(V)} and \ref{not18} for the notations).
\end{defin}

Note that $\cal G$ is regular if, and only if, $\K_{\alpha}={\cal C}(W)$. Moreover, the dual measured quantum groupoid $\widehat{\cal G}$ is regular if, and only if, $\cal G$ is regular (cf.\ 2.8 \cite{BC}). For more information concerning the notion of regularity (and the notion of semi-regularity) for measured quantum groupoids on a finite basis, we refer the reader to \cite{BC} and \cite{C2}.

		\subsection{Measured quantum groupoid associated with a monoidal equivalence}\label{sectionColinking}

We will recall the construction of the measured quantum groupoid associated with a monoidal equivalence between two locally compact quantum groups provided by De Commer \cite{DC,DC3}. First of all, we will need to recall the definitions and the crucial results of De Commer \cite{DC,DC3}.

\begin{defin}
Let $\QG$ be a locally compact quantum group. A right (resp.\ left) Galois action of $\QG$ on a von Neumann algebra $N$ is an ergodic integrable right (resp.\ left) action $\alpha_N:N\rightarrow N\tens {\rm L}^{\infty}(\QG)$ $($resp.\ $\gamma_N:N\rightarrow {\rm L}^{\infty}(\QG)\tens N)$ such that the crossed product $N\rtimes_{\alpha_N}\QG$ $($resp.\ $\QG\;_{\gamma_N}\!\!\ltimes N)$ is a type I factor. Then, the pair $(N,\alpha_N)$ (resp. $(N,\gamma_N)$) is called a right $($resp.\ left$)$ Galois object for $\QG$.
\end{defin}

Let $\QG$ be a locally compact quantum group and let us fix a right Galois object $(N,\alpha_N)$ for $\QG$. In his thesis, De Commer was able to build a locally compact quantum group $\QH$ equipped with a left Galois action $\gamma_N$ on $N$ commuting with $\alpha_N$, {\it i.e.\ }$(\id\tens\alpha_N)\gamma_N=(\gamma_N\tens\id)\alpha_N$. This construction is called the \textit{reflection technique} and $\QH$ is called the \textit{reflected locally compact quantum group across} $(N,\alpha_N)$.\hfill\break
In a canonical way, he was also able to associate a right Galois object $(O,\alpha_O)$ for $\QH$ and a left Galois action $\gamma_O:O\rightarrow {\rm L}^{\infty}(\QG)\tens O$ of $\QG$ on $O$ commuting with $\alpha_O$. Finally, De Commer has built a measured quantum groupoid 
\[
{\cal G}_{\QH,\QG}=(\GC^2,M,\alpha,\beta,\Delta,T,T',\epsilon)
\]
where: $M={\rm L}^{\infty}(\QH)\oplus N\oplus O\oplus {\rm L}^{\infty}(\QG)$; $\Delta:M\rightarrow M\tens M$ is made up of the coactions and coproducts of the constituents of $M$; the operator-valued weights $T$ and $T'$ are given by the invariants weights; the non-normalized Markov trace $\epsilon$ on $\GC^2$ is simply given by $\epsilon(a,b)=a+b$ for all $(a,b)\in\GC^2$. Moreover, the source and target maps $\alpha$ and $\beta$ have range in ${\cal Z}(M)$ and generate a copy of $\GC^4$.\hfill\break
Conversely, if ${\cal G}=(\GC^2,M,\alpha,\beta,\Delta,T,T',\epsilon)$ is a measured quantum groupoid whose source and target maps have range in ${\cal Z}(M)$ and generate a copy of $\GC^4$, then ${\cal G}$ is of the form ${\cal G}_{\QH,\QG}$ in a unique way, where $\QH$ and $\QG$ are locally compact quantum groups canonically associated with $\cal G$.

\medskip

In what follows, we fix a measured quantum groupoid ${\cal G}=(\GC^2,M,\alpha,\beta,\Delta,T,T',\epsilon)$ whose source and target maps have range in ${\cal Z}(M)$ and generate a copy of $\GC^4$. It is worth noticing that for such a groupoid we have:

\begin{lem}\label{lem26}(cf.\ 2.21 \cite{BC})
$\widehat\alpha=\beta$ and $\widehat{\beta}=\alpha$.
\end{lem}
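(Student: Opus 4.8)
The plan is to exploit the defining formulas for the dual maps together with the hypothesis that $\alpha$ and $\beta$ both have range in the center ${\cal Z}(M)$ and that their ranges together generate a copy of $\GC^4$ inside ${\cal Z}(M)$. Recall from the construction of the Pontryagin dual that $\widehat\beta(n^{\rm o}) = J\alpha(n^*)J$ and $\widehat\alpha(n) = J\beta(n^{\rm o})^*J$ for all $n\in N$, where $J$ is the modular conjugation of $\varphi$. So the whole point is to understand how the modular conjugation $J$ interacts with $\alpha(N)$ and $\beta(N^{\rm o})$. Since $N=\GC^2$ is commutative and $\epsilon$ is a trace, the weight $\varphi=\epsilon\circ\alpha^{-1}\circ T$ has a particularly rigid modular structure.

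First I would use the relations $M\cap\widehat M=\alpha(N)$, $M\cap\widehat M'=\beta(N^{\rm o})$, recalled after the Proposition-Definition for the dual. Since $\alpha(N),\beta(N^{\rm o})\subseteq{\cal Z}(M)$, in particular $\alpha(N)$ commutes with all of $M$, so $\alpha(N)\subseteq M\cap M' \subseteq M\cap\widehat M'=\beta(N^{\rm o})$ — wait, that inclusion needs $\alpha(N)\subseteq\widehat M'$, which must be checked. The cleaner route: the four abelian algebras $\alpha(N)$, $\beta(N^{\rm o})$, $\widehat\alpha(N)$, $\widehat\beta(N^{\rm o})$ are, by the relations just cited, exactly $M\cap\widehat M$, $M\cap\widehat M'$, $M'\cap\widehat M$, $M'\cap\widehat M'$ respectively. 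Now use the central hypothesis: $\alpha(N)\subseteq{\cal Z}(M)=M\cap M'$ forces $\alpha(N)\subseteq M'$, hence $\alpha(N)= (M\cap\widehat M)\cap M' \subseteq M'\cap\widehat M = \widehat\alpha(N)$; symmetrically $\widehat\alpha(N)\subseteq M$ (since $\widehat\alpha(N)\subseteq{\cal Z}(\widehat M)$, one needs the analogous centrality on the dual side, which follows because the ranges of $\widehat\alpha,\widehat\beta$ are $U(\cdot)U^*$-conjugates of those of $\alpha,\beta$ and $U=\widehat J J$ normalizes the center), giving $\widehat\alpha(N)\subseteq M\cap\widehat M'$... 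Here I have to be careful about which intersection; the correct bookkeeping is: centrality of $\alpha(N)$ in $M$ together with $\widehat\alpha(n)=U\alpha(n)U^*$ and $\lambda^{\pm i/4}\in{\cal Z}(M)\cap{\cal Z}(\widehat M)$ shows $\widehat\alpha(n)$ and $\alpha(n)$ generate commuting central algebras, and then comparing the four one-dimensional-per-block pieces that make up $\GC^4$ pins down $\widehat\alpha=\beta$ on the nose (not merely $\widehat\alpha(N)=\beta(N^{\rm o})$ as sets). The identification $\widehat\beta=\alpha$ is then automatic since $\widehat\beta(n^{\rm o})=U\beta(n^{\rm o})U^*$ and, dually, applying the first identity to $\widehat{\cal G}$ in place of $\cal G$ (whose source/target maps are again central and generate $\GC^4$) yields $\alpha=\dwh\alpha=\widehat\beta$ after matching the involution $n\mapsto n^{\rm o}$.

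The main obstacle I anticipate is \emph{not} the set-level equality of ranges — that falls out of the intersection relations and centrality — but rather getting the \emph{precise} identification of the homomorphisms, including the correct matching of the opposite-algebra structure $N^{\rm o}$ versus $N$ and the correct block-permutation of $\GC^2$. One must track how $J$ (equivalently $U=\widehat J J$) permutes the two minimal central projections of $N\cong\GC^2$ sitting inside $M$ via $\alpha$ and via $\beta$, and confirm that conjugation by $J$ sends $\alpha$ to $\beta$ rather than to $\alpha$ composed with a flip. For this I would invoke the explicit description of $M={\rm L}^\infty(\QH)\oplus N\oplus O\oplus{\rm L}^\infty(\QG)$ and the fact, recalled in \S\ref{sectionColinking}, that $\alpha$ and $\beta$ generate $\GC^4={\cal Z}({\rm L}^\infty(\QH))\oplus\GC\oplus\GC\oplus{\cal Z}({\rm L}^\infty(\QG))$ with a definite assignment of the four minimal projections; since $\widehat{\cal G}$ in the colinking picture is again colinking with the roles of the "inner" blocks interchanged, the formula $\widehat\alpha(n)=J\beta(n^{\rm o})^*J$ then visibly collapses to $\beta$. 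Once the bookkeeping of projections is fixed, the verification is a short finite check, so I would keep that part terse and lean on 2.21 of \cite{BC} for the projection assignment.
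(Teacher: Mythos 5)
Your reduction of the lemma to a statement about how $J$ interacts with the central copies of $\GC^2$ is the right move, and the set-level equalities of ranges that you extract from centrality together with $M\cap\widehat M=\alpha(N)$, $M\cap\widehat M'=\beta(N^{\rm o})$, $M'\cap\widehat M=\widehat\beta(N^{\rm o})$, $M'\cap\widehat M'=\widehat\alpha(N)$ are fine in principle (though note a bookkeeping slip: $\alpha(N)\subseteq M'$ gives $\alpha(N)\subseteq M'\cap\widehat M=\widehat\beta(N^{\rm o})$, not $\widehat\alpha(N)$ --- and the former is in fact the inclusion you want for $\widehat\beta=\alpha$). The genuine gap is in the step you yourself flag as the crux: deciding whether conjugation by $J$ sends $\alpha$ to $\widehat\beta$ on the nose or composed with the flip of $\GC^2$. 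At that point you propose to ``lean on 2.21 of \cite{BC} for the projection assignment'' --- but 2.21 of \cite{BC} \emph{is} the statement being proved, so as written the argument is circular; and the auxiliary route through biduality ($\alpha=\dwh{\alpha}=\widehat\beta$) presupposes that the dual groupoid is again colinking with central source and target maps, which you assert but do not establish.

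The missing ingredient is elementary and makes everything else unnecessary: for the modular conjugation $J$ of a n.s.f.\ weight on $M$ (standard form), one has $JzJ=z^*$ for every $z\in{\cal Z}(M)$. Since by hypothesis $\alpha(N)$ and $\beta(N^{\rm o})$ lie in ${\cal Z}(M)$, the defining formulas give directly $\widehat\beta(n^{\rm o})=J\alpha(n^*)J=\alpha(n^*)^*=\alpha(n)$ and $\widehat\alpha(n)=J\beta(n^{\rm o})^*J=(\beta(n^{\rm o})^*)^*=\beta(n^{\rm o})$ for all $n\in N=\GC^2$ (using that $N$ is commutative to identify $N^{\rm o}$ with $N$). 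This is a two-line proof; no intersection relations, no four-block decomposition of $M$, and no appeal to the dual groupoid are needed.
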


Following the notations introduced in \cite{DC}, we recall the precise description of the left and right regular representations $W$ and $V$ of $\cal G$ introduced in the previous section. We identify $M$ with its image by $\pi$ in $\B(\s H)$, where $(\s H,\pi,\Lambda)$ is the \GNS construction for $M$ endowed with the \nsf weight $\varphi=\epsilon\circ\alpha^{-1}\circ T$. We also consider the \nsf weight $\psi=\epsilon\circ\beta^{-1}\circ T'$. Denote by $(\varepsilon_1,\varepsilon_2)$ the standard basis of the vector space $\GC^2$.\index[symbol]{eaa@$(\varepsilon_1,\varepsilon_2)$, standard basis of $\GC^2$}  

\begin{nbs}\label{not10} Let us introduce some useful notations and make some remarks concerning them.
\begin{itemize}
	\item For $i,j=1,2$, we define the following nonzero central self-adjoint projection of $M$:\index[symbol]{pa@$p_{ij}$}
	\[
	p_{ij}:=\alpha(\varepsilon_i)\beta(\varepsilon_j).
	\]
	It follows from $\beta(\varepsilon_1)+\beta(\varepsilon_2)=1_M$ and $\alpha(\varepsilon_1)+\alpha(\varepsilon_2)=1_M$ that
	\[
	\alpha(\varepsilon_i)=p_{i1}+p_{i2} \quad \text{and} \quad \beta(\varepsilon_j)=p_{1j}+p_{2j},\quad \text{for all }i,j=1,2.
	\]
	\item We have 
	\begin{center}	
	$\Delta(1)=\alpha(\varepsilon_1)\tens \beta(\varepsilon_1)+\alpha(\varepsilon_2)\tens \beta(\varepsilon_2)$ \; and \; $\widehat{\Delta}(1)=\beta(\varepsilon_1)\tens \beta(\varepsilon_1)+\beta(\varepsilon_2)\tens \beta(\varepsilon_2)$
	\end{center} 
	since $\widehat{\alpha}=\beta$.
	\item Let $M_{ij}:=p_{ij}M$, for $i,j=1,2$. Then, $M_{ij}$ is a nonzero von Neumann subalgebra of $M$.\index[symbol]{mc@$M_{ij}$}
	\item Let $\s H_{ij}:=p_{ij}\s H$, for $i,j=1,2$. Then, $\s H_{ij}$ is a nonzero Hilbert subspace of $\s H$ for all $i,j=1,2$.\index[symbol]{h@$\s H_{ij}$}
	\item Let $\varphi_{ij}:=\varphi$\,$\restriction_{(M_{ij})_+}$ and $\psi_{ij}:=\psi$\,$\restriction_{(M_{ij})_+}$, for $i,j=1,2$. Then, $\varphi_{ij}$ and $\psi_{ij}$ are \nsf weights on $M_{ij}$.\index[symbol]{pb@$\varphi_{ij}$, $\psi_{ij}$}
	\item For all $i,j,k=1,2$, we denote by $\Delta_{ij}^k:M_{ij}\rightarrow M_{ik}\tens M_{kj}$ the unital normal *-homomorphism given by\index[symbol]{da@$\Delta_{ij}^k$} 
	\[
	\Delta_{ij}^k(x):=(p_{ik}\tens p_{kj})\Delta(x),\quad \text{for all } x\in M_{ij}.
	\]
	\item We have $Jp_{kl}=p_{kl}J$, $\widehat{J}p_{kl}=p_{lk}\widehat{J}$ and $Up_{kl}=p_{lk}U$ for all $k,l=1,2$. We define the anti-unitaries $J_{kl}:\s H_{kl}\rightarrow\s H_{kl}$, $\widehat{J}_{kl}:\s H_{kl}\rightarrow\s H_{lk}$ and the unitary $U_{kl}:\s H_{kl}\rightarrow\s H_{lk}$ by setting $J_{kl}=p_{kl}Jp_{kl}$, $\widehat{J}_{kl}=p_{lk}\widehat{J}p_{kl}$ and $U_{kl}=p_{lk}Up_{kl}=\widehat{J}_{kl}J_{kl}$.
	\item For $i,j,k,l=1,2$, let $\Sigma_{ij\tens kl}:=\Sigma_{\s H_{ij}\tens\s H_{kl}}:\s H_{ij}\tens\s H_{kl}\rightarrow\s H_{kl}\tens\s H_{ij}$.\qedhere
\end{itemize}
\end{nbs}

We readily obtain:
\[
M=\bigoplus_{i,j=1,2}M_{ij};\quad \s H=\bigoplus_{i,j=1,2}\s H_{ij};\quad \Delta(p_{ij})=p_{i1}\tens p_{1j}+p_{i2}\tens p_{2j},\text{ for all }i,j=1,2.
\]
Note that in terms of the parts $\Delta_{ij}^k$ of $\Delta$, the coassociativity condition reads as follows:
\[
(\Delta_{ik}^l\tens\id_{M_{kj}})\Delta_{ij}^k=(\id_{M_{il}}\tens\Delta_{lj}^k)\Delta_{ij}^l,\quad \text{for all } i,j,k,l=1,2.
\]
The \GNS representation for $(M_{ij},\varphi_{ij})$ is obtained by restriction of the \GNS representation of $(M,\varphi)$ to $M_{ij}$. In particular, the \GNS space $\s H_{\varphi_{ij}}$ is identified with $\s H_{ij}$.

\begin{prop}\label{prop35} For all $i,j,k,l=1,2$, we have:
\begin{align*}
(p_{ij} \otimes 1) V (p_{kl} \otimes 1) & = \delta_k^i (p_{ij} \otimes p_{jl}) V (p_{il} \otimes p_{jl});\\
(1 \otimes p_{ij}) W (1 \otimes p_{kl}) & = \delta_j^l (p_{ik} \otimes p_{ij}) W (p_{ik} \otimes p_{kj});\\
(1\tens p_{ji})\widetilde{V}(1\tens p_{lk}) & =\delta_j^l (p_{ki}\tens p_{ji})\widetilde{V}(p_{ki}\tens p_{jk}).\qedhere
\end{align*}
\end{prop}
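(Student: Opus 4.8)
The three identities all express the block structure of $V$, $W$, $\widetilde V$ with respect to the decomposition $\s H=\bigoplus_{i,j}\s H_{ij}$, and it suffices to prove the first one and then transport it to the other two by conjugation, using the formulas of Proposition~\ref{inifinproj}, namely $W=\Sigma(U\tens 1)V(U^*\tens 1)\Sigma$ and $\widetilde V=\Sigma(1\tens U)V(1\tens U^*)\Sigma=(U\tens U)W(U^*\tens U^*)$, together with the commutation rule $Up_{kl}=p_{lk}U$ of \ref{not10} (whence also $p_{ij}U=Up_{ji}$ and, taking adjoints of self-adjoint projections, $p_{kl}U^*=U^*p_{lk}$), and the flip relations $(1\tens p)\Sigma=\Sigma(p\tens 1)$. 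Throughout I use that the $p_{ij}$ are central projections of $M$ with $p_{ij}p_{kl}=\delta_k^i\delta_l^j\,p_{ij}$, $\alpha(\varepsilon_i)p_{kl}=\delta_k^i\,p_{kl}$, $\beta(\varepsilon_j)p_{kl}=\delta_l^j\,p_{kl}$, and that $\widehat\alpha=\beta$, $\widehat\beta=\alpha$ by Lemma~\ref{lem26}.

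\textbf{The first identity.} After substituting $\widehat\alpha=\beta$, $\widehat\beta=\alpha$, Propositions~\ref{prop34} and \ref{NonComRel} give the following relations for $V$: (a) $V$ commutes with $\alpha(\varepsilon_i)\tens 1$; (b) $V$ commutes with $1\tens\alpha(\varepsilon_j)$ and with $1\tens\beta(\varepsilon_l)$, hence with $1\tens p_{jl}$; (c) $V(\beta(\varepsilon_l)\tens 1)=(1\tens\beta(\varepsilon_l))V$; (d) $(\beta(\varepsilon_j)\tens 1)V=(1\tens\alpha(\varepsilon_j))V$. Starting from $(p_{ij}\tens 1)V(p_{kl}\tens 1)$ and writing $p_{kl}=\alpha(\varepsilon_k)\beta(\varepsilon_l)$, I push $\alpha(\varepsilon_k)\tens 1$ through $V$ by (a) and use $p_{ij}\alpha(\varepsilon_k)=\delta_k^i p_{ij}$ to produce $\delta_k^i$; I carry $\beta(\varepsilon_l)\tens 1$ to the second leg by (c); finally, inserting $\beta(\varepsilon_j)\tens 1$ on the left of $V$ (harmless since $p_{ij}\beta(\varepsilon_j)=p_{ij}$) and applying (d), I collapse the second-leg index via $\alpha(\varepsilon_j)\beta(\varepsilon_l)=p_{jl}$. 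This gives
\[
(p_{ij}\tens 1)V(p_{kl}\tens 1)=\delta_k^i\,(p_{ij}\tens p_{jl})V .
\]
By the analogous moves applied to $(p_{ij}\tens p_{jl})V(p_{il}\tens p_{jl})$ — move $1\tens p_{jl}$ to the left of $V$ by (b), then expand $p_{il}=\alpha(\varepsilon_i)\beta(\varepsilon_l)$ and use (a) and (c) — one obtains $(p_{ij}\tens p_{jl})V(p_{il}\tens p_{jl})=(p_{ij}\tens p_{jl})V$. Comparing the two computations yields the first identity.

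\textbf{The remaining two.} For $W$, using $W=\Sigma(U\tens 1)V(U^*\tens 1)\Sigma$ and the flip relations, $(1\tens p_{ij})W(1\tens p_{kl})$ becomes $\Sigma(U\tens 1)\big[(p_{ji}\tens 1)V(p_{lk}\tens 1)\big](U^*\tens 1)\Sigma$ after moving the $p$'s across $U$ and $U^*$. Applying the first identity (with indices $j,i,l,k$), then pushing the resulting outer projections back through $U^{(*)}$ and $\Sigma$ and using $(U\tens 1)V(U^*\tens 1)=\Sigma W\Sigma$, converts this into $\delta_l^j(p_{ik}\tens p_{ij})W(p_{ik}\tens p_{kj})$, which is the second identity. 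The third is deduced from the second in the same way via $\widetilde V=(U\tens U)W(U^*\tens U^*)$: one has $(1\tens p_{ji})\widetilde V(1\tens p_{lk})=(U\tens U)\big[(1\tens p_{ij})W(1\tens p_{kl})\big](U^*\tens U^*)$, and inserting the second identity and conjugating back with $Up_{kl}=p_{lk}U$ produces $\delta_j^l(p_{ki}\tens p_{ji})\widetilde V(p_{ki}\tens p_{jk})$. (Each identity could equally be proved directly by the same token, using the $W$- and $\widetilde V$-analogues of (a)--(d).)

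\textbf{Main difficulty.} Nothing here is deep; the only real point of care is bookkeeping under the collapse $\widehat\alpha=\beta$, $\widehat\beta=\alpha$. Several of the relations in Propositions~\ref{prop34}--\ref{NonComRel} coincide while others do not, and the symmetry between the two legs is broken: for instance $V$ commutes with the full grading $1\tens p_{jl}$ on its second leg, whereas $W$ commutes only with $1\tens\beta(\varepsilon_j)$ there and genuinely mixes the $\alpha$-index — which is exactly why the surviving constraint on the $W$-side is the single equality $j=l$ rather than a matched pair. One must also keep track of which factor of the flip each $p_{ij}$ sits on when conjugating by $\Sigma$. Modulo that, the proof is a routine string of substitutions.
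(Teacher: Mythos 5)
Your proof is correct. The paper states Proposition \ref{prop35} without proof (it is recalled from the references on colinking measured quantum groupoids), and your derivation — establishing the $V$-identity from the intertwining relations of Propositions \ref{prop34} and \ref{NonComRel} specialized via $\widehat\alpha=\beta$, $\widehat\beta=\alpha$, then transporting it to $W$ and $\widetilde V$ by conjugation with $U$ and $\Sigma$ using $Up_{kl}=p_{lk}U$ — is exactly the intended argument, with all the index bookkeeping done correctly.
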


\begin{nbs} The operators $V$, $W$ and $\widetilde{V}$ each splits up into eight unitaries\index[symbol]{vb@$V_{jl}^i$, $W_{ik}^j$, $\widetilde{V}_{ki}^j$} 
\[
V_{jl}^i : \s H_{il} \otimes \s H _{jl} \rightarrow \s H_{ij} \otimes \s H_{jl},\; 
W_{ik}^j : \s H_{ik} \otimes \s H_{kj} \rightarrow \s H_{ik} \otimes \s H_{ij} \text{ and }
\widetilde{V}_{ki}^j : \s H_{ki}\tens\s H_{jk}\rightarrow\s H_{ki}\tens\s H_{ji}
\] 
for $i,j,k,l=1,2$, given by $V_{jl}^i=(p_{ij} \otimes p_{jl}) V (p_{il} \otimes p_{jl})$, $ W_{ik}^j=(p_{ik} \otimes p_{ij}) W (p_{ik} \otimes p_{kj})$ and $\widetilde{V}_{ki}^j=(p_{ki}\tens p_{ji})\widetilde{V}(p_{ki}\tens p_{jk})$.
\end{nbs}

Let $i,j,k,l,l'=1,2$. These unitaries are related to each other by the following relations (cf.\ \ref{inifinproj}):
\[
W_{ik}^j=\Sigma_{ij\tens ik}(U_{ji}\tens 1)V_{ik}^j(U_{jk}^*\tens 1)\Sigma_{ik\tens kj};\;
\widetilde{V}_{ki}^j=\Sigma_{ji\tens ki}(1\tens U_{ik})V_{ik}^j(\tens U_{ik}^*)\Sigma_{ki\tens jk};
\]
\[
\widetilde{V}_{ki}^j=(U_{ik}\tens U_{ij})W_{ik}^j(U_{ik}^*\tens U_{kj}^*).
\]
Furthermore, we also have:
\[
(V_{jl}^i)^*=(J_{il}\tens\widehat{J}_{lj})V_{lj}^i(J_{ij}\tens\widehat{J}_{jl}) \quad \text{and} \quad 
(W_{ik}^l)^*=(\widehat{J}_{ki}\tens J_{kj})W_{ki}^j(\widehat{J}_{ik}\tens J_{ij}).
\]
Moreover, these unitaries satisfy the following pentagonal equations:
\begin{equation*}\label{penteq}
\begin{split}
(V_{jk}^i)_{12} (V_{kl}^i)_{13} (V_{kl}^j)_{23} = (V_{kl}^j)_{23} (V_{jl}^i)_{12};&\quad
(W_{ij}^k)_{12} (W_{ij}^l)_{13} (W_{jk}^l)_{23} = (W_{ik}^l)_{23}(W_{ij}^k)_{12};\\
(\widetilde{V}_{ji}^k)_{12}(\widetilde{V}_{ji}^l)_{13}&(\widetilde{V}_{kj}^l)_{23}=(\widetilde{V}_{ki}^l)_{23}(\widetilde{V}_{ji}^k)_{12}.
\end{split}
\end{equation*}
We also have the following commutation relations:
\begin{equation*}\label{comrel}
(V_{kj}^l)_{23}(W_{ll'}^j)_{12} = (W_{ll'}^k)_{12}(V_{kj}^{l'})_{23};\quad 
(V_{ki}^l)_{12}(\widetilde{V}_{ki}^{j})_{23}=(\widetilde{V}_{ki}^{j})_{23}(V_{ki}^l)_{12}.
\end{equation*}
Furthermore, we have
\begin{equation*}
\Delta_{ij}^k(x)=(W_{ik}^j)^*(1\tens x)W_{ik}^j=V_{kj}^i(x\tens 1)(V_{kj}^i)^*,\quad\text{for all } x\in M_{ij}.
\end{equation*}
Note that for all $\omega\in\B(\s H)_*$ we have:
\begin{equation*}\label{eqdefpi}
\begin{split}
({\rm id}  \otimes p_{jl}\omega p_{jl}) (V_{jl}^i) = p_{ij} ({\rm id}  \otimes \omega)(V) p_{il};& \quad (p_{ik}\omega p_{ik} \otimes {\rm id} )(W_{ik}^j) = 
p_{ij} (\omega \otimes {\rm id} )(W) p_{kj};\\
(p_{ki}\omega p_{ki} \tens \id)(\widetilde{V}_{ki}^j)&=p_{ji}(\id\tens\omega)(\widetilde{V})p_{jk}.
\end{split}
\end{equation*}

\begin{prop}\label{prop36}
Let $i , j=1,2$ such that $i\neq j$. With the notations of \ref{not10}, we have:
\begin{enumerate}
	\item $\QG_i:=(M_{ii} , \Delta_{ii}^i , \varphi_{ii}, \psi_{ii})$ is a locally compact quantum group whose left {\rm(}resp.\ right\,{\rm)} regular representation is $W_{ii}^i$ {\rm(}resp.\ $V_{ii}^i)$;
	\item $(M_{ij} , \Delta_{ij}^j)$ is a right Galois object for $\QG_j$ whose canonical implementation is $V_{jj}^i$;
	\item $(M_{ij} , \Delta_{ij}^i)$ is a left Galois object for $\QG_i$ whose canonical implementation is $W_{ii}^j$;
	\item the actions $\Delta_{ij}^j$ and $\Delta_{ij}^i$ on $M_{ij}$ commute;
	\item the Galois isometry associated with the right Galois object $(M_{ij},\Delta_{ij}^j)$ for $\QG_j$ (cf.\ 6.4.1, 6.4.2 \cite{DC}) is the unitary $\Sigma_{ij\tens jj}(W_{ij}^j)^*\Sigma_{ij\tens ij}$.\qedhere
\end{enumerate}
\end{prop}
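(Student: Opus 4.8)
The plan is to identify each item as an instance of a statement already available from the theory of measured quantum groupoids on a finite basis once we restrict and decompose along the central projections $p_{ij}$, and to match the relevant regular representations with the appropriate eight-unitary pieces $V_{jl}^i$, $W_{ik}^j$. First I would fix $i\neq j$ and note, using Lemma \ref{lem26} and Notations \ref{not10}, that the coproduct $\Delta$ decomposes as $\Delta(p_{ab})=\sum_{c}p_{ac}\tens p_{cb}$, so that $\Delta_{ij}^k$ is unital as a map $M_{ij}\to M_{ik}\tens M_{kj}$, and that the coassociativity $(\Delta_{ik}^l\tens\id)\Delta_{ij}^k=(\id\tens\Delta_{lj}^k)\Delta_{ij}^l$ holds. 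Setting $k=l=i$ and restricting to the corner $M_{ii}$ gives that $(M_{ii},\Delta_{ii}^i)$ is a coassociative Hopf--von Neumann algebra; the weights $\varphi_{ii}=\varphi\restriction_{(M_{ii})_+}$ and $\psi_{ii}=\psi\restriction_{(M_{ii})_+}$ are \nsf, and their left/right invariance follows by restricting axiom 5 of Definition \ref{defMQG} (the identities $T(x)=(\id\tens\varphi)\Delta(x)$, $T'(x)=(\psi\tens\id)\Delta(x)$) together with the fact that $T$ and $T'$ have ranges in $\alpha(N)$ and $\beta(N^{\rm o})$, which here lie in ${\cal Z}(M)$. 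This proves (1); the identification of the left (resp.\ right) regular representation of $\QG_i$ with $W_{ii}^i$ (resp.\ $V_{ii}^i$) comes from the formula $\Delta_{ij}^k(x)=(W_{ik}^j)^*(1\tens x)W_{ik}^j=V_{kj}^i(x\tens 1)(V_{kj}^i)^*$ specialized to $i=j=k$, compared with the defining formulas for $W$ and $V$ of $\QG_i$ recalled in Section 2.

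For (2), (3) and (4), I would again use the decomposed coassociativity: taking $k=j$, $l=j$ gives $(\Delta_{ij}^j\tens\id_{M_{jj}})\Delta_{ij}^j=(\id_{M_{ij}}\tens\Delta_{jj}^j)\Delta_{ij}^j$, which says exactly that $\Delta_{ij}^j:M_{ij}\to M_{ij}\tens M_{jj}$ is a right action of $\QG_j$ on $M_{ij}$; symmetrically $\Delta_{ij}^i:M_{ij}\to M_{ii}\tens M_{ij}$ is a left action of $\QG_i$. The commutation of these two actions in (4) is the mixed instance of coassociativity with $k=i$, $l=j$: $(\Delta_{ii}^i\tens\id)\ \text{vs.}\ (\id\tens\Delta_{jj}^j)$ applied to $\Delta_{ij}^i$ and $\Delta_{ij}^j$ respectively, i.e.\ $(\id_{M_{ii}}\tens\Delta_{ij}^j)\Delta_{ij}^i=(\Delta_{ij}^i\tens\id_{M_{jj}})\Delta_{ij}^j$ after the obvious reindexing. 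The ergodicity and integrability making these Galois objects, and the type I factor condition on the crossed products, should be extracted from the corresponding properties of $\cal G$ itself: since $\cal G=\cal G_{\QH,\QG}$ with $M=\rml^\infty(\QH)\oplus N\oplus O\oplus\rml^\infty(\QG)$ by De Commer's classification recalled in Section 2.4, the corner $M_{ij}$ with $i\neq j$ is precisely the Galois object $N$ or $O$ and $\Delta_{ij}^j$, $\Delta_{ij}^i$ are the very Galois (co)actions built in the reflection technique — so (2) and (3) are really a bookkeeping identification rather than a fresh proof. The canonical implementations are then $V_{jj}^i$ and $W_{ii}^j$ by the same matching of formulas as in (1): $\Delta_{ij}^j(x)=V_{jj}^i(x\tens 1)(V_{jj}^i)^*$ exhibits $V_{jj}^i$ as the unitary implementing the right coaction on the GNS space $\s H_{ij}$, and dually for $W_{ii}^j$.

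For (5), I would recall De Commer's Galois isometry/unitary for a right Galois object (6.4.1–6.4.2 of \cite{DC}): it is obtained from the canonical implementation by flipping legs and taking an adjoint in the appropriate corner. Concretely, starting from $V_{jj}^i:\s H_{ij}\tens\s H_{jj}\to\s H_{ij}\tens\s H_{jj}$ — wait, more usefully from $W_{ij}^j:\s H_{ij}\tens\s H_{jj}\to\s H_{ij}\tens\s H_{ij}$, the claim is that the Galois unitary is $\Sigma_{ij\tens jj}(W_{ij}^j)^*\Sigma_{ij\tens ij}$; I would verify this by writing out how the Galois unitary acts on $\Lambda_{\varphi_{ij}}\tens\Lambda_{\varphi_{jj}}$-type vectors and comparing with the defining relation $W^*(\Lambda\tens\Lambda)=(\Lambda\tens\Lambda)(\Delta(\cdot)(\cdot\tens 1))$ restricted and decomposed via $p_{ij}$, using Proposition \ref{prop35} to keep track of which corners appear. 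The main obstacle, and the only place where real work is needed beyond reindexing, is (5): one must carefully reconcile the normalization/leg conventions of the Galois (co)isometry in \cite{DC} with the conventions for $W$, $V$ fixed in Section 2 and in \ref{inifinproj}, and check that no scaling operator $\lambda^{it}$ or modular factor $U_{kl}$ intervenes — the relations in \ref{inifinproj}(1) and the formulas for $(W_{ik}^l)^*$ in terms of $J_{kl},\widehat J_{kl}$ are exactly the tools for this, but getting the flips $\Sigma_{ij\tens jj}$ versus $\Sigma_{ij\tens ij}$ in the right order is the delicate point. Everything else reduces to restricting the already-established structure of $\cal G$ to the corners $p_{ij}$.
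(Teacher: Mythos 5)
First, a remark on the ground truth: the paper gives no proof of this proposition at all. Section 3.4 is an expository recollection of De Commer's construction, and \ref{prop36} is stated as a known fact from \cite{DC} (and \cite{BC}); there is nothing in the source to compare your argument against line by line. Judged on its own terms, your overall strategy — decompose $\Delta$ along the central projections $p_{ab}$, read off coassociativity of the parts $\Delta_{ij}^k$ from the groupoid coassociativity, restrict the invariance axiom $T(x)=(\id\tens\varphi)\Delta(x)$ to the corners, and identify the eight-unitary pieces by checking them on G.N.S.\ vectors — is exactly the route taken in the cited references, and items (1) and (4) go through essentially as you describe. One small point on (1): to get $(\id\tens\varphi_{ii})\Delta_{ii}^i(x)=\varphi_{ii}(x)1_{M_{ii}}$ you need not just that $T$ has range in $\alpha(N)\subset{\cal Z}(M)$ but that $T$ is an $\alpha(N)$-bimodule map, so that $T(x)=T(\alpha(\varepsilon_i)x\alpha(\varepsilon_i))=\alpha(\varepsilon_i)T(x)\alpha(\varepsilon_i)$ is a scalar multiple of $\alpha(\varepsilon_i)$ alone; only then does $\epsilon\circ\alpha^{-1}$ return that same scalar. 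Likewise, implementing the coproduct ($\Delta_{ii}^i(x)=(W_{ii}^i)^*(1\tens x)W_{ii}^i$) does not by itself identify $W_{ii}^i$ as \emph{the} left regular representation; the G.N.S.-level formula $W^*(\Lambda(x)\tens\Lambda(y))=(\Lambda\tens\Lambda)(\Delta(y)(x\tens1))$ must be verified on $\s H_{ii}\tens\s H_{ii}$, which you only invoke later for (5).

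The genuine gap is in (2) and (3). You justify the Galois-object claims by appealing to De Commer's classification ("${\cal G}$ is of the form ${\cal G}_{\QH,\QG}$, so $M_{ij}$ \emph{is} the Galois object $N$ or $O$"), but that classification is proved \emph{by} establishing the statements of \ref{prop36} from the groupoid axioms — the corner decomposition, the invariance of the restricted weights, and crucially the fact that $M_{ij}\rtimes_{\Delta_{ij}^j}\QG_j$ is a type I factor. Your proposal never derives ergodicity, integrability, or the type-I condition from the axioms of Definition \ref{defMQG}; the last of these is the only substantive analytic content of (2)–(3) (it is ultimately equivalent to the corner unitaries $V_{jj}^i$, $W_{ii}^j$ of the pseudo-multiplicative partial isometry being genuine unitaries between the indicated corners $\s H_{il}\tens\s H_{jl}$, which is where Proposition \ref{prop35} and the initial/final projections of \ref{inifinproj} enter). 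As written, the argument for (2)–(3) is circular. Item (5) is a plausible plan but is not carried out, and, as you note yourself, the flip and adjoint conventions there are precisely where an unverified sketch can silently go wrong.
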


\begin{defin}
A measured quantum groupoid $(\GC^2,M,\alpha,\beta,\Delta,T,T',\epsilon)$ such that the sour\-ce and 
target maps have range in ${\cal Z}(M)$ and generate a copy of $\GC^4$ will be denoted by ${\cal G}
_{\QG_1,\QG_2}$, where $\QG_i=(M_{ii},\Delta_{ii}^i,\varphi_{ii},\psi_{ii})$ (cf.\ \ref{prop36}) and will be called a colinking measured quantum groupoid.
\end{defin}

\begin{defin}
Let $\QG$ and $\QH$ be two locally compact quantum groups. We say that $\QG$ and $\QH$ are monoidally equivalent if there exists a colinking measured quantum groupoid ${\cal G}_{\QG_1,\QG_2}$ between two locally compact quantum groups $\QG_1$ and $\QG_2$ such that $\QH$ $($resp.\ $\QG)$ is isomorphic to $\QG_1$ $($resp.\ $\QG_2)$.\index[symbol]{g@${\cal G}_{\QG_1,\QG_2}$, colinking measured quantum groupoid}
\end{defin}

Let us recall the following necessary and sufficient condition for a colinking measured quantum groupoid to be regular.

\begin{thm}\label{theo7}(cf.\ 2.45 \cite{BC})
Let ${\cal G}_{\QG_1,\QG_2}$ be a colinking measured quantum groupoid associated with two monoidally equivalent locally compact quantum groups $\QG_1$ and $\QG_2$. The groupoid ${\cal G}_{\QG_1,\QG_2}$ is  regular if, and only if, the quantum groups $\QG_1$ and $\QG_2$ are regular.
\end{thm}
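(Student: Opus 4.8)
The plan is to reduce the regularity of ${\cal G}_{\QG_1,\QG_2}$ to a finite block computation along the decomposition $\s H=\bigoplus_{i,j}\s H_{ij}$. By Definition~\ref{DefRegMQG}, ${\cal G}_{\QG_1,\QG_2}$ is regular precisely when $\K_{\beta}={\cal C}(V)$, where ${\cal C}(V)=[(\id\tens\omega)(\Sigma V)\,;\,\omega\in\B(\s H)_*]$. By Proposition~\ref{prop35} and the ensuing notation, $V$ is the sum of the eight unitaries $V_{jl}^i\colon\s H_{il}\tens\s H_{jl}\to\s H_{ij}\tens\s H_{jl}$; since $\Sigma V_{jl}^i$ maps $\s H_{il}\tens\s H_{jl}$ into $\s H_{jl}\tens\s H_{ij}$, slicing on the second leg shows that each $(\id\tens\omega)(\Sigma V)$ is block‑diagonal with respect to $\s H=\bigoplus_{l}(\s H_{1l}\oplus\s H_{2l})$ and that, for each $l$, its $(\s H_{1l}\oplus\s H_{2l})$‑corner runs over the linear span (in $i,j$) of the blocks
\[
{\cal C}_{jl}^{i}:=[(\id\tens\omega)(\Sigma V_{jl}^i)\,;\,\omega\in\B(\s H)_*]\subseteq\B(\s H_{il},\s H_{jl}).
\]
The matching corner of $\K_{\beta}$ is $\K(\s H_{1l}\oplus\s H_{2l})=\bigoplus_{i,j}\K(\s H_{il},\s H_{jl})$. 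Hence ${\cal G}_{\QG_1,\QG_2}$ is regular if and only if ${\cal C}_{jl}^{i}=\K(\s H_{il},\s H_{jl})$ for all $i,j,l\in\{1,2\}$.

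First I would settle the easy implication. Assume ${\cal G}_{\QG_1,\QG_2}$ regular and fix $i$. Compressing the equality $\K_{\beta}={\cal C}(V)$ by $p_{ii}$ on both sides gives $\K(\s H_{ii})=p_{ii}\K_{\beta}p_{ii}=p_{ii}{\cal C}(V)p_{ii}={\cal C}_{ii}^{i}={\cal C}(V_{ii}^i)$; by Proposition~\ref{prop36}(1), $V_{ii}^i$ is the right regular representation of the locally compact quantum group $\QG_i$ on its \GNS space $\s H_{ii}$, so this is exactly the regularity of $\QG_i$.

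For the converse, assume $\QG_1$ and $\QG_2$ regular, i.e.\ ${\cal C}_{11}^{1}=\K(\s H_{11})$ and ${\cal C}_{22}^{2}=\K(\s H_{22})$, and it remains to prove fullness of the six other blocks. Here one exploits the structural relations recorded after the introduction of the $V_{jl}^i$, $W_{ik}^j$ and $\widetilde V_{ki}^j$: the identities $W_{ik}^j=\Sigma_{ij\tens ik}(U_{ji}\tens 1)V_{ik}^j(U_{jk}^*\tens 1)\Sigma_{ik\tens kj}$ and $\widetilde V_{ki}^j=(U_{ik}\tens U_{ij})W_{ik}^j(U_{ik}^*\tens U_{kj}^*)$ (the $U_{kl}$ being unitary) show that fullness of a given block of ${\cal C}$ is insensitive to replacing $V$ by $W$ or $\widetilde V$; slicing the pentagonal equations such as $(V_{jk}^i)_{12}(V_{kl}^i)_{13}(V_{kl}^j)_{23}=(V_{kl}^j)_{23}(V_{jl}^i)_{12}$ on the third leg writes closed spans of elements of one block as products of elements of neighbouring blocks (and conversely); and the commutation relations $(V_{kj}^l)_{23}(W_{ll'}^j)_{12}=(W_{ll'}^k)_{12}(V_{kj}^{l'})_{23}$ provide the remaining bridges. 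Feeding the two known full blocks through these relations, one obtains first the fullness of ${\cal C}_{11}^{2}={\cal C}(V_{11}^2)$ and ${\cal C}_{22}^{1}={\cal C}(V_{22}^1)$ — the blocks carried by the canonical implementations of the Galois objects $M_{21}$ and $M_{12}$ (Proposition~\ref{prop36}(2)) — and then, iterating, the fullness of the four remaining mixed blocks ${\cal C}_{21}^{1},{\cal C}_{21}^{2},{\cal C}_{12}^{1},{\cal C}_{12}^{2}$. All eight blocks being full, $\K_{\beta}={\cal C}(V)$ and ${\cal G}_{\QG_1,\QG_2}$ is regular. (One may shorten this step by also invoking that $\widehat{\cal G}$ is regular if and only if ${\cal G}$ is, together with $\widehat{\QG_i}$ regular $\iff\QG_i$ regular.)

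I expect the main difficulty to be exactly this converse: organising the bootstrapping so that each off‑diagonal block is produced only from blocks already shown to be full, keeping careful track of the index bookkeeping on $\s H=\bigoplus_{i,j}\s H_{ij}$, and working throughout with the genuine unitaries $V_{jl}^i$ (and their $W$‑, $\widetilde V$‑counterparts) rather than with the partial isometry $V$, for which the usual multiplicative‑unitary manipulations are not directly available.
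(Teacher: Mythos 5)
Note first that the paper does not actually prove this theorem: it is quoted from Theorem 2.45 of \cite{BC}, so there is no internal proof here to compare with. That said, your reduction is the right one: the eight blocks ${\cal C}_{jl}^{i}\subseteq\B(\s H_{il},\s H_{jl})$ occupy pairwise orthogonal corners of $\B(\s H)$, so ${\cal C}(V)$ does split as you claim, your identification of the corresponding corner of $\K_{\beta}$ is correct, and the forward implication by compression with $p_{ii}$, using \ref{prop36}, is complete.

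The converse, however, contains a genuine gap. Regularity is the \emph{equality} ${\cal C}_{jl}^{i}=\K(\s H_{il},\s H_{jl})$, and the bootstrap you describe can only deliver the inclusion $\supseteq$. Indeed, a one-sided product relation $[{\cal C}_{bl}^{\,j}\,{\cal C}_{jl}^{\,i}]\subseteq{\cal C}_{bl}^{\,i}$ obtained by slicing the pentagonal equations, combined with ${\cal C}_{jl}^{\,j}=\K(\s H_{jl})$ and the density of $[({\cal C}_{jl}^{i})^{*}\s H_{jl}]$ in $\s H_{il}$ (which does follow from unitarity of $V_{jl}^{i}$), gives $\K(\s H_{il},\s H_{jl})=[\K(\s H_{jl})\,{\cal C}_{jl}^{i}]\subseteq{\cal C}_{jl}^{i}$ --- that is, semi-regularity of the block, not fullness. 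To obtain the reverse inclusion ${\cal C}_{jl}^{i}\subseteq\K(\s H_{il},\s H_{jl})$ one needs relations pointing the other way, e.g.\ the equality ${\cal C}_{jl}^{i}=[{\cal C}_{jl}^{\,j}\,{\cal C}_{jl}^{\,i}]$ or a compactness bound on $[({\cal C}_{jl}^{i})^{*}{\cal C}_{jl}^{i}]$; since semi-regularity does not imply regularity for multiplicative unitaries, this half cannot be obtained for free, and your sketch never states, let alone proves, any such two-sided relation. Moreover, because $V$ is only a partial isometry, even the one-sided algebra relations among the blocks must be re-derived from the component pentagonal and commutation identities rather than quoted from \cite{BS2}. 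Establishing these precise product relations (with the equalities needed for the upper bound) and ordering the deductions so that the two known diagonal blocks propagate \emph{both} inclusions --- first to the Galois blocks ${\cal C}_{11}^{2},{\cal C}_{22}^{1}$, then to the remaining diagonal blocks ${\cal C}_{12}^{1},{\cal C}_{21}^{2}$, and finally to the last two corners --- is exactly the content of the lemmas preceding 2.45 in \cite{BC}, and it is precisely the part your proposal leaves to ``iteration'' and ``index bookkeeping''.
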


Let $(S,\delta)$ be the weak Hopf $\Cstar$-algebra associated with $\cal G$. Note that 
\[
p_{ij}=\alpha(\varepsilon_i)\beta(\varepsilon_j)\in{\cal Z}(\M(S)),\quad\text{for all } i,j=1,2.
\]

\begin{nbs}\label{not19}
Let us recall the notations below (cf.\ 2.26 \cite{BC}).
\begin{itemize}
	\item Let $S_{ij}:=p_{ij}S$, for $i,j=1,2$. Then, $S_{ij}$ is a $\Cstar$-algebra (actually a closed two-sided ideal) of $S$ weakly dense in $M_{ij}$.\index[symbol]{sca@$S_{ij}$}
	\item For $i,j=1,2$, we have the following faithful non-degenerate *-representations of the C*-algebra $S_{ij}$:
	\[
	L_{ij}:S_{ij}\rightarrow\B(\s H_{ij}),\; x\mapsto \restr{L(x)}{\s H_{ij}};\quad
	R_{ij}:S_{ij}\rightarrow\B(\s H_{ji}),\; x\mapsto U_{ij}L_{ij}(x)U_{ji}.
	\]
	\item For $i,j,k=1,2$, let 
	$
	\iota_{ij}^k:\M(S_{ik}\tens S_{kj})\rightarrow\M(S\tens S)
	$
	\index[symbol]{ib@$\iota_{ij}^k$}be the unique strictly continuous extension of the inclusion map $S_{ik}\tens S_{kj}\subset S\tens S$ satisfying $\iota_{ij}^k(1_{S_{ik}\tens S_{kj}})=p_{ik}\tens p_{kj}$. 
	\item Let $\delta_{ij}^k:S_{ij}\rightarrow\M(S_{ik}\tens S_{kj})$ be the unique *-homomorphism such that\index[symbol]{db@$\delta_{ij}^k$} 
\[
\iota_{ij}^k\circ\delta_{ij}^k(x)=(p_{ik}\tens p_{kj})\delta(x),\quad\text{for all }x\in S_{ij}.\qedhere
\]
\end{itemize}
\end{nbs}

With these notations, we have:

\begin{prop}\label{prop5bis}(cf.\ 7.4.13, 7.4.14 \cite{DC}, 2.27 \cite{BC})
Let $i,j,k,l=1,2$.
\begin{enumerate}
	\item $(\delta_{ik}^l\tens\id_{S_{kj}})\delta_{ij}^k=(\id_{S_{il}}\tens\delta_{lj}^k)				\delta_{ij}^l$.
	\item $\delta_{ij}^k(x)=(W_{ik}^j)^*(1_{\s H_{ik}}\tens x)W_{ik}^j=V_{kj}^i(x\tens 1_{\s H_{kj}})(V_{kj}^i)^*$, for all $x\in S_{ij}$.
	\item $[\delta_{ij}^k(S_{ij})(1_{S_{ik}}\tens S_{kj})]=S_{ik}\tens S_{kj}=[\delta_{ij}				^k(S_{ij})(S_{ik}\tens 1_{S_{kj}})]$. In particular, we have 
	\[
	S_{kj}=[(\id_{S_{ik}}\tens\omega)\delta_{ij}^k(x)\,;\,x\in S_{ij},\,\omega\in\B(\s H_{kj})_*].
	\]
	\item The pair $(S_{jj},\delta_{jj}^j)$ is the Hopf C*-algebra associated with $\QG_j$.\qedhere
\end{enumerate}
\end{prop}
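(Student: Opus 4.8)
The plan is to derive all four statements by ``slicing'' the corresponding facts about the full weak Hopf $\Cstar$-algebra $(S,\delta)$ recalled in \S\ref{WHC*A} with the central projections $p_{ij}\in\M(S)$, reading off the resulting pieces through Proposition \ref{prop35}. As a preliminary I would record the bookkeeping: the $p_{ij}$ are pairwise orthogonal central projections of $\M(S)$ with $\sum_{i,j}p_{ij}=1$, and from $\Delta(p_{ab})=p_{a1}\tens p_{1b}+p_{a2}\tens p_{2b}$ one gets $(p_{ik}\tens p_{kj})\,\delta(p_{ab})=\delta_a^i\,\delta_b^j\,(p_{ik}\tens p_{kj})$ for all indices. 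Two consequences will be used repeatedly: $(p_{ik}\tens p_{kj})\delta(x)=\iota_{ij}^k\bigl(\delta_{ij}^k(p_{ij}x)\bigr)$ for every $x\in S$, and $\delta_{ij}^k(1_{S_{ij}})=1_{\M(S_{ik}\tens S_{kj})}$.

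For item (2) I would start from the identities $\delta(x)=W^*(1\tens x)W=V(x\tens 1)V^*$, which hold for $x\in M$ and in particular for $x\in S_{ij}\subset M$. Since $p_{ij}$ is central, $1\tens x=(1\tens p_{ij})(1\tens x)(1\tens p_{ij})$; substituting this and applying Proposition \ref{prop35} (second line, with $\delta_j^j=1$) to rewrite $(1\tens p_{ij})W(p_{ik}\tens p_{kj})$ as $W_{ik}^j=(p_{ik}\tens p_{ij})W(p_{ik}\tens p_{kj})$ and its adjoint, one obtains $(p_{ik}\tens p_{kj})\delta(x)=(W_{ik}^j)^*(1_{\s H_{ik}}\tens x)W_{ik}^j$; composing with $\iota_{ij}^k$ and matching against the defining relation of $\delta_{ij}^k$ gives the $W$-formula, and the first line of Proposition \ref{prop35} yields the $V$-formula in the same way. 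Then item (1) follows by applying the coassociativity $(\delta\tens\id)\delta=(\id\tens\delta)\delta$ on $\M(S)$ to $x\in S_{ij}$ and multiplying both sides by $p_{il}\tens p_{lk}\tens p_{kj}$: the preliminary identity turns the left-hand side into $(\delta_{ik}^l\tens\id_{S_{kj}})\delta_{ij}^k(x)$ and, symmetrically, the right-hand side into $(\id_{S_{il}}\tens\delta_{lj}^k)\delta_{ij}^l(x)$.

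For item (3) I would multiply the density relations $[\delta(S)(1_S\tens S)]=q_{\beta\alpha}(S\tens S)=[\delta(S)(S\tens 1_S)]$ from \S\ref{WHC*A} by $p_{ik}\tens p_{kj}$; using $(p_{ik}\tens p_{kj})\delta(x)=\iota_{ij}^k(\delta_{ij}^k(p_{ij}x))$ on the left and $(p_{ik}\tens p_{kj})(S\tens S)=S_{ik}\tens S_{kj}$ on the right, one reads off $[\delta_{ij}^k(S_{ij})(1_{S_{ik}}\tens S_{kj})]=S_{ik}\tens S_{kj}$ and, in the same way, $[\delta_{ij}^k(S_{ij})(S_{ik}\tens 1_{S_{kj}})]=S_{ik}\tens S_{kj}$; the ``in particular'' clause is then obtained by slicing the first equality against $\omega\in\B(\s H_{kj})_*$. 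Finally, for item (4): the case $i=k=j$ of item (2) gives $\delta_{jj}^j(x)=V_{jj}^j(x\tens 1)(V_{jj}^j)^*$, which is exactly the coproduct of the Hopf $\Cstar$-algebra attached (as in \S 2.1) to the right regular representation $V_{jj}^j$ of the locally compact quantum group $\QG_j$ (Proposition \ref{prop36}(1)); and the slicing formulas for $V$ of Proposition \ref{prop35}, together with Proposition \ref{prop36} (the blocks $(\omega\tens\id)(V_{jj}^a)$ with $a\neq j$ also span $S_{jj}$, since $V_{jj}^a$ is the canonical implementation of a right Galois object for $\QG_j$), identify $S_{jj}=p_{jj}S$ with $[(\omega\tens\id)(V_{jj}^j)\,;\,\omega\in\B(\s H)_*]$, so the two Hopf $\Cstar$-algebra structures coincide.

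I expect the main obstacle to be the index and representation bookkeeping in item (2): one must track carefully whether an operator is viewed on $\s H$ through $L$ or on the subspaces $\s H_{ij}$ through the $L_{ij}$, and handle the strictly continuous extensions of $\delta$ and of the maps $\iota_{ij}^k$ to multiplier algebras when cutting down to the ideals $S_{ij}$. Conceptually, however, nothing is needed beyond Proposition \ref{prop35}, the coassociativity and density properties of $(S,\delta)$ listed in \S\ref{WHC*A}, and Proposition \ref{prop36}.
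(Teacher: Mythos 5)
Your proof is correct and is essentially the intended argument: the paper itself gives no proof of this proposition (it is quoted from \cite{DC} and \cite{BC}), and the standard derivation is exactly the one you carry out — cut the global relations for $(S,\delta)$ (the formulas $\delta(x)=W^*(1\tens x)W=V(x\tens 1)V^*$, coassociativity, and the density relations of \S\ref{WHC*A}) down by the central projections $p_{ik}\tens p_{kj}$ and read off the blocks via Proposition \ref{prop35}. The only step needing input beyond this bookkeeping is item 4, where, as you note, identifying the corner $p_{jj}S=[(\omega\tens\id)(V_{jj}^i)\,;\,i=1,2,\ \omega\in\B(\s H)_*]$ with $[(\omega\tens\id)(V_{jj}^j)\,;\,\omega\in\B(\s H)_*]$ requires De Commer's results on the Galois unitaries $V_{jj}^i$, $i\neq j$; note also that your slicing of item 3 by $\id_{S_{ik}}\tens\omega$, $\omega\in\B(\s H_{kj})_*$, correctly yields $S_{ik}$ (the index $S_{kj}$ in the displayed ``in particular'' clause appears to be a misprint, cf.\ the analogous statement in \ref{actprop}).
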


\section{C*-algebras acted upon by measured quantum groupoids}

\setcounter{thm}{0}

\numberwithin{thm}{subsection}
\numberwithin{prop}{subsection}
\numberwithin{lem}{subsection}
\numberwithin{cor}{subsection}
\numberwithin{propdef}{subsection}
\numberwithin{nb}{subsection}
\numberwithin{nbs}{subsection}
\numberwithin{rk}{subsection}
\numberwithin{rks}{subsection}
\numberwithin{defin}{subsection}
\numberwithin{ex}{subsection}
\numberwithin{exs}{subsection}
\numberwithin{noh}{subsection}
\numberwithin{conv}{subsection}

	\subsection{Continuous actions, crossed product and biduality}\label{subsectionBiduality}

In this section, we fix a measured quantum groupoid ${\cal G}=(N,M,\alpha,\beta,\Delta,T,T',\epsilon)$ on the finite-dimensional basis $N=\bigoplus_{1\leqslant l\leqslant k}\,{\rm M}_{n_l}(\GC)$ and we use all the notations introduced in \S\S\ \ref{MQGfinitebasis}, \ref{WHC*A}. In the following, we recall the definitions, notations and results of \S\S\ 3.1, 3.2.1, 3.2.2 and 3.3.1 \cite{BC} (see also \cite{C} chapter 4) and those of \S 5.3 \cite{C2}.

		\subsubsection{Notion of actions of measured quantum groupoids on a finite basis}\label{CoAction}

\begin{lem}\label{lem19bis}
Let $A$ and $B$ be two $\Cstar$-algebras, $f:A\rightarrow\M(B)$ a *-homomorphism and $e\in\M(B)$. The following statements are equivalent:
\begin{enumerate}[label=(\roman*)]
	\item there exists an approximate unit $(u_{\lambda})_{\lambda}$ of $A$ such that $f(u_{\lambda})\rightarrow e$ with respect to the strict topology;
	\item $f$ extends to a strictly continuous *-homomorphism $f:\M(A)\rightarrow\M(B)$, necessarily unique, such that $f(1_A)=e$;
	\item $[f(A)B]=eB$.
\end{enumerate}
In that case, $e$ is a self-adjoint projection, for all approximate unit $(v_{\mu})_{\mu}$ of $A$ we have $f(v_{\mu})\rightarrow e$ with respect to the strict topology and $[Bf(A)]=Be$.
\end{lem}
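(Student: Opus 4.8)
The plan is to prove the cycle of implications $(i)\Rightarrow(iii)\Rightarrow(ii)\Rightarrow(i)$, and then to extract the supplementary statements. First I would address $(i)\Rightarrow(iii)$. Given an approximate unit $(u_\lambda)_\lambda$ of $A$ with $f(u_\lambda)\to e$ strictly in $\M(B)$, I want to show $[f(A)B]=eB$. For the inclusion $[f(A)B]\subset eB$: for any $a\in A$ one has $u_\lambda a\to a$ in norm, hence $f(u_\lambda)f(a)\to f(a)$ in norm; since $f(u_\lambda)b\to eb$ for each $b\in B$, a standard $3\varepsilon$-argument (using that $\|f(u_\lambda)\|\leqslant 1$, or at least is bounded) gives $f(a)b=\lim_\lambda f(u_\lambda)f(a)b\in [eB]$, so $f(a)b\in eB$ (the latter is already closed since $e$ is a contraction — or one takes closures throughout). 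Conversely, for $b\in B$, write $b=\lim b_n$ with $b_n\in[f(A)B]$ is not immediate; instead use that $eb=\lim_\lambda f(u_\lambda)b$ and each $f(u_\lambda)b\in f(A)B$, so $eb\in[f(A)B]$. Hence $eB\subset[f(A)B]$ and equality follows.

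Next, $(iii)\Rightarrow(ii)$. Assuming $[f(A)B]=eB$, I would first check $e=e^*=e^2$. Since $e B = [f(A)B]$ and $f(A)^*=f(A)$ (as $f$ is a $*$-homomorphism and $A$ is a $*$-algebra), the space $[f(A)B]$ is closed under taking adjoints composed with... more carefully: $[Bf(A)]=[f(A)^*B^*]^* = [f(A)B]^* = (eB)^* = Be^*$, and one shows $[Bf(A)]=[f(A)B]$ by a bounded-approximate-unit argument on $A$ again (any such approximate unit satisfies $au_\lambda\to a$ too). Then $Be^*=eB$; evaluating against an approximate unit of $B$ forces $e$ to be a projection by a routine argument (multiply $eB=Be^*$ on suitable sides and pass to limits, or observe $e\in\M(B)$ with $eB=Be^*\subset eB$, so $e^*e\in\M(B)$ fixes $eB$, giving $e^*e=e$ hence $e=e^*$ and $e^2=e$). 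Now to build the extension: for $m\in\M(A)$ and $b\in B$, write $b=eb'$ with $b'\in B$, then $b=\lim f(a_n)c_n$ with $a_n\in A,\ c_n\in B$, and define $f(m)b := \lim f(ma_n)c_n$; one checks this is well-defined, bounded, and yields a strictly continuous $*$-homomorphism $\M(A)\to\M(B)$ with $f(1_A)b=\lim f(a_n)c_n = b$ for $b\in eB=B$-image... wait, $eB$ need not be all of $B$, so rather $f(1_A)=e$ because $f(1_A)b = \lim f(u_\lambda)b = eb$. Uniqueness is clear from strict density of $A$ in $\M(A)$.

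Finally $(ii)\Rightarrow(i)$ is immediate: if $f$ extends strictly continuously with $f(1_A)=e$, then for any approximate unit $(u_\lambda)_\lambda$ of $A$ one has $u_\lambda\to 1_A$ strictly in $\M(A)$, so $f(u_\lambda)\to f(1_A)=e$ strictly in $\M(B)$. This simultaneously proves the ``in that case'' clause about arbitrary approximate units. The remaining supplementary claims ($e$ a self-adjoint projection, $[Bf(A)]=Be$) I would then read off from the work done in $(iii)\Rightarrow(ii)$: the projection property was established there, and $[Bf(A)]=Be^*=Be$.

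\textbf{Main obstacle.} The one genuinely delicate point is establishing that $e$ is a self-adjoint projection and that $[Bf(A)]=[f(A)B]$ purely from $(iii)$, since a priori $f$ is only a (possibly degenerate, non-contractive on $\M$) $*$-homomorphism and $e$ is just some multiplier. The clean way is: from $[f(A)B]=eB$ one gets, applying $*$, that $[Bf(A)]=Be^*$; then using a bounded approximate unit $(u_\lambda)$ of $A$, $f(u_\lambda)b\to eb$ and $bf(u_\lambda)\to be^*$, but also $f(u_\lambda)f(u_\mu)\to f(u_\mu)$ in norm along $\lambda$, which upon taking the appropriate iterated limits forces $e^2=e$ and $ee^*=e=e^*e$, whence $e=e^*$. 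Everything else is the standard ``extend a nondegenerate-onto-a-corner $*$-homomorphism to the multiplier algebra'' machinery, which the paper has already invoked in essentially identical form.
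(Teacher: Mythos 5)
Your steps (i)$\Rightarrow$(iii) and (ii)$\Rightarrow$(i) are essentially fine, except that in the first you need $eB$ to be closed, and "$e$ is a contraction" does not give that; you should first extract $e=e^{*}=e^{2}$ directly from (i) (from $ef(a)b=\lim_{\lambda}f(u_{\lambda}a)b=f(a)b$ and a second passage to the limit, plus self-adjointness of the $u_{\lambda}$), after which $eB$ is closed and your two inclusions do yield (iii). The genuine problem is in (iii)$\Rightarrow$(ii), at exactly the point you flag as the main obstacle. The identity $[Bf(A)]=[f(A)B]$ that you propose to prove by an approximate-unit argument is false: the first is a closed left ideal and the second a closed right ideal, and they already differ for $A=\GC$, $B={\rm M}_{2}(\GC)$, $f(1)=e_{11}$, where $[f(A)B]$ is the first row and $[Bf(A)]$ is the first column. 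Hence your derivation of $Be^{*}=eB$ and of the projection property of $e$ from (iii) collapses. Worse, no argument can repair this step as literally stated: with the same $f$, the non-projection $e=e_{11}+e_{12}$ satisfies $eB=e_{11}{\rm M}_{2}(\GC)=[f(A)B]$, so (iii) holds for this $e$ while (i) and (ii) fail for it (every approximate unit gives $f(u_{\lambda})\rightarrow e_{11}\neq e$). So condition (iii) alone neither determines $e$ nor forces it to be a projection; the implication (iii)$\Rightarrow$(ii) requires the additional hypothesis that $e$ is a self-adjoint projection (which is surely the intended reading, and is what the "in that case" clause records).

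Once that hypothesis is granted, the efficient route is (iii)$\Rightarrow$(i) for an \emph{arbitrary} approximate unit $(u_{\lambda})_{\lambda}$: since $f(u_{\lambda})f(a)b=f(u_{\lambda}a)b\rightarrow f(a)b$ in norm and $\|f(u_{\lambda})\|\leqslant 1$, the net $(f(u_{\lambda}))_{\lambda}$ is an approximate identity for $[f(A)B]=eB$, so $f(u_{\lambda})eb\rightarrow eb$; and $f(u_{\lambda})(1-e)b=0$ outright, because $\|f(u_{\lambda})(1-e)b\|^{2}=\|b^{*}(1-e)f(u_{\lambda}^{2})(1-e)b\|$ with $f(u_{\lambda}^{2})(1-e)b\in f(A)B\subset eB$ and $(1-e)e=0$. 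Taking adjoints gives $bf(u_{\lambda})\rightarrow be$, which yields (i), the "for all approximate units" clause and $[Bf(A)]=Be$ simultaneously. Finally, in your construction of the extension you leave the key estimate unproved: well-definedness and boundedness of $\sum_{i}f(a_{i})c_{i}\mapsto\sum_{i}f(ma_{i})c_{i}$ rests on the matrix inequality $(a_{i}^{*}m^{*}ma_{j})_{ij}\leqslant\|m\|^{2}(a_{i}^{*}a_{j})_{ij}$ and positivity of $f$ on matrix algebras; this is standard, but it, rather than the formula $f(1_{A})=e$, is the substantive content of that step.
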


\begin{defin}\label{defactmqg}
An action of $\cal G$ on a $\Cstar$-algebra $A$ is a pair $(\beta_A,\delta_A)$ consisting of a non-degenerate *-homomorphism $\beta_A:N^{\rm o}\rightarrow\M(A)$ and a faithful *-homomorphism $\delta_A:A\rightarrow\M(A\tens S)$ such that:
\begin{enumerate}
\item $\delta_A$ extends to a strictly continuous *-homomorphism $\delta_A:\M(A)\rightarrow\M(A\tens S)$ such that $\delta_A(1_A)=q_{\beta_A\alpha}$ (cf.\ \ref{ProjectionCAlg});
\item $(\delta_A\tens\id_S)\delta_A=(\id_A\tens\delta)\delta_A$;
$\phantom{q_{\beta_A\alpha}\ref{rk15}}$ 
\item $\delta_A(\beta_A(n^{\rm o}))=q_{\beta_A\alpha}(1_A\tens\beta(n^{\rm o}))$, for all $n\in N$.
\end{enumerate}
We say that the action $(\beta_A,\delta_A)$ is strongly (resp.\ weakly) continuous if we have
\[
[\delta_A(A)(1_A\tens S)]=q_{\beta_A\alpha}(A\tens S) \quad \text{(resp.\ }A=[(\id_A\tens\omega)\delta_A(a)\,;\, a\in A,\, \omega\in\B(\s H)_*]\text{)}.
\]
A $\cal{G}$-C*-algebra is a triple $(A,\beta_A,\delta_A)$ consisting of a C*-algebra $A$ endowed with a strongly continuous action $(\beta_A,\delta_A)$.
\end{defin}

\begin{rks}\label{rk15}
\begin{enumerate}
\item By \ref{lem19bis}, the condition 1 is equivalent to requiring that for some (and then any) approximate unit $(u_{\lambda})$ of $A$, we have $\delta_A(u_{\lambda})\rightarrow q_{\beta_A\alpha}$ with respect to the strict topology of $\M(A\tens S)$. It is also equivalent to $[\delta_A(A)(A\tens S)]=q_{\beta_A\alpha}(A\tens S)$.
	\item Condition 1 implies that the *-homomorphisms $\delta_A\tens\id_S$ and $\id_A\tens\delta$ extend uniquely to strictly continuous *-homomorphisms from $\M(A\tens S)$ to $\M(A\tens S\tens S)$ such that 
$
(\delta_A\tens\id_S)(1_{A\tens S})=q_{\beta_A\alpha,12}
$
and 
$
(\id_A\tens\delta)(1_{A\tens S})=q_{\beta\alpha,23}.
$ In particular, condition 2 does make sense and we denote by $\delta_A^2:=(\delta_A\tens\id_S)\delta_A:A\rightarrow\M(A\tens S\tens S)$\index[symbol]{dca@$\delta_A^2$, iterated coaction map} the iterated coaction map.
\item An element $m\in\M(A)$ is said to be $\delta_A$-invariant if $\delta_A(m)=q_{\beta_A\alpha}(m\tens 1_S)$. In that case, we have $[\beta_A(n^{\rm o}),\, m]=0$ for all $n\in N$ (cf.\ 6.1.23 \cite{C2}). In particular, we have $[q_{\beta_A\alpha},\,m\tens 1_S]=0$.\qedhere
\end{enumerate}
\end{rks}

\begin{exs}\label{ex2} Let us give two basic examples.
\begin{itemize}
\item $(S,\beta,\delta)$ is a $\cal{G}$-$\Cstar$-algebra.
\item Let $\beta_{N^{\rm o}}:=\id_{N^{\rm o}}$. Let $\delta_{N^{\rm o}}:N^{\rm o}\rightarrow\M(N^{\rm o}\tens S)$ be the faithful unital *-homomorphism given by $\delta_{N^{\rm o}}(n^{\rm o}):=q_{\beta_{N^{\rm o}}\alpha}(1_{N^{\rm o}}\tens\beta(n^{\rm o}))$ for all $n\in N$. Then, the pair $(\beta_{N^{\rm o}},\delta_{N^{\rm o}})$ is an action of $\cal{G}$ on $N^{\rm o}$ called the trivial action.\qedhere
\end{itemize}
\end{exs}

\begin{prop}\label{prop39}
Let $(\delta_A,\beta_A)$ be an action of $\cal G$ on $A$. We have the following statements:
\begin{enumerate}
\item the iterated coaction map $\delta_A^2$ extends uniquely to a strictly continuous *-homomorphism $\delta_A^2:\M(A)\rightarrow\M(A\tens S\tens S)$ such that $\delta_A^2(1_A)=q_{\beta_A\alpha,12}q_{\beta\alpha,23}$; moreover, we have $\delta_A^2(m)=(\delta_A\tens\id_S)\delta_A(m)=(\id_A\tens\delta)\delta_A(m)$ for all $m\in\M(A)$;
\item for all $n\in N$, we have $\delta_A(\beta_A(n^{\rm o}))=(1_A\tens\beta(n^{\rm o}))q_{\beta_A\alpha}$;
\item if $(\beta_A,\delta_A)$ is strongly continuous, then we have $[(1_A\tens S)\delta_A(A)]=(A\tens S)q_{\beta_A\alpha}$.\qedhere
\end{enumerate}
\end{prop}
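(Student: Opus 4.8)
The plan is to deduce all three statements from \ref{defactmqg}, the strict--continuity observations recorded in \ref{rk15}, and a couple of adjoint manipulations, so there is little genuinely new content to produce.

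For (1), I would first note that $\delta_A\colon\M(A)\rightarrow\M(A\tens S)$ is strictly continuous by hypothesis and that $\delta_A\tens\id_S\colon\M(A\tens S)\rightarrow\M(A\tens S\tens S)$ is strictly continuous by \ref{rk15}; hence their composite $\delta_A^2=(\delta_A\tens\id_S)\delta_A$ is a strictly continuous $*$-homomorphism on $\M(A)$ restricting to the iterated coaction map on $A$, and it is the unique such extension because $A$ is strictly dense in $\M(A)$. To identify $\delta_A^2(1_A)=(\delta_A\tens\id_S)(q_{\beta_A\alpha})$ I would expand $q_{\beta_A\alpha}\in\M(A\tens S)$ via the matrix-unit formula for this projection (cf.\ \ref{ProjectionCAlg}), apply $\delta_A\tens\id_S$ termwise, and use condition 3 of \ref{defactmqg}, namely $\delta_A(\beta_A(n^{\rm o}))=q_{\beta_A\alpha}(1_A\tens\beta(n^{\rm o}))$, to collapse the resulting expression to $q_{\beta_A\alpha,12}\,q_{\beta\alpha,23}$ (one may instead apply $\id_A\tens\delta$ to $q_{\beta_A\alpha}$ and use $\delta(\alpha(n))=q_{\beta\alpha}(\alpha(n)\tens 1_S)$; the two factors commute since $\alpha(N)$ and $\beta(N^{\rm o})$ commute). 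Finally, for the ``moreover'' clause, both $(\delta_A\tens\id_S)\delta_A$ and $(\id_A\tens\delta)\delta_A$ are strictly continuous $*$-homomorphisms on $\M(A)$ that agree on $A$ by condition 2 of \ref{defactmqg}, hence agree on all of $\M(A)$ by strict density.

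For (2), I would argue by taking adjoints in condition 3 of \ref{defactmqg}: applying that identity with $n^*$ in place of $n$ and then taking adjoints, while using that $\beta_A$ and $\beta$ are $*$-homomorphisms, that $(n^{\rm o})^*=(n^*)^{\rm o}$, and that $q_{\beta_A\alpha}=\delta_A(1_A)$ is a self-adjoint projection, immediately yields $\delta_A(\beta_A(n^{\rm o}))=(1_A\tens\beta(n^{\rm o}))q_{\beta_A\alpha}$. (Equivalently, $q_{\beta_A\alpha}$ commutes with $1_A\tens\beta(n^{\rm o})$ because $\alpha(N)$ and $\beta(N^{\rm o})$ have commuting ranges by condition 1 of \ref{defMQG}.) Statement (3) is handled the same way: strong continuity together with \ref{rk15} gives $[\delta_A(A)(1_A\tens S)]=q_{\beta_A\alpha}(A\tens S)$, and since $S$ and $\delta_A(A)$ are $*$-closed and $q_{\beta_A\alpha}$ is self-adjoint, taking the adjoint of this equality of closed subspaces gives $[(1_A\tens S)\delta_A(A)]=(A\tens S)q_{\beta_A\alpha}$ (using that $pB$ and $Bp$ are norm-closed for any projection $p\in\M(B)$ and C*-algebra $B$).

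The one step requiring real care is the evaluation $\delta_A^2(1_A)=q_{\beta_A\alpha,12}q_{\beta\alpha,23}$ in (1): it rests on the concrete description of the projections $q_{\beta_A\alpha}$ and $q_{\beta\alpha}$ over the finite-dimensional basis $N$ and on keeping track of which legs of $\M(A\tens S\tens S)$ each factor occupies. Everything else is a routine use of strict continuity, density, and the $*$-structure.
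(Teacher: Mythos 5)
Your proof is correct and follows exactly the argument one would expect: the paper itself states \ref{prop39} without proof (it is recalled from \S 3 of [BC]), and the standard verification is precisely your combination of the strict-continuity extensions of \ref{rk15}, the matrix-unit expansion of $q_{\beta_A\alpha}$ and $q_{\beta\alpha}$ from \ref{ProjectionCAlg} together with condition 3 of \ref{defactmqg} (or the analogous identity $\delta(\alpha(n))=q_{\beta\alpha}(\alpha(n)\tens 1_S)$) to evaluate $\delta_A^2(1_A)$, and the adjoint/commutation arguments for statements 2 and 3. No gaps.
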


Let us provide a more explicit definition of what an action of the dual measured quantum groupoid $\widehat{\cal G}$ on a $\Cstar$-algebra $B$ is.

\begin{defin}
An action of $\widehat{\cal G}$ on a $\Cstar$-algebra $B$ is a pair $(\alpha_B,\delta_B)$ consisting of a non-degenerate *-homomorphism $\alpha_B:N\rightarrow\M(B)$ and a faithful *-homomorphism $\delta_B:B\rightarrow\M(B\tens\widehat{S})$ such that:
\begin{enumerate}
\item $\delta_B$ extends to a strictly continuous *-homomorphism $\delta_B:\M(B)\rightarrow\M(B\tens\widehat{S})$ such that $\delta_B(1_B)=q_{\alpha_B\beta}$ (cf.\ \ref{ProjectionCAlg});
\item $(\delta_B\tens\id_{\widehat{S}})\delta_B=(\id_B\tens\widehat\delta)\delta_B$;
\item $\delta_B(\alpha_B(n))=q_{\alpha_B\beta}(1_B\tens\widehat\alpha(n))$, for all $n\in N$.
\end{enumerate}
We say that the action $(\alpha_B,\delta_B)$ is strongly (resp.\ weakly) continuous if we have  
\[
[\delta_B(B)(1_B\tens\widehat S)]=q_{\alpha_B\beta}(B\tens\widehat S) \quad \text{(resp.\ } B=[(\id_B\tens\omega)\delta_B(b)\,;\, b\in B,\, \omega\in\B(\s H)_*]\text{)}.
\]
If $(\delta_B,\alpha_B)$ is a strongly continuous action of $\widehat{\cal G}$ on $B$, we say that the triple $(B,\alpha_B,\delta_B)$ is a  $\widehat{\cal G}$-$\Cstar$-algebra. 
\end{defin}

\begin{thm}
If the quantum groupoid $\cal G$ is regular, then any weakly continuous action of $\cal G$ is strongly continuous.
\end{thm}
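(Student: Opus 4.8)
The plan is to mimic, in the groupoid setting, the proof of the corresponding fact for locally compact quantum groups (5.8 of \cite{BSV}), the extra work being the bookkeeping caused by the source/target maps and by the fact that $V$ and $W$ are only partial isometries. Throughout I use the weak Hopf C*-algebra $(S,\delta)$ of $\cal G$, the multiplicative partial isometries $V\in\M(\widehat S\tens S)$ and $W\in\M(S\tens\lambda(\widehat S))$, the identities $\delta(x)=V(x\tens 1)V^\ast=W^\ast(1\tens x)W$ for $x\in M$ (in particular for $x\in S$), the densities $S=[(\omega\tens\id)(V)\,;\,\omega\in\B(\s H)_\ast]$ and $\widehat S=[(\id\tens\omega)(V)]$, the pentagon and commutation relations of \ref{inifinproj}, \ref{prop34}, \ref{NonComRel}, and the regularity hypothesis ${\cal C}(V)=\K_\beta$, equivalently ${\cal C}(W)=\K_\alpha$ (cf.\ the remark after Definition \ref{DefRegMQG}). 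Fix a weakly continuous action $(\beta_A,\delta_A)$ of $\cal G$ on $A$ and set $E:=[\delta_A(A)(1_A\tens S)]\subseteq\M(A\tens S)$; the goal is $E=q_{\beta_A\alpha}(A\tens S)$.

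First I would show that $E$ is a C*-subalgebra of $\M(A\tens S)$ and that $E=[(1_A\tens S)\delta_A(A)]$; since $\delta_A(A)^*=\delta_A(A)$ and $S=S^*$ this last equality is exactly the self-adjointness of $E$, and it is the delicate part. Writing an element of $1_A\tens S$ as a leg-slice $(\id_A\tens\omega\tens\id_S)(1_A\tens V)$ and using coassociativity $(\delta_A\tens\id_S)\delta_A=(\id_A\tens\delta)\delta_A$ together with $\delta(x)=V(x\tens 1)V^\ast$, one rewrites $\delta_A(a)(1_A\tens s)$ as a slice of an expression of the form $(1_A\tens V)\,\delta_A(a')_{12}\,(1_A\tens V^\ast)$ and, by the pentagon equation, pushes the $V$'s past $\delta_A(a')$ so that they land on the left, thereby producing an element of $[(1_A\tens S)\delta_A(A)]$; the projection identities $V^\ast V=q_{\widehat\alpha\beta}$, $VV^\ast=q_{\beta\alpha}$ of \ref{inifinproj} and $\delta_A(1_A)=q_{\beta_A\alpha}$ (cf.\ \ref{rk15}) are carried along. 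Closure of $E$ under products then follows from $[\delta(S)(1_S\tens S)]=\delta(1_S)(S\tens S)=[\delta(S)(S\tens 1_S)]$ together with the self-adjointness just established.

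The heart of the argument is to show $E=q_{\beta_A\alpha}(A\tens S)$, and this is where regularity enters. By weak continuity $A=[(\id_A\tens\omega)\delta_A(A)\,;\,\omega\in\B(\s H)_\ast]$, so $E$ is the closed linear span of the elements $\delta_A\bigl((\id_A\tens\omega)\delta_A(a')\bigr)(1_A\tens s)$ for $a'\in A$, $\omega\in\B(\s H)_\ast$, $s\in S$. Using coassociativity and $\delta(x)=W^\ast(1\tens x)W$ one computes
\[
\delta_A\bigl((\id_A\tens\omega)\delta_A(a')\bigr)=(\id_A\tens\id_S\tens\omega)\bigl((1_A\tens W^\ast)\,\delta_A(a')_{13}\,(1_A\tens W)\bigr),
\]
and then, using $W=\Sigma(U\tens 1)V(U^\ast\tens 1)\Sigma$ (cf.\ \ref{inifinproj}), the density $S=[(\omega\tens\id)(V)]$ and the commutation relations of \ref{prop34} and \ref{NonComRel} between $V,W$ and $\alpha,\beta,\widehat\alpha,\widehat\beta$, one rewrites the generators of $E$ in a form from which both inclusions become visible: each such generator lies in $q_{\beta_A\alpha}(A\tens S)$, so $E\subseteq q_{\beta_A\alpha}(A\tens S)$, while, because ${\cal C}(W)=\K_\alpha$, the slices that occur exhaust $\K_\alpha$, and hence the closed span of the generators fills up all of $q_{\beta_A\alpha}(A\tens S)$. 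Since $N$ is finite-dimensional one may, if convenient, carry this out block by block over the matrix summands of $N$, each block reducing to an estimate of exactly the shape in \cite{BSV}.

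The main obstacle is precisely this last step. In the quantum group case $V$ and $W$ are genuine multiplicative unitaries with ${\cal C}(V)={\cal C}(W)=\K(\s H)$; here they are only partial isometries, $\K_\alpha$ (resp.\ $\K_\beta$) is a corner of $\K(\s H)$ cut down by projections attached to the range (resp.\ source) map, and $\delta$ is non-unital with $\delta(1_S)=q_{\beta\alpha}$. What really has to be checked is that, after all the slicing and the absorptions dictated by the pentagon and commutation relations, the resulting closed linear span is neither too small nor too large but exactly $q_{\beta_A\alpha}(A\tens S)$; keeping careful track of the projections $q_{\beta_A\alpha}$, $q_{\widehat\alpha\beta}$, $q_{\beta\alpha}$ via \ref{prop34} and \ref{NonComRel} (and, where helpful, the block decomposition of $N$) is the technical core of the proof.
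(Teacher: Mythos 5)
You should first note that the paper does not actually prove this theorem: it is recalled from \cite{C2} (see the preamble to \S\ref{subsectionBiduality}), so there is no in-paper proof to measure against; your plan of adapting 5.8 of \cite{BSV} is nevertheless the natural one. There are, however, two problems. The first is a concrete error in your opening step. You propose to prove that $E:=[\delta_A(A)(1_A\tens S)]$ is a C*-subalgebra and that $E=[(1_A\tens S)\delta_A(A)]$, i.e.\ that $E$ is self-adjoint. But the target identity is $E=q_{\beta_A\alpha}(A\tens S)$, whose adjoint is $(A\tens S)q_{\beta_A\alpha}$ (and indeed \ref{prop39}~3 records $[(1_A\tens S)\delta_A(A)]=(A\tens S)q_{\beta_A\alpha}$ for strongly continuous actions). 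For a projection $q\in\M(B)$ one has $qB=Bq$ if and only if $q$ is central, and $q_{\beta_A\alpha}$ is not central in $\M(A\tens S)$ in general (\ref{rk15}~3 singles out the commutation $[q_{\beta_A\alpha},\,m\tens 1_S]=0$ as a special property of $\delta_A$-invariant elements). So the claimed self-adjointness of $E$ is incompatible with the conclusion you are trying to reach except when the source map is central; it is an artifact of transplanting the quantum-group argument, where $q=1$ and $E=A\tens S$ really is self-adjoint. The correct intermediate statement in this setting is the asymmetric one, $[\delta_A(A)(1_A\tens S)]=[\delta_A(A)(A\tens S)]$, the right-hand side being $q_{\beta_A\alpha}(A\tens S)$ by condition~1 of \ref{defactmqg} together with \ref{rk15}~1.

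The second problem is that the decisive computation is announced but never performed. After writing the generators of $E$ as $(\id_A\tens\id_S\tens\omega)\bigl(W_{23}^*\,\delta_A(a)_{13}\,W_{23}\bigr)(1_A\tens s)$ — a formula which is correct — you assert that one "rewrites the generators of $E$ in a form from which both inclusions become visible" and that "the slices that occur exhaust $\K_\alpha$"; but this is precisely the content of the theorem, and you yourself flag it as "what really has to be checked." To turn the sketch into a proof you must exhibit the absorption identity that converts ${\cal C}(W)=\K_\alpha$ into an equality of closed linear spans (an identity of the shape $[(1\tens\widehat S)W(\K_\alpha\tens 1)]=\cdots$ or $[S\,\K_\alpha]=\cdots$, applied leg by leg), and verify at each stage that the projections $q_{\beta\alpha}$ and $q_{\alpha\widehat\beta}$ produced by $W^*W$ and $WW^*$ are absorbed into $q_{\beta_A\alpha}(A\tens S)$ rather than cutting it down. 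As it stands the argument identifies the right strategy but establishes neither inclusion.
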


\begin{defin}\label{defEquiHom}
For $i=1,2$, let $A_i$ be a $\Cstar$-algebra. For $i=1,2$, let $(\beta_{A_i},\delta_{A_i})$ be an action of $\cal{G}$ on $A_i$. A non-degenerate *-homomorphism 
$
f:A_1\rightarrow\M(A_2)
$ 
is said to be $\cal{G}$-equivariant if $(f\tens\id_S)\delta_{A_1}=\delta_{A_2}\circ f$ and $ f\circ\beta_{A_1}=\beta_{A_2}$.
\end{defin}

\begin{rk}\label{rkEqMorph} $\vphantom{\id_{\widehat{S}}}$With the notations and hypotheses of \ref{defEquiHom}, if $f$ satisfies the relation $(f\tens\id_S)\delta_{A_1}=\delta_{A_2}\circ f$,$\vphantom{\widehat{\cal G}}$ then $f$ satisfies necessarily the relation $f\circ\beta_{A_1}=\beta_{A_2}$, {\it i.e.}\ $f$ is $\cal G$-equivariant. 
\end{rk}

\begin{nb}
We denote by ${\sf Alg}_{\mathcal{G}}$\index[symbol]{ac@${\sf Alg}_{\mathcal{G}}$, category of $\cal G$-C*-algebras} the category whose objects are the $\cal{G}$-$\Cstar$-algebras and whose set of arrows between $\cal{G}$-$\Cstar$-algebras is the set of $\cal G$-equivariant non-degenerate *-homomorphisms.
\end{nb}

We will also need to consider possibly degenerate $\cal G$-equivariant *-homomor\-phism.

\begin{defin}\label{defEquiHombis}
For $i=1,2$, let $A_i$ be a $\Cstar$-algebra endowed with an action $(\beta_{A_i},\delta_{A_i})$ of $\cal G$ on $A_i$ such that $\delta_{A_i}(A_i)\subset\widetilde{\M}(A_i\tens S)$ ({\it e.g.\ }if $(\beta_{A_i},\delta_{A_i})$ is strongly continuous). A *-homomorphism $f:A_1\rightarrow\M(A_2)$ is said to be $\cal G$-equivariant $\vphantom{\widetilde{\M}}$if for all $a\in A_1$ and $n\in N$ we have $\delta_{A_2}(f(a))=(f\tens\id_S)\delta_{A_1}(a)$ (cf.\ \S \ref{sectionNotations}) and $f(\beta_{A_1}(n^{\rm o})a)=\beta_{A_2}(n^{\rm o})f(a)$.$\vphantom{\widetilde{\M}}$
\end{defin}

In a similar way, we define the notion of $\widehat{\cal G}$-equivariant *-homomorphism and the category of $\widehat{\cal G}$-C*-algebras ${\sf Alg}_{\widehat{\cal G}}$.

\medbreak

Let us introduce some definitions.

\begin{defin}\label{defLinkAlg}
A linking $\cal G$-C*-algebra (resp.\ linking $\widehat{\cal G}$-C*-algebra) is a quintuple $(J,\beta_J,\delta_J,e_1,e_2)$ (resp.\ $(J,\alpha_J,\delta_J,e_1,e_2)$) consisting of a C*-algebra $J$ endowed with a continuous action $(\beta_J,\delta_J)$ of $\cal G$ (resp.\ $(\alpha_J,\delta_J)$ of $\widehat{\cal G}$) and nonzero self-adjoint projections $e_1,e_2\in\M(J)$ satisfying the following conditions:
\begin{enumerate}
\item $e_1+e_2=1_J$;
\item $[Je_jJ]=J$, for all $j=1,2$;
\item $\delta_J(e_j)=q_{\beta_J\alpha}(e_j\tens 1_S)$ (resp.\ $\delta_J(e_j)=q_{\alpha_J\beta}(e_j\tens 1_{\widehat{S}})$), for all $j=1,2$ ({\it i.e.\ }$e_j$ is $\delta_J$-invariant).\qedhere
\end{enumerate} 
\end{defin}

		\subsubsection{Crossed product and dual action}\label{crossedproductCstar}

Let us fix a strongly continuous action $(\beta_A,\delta_A)$ of $\cal G$ on a C*-algebra $A$. 

\begin{nbs}\label{notCrossedProduct}
The *-representation\index[symbol]{pc@$\pi_L$, $\pi_{A,L}$} 
\begin{center}
$
\pi_L:=(\id_A\tens L)\circ\delta_A:A\rightarrow\Lin(A\tens\s H)
$
\end{center}
of $A$ on the Hilbert $A$-module $A\tens\s H$ extends uniquely to a strictly/*-strongly continuous faithful *-representation $\pi_L:\M(A)\rightarrow\Lin(A\tens\s H)$ such that $\pi_L(1_A)=q_{\beta_A\alpha}$. Moreover, we have
$
\pi_L(m)=\pi_L(m)q_{\beta_A\alpha}=q_{\beta_A\alpha}\pi_L(m)
$ 
for all $m\in\M(A)$. Consider the Hilbert $A$-module
\[
{\cal E}_{A,L}:=q_{\beta_A\alpha}(A\tens\s H).
\]
\index[symbol]{eb@${\cal E}_{A,L}$}By restricting $\pi_L$, we obtain a strictly/*-strongly continuous faithful unital *-representation
\[
\pi:\M(A)\rightarrow\Lin({\cal E}_{A,L})\; ;\; m\mapsto \restr{\pi_L(m)}{{\cal E}_{A,L}}\!\!.
\]
We have $[1_A\tens T,\,q_{\beta_A\alpha}]=0$ for all $T\in\M(\widehat{S})$. We then obtain a strictly/*-strongly continuous unital *-representation
\[
\widehat{\theta}:\M(\widehat{S})\rightarrow\Lin({\cal E}_{A,L})\; ;\; T\mapsto \restr{(1_A\tens T)}{{\cal E}_{A,L}}\!\!.
\]
\index[symbol]{pd@$\pi$, $\widehat{\theta}$}Note that if $\beta_A$ is faithful, then so is $\widehat{\theta}$.
\end{nbs}

For later reference, let us state the following lemma.

\begin{lem}\label{lem31}
If $m\in\M(A)$ is $\delta_A$-invariant, then $[\pi(m),\,\widehat{\theta}(x)]=0$ for all $x\in\widehat{S}$.
\end{lem}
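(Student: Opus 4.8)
The plan is to unwind the definitions of $\pi$ and $\widehat\theta$ on the module $\mathcal E_{A,L}=q_{\beta_A\alpha}(A\tens\s H)$ and reduce everything to a commutation relation that holds inside $\M(A\tens S)$, which can then be checked after applying $\id_A\tens L$. Recall that for $m\in\M(A)$ we have $\pi(m)=\restr{\pi_L(m)}{\mathcal E_{A,L}}$ with $\pi_L=(\id_A\tens L)\circ\delta_A$, while $\widehat\theta(x)=\restr{(1_A\tens x)}{\mathcal E_{A,L}}$ for $x\in\widehat S$, using that $[1_A\tens T,\,q_{\beta_A\alpha}]=0$ for $T\in\M(\widehat S)$. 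So it suffices to show that $\pi_L(m)$ and $1_A\tens x$ commute as operators on the Hilbert $A$-module $A\tens\s H$, i.e.\ that $(\id_A\tens L)(\delta_A(m))$ commutes with $1_A\tens x$ for every $x\in\widehat S$.

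First I would use the hypothesis that $m$ is $\delta_A$-invariant: by definition this means $\delta_A(m)=q_{\beta_A\alpha}(m\tens 1_S)$ in $\M(A\tens S)$. Hence $\pi_L(m)=(\id_A\tens L)(q_{\beta_A\alpha})\,(m\tens L(1_S))=q_{\beta_A\alpha}\,(m\tens 1_{\s H})$ after restricting and using $\pi_L(1_A)=q_{\beta_A\alpha}$. Thus on $\mathcal E_{A,L}$ the operator $\pi(m)$ acts simply as (the restriction of) $m\tens 1_{\s H}$. Now the key point is a commutation relation between the representations $L$ (of $S$, hence of $M$) and $\rho$ (the defining representation of $\widehat S$), namely $[L(y),\,\rho(x)]=0$ for $y\in M$ and $x\in\widehat S$; this follows from $W\in M\tens\widehat M$ (Proposition in \S\ref{WHC*A}), since $M$ and $\widehat M'$ are represented in standard position on $\s H$ and $\widehat S$ is weakly dense in $\widehat M'$, so $L(M)\subset M$ commutes with $\rho(\widehat S)\subset\widehat M'$. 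Consequently $m\tens 1_{\s H}$ (living in $\M(A)\tens\B(\s H)$) commutes with $1_A\tens x$ for $x\in\widehat S\subset\B(\s H)$ trivially in the second leg, and hence $\pi_L(m)$ commutes with $1_A\tens x$ in $\Lin(A\tens\s H)$. Restricting to $\mathcal E_{A,L}$ and using the compatibility of the restriction maps with products gives $[\pi(m),\,\widehat\theta(x)]=0$.

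The only mild subtlety — and the step I expect to require the most care — is the bookkeeping between $\M(A)\tens 1_{\s H}$ inside $\M(A\tens S)$ versus inside $\Lin(A\tens\s H)$, and the fact that $m\tens 1_{\s H}$ need not preserve $\mathcal E_{A,L}$ on the nose; this is why one works with $\pi_L(m)=q_{\beta_A\alpha}(m\tens 1_{\s H})q_{\beta_A\alpha}$ and invokes $[q_{\beta_A\alpha},\,1_A\tens x]=0$ together with $[q_{\beta_A\alpha},\,m\tens 1_S]=0$ (the latter from Remark~\ref{rk15}(3), since $m$ being $\delta_A$-invariant forces $[\beta_A(n^{\rm o}),\,m]=0$ and hence $[q_{\beta_A\alpha},\,m\tens 1_S]=0$). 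With these commutations in hand the computation $\pi(m)\widehat\theta(x)=\restr{q_{\beta_A\alpha}(m\tens 1)q_{\beta_A\alpha}(1_A\tens x)}{}= \restr{(1_A\tens x)q_{\beta_A\alpha}(m\tens 1)q_{\beta_A\alpha}}{}=\widehat\theta(x)\pi(m)$ closes the argument.
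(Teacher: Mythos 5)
Your argument is correct and follows the same route as the paper: reduce to $[\pi_L(m),\,1_A\tens\rho(x)]=0$, use $\delta_A$-invariance to write $\pi_L(m)=q_{\beta_A\alpha}(m\tens 1_{\s H})$, and then use that $q_{\beta_A\alpha}$ commutes with $1_A\tens\rho(x)$ because $\widehat{S}\subset\widehat{M}'\subset\alpha(N)'$. One assertion in your middle paragraph is false, however: $[L(y),\,\rho(x)]=0$ does \emph{not} hold for all $y\in M$ and $x\in\widehat{S}$. Since $M\cap\widehat{M}'=\beta(N^{\rm o})$, if $M$ commuted with $\widehat{M}'$ one would get $M\subset\widehat{M}''=\widehat{M}$, i.e.\ $M=\alpha(N)$, which fails except in degenerate cases (already for a classical group, $\rml^\infty(G)$ does not commute with the right regular representation). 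Fortunately this claim is not load-bearing: the commutation of $m\tens 1_{\s H}$ with $1_A\tens\rho(x)$ is trivial because the two operators live in different legs, and the only genuine commutation you need is $[q_{\beta_A\alpha},\,1_A\tens\rho(x)]=0$, which you do invoke (and which holds because the second leg of $q_{\beta_A\alpha}$ lies in $\alpha(N)\subset\widehat{M}$ while $\rho(\widehat{S})\subset\widehat{M}'$). With that sentence deleted, your final computation is exactly the paper's proof.
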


\begin{proof}
{\setlength{\baselineskip}{1.1\baselineskip}
Let us fix a $\delta_A$-invariant element $m$ of $\M(A)$ and $x\in\widehat{S}$. We have to show that $[\pi_L(m),\,1_A\tens\rho(x)]=0$. This follows from the formula $\pi_L(m)=q_{\beta_A\alpha}(m\tens 1_{\s H})$ and the commutation relation $[q_{\beta_A\alpha},\,1_A\tens\rho(x)]=0$ ensuing from $\widehat{S}\subset\widehat{M}'\subset\alpha(N)'$.\qedhere
\par}
\end{proof}

\begin{propdef}\label{defCrossedProduct}
The norm closed subspace of $\Lin({\cal E}_{A,L})$ spanned by the products of the form $\pi(a)\widehat{\theta}(x)$ for $a\in A$ and $x\in\widehat{S}$ is a C*-subalgebra called the (reduced) crossed product of $A$ by the strongly continuous action $(\beta_A,\delta_A)$ of $\cal G$ and denoted by $A\rtimes{\cal G}$.\index[symbol]{ad@$A\rtimes{\cal G}$, crossed product}
\end{propdef}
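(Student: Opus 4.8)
Set $B:=[\pi(A)\widehat{\theta}(\widehat{S})]\subseteq\Lin({\cal E}_{A,L})$ for the norm-closed subspace in the statement. Since $\pi(A)$ and $\widehat{\theta}(\widehat{S})$ are $\Cstar$-subalgebras of $\Lin({\cal E}_{A,L})$, everything comes down to the single inclusion $[\widehat{\theta}(\widehat{S})\pi(A)]\subseteq B$. Indeed, granting it: as $A$ and $\widehat{S}$ are $*$-closed and $\pi$, $\widehat{\theta}$ are $*$-homomorphisms, taking adjoints gives $B^{*}=[\widehat{\theta}(\widehat{S})^{*}\pi(A)^{*}]=[\widehat{\theta}(\widehat{S})\pi(A)]\subseteq B$, whence $B=B^{*}$ and $[\widehat{\theta}(\widehat{S})\pi(A)]=B$; then, using $[\pi(A)\pi(A)]\subseteq\pi(A)$ and $[\widehat{\theta}(\widehat{S})\widehat{\theta}(\widehat{S})]\subseteq\widehat{\theta}(\widehat{S})$,
\[
B\cdot B=[\pi(A)\widehat{\theta}(\widehat{S})\pi(A)\widehat{\theta}(\widehat{S})]=[\pi(A)\,[\widehat{\theta}(\widehat{S})\pi(A)]\,\widehat{\theta}(\widehat{S})]=[\pi(A)\pi(A)\widehat{\theta}(\widehat{S})\widehat{\theta}(\widehat{S})]=[\pi(A)\widehat{\theta}(\widehat{S})]=B .
\]
Hence $B$ is a norm-closed $*$-subalgebra, i.e.\ a $\Cstar$-subalgebra of $\Lin({\cal E}_{A,L})$.

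\textbf{Reduction to $A\tens\s H$.} The operators of $\pi_L(\M(A))$ and $1_A\tens\rho(\M(\widehat{S}))$ all commute with $q_{\beta_A\alpha}$, so restriction to ${\cal E}_{A,L}=q_{\beta_A\alpha}(A\tens\s H)$ is a $*$-homomorphism on the $\Cstar$-algebra of operators of $\Lin(A\tens\s H)$ commuting with $q_{\beta_A\alpha}$, sending $\pi_L(a)$ to $\pi(a)$ and $1_A\tens\rho(x)$ to $\widehat{\theta}(x)$; it therefore suffices to prove $[(1_A\tens\rho(\widehat{S}))\pi_L(A)]\subseteq[\pi_L(A)(1_A\tens\rho(\widehat{S}))]$ inside $\Lin(A\tens\s H)$, that is, $(1_A\tens\rho(x))\pi_L(b)\in[\pi_L(A)(1_A\tens\rho(\widehat{S}))]$ for all $b\in A$ and $x\in\widehat{S}$. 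By norm-continuity of the multiplication and the density of $\{(\id\tens\omega)(V)\,;\,\omega\in\B(\s H)_*\}$ in $\widehat{S}$, one may assume $x=(\id\tens\omega)(V)$.

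\textbf{The key computation.} Work on $A\tens\s H\tens\s H$ and slice the third leg. As $\rho$ and $L$ are the identical representations, $1_A\tens\rho(x)=(\id\tens\id\tens\omega)(V_{23})$ and $\pi_L(b)$ is the image in $\Lin(A\tens\s H)$ of $\delta_A(b)$, which acts trivially on the third leg; hence
\[
(1_A\tens\rho(x))\,\pi_L(b)=(\id\tens\id\tens\omega)\big(V_{23}\,(\pi_L(b))_{12}\big).
\]
The heart of the argument is to push $V_{23}$ to the right of $(\pi_L(b))_{12}$. From the formula $\Delta(y)=V(y\tens1)V^{*}$ on $S$ (extended strictly) one gets $(\id_A\tens\delta)\delta_A(b)=V_{23}(\pi_L(b))_{12}V_{23}^{*}$; combining this with the coassociativity axiom $(\delta_A\tens\id_S)\delta_A=(\id_A\tens\delta)\delta_A$, with the relations $V^{*}V=q_{\widehat\alpha\beta}$, $VV^{*}=q_{\beta\alpha}$ of \ref{inifinproj}, with $\delta_A(1_A)=q_{\beta_A\alpha}$, and with the commutation/intertwining identities of \ref{prop34} and \ref{NonComRel} to dispose of the projection produced on the right, yields
\[
V_{23}\,(\pi_L(b))_{12}=(\delta_A\tens\id_S)\delta_A(b)\cdot V_{23},\qquad\text{so}\qquad (1_A\tens\rho(x))\,\pi_L(b)=(\id\tens\id\tens\omega)\big((\delta_A\tens\id_S)\delta_A(b)\cdot V_{23}\big).
\]
Finally, strong continuity $[\delta_A(A)(1_A\tens S)]=q_{\beta_A\alpha}(A\tens S)$ together with Proposition \ref{prop39} let one write $(\delta_A\tens\id_S)\delta_A(b)$ as a norm-limit of finite sums of terms $(\delta_A\tens\id_S)\delta_A(a_k)\,q_{\beta_A\alpha,12}\,(1_A\tens1_{\s H}\tens s_k)$ with $a_k\in A$ and $s_k\in S$; multiplying by $V_{23}$, using that $V\in\M(\widehat{S}\tens S)$ and that slicing the $S$-leg of $(1_{\s H}\tens s_k)V$ produces an element of $\widehat{S}$, and then slicing the third leg, each such term becomes $\pi_L(a_k)(1_A\tens\rho(\widehat{x}_k))$ with $\widehat{x}_k\in\widehat{S}$. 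Thus $(1_A\tens\rho(x))\pi_L(b)\in[\pi_L(A)(1_A\tens\rho(\widehat{S}))]$, as wanted.

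\textbf{Main obstacle.} The delicate part is the bookkeeping of the source/range projections $q_{\beta_A\alpha}$, $q_{\beta\alpha}$, $q_{\widehat\alpha\beta}$ all along the key computation: since $V$ is only a multiplicative \emph{partial} isometry (and not a multiplicative unitary, as it would be for a locally compact quantum group), each use of $VV^{*}=1$ or $V^{*}V=1$ has to be replaced by the corresponding projection, and one must check in each instance---using the commutation and intertwining identities gathered in \ref{inifinproj}, \ref{prop34}, \ref{NonComRel} and Proposition \ref{prop39}---that this projection is absorbed by the operator it meets and does not obstruct the slicing. When $V$ is a unitary, the computation above is exactly the classical one (cf.\ \S 7 \cite{BS2}).
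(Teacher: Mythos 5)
Your overall strategy is the right one, and it is essentially the argument that the paper recalls from [BC] (which in turn adapts 7.1 of [BS2] to the partial-isometry setting): reduce everything to the single inclusion $[\widehat{\theta}(\widehat{S})\pi(A)]\subseteq[\pi(A)\widehat{\theta}(\widehat{S})]$, lift to $\Lin(A\tens\s H)$, and use the identity $V_{23}\,\delta_A(b)_{12}=(\delta_A\tens\id_S)\delta_A(b)\,V_{23}$ (valid here because $V=Vq_{\widehat\alpha\beta}$ and $q_{\widehat\alpha\beta,23}$ commutes with $\delta_A(b)_{12}$, since $\widehat\alpha(N)\subseteq M'$). The algebraic reduction at the start and the restriction-to-the-corner argument are both correct.

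There is, however, one step that fails as written: you claim that strong continuity lets you write $(\delta_A\tens\id_S)\delta_A(b)$ itself as a norm limit of finite sums $\sum_k(\delta_A\tens\id_S)\delta_A(a_k)\,q_{\beta_A\alpha,12}(1_A\tens 1_{\s H}\tens s_k)$. This is equivalent to $\delta_A(b)\in[\delta_A(A)(1_A\tens S)]=q_{\beta_A\alpha}(A\tens S)$, which is false in general: $\delta_A(b)$ is only a multiplier of $A\tens S$ and need not lie in $A\tens S$ (already for $\cal G$ a non-compact locally compact quantum group acting on $A=S$ by $\delta$). The standard repair is to perform the factorization on the $\omega$ side \emph{first}: since $S$ acts non-degenerately on $\s H$, you may assume $\omega=\omega' s$ with $s\in S$, i.e.\ replace $V_{23}$ by $(1\tens 1\tens s)V_{23}$ under the slice. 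Then $(1_A\tens 1_{\s H}\tens s)(\delta_A\tens\id_S)\delta_A(b)=(\delta_A\tens\id_S)\bigl((1_A\tens s)\delta_A(b)\bigr)$, and $(1_A\tens s)\delta_A(b)\in A\tens S$ by strong continuity together with \ref{prop39}~3 (taking adjoints), so it is a norm limit of finite sums $\sum_k a_k\tens s_k$; only now does your decomposition, followed by $(\id\tens\omega')\bigl((1\tens s_k)V\bigr)=(\id\tens\omega' s_k)(V)\in\widehat{S}$, go through. You already invoke exactly this factorization through $V\in\M(\widehat{S}\tens S)$, but it must precede the decomposition of the coproduct term rather than follow it; with that reordering the proof is complete.
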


In particular, the morphism $\pi$ (resp.\ $\widehat{\theta}$) defines a faithful unital *-homomorphism (resp.\ unital *-homomorphism) $\pi:\M(A)\rightarrow\M(A\rtimes{\cal G})$ (resp.\  $\widehat{\theta}:\widehat{S}\rightarrow\M(A\rtimes{\cal G})$). We will sometimes lay emphasis on $A$ and denote the canonical morphism $\pi$ (resp.\ $\widehat{\theta}$) by $\pi_A:A\rightarrow\M(A\rtimes{\cal G})$ (resp.\ $\widehat{\theta}_A:\widehat{S}\rightarrow\M(A\rtimes{\cal G})$). Similarly, we will occasionally denote the *-representation $\pi_L:A\rightarrow\Lin(A\tens\s H)$ by $\pi_{A,L}$.

\medbreak

Since $[\widetilde{V},\,\alpha(n)\tens 1]=0$, we have $[\widetilde{V}_{23},q_{\beta_A\alpha,12}]=0$. The operator $\widetilde{V}_{23}\in\Lin(A\tens\s H\tens\s H)$ restricts to a partial isometry 
\[
X:=\restr{\widetilde{V}_{23}}{{\cal E}_{A,L}\tens\s H}\,\in\Lin({\cal E}_{A,L}\tens\s H),
\]
whose initial and final projections are $X^*X=\restr{q_{\widehat{\beta}\alpha,23}}{{\cal E}_{A,L}\tens\s H}$ and $XX^*=\restr{q_{\widehat{\alpha}\beta,23}}{{\cal E}_{A,L}\tens\s H}$.

\begin{propdef}\label{defDualAct}
Let
\[
\delta_{A\rtimes{\cal G}}:A\rtimes{\cal G}\rightarrow\Lin({\cal E}_{A,L}\tens\s H) \quad \text{and} \quad \alpha_{A\rtimes{\cal G}}:N\rightarrow\M(A\rtimes{\cal G})
\] 
be the linear maps defined by:
\begin{itemize}
\item $\delta_{A\rtimes{\cal G}}(b):=X(b\tens 1)X^*$, for all
$b\in A\rtimes{\cal G}$;
\item $\alpha_{A\rtimes{\cal G}}(n):=\widehat{\theta}(\widehat{\alpha}(n))=(1_A\tens\widehat{\alpha}(n))\!\!\restriction_{{\cal E}_{A,L}}$, for all $n\in N$.
\end{itemize}
Then, $\delta_{A\rtimes{\cal G}}$ is a faithful *-homomorphism and $\alpha_{A\rtimes{\cal G}}$ is a non-degenerate *-homomorphism. Moreover, we have the following statements:
\begin{enumerate}
\item $\delta_{A\rtimes{\cal G}}(\pi(a)\widehat{\theta}(x))=(\pi(a)\tens 1_{\widehat{S}})
(\widehat{\theta}\tens\id_{\widehat{S}})\widehat{\delta}(x)$, for all $a\in A$ and $x\in\widehat{S}$; in particular, $\delta_{A\rtimes{\cal G}}$ takes its values in $\M((A\rtimes{\cal G})\tens\widehat{S})$;
\item $\alpha_{A\rtimes{\cal G}}(n)\pi(a)\widehat{\theta}(x)=\pi(a)\widehat{\theta}(\widehat{\alpha}
(n)x)$ and $\pi(a)\widehat{\theta}(x)\alpha_{A\rtimes{\cal G}}(n)=\pi(a)\widehat{\theta}(x
\widehat{\alpha}(n))$ for all $n\in N$, $a\in A$ and $x\in\widehat{S}$.\qedhere
\end{enumerate}
\end{propdef}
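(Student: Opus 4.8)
The plan is to dispose of $\alpha_{A\rtimes{\cal G}}$ first (it is essentially formal), then to reduce all claims about $\delta_{A\rtimes{\cal G}}$ to the covariance formula in item (1) — the only genuine computation — and finally to read off the homomorphism property and the faithfulness from the explicit form of the initial and final projections of $X$. For $\alpha_{A\rtimes{\cal G}}$: the map $\widehat\theta:\M(\widehat S)\to\Lin({\cal E}_{A,L})$, $T\mapsto\restr{(1_A\tens T)}{{\cal E}_{A,L}}$, is a unital *-representation carrying $\widehat S$ into $\M(A\rtimes{\cal G})$, so it restricts to a unital *-homomorphism $\M(\widehat S)\to\M(A\rtimes{\cal G})$; since $\widehat\alpha:N\to\M(\widehat S)$ is a unital *-homomorphism, $\alpha_{A\rtimes{\cal G}}=\widehat\theta\circ\widehat\alpha$ is unital, hence nondegenerate, and $\alpha_{A\rtimes{\cal G}}(n)=\restr{(1_A\tens\widehat\alpha(n))}{{\cal E}_{A,L}}$. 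That $\alpha_{A\rtimes{\cal G}}(n)$ multiplies $A\rtimes{\cal G}$ into itself and the relations of item (2) both reduce to $[\widehat\theta(\widehat\alpha(n)),\pi(a)]=0$, which holds because $\widehat\alpha(N)=M'\cap\widehat M'\subset M'$ while $\pi_L(a)=(\id_A\tens L)\delta_A(a)$ has its $\s H$-leg in $S\subset M$; then $\alpha_{A\rtimes{\cal G}}(n)\pi(a)\widehat\theta(x)=\pi(a)\widehat\theta(\widehat\alpha(n))\widehat\theta(x)=\pi(a)\widehat\theta(\widehat\alpha(n)x)$, and symmetrically for multiplication on the right.

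Next I would establish the covariance formula. Fix $a\in A$ and $x\in\widehat S$. Using $[1_A\tens x,q_{\beta_A\alpha}]=0$, the operator $\pi_L(a)(1_A\tens x)$ leaves ${\cal E}_{A,L}$ invariant and restricts to $\pi(a)\widehat\theta(x)$; using $[\widetilde V_{23},q_{\beta_A\alpha,12}]=0$ (noted just before the statement), $\widetilde V_{23}$ leaves ${\cal E}_{A,L}\tens\s H$ invariant, so $X$ and $X^*$ are its restrictions and
\[
\delta_{A\rtimes{\cal G}}(\pi(a)\widehat\theta(x))=\restr{\widetilde V_{23}\,\pi_L(a)_{12}\,(1_A\tens x\tens 1_{\s H})\,\widetilde V_{23}^*}{{\cal E}_{A,L}\tens\s H}.
\]
Here $[\widetilde V_{23},\pi_L(a)_{12}]=0$, since $\widetilde V\in M'\tens\widehat M'$ has its first leg in $M'$ and $\pi_L(a)_{12}$ has its middle leg in $M$; and $\widetilde V_{23}(1_A\tens x\tens 1_{\s H})\widetilde V_{23}^*=1_A\tens\widehat\delta(x)$ because $\widehat\Delta(y)=\widetilde V(y\tens 1)\widetilde V^*$ for $y\in\widehat M'$ and $\widehat\delta$ is the restriction of $\widehat\Delta$. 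Thus $\delta_{A\rtimes{\cal G}}(\pi(a)\widehat\theta(x))$ is the restriction of $\pi_L(a)_{12}(1_A\tens\widehat\delta(x))$ to ${\cal E}_{A,L}\tens\s H$; identifying this restriction of $\pi_L(a)_{12}$ with $\pi(a)\tens 1_{\widehat S}$ and that of $1_A\tens\widehat\delta(x)$ with $(\widehat\theta\tens\id_{\widehat S})(\widehat\delta(x))$ — the latter using $[1_A\tens\widehat\delta(x),q_{\beta_A\alpha,12}]=0$, again because $\widehat M'$ commutes with $\alpha(N)$ — yields item (1). Since the right-hand side of (1) lies in $\M((A\rtimes{\cal G})\tens\widehat S)$ and the products $\pi(a)\widehat\theta(x)$ span a dense subspace of $A\rtimes{\cal G}$, it follows that $\delta_{A\rtimes{\cal G}}$ takes its values in $\M((A\rtimes{\cal G})\tens\widehat S)$.

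It remains to see that $\delta_{A\rtimes{\cal G}}$ is a faithful *-homomorphism. First I would check that the projection $X^*X=\restr{q_{\widehat\beta\alpha,23}}{{\cal E}_{A,L}\tens\s H}$ commutes with $(A\rtimes{\cal G})\tens 1_{\s H}$: over the finite basis $N$ the projection $q_{\widehat\beta\alpha}$ is built from $\widehat\beta(N^{\rm o})$ and $\alpha(N)$, and the leg of it sitting in position $2$ is in $\widehat\beta(N^{\rm o})=M'\cap\widehat M$, which commutes both with the $\s H$-leg of $\pi_L(a)$ (in $M$) and with that of $1_A\tens x$ (in $\widehat M'$), while the other leg sits in position $3$, where $(A\rtimes{\cal G})\tens 1_{\s H}$ acts trivially. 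Granting this, $b\mapsto X(b\tens 1_{\s H})X^*$ is a *-homomorphism on $A\rtimes{\cal G}$ because $XX^*X=X$, and moreover $\delta_{A\rtimes{\cal G}}(b)X=X(b\tens 1_{\s H})$. Consequently, if $\delta_{A\rtimes{\cal G}}(b)=0$ then $(b\tens 1_{\s H})X^*X=0$ on ${\cal E}_{A,L}\tens\s H$, and the normalization of $q_{\widehat\beta\alpha}$ coming from the Markov trace $\epsilon$ (summing the matrix-unit expansion over the diagonal and over the blocks) forces $b=0$; so $\delta_{A\rtimes{\cal G}}$ is faithful.

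The main obstacle, in my view, is the bookkeeping behind (1): every operator has to be treated as the restriction of an operator on $A\tens\s H\tens\s H$ to the nontrivially embedded submodule ${\cal E}_{A,L}\tens\s H=q_{\beta_A\alpha,12}(A\tens\s H\tens\s H)$, and each commutation must be justified from the placement of the relevant legs inside $M,M',\widehat M,\widehat M'$ (via $M\cap\widehat M=\alpha(N)$, $M'\cap\widehat M=\widehat\beta(N^{\rm o})$, $M'\cap\widehat M'=\widehat\alpha(N)$, together with $\widetilde V\in M'\tens\widehat M'$). By contrast with the locally compact quantum group case of \cite{BS2}, where $\widetilde V$ is unitary and $\delta_{A\rtimes\QG}$ is plainly a faithful *-homomorphism, here $\widetilde V$ — hence $X$ — is only a partial isometry, so both the homomorphism property and the injectivity of $\delta_{A\rtimes{\cal G}}$ rely on the explicit description of $X^*X$ and $XX^*$ and on the structure of $q_{\widehat\beta\alpha}$ over the finite basis $N$.
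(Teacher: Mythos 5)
Your proof is correct, and it follows the standard route (the paper itself only recalls this statement from \cite{BC} without proof): the covariance formula via $\widetilde V\in M'\tens\widehat M'$ and $\widehat\Delta(x)=\widetilde V(x\tens 1)\widetilde V^*$, the commutation of $X^*X$ with $(A\rtimes{\cal G})\tens 1_{\s H}$ through the leg of $q_{\cdot,23}$ lying in $\widehat\beta(N^{\rm o})=M'\cap\widehat M$ to get the *-homomorphism property despite $\widetilde V$ being only a partial isometry, and the matrix-unit expansion of that projection over the Markov-trace basis to recover $b$ and conclude faithfulness. All the leg placements you invoke check out, so I have nothing to add beyond noting that the argument is sound as written.
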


\begin{propdef}
With the notations of \ref{defDualAct}, the pair $(\alpha_{A\rtimes{\cal G}},\delta_{A\rtimes{\cal G}})$ is a strongly continuous action of $\widehat{\cal G}$ on ${A\rtimes{\cal G}}$ called the dual action of $(\beta_A,\delta_A)$.
\end{propdef}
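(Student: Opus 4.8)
The plan is to verify, in the order below, the three defining axioms of an action of $\widehat{\cal G}$ on $A\rtimes{\cal G}$ and then strong continuity. By \ref{defDualAct} we already know that $\delta_{A\rtimes{\cal G}}$ is a faithful $*$-homomorphism with values in $\M((A\rtimes{\cal G})\tens\widehat S)$, that $\alpha_{A\rtimes{\cal G}}$ is a non-degenerate $*$-homomorphism, and that $\delta_{A\rtimes{\cal G}}(\pi(a)\widehat\theta(x))=(\pi(a)\tens 1_{\widehat S})(\widehat\theta\tens\id_{\widehat S})\widehat\delta(x)$; the whole argument will run off this last formula.

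First I would establish the non-degeneracy axiom (that $\delta_{A\rtimes{\cal G}}$ extends strictly continuously with $\delta_{A\rtimes{\cal G}}(1)=q_{\alpha_{A\rtimes{\cal G}}\beta}$) together with strong continuity in one stroke, by computing the closed linear spans $[\delta_{A\rtimes{\cal G}}(A\rtimes{\cal G})((A\rtimes{\cal G})\tens\widehat S)]$ and $[\delta_{A\rtimes{\cal G}}(A\rtimes{\cal G})(1_{A\rtimes{\cal G}}\tens\widehat S)]$. Using the formula above, $A\rtimes{\cal G}=[\pi(A)\widehat\theta(\widehat S)]$, the density properties $[\widehat\delta(\widehat S)(1_{\widehat S}\tens\widehat S)]=q_{\widehat\alpha\beta}(\widehat S\tens\widehat S)=[\widehat\delta(\widehat S)(\widehat S\tens 1_{\widehat S})]$, the identity $(\widehat\theta\tens\id_{\widehat S})(q_{\widehat\alpha\beta})=q_{\alpha_{A\rtimes{\cal G}}\beta}$ (functoriality of $q$ under $\widehat\theta$, since $\widehat\theta\circ\widehat\alpha=\alpha_{A\rtimes{\cal G}}$ by definition of $\alpha_{A\rtimes{\cal G}}$), and the commutation $[\pi(a),\alpha_{A\rtimes{\cal G}}(n)]=0$ (which holds because $\alpha_{A\rtimes{\cal G}}(n)=\restr{(1_A\tens\widehat\alpha(n))}{{\cal E}_{A,L}}$ with $\widehat\alpha(N)=M'\cap\widehat M'\subset M'$, whereas $\pi_L=(\id_A\tens L)\delta_A$ has its $\s H$-leg in $L(S)=S\subset M$), both spans reduce to $q_{\alpha_{A\rtimes{\cal G}}\beta}((A\rtimes{\cal G})\tens\widehat S)$. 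Then \ref{lem19bis} yields the strict extension of $\delta_{A\rtimes{\cal G}}$ to $\M(A\rtimes{\cal G})$ with $\delta_{A\rtimes{\cal G}}(1_{A\rtimes{\cal G}})=q_{\alpha_{A\rtimes{\cal G}}\beta}$; note that this projection also equals $XX^*$, since $\delta_{A\rtimes{\cal G}}(b)=X(b\tens 1)X^*$.

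With the strict extension in hand, letting $a$ (resp.\ $x$) run through an approximate unit in the formula of \ref{defDualAct} gives $\delta_{A\rtimes{\cal G}}\circ\widehat\theta=(\widehat\theta\tens\id_{\widehat S})\circ\widehat\delta$ and $\delta_{A\rtimes{\cal G}}(\pi(a))=(\pi(a)\tens 1_{\widehat S})(\widehat\theta\tens\id_{\widehat S})(q_{\widehat\alpha\beta})$. The remaining axiom $\delta_{A\rtimes{\cal G}}(\alpha_{A\rtimes{\cal G}}(n))=q_{\alpha_{A\rtimes{\cal G}}\beta}(1_{A\rtimes{\cal G}}\tens\widehat\alpha(n))$ then follows at once from $\alpha_{A\rtimes{\cal G}}(n)=\widehat\theta(\widehat\alpha(n))$, the first of these two identities, $(\widehat\theta\tens\id_{\widehat S})(q_{\widehat\alpha\beta})=q_{\alpha_{A\rtimes{\cal G}}\beta}$, and the corresponding relation $\widehat\delta(\widehat\alpha(n))=q_{\widehat\alpha\beta}(1_{\widehat S}\tens\widehat\alpha(n))$ for $\widehat\delta$. (Alternatively one can compute directly with $\delta_{A\rtimes{\cal G}}(b)=X(b\tens1)X^*$, using the commutation $\widetilde V(\widehat\alpha(n)\tens 1)=(1\tens\widehat\alpha(n))\widetilde V$ of \ref{prop34} to push $\widehat\alpha(n)$ through.) Coassociativity I would check on a generator $\pi(a)\widehat\theta(x)$: applying $\delta_{A\rtimes{\cal G}}\tens\id_{\widehat S}$ and $\id_{A\rtimes{\cal G}}\tens\widehat\delta$ to $\delta_{A\rtimes{\cal G}}(\pi(a)\widehat\theta(x))$ and substituting the two identities above, both sides collapse --- modulo the idempotent factors coming from $q_{\widehat\alpha\beta}$ --- to $(\pi(a)\tens 1_{\widehat S}\tens 1_{\widehat S})(\widehat\theta\tens\id_{\widehat S}\tens\id_{\widehat S})\bigl((\widehat\delta\tens\id_{\widehat S})\widehat\delta(x)\bigr)$ by coassociativity of $\widehat\delta$.

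The only genuine difficulties are bookkeeping ones: checking that every map in play extends to the relevant (relative) multiplier algebra so that all the formulas make sense, correctly tracking the leg-numbered commutation relations of \ref{prop34} and \ref{NonComRel} inside the various corners of $\s H\tens\s H$, and proving the functoriality $(\widehat\theta\tens\id_{\widehat S})(q_{\widehat\alpha\beta})=q_{\alpha_{A\rtimes{\cal G}}\beta}$ together with the commutation $[\pi(A),\alpha_{A\rtimes{\cal G}}(N)]=0$. Once these are settled the proof is a string of routine manipulations, the substantive content (coassociativity) being a formal consequence of coassociativity of $\widehat\delta$.
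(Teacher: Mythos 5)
Your argument is correct: all the ingredients you invoke — the generator formula of \ref{defDualAct}~1, the density relations $[\widehat\delta(\widehat S)(1_{\widehat S}\tens\widehat S)]=q_{\widehat\alpha\beta}(\widehat S\tens\widehat S)=[\widehat\delta(\widehat S)(\widehat S\tens 1_{\widehat S})]$, the identity $(\widehat\theta\tens\id_{\widehat S})(q_{\widehat\alpha\beta})=q_{\alpha_{A\rtimes{\cal G}}\beta}$, and the commutation $[\pi(A),\alpha_{A\rtimes{\cal G}}(N)]=0$ coming from $\widehat\alpha(N)\subset M'$ — do hold, and they assemble exactly as you describe. The paper itself recalls this statement from \cite{BC} without proof, but your method is the same one the paper deploys for the parallel Hilbert-module statement \ref{defdualaction} and for \ref{lem24}, so there is nothing genuinely different to compare.
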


In the following result, we investigate the functoriality of the crossed product construction.

\begin{prop}\label{FunctorCrossedProd}
For $i=1,2$, let $A_i$ be a $\cal G$-C*-algebra. Let $f:A_1\rightarrow\M(A_2)$ be a $\cal G$-equivariant *-homomorphism (cf.\ \ref{defEquiHombis}). 
\begin{enumerate}
\item There exists a unique *-homomorphism $f_*:A_1\rtimes{\cal G}\rightarrow\M(A_2\rtimes{\cal G})$ such that for all $a\in A_1$ and $x\in\widehat{S}$, $f(\pi_{A_1}(a)\widehat{\theta}_{A_1}(x))=\pi_{A_2}(f(a))\widehat{\theta}_{A_2}(x)$. Moreover, $f_*$ is $\widehat{\cal G}$-equivariant. Note that if $f(A_1)\subset A_2$, then $f_*(A_1\rtimes{\cal G})\subset A_2\rtimes{\cal G}$.
\item The correspondence $-\rtimes{\cal G}:{\sf Alg}_{\cal G}\rightarrow{\sf Alg}_{\widehat{\cal G}}$ is functorial.\qedhere
\end{enumerate}
\end{prop}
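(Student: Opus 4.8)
The plan is to construct $f_*$ concretely on generators and then check it is a well-defined $\ast$-homomorphism with the required compatibility, equivariance, and functoriality properties. First I would recall that the crossed product $A_i\rtimes\mathcal{G}$ sits inside $\Lin(\mathcal{E}_{A_i,L})$ and is generated by the products $\pi_{A_i}(a_i)\widehat\theta_{A_i}(x)$ with $a_i\in A_i$, $x\in\widehat S$; by \ref{defCrossedProduct} and the subsequent remarks, $\pi_{A_i}$ extends to a faithful unital $\ast$-homomorphism $\M(A_i)\to\M(A_i\rtimes\mathcal{G})$ and $\widehat\theta_{A_i}$ to a unital $\ast$-homomorphism $\widehat S\to\M(A_i\rtimes\mathcal{G})$. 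Since $f:A_1\to\M(A_2)$ is non-degenerate, it extends to a unital strictly continuous $\ast$-homomorphism $\M(A_1)\to\M(A_2)$, so $\pi_{A_2}\circ f:\M(A_1)\to\M(A_2\rtimes\mathcal{G})$ makes sense and is a $\ast$-homomorphism. The candidate is then determined by the formula in the statement: $f_*(\pi_{A_1}(a)\widehat\theta_{A_1}(x)):=\pi_{A_2}(f(a))\widehat\theta_{A_2}(x)$.

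The existence of $f_*$ as a $\ast$-homomorphism I would obtain by exhibiting it as a compression/restriction of a $\ast$-homomorphism defined at the level of the ambient algebras of adjointable operators. Concretely, one has the Hilbert module map $f\otimes\id_{\s H}:A_1\otimes\s H\to\M(A_2\otimes\s H)$, which is $\mathcal{G}$-equivariant in the sense that (using $(f\tens\id_S)\delta_{A_1}=\delta_{A_2}\circ f$ and $f\circ\beta_{A_1}=\beta_{A_2}$) it intertwines $q_{\beta_{A_1}\alpha}$ with $q_{\beta_{A_2}\alpha}$ and hence restricts to a map $\mathcal{E}_{A_1,L}\to\widetilde\M(\mathcal{E}_{A_2,L})$; the induced $\ast$-homomorphism on adjointable operators sends $\pi_{A_1,L}=(\id\tens L)\delta_{A_1}$ to $\pi_{A_2,L}\circ f=(\id\tens L)\delta_{A_2}\circ f$ (this is exactly $\mathcal{G}$-equivariance of $f$) and sends $1_{A_1}\tens\rho(x)$ to $1_{A_2}\tens\rho(x)$, i.e. $\widehat\theta_{A_1}(x)$ to $\widehat\theta_{A_2}(x)$. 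Restricting this ambient map to the C*-subalgebra generated by the $\pi_{A_1}(a)\widehat\theta_{A_1}(x)$ gives the desired $f_*:A_1\rtimes\mathcal{G}\to\M(A_2\rtimes\mathcal{G})$, and uniqueness is immediate since these products span a dense subspace. The inclusion statement ($f(A_1)\subset A_2\Rightarrow f_*(A_1\rtimes\mathcal{G})\subset A_2\rtimes\mathcal{G}$) then follows because in that case $\pi_{A_2}(f(a))\in\M(A_2\rtimes\mathcal{G})$ already lies in the appropriate multiplier algebra of $A_2\rtimes\mathcal{G}$ and products $\pi_{A_2}(f(a))\widehat\theta_{A_2}(x)$ land in $A_2\rtimes\mathcal{G}$ by \ref{defCrossedProduct}.

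For $\widehat{\mathcal{G}}$-equivariance of $f_*$ I would use the explicit description of the dual action from \ref{defDualAct}: $\delta_{A_i\rtimes\mathcal{G}}(\pi_{A_i}(a)\widehat\theta_{A_i}(x))=(\pi_{A_i}(a)\tens 1_{\widehat S})(\widehat\theta_{A_i}\tens\id_{\widehat S})\widehat\delta(x)$ and $\alpha_{A_i\rtimes\mathcal{G}}(n)=\widehat\theta_{A_i}(\widehat\alpha(n))$. Applying $f_*\tens\id_{\widehat S}$ to the right-hand side for $i=1$ and comparing with $\delta_{A_2\rtimes\mathcal{G}}(f_*(\pi_{A_1}(a)\widehat\theta_{A_1}(x)))$ reduces, after using the defining formula for $f_*$ and the fact that $f_*\circ\widehat\theta_{A_1}=\widehat\theta_{A_2}$ together with $\pi_{A_2}\circ f=f_*\circ\pi_{A_1}$ on $A_1$, to an identity between operators built from $\widehat\delta(x)$; this is a routine check on generators. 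Equivariance for the source maps, $f_*\circ\alpha_{A_1\rtimes\mathcal{G}}=\alpha_{A_2\rtimes\mathcal{G}}$, is immediate from $f_*\circ\widehat\theta_{A_1}=\widehat\theta_{A_2}$. Finally, functoriality of $-\rtimes\mathcal{G}$ follows by checking $(\id_{A})_*=\id_{A\rtimes\mathcal{G}}$ and $(g\circ f)_*=g_*\circ f_*$ on the dense set of products $\pi(a)\widehat\theta(x)$, which is immediate from the defining formula (taking a little care with the extension of $g$ to multipliers so that $g_*\circ f_*$ is defined when $f$ is degenerate).

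The main obstacle I expect is the bookkeeping around \emph{degenerate} $\ast$-homomorphisms: when $f(A_1)\not\subset A_2$, one must verify that $f_*$ genuinely lands in $\M(A_2\rtimes\mathcal{G})$ (not merely in some larger operator algebra) and that the composition $g_*\circ f_*$ in the functoriality statement is legitimate — this requires knowing that $f_*$ is itself non-degenerate, or at least extends suitably to multipliers, which in turn rests on $\pi_{A_2}$ being unital on $\M(A_2)$ and on $[\,f(A_1)A_2\,]=A_2$. Handling this cleanly is exactly the point of \ref{defEquiHombis} and \ref{lem19bis}, and the argument should go through by combining strict continuity of $\pi_{A_i}$ with the density $[\delta_{A_i}(A_i)(1\tens S)]=q_{\beta_{A_i}\alpha}(A_i\tens S)$.
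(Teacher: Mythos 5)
Your proposal is correct and follows essentially the same route as the paper: the paper also realizes $A_1\rtimes{\cal G}$ inside $\widetilde{\M}(A_1\tens\K)$, defines $f_*$ as the restriction of the relative-multiplier extension $f\tens\id_{\K}:\widetilde{\M}(A_1\tens\K)\rightarrow\M(A_2\tens\s H)$, verifies the generator formula $(f\tens\id_{\K})(\pi_{A_1,L}(a)(1_{A_1}\tens\rho(x)))=\pi_{A_2,L}(f(a))(1_{A_2}\tens\rho(x))$, checks $\widehat{\cal G}$-equivariance via \ref{defDualAct} 1, and proves non-degeneracy of $f_*$ by factorizing through $[f(A_1)A_2]=A_2$, exactly as you sketch. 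The only correction: in part 1 the morphism $f$ is \emph{not} assumed non-degenerate (that is the point of \ref{defEquiHombis}), so your opening appeal to a unital extension $\M(A_1)\rightarrow\M(A_2)$ should be dropped — the composite $\pi_{A_2}\circ f$ makes sense anyway because $\pi_{A_2}$ is defined on all of $\M(A_2)$, and the degeneracy issue you flag at the end is handled precisely by working in $\widetilde{\M}(A_1\tens\K)$.
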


\begin{proof}
1. Let $i=1,2$. Let us denote $\pi_{A_i,L}:{A_i}\rightarrow\Lin({A_i}\tens\s H)\,;\,a\mapsto(\id_{A_i}\tens L)\delta_{A_i}(a)$ and $B_i:=A_i\rtimes{\cal G}$. In this proof, we make the identifications (cf.\ \S \ref{sectionNotations})
\begin{center}
$
\widetilde{\M}(A_i\tens\K)\subset\M({A_i}\tens\K)=\Lin({A_i}\tens\s H).
$ 
\end{center}
We also identify
\begin{center}
$
\Lin({\cal E}_{{A_i},L})=\{T\in\Lin({A_i}\tens\s H)\,;\, Tq_{\beta_{A_i}\alpha}=T=q_{\beta_{A_i}\alpha}T\}.
$ 
\end{center}
We then have $B_i=[\pi_{{A_i},L}(a)(1_{A_i}\tens\rho(x))\,;\,a\in {A_i},\, x\in\widehat{S}]$. It follows from $L(S)\K=\K$ and $\delta_{A_i}({A_i})\subset\widetilde{\M}({A_i}\tens S)$ that $B_i\subset\widetilde{\M}({A_i}\tens\K)$. Let $f\tens\id_{\K}:\widetilde{\M}({A_1}\tens\K)\rightarrow\Lin(A_2\tens\s H)$ (cf.\ \ref{sectionNotations}). By a straightforward computation, we have
\begin{center}
$
(f\tens\id_{\K})(\pi_{{A_1},L}(a)(1_{A_1}\tens\rho(x)))=\pi_{A_2,L}(f(a))(1_{A_2}\tens\rho(x)),\quad \text{for all } a\in A_1 \text{ and } x\in\widehat{S}.
$
\end{center}
Hence, $(f\tens\id_{\K})(B_1)\subset\M(B_2)$. Let $f_*:=\restr{(f\tens\id_{\K})}{B_1}:B_1\rightarrow\M(B_2)$. We have proved that the *-homomorphism $f_*$ satisfies $f(\pi_{A_1}(a)\widehat{\theta}_{A_1}(x))=\pi_{A_2}(f(a))\widehat{\theta}_{A_2}(x)$ for all $a\in A_1$ and $x\in\widehat{S}$. In particular, for all $a\in A_1$ and $u\in\widehat{S}\tens\widehat{S}$ we have 
\[
(f_*\tens\id_{\widehat{S}})((\pi_{A_1}(a)\tens 1_{\widehat{S}})(\widehat{\theta}_{A_1}\tens\id_{\widehat{S}})(u))=(\pi_{A_2}(f(a))\tens 1_{\widehat{S}})(\widehat{\theta}_{A_2}\tens\id_{\widehat{S}})(u).
\]
Let $a\in A_1$ and $x,x'\in\widehat{S}$. By a straightforward computation, it follows from \ref{defDualAct} 1, the previous formula and the relation $\widehat{\delta}(x)(1_{\widehat{S}}\tens x')\in\widehat{S}\tens\widehat{S}$ that 
\[
(f_*\tens\id_{\widehat{S}})(\delta_{B_1}(\pi_{A_1}(a)\widehat{\theta}_{A_1}(x))(1_{B_1}\tens x'))=
\delta_{B_2}(f_*(\pi_{A_1}(a)\widehat{\theta}_{A_1}(x)))(1_{B_2}\tens x').
\]
Hence, $(f_*\tens\id_{\widehat{S}})\delta_{B_1}(\pi_{A_1}(a)\widehat{\theta}_{A_1}(x))=\delta_{B_2}(f_*(\pi_{A_1}(a)\widehat{\theta}_{A_1}(x)))$ for all $a\in A_1$ and $x\in\widehat{S}$. Moreover, it is easy to see that $f_*(\alpha_{A_1}(n)b)=\alpha_{A_2}(n)f_*(b)$ for all $b\in B_1$. Hence, $f_*$ is $\widehat{\cal G}$-equivariant.\newline
2. If $f$ is non-degenerate, then so is $f_*$. Indeed, we have $A_2=f(A_1)A_2$. Let $a\in A_1$, $b\in A_2$ and $x\in\widehat{S}$. By \ref{defCrossedProduct}, $\pi_{A_2}(f(a)b)\widehat{\theta}_{A_2}(x)=\pi_{A_2}(f(a))\pi_{A_2}(b)\widehat{\theta}_{A_2}(x)$ is the norm limit of finite sums of the form $\sum_i\pi_{A_2}(f(a))\widehat{\theta}_{A_2}(x_ix_i')\pi_{A_2}(b_i)=\sum_i f_*(\pi_{A_1}(a_i)\widehat{\theta}_{A_1}(x_i))\widehat{\theta}_{A_2}(x_i')\pi_{A_2}(b_i)$ with $x_i,x_i'\in\widehat{S}$ and $b_i\in A_2$. Hence, $\pi_{A_2}(f(a)b)\widehat{\theta}_{A_2}(x)\in [f_*(B_1)B_2]$. Hence, $f_*$ is non-degenerate. The functoriality of the correspondence $-\rtimes{\cal G}:{\sf Alg}_{\cal G}\rightarrow{\sf Alg}_{\widehat{\cal G}}$ easily follows.
\end{proof}

In a similar way, we define the crossed product of a C*-algebra $B$ by a strongly continuous action $(\alpha_B,\delta_B)$ of the dual measured quantum groupoid $\widehat{\cal G}$. 

\begin{nbs}
The *-representation\index[symbol]{pe@$\widehat{\pi}_{\lambda}$} 
\begin{center}
$
\widehat\pi_\lambda:=(\id_B\tens\lambda)\circ\delta_B:B\rightarrow\Lin(B\tens\s H)
$
\end{center}
of $B$ on the Hilbert $B$-module $B\tens\s H$ extends uniquely to a strictly/*-strongly continuous faithful *-representation $\widehat\pi_\lambda:\M(B)\rightarrow\Lin(B\tens\s H)$ such that $\widehat\pi_\lambda(1_B)=q_{\alpha_B\widehat\beta}$. Moreover, we have
$
\widehat\pi_\lambda(m)=\widehat\pi_\lambda(m)q_{\alpha_B\widehat\beta}=q_{\alpha_B\widehat\beta}\widehat\pi_\lambda(m)
$, 
for all $m\in\M(B)$. Consider the Hilbert $B$-module
\[
{\cal E}_{B,\lambda}:=q_{\alpha_B\widehat\beta}(B\tens\s H).
\]
\index[symbol]{ec@${\cal E}_{B,\lambda}$}By restricting $\widehat\pi_\lambda$, we obtain a strictly/*-strongly continuous faithful unital *-representation
\[
\widehat\pi:\M(B)\rightarrow\Lin({\cal E}_{B,\lambda})\; ;\; m\mapsto \restr{\widehat\pi_\lambda(m)}{{\cal E}_{B,\lambda}}\!.
\]
We have $[1_B\tens T,\,q_{\alpha_B\widehat\beta}]=0$ for all $T\in\M(S)$. We then obtain a strictly/*-strongly continuous unital *-representation
\[
\theta:\M(S)\rightarrow\Lin({\cal E}_{B,\lambda})\; ;\; T\mapsto \restr{(1_B\tens T)}{{\cal E}_{B,\lambda}}\!.
\]
\index[symbol]{pf@$\widehat{\pi}$, $\theta$}Note that if $\alpha_B$ is faithful, then so is $\theta$.
\end{nbs}

\begin{propdef}
The norm closed subspace of $\Lin({\cal E}_{B,\lambda})$ spanned by the products of the form $\widehat{\pi}(b)\theta(x)$ for $b\in B$ and $x\in S$ is a C*-subalgebra called the (reduced) crossed product of $B$ by the strongly continuous action $(\alpha_B,\delta_B)$ of $\widehat{\cal G}$ and denoted by $B\rtimes\widehat{\cal G}$.\index[symbol]{bc@$B\rtimes\widehat{\cal G}$, crossed product}
\end{propdef}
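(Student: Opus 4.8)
The plan is to follow the proof of Proposition-Definition~\ref{defCrossedProduct} line by line, exchanging the roles of $\cal G$ and $\widehat{\cal G}$ and replacing the structure maps of the weak Hopf $\Cstar$-algebra of $\cal G$ by those of $\widehat{\cal G}$ recalled in \S\ref{WHC*A}. Write $C:=[\widehat\pi(B)\theta(S)]\subseteq\Lin({\cal E}_{B,\lambda})$. The first step is to reduce the assertion that $C$ is a $\Cstar$-subalgebra to the single inclusion
\[
[\theta(S)\widehat\pi(B)]\subseteq[\widehat\pi(B)\theta(S)].
\]
Indeed, granting this, self-adjointness follows by taking adjoints (using $\theta(x)^\ast=\theta(x^\ast)$, $\widehat\pi(b)^\ast=\widehat\pi(b^\ast)$, $S^\ast=S$, $B^\ast=B$), which yields $C^\ast=[\theta(S)\widehat\pi(B)]\subseteq C$ and hence $C^\ast=C$; and closedness under multiplication follows from $C\cdot C=[\widehat\pi(B)\theta(S)\widehat\pi(B)\theta(S)]\subseteq[\widehat\pi(B)\widehat\pi(B)\,\theta(S)\theta(S)]=[\widehat\pi(B)\theta(S)]=C$, where I used that $\widehat\pi$ and $\theta$ are $\ast$-homomorphisms together with $[BB]=B$ and $[SS]=S$.

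Next I would unwind the definitions: for $b\in B$ and $x\in S$ one has
\[
\theta(x)\widehat\pi(b)=\restr{\bigl((1_B\tens L(x))(\id_B\tens\lambda)\delta_B(b)\bigr)}{{\cal E}_{B,\lambda}},
\]
and the task is to push the leg $1_B\tens L(x)$ to the right of $(\id_B\tens\lambda)\delta_B(b)$ modulo $[\widehat\pi(B)\theta(S)]$. The tools for this are exactly those available in the quantum-group case: the strong continuity of $(\alpha_B,\delta_B)$, i.e.\ $[\delta_B(B)(1_B\tens\widehat S)]=\delta_B(1_B)(B\tens\widehat S)$; the coassociativity $(\id_B\tens\widehat\delta)\delta_B=(\delta_B\tens\id_{\widehat S})\delta_B$ combined with $[\widehat\delta(\widehat S)(\widehat S\tens 1_{\widehat S})]=\widehat\delta(1_{\widehat S})(\widehat S\tens\widehat S)$; and the commutation data between the canonical representations $L$ of $S$ and $\lambda$ of $\widehat S$ encoded by $W\in\M(S\tens\lambda(\widehat S))$ and $V\in\M(\widehat S\tens S)$ (\S\ref{WHC*A}), supplemented by the $N$-relations of \ref{prop34} and \ref{NonComRel}. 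This is precisely the manipulation of the pentagon equation and commutation relations performed in 7.1~\cite{BS2}, transported here via Proposition~\ref{inifinproj}.

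The step I expect to be the main obstacle — indeed the only point that is genuinely new compared with the quantum-group setting — is the bookkeeping of the non-unital coproducts and of the projections $q_{\alpha_B\widehat\beta}=\widehat\pi_\lambda(1_B)$ and $\delta_B(1_B)$: one must check at every manipulation that the operators produced really preserve ${\cal E}_{B,\lambda}=q_{\alpha_B\widehat\beta}(B\tens\s H)$ and that no leg is absorbed into a projection when passing through $\delta_B(1_B)$. As in the proof of \ref{defCrossedProduct}, this is taken care of by the commutation relation $[1_B\tens T,\,q_{\alpha_B\widehat\beta}]=0$ for $T\in\M(S)$ (already used when defining $\theta$), by the identity $\delta_B(\alpha_B(n))=\delta_B(1_B)(1_B\tens\widehat\alpha(n))$, and by the relations of \ref{NonComRel} between $W,V,\widetilde V$ and $\alpha,\beta,\widehat\alpha,\widehat\beta$. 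Once this dictionary is set up, the remaining computations are routine and run exactly as in \ref{defCrossedProduct}.
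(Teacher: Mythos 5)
Your proposal is correct and follows exactly the route the paper intends: the paper itself gives no separate argument for $B\rtimes\widehat{\cal G}$, deferring to the $\cal G$-case (recalled from \cite{BC,C2}) "in a similar way", and your reduction of the C*-algebra property to the single inclusion $[\theta(S)\widehat\pi(B)]\subseteq[\widehat\pi(B)\theta(S)]$, together with the identification of the needed inputs ($W\in\M(S\tens\lambda(\widehat S))$ and the pentagon relation for the multiplicative partial isometries, strong continuity of $(\alpha_B,\delta_B)$, and the bookkeeping of $q_{\alpha_B\widehat\beta}$ and $\delta_B(1_B)$), is precisely the transported Baaj--Skandalis argument. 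Nothing further is needed beyond carrying out that standard computation.
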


In particular, the morphism $\widehat\pi$ (resp.\ ${\theta}$) defines a faithful unital *-homomorphism (resp.\ unital *-homomorphism) $\widehat\pi:\M(B)\rightarrow\M(B\rtimes\widehat{\cal G})$ (resp.\  ${\theta}:S\rightarrow\M(B\rtimes\widehat{\cal G})$). We will sometimes want to lay emphasis on $B$ by denoting the canonical morphism $\widehat\pi$ (resp.\ $\theta$) by $\widehat{\pi}_B:B\rightarrow\M(B\rtimes\widehat{\cal G})$ (resp.\ $\theta_B:S\rightarrow\M(B\rtimes\widehat{\cal G})$).

\medbreak

Since $[V,\,\beta(n^{\rm o})\tens 1]=0$, we have $[V_{23},q_{\alpha_B\beta,12}]=0$. The operator $V_{23}\in\Lin(B\tens\s H\tens\s H)$ restricts to a partial isometry 
\[
Y:=\restr{V_{23}}{{\cal E}_{B,\lambda}\tens\s H}\,\in\Lin({\cal E}_{B,\lambda}\tens\s H),
\]
whose initial and final projections are $Y^*Y=\restr{q_{\widehat{\alpha}\beta,23}}{{\cal E}_{B,\lambda}\tens\s H}$ and $YY^*=\restr{q_{\beta\alpha,23}}{{\cal E}_{B,\lambda}\tens\s H}$.

\begin{propdef}\label{defDualActBis}
Let 
\[
\delta_{B\rtimes\widehat{\cal G}}:B\rtimes\widehat{\cal G}\rightarrow\Lin({\cal E}_{B,\lambda}\tens\s H) \quad \text{and} \quad \beta_{B\rtimes\widehat{\cal G}}:N^{\rm o}\rightarrow\Lin({\cal E}_{B,\lambda})
\]
be the linear maps defined by:
\begin{itemize}
\item $\delta_{B\rtimes\widehat{\cal G}}(c):=Y(c\tens 1_{\s H})Y^*$, for all $c\in B\rtimes\widehat{\cal G}$;
\item $\beta_{ B\rtimes\widehat{\cal G}}(n^{\rm o}):=\theta(\beta(n^{\rm o}))=\restr{(1_B\tens\beta(n^{\rm o}))}{{\cal E}_{B,\lambda}\tens\s H}$, for all $n\in N$.
\end{itemize}
Then, $\delta_{B\rtimes\widehat{\cal G}}$ is a faithful *-homomorphism and $\beta_{ B\rtimes\widehat{\cal G}}$ is a non-degenerate *-homomorphism. Moreover, we have the following statements:
\begin{enumerate}
\item $\delta_{B\rtimes\widehat{\cal G}}(\widehat\pi(b)\theta(x))=(\widehat\pi(b)\tens 1_{S})
(\theta\tens\id_S)\delta(x)$, for all $b\in B$ and $x\in S$; in particular, $\delta_{B\rtimes\widehat{\cal G}}$ takes its values in $\M((B\rtimes\widehat{\cal G})\tens S)$;
\item $\beta_{B\rtimes\widehat{\cal G}}(n^{\rm o})\widehat\pi(b)\theta(x)=\widehat\pi(b)\theta(\beta(n^{\rm o})x)$ and $\widehat\pi(b)\theta(x)\beta_{B\rtimes\widehat{\cal G}}(n^{\rm o})=\widehat\pi(b)\theta(x\beta(n^{\rm o}))$ for all $n\in N$, $b\in B$ and $x\in S$.\qedhere
\end{enumerate}
\end{propdef}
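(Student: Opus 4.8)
The plan is to transport the proof of \ref{defDualAct} to the dual side, via the dictionary ${\cal G}\leftrightarrow\widehat{\cal G}$, $(S,\delta)\leftrightarrow(\widehat S,\widehat\delta)$, $\alpha\leftrightarrow\beta$, $\widehat\alpha\leftrightarrow\widehat\beta$, with conjugation by $\widetilde V_{23}$ replaced by conjugation by $V_{23}$, using \ref{inifinproj}, \ref{prop34}, \ref{NonComRel}, the relations $M\cap\widehat M'=\beta(N^{\rm o})$, $M'\cap\widehat M=\widehat{\beta}(N^{\rm o})$, $M'\cap\widehat M'=\widehat{\alpha}(N)$, and the inclusion $\lambda(\widehat S)\subset\widehat M$.

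First I would show that $c\mapsto Y(c\tens 1_{\s H})Y^{*}$ is a well-defined faithful $*$-homomorphism on $B\rtimes\widehat{\cal G}$. Since $Y$ is a partial isometry with initial projection $Y^{*}Y=\restr{q_{\widehat{\alpha}\beta,23}}{{\cal E}_{B,\lambda}\tens\s H}$, the formula defines a $*$-homomorphism as soon as $c\tens 1_{\s H}$ commutes with $q_{\widehat{\alpha}\beta,23}$ for every $c$ in the generating set $\{\widehat{\pi}(b)\theta(x)\,;\,b\in B,\,x\in S\}$ of $B\rtimes\widehat{\cal G}$. The third leg is untouched by $c\tens 1_{\s H}$, so this reduces, for $c=\theta(x)$, to $[x,\widehat{\alpha}(n)]=0$ (valid since $\widehat{\alpha}(N)\subset M'$), and, for $c=\widehat{\pi}(b)$, to the commutation of $\widehat{\alpha}(N)$ with the $\s H$-content $\lambda(\widehat S)\subset\widehat M$ of $\widehat{\pi}_{\lambda}(b)$ (valid since $\widehat{\alpha}(N)\subset\widehat M'$). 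Multiplicativity and $*$-preservation are then automatic. For faithfulness, $\delta_{B\rtimes\widehat{\cal G}}(c)=0$ yields $(c\tens 1_{\s H})\restr{q_{\widehat{\alpha}\beta,23}}{{\cal E}_{B,\lambda}\tens\s H}=0$ after multiplying $Y(c\tens 1)Y^{*}=0$ on the left by $Y^{*}$ and on the right by $Y$ and invoking the commutation just established; slicing the last leg and summing over an orthonormal basis of $\s H$ then produces $c$ multiplied by an invertible positive operator, so $c=0$, exactly as in \ref{defDualAct}.

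For item~1, since $\delta_{B\rtimes\widehat{\cal G}}$ is a $*$-homomorphism it suffices to compute it on the multipliers $\widehat{\pi}(b)$ and $\theta(x)$ separately. Because the $\s H$-leg of $\widehat{\pi}_{\lambda}(b)$ lies in $\lambda(\widehat S)\subset\widehat M$ while $V\in\widehat M'\tens M$, the operator $V_{23}$ commutes with $\widehat{\pi}(b)\tens 1_{\s H}$, whence $Y(\widehat{\pi}(b)\tens 1)Y^{*}=(\widehat{\pi}(b)\tens 1)YY^{*}=\widehat{\pi}(b)\tens 1_{S}$ inside $\M((B\rtimes\widehat{\cal G})\tens S)$. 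On the other hand $\theta(x)\tens 1_{\s H}=\restr{(1_{B}\tens x\tens 1)}{}$ with $x\in S\subset M$, hence $V_{23}(1_{B}\tens x\tens 1)V_{23}^{*}=1_{B}\tens V(x\tens 1)V^{*}=1_{B}\tens\delta(x)$, which restricts legitimately to ${\cal E}_{B,\lambda}\tens\s H$ because $\widehat{\beta}(N^{\rm o})\subset M'$; thus $\delta_{B\rtimes\widehat{\cal G}}(\theta(x))=(\theta\tens\id_{S})\delta(x)$. Multiplying the two computations gives item~1. Since $\widehat{\pi}(b)\in\M(B\rtimes\widehat{\cal G})$ and $\delta(x)\in\M(S\tens S)$ (weak Hopf $\Cstar$-algebra structure), the right-hand side of item~1 lies in $\M((B\rtimes\widehat{\cal G})\tens S)$; as $B\rtimes\widehat{\cal G}=[\widehat{\pi}(b)\theta(x)\,;\,b\in B,\,x\in S]$, it follows that $\delta_{B\rtimes\widehat{\cal G}}$ takes its values there.

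Finally, $\beta_{B\rtimes\widehat{\cal G}}=\theta\circ\beta$ is a $*$-homomorphism, non-degenerate because $\theta:\M(S)\to\Lin({\cal E}_{B,\lambda})$ and $\beta:N^{\rm o}\to\M(S)$ are both unital; and $\beta(N^{\rm o})\subset M\cap\widehat M'\subset\widehat M'$ commutes with $\lambda(\widehat S)\subset\widehat M$, so $\theta(\beta(n^{\rm o}))$ commutes with $\widehat{\pi}(b)$, which gives $\beta_{B\rtimes\widehat{\cal G}}(n^{\rm o})\widehat{\pi}(b)\theta(x)=\widehat{\pi}(b)\theta(\beta(n^{\rm o})x)$; the second identity of item~2 is immediate from multiplicativity of $\theta$ and from $x\beta(n^{\rm o})\in S$. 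The main obstacle is not any of these formal steps but the bookkeeping with the non-unital coproduct and with the projections $q_{\widehat{\alpha}\beta}$, $q_{\beta\alpha}$, $q_{\alpha_{B}\widehat{\beta}}$: one must verify that $Y$ restricts to ${\cal E}_{B,\lambda}\tens\s H$ with the stated initial and final projections, that each of the restrictions appearing above is legitimate, and that the symbol $1_{S}$ in item~1 is the appropriate cut-down unit of $\M((B\rtimes\widehat{\cal G})\tens S)$ — all routine given \cite{BC,C2}, but where the care is needed.
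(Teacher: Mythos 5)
The paper does not prove this statement itself (it is recalled from \cite{BC,C2}), so there is no in-paper argument to compare against; your proposal supplies the standard one, obtained from \ref{defDualAct} by the expected dictionary, and it is correct in structure and in almost every detail. The key points are all in place: the commutation $[c\tens 1_{\s H},\,q_{\widehat\alpha\beta,23}]=0$ on the generators $\theta(x)$ and $\widehat\pi(b)$ (via $S\subset M\subset\widehat\alpha(N)'$ and $\lambda(\widehat S)\subset\widehat M\subset\widehat\alpha(N)'$) is exactly what makes $c\mapsto Y(c\tens 1_{\s H})Y^*$ multiplicative; the identities $Y(\widehat\pi(b)\tens 1)Y^*=(\widehat\pi(b)\tens 1)YY^*$ (from $V\in\widehat M'\tens M$) and $Y(\theta(x)\tens 1)Y^*=(\theta\tens\id_S)\delta(x)$ (from $\delta(x)=V(x\tens 1)V^*$, restriction being legitimate since the first leg of $\delta(x)$ lies in $M\subset\widehat\beta(N^{\rm o})'$) give item~1; and $\beta(N^{\rm o})\subset M\cap\widehat M'$ gives item~2 and the non-degeneracy of $\beta_{B\rtimes\widehat{\cal G}}$.

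The one step you should repair is the faithfulness argument. From $Y(c\tens 1)Y^*=0$ you correctly deduce $(c\tens 1_{\s H})\restr{q_{\widehat\alpha\beta,23}}{{\cal E}_{B,\lambda}\tens\s H}=0$, but ``slicing the last leg and summing over an orthonormal basis of $\s H$'' does not converge: the entries $\beta(e_{ji}^{(l){\rm o}})$ are not trace-class, so the would-be sum of slices $\sum_k(\id\tens\id\tens\omega_{e_k,e_k})(q_{\widehat\alpha\beta,23})$ diverges. The correct move is to slice the third leg with a single $\omega\in\B(\s H)_*$, which yields
\[
c\,\Bigl(1_B\tens\sum_{l}n_l^{-1}\sum_{i,j}\omega(\beta(e_{ji}^{(l){\rm o}}))\,\widehat\alpha(e_{ij}^{(l)})\Bigr)=0 ;
\]
since $\beta$ is faithful and $N$ is finite-dimensional, these slices span all of $\widehat\alpha(N)$ as $\omega$ varies, in particular a suitable linear combination realizes $\widehat\alpha(1)=1$ and hence $c=0$.
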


\begin{propdef}
With the notations of \ref{defDualActBis}, the pair $(\beta_{B\rtimes\widehat{\cal G}},\delta_{B\rtimes\widehat{\cal G}})$ is a strongly continuous action of ${\cal G}$ on ${B\rtimes\widehat{\cal G}}$ called the dual action of $(\alpha_B,\delta_B)$.
\end{propdef}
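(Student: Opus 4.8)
The plan is to verify that $(\beta_{B\rtimes\widehat{\cal G}},\delta_{B\rtimes\widehat{\cal G}})$ satisfies the three conditions of Definition \ref{defactmqg} for the groupoid ${\cal G}$ acting on the C*-algebra $B\rtimes\widehat{\cal G}$, together with the strong-continuity condition; that $\delta_{B\rtimes\widehat{\cal G}}$ is a faithful *-homomorphism into $\M((B\rtimes\widehat{\cal G})\tens S)$ and that $\beta_{B\rtimes\widehat{\cal G}}$ is a non-degenerate *-homomorphism are already recorded in \ref{defDualActBis}. Two descriptions of $\delta_{B\rtimes\widehat{\cal G}}$ will be used in tandem: the generator formula $\delta_{B\rtimes\widehat{\cal G}}(\widehat\pi(b)\theta(x))=(\widehat\pi(b)\tens 1_S)(\theta\tens\id_S)\delta(x)$ of \ref{defDualActBis}\,(1), and the spatial description $\delta_{B\rtimes\widehat{\cal G}}(c)=Y(c\tens 1_{\s H})Y^*$ with $Y=\restr{V_{23}}{{\cal E}_{B,\lambda}\tens\s H}$, in which $B\rtimes\widehat{\cal G}\subset\Lin({\cal E}_{B,\lambda})$ operates on the first two legs of $B\tens\s H\tens\s H$ and $V$ on the last two.

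First I would treat condition\,(1) together with strong continuity. Multiplying the generator formula on the right by $1\tens y$ $(y\in S)$ and using $\theta(1_S)=1$ gives $\delta_{B\rtimes\widehat{\cal G}}(\widehat\pi(b)\theta(x))(1\tens y)=(\widehat\pi(b)\tens 1_S)(\theta\tens\id_S)\bigl(\delta(x)(1_S\tens y)\bigr)$. The density relation $[\delta(S)(1_S\tens S)]=\delta(1_S)(S\tens S)=q_{\beta\alpha}(S\tens S)$ from \S\ref{WHC*A}, the identity $[\widehat\pi(B)\theta(S)]=B\rtimes\widehat{\cal G}$, and the commutation $[\beta_{B\rtimes\widehat{\cal G}}(N^{\rm o}),\widehat\pi(B)]=0$ (a consequence of \ref{defDualActBis}\,(2)) then give $[\delta_{B\rtimes\widehat{\cal G}}(B\rtimes\widehat{\cal G})(1\tens S)]=q\,((B\rtimes\widehat{\cal G})\tens S)$ with $q:=(\theta\tens\id_S)(q_{\beta\alpha})$; since $\theta\circ\beta=\beta_{B\rtimes\widehat{\cal G}}$ one has $q=q_{\beta_{B\rtimes\widehat{\cal G}}\alpha}$, and indeed $q=\restr{q_{\beta\alpha,23}}{{\cal E}_{B,\lambda}\tens\s H}=YY^*$. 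This is the strong-continuity condition, and by Lemma \ref{lem19bis} (see also \ref{rk15}) it upgrades to condition\,(1): $\delta_{B\rtimes\widehat{\cal G}}$ extends strictly continuously with $\delta_{B\rtimes\widehat{\cal G}}(1_{B\rtimes\widehat{\cal G}})=q_{\beta_{B\rtimes\widehat{\cal G}}\alpha}$.

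For condition\,(3) I would compute spatially: $\delta_{B\rtimes\widehat{\cal G}}(\beta_{B\rtimes\widehat{\cal G}}(n^{\rm o}))=Y(1_B\tens\beta(n^{\rm o})\tens 1_{\s H})Y^*$ restricts from $1_B\tens V(\beta(n^{\rm o})\tens 1)V^*$, which by \ref{prop34}\,(2) and \ref{inifinproj}\,(3) equals $1_B\tens(1\tens\beta(n^{\rm o}))VV^*=(1_B\tens 1_{\s H}\tens\beta(n^{\rm o}))(1_B\tens q_{\beta\alpha})$. Using the identification $YY^*=q_{\beta_{B\rtimes\widehat{\cal G}}\alpha}$ from the previous step, this reads $\delta_{B\rtimes\widehat{\cal G}}(\beta_{B\rtimes\widehat{\cal G}}(n^{\rm o}))=(1_{B\rtimes\widehat{\cal G}}\tens\beta(n^{\rm o}))\,q_{\beta_{B\rtimes\widehat{\cal G}}\alpha}$. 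Finally, taking adjoints in the relation $\delta(\beta(n^{\rm o}))=\delta(1_S)(1_S\tens\beta(n^{\rm o}))$ of \S\ref{WHC*A} gives $[q_{\beta\alpha},1_S\tens\beta(n^{\rm o})]=0$, hence $[q_{\beta_{B\rtimes\widehat{\cal G}}\alpha},1\tens\beta(n^{\rm o})]=0$ after applying $\theta\tens\id_S$, so condition\,(3) holds.

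Coassociativity, condition\,(2), is the main point and the expected pentagon computation. Working inside $\Lin(B\tens\s H\tens\s H\tens\s H)$ with the $B\rtimes\widehat{\cal G}$-part carried on legs $1,2$, the spatial formula gives $(\delta_{B\rtimes\widehat{\cal G}}\tens\id_S)\delta_{B\rtimes\widehat{\cal G}}(c)=V_{23}V_{24}(c\tens 1\tens 1)V_{24}^*V_{23}^*$, while $\delta(x)=V(x\tens 1)V^*$ and the corepresentation identity $(\id\tens\delta)(V)=V_{12}V_{13}$ (a form of the pentagon equation for $V$, cf.\ \ref{inifinproj}, \ref{prop34}, \ref{NonComRel}) give $(\id_{B\rtimes\widehat{\cal G}}\tens\delta)\delta_{B\rtimes\widehat{\cal G}}(c)=V_{34}V_{23}(c\tens 1\tens 1)V_{23}^*V_{34}^*$. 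The pentagon relation $V_{23}V_{24}V_{34}=V_{34}V_{23}$ rewrites the second expression as $V_{23}V_{24}\bigl(V_{34}(c\tens 1\tens 1)V_{34}^*\bigr)V_{24}^*V_{23}^*$, and since $c\tens 1\tens 1$ lives on legs $1,2$ and $V_{34}$ acts on legs $3,4$ they commute, so the two expressions agree. I expect the genuine difficulty to lie not in this heuristic but in the careful bookkeeping of the partial isometry $Y$ and of the range/source projections $q_{\beta\alpha}$, $q_{\widehat\alpha\beta}$, $q_{\beta_{B\rtimes\widehat{\cal G}}\alpha}$ under these conjugations — controlled by the intertwining relations of \ref{prop34} and \ref{NonComRel} and by \ref{inifinproj} — exactly as in the companion statement for the dual action of $(\beta_A,\delta_A)$ on $A\rtimes{\cal G}$ proved above and in \cite{BS2}, \S 7.
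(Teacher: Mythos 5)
Your proposal is correct and follows essentially the approach the paper recalls from \cite{BC} (modeled on \S 7 of \cite{BS2}): strong continuity from the generator formula of \ref{defDualActBis} combined with $[\delta(S)(1_S\tens S)]=q_{\beta\alpha}(S\tens S)$, condition 3 from the intertwining relations of $V$, and coassociativity from the pentagon equation for the multiplicative partial isometry $V$. The one step you defer — absorbing the projection $V_{34}^{\vphantom{*}}V_{34}^*=q_{\beta\alpha,34}$ that survives after commuting $c\tens 1\tens 1$ past $V_{34}$ — does close: by \ref{prop34}\,2 one has $V_{24}q_{\beta\alpha,34}=q_{\widehat\alpha\beta,23}V_{24}$, and $V_{23}q_{\widehat\alpha\beta,23}=V_{23}^{\vphantom{*}}V_{23}^*V_{23}^{\vphantom{*}}=V_{23}^{\vphantom{*}}$ by \ref{inifinproj}\,3, so $V_{23}V_{24}c_{12}q_{\beta\alpha,34}V_{24}^*V_{23}^*=V_{23}V_{24}c_{12}V_{24}^*V_{23}^*$ as required.
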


Let us state the functoriality of the crossed product construction.

\begin{prop}\label{FunctorCrossedProdBis}
For $i=1,2$, let $B_i$ be a $\widehat{\cal G}$-C*-algebra. Let $f:B_1\rightarrow\M(B_2)$ be $\widehat{\cal G}$-equivariant *-homomorphism (cf.\ \ref{defEquiHombis}). 
\begin{enumerate}
\item There exists a unique *-homomorphism $f_*:B_1\rtimes\widehat{\cal G}\rightarrow\M(B_2\rtimes\widehat{\cal G})$ such that for all $b\in B_1$ and $x\in\widehat{S}$, $f(\widehat{\pi}_{B_1}(b)\theta_{B_1}(x))=\widehat{\pi}_{B_2}(f(a))\theta_{B_2}(x)$. Moreover, $f_*$ is ${\cal G}$-equivariant.
\item The correspondence $-\rtimes\widehat{\cal G}:{\sf Alg}_{\widehat{\cal G}}\rightarrow{\sf Alg}_{\cal G}$ is functorial.\qedhere
\end{enumerate}
\end{prop}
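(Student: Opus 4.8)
The plan is to transcribe the proof of Proposition~\ref{FunctorCrossedProd}, interchanging the roles of ${\cal G}$ and $\widehat{\cal G}$; the dictionary is $S\leftrightarrow\widehat S$, $L\leftrightarrow\lambda$, the identity representation of $S$ on $\s H$ playing the role previously played by $\rho$, $\delta_{A_i}\leftrightarrow\delta_{B_i}$, $\pi_{A_i,L}=(\id\tens L)\delta_{A_i}\leftrightarrow\widehat\pi_{B_i,\lambda}:=(\id_{B_i}\tens\lambda)\delta_{B_i}$, the coproduct $\widehat\delta$ of $\widehat{\cal G}$ (on $\widehat S$) $\leftrightarrow$ the coproduct $\delta$ of ${\cal G}$ (on $S$), and \ref{defDualAct} $\leftrightarrow$ \ref{defDualActBis}. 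Write $C_i:=B_i\rtimes\widehat{\cal G}$. First I would fix, exactly as in the quoted proof, the identifications
\[
\widetilde{\M}(B_i\tens\K)\subset\M(B_i\tens\K)=\Lin(B_i\tens\s H),\qquad \Lin({\cal E}_{B_i,\lambda})=\{T\in\Lin(B_i\tens\s H)\,;\,Tq_{\alpha_{B_i}\widehat\beta}=T=q_{\alpha_{B_i}\widehat\beta}T\},
\]
and record that $C_i=[\widehat\pi_{B_i,\lambda}(b)(1_{B_i}\tens x)\,;\,b\in B_i,\,x\in S]$ inside $\Lin(B_i\tens\s H)$ (using $[1_{B_i}\tens x,\,q_{\alpha_{B_i}\widehat\beta}]=0$ for $x\in S$).

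Next I would construct $f_*$. Since $\lambda$ is a faithful non-degenerate representation of $\widehat S$ one has $\lambda(\widehat S)\K=\K$, and since $(\alpha_{B_i},\delta_{B_i})$ is strongly continuous one has $\delta_{B_i}(B_i)\subset\widetilde{\M}(B_i\tens\widehat S)$; just as in the quoted proof these two facts yield $C_i\subset\widetilde{\M}(B_i\tens\K)$. Consider then $f\tens\id_{\K}:\widetilde{\M}(B_1\tens\K)\rightarrow\Lin(B_2\tens\s H)$ (cf.\ \S\ref{sectionNotations}). Using that $f$ is $\widehat{\cal G}$-equivariant, i.e.\ $(f\tens\id_{\widehat S})\delta_{B_1}=\delta_{B_2}\circ f$, a direct computation gives
\[
(f\tens\id_{\K})\bigl(\widehat\pi_{B_1,\lambda}(b)(1_{B_1}\tens x)\bigr)=\widehat\pi_{B_2,\lambda}(f(b))(1_{B_2}\tens x),\qquad b\in B_1,\ x\in S,
\]
hence $(f\tens\id_{\K})(C_1)\subset\M(C_2)$; I then set $f_*:=\restr{(f\tens\id_{\K})}{C_1}:C_1\rightarrow\M(C_2)$, which is the unique *-homomorphism with $f_*(\widehat\pi_{B_1}(b)\theta_{B_1}(x))=\widehat\pi_{B_2}(f(b))\theta_{B_2}(x)$ for $b\in B_1$ and $x\in S$ (and $f_*(C_1)\subset C_2$ when $f(B_1)\subset B_2$).

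For the ${\cal G}$-equivariance of $f_*$ I would mimic the corresponding part of the quoted proof with \ref{defDualActBis} in place of \ref{defDualAct}: from the displayed formula one first deduces, for $u\in S\tens S$,
\[
(f_*\tens\id_S)\bigl((\widehat\pi_{B_1}(b)\tens 1_S)(\theta_{B_1}\tens\id_S)(u)\bigr)=(\widehat\pi_{B_2}(f(b))\tens 1_S)(\theta_{B_2}\tens\id_S)(u);
\]
then, combining \ref{defDualActBis} 1, this identity and the relation $\delta(x)(1_S\tens x')\in S\tens S$ (valid since $\delta(x)(1_S\tens x')\in[\delta(S)(1_S\tens S)]=\delta(1_S)(S\tens S)\subset S\tens S$), one obtains $(f_*\tens\id_S)\delta_{C_1}(\widehat\pi_{B_1}(b)\theta_{B_1}(x))=\delta_{C_2}(f_*(\widehat\pi_{B_1}(b)\theta_{B_1}(x)))$ on generators, hence $(f_*\tens\id_S)\delta_{C_1}=\delta_{C_2}\circ f_*$; since the source map $\beta_{C_i}$ of the dual action equals $\theta_{B_i}\circ\beta$ and $\theta_{B_i}$ is untouched by $f_*$, one also gets $f_*(\beta_{C_1}(n^{\rm o})c)=\beta_{C_2}(n^{\rm o})f_*(c)$ for $c\in C_1$, $n\in N$, so $f_*$ is ${\cal G}$-equivariant. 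Finally, for part~2: if $f$ is non-degenerate then $[f(B_1)B_2]=B_2$, and writing $\widehat\pi_{B_2}(f(b)b')\theta_{B_2}(x)=\widehat\pi_{B_2}(f(b))\widehat\pi_{B_2}(b')\theta_{B_2}(x)$ as a norm limit of finite sums $\sum_i\widehat\pi_{B_2}(f(b))\theta_{B_2}(x_ix_i')\widehat\pi_{B_2}(b_i')=\sum_i f_*(\widehat\pi_{B_1}(b)\theta_{B_1}(x_i))\theta_{B_2}(x_i')\widehat\pi_{B_2}(b_i')$ (as in the quoted proof, using that the C*-algebra $C_2$ is self-adjoint, so $C_2=[\theta_{B_2}(S)\widehat\pi_{B_2}(B_2)]$) shows it lies in $[f_*(C_1)C_2]$, whence $f_*$ is non-degenerate; functoriality of $-\rtimes\widehat{\cal G}:{\sf Alg}_{\widehat{\cal G}}\rightarrow{\sf Alg}_{\cal G}$ follows at once. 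I do not anticipate a genuine obstacle — the argument is pure bookkeeping — the only step deserving a moment's attention being the relative-multiplier inclusion $C_i\subset\widetilde{\M}(B_i\tens\K)$, which relies on the non-degeneracy of $\lambda$ together with the strong continuity of $\delta_{B_i}$.
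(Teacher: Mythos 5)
Your proposal is correct and is exactly the argument the paper intends: Proposition \ref{FunctorCrossedProdBis} is stated without proof precisely because it is the mutatis mutandis transcription of the proof of \ref{FunctorCrossedProd}, and your dictionary ($S\leftrightarrow\widehat S$, $\pi_L\leftrightarrow\widehat\pi_\lambda$, $\rho\leftrightarrow L$, \ref{defDualAct}$\leftrightarrow$\ref{defDualActBis}) is the right one, including the two points that actually need checking — the inclusion $C_i\subset\widetilde{\M}(B_i\tens\K)$ via non-degeneracy of $\lambda$ and strong continuity of $\delta_{B_i}$, and the relation $\delta(y)(1_S\tens y')\in S\tens S$ used for equivariance. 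You also correctly read the statement's typos ($f_*$ on the left, $f(b)$ on the right, $x\in S$), so nothing further is needed.
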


In the proposition below, we investigate the image of a linking $\cal G$-C*-algebra (resp.\ linking $\widehat{\cal G}$-C*-algebra) by the functor $-\rtimes{\cal G}:{\sf Alg}_{\cal G}\rightarrow{\sf Alg}_{\widehat{\cal G}}$ (resp.\ $-\rtimes\widehat{\cal G}:{\sf Alg}_{\widehat{\cal G}}\rightarrow{\sf Alg}_{\cal G}$).

\begin{lem}\label{lem24}
Let $(A,\beta_A,\delta_A)$ be a $\cal G$-C*-algebra {\rm(}resp.\ $\widehat{\cal G}$-C*-algebra{\rm)}. For all $m\in\M(A)$, $\pi(m)\in\M(A\rtimes{\cal G})$ {\rm(}resp.\ $\widehat{\pi}(m)\in\M(A\rtimes\widehat{\cal G})${\rm)} is $\delta_{A\rtimes{\cal G}}$-invariant {\rm(}resp.\ $\delta_{A\rtimes\widehat{\cal G}}$-invariant{\rm)}.
\end{lem}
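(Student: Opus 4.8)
The assertion $\pi(m)\in\M(A\rtimes\mathcal G)$ is already recorded right after \ref{defCrossedProduct}, so the only thing to prove is the $\delta_{A\rtimes\mathcal G}$-invariance of $\pi(m)$, namely $\delta_{A\rtimes\mathcal G}(\pi(m))=q_{\alpha_{A\rtimes\mathcal G}\beta}(\pi(m)\tens 1_{\widehat S})$; I would treat the $\mathcal G$-case and obtain the $\widehat{\mathcal G}$-case by the parallel argument. Recall that $\delta_{A\rtimes\mathcal G}(b)=X(b\tens 1_{\s H})X^*$ with $X=\restr{\widetilde V_{23}}{\mathcal E_{A,L}\tens\s H}$, that $q_{\alpha_{A\rtimes\mathcal G}\beta}=\delta_{A\rtimes\mathcal G}(1_{A\rtimes\mathcal G})=XX^*$, and that $\pi(m)$ is the compression to $\mathcal E_{A,L}$ of $\pi_L(m)=(\id_A\tens L)\delta_A(m)\in\M(A\tens S)$. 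So the statement reduces to showing that $\pi(m)\tens 1_{\s H}$ commutes with $X$ (hence with $XX^*$): then $\delta_{A\rtimes\mathcal G}(\pi(m))=X(\pi(m)\tens 1_{\s H})X^*=(\pi(m)\tens 1_{\s H})XX^*=q_{\alpha_{A\rtimes\mathcal G}\beta}(\pi(m)\tens 1_{\widehat S})$, as wanted.

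To get that commutation I would lift everything to $\Lin(A\tens\s H\tens\s H)$. Here $X$ is the restriction of $\widetilde V_{23}$ and $\pi(m)\tens 1_{\s H}$ is the restriction of $(\pi_L(m))_{12}$, both to the submodule $\mathcal E_{A,L}\tens\s H=q_{\beta_A\alpha,12}(A\tens\s H\tens\s H)$, which each of these operators preserves: $\widetilde V_{23}$ because $[\widetilde V,\alpha(n)\tens 1]=0$ gives $[\widetilde V_{23},q_{\beta_A\alpha,12}]=0$, and $(\pi_L(m))_{12}$ because $\pi_L(m)=q_{\beta_A\alpha}\pi_L(m)=\pi_L(m)q_{\beta_A\alpha}$. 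Hence it suffices to prove $[(\pi_L(m))_{12},\widetilde V_{23}]=0$ in $\Lin(A\tens\s H\tens\s H)$. This is the crux, and it rests on a leg-wise commutation: the second leg of $(\pi_L(m))_{12}$ lies in $S\subseteq M$ (as $S$ is weakly dense in $M$) and its third leg is trivial, while $\widetilde V\in M'\tens\widehat M'$, so the second (overall) leg of $\widetilde V_{23}$ lies in $M'$ and its first leg is trivial; since $M$ and $M'$ commute in $\B(\s H)$, a routine slicing argument yields the vanishing of the commutator. Turning this leg-wise bookkeeping into a rigorous statement at the level of multipliers (rather than of algebra elements) is the step that needs the most care, but it is standard.

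Once $[(\pi_L(m))_{12},\widetilde V_{23}]=0$ is established, restricting to $\mathcal E_{A,L}\tens\s H$ gives $[\pi(m)\tens 1_{\s H},X]=0$ and $[\pi(m)\tens 1_{\s H},X^*]=0$, and the computation of the first paragraph goes through, proving that $\pi(m)$ is $\delta_{A\rtimes\mathcal G}$-invariant. For a $\widehat{\mathcal G}$-C*-algebra $B$ one repeats the argument verbatim with $\widetilde V$ replaced by $V$ (so $\delta_{B\rtimes\widehat{\mathcal G}}(c)=Y(c\tens 1_{\s H})Y^*$ with $Y=\restr{V_{23}}{\mathcal E_{B,\lambda}\tens\s H}$) and $\pi_L$ replaced by $\widehat\pi_\lambda=(\id_B\tens\lambda)\delta_B$: the commutation required is now $[(\widehat\pi_\lambda(m))_{12},V_{23}]=0$, which holds because the second leg of $\widehat\pi_\lambda(m)$ lies in $\lambda(\widehat S)\subseteq\widehat M$ — using $W\in\M(S\tens\lambda(\widehat S))$ together with $W\in M\tens\widehat M$ — while the second leg of $V_{23}$ lies in $\widehat M'$, and $\widehat M$ commutes with $\widehat M'$.
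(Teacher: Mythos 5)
Your proof is correct, but it follows a genuinely different route from the paper's. The paper's argument is purely algebraic and very short: it takes the formula $\delta_{A\rtimes{\cal G}}(\pi(a)\widehat{\theta}(x))=(\pi(a)\tens 1_{\widehat S})(\widehat{\theta}\tens\id_{\widehat S})\widehat{\delta}(x)$ from \ref{defDualAct}~1, lets $x$ run over an approximate unit of $\widehat S$, and uses $(\widehat{\theta}\tens\id_{\widehat{S}})\widehat{\delta}(1_{\widehat{S}})=(\widehat{\theta}\tens\id_{\widehat{S}})(q_{\widehat{\alpha}\beta})=q_{\alpha_{A\rtimes{\cal G}}\beta}$ together with strict continuity of $\widehat\theta$, $\pi$ and $\delta_{A\rtimes{\cal G}}$ to get $\delta_{A\rtimes{\cal G}}(\pi(m))=q_{\alpha_{A\rtimes{\cal G}}\beta}(\pi(m)\tens 1_{\widehat S})$ directly. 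You instead return to the spatial implementation $\delta_{A\rtimes{\cal G}}=\mathrm{Ad}(X)$ with $X=\restr{\widetilde V_{23}}{{\cal E}_{A,L}\tens\s H}$ and reduce everything to the commutation $[(\pi_L(m))_{12},\widetilde V_{23}]=0$, which you correctly derive from $\widetilde V\in M'\tens\widehat M'$ and the fact that the second leg of $\pi_L(m)$ lies in $\M(S)\subset M$ (for $m\in\M(A)$ it is $\M(S)$ rather than $S$, but $\M(S)\subset S''=M$ all the same; the dual case with $\lambda(\widehat S)\subset\widehat M$ and $V\in\widehat M'\tens M$ is likewise fine). What the paper's route buys is that it needs no multiplier-level leg bookkeeping at all — everything is already packaged in \ref{defDualAct}~1 — whereas your route has to justify two standard but nontrivial points: that $\M(A\tens S)_{12}$ commutes with $(M'\tens\widehat M')_{23}$ as multipliers, and that the strictly continuous extension of $\delta_{A\rtimes{\cal G}}$ to $\M(A\rtimes{\cal G})$ is still implemented by $\mathrm{Ad}(X)$ (alternatively, once $[\pi(m)\tens 1_{\s H},X]=0$ is known, one can bypass the latter by computing $\delta_{A\rtimes{\cal G}}(\pi(m))\delta_{A\rtimes{\cal G}}(b)=X(\pi(m)b\tens 1)X^*=(\pi(m)\tens 1)q_{\alpha_{A\rtimes{\cal G}}\beta}\,\delta_{A\rtimes{\cal G}}(b)$ for $b\in A\rtimes{\cal G}$). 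What your route buys is independence from \ref{defDualAct}~1 and a transparent structural reason for the invariance, namely that $\widetilde V$ lives in $M'\tens\widehat M'$ while $\pi_L$ lands in $\M(A\tens S)$.
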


\begin{proof}
We have $(\widehat{\theta}\tens\id_{\widehat{S}})\widehat{\delta}(1_{\widehat{S}})=(\widehat{\theta}\tens\id_{\widehat{S}})(q_{\widehat{\alpha}\beta})=q_{\alpha_{A\rtimes{\cal G}}\beta}$. By strict continuity of $\widehat{\theta}$ and $\delta_{A\rtimes{\cal G}}$, it follows from \ref{defDualAct} 1 that $\delta_{A\rtimes{\cal G}}(\pi(a))=(\pi(a)\tens 1_{\widehat{S}})q_{\alpha_{A\rtimes{\cal G}}\beta}$ for all $a\in A$. Hence, $\delta_{A\rtimes{\cal G}}(\pi(m))=q_{\alpha_{A\rtimes{\cal G}}\beta}(\pi(m)\tens 1_{\widehat{S}})$ by strict continuity of $\pi$ and $\delta_{A\rtimes{\cal G}}$, {\it i.e.\ }$\pi(m)$ is $\delta_{A\rtimes{\cal G}}$-invariant. 
\end{proof}

\begin{prop}\label{prop40}
If the quintuple $(J,\beta_J,\delta_J,e_1,e_2)$ {\rm(}resp.\ $(K,\alpha_K,\delta_K,f_1,f_2)${\rm)} is a linking $\cal G$-C*-algebra {\rm(}resp.\ linking $\widehat{\cal G}$-C*-algebra{\rm)}, then the quintuple $(J\rtimes{\cal G},\alpha_{J\rtimes{\cal G}},\delta_{J\rtimes{\cal G}},\pi(e_1),\pi(e_2))$ {\rm(}resp.\ $(K\rtimes\widehat{\cal G},\beta_{K\rtimes\widehat{\cal G}},\delta_{K\rtimes\widehat{\cal G}},\widehat{\pi}(f_1),\widehat{\pi}(f_2))${\rm)} is a linking $\widehat{\cal G}$-C*-algebra {\rm(}resp.\ linking $\cal G$-C*-algebra{\rm)}.
\end{prop}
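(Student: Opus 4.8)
The plan is to verify the three defining conditions of \ref{defLinkAlg} for the quintuple $(J\rtimes{\cal G},\alpha_{J\rtimes{\cal G}},\delta_{J\rtimes{\cal G}},\pi(e_1),\pi(e_2))$, the statement for a linking $\widehat{\cal G}$-C*-algebra then following by the symmetric argument. Two of the three conditions are immediate. Since $\pi:\M(J)\rightarrow\M(J\rtimes{\cal G})$ is a unital *-homomorphism (see the discussion following \ref{defCrossedProduct}), $\pi(e_1)$ and $\pi(e_2)$ are self-adjoint projections with $\pi(e_1)+\pi(e_2)=\pi(1_J)=1_{J\rtimes{\cal G}}$, which is condition~1; and \ref{lem24} asserts precisely that each $\pi(e_j)$ is $\delta_{J\rtimes{\cal G}}$-invariant, which is condition~3. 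So the whole matter reduces to condition~2, the fullness relation $[(J\rtimes{\cal G})\pi(e_j)(J\rtimes{\cal G})]=J\rtimes{\cal G}$ for $j=1,2$, and this is the step I expect to carry the weight of the proof.

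For condition~2 I would collect three facts. (i) Since $e_j$ is $\delta_J$-invariant (axiom~3 of the linking algebra), \ref{lem31} gives $[\pi(e_j),\widehat{\theta}(x)]=0$ for all $x\in\widehat{S}$. (ii) By definition $J\rtimes{\cal G}=[\pi(J)\widehat{\theta}(\widehat{S})]$, and since $J\rtimes{\cal G}$ is a *-algebra one also has $J\rtimes{\cal G}=[\widehat{\theta}(\widehat{S})\pi(J)]$; combined with $[\widehat{S}\widehat{S}]=\widehat{S}$ this yields $[\widehat{\theta}(\widehat{S})(J\rtimes{\cal G})]=J\rtimes{\cal G}$. (iii) $\pi$ is faithful, hence isometric on $J$, so $[\pi(Je_j)\pi(J)]=\pi([Je_jJ])=\pi(J)$ by axiom~2 of the linking algebra. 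A short manipulation with closed linear spans then gives
\[
\begin{aligned}
[(J\rtimes{\cal G})\pi(e_j)(J\rtimes{\cal G})]
&=[\pi(J)\widehat{\theta}(\widehat{S})\pi(e_j)(J\rtimes{\cal G})]
=[\pi(J)\pi(e_j)\widehat{\theta}(\widehat{S})(J\rtimes{\cal G})]\\
&=[\pi(Je_j)(J\rtimes{\cal G})]
=[\pi(Je_j)\pi(J)\widehat{\theta}(\widehat{S})]
=[\pi(J)\widehat{\theta}(\widehat{S})]=J\rtimes{\cal G},
\end{aligned}
\]
where the first and fourth equalities expand $J\rtimes{\cal G}=[\pi(J)\widehat{\theta}(\widehat{S})]$, the second uses (i), the third uses (ii), and the fifth uses (iii). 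This settles condition~2, and with it the first half of the proposition.

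For the dual assertion about $(K\rtimes\widehat{\cal G},\beta_{K\rtimes\widehat{\cal G}},\delta_{K\rtimes\widehat{\cal G}},\widehat{\pi}(f_1),\widehat{\pi}(f_2))$ I would run the identical argument with $(\pi,\widehat{\theta},\widehat{S})$ replaced by $(\widehat{\pi},\theta,S)$: condition~1 from $\widehat{\pi}$ being unital, condition~3 from the $\widehat{\cal G}$-version of \ref{lem24}, and condition~2 from the $\widehat{\cal G}$-analogue of \ref{lem31} — which holds because $S\subset M\subset\widehat{\beta}(N^{\rm o})'$, so that $[\widehat{\pi}(m),\theta(x)]=0$ for $x\in S$ whenever $m$ is $\delta_K$-invariant — together with $[SS]=S$. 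The only genuinely delicate point in the whole argument is the rearrangement in the displayed chain: moving $\pi(e_j)$ past $\widehat{\theta}(\widehat{S})$ via its $\widehat{\theta}$-centrality and absorbing the left copy of $\pi(J)$ into the full crossed product using the isometry of $\pi$ and the fullness axiom $[Je_jJ]=J$; everything else is routine bookkeeping with closed spans.
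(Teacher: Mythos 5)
Your proof is correct and follows essentially the same route as the paper: verify the three axioms, with condition~1 from unitality of $\pi$, condition~3 from \ref{lem24}, and condition~2 from the fullness $[Je_jJ]=J$ together with the span description $J\rtimes{\cal G}=[\pi(J)\widehat{\theta}(\widehat{S})]$. The only (harmless) difference is that for condition~2 the paper sandwiches $\pi(e_j)$ directly into sums of the form $\widehat{\theta}(x)\pi(a)\widehat{\theta}(x')$ without needing \ref{lem31}, whereas you invoke the commutation $[\pi(e_j),\widehat{\theta}(x)]=0$ to rearrange the spans.
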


\begin{proof}
Let $(J,\beta_J,\delta_J,e_1,e_2)$ be a linking $\cal G$-C*-algebra. Let $K:=J\rtimes{\cal G}$ and $(\alpha_K,\delta_K)$ the dual action of $(\beta_J,\delta_J)$. $\vphantom{\widehat{\theta}}$Since the canonical morphism $\pi:J\rightarrow\M(K)$ is non-degenerate, we have $\pi(e_1)+\pi(e_2)=1_K$. $\vphantom{\widehat{\theta}}$Let $j=1,2$. Since $\pi(e_j)\in\M(K)$, we have $[K\pi(e_j)K]\subset K$. Any element of $K$ is the norm limit of finite sums of the form $\sum_{\lambda}\widehat{\theta}(x_{\lambda})\pi(a_{\lambda})\widehat{\theta}(x_{\lambda}')$ with $x_{\lambda},x_{\lambda}'\in\widehat{S}$ and $a_{\lambda}\in J$. Since $J=[Je_jJ]$, any element of $K$ is the norm limit of finite sums of the form $\sum_{\lambda}\widehat{\theta}(x_{\lambda})\pi(a_{\lambda})\pi(e_j)\pi(b_{\lambda})\widehat{\theta}(x_{\lambda}')$ with $x_{\lambda},x_{\lambda}'\in\widehat{S}$ and $a_{\lambda},b_{\lambda}\in J$. Hence, $K\subset [K\pi(e_j)K]$. Hence, $K=[K\pi(e_j)K]$.$\vphantom{\widehat{\theta}}$ Thus, the quintuple $(J\rtimes{\cal G},\alpha_{J\rtimes{\cal G}},\delta_{J\rtimes{\cal G}},\pi(e_1),\pi(e_2))$ is a linking $\widehat{\cal G}$-C*-algebra (cf.\ \ref{lem24}).
\end{proof}

		\subsubsection{Takesaki-Takai duality}\label{sectionTT}

Let $(\beta_A,\delta_A)$ (resp.\ $(\alpha_B,\delta_B)$) be a strongly continuous action of the groupoid ${\cal G}$ (resp.\ $\widehat{\cal G}$) on a C*-algebra $A$ (resp.\ $B$).

\begin{nbs}\label{not16}
The *-representation\index[symbol]{pga@$\pi_R$}\index[symbol]{pgb@$\pi_{\rho}$} of $A$ (resp.\ $B$) on the Hilbert $A$-module $A\tens\s H$ (resp.\ the Hilbert $B$-module $B\tens\s H$)
\[
\pi_R:=(\id_A\tens R)\circ\delta_A:A\rightarrow\Lin(A\tens\s H) \quad
\text{{\rm(}resp.\ }\pi_{\rho}:=(\id_B\tens\rho)\delta_B:B\rightarrow\Lin(B\tens\s H)\text{{\rm)}}
\]
 extends uniquely to a strictly/*-strongly continuous faithful *-representation $\pi_R:\M(A)\rightarrow\Lin(A\tens\s H)$ (resp.\ $\pi_{\rho}:\M(B)\rightarrow\Lin(B\tens\s H)$) satisfying $\pi_R(m)=(\id_A\tens R)\delta_A(m)$ for all $m\in\M(A)$ and $\pi_R(1_A)=q_{\beta_A\widehat{\alpha}}$ (resp.\ $\pi_{\rho}(m)=(\id_B\tens\rho)\delta_B(m)$ for all $m\in\M(B)$ and $\pi_{\rho}(1_B)=q_{\alpha_B\beta}$). Consider the Hilbert $A$-module (resp.\ the Hilbert $B$-module)
\[
\er:=q_{\beta_A\widehat{\alpha}}(A\tens\s H) \quad \text{{\rm(}resp.\ }{\cal E}_{B,\rho}:=q_{\alpha_B\beta}(B\tens\s H)\text{{\rm)}}.
\]
\index[symbol]{eda@$\er$}\index[symbol]{edb@${\cal E}_{B,\rho}$}We recall that the Banach space\index[symbol]{dcb@$D$/$E$, bidual $\cal G$/$\widehat{\cal G}$-C*-algebra}
\begin{align*}
D &:=[\pi_R(a)(1_A\tens\lambda(x)L(y))\,;\, a\in A,\, x\in\widehat{S},\, y\in S] \\
\text{{\rm(}resp.\ }E &:=[\pi_{\rho}(b)(1_B\tens R(y)\lambda(x))\,;\, b\in B,\, y\in S,\, x\in\widehat{S}]\text{{\rm)}}
\end{align*}
is a C*-subalgebra of $\Lin(A\tens\s H)$ (resp.\ $\Lin(B\tens\s H)$) such that $u q_{\beta_A\widehat{\alpha}}=u=uq_{\beta_A\widehat{\alpha}}$ for all $u\in D$ (resp.\ $v q_{\alpha_B\beta}=v=q_{\alpha_B\beta}v$ for all $v\in E$). Moreover, we have $D(A\tens\s H)=\er$ (resp.\ $E(B\tens\s H)={\cal E}_{B,\rho}$). We also recall that there exists a unique strictly/*-strongly continuous faithful *-representation 
$
j_D:\M(D)\rightarrow\Lin(A\tens\s H) 
$ 
\index[symbol]{ja@$j_D$, $j_E$} (resp.\ $j_E:\M(E)\rightarrow\Lin(B\tens\s H)$) extending the inclusion map $D\subset\Lin(A\tens\s H)$ (resp.\ $E\subset\Lin(B\tens\s H)$) such that $j_D(1_D)=q_{\beta_A\widehat{\alpha}}$ (resp.\ $j_E(1_E)=q_{\alpha_B\beta}$).
\end{nbs}

\begin{prop}\label{IsoTT}
There exists a unique *-isomorphism $\phi:(A\rtimes{\cal G})\rtimes\widehat{\cal G}\rightarrow D$ {\rm(}resp.\ $\psi:(B\rtimes\widehat{\cal G})\rtimes{\cal G}\rightarrow E${\rm)} such that $\phi(\widehat{\pi}(\pi(a)\widehat{\theta}(x))\theta(y))=\pi_R(a)(1_A\tens\lambda(x)L(y))$ for all $a\in A$, $x\in\widehat{S}$ and $y\in S$ {\rm(}resp.\ $\psi(\pi(\widehat{\pi}(b)\theta(y))\widehat{\theta}(x))=\pi_{\rho}(b)(1_B\tens R(y)\rho(x))$ for all $b\in B$, $y\in S$ and $x\in\widehat{S}${\rm)}.
\end{prop}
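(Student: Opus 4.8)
The plan is to prove the statement for $\phi$; the statement for $\psi$ is entirely parallel (via the opposite/commutant groupoid, or by an identical argument with the roles of $S$ and $\widehat S$, $V$ and $\widetilde V$, interchanged). The first step is to unwind all the definitions to get an explicit description of both $(A\rtimes{\cal G})\rtimes\widehat{\cal G}$ and $D$ as concrete C*-algebras of operators on the Hilbert $A$-module $A\tens\s H\tens\s H$, and then on $A\tens\s H$ after applying a suitable unitary. By \ref{defDualAct} and the construction of the crossed product, $(A\rtimes{\cal G})\rtimes\widehat{\cal G}$ is the norm-closed span in $\Lin({\cal E}_{A\rtimes{\cal G},\lambda})$ of the operators $\widehat\pi(\pi(a)\widehat\theta(x))\theta(y)$, where $\widehat\pi=(\id\tens\lambda)\delta_{A\rtimes{\cal G}}$ and $\delta_{A\rtimes{\cal G}}=\mathrm{Ad}(X)$ with $X=\restr{\widetilde V_{23}}{{\cal E}_{A,L}\tens\s H}$. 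So I would first write, using \ref{defDualAct} 1, that $\delta_{A\rtimes{\cal G}}(\pi(a)\widehat\theta(x))=(\pi(a)\tens 1_{\widehat S})(\widehat\theta\tens\id_{\widehat S})\widehat\delta(x)$, hence on applying $\id\tens\lambda$ and recalling $\widehat\theta=\restr{(1_A\tens\rho(\cdot))}{{\cal E}_{A,L}}$ and the relation $\widehat\delta(x)=V^*(1\tens x)V$, obtain an explicit formula for $\widehat\pi(\pi(a)\widehat\theta(x))$ as the restriction of an operator on $A\tens\s H\tens\s H$ built from $\pi_L(a)$, $V$, $\widetilde V$ and $\rho(x)$. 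Likewise $\theta(y)=\restr{(1\tens 1\tens\lambda(y))}{\cdots}$ up to the various projections.

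The second step is to produce the conjugating unitary. The target algebra $D$ lives on $A\tens\s H$ and is built from $\pi_R=(\id\tens R)\delta_A$, i.e.\ from $\mathrm{Ad}(U)$ applied in the second leg of $\pi_L$, together with $\lambda(x)L(y)=U\rho(x)U^*\cdot y$. The natural candidate is a unitary of the form $U_{12}$ composed with the leg-exchanging part of $\widetilde V$ or $V$ that trivialises one of the legs of $A\tens\s H\tens\s H$; concretely, I expect to use the partial isometry $V_{23}$ (or $\widetilde V_{23}$) together with a flip and a copy of $U$ to collapse the redundant $\s H$-leg, exactly as in the Baaj–Skandalis proof of 7.5 \cite{BS2}. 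The key computational identities needed here are the pentagonal/commutation relations of \ref{inifinproj} and \ref{prop34}–\ref{NonComRel}, the relation $W=\Sigma(U\tens 1)V(U^*\tens 1)\Sigma$, the formula $\widehat\delta(x)=V^*(1\tens x)V$, and the covariance relations between $\delta_A$, $\pi_L$ and $V$, $\widetilde V$ (these are the analogues of the quantum-group identities $[W_{12},V_{23}]=0$, etc., now with the projections $q_{\beta_A\alpha}$ taken into account). I would check that conjugation by this unitary sends the generator $\widehat\pi(\pi(a)\widehat\theta(x))\theta(y)$ to $\pi_R(a)(1_A\tens\lambda(x)L(y))$, which simultaneously shows that the map is well-defined on generators, multiplicative (since conjugation is an algebra homomorphism on the ambient $\Lin$), $*$-preserving, and isometric; uniqueness is automatic because the generators span a dense subspace.

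The main obstacle I anticipate is bookkeeping rather than conceptual: one must keep careful track of the support projections $q_{\beta_A\alpha}$, $q_{\beta_A\widehat\alpha}$, $q_{\widehat\alpha\beta}$, $q_{\alpha_B\beta}$ appearing throughout (since $\delta_A(1_A)\neq 1$ and $V,W,\widetilde V$ are only partial isometries), and verify that the candidate unitary genuinely restricts to a unitary between the correct cut-down Hilbert submodules ${\cal E}_{A\rtimes{\cal G},\lambda}\tens\s H$ and $A\tens\s H$ — in particular that its initial and final projections match the relevant $q$'s. This is where \ref{inifinproj} 3 and the various relations of \ref{prop34} and \ref{NonComRel} are used repeatedly. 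Once the projections are reconciled, the computation proceeds formally as in the locally compact quantum group case (\S 7 \cite{BS2}), so I would present the projection bookkeeping as a preliminary lemma and then do the generator-by-generator computation.
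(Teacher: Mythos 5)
Your strategy is the right one, but note first that the paper itself gives no proof of this proposition: it is recalled verbatim from \cite{BC} (see the preamble of \S\ref{subsectionBiduality}), so there is no in-paper argument to compare against. The proof in the cited reference does follow the Baaj--Skandalis scheme you describe: realize $(A\rtimes{\cal G})\rtimes\widehat{\cal G}$ concretely inside $\Lin(A\tens\s H\tens\s H)$ via \ref{defDualAct}, conjugate by an explicit operator built from $V$, $\widetilde V$, $U$ and the flip, and match generators with those of $D$ using the relations of \ref{inifinproj}, \ref{prop34} and \ref{NonComRel}.

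As written, however, this is a plan rather than a proof, and one step does not typecheck if taken literally. A single conjugation cannot carry an algebra represented on (a submodule of) $A\tens\s H\tens\s H$ onto an algebra represented on (a submodule of) $A\tens\s H$: what the computation actually yields is a conjugation \emph{inside} $\Lin(A\tens\s H\tens\s H)$ sending the generator $\widehat{\pi}(\pi(a)\widehat{\theta}(x))\theta(y)$ to an operator of the form $d\tens 1_{\s H}$ with $d=\pi_R(a)(1_A\tens\lambda(x)L(y))$, i.e.\ acting trivially in the last leg. One then needs the separate (easy but essential) step of stripping that leg — the amplification $T\mapsto T\tens 1_{\s H}$ is an isometric *-homomorphism on the relevant corner, equivalently $j_D$ of \ref{not16} is faithful — and it is this step, not the conjugation itself, that gives injectivity of $\phi$ on the whole double crossed product rather than mere consistency of the formula on generators; surjectivity then holds because the images of the generators span $D$ by definition. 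Moreover the conjugating operator is only a partial isometry, so the ``bookkeeping'' you defer includes the nontrivial verification that its initial and final projections coincide with $q_{\alpha_{A\rtimes{\cal G}}\beta}$ (the support of ${\cal E}_{A\rtimes{\cal G},\lambda}$) and $q_{\beta_A\widehat{\alpha}}$ (the support of ${\cal E}_{A,R}$ and of $D$); without this the map is not isometric. None of these points is an obstruction — they are exactly what \cite{BC} carries out — but together with the unwritten formula for the conjugating isometry they constitute the entire content of the proof, which the proposal does not yet contain.
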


\begin{nbs}\label{not17}
We denote $\K:=\K(\s H)$\index[symbol]{ka@$\K$, C*-algebra of compact operators on the G.N.S.\ space $\s H:={\rm L}^2({\cal G})$} for short. Let $\delta_0:A\tens\K\rightarrow\M(A\tens\K\tens S)$ (resp.\ $\delta_0:B\tens\K\rightarrow\M(B\tens\K\tens \widehat{S})$) be the *-homomorphism defined by 
\[
\delta_0(a\tens k)=\delta_A(a)_{13}(1_A\tens k\tens 1_S) \quad
\text{(resp.\ }\delta_0(b\tens k)=\delta_B(b)_{13}(1_B\tens k\tens 1_{\widehat{S}})
\]
for all $a\in A$ (resp.\ $b\in B$) and $k\in\K$. The morphism $\delta_0$ extends uniquely to a strictly continuous *-homomorphism still denoted by $\delta_0:\M(A\tens\K)\rightarrow\M(A\tens\K\tens S)$ (resp.\ $\delta_0:\M(B\tens\K)\rightarrow\M(B\tens\K\tens \widehat{S})$) such that $\delta_0(1_{A\tens\K})=q_{\beta_A\alpha,13}$ (resp.\ $\delta_0(1_{B\tens\K})=q_{\alpha_B\beta,13}$). Let us denote by ${\cal V}\in\Lin(\s H\tens S)$ (resp.\ $\widetilde{\cal V}\in\Lin(\s H\tens \widehat{S})$)\index[symbol]{vb@${\cal V}$, $\widetilde{\cal V}$} the unique partial isometry such that $(\id_{\K}\tens L)({\cal V})=V$ (resp.\ $(\id_{\K}\tens\rho)(\widetilde{\cal V})=\widetilde{V}$).
\end{nbs}

\begin{thm}\label{BidualityTheo}
There exists a unique strongly continuous action $(\beta_D,\delta_D)$ {\rm(}resp.\ $(\alpha_E,\delta_E)${\rm)} of $\cal G$ {\rm(}resp.\ $\widehat{\cal G}${\rm)} on the C*-algebra $D:=[\pi_R(a)(1_A\tens\lambda(x)L(y))\,;\,a\in A,\,x\in\widehat{S},\, y\in S]$ {\rm(}resp.\ $E:=[\pi_{\rho}(b)(1_B\tens R(y)\lambda(x))\,;\, b\in B,\, y\in S,\, x\in\widehat{S}]${\rm)} defined by the relations: 
\begin{align*}
(j_D\tens\id_S)\delta_D(u)&={\cal V}_{23}\delta_0(u){\cal V}_{23}^*,\;\; u\in D ; \;\;
j_D(\beta_D(n^{\rm o}))=q_{\beta_A\widehat{\alpha}}(1_A\tens\beta(n^{\rm o})),\;\; n\in N.\\
\text{{\rm(}resp.\ }(j_E\tens\id_{\widehat{S}})\delta_E(v)&=\widetilde{\cal V}_{23}\delta_0(v)\widetilde{\cal V}_{23}^*,\quad v\in E;\;\; j_E(\alpha_E(n))=q_{\alpha_B\beta}(1_B\tens\widehat{\alpha}(n)),\;\; n\in N\text{{\rm)}}
\end{align*}
Moreover, the canonical *-isomorphism $\phi:(A\rtimes{\cal G})\rtimes\widehat{\cal G}\rightarrow D$ {\rm(}resp.\ $\psi:(B\rtimes\widehat{\cal G})\rtimes{\cal G}\rightarrow E${\rm)} {\rm(}cf.\ \ref{IsoTT}{\rm)} is $\cal G$-equivariant {\rm(}resp.\ $\widehat{\cal G}$-equivariant{\rm)}. If the groupoid $\cal G$ is regular, then we have $D=q_{\beta_A\widehat{\alpha}}(A\tens\K)q_{\beta_A\widehat{\alpha}}$ {\rm(}resp.\ $E=q_{\alpha_B\beta}(B\tens\K)q_{\alpha_B\beta}${\rm)}.
\end{thm}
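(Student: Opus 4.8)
The plan is to produce $(\beta_D,\delta_D)$ by transport of structure through the $*$-isomorphism $\phi$ of \ref{IsoTT}, starting from the \emph{bidual action}. First I would apply the dual-action construction of \ref{defDualActBis} to the $\widehat{\cal G}$-C*-algebra $(A\rtimes{\cal G},\alpha_{A\rtimes{\cal G}},\delta_{A\rtimes{\cal G}})$: this yields a strongly continuous action of $\cal G$ on the double crossed product $(A\rtimes{\cal G})\rtimes\widehat{\cal G}$, implemented by the partial isometry $Y=\restr{V_{23}}{{\cal E}_{A\rtimes{\cal G},\lambda}\tens\s H}$, together with its source map valued in $\M\bigl((A\rtimes{\cal G})\rtimes\widehat{\cal G}\bigr)$. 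I would then define $\delta_D$ and $\beta_D$ as the images of $\delta_{(A\rtimes{\cal G})\rtimes\widehat{\cal G}}$ and $\beta_{(A\rtimes{\cal G})\rtimes\widehat{\cal G}}$ under $\phi$, i.e.\ $(\phi\tens\id_S)\circ\delta_{(A\rtimes{\cal G})\rtimes\widehat{\cal G}}=\delta_D\circ\phi$ and $\beta_D=\phi\circ\beta_{(A\rtimes{\cal G})\rtimes\widehat{\cal G}}$ (with $\phi$ extended to multipliers). Since $\phi$ is a $*$-isomorphism, $(\beta_D,\delta_D)$ is automatically a strongly continuous action of $\cal G$ on $D$ and $\phi$ is $\cal G$-equivariant by construction. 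Uniqueness is then free: $\delta_D$ is pinned down on the generating set $\{\pi_R(a)(1_A\tens\lambda(x)L(y))\}$ of $D$ by the stated relation (because $j_D\tens\id_S$ is faithful), and likewise $\beta_D$ by faithfulness of $j_D$.

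The second step is to verify that $(\beta_D,\delta_D)$ satisfies the two displayed formulas. Using the defining property $\phi(\widehat{\pi}(\pi(a)\widehat{\theta}(x))\theta(y))=\pi_R(a)(1_A\tens\lambda(x)L(y))$, the formula $\delta_{(A\rtimes{\cal G})\rtimes\widehat{\cal G}}(c)=Y(c\tens 1_{\s H})Y^*$, and $\pi_R=(\id_A\tens R)\delta_A$, one rewrites $(j_D\tens\id_S)\delta_D(\pi_R(a)(1_A\tens\lambda(x)L(y)))$, under the identification $\Lin(A\tens\s H\tens S)=\Lin((A\tens\K)\tens S)$, as a $V_{23}$-conjugate of $\delta_A(a)_{13}(1_A\tens\lambda(x)L(y)\tens 1_S)$. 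Feeding in $(\id_{\K}\tens L)({\cal V})=V$, the intertwining relations of $V$, $W$, $\widetilde V$ with $\alpha,\beta,\widehat\alpha,\widehat\beta$ (\ref{prop34}), and the commutation relation $[W_{12},\,V_{23}]=0$, one recognizes this as ${\cal V}_{23}\delta_0(\pi_R(a)(1_A\tens\lambda(x)L(y))){\cal V}_{23}^*$ with $\delta_0$ as in \ref{not17}; the formula $j_D(\beta_D(n^{\rm o}))=q_{\beta_A\widehat{\alpha}}(1_A\tens\beta(n^{\rm o}))$ comes out of an easier computation with $\widehat{\theta}(\widehat{\alpha}(n))$. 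This step is essentially bookkeeping, the delicate point being to keep track of the various range projections ($q_{\beta_A\alpha}$, $q_{\beta_A\widehat{\alpha}}$, $q_{\widehat{\alpha}\beta}$, \dots) and to confirm that the right-hand sides genuinely lie in $\M(D\tens S)$, i.e.\ in the range of $j_D\tens\id_S$.

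For the last assertion, assume $\cal G$ is regular, so $\K_{\beta}={\cal C}(V)$ (equivalently $\K_{\alpha}={\cal C}(W)$, cf.\ \ref{DefRegMQG}). The point is that regularity, in this form, is exactly what forces the span $[\lambda(\widehat S)L(S)]$, after cutting by the appropriate source/range projections, to exhaust the algebra of ``$\beta$-relative compact'' operators on $\s H$, so that
\[
D=[\pi_R(a)(1_A\tens\lambda(x)L(y))\,;\,a\in A,\ x\in\widehat S,\ y\in S]=[(\id_A\tens R)\delta_A(a)(1_A\tens k)\,;\,a\in A,\ k\in\K].
\]
Combining this with the strong continuity of $(\beta_A,\delta_A)$ — in the form $[(1_A\tens S)\delta_A(A)]=(A\tens S)q_{\beta_A\alpha}$ of \ref{prop39} — with $[R(S)\K]=\K$ and with $\pi_R(1_A)=q_{\beta_A\widehat{\alpha}}$, one gets $D=q_{\beta_A\widehat{\alpha}}(A\tens\K)q_{\beta_A\widehat{\alpha}}$. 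I expect this to be the main obstacle: it is the only step where the finite-basis bookkeeping of the source and target maps genuinely enters, namely in transcribing the regularity identity for $V$ into the groupoid setting and checking that it fills up exactly the corner $q_{\beta_A\widehat{\alpha}}(A\tens\K)q_{\beta_A\widehat{\alpha}}$ rather than a smaller subalgebra; the quantum-group case (7.5 of \cite{BS2} in the regular case, following \cite{BSV}) provides the template.

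Finally, the ``resp.'' statement for a $\widehat{\cal G}$-C*-algebra $(B,\alpha_B,\delta_B)$ follows by the symmetric argument: apply the dual-action construction of \ref{defDualAct} to the $\cal G$-C*-algebra $B\rtimes\widehat{\cal G}$, transport through $\psi$ of \ref{IsoTT}, replace ${\cal V}$ by the partial isometry $\widetilde{\cal V}$ with $(\id_{\K}\tens\rho)(\widetilde{\cal V})=\widetilde V$ of \ref{not17}, and use that $\widehat{\cal G}$ is regular if and only if $\cal G$ is.
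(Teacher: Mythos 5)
Your proposal is sound and follows the expected route: the paper itself states Theorem \ref{BidualityTheo} without proof (it is recalled from \cite{BC}), and the intended argument is exactly the transport-of-structure device you describe — push the canonical dual action on $(A\rtimes{\cal G})\rtimes\widehat{\cal G}$ through the isomorphism $\phi$ of \ref{IsoTT}, which makes strong continuity, equivariance of $\phi$ and uniqueness automatic, and reduces everything to identifying the transported coaction with ${\cal V}_{23}\delta_0(\cdot){\cal V}_{23}^*$ plus the regularity computation. This is also the same device the paper itself uses later for the Hilbert-module analogue (proofs of \ref{theo1} and \ref{theo2}), and you correctly locate the only genuinely delicate point, namely converting $\K_{\beta}={\cal C}(V)$ into the statement that $[\pi_R(A)(1_A\tens\lambda(\widehat S)L(S))]$ fills the whole corner $q_{\beta_A\widehat{\alpha}}(A\tens\K)q_{\beta_A\widehat{\alpha}}$.
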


The $\cal G$ (resp.\ $\widehat{\cal G}$)-C*-algebra $D$ (resp. $E$) defined above will be referred to as the bidual $\cal G$ (resp.\ $\widehat{\cal G}$)-C*-algebra of $A$ (resp.\ $B$). We investigate below the case of a linking $\cal G$ (resp.\ $\widehat{\cal G}$)-C*-algebra (cf.\ \ref{prop40}).

\begin{rk}\label{rk19}
Let $(J,\beta_J,\delta_J,e_1,e_2)$ (resp.\ $(K,\alpha_K,\delta_K,f_1,f_2)$) be linking $\cal G$ (resp.\ $\widehat{\cal G}$)-C*-algebra. We have a bidual linking $\cal G$ (resp.\ $\widehat{\cal G}$)-C*-algebra $(D,\beta_D,\delta_D,\pi_R(e_1),\pi_R(e_2))$ (resp.\ $(E,\alpha_E,\delta_E,\pi_{\rho}(f_1),\pi_{\rho}(f_2)$) and $\phi:(J\rtimes{\cal G})\rtimes\widehat{\cal G}\rightarrow D$ (resp.\ $\psi:(K\rtimes\widehat{\cal G})\rtimes{\cal G}\rightarrow E$) is an isomorphism of linking $\cal G$ (resp.\ $\widehat{\cal G}$)-C*-algebras.
\end{rk}

	\subsection{Case of a colinking measured quantum groupoid}\label{colinkingMQG}

In this section, we fix a colinking measured quantum groupoid ${\cal G}:={\cal G}_{\QG_1,\QG_2}$ associated with two monoidally equivalent locally compact quantum groups $\QG_1$ and $\QG_2$. We follow all the notations recalled in \S \ref{sectionColinking} concerning the objects associated with $\cal G$.

	\subsubsection{C*-algebra acted upon by a colinking measured quantum groupoid}\label{sectionC*AlgColink}

In the following, we recall the notations and the main results of \S 3.2.3 \cite{BC} concerning the equivalent description of the $\cal G$-C*-algebras in terms of $\QG_1$-C*-algebras and $\QG_2$-C*-algebras. Let us fix a $\cal G$-C*-algebra $(A,\beta_A,\delta_A)$.

\begin{nbs}\label{not12}
\begin{itemize}
\item The morphism $\beta_A:\GC^2\rightarrow\M(A)$ is central. Let $q_j:=\beta_A(\varepsilon_j)$\index[symbol]{qa@$q_j$, $q_{A,j}$} for $j=1,2$. Then, $q_j$ is a central self-adjoint projection of $\M(A)$ and $q_1+q_2=1_A$. Let $A_j:=q_jA$ for $j=1,2$. For $j=1,2$, $A_j$ is a $\Cstar$-subalgebra (actually a closed two-sided ideal) of $A$ and we have $A=A_1\oplus A_2$.
\item For $j,k=1,2$, let 
$
\pi_j^k:\M(A_k\tens S_{kj})\rightarrow\M(A\tens S)
$ 
\index[symbol]{ph@$\pi_j^k$, $\pi_{A,j}^k$}be the unique strictly continuous extension of the inclusion $A_k\tens S_{kj}\subset A\tens S$ such that $\pi_{j}^k(1_{A_k\tens S_{kj}})=q_k\tens p_{kj}$.
\end{itemize}
In case of ambiguity, we will denote $\pi_{A,j}^k$ and $q_{A,j}$ instead of $\pi_j^k$ and $q_j$.
\end{nbs}

\begin{prop}\label{actprop}
For all $j,k=1,2$, there exists a unique faithful non-degenerate *-homo\-morphism 
\[\delta_{A_j}^k:A_j\rightarrow\M(A_k\tens S_{kj})\]
\index[symbol]{dd@$\delta_{A_j}^k$}such that for all $x\in A_j$, we have
\[
\pi_j^k\circ\delta_{A_j}^k(x)=(q_k\tens p_{kj})\delta_A(x)=(q_k\tens 1_S)\delta_A(x)=(1_A\tens \alpha(\varepsilon_k))\delta_A(x)=(1_A\tens p_{kj})\delta_A(x).
\] 
Moreover, we have:
\begin{enumerate}
\item $\displaystyle{\delta_A(a)=\sum_{k,j=1,2}\pi_j^k\circ\delta_{A_j}^k(q_ja)}$, for all $a\in A$;
\item $(\delta_{A_k}^l\tens\id_{S_{kj}})\delta_{A_j}^k=(\id_{A_l}\tens\delta_{lj}^k)\delta_{A_j}^l$, for all $j,k,l=1,2$;
\item $[\delta_{A_j}^k(A_j)(1_{A_k}\tens S_{kj})]=A_k\tens S_{kj}$, for all $j,k=1,2$; in particular, we have

{
\centering
$
A_k=[(\id_{A_k}\tens\omega)\delta_{A_j}^k(a)\,;\,a\in A_j,\,\omega\in\B(\s H_{kj})_*];
$
\par
}
\item $\delta_{A_j}^j:A_j\rightarrow\M(A_j\tens S_{jj})$ is a strongly continuous action of $\QG_j$ on $A_j$.\qedhere
\end{enumerate}
\end{prop}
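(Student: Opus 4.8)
The plan is to first construct the map $\delta_{A_j}^k$ using the central projections $q_k$ and $p_{kj}$, then verify each property by reducing to the already-established facts about $\delta_A$ and $\delta$ (the weak Hopf $C^*$-algebra coproduct). First I would establish the chain of equalities in the displayed formula: since $q_k = \beta_A(\varepsilon_k)$ and $\delta_A$ is an action, Proposition \ref{prop39}(2) gives $\delta_A(q_k) = (1_A\tens\beta(\varepsilon_k))q_{\beta_A\alpha}$, and since $\widehat\alpha=\beta$ for a colinking groupoid (Lemma \ref{lem26}), $\alpha(\varepsilon_k)$ and $\beta(\varepsilon_k)=p_{1k}+p_{2k}$ differ in a controlled way; combining with $\delta_A(1_A)=q_{\beta_A\alpha}$ and the identity $q_{\beta_A\alpha}=\sum_{k,j}(q_k\tens p_{kj})$ (which follows from $\Delta(1)=\sum_i\alpha(\varepsilon_i)\tens\beta(\varepsilon_i)$ in \ref{not10}, transported through $\delta_A$), one gets $(q_k\tens 1_S)\delta_A(x) = (1_A\tens\alpha(\varepsilon_k))\delta_A(x) = (q_k\tens p_{kj})\delta_A(x) = (1_A\tens p_{kj})\delta_A(x)$ for $x\in A_j$. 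Then I would observe that this element lies in the image of the (faithful, strictly continuous) embedding $\pi_j^k:\M(A_k\tens S_{kj})\hookrightarrow\M(A\tens S)$ — indeed it is killed by $1-(q_k\tens p_{kj})$ on both sides — hence defines $\delta_{A_j}^k(x)\in\M(A_k\tens S_{kj})$ uniquely. Faithfulness of $\delta_{A_j}^k$ follows from faithfulness of $\delta_A$ restricted to $A_j$ together with faithfulness of $\pi_j^k$; non-degeneracy follows from statement (3), so I would prove (3) before concluding this part, or argue non-degeneracy directly from $[\delta_A(A)(A\tens S)]=q_{\beta_A\alpha}(A\tens S)$ compressed by $q_k\tens p_{kj}$.

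For statement (1), apply $\sum_{k}(q_k\tens 1_S)=1\tens 1$ and $\sum_j(1_A\tens p_{kj})=q_k\tens\beta(\varepsilon_k)\cdots$ — more precisely, since $\delta_A(a)=\delta_A(1_A)\delta_A(a)=q_{\beta_A\alpha}\delta_A(a)$ and $q_{\beta_A\alpha}=\sum_{k,j}(q_k\tens p_{kj})$, we get $\delta_A(a)=\sum_{k,j}(q_k\tens p_{kj})\delta_A(a)=\sum_{k,j}(q_k\tens p_{kj})\delta_A(q_j a)=\sum_{k,j}\pi_j^k\delta_{A_j}^k(q_j a)$, using that $(q_k\tens p_{kj})\delta_A(a)=(q_k\tens p_{kj})\delta_A(\beta_A(\varepsilon_j)a)$ since $(q_k\tens p_{kj})\delta_A(\beta_A(\varepsilon_j))=(q_k\tens p_{kj})$ (as $p_{kj}=\beta(\varepsilon_j)p_{kj}$). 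For statement (2), I would apply the coassociativity $(\delta_A\tens\id_S)\delta_A=(\id_A\tens\delta)\delta_A$ from \ref{rk15}(2)/\ref{defactmqg}, compress both sides by the appropriate central projection $q_l\tens p_{lk}\tens p_{kj}$, and match terms using the definitions of $\delta_{A_j}^k$, $\delta_{ij}^k$ (from \ref{not19}), and the compatibility $\pi_j^k$ vs. $\iota_{ij}^k$; this is the same bookkeeping as in Proposition \ref{prop5bis}(1), transported to the module setting. Statement (4) is the case $k=j$ of the construction: $\delta_{A_j}^j:A_j\to\M(A_j\tens S_{jj})$, where by Proposition \ref{prop5bis}(4) $(S_{jj},\delta_{jj}^j)$ is the Hopf $C^*$-algebra of $\QG_j$; the action axioms for $\QG_j$ follow by specializing (2) (coassociativity) and (3) (strong continuity), noting $\delta_{A_j}^j(1_{A_j})=q_j\tens p_{jj}$ compresses correctly to the unit of $\M(A_j\tens S_{jj})$ since on $A_j$ one has $q_j\tens p_{jj}$ acting as the identity after applying $\pi_j^j$.

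For statement (3), the density $[\delta_{A_j}^k(A_j)(1_{A_k}\tens S_{kj})]=A_k\tens S_{kj}$: start from strong continuity $[\delta_A(A)(1_A\tens S)]=q_{\beta_A\alpha}(A\tens S)$, compress by $q_k\tens p_{kj}$ on the left (using that $q_k\tens p_{kj}$ is $\delta_A$-central-ish, i.e. $(q_k\tens p_{kj})\delta_A(a)=\pi_j^k\delta_{A_j}^k(q_ka)$ and $(q_k\tens p_{kj})(1_A\tens S)=1_{A_k}\tens p_{kj}S=1_{A_k}\tens S_{kj}$), to obtain $[\delta_{A_j}^k(A_j)(1_{A_k}\tens S_{kj})]=(q_k\tens p_{kj})q_{\beta_A\alpha}(A\tens S)=A_k\tens S_{kj}$; the displayed consequence about $A_k$ then follows by applying slice maps $\id_{A_k}\tens\omega$. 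The main obstacle I anticipate is purely organizational rather than conceptual: correctly juggling the four-index families of central projections $p_{kj}$, the embeddings $\pi_j^k$ and $\iota_{ij}^k$, and the places where $\widehat\alpha=\beta$ is invoked, so that the equality $(q_k\tens p_{kj})\delta_A(x)=(q_k\tens 1_S)\delta_A(x)$ is genuinely justified (it hinges on $(q_k\tens(\beta(\varepsilon_1)+\beta(\varepsilon_2)))\delta_A(x)=(q_k\tens 1)\delta_A(x)$ combined with $\delta_A(x)=\delta_A(1_A)\delta_A(x)=q_{\beta_A\alpha}\delta_A(x)$ and $q_{\beta_A\alpha}(q_k\tens\beta(\varepsilon_i))=q_k\tens p_{ki}$, forcing the $i$-summand to be $p_{kj}$ after also using $q_j a = a$). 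Everything else is a routine transport of Proposition \ref{prop5bis} and the colinking decomposition of $\delta$ in \ref{prop35} to the $C^*$-algebra-with-action setting, following \S 3.2.3 of \cite{BC}.
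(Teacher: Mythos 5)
Your overall plan --- define $\delta_{A_j}^k(x)$ by compressing $\delta_A(x)$ with the central projection $q_k\tens p_{kj}$, derive the chain of equalities from $\delta_A(x)=\delta_A(q_j)\delta_A(x)$ together with $\delta_A(q_j)=q_{\beta_A\alpha}(1_A\tens\beta(\varepsilon_j))=\sum_i q_i\tens p_{ij}$, and then obtain statements 1--4 by compressing the axioms of $(\beta_A,\delta_A)$ by the appropriate central projections --- is the right one; it is exactly how this result is established in \S 3.2.3 of \cite{BC}, which the present paper cites rather than reproves. (Incidentally, Lemma \ref{lem26} is not what powers the chain of equalities; the two identities just quoted, both valid for any finite basis, suffice.)

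There is, however, one genuine gap: your justification of the faithfulness of $\delta_{A_j}^k$. You claim it follows from faithfulness of $\delta_A$ restricted to $A_j$ together with faithfulness of $\pi_j^k$. But $\pi_j^k\circ\delta_{A_j}^k(x)=(q_k\tens p_{kj})\delta_A(x)$ is only one of the two summands in the decomposition $\delta_A(x)=\sum_{l}(q_l\tens p_{lj})\delta_A(x)$ valid for $x\in A_j$, so the vanishing of this single compression does not force $\delta_A(x)=0$; as stated the argument is circular, since faithfulness of $\pi_j^k$ merely reduces the problem to the injectivity of $x\mapsto(q_k\tens p_{kj})\delta_A(x)$, which is precisely what is to be proved. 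The repair stays within your own plan but reverses the order: first prove statement 2, then note that if $\delta_{A_j}^k(x)=0$, the identity $(\delta_{A_k}^l\tens\id_{S_{kj}})\delta_{A_j}^k=(\id_{A_l}\tens\delta_{lj}^k)\delta_{A_j}^l$ yields $(\id_{A_l}\tens\delta_{lj}^k)\delta_{A_j}^l(x)=0$ for every $l$; since $\delta_{lj}^k$ is implemented by a unitary (cf.\ \ref{prop5bis} 2), hence faithful and non-degenerate, its amplification $\id_{A_l}\tens\delta_{lj}^k$ is faithful on $\M(A_l\tens S_{lj})$, so $\delta_{A_j}^l(x)=0$ for all $l$, whence $\delta_A(x)=0$ and $x=0$ by faithfulness of $\delta_A$. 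Everything else in your proposal is sound.
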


From this concrete description of $\cal G$-$\Cstar$-algebras we can also give a convenient description of the $\cal G$-equivariant *-homomorphisms. With the above notations, we have the result below.

\begin{prop}\label{lem7bis}
Let $A$ and $B$ be two $\cal G$-$\Cstar$-algebras. For $k=1,2$, let $\iota_k:\M(B_k)\rightarrow\M(B)$ be the unique strictly continuous extension of the inclusion map $B_k\subset B$ such that $\iota_k(1_{B_k})=q_{B,k}$.
\begin{enumerate}
\item Let $f:A\rightarrow\M(B)$ be a non-degenerate $\cal G$-equivariant *-homomorphism. Then, for all $j=1,2$, there exists a unique non-degenerate *-homomorphism $f_j:A_j\rightarrow\M(B_j)$ such that for $k=1,2$ we have
\begin{equation}\label{eqmorpheq}
(f_k\tens\id_{S_{kj}})\circ\delta_{A_j}^k=\delta_{B_j}^k\circ f_j.
\end{equation}
Moreover, we have $f(a)=\iota_1\circ f_1(aq_{A,1})+\iota_2\circ f_2(aq_{A,2})$ for all $a\in A$.
\item Conversely, for $j=1,2$ let $f_j:A_j\rightarrow\M(B_j)$ be a non-degenerate *-homomorphism such that {\rm(}\ref{eqmorpheq}\,{\rm)} holds for all $j,k=1,2$. Then, the map $f:A\rightarrow\M(B)$, defined for all $a\in A$ by 
\[
f(a):=\iota_1\circ f_1(aq_{A,1})+\iota_2\circ f_2(aq_{A,2}),
\]
is a non-degenerate $\cal G$-equivariant *-homomorphism.\qedhere
\end{enumerate} 
\end{prop}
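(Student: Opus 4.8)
The plan is to exploit the decomposition $A = A_1 \oplus A_2$ (and likewise for $B$) afforded by the central projections $q_{A,j} = \beta_A(\varepsilon_j)$, together with the characterization of the coaction $\delta_A$ through its components $\delta_{A_j}^k$ given in Proposition \ref{actprop}. For part 1, I would start from a non-degenerate $\mathcal{G}$-equivariant $*$-homomorphism $f : A \to \M(B)$. Since $f$ is $\mathcal{G}$-equivariant, $f \circ \beta_A = \beta_B$; applying the (unique) strictly continuous extension $f : \M(A) \to \M(B)$ to the central projection $q_{A,j} = \beta_A(\varepsilon_j)$ gives $f(q_{A,j}) = \beta_B(\varepsilon_j) = q_{B,j}$, a central projection of $\M(B)$. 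Hence $f$ restricts to a $*$-homomorphism $f_j : A_j \to \M(B_j)$ defined by $f_j(x) := f(x)$ for $x \in A_j = q_{A,j}A$ (using that $f(q_{A,j}x) = q_{B,j}f(x) \in q_{B,j}\M(B) = \M(B_j)$), and $f(a) = f(q_{A,1}a) + f(q_{A,2}a) = \iota_1\circ f_1(aq_{A,1}) + \iota_2\circ f_2(aq_{A,2})$ as claimed. Non-degeneracy of $f_j$ follows because $[f(A)B] = B$ together with the direct-sum decomposition forces $[f_j(A_j)B_j] = B_j$.

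The key verification in part 1 is the intertwining identity $(f_k\tens\id_{S_{kj}})\circ\delta_{A_j}^k = \delta_{B_j}^k\circ f_j$. Here I would compose the global equivariance relation $(f\tens\id_S)\delta_A = \delta_B\circ f$ with the central projections $q_k \tens p_{kj} = q_k \tens \alpha(\varepsilon_k)\beta(\varepsilon_j) \in \M(A\tens S)\cap\M(B\tens S)$. Using the defining formula $\pi_j^k\circ\delta_{A_j}^k(x) = (q_k\tens p_{kj})\delta_A(x)$ from Proposition \ref{actprop} (and the analogous one for $B$), applied to $x\in A_j$, and the fact that $f\tens\id_S$ commutes with multiplication by the central elements $q_k\tens p_{kj}$ and carries $\pi_{A,j}^k$ to $\pi_{B,j}^k$ (compatibly with the inclusions $A_k\tens S_{kj}\subset A\tens S$), one reads off $\pi_{B,j}^k\circ(f_k\tens\id_{S_{kj}})\circ\delta_{A_j}^k(x) = \pi_{B,j}^k\circ\delta_{B_j}^k\circ f_j(x)$; since $\pi_{B,j}^k$ is injective (it is the strictly continuous extension of an inclusion of a direct summand), the desired identity follows. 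The case $k=j$ of this identity is exactly the statement that $f_j$ is $\QG_j$-equivariant for the actions $\delta_{A_j}^j$, $\delta_{B_j}^j$.

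For part 2 (the converse), given non-degenerate $*$-homomorphisms $f_j : A_j \to \M(B_j)$ satisfying \eqref{eqmorpheq}, I define $f(a) := \iota_1\circ f_1(aq_{A,1}) + \iota_2\circ f_2(aq_{A,2})$; this is clearly a non-degenerate $*$-homomorphism since the $\iota_k\circ f_k$ have orthogonal central ranges $q_{B,k}\M(B)$ summing to $1_B$. Equivariance with respect to $\beta$ is immediate: $f(\beta_A(\varepsilon_j)) = f(q_{A,j}) = q_{B,j} = \beta_B(\varepsilon_j)$. For the coaction equivariance $(f\tens\id_S)\delta_A = \delta_B\circ f$, I would use the decomposition $\delta_A(a) = \sum_{k,j}\pi_{A,j}^k\circ\delta_{A_j}^k(q_{A,j}a)$ from Proposition \ref{actprop}(1): apply $f\tens\id_S$ termwise, note $(f\tens\id_S)\circ\pi_{A,j}^k = \pi_{B,j}^k\circ(f_k\tens\id_{S_{kj}})$ on $\M(A_k\tens S_{kj})$ (both sides are the strictly continuous extension of $a'\tens s\mapsto \iota_k(f_k(a'))\tens s$, using that $f$ restricted to $A_k$ is $\iota_k\circ f_k$), then invoke \eqref{eqmorpheq} and reassemble via the $B$-version of Proposition \ref{actprop}(1). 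The main obstacle — really the only subtlety — is bookkeeping the strictly continuous extensions: one must check that $f\tens\id_S$ (the canonical extension $\widetilde{\M}(A\tens S)\to\M(B\tens S)$ from Section \ref{sectionNotations}) genuinely intertwines $\pi_{A,j}^k$ with $\pi_{B,j}^k$ and commutes with multiplication by the central projections $q_k\tens p_{kj}$; this is a routine check on generators $a\tens s$ extended by strict continuity, but it underlies every displayed identity above and should be stated carefully. By Remark \ref{rkEqMorph}, once $(f\tens\id_S)\delta_A = \delta_B\circ f$ holds, $\mathcal{G}$-equivariance of $f$ is automatic.
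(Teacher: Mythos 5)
Your argument is correct and follows the standard route: the paper itself recalls this proposition from \S 3.2.3 of \cite{BC} without reproducing a proof, and the decomposition you use — $f(q_{A,j})=q_{B,j}$ via $f\circ\beta_A=\beta_B$, restriction to the corners $A_j=q_{A,j}A$, and cutting the global intertwining relation $(f\tens\id_S)\delta_A=\delta_B\circ f$ by the central projections $q_{B,k}\tens p_{kj}$ using $\pi_j^k\circ\delta_{A_j}^k(x)=(q_k\tens p_{kj})\delta_A(x)$ together with injectivity of $\pi_{B,j}^k$ — is exactly the intended mechanism, and the converse by reassembly via \ref{actprop}(1) and \ref{rkEqMorph} is likewise sound. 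The only point worth tightening in a written version is the one you already flag, namely the identity $(f\tens\id_S)\circ\pi_{A,j}^k=\pi_{B,j}^k\circ(f_k\tens\id_{S_{kj}})$ between strictly continuous extensions, which should be checked on elementary tensors and extended by strict continuity.
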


\subsubsection{Induction of equivariant C*-algebras}

The above results show that for $j=1,2$ we have a functor 
\begin{center}
${\sf Alg}_{\cal G}\rightarrow{\sf Alg}_{\QG_j}\,;\,(A,\beta_A,\delta_A)\mapsto(A_j,\delta_{A_j}^j).$
\end{center}
In \S 4 \cite{BC}, it has been proved that if $\cal G$ is regular (cf.\ \ref{theo7}), then $(A,\delta_A,\beta_A)\rightarrow(A_1,\delta_{A_1}^1)$ is an equivalence of categories. Moreover, the authors build explicitly the inverse functor $(A_1,\delta_{A_1})\rightarrow(A,\beta_A,\delta_A)$. More precisely, if $\QG_1$ and $\QG_2$ are regular, then to any $\QG_1$-$\Cstar$-algebra $(A_1,\delta_{A_1})$ they associate a $\QG_2$-$\Cstar$-algebra $(A_2,\delta_{A_2})$ in a canonical way. The $\Cstar$-algebra $A:=A_1\oplus A_2$ can be equipped with a strongly continuous action $(\beta_A,\delta_A)$ of the groupoid $\cal G$. This allowed them to build the inverse functor $(A_1,\delta_{A_1})\rightarrow(A,\beta_A,\delta_A)$. The equivalence of categories $(A_1,\delta_{A_1})\rightarrow(A_2,\delta_{A_2})$ generalizes the correspondence of actions for monoidally equivalent \textit{compact} quantum groups of De Rijdt and Vander Vennet \cite{RV}. We bring to the reader's attention that an induction procedure has been developed by De Commer in the von Neumann algebraic setting (cf.\ \S 8 \cite{DC}).

\medskip

In the following, we recall the notations and the main results of \S 4 \cite{BC}. We assume the quantum groups $\QG_1$ and $\QG_2$ to be regular.

\begin{nbs}
Let $\delta_{A_1}:A_1\rightarrow\M(A_1\tens S_{11})$ be a continuous action of $\QG_1$ on a $\Cstar$-algebra $A_1$. Let us denote:\index[symbol]{de@$\delta_{A_1}^{(2)}$}
\[
\delta_{A_1}^1:=\delta_{A_1},\quad \delta_{A_1}^{(2)}:=(\id_{A_1}\tens\delta_{11}^2)\delta_{A_1}:A_1\rightarrow\M(A_1\tens S_{12}\tens S_{21}).
\]
Then, $\delta_{A_1}^{(2)}$ is a faithful non-degenerate *-homomorphism. In the following, we will identify $S_{21}$ with a C*-subalgebra of $\B(\s H_{21})$. Let\index[symbol]{ic@$\ind(A_1)$, induced C*-algebra}
\[
{\rm Ind}_{\QG_1}^{\QG_2}(A_1):=[(\id_{A_1\tens S_{12}}\tens\omega)\delta_{A_1}^{(2)}(a)\,;\,a\in A_1,\,\omega\in\B(\s H_{21})_*]\subset\M(A_1\tens S_{12}).\qedhere
\]
\end{nbs}

\begin{propdef}\label{propind4}
The Banach space $A_2:={\rm Ind}_{\QG_1}^{\QG_2}(A_1)\subset\M(A_1\tens S_{12})$ is a C*-algebra.
\begin{itemize}
	\item We have the relations $[A_2(1_{A_1}\tens S_{12})]=A_1\tens S_{12}=[(1_{A_1}\tens S_{12})A_2]$. In particular, the inclusion $A_2\subset\M(A_1\tens S_{12})$ defines a faithful non-degenerate *-homomorphism and $\M(A_2)\subset\M(A_1\tens S_{12})$.
	\item Let $\delta_{A_2}:=\restr{(\id_{A_1}\tens\delta_{12}^2)}{A_2}$. Then, we have the inclusion $\delta_{A_2}(A_2)\subset\M(A_2\tens S_{22})$ and the *-homomorphism $\delta_{A_2}:A_2\rightarrow\M(A_2\tens S_{22})$ is a continuous action of $\QG_2$ on $A_2$.
\end{itemize}
The pair $\ind(A_1,\delta_{A_1}):=(A_2,\delta_{A_2})$ is called the induced $\QG_2$-C*-algebra of $(A_1,\delta_{A_1})$.
\end{propdef}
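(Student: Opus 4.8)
The plan is to carry out the standard C*-algebraic induction construction across the $\QG_1$--$\QG_2$ bi-Galois object $S_{12}$, exactly as in \S 4 \cite{BC} (see also \S 8 \cite{DC} for the von Neumann algebraic precursor). The structural inputs I would use are: the fact (\ref{prop36}) that $(S_{12},\delta_{12}^1)$ is a left Galois object for $\QG_1$ and $(S_{12},\delta_{12}^2)$ a right Galois object for $\QG_2$; the coassociativity and density relations \ref{prop5bis}\,(1),(3) for the pieces $\delta_{ij}^k$, together with the presentation $\delta_{ij}^k(x)=(W_{ik}^j)^*(1\tens x)W_{ik}^j$ of \ref{prop5bis}\,(2); the fact that $\delta_{A_1}$ is a genuine $\QG_1$-action, hence a non-degenerate faithful $*$-homomorphism with $\delta_{A_1}(1_{A_1})=1$, $(\delta_{A_1}\tens\id_{S_{11}})\delta_{A_1}=(\id_{A_1}\tens\delta_{11}^1)\delta_{A_1}$ and $[\delta_{A_1}(A_1)(1_{A_1}\tens S_{11})]=A_1\tens S_{11}=[(1_{A_1}\tens S_{11})\delta_{A_1}(A_1)]$; and the standing regularity assumption on $\QG_1$ and $\QG_2$. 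Since the $\delta_{ij}^k$ are non-degenerate, $\delta_{A_1}^{(2)}=(\id_{A_1}\tens\delta_{11}^2)\delta_{A_1}\colon A_1\to\M(A_1\tens S_{12}\tens S_{21})$ is again a non-degenerate faithful $*$-homomorphism, and plugging in the formula for $\delta_{11}^2$ gives the convenient presentation $\delta_{A_1}^{(2)}(a)=(W_{12}^1)^*_{23}\,\delta_{A_1}(a)_{13}\,(W_{12}^1)_{23}$, where $W_{12}^1\colon\s H_{12}\tens\s H_{21}\to\s H_{12}\tens\s H_{11}$ acts on the last two legs.

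That $A_2$ is $*$-closed is immediate, since $\big((\id_{A_1\tens S_{12}}\tens\omega)\delta_{A_1}^{(2)}(a)\big)^{*}=(\id_{A_1\tens S_{12}}\tens\bar\omega)\delta_{A_1}^{(2)}(a^{*})$. I would then establish the density relations $[A_2(1_{A_1}\tens S_{12})]=A_1\tens S_{12}=[(1_{A_1}\tens S_{12})A_2]$ by slicing the $S_{21}$-leg in the presentation above, using strong continuity of $\delta_{A_1}$, the Galois relations $[\delta_{11}^2(S_{11})(S_{12}\tens 1)]=S_{12}\tens S_{21}=[\delta_{11}^2(S_{11})(1\tens S_{21})]$, and the regularity of $\QG_1$ — which is precisely what forces the reduced slices against $\B(\s H_{21})_*$ of $(W_{12}^1)^*_{23}(\,\cdot\,)_{13}(W_{12}^1)_{23}$ to generate, after multiplication by $1\tens S_{12}$, a dense subspace of $A_1\tens S_{12}$. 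These two relations immediately give that the inclusion $A_2\subset\M(A_1\tens S_{12})$ is faithful and non-degenerate and that $\M(A_2)\subset\M(A_1\tens S_{12})$. Multiplicative closedness of $A_2$ is the delicate point; I would settle it as in \S 4 \cite{BC}, using the pentagon relations of \ref{prop5bis}, the density relations just obtained, and regularity of $\QG_1$ to verify that a product of two slices $(\id_{A_1\tens S_{12}}\tens\omega)\delta_{A_1}^{(2)}(a)\cdot(\id_{A_1\tens S_{12}}\tens\eta)\delta_{A_1}^{(2)}(b)$ is again a norm limit of slices of this form.

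For the last part I would set $\delta_{A_2}:=\restr{(\id_{A_1}\tens\delta_{12}^2)}{A_2}$ and compute: applying $\id_{A_1}\tens\delta_{12}^2$ to the $S_{12}$-leg of $(\id_{A_1\tens S_{12}}\tens\omega)\delta_{A_1}^{(2)}(a)$, commuting the slice past $\delta_{12}^2$, and invoking the coassociativity identity $(\delta_{12}^2\tens\id_{S_{21}})\delta_{11}^2=(\id_{S_{12}}\tens\delta_{21}^2)\delta_{11}^2$ (a case of \ref{prop5bis}\,(1)), one rewrites the result as $(\id_{A_1\tens S_{12}}\tens\delta_{21}^2)(\delta_{A_1}^{(2)}(a))$ with its final $S_{21}$-leg sliced by $\omega$. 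Feeding in $[\delta_{21}^2(S_{21})(S_{22}\tens 1)]=S_{22}\tens S_{21}$ together with regularity then shows that $\delta_{A_2}(x)(1_{A_2}\tens s)$ lies in $A_2\tens S_{22}$ for $x\in A_2$, $s\in S_{22}$, which yields both $\delta_{A_2}(A_2)\subset\M(A_2\tens S_{22})$ and, by density of these products, the strong-continuity relation $[\delta_{A_2}(A_2)(1_{A_2}\tens S_{22})]=A_2\tens S_{22}$; non-degeneracy of $\delta_{21}^2$ gives $\delta_{A_2}(1_{A_2})=1$. Finally the coassociativity $(\delta_{A_2}\tens\id_{S_{22}})\delta_{A_2}=(\id_{A_2}\tens\delta_{22}^2)\delta_{A_2}$ is a purely formal consequence of $(\delta_{12}^2\tens\id_{S_{22}})\delta_{12}^2=(\id_{S_{12}}\tens\delta_{22}^2)\delta_{12}^2$, again from \ref{prop5bis}\,(1). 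The main obstacles are thus the two slicing arguments — matching reduced slices against $\B(\s H_{21})_*$ with genuine elements of $A_1\tens S_{12}$, which is exactly where regularity of $\QG_1$ is indispensable — and the multiplicative closedness of $A_2$; once these are secured, the remaining coassociativity and equivariance statements are formal.
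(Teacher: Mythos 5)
This Proposition-Definition is not proved in the paper: it is recalled verbatim from \S 4 of \cite{BC} (under the standing regularity hypothesis), so there is no internal proof to compare against. Your sketch is a faithful reconstruction of the argument from \cite{BC}, and the identities you invoke all check out: the presentation $\delta_{A_1}^{(2)}(a)=(W_{12}^1)^*_{23}\,\delta_{A_1}(a)_{13}\,(W_{12}^1)_{23}$, the $*$-closedness identity, and the two instances of \ref{prop5bis}\,(1) used for the equivariance and coassociativity of $\delta_{A_2}$ are all correct instances of the cited results.

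Two remarks. First, you somewhat misplace the role of regularity. The density relations $[A_2(1_{A_1}\tens S_{12})]=A_1\tens S_{12}=[(1_{A_1}\tens S_{12})A_2]$ do not require it: they follow from the strong continuity of $\delta_{A_1}$, the cancellation property $[\delta_{11}^2(S_{11})(S_{12}\tens 1_{S_{21}})]=S_{12}\tens S_{21}$ of \ref{prop5bis}\,(3), and factorization of $\omega\in\B(\s H_{21})_*$ through $S_{21}$; indeed the paper's own proof of \ref{defHomInd} establishes $\delta_{A_1}^{(2)}(a)(1_{A_1}\tens s\tens s')\in A_1\tens S_{12}\tens S_{21}$ by exactly this route without ever invoking regularity, and slicing that inclusion is all the density relations need. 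Regularity enters where you correctly identify the real difficulty, namely the multiplicative closedness of $A_2$. Second, for that step your proposal only gestures at ``as in \S 4 \cite{BC}''. Since this is the single non-formal point of the entire statement, a self-contained proof would have to carry out the computation: rewrite $\delta_{A_1}^{(2)}(a)_{123}\,\delta_{A_1}^{(2)}(b)_{124}$ using the pentagon relations for $W_{12}^1$, and absorb the resulting compact operator on the $\s H_{21}$-leg using a density statement for the slices of $W_{12}^1$ that encodes the regularity of $\QG_1$ (equivalently of $\cal G$, by \ref{theo7}). As written, your argument defers precisely the part that cannot be dispatched formally; everything else in your sketch is routine and correct.
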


\begin{propdef}\label{defHomInd}
Let $A_1$ and $B_1$ be $\QG_1$-C*-algebras. Let $A_2$ {\rm(}resp.\ $B_2${\rm)} be the induced $\QG_2$-C*-algebra of $A_1$ {\rm(}resp.\ $B_1${\rm)}. Let $\phi_1:A_1\rightarrow\M(B_1)$ be a (possibly degenerate) $\QG_1$-equivariant *-homomorphism. There exists a unique *-homomorphism $\phi_2:A_2\rightarrow\M(B_2)$ such that 
\[
\phi_2((\id_{A_1\tens S_{12}}\tens\omega)\delta_{A_1}^{(2)}(a))=(\id_{B_1\tens S_{12}}\tens\omega)\delta_{B_1}^{(2)}(\phi_1(a)),\;\; \text{for all } a\in A_1 \text{ and } \omega\in\B(\s H_{21})_*.
\]
Moreover, $\phi_2$ is $\QG_2$-equivariant. The map $\ind\phi_1:=\phi_2$ is called the induced $\QG_2$-equivariant *-homomorphism of $\phi_1$. In particular, the correspondence ${\rm Ind}_{\QG_1}^{\QG_2}:{\sf Alg}_{\QG_1}\rightarrow {\sf Alg}_{\QG_2}$ is functorial.
\end{propdef}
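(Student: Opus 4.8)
The plan is to obtain $\phi_2$ as the restriction to $A_2$ of the relative–multiplier amplification $\phi_1\tens\id_{S_{12}}$, and then to verify that this restriction takes its values in $\M(B_2)$, satisfies the stated formula, and is $\QG_2$-equivariant. Uniqueness is immediate: by \ref{propind4} the algebra $A_2$ is the closed linear span of the elements $(\id_{A_1\tens S_{12}}\tens\omega)\delta_{A_1}^{(2)}(a)$ with $a\in A_1$ and $\omega\in\B(\s H_{21})_*$, so the displayed formula prescribes $\phi_2$ on a norm–dense subspace, and a $*$-homomorphism is norm–continuous.

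For existence, the density relations $[A_2(1_{A_1}\tens S_{12})]=A_1\tens S_{12}=[(1_{A_1}\tens S_{12})A_2]$ of \ref{propind4} show that $A_2\subset\widetilde{\M}(A_1\tens S_{12})$, so the strictly continuous $*$-homomorphism $\phi_1\tens\id_{S_{12}}:\widetilde{\M}(A_1\tens S_{12})\rightarrow\M(B_1\tens S_{12})$ of \S\ref{sectionNotations} is defined on $A_2$; put $\phi_2:=\restr{(\phi_1\tens\id_{S_{12}})}{A_2}$. To compute $\phi_2$ on the generators I would combine two ingredients. First, the $\QG_1$-equivariance of $\phi_1$, which reads $(\phi_1\tens\id_{S_{11}})\delta_{A_1}=\delta_{B_1}\circ\phi_1$ on $A_1$, amplified along the second leg by the fixed $*$-homomorphism $\delta_{11}^2$ — using that $\id\tens\delta_{11}^2$ intertwines $\phi_1\tens\id_{S_{11}}$ with $\phi_1\tens\id_{S_{12}\tens S_{21}}$ on $\widetilde{\M}(A_1\tens S_{11})$ — gives $(\phi_1\tens\id_{S_{12}\tens S_{21}})\circ\delta_{A_1}^{(2)}=\delta_{B_1}^{(2)}\circ\phi_1$ on $A_1$. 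Second, $\phi_1\tens\id_{S_{12}}$ commutes with the leg–$S_{21}$ slice: $(\phi_1\tens\id_{S_{12}})\big((\id_{A_1\tens S_{12}}\tens\omega)(X)\big)=(\id_{B_1\tens S_{12}}\tens\omega)\big((\phi_1\tens\id_{S_{12}\tens S_{21}})(X)\big)$ for $X\in\widetilde{\M}(A_1\tens S_{12}\tens S_{21})$ and $\omega\in\B(\s H_{21})_*$ — a standard compatibility of amplifications, compare \S1 \cite{BS1} and \ref{not4}. Applying the second identity to $X=\delta_{A_1}^{(2)}(a)$ and then the first identity yields exactly $\phi_2\big((\id_{A_1\tens S_{12}}\tens\omega)\delta_{A_1}^{(2)}(a)\big)=(\id_{B_1\tens S_{12}}\tens\omega)\delta_{B_1}^{(2)}(\phi_1(a))$.

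It remains to see that $\phi_2(A_2)\subset\M(B_2)$; since $\phi_2$ is continuous and $\M(B_2)$ is closed, it suffices to prove $(\id_{B_1\tens S_{12}}\tens\omega)\delta_{B_1}^{(2)}(m)\in\M(B_2)$ for every $m\in\M(B_1)$, and then to apply this with $m=\phi_1(a)$. Here I would use that $\delta_{11}^2$ — and hence $\delta_{B_1}^{(2)}=(\id_{B_1}\tens\delta_{11}^2)\delta_{B_1}$ — is unital (this amounts to $(p_{12}\tens p_{21})\Delta(1)=p_{12}\tens p_{21}$, which follows from $\Delta(1)=\alpha(\varepsilon_1)\tens\beta(\varepsilon_1)+\alpha(\varepsilon_2)\tens\beta(\varepsilon_2)$, cf.\ \ref{not10}) and strictly continuous on $\M(B_1)$, so that along an approximate unit $(u_\lambda)$ of $B_1$ one has $\delta_{B_1}^{(2)}(mu_\lambda)\to\delta_{B_1}^{(2)}(m)$ strictly in $\M(B_1\tens S_{12}\tens S_{21})$; slicing by $\omega$ and using the density relations of \ref{propind4} and \ref{prop5bis} to control the relevant products then realizes $(\id\tens\omega)\delta_{B_1}^{(2)}(m)$ as a two-sided multiplier of $B_2=[(\id\tens\omega')\delta_{B_1}^{(2)}(b)\,;\,b\in B_1,\,\omega'\in\B(\s H_{21})_*]$. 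I expect this last verification — getting the $\omega$-slice of a strict limit to interact correctly with $B_2$ — to be the main technical obstacle; everything else is formal.

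Once this is in place, $\QG_2$-equivariance is automatic: $\delta_{A_2}$ and $\delta_{B_2}$ are the restrictions of $\id_{A_1}\tens\delta_{12}^2$ and $\id_{B_1}\tens\delta_{12}^2$, and $\phi_1\tens\id_{S_{12}}$ visibly commutes with amplifying the second leg by $\delta_{12}^2$, so $(\phi_2\tens\id_{S_{22}})\delta_{A_2}=\delta_{B_2}\circ\phi_2$. Functoriality of $\ind$ is then routine: for composable $\QG_1$-equivariant $*$-homomorphisms $\phi_1:A_1\rightarrow\M(B_1)$ and $\psi_1:B_1\rightarrow\M(C_1)$ one has $(\psi_1\circ\phi_1)\tens\id_{S_{12}}=(\psi_1\tens\id_{S_{12}})\circ(\phi_1\tens\id_{S_{12}})$ on $\widetilde{\M}(A_1\tens S_{12})$, and $\id_{A_1}\tens\id_{S_{12}}$ restricts to $\id_{A_2}$, whence $\ind(\psi_1\circ\phi_1)=\ind\psi_1\circ\ind\phi_1$ and $\ind(\id_{A_1})=\id_{A_2}$.
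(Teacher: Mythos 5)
Your proposal is correct and follows essentially the same route as the paper: define $\phi_2$ as the restriction to $A_2$ of the extension $\phi_1\tens\id_{S_{12}}:\widetilde{\M}(A_1\tens S_{12})\rightarrow\M(B_1\tens S_{12})$, and derive the displayed formula from the intertwining relation $(\phi_1\tens\id_{S_{12}\tens S_{21}})\delta_{A_1}^{(2)}=\delta_{B_1}^{(2)}\circ\phi_1$ combined with the compatibility of $\phi_1\tens\id_{S_{12}}$ with slicing the $S_{21}$-leg. The only shift in emphasis is that the paper's technical work goes into verifying $\delta_{A_1}^{(2)}(A_1)\subset\widetilde{\M}_{S_{12}\tens S_{21}}(A_1\tens S_{12}\tens S_{21})$ so that the slice-compatibility identity applies (a membership you take for granted), whereas you instead spell out the check that the image lands in $\M(B_2)$, which the paper leaves implicit.
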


\begin{proof}
{\setlength{\baselineskip}{1.3\baselineskip}
Let $\phi_1\tens\id_{S_{12}}:\widetilde{\M}(A_1\tens S_{12})\rightarrow\M(B_1\tens S_{12})$ be the *-homomorphism extending $\phi_1\tens\id_{S_{12}}$ (cf.\ \S \ref{sectionNotations}. We have $A_2\subset\widetilde{\M}(A_1\tens S_{12})$ (cf.\ \ref{propind4}). Let $\phi_2:=\restr{(\phi_1\tens\id_{S_{12}})}{A_2}$. Let us prove that $\delta_{A_1}^{(2)}(A_1)\subset\widetilde{\M}_{S_{12}\tens S_{21}}(A_1\tens S_{12}\tens S_{21})$. Let $a\in A_1$, $s\in S_{12}$ and $s'\in S_{21}$. It follows from the relation $[\delta_{11}^2(S_{11})(S_{12}\tens 1_{S_{21}})]=S_{12}\tens S_{21}$ that $\delta_{A_1}^{(2)}(a)(1_{A_1}\tens s\tens s')$ is the norm limit of finite sums of the form $\sum_i(\id_{A_1}\tens\delta_{11}^2)(\delta_{A_1}(a)(1_{A_1}\tens t_i))(1_{A_1}\tens s_i\tens 1_{S_{21}})$ with $t_i\in S_{11}$ and $s_i\in S_{12}$. It then follows from the inclusion $\delta_{A_1}(A_1)(1_{A_1}\tens S_{11})\subset A_1\tens S_{11}$ ($\delta_{A_1}$ is continuous) that $\delta_{A_1}^{(2)}(a)(1_{A_1}\tens s\tens s')$ is the norm limit of finite sums of the form $\sum_i(\id_{A_1}\tens\delta_{11}^2)(a_i\tens t_i)(1_{A_1}\tens s_i\tens 1_{S_{21}})=\sum_i a_i\tens\delta_{11}^2(t_i)(s_i\tens 1_{S_{21}})$ with $a_i\in A_1$, $t_i\in S_{11}$ and $s_i\in S_{12}$. Hence, $\delta_{A_1}^{(2)}(a)(1_{A_1}\tens s\tens s')\in A_1\tens S_{12}\tens S_{21}$.\newline
It is clear that $(\phi_1\tens\id_{S_{12}})(\id_{A_1\tens S_{12}}\tens\omega)(x)=(\id_{B_1\tens S_{12}}\tens\omega)(\phi_1\tens\id_{S_{12}}\tens\id_{S_{21}})(x)$ for all $x\in A_1\tens S_{12}\tens S_{21}$ and $\omega\in\B(\s H_{21})_*$. By composing by $\id_{B_1}\tens\phi$ for $\phi\in\B(\s H_{12})_*$ and by factorizing $\phi$ and $\omega$ on the same side by an element of $S_{12}$ and $S_{21}$ respectively, we remark that this formula still holds if $x\in\widetilde{\M}_{S_{12}\tens S_{21}}(A_1\tens S_{12}\tens S_{21})$. By a straightforward computation, we have 
$
(\phi_1\tens\id_{S_{12}\tens S_{21}})\delta_{A_1}^{(2)}(a)=\delta_{B_1}^{(2)}(\phi_1(a))
$
for all $a\in A_1$. We then conclude that $\phi_2((\id_{A_1\tens S_{12}}\tens\omega)\delta_{A_1}^{(2)}(a))=(\id_{B_1\tens S_{12}}\tens\omega)\delta_{B_1}^{(2)}(\phi_1(a))$ for all $a\in A_1$ and $\omega\in\B(\s H_{21})_*$. The proof of the equivariance of $\phi_2$ is similar to that of 4.3 c) \cite{BC}.\qedhere
\par}
\end{proof}

By exchanging the roles of the quantum groups $\QG_1$ and $\QG_2$, we obtain {\it mutatis mutandis} a functor ${\rm Ind}_{\QG_2}^{\QG_1}:{\sf Alg}_{\QG_2}\rightarrow {\sf Alg}_{\QG_1}$.

\begin{prop}\label{propind1}
Let $j,k=1,$ with $j\neq k$. Let $(A_j,\delta_{A_j})$ be a $\QG_j$-$\Cstar$-algebra. Let 
\[
A_k:={\rm Ind}_{\QG_j}^{\QG_k}(A_j)\subset\M(A_j\tens S_{jk}) \quad \text{and} \quad C:={\rm Ind}_{\QG_k}^{\QG_j}(A_k)\subset\M(A_k\tens S_{kj})
\]
endowed with the continuous actions $\delta_{A_k}:=\restr{(\id_{A_j}\tens\delta_{jk}^k)}{A_k}$ and $\delta_C:=\restr{(\id_{A_k}\tens\delta_{kj}^j)}{C}$ respectively. Then, we have:
\begin{enumerate}
	\item $C\subset\M(A_k\tens S_{kj})\subset\M(A_j\tens S_{jk}\tens S_{kj})$ and $C=\delta_{A_j}^{(k)}(A_j)$;
	\item $\pi_j:A_j\rightarrow C\,;\,a\mapsto \delta_{A_j}^{(k)}(a):=(\id_{A_j}\tens\delta_{jj}^k)\delta_{A_j}(a)$
	is a $\QG_j$-equivariant *-isomorphism;\index[symbol]{pi@$\pi_j$}
	\item $\delta_{A_j}^k:A_j\rightarrow\M(A_k\tens S_{kj})\,;\,a\mapsto\delta_{A_j}^{(k)}(a):=(\id_{A_j}\tens\delta_{jj}^k)\delta_{A_j}(a)$
	is a faithful non-degenerate *-homomorphism.\qedhere
\end{enumerate}
\end{prop}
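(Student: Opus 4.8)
\emph{Strategy.} This statement is the C*-algebraic incarnation of the fact that inducing from $\QG_j$ to $\QG_k$ and back is the identity. I would prove the three items together, the core being the identification $C=\delta_{A_j}^{(k)}(A_j)$, where $\delta_{A_j}^{(k)}=(\id_{A_j}\tens\delta_{jj}^k)\delta_{A_j}$ is regarded inside $\M(A_j\tens S_{jk}\tens S_{kj})$. The two inclusions $C\subset\M(A_k\tens S_{kj})\subset\M(A_j\tens S_{jk}\tens S_{kj})$ of item~1 are cheap: the first is part of \ref{propind4} (with the roles of $\QG_1,\QG_2$ played by $\QG_k,\QG_j$), and the second holds because $A_k\subset\M(A_j\tens S_{jk})$ is non-degenerate (again \ref{propind4}), hence so is $A_k\tens S_{kj}\subset\M(A_j\tens S_{jk}\tens S_{kj})$. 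Item~3 is then a formal consequence of items~1 and~2: $\delta_{A_j}^k$ is the composite of the isomorphism $\pi_j:A_j\to C$ with the inclusion $C\subset\M(A_k\tens S_{kj})$, which is faithful and non-degenerate by \ref{propind4}. So the genuine work is (a) the identification $C=\delta_{A_j}^{(k)}(A_j)$, and (b) the fact that $\pi_j$ is a $\QG_j$-equivariant *-isomorphism.

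\emph{Item (b).} Injectivity of $\delta_{A_j}^{(k)}$ is immediate: $\delta_{A_j}$ is faithful, and $\delta_{jj}^k(x)=V_{kj}^j(x\tens 1)(V_{kj}^j)^*$ with $V_{kj}^j$ a unitary (\ref{prop5bis}\,2 and the discussion following \ref{prop35}), so $\delta_{jj}^k$—hence also $\id_{A_j}\tens\delta_{jj}^k$—is an injective *-homomorphism; being an injective *-homomorphism, $\delta_{A_j}^{(k)}$ is isometric, so $\delta_{A_j}^{(k)}(A_j)$ is a closed *-subalgebra and, granting (a), $\pi_j:A_j\to C$ is a *-isomorphism. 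For equivariance I must check $(\id_{A_k}\tens\delta_{kj}^j)\delta_{A_j}^{(k)}(a)=(\delta_{A_j}^{(k)}\tens\id_{S_{jj}})\delta_{A_j}(a)$ for $a\in A_j$; expanding $\delta_{A_j}^{(k)}$ on both sides, this reduces to the coassociativity identity $(\id_{S_{jk}}\tens\delta_{kj}^j)\delta_{jj}^k=(\delta_{jj}^k\tens\id_{S_{jj}})\delta_{jj}^j$ (an instance of \ref{prop5bis}\,1) together with the coaction identity $(\id_{A_j}\tens\delta_{jj}^j)\delta_{A_j}=(\delta_{A_j}\tens\id_{S_{jj}})\delta_{A_j}$ of the $\QG_j$-action.

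\emph{Item (a).} Here lies the computation. I would take a generating element of $C$, namely $(\id_{A_k\tens S_{kj}}\tens\omega)\delta_{A_k}^{(j)}(b)$ with $b=(\id_{A_j\tens S_{jk}}\tens\phi)\delta_{A_j}^{(k)}(a)$ ($a\in A_j$, $\phi\in\B(\s H_{kj})_*$, $\omega\in\B(\s H_{jk})_*$), and successively unwind $\delta_{A_k}=\restr{(\id_{A_j}\tens\delta_{jk}^k)}{A_k}$ and then $\delta_{A_k}^{(j)}=(\id_{A_k}\tens\delta_{kk}^j)\delta_{A_k}$, each time re-associating the resulting composite of coproduct-parts $\delta_{\ast\ast}^{\ast}$ by the appropriate index configuration of \ref{prop5bis}\,1 (for instance $(\delta_{jk}^k\tens\id_{S_{kj}})\delta_{jj}^k=(\id_{S_{jk}}\tens\delta_{kj}^k)\delta_{jj}^k$) and slicing off the auxiliary legs. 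Using finally the $\QG_j$-coaction identity once more, this should collapse to
\[
(\id_{A_k\tens S_{kj}}\tens\omega)\,\delta_{A_k}^{(j)}(b)=\delta_{A_j}^{(k)}(a_\eta),\qquad \eta:=(\omega\tens\phi)\circ\delta_{jj}^k,\quad a_\eta:=(\id_{A_j}\tens\eta)\delta_{A_j}(a).
\]
Since $V_{kj}^j$ is unitary, $\eta$ is (the restriction of) a normal functional on $\B(\s H_{jj})$, and since $\delta_{A_j}(A_j)\subset\widetilde{\M}(A_j\tens S_{jj})$ (a standard consequence of strong continuity) one gets $a_\eta\in A_j$; this already yields $C\subseteq\delta_{A_j}^{(k)}(A_j)$. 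For the reverse inclusion it is enough that the $a_\eta$, as $a,\omega,\phi$ vary, have dense linear span in $A_j$; but $a_\eta=(\id_{A_j}\tens\omega\tens\phi)\delta_{A_j}^{(k)}(a)=(\id_{A_j}\tens\omega)(c)$ with $c=(\id_{A_j\tens S_{jk}}\tens\phi)\delta_{A_j}^{(k)}(a)\in A_k$, and $[(\id_{A_j}\tens\omega)(c)\,;\,c\in A_k,\ \omega\in\B(\s H_{jk})_*]=A_j$ because $[A_k(1_{A_j}\tens S_{jk})]=A_j\tens S_{jk}$ (\ref{propind4}). Applying the isometry $\delta_{A_j}^{(k)}$, the closed span of the $\delta_{A_j}^{(k)}(a_\eta)$ equals $\delta_{A_j}^{(k)}(A_j)$; since by the displayed identity these are exactly the generators of $C$, we conclude $C=\delta_{A_j}^{(k)}(A_j)$.

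\emph{Main obstacle.} Everything is soft except the middle step of item~(a): the explicit collapse of $(\id_{A_k\tens S_{kj}}\tens\omega)\delta_{A_k}^{(j)}(b)$ to $\delta_{A_j}^{(k)}(a_\eta)$. It is a purely formal manipulation, but one has to track carefully which tensor leg each map $\delta_{\ast\ast}^{\ast}$ acts on and pick the correct index configuration of the coassociativity relation \ref{prop5bis}\,1 at every re-association — routine but error-prone bookkeeping. Once this identity is established, items~1, 2 and~3 follow as indicated, using only \ref{propind4}, the coaction axiom of the $\QG_j$-action, and the fact that strongly continuous actions take values in the relative multiplier algebra.
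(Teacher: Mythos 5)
Your argument is correct, and in fact the paper gives no proof of this statement: Proposition \ref{propind1} is quoted verbatim from \S 4.5 of \cite{BC} (the surrounding text says the section merely ``recalls the main results of \S 4 \cite{BC}''), so there is nothing internal to compare against. Your route — reduce the generator $(\id_{A_k\tens S_{kj}}\tens\omega)\delta_{A_k}^{(j)}(b)$ to $\delta_{A_j}^{(k)}\bigl((\id_{A_j}\tens(\omega\tens\phi)\delta_{jj}^k)\delta_{A_j}(a)\bigr)$ by repeated use of the coassociativity instances of \ref{prop5bis}\,1 and the coaction identity, then use $[A_k(1_{A_j}\tens S_{jk})]=A_j\tens S_{jk}$ from \ref{propind4} and faithfulness of $\delta_{A_j}^{(k)}$ to get both inclusions — is exactly the natural proof and is the one carried out in \cite{BC}; the only points worth making explicit in a full write-up are the Cohen-factorization step guaranteeing $a_\eta=(\id_{A_j}\tens\eta)\delta_{A_j}(a)\in A_j$ for every normal $\eta$, and the compatibility of the slice maps with the non-degenerate embedding $\M(A_k\tens S_{kj})\subset\M(A_j\tens S_{jk}\tens S_{kj})$, both of which you correctly flag as standard.
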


The above result shows that the functors $\ind$ and $\iind$ are inverse of each other.

\begin{nbs}\label{not11}
Let $(B_1,\delta_{B_1})$ be a $\QG_1$-$\Cstar$-algebra. Let $(B_2,\delta_{B_2})$ be the induced $\QG_2$-$\Cstar$-algebra, that is to say $B_2={\rm Ind}_{\QG_1}^{\QG_2}(B_1)$ and $\delta_{B_2}=\restr{(\id_{B_1}\tens\delta_{12}^2)}{B_2}$. In virtue of \ref{propind1}, we have four *-homomorphisms:
\[
\delta_{B_j}^k:B_j\rightarrow\M(B_k\tens S_{kj}),\quad j,k=1,2.
\]
Let us give a precise description of them. We denote $\delta_{B_1}^1:=\delta_{B_1}$ and $\delta_{B_2}^2:=\delta_{B_2}$. The *-homomorphism $\delta_{B_1}^2:B_1\rightarrow\M(B_2\tens S_{21})$ is given by
\[
b\in B_1\mapsto \delta_{B_1}^2(b):=\delta_{B_1}^{(2)}(b)\in\M(B_2\tens S_{21}) \;\;\text{{\rm(}with }\delta_{B_1}^{(2)}(b):=(\id_{B_1}\tens\delta_{11}^2)\delta_{B_1}^1(b),\text{ for } b\in B_1\text{{\rm)}}
\]
whereas the *-homomorphism $\delta_{B_2}^1:B_2\rightarrow\M(B_1\tens S_{12})$ is defined by the relation
\[
(\pi_1\tens\id_{S_{12}})\delta_{B_2}^1(b)=\delta_{B_2}^{(1)}(b) \quad \text{for } b\in B_2,
\]
where
$
\delta_{B_2}^{(1)}:=(\id_{B_2}\tens \delta_{22}^1)\delta_{B_2}^2
$ 
and
$
\pi_1:B_1\rightarrow{\rm Ind}_{\QG_2}^{\QG_1}(B_2)\,;\,b \mapsto \delta_{B_1}^{(2)}(b)
$
(cf.\ \ref{propind1} 2).
\end{nbs}

\begin{prop}\label{prop4}
Let $(A,\beta_A,\delta_A)$ be a ${\cal G}$-$\Cstar$-algebra. Let $j,k=1,\,2$ with $j\neq k$. With the notations of \ref{actprop}, let
\[
(\widetilde{A}_j,\delta_{\widetilde{A}_j}):={\rm Ind}_{\QG_k}^{\QG_j}(A_k,\delta_{A_k}^k).
\]
If $x\in A_j$, then we have $\delta_{A_j}^k(x)\in\widetilde{A}_j\subset\M(A_k\tens S_{kj})$ and the map 
$
\widetilde{\pi}_j:A_j\rightarrow\widetilde{A}_j\,;\,x\mapsto\delta_{A_j}^k(x)
$
is a $\QG_j$-equivariant *-isomorphism.\index[symbol]{pja@$\widetilde{\pi}_j$}
\end{prop}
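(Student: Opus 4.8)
The plan is to show that the map $\widetilde\pi_j:=\delta_{A_j}^k$ carries $A_j$ \emph{onto} the induced C*-algebra $\widetilde A_j={\rm Ind}_{\QG_k}^{\QG_j}(A_k,\delta_{A_k}^k)$ and intertwines the actions $\delta_{A_j}^j$ and $\delta_{\widetilde A_j}$; since $\delta_{A_j}^k$ is already a faithful non-degenerate $*$-homomorphism $A_j\to\M(A_k\tens S_{kj})$ by \ref{actprop} (and $\delta_{A_k}^k$ is a $\QG_k$-action by the same result, so $\widetilde A_j$ is defined), this will give the claimed $\QG_j$-equivariant $*$-isomorphism. Recall from \ref{propind4}, after relabelling the indices $1,2$ there by $k,j$, that $\widetilde A_j$ is the closed linear span in $\M(A_k\tens S_{kj})$ of the elements $(\id_{A_k\tens S_{kj}}\tens\omega)\,\delta_{A_k}^{(j)}(a)$ ($a\in A_k$, $\omega\in\B(\s H_{jk})_*$), where $\delta_{A_k}^{(j)}:=(\id_{A_k}\tens\delta_{kk}^j)\delta_{A_k}^k$, and that $\delta_{\widetilde A_j}$ is the restriction of $\id_{A_k}\tens\delta_{kj}^j$ to $\widetilde A_j$.

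The crux is the identity
\[
\delta_{A_k}^{(j)}=(\delta_{A_j}^k\tens\id_{S_{jk}})\circ\delta_{A_k}^j ,
\]
which is simply an instance of the coassociativity relation in item~2 of \ref{actprop} (for the labelling making its right-hand side equal to $(\id_{A_k}\tens\delta_{kk}^j)\delta_{A_k}^k$). For the right-hand side to make sense one first checks that $\delta_{A_k}^j(A_k)\subset\widetilde\M(A_j\tens S_{jk})$; this follows from $[\delta_{A_k}^j(A_k)(1_{A_j}\tens S_{jk})]=A_j\tens S_{jk}$ — which is item~3 of \ref{actprop} with $j$ and $k$ exchanged — together with the analogous relation on the other side obtained by taking adjoints, so that $\delta_{A_j}^k\tens\id_{S_{jk}}$ is defined on $\widetilde\M(A_j\tens S_{jk})$ as in \S\ref{sectionNotations}. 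A routine slicing of the last leg (writing an arbitrary $\omega\in\B(\s H_{jk})_*$ as a norm-limit of functionals $\omega'(\,\cdot\,s)$, $s\in S_{jk}$, and using the characterising property of $\delta_{A_j}^k\tens\id_{S_{jk}}$) then yields
\[
(\id_{A_k\tens S_{kj}}\tens\omega)\,\delta_{A_k}^{(j)}(a)=\delta_{A_j}^k\big((\id_{A_j}\tens\omega)\,\delta_{A_k}^j(a)\big),\qquad a\in A_k,\ \omega\in\B(\s H_{jk})_*.
\]

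From this, $\widetilde A_j$ is the closed linear span of the elements $\delta_{A_j}^k(y)$ with $y$ in $\{(\id_{A_j}\tens\omega)\delta_{A_k}^j(a):a\in A_k,\ \omega\in\B(\s H_{jk})_*\}$, a set whose linear span is dense in $A_j$ by item~3 of \ref{actprop} (again with $j,k$ exchanged); since $\delta_{A_j}^k$ is a $*$-homomorphism its image is a closed $*$-subalgebra of $\M(A_k\tens S_{kj})$, so $\widetilde A_j=\delta_{A_j}^k(A_j)$. In particular $\delta_{A_j}^k(x)\in\widetilde A_j$ for all $x\in A_j$, and $\widetilde\pi_j\colon A_j\to\widetilde A_j$, $x\mapsto\delta_{A_j}^k(x)$, is a $*$-isomorphism (injectivity being the faithfulness of $\delta_{A_j}^k$). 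Finally, $\QG_j$-equivariance amounts to $(\delta_{A_j}^k\tens\id_{S_{jj}})\delta_{A_j}^j=(\id_{A_k}\tens\delta_{kj}^j)\delta_{A_j}^k$ — i.e.\ $(\widetilde\pi_j\tens\id_{S_{jj}})\delta_{A_j}^j=\delta_{\widetilde A_j}\circ\widetilde\pi_j$ once we know $\delta_{A_j}^k(x)\in\widetilde A_j$ — and this is once again the coassociativity relation of \ref{actprop}, now for the labelling with $\delta_{A_j}^j$ innermost. The main obstacle is bookkeeping: choosing the index substitutions in item~2 of \ref{actprop} so that the two required instances of coassociativity come out exactly in the needed shape, and handling the relative-multiplier extension $\delta_{A_j}^k\tens\id_{S_{jk}}$ with due care in the slicing step; once the displayed identity for $\delta_{A_k}^{(j)}$ is in hand, everything else is purely formal.
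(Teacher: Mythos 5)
Your proof is correct. The paper states this proposition without proof (it is recalled from \cite{BC}), but your argument --- identifying $\widetilde{A}_j$ with $\delta_{A_j}^k(A_j)$ via the coassociativity instance $(\delta_{A_j}^k\tens\id_{S_{jk}})\delta_{A_k}^j=(\id_{A_k}\tens\delta_{kk}^j)\delta_{A_k}^k$ from \ref{actprop}~2, slicing the last leg, and invoking \ref{actprop}~3 with $j$ and $k$ exchanged to get surjectivity, then deducing equivariance from a second instance of coassociativity --- is exactly the expected one, and your treatment of the relative-multiplier extension by factorizing $\omega$ through $S_{jk}$ is the same device the paper uses in its proof of \ref{defHomInd}.
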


\begin{prop}\label{prop37}
Let $(B_1,\delta_{B_1})$ be a $\QG_1$-$\Cstar$-algebra. Let $B_2={\rm Ind}_{\QG_1}^{\QG_2}(B_1)$ be the induced $\QG_2$-$\Cstar$-algebra. Let $B:=B_1\oplus B_2$. For $j,\,k=1,2$ with $j\neq k$, let $\pi_j^k:\M(B_k\tens S_{kj})\rightarrow\M(B\tens S)$ be the strictly continuous *-homomorphism extending the canonical injection $B_k\tens S_{kj}\rightarrow B\tens S$ and $\delta_{B_j}^k:B_j\rightarrow\M(B_k\tens S_{kj})$ the *-homomorphisms defined in \ref{not11}.
Let $\beta_B:\GC^2\rightarrow\M(B)$ and $\delta_B:B\rightarrow\M(B\tens S)$ be the *-homomorphisms defined by:
\[
\beta_B(\lambda,\mu):=\begin{pmatrix}\lambda & 0\\ 0 & \mu\end{pmatrix}\!,\quad (\lambda,\mu)\in\GC^2;\quad
\delta_B(b):=\sum_{k,j=1,2}\pi_j^k\circ\delta_{B_j}^k(b_j),\quad b=(b_1,b_2)\in B.\]
Therefore, we have:
\begin{enumerate}
	\item $(\beta_B,\delta_B)$ is a strongly continuous action of ${\cal G}$ on $B$;
	\item the correspondence ${\sf Alg}_{\QG_1}\rightarrow{\sf Alg}_{\cal G}\,;\,(B_1,\delta_{B_1})\mapsto(B,\beta_B,\delta_B)$ is functorial;
	\item the functors ${\sf Alg}_{\QG_1}\rightarrow{\sf Alg}_{\cal G}$ and ${\sf Alg}_{\cal G}\rightarrow{\sf Alg}_{\QG_1}$ are inverse of each other.\qedhere
\end{enumerate}
\end{prop}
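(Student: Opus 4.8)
The plan is to verify the three assertions in turn, in each case reducing to the compatibility relations already recorded for the maps $\delta_{B_j}^k$ (Propositions \ref{propind4}, \ref{propind1}, \ref{prop4}) and for the $\delta_{ij}^k$ (Proposition \ref{prop5bis}), and exploiting that the ranges of the maps $\pi_j^k$ are mutually orthogonal closed two-sided ideals of $\M(B\tens S)$.

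\textbf{Assertion 1.} First I would check the axioms of Definition \ref{defactmqg}. That $\beta_B$ is a non-degenerate central $*$-homomorphism $\GC^2\rightarrow\M(B)$ is immediate from its diagonal form, and from $\pi_j^k(1_{B_k\tens S_{kj}})=q_{B,k}\tens p_{kj}$ one reads off $q_{\beta_B\alpha}=\sum_{k,j}q_{B,k}\tens p_{kj}$. Since $\delta_{B_j}^j=\delta_{B_j}$ is a (non-degenerate) $\QG_j$-action and $\delta_{B_j}^k$ is faithful non-degenerate for $j\neq k$ (Proposition \ref{propind1} (3)), the map $\delta_B=\sum_{k,j}\pi_j^k\circ\delta_{B_j}^k(\,\cdot\,q_{B,j})$ is a faithful $*$-homomorphism extending strictly continuously to $\M(B)$ with $\delta_B(1_B)=q_{\beta_B\alpha}$, and $\delta_B(\beta_B(n^{\rm o}))=q_{\beta_B\alpha}(1_B\tens\beta(n^{\rm o}))$ follows since the $S_{kj}$-leg carries the $\beta(\varepsilon_j)$-support. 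Coassociativity $(\delta_B\tens\id_S)\delta_B=(\id_B\tens\delta)\delta_B$ becomes, after inserting the decompositions and using $(\Delta_{ij}^k)$-coassociativity (Proposition \ref{prop5bis} (1)) on the $S$-legs, the family of identities $(\delta_{B_k}^l\tens\id_{S_{kj}})\delta_{B_j}^k=(\id_{B_l}\tens\delta_{lj}^k)\delta_{B_j}^l$ for all $j,k,l$; the cases with $k$ or $l$ diagonal come from Propositions \ref{propind4} and \ref{propind1}, and the remaining cases follow by iterating the induction isomorphism $\pi_j:B_j\xrightarrow{\sim}{\rm Ind}_{\QG_k}^{\QG_j}{\rm Ind}_{\QG_j}^{\QG_k}(B_j)$ of Proposition \ref{propind1} (2). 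Finally, strong continuity $[\delta_B(B)(1_B\tens S)]=q_{\beta_B\alpha}(B\tens S)$ reduces to $[\delta_{B_j}^k(B_j)(1_{B_k}\tens S_{kj})]=B_k\tens S_{kj}$ for all $j,k$: the diagonal case is strong continuity of the $\QG_j$-actions, and the off-diagonal case follows from $[\delta_{jj}^k(S_{jj})(1_{S_{jk}}\tens S_{kj})]=S_{jk}\tens S_{kj}$ (Proposition \ref{prop5bis} (3)) together with the definition of the induced C*-algebra in Proposition-Definition \ref{propind4}.

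\textbf{Assertions 2 and 3.} For functoriality, given a (possibly degenerate) $\QG_1$-equivariant $*$-homomorphism $\phi_1:B_1\rightarrow\M(B_1')$ I would set $\phi_2:={\rm Ind}_{\QG_1}^{\QG_2}\phi_1$ (Proposition-Definition \ref{defHomInd}) and $\phi:=\iota_1\circ\phi_1(\,\cdot\,q_{B,1})+\iota_2\circ\phi_2(\,\cdot\,q_{B,2}):B\rightarrow\M(B')$; then $\phi\circ\beta_B=\beta_{B'}$ is clear and $(\phi\tens\id_S)\delta_B=\delta_{B'}\circ\phi$ follows from the four intertwining relations $(\phi_k\tens\id_{S_{kj}})\delta_{B_j}^k=\delta_{B_j'}^k\circ\phi_j$, the diagonal ones being equivariance of $\phi_j$ (given for $j=1$, part of \ref{defHomInd} for $j=2$) and the off-diagonal ones following from the definitions in Notations \ref{not11} and the relation defining $\phi_2$; preservation of identities and composition is immediate. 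For the mutual-inverse property, composing ${\sf Alg}_{\QG_1}\rightarrow{\sf Alg}_{\cal G}$ with $(A,\beta_A,\delta_A)\mapsto(q_{A,1}A,\delta_{A_1}^1)$ returns $(B_1,\delta_{B_1})$ on the nose, because $q_{B,1}B=B_1$ and, by orthogonality of the ranges of the $\pi_j^k$, the map $\delta_{B_1}^1$ produced by \ref{actprop} applied to $(B,\beta_B,\delta_B)$ is exactly $\delta_{B_1}$; for the other composite, starting from $(A,\beta_A,\delta_A)$ one uses Proposition \ref{prop4} to get the $\QG_2$-equivariant isomorphism $\widetilde\pi_2:A_2\xrightarrow{\sim}{\rm Ind}_{\QG_1}^{\QG_2}(A_1,\delta_{A_1}^1)=\widetilde A_2$, and then $u:=\id_{A_1}\oplus\widetilde\pi_2:A\rightarrow\widetilde B$ is a $\cal G$-equivariant $*$-isomorphism (the equivariance splits again into the four relations, settled by \ref{prop4}, \ref{propind1}, \ref{not11}, and $u\circ\beta_A=\beta_{\widetilde B}$ is obvious). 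Naturality of $u$ in $A$ follows from functoriality of ${\rm Ind}_{\QG_1}^{\QG_2}$ together with the naturality built into Proposition \ref{prop4}.

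\textbf{Main obstacle.} The delicate point is the coassociativity step of Assertion 1: establishing the full set of four-fold relations $(\delta_{B_k}^l\tens\id_{S_{kj}})\delta_{B_j}^k=(\id_{B_l}\tens\delta_{lj}^k)\delta_{B_j}^l$, including the cases with two off-diagonal legs, requires carefully iterating the induction isomorphisms of Proposition \ref{propind1} and keeping track of the non-unital leg-numbering bookkeeping forced by $\delta(1_S)=q_{\beta\alpha}$ and $\pi_j^k(1)=q_{B,k}\tens p_{kj}$; everything else is routine manipulation with the orthogonal central projections $q_{B,k}\tens p_{kj}$. It is also worth noting that much of this overlaps with \S 4 of \cite{BC}, so in the write-up several of these verifications can simply be cited from there.
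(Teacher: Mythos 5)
This proposition is stated in the paper without proof: it is part of the material recalled from \S 4 of \cite{BC} (the section opens with ``we recall the notations and the main results of \S 4 \cite{BC}''), so there is no in-paper argument to compare against. Judged on its own, your outline is correct and is essentially the argument of that reference: the reduction of every axiom of \ref{defactmqg} to componentwise statements via the orthogonal projections $q_{B,k}\tens p_{kj}$, the identification of coassociativity with the family $(\delta_{B_k}^l\tens\id_{S_{kj}})\delta_{B_j}^k=(\id_{B_l}\tens\delta_{lj}^k)\delta_{B_j}^l$, and the use of \ref{prop4} together with $\widetilde\pi_2$ to get the mutual-inverse property are all the right moves, and you correctly flag the two genuinely nontrivial points. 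For the record, those two points --- the mixed coassociativity relations with two off-diagonal legs, and the off-diagonal strong-continuity relation $[\delta_{B_1}^2(B_1)(1_{B_2}\tens S_{21})]=B_2\tens S_{21}$ (where $1_{B_2}$ is the unit of $\M(B_2)\subset\M(B_1\tens S_{12})$, so the factorization argument needs the relation $[B_2(1_{B_1}\tens S_{12})]=B_1\tens S_{12}$ of \ref{propind4} and not just \ref{prop5bis}~(3)) --- are exactly the statements established in \cite{BC} (4.5 and 4.8 there), so in a write-up they should simply be cited rather than re-derived, as you suggest at the end.
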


\section{Hilbert C*-modules acted upon by measured quantum groupoids}

	\subsection{Notion of actions of measured quantum groupoids on a finite basis on Hilbert C*-modules}\label{SectEqHilb}

\paragraph{The three pictures.} In this paragraph, we recall the notion of ${\cal G}$-equivariant Hilbert C*-module for a measured quantum groupoid ${\cal G}$ on a finite basis in the spirit of \cite{BS1} (cf.\ \S 6.1 \cite{C2}). We fix a measured quantum groupoid $\cal G$ on a finite-dimensional basis $N=\bigoplus_{1\leqslant l\leqslant k}{\rm M}_{n_l}(\GC)$ endowed with the non-normalized Markov trace $\epsilon=\bigoplus_{1\leqslant l\leqslant k}n_l\cdot{\rm Tr}_l$. We use all the notations introduced in \S \ref{MQGfinitebasis} and \S \ref{WHC*A} concerning the objects associated with $\cal G$. Let us fix a ${\cal G}$-$\Cstar$-algebra $A$.

\medbreak

Following \S 2 \cite{BS1}, an action of ${\cal G}$ on a Hilbert $A$-module $\s E$ is defined in \cite{C2} by three equivalent data:
\begin{itemize}
\item a pair $(\beta_{\s E},\delta_{\s E})$ consisting of a *-homomorphism $\beta_{\s E}:N^{\rm o}\rightarrow\Lin(\s E)$ and a linear map $\delta_{\s E}:\s E\rightarrow\widetilde{\M}(\s E\tens S)$ (cf.\ \ref{hilbmodequ});
\item a pair $(\beta_{\s E},\s V)$ consisting of a *-homomorphism $\beta_{\s E}:N^{\rm o}\rightarrow\Lin(\s E)$ and an isometry $\s V\in\Lin(\s E\tens_{\delta_A}(A\tens S),\s E\tens S)$ (cf.\ \ref{isometry});
\item an action $(\beta_J,\delta_J)$ of $\cal G$ on $J:=\K(\s E\oplus A)$ (cf.\ \ref{compcoact});
\end{itemize}
satisfying some conditions.

\medskip

We have the following unitary equivalences of Hilbert modules: 
\begin{align}
A\tens_{\delta_A}(A\tens S) &\rightarrow q_{\beta_A \alpha}(A\tens S)\,;\, a\tens_{\delta_A}x \mapsto \delta_A(a)x;\label{ehmeq1}\\
(A\tens S)\tens_{\delta_A\tens\, \id_S}(A\tens S\tens S)&\rightarrow q_{\beta_A \alpha,12}(A\tens S\tens S) \, ;\, x\tens_{\delta_A\tens\, \id_S}y \mapsto (\delta_A\tens \id_S)(x)y;\label{ehmeq2}\\
(A\tens S)\tens_{\id_A\tens\,\delta}(A\tens S\tens S) &\rightarrow q_{\beta\alpha,23}(A\tens S\tens S)\,;\,x\tens_{\id_A\tens\,\delta}y\mapsto(\id_A\tens\delta)(x)y.\label{ehmeq3}
\end{align}

In the following, we fix a Hilbert $A$-module $\s E$. We will apply the usual identifications $\M(A\tens S)=\Lin(A\tens S)$, $\K(\s E)\tens S=\K(\s E\tens S)$ and $\M(\K(\s E)\tens S)=\Lin(\s E\tens S)$.

\begin{defin}\label{hilbmodequ}
An action of ${\cal G}$ on the Hilbert $A$-module $\s E$ is a pair $(\beta_{\s{E}},\delta_{\s{E}})$, where $\beta_{\s{E}}:N^{\rm o}\rightarrow\mathcal{L}(\s{E})$ is a non-degenerate *-homomorphism and $\delta_{\s{E}}:\s{E}\rightarrow\widetilde{\mathcal{M}}(\s{E}\tens S)$ is a linear map such that:
\begin{enumerate}
 \item for all $a\in A$ and $\xi,\,\eta\in \s{E}$, we have 
\[
\delta_\s{E}(\xi a)=\delta_\s{E}(\xi)\delta_A(a) \quad \text{and} \quad \langle\delta_\s{E}(\xi),\,\delta_\s{E}(\eta)\rangle=\delta_A(\langle\xi,\,\eta\rangle);
\]
 \item $[\delta_{\s{E}}(\s{E})(A\tens S)]=q_{\beta_{\s E}\alpha}(\s{E}\tens S)$;
 \item for all $\xi\in \s{E}$ and $n\in N$, we have $\delta_{\s{E}}(\beta_{\s{E}}(n^{\rm o})\xi)=(1_{\s E}\tens\beta(n^{\rm o}))\delta_{\s{E}}(\xi)$;
 \item the linear maps $\delta_{\s{E}}\tens \id_S$ and $\id_{\s{E}}\tens\delta$ extend to linear maps from $\mathcal{L}(A\tens S,\s{E}\tens S)$ to $\mathcal{L}(A\tens S\tens S,\s{E}\tens S\tens S)$ and we have 
\[
(\delta_\s{E}\tens \id_S)\delta_\s{E}(\xi)=(\id_\s{E}\tens\delta)\delta_{\s{E}}(\xi)\in\mathcal{L}(A\tens S\tens S,\s{E}\tens S\tens S),\quad \text{for all } \xi\in\s{E}.\qedhere
\]
\end{enumerate}
\end{defin}

\begin{rks}\label{rk2}
\begin{enumerate}
	\item If the second formula of the condition 1 holds, then $\delta_{\s E}$ is isometric (cf.\ \cite{BS2}, \ref{rk4} 1).
	\item If the condition 1 holds, then the condition 2 is equivalent to:
	\[
	[\delta_{\s E}(\s E)(1_A\tens S)]=q_{\beta_{\s E}\alpha}(\s E\tens S).
	\]
	Indeed, if $(u_{\lambda})_{\lambda}$ is an approximate unit of $A$ we have
	\[
	\delta_{\s E}(\xi)=\lim_{\lambda}\,\delta_{\s E}(\xi u_{\lambda})=\lim_{\lambda}\,\delta_{\s E}(\xi)\delta_A(u_{\lambda})=\delta_{\s E}(\xi)q_{\beta_A\alpha},\quad \text{for all } \xi\in\s E.
	\]
	By strong continuity of the action $(\beta_A,\delta_A)$, the condition 1 of Definition \ref{hilbmodequ} and the equality $\s E A=\s E$, we then have $[\delta_{\s E}(\s E)(A\tens S)]=[\delta_{\s E}(\s E)(1_A\tens S)]$ and the equivalence follows.
	\item Note that we have $q_{\beta_{\s E}\alpha}\delta_{\s E}(\xi)=\delta_{\s E}(\xi)=\delta_{\s E}(\xi)q_{\beta_A\alpha}$ for all $\xi\in\s E$.
	\item We will prove (cf.\ \ref{rk9}) that if $\delta_{\s E}$ satisfies the conditions 1 and 2 of \ref{hilbmodequ}, then the extensions of $\delta_{\s E}\tens\id_S$ and $\id_{\s E}\tens\delta$ always exist and satisfy the formulas: 
	\begin{align*}
(\id_{\s E}\tens\delta)(T)(\id_A\tens\delta)(x)&=(\id_{\s E}\tens\delta)(Tx);\\
(\delta_{\s E}\tens\id_S)(T)(\delta_A\tens\id_S)(x)&=(\delta_{\s E}\tens\id_S)(Tx);
\end{align*}
for all $x\in A\tens S$ and $T\in\Lin(A\tens S,\s E\tens S)$.\qedhere
\end{enumerate}
\end{rks}

\begin{nb}\label{not5}
For $\xi\in \s{E}$, let us denote by $T_{\xi}\in\mathcal{L}(A\tens S,\s{E}\tens_{\delta_A}(A\tens S))$ the operator defined by\index[symbol]{tb@$T_{\xi}$}
\[
T_{\xi}(x):=\xi\tens_{\delta_A}x,\quad\text{for all } x\in A\tens S.
\]
We have $T_{\xi}^*(\eta\tens_{\delta_A}y)=\delta_A(\langle\xi,\,\eta\rangle)y$ for all $\eta\in\s E$ and $y\in A\tens S$. In particular, we have $T_{\xi}^*T_{\eta}=\delta_A(\langle\xi,\,\eta\rangle)$ for all $\xi,\eta\in\s E$.
\end{nb}

\begin{defin}\label{isometry}
Let $\s{V}\in\mathcal{L}(\s{E}\tens_{\delta_A}(A\tens S),\s{E}\tens S)$ be an isometry and $\beta_{\s{E}}:N^{\rm o}\rightarrow\mathcal{L}(\s{E})$ a non-degenerate *-homomorphism such that:
\newcounter{saveenum}
\begin{enumerate}
 \item $\s{V}\s{V}^*=q_{\beta_{\s E}\alpha}$;
 \item $\s{V}(\beta_{\s{E}}(n^{\rm o})\tens_{\delta_A}1)=(1_{\s E}\tens\beta(n^{\rm o}))\s{V}$, for all $n\in N$. 
\setcounter{saveenum}{\value{enumi}}
\end{enumerate}
Then, $\s{V}$ is said to be admissible if we further have:
\begin{enumerate}
\setcounter{enumi}{\value{saveenum}}
 \item $\s{V}T_{\xi}\in\widetilde{\mathcal{M}}(\s{E}\tens S)$, for all $\xi\in \s{E}$;
 \item $(\s{V}\tens_{\mathbb{C}}\id_S)(\s{V}\tens_{\delta_A\tens\, \id_S}1)=\s{V}\tens_{\id_A\tens\,\delta}1\in\mathcal{L}(\s{E}\tens_{\delta_A^2}(A\tens S\tens S),\s{E}\tens S\tens S)$.\qedhere
\end{enumerate}
\end{defin}

The fourth statement in the previous definition makes sense since we have used the canonical identifications thereafter. By combining the associativity of the internal tensor product with the unitary equivalences (\ref{ehmeq2}) and (\ref{ehmeq3}), we obtain the following unitary equivalences of Hilbert $A\tens S$-modules:
\begin{align}
(\s{E}\tens_{\delta_A}(A\tens S))\tens_{\delta_A\tens\,\id_S}(A\tens S\tens S) &\rightarrow \s{E}\tens_{\delta_A^2}(A\tens S\tens S)\label{eq4}\\
(\xi\tens_{\delta_A}x)\tens_{\delta_A\tens\,\id_S}y &\mapsto \xi\tens_{\delta_A^2}(\delta_A\tens \id_S)(x)y;\notag\\[.5em]
(\s{E}\tens_{\delta_A}(A\tens S))\tens_{\id_A\tens\,\delta}(A\tens S\tens S) &\rightarrow \s{E}\tens_{\delta_A^2}(A\tens S\tens S)\label{eq5}\\
(\xi\tens_{\delta_A}x)\tens_{\id_A\tens\,\delta}y &\mapsto \xi\tens_{\delta_A^2}(\id_A\tens\delta)(x)y.\notag
\intertext{We also have the following:}
(\s{E}\tens S)\tens_{\delta_A\tens\,\id_S}(A\tens S\tens S) &\rightarrow (\s{E}\tens_{\delta_A}(A\tens S))\tens S \label{eq6}\\
(\xi\tens s)\tens_{\delta_A\tens\,\id_S}(x\tens t)&\mapsto(\xi\tens_{\delta_A}x)\tens st;\notag\\[.5em]
(\s{E}\tens S)\tens_{\id_A\tens\,\delta}(A\tens S\tens S) & \rightarrow q_{\beta\alpha,23}(\s{E}\tens S\tens S)\subset\s{E}\tens S\tens S \label{eq7}\\
\xi\tens_{\id_A\tens\,\delta}y &\mapsto(\id_{\s{E}}\tens\delta)(\xi)y.\notag
\end{align}
In particular, 
$\s{V}\tens_{\delta_A\tens\,\id_S}1\in\mathcal{L}(\s{E}\tens_{\delta_A^2}(A\tens S\tens S),(\s{E}\tens S)\tens_{\delta_A\tens\,\id_S}(A\tens S\tens S))$ (\ref{eq4}) and
$\s{V}\tens_{\mathbb{C}}\id_S\in\mathcal{L}((\s{E}\tens S)\tens_{\delta_A\tens\,\id_S}(A\tens S\tens S),\s{E}\tens S\tens S)$ (\ref{eq6}).
\vspace{10pt}

The next result provides an equivalence of the definitions \ref{hilbmodequ} and \ref{isometry}. 

\begin{prop}\label{prop27}
 \begin{enumerate}[label=\alph*)]
  \item Let $\delta_{\s{E}}:\s{E}\rightarrow\widetilde{\mathcal{M}}(\s{E}\tens S)$ be a linear map and $\beta_{\s{E}}:N^{\rm o}\rightarrow\mathcal{L}(\s{E})$ a non-degenerate *-homomorphism which satisfy the conditions 1, 2, and 3 of Definition \ref{hilbmodequ}. Then, there exists a unique isometry $\s{V}\in\mathcal{L}(\s{E}\tens_{\delta_A}(A\tens S),\s{E}\tens S)$ such that $\delta_\s{E}(\xi)=\s{V}T_{\xi}$ for all $\xi\in\s{E}$. 
  Moreover, the pair $(\beta_{\s{E}},\s V)$ satisfies the conditions 1, 2, and 3 of Definition \ref{isometry}.
  \item Conversely, let $\s{V}\in\mathcal{L}(\s{E}\tens_{\delta_A}(A\tens S),\s{E}\tens S)$ be an isometry and $\beta_{\s{E}}:N^{\rm o}\rightarrow\mathcal{L}(\s{E})$ a non-degenerate *-homomorphism, which satisfy the conditions 1, 2, and 3 of Definition \ref{isometry}. Let us consider the map $\delta_{\s{E}}:\s{E}\rightarrow\Lin(A\tens S,\s{E}\tens S)$ given by $\delta_{\s{E}}(\xi):=\s{V}T_{\xi}$ for all $\xi\in\s{E}$. Then, the pair $(\beta_{\s{E}},\delta_{\s{E}})$ satisfies the conditions 1, 2 and 3 of Definition \ref{hilbmodequ}.
  \item Let us assume that the above statements hold. Then, the pair $(\beta_{\s{E}},\delta_{\s{E}})$ is an action of $\cal G$ on $\s E$ if, and only if, $\s{V}$ is admissible.\qedhere
 \end{enumerate}
\end{prop}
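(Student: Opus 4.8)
The plan is to imitate closely the strategy of \S 2 \cite{BS1}, passing back and forth between the coaction map $\delta_{\s E}$ and its ``canonical implementation'' $\s V$, and to reduce everything to elementary manipulations of internal tensor products. \textbf{Part a).} I would \emph{define} $\s V$ on the algebraic tensor product $\s E\odot(A\tens S)$ by $\s V(\xi\odot x):=\delta_{\s E}(\xi)x$; this makes sense since $\delta_{\s E}(\xi)\in\widetilde{\M}(\s E\tens S)\subset\Lin(A\tens S,\s E\tens S)$. The key point is that the second identity in condition 1 of \ref{hilbmodequ} reads $\delta_{\s E}(\xi)^*\delta_{\s E}(\eta)=\delta_A(\langle\xi,\eta\rangle)$ in $\Lin(A\tens S)$, whence $\langle\delta_{\s E}(\xi)x,\delta_{\s E}(\eta)y\rangle=x^*\delta_A(\langle\xi,\eta\rangle)y=\langle\xi\tens_{\delta_A}x,\eta\tens_{\delta_A}y\rangle$; by bilinearity the two semi-inner products agree on all of $\s E\odot(A\tens S)$, so the formula descends to an isometric linear map, still denoted $\s V$, from $\s E\tens_{\delta_A}(A\tens S)$ into $\s E\tens S$. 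Its closed range is $[\delta_{\s E}(\s E)(A\tens S)]=q_{\beta_{\s E}\alpha}(\s E\tens S)$ by condition 2 of \ref{hilbmodequ}, an orthogonally complemented submodule; hence $\s V$ is adjointable with $\s V^*\s V=1$ and $\s V\s V^*=q_{\beta_{\s E}\alpha}$, which is condition 1 of \ref{isometry}. By construction $\s V T_{\xi}=\delta_{\s E}(\xi)$ for all $\xi$, which also forces uniqueness. Condition 2 of \ref{isometry} is obtained by evaluating $\s V(\beta_{\s E}(n^{\rm o})\tens_{\delta_A}1)$ and $(1_{\s E}\tens\beta(n^{\rm o}))\s V$ on a simple tensor $\xi\tens_{\delta_A}x$ and invoking condition 3 of \ref{hilbmodequ}; condition 3 of \ref{isometry} is immediate, since $\s V T_{\xi}=\delta_{\s E}(\xi)\in\widetilde{\M}(\s E\tens S)$ by hypothesis.

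\textbf{Part b).} This is the same computations read in reverse. Setting $\delta_{\s E}(\xi):=\s V T_{\xi}$, condition 3 of \ref{isometry} gives $\delta_{\s E}(\s E)\subset\widetilde{\M}(\s E\tens S)$, and conditions 1, 2, 3 of \ref{hilbmodequ} follow respectively from the elementary identities $T_{\xi a}=T_{\xi}\,\delta_A(a)$, $T_{\xi}^*T_{\eta}=\delta_A(\langle\xi,\eta\rangle)$ (cf.\ \ref{not5}) and $T_{\beta_{\s E}(n^{\rm o})\xi}=(\beta_{\s E}(n^{\rm o})\tens_{\delta_A}1)T_{\xi}$, combined with $\s V^*\s V=1$, $\s V\s V^*=q_{\beta_{\s E}\alpha}$ and condition 2 of \ref{isometry}; for condition 2 of \ref{hilbmodequ} one moreover uses that the simple tensors $\xi\tens_{\delta_A}x$ span a dense subspace of $\s E\tens_{\delta_A}(A\tens S)$, so that $[\delta_{\s E}(\s E)(A\tens S)]=\s V(\s E\tens_{\delta_A}(A\tens S))=q_{\beta_{\s E}\alpha}(\s E\tens S)$.

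\textbf{Part c).} Once conditions 1 and 2 of \ref{hilbmodequ} hold, the extensions of $\delta_{\s E}\tens\id_S$ and $\id_{\s E}\tens\delta$ appearing in condition 4 exist automatically (cf.\ \ref{rk2} 4), so being an action of ${\cal G}$ reduces to the single coassociativity identity $(\delta_{\s E}\tens\id_S)\delta_{\s E}(\xi)=(\id_{\s E}\tens\delta)\delta_{\s E}(\xi)$ for all $\xi\in\s E$; likewise, admissibility of $\s V$ (granted conditions 1, 2, 3 of \ref{isometry}, established above) reduces to condition 4 of \ref{isometry}. I would then use \ref{rk2} 4 and the unitary identifications (\ref{eq4})--(\ref{eq7}) to rewrite, with $T_{\xi}^{(2)}\in\Lin(A\tens S\tens S,\s E\tens_{\delta_A^2}(A\tens S\tens S))$ denoting $z\mapsto\xi\tens_{\delta_A^2}z$, the two sides as $(\delta_{\s E}\tens\id_S)\delta_{\s E}(\xi)=(\s V\tens_{\GC}\id_S)(\s V\tens_{\delta_A\tens\id_S}1)\,T_{\xi}^{(2)}$ and $(\id_{\s E}\tens\delta)\delta_{\s E}(\xi)=(\s V\tens_{\id_A\tens\delta}1)\,T_{\xi}^{(2)}$, after transporting the target through the relevant identification of $\s E\tens S\tens S$. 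Since the ranges of the $T_{\xi}^{(2)}$, $\xi\in\s E$, span a dense submodule, the coassociativity identity holds for every $\xi$ if and only if $(\s V\tens_{\GC}\id_S)(\s V\tens_{\delta_A\tens\id_S}1)=\s V\tens_{\id_A\tens\delta}1$, which is exactly condition 4 of \ref{isometry}.

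I expect this last step to be the only genuine obstacle: checking that the unitary equivalences (\ref{eq4})--(\ref{eq7}) really do intertwine the operators as claimed, and that $\s V\tens_{\delta_A\tens\id_S}1$, $\s V\tens_{\GC}\id_S$ and $\s V\tens_{\id_A\tens\delta}1$ are well-defined adjointable operators between the stated modules. This is a delicate but essentially routine bookkeeping with internal tensor products over the four maps $\delta_A$, $\delta_A\tens\id_S$, $\id_A\tens\delta$ and $\delta_A^2$; everything in parts a) and b), as well as the reductions at the start of part c), is purely formal.
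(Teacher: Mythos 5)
Your argument is correct and is essentially the intended one: it is the standard Baaj–Skandalis passage between the coaction map and its implementing isometry (cf.\ 2.3–2.4 of \cite{BS1}), which the paper's surrounding apparatus (\ref{not5}, \ref{not7}, \ref{rk9} and the identifications (\ref{eq4})--(\ref{eq7})) is set up precisely to support. The only step you leave partly implicit — that the identifications (\ref{eq4})--(\ref{eq7}) carry $T_{\xi}\tens_{\delta_A\tens\id_S}1$ and $T_{\xi}\tens_{\id_A\tens\delta}1$ to $T_{\xi}^{(2)}$, so that both sides of coassociativity become the two operators of condition 4 of \ref{isometry} composed with $T_{\xi}^{(2)}$ — is exactly the routine verification the paper records in \ref{rk9}, and density of the ranges of the $T_{\xi}^{(2)}$ then gives the equivalence.
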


\begin{nb}\label{not7}
Let ${\cal E}$ and ${\cal F}$ be Hilbert C*-modules. Let $q\in\Lin({\cal E})$ be a self-adjoint projection and $T\in\Lin(q{\cal E},{\cal F})$. Let $\widetilde{T}:{\cal E}\rightarrow{\cal F}$ be the map defined by $\widetilde{T}\xi:=Tq\xi$, for all $\xi\in{\cal E}$. Therefore, $\widetilde T\in\Lin({\cal E},{\cal F})$ and $\widetilde{T}^*=qT^*$. By abuse of notation, we will still denote by $T$ the adjointable operator $\widetilde T$.
\end{nb}

\begin{rks}\label{rk9} As a consequence of Proposition \ref{prop27}, we have the statements below.
\begin{itemize}
\item  By applying \ref{not7} and the identifications (\ref{ehmeq3}, \ref{eq7}), we have obtained a linear map $\id_{\s E}\tens\delta:\Lin(A\tens S,\s E\tens S)\rightarrow \Lin(A\tens S\tens S,\s E\tens S\tens S)$ given by
\[
(\id_{\s E}\tens\delta)(T):=T\tens_{\id_A\tens\,\delta} 1,\quad \text{for all } T\in\Lin(A\tens S,\s E\tens S);
\]
\item If $\delta_{\s E}$ satisfies the conditions 1 and 2 of Definition \ref{hilbmodequ}, let $\s V$ be the isometry associated with $\delta_{\s E}$ (cf.\ \ref{prop27} a)). By applying \ref{not7} and the identifications (\ref{ehmeq2}, \ref{eq6}), the linear map $\delta_{\s E}\tens\id_S:\Lin(A\tens S,\s E\tens S)\rightarrow\Lin(A\tens S\tens S,\s E\tens S\tens S)$ is defined by
\[
(\delta_{\s E}\tens\id_S)(T):=(\s V \tens_{\GC} 1_S)(T\tens_{\delta_A\tens\,\id_S} 1),\quad \text{for all }  T\in\Lin(A\tens S,\s E\tens S).
\]
\end{itemize}
Note that the extensions $\id_{\s E}\tens\delta$ and $\delta_{\s E}\tens\id_S$ satisfy the following formulas:
\begin{equation}\label{eq26bis}
(\id_{\s E}\tens\delta)(T)(\id_A\tens\delta)(x)=(\id_{\s E}\tens\delta)(Tx); \,
(\delta_{\s E}\tens\id_S)(T)(\delta_A\tens\id_S)(x)=(\delta_{\s E}\tens\id_S)(Tx);
\end{equation}
for all $x\in A\tens S$ and $T\in\Lin(A\tens S,\s E\tens S)$.
\end{rks}

Let us denote by $J:=\K(\s E\oplus A)$ the linking $\Cstar$-algebra associated with the Hilbert $A$-module $\s E$. In the following, we apply the usual identifications $\M(J)=\Lin(\s E\oplus A)$ and $\M(J\tens S)=\Lin((\s E\tens S)\oplus(A\tens S))$.

\begin{defin}\label{compcoact}
An action $(\beta_J,\delta_J)$ of ${\cal G}$ on $J$ is said to be compatible with the action $(\beta_A,\delta_A)$ if: 
\begin{enumerate}
 \item $\delta_J:J\rightarrow\M(J\tens S)$ is compatible with $\delta_A$, {\it i.e.} 
$ 
 \iota_{A\tens S}\circ\delta_A=\delta_J\circ\iota_A;
$
 \item $\beta_J:N^{\rm o}\rightarrow\M(J)$ is compatible with $\beta_A$, {\it i.e.} 
$ 
 \iota_A(\beta_A(n^{\rm o})a)=\beta_J(n^{\rm o})\iota_A(a)
$,
 for all $n\in N$ and $a\in A$.\qedhere
\end{enumerate}
\end{defin}

\begin{prop}\label{propfib}
Let $(\beta_J,\delta_J)$ be a compatible action of ${\cal G}$ on $J$. There exists a unique non-degenerate *-homomorphism $\beta_{\s E}:N^{\rm o}\rightarrow\Lin(\s{E})$ such that 
\[
\beta_J(n^{\rm o})=\begin{pmatrix}\beta_{\s E}(n^{\rm o})&0\\0&\beta_A(n^{\rm o})\end{pmatrix}\!,\quad\text{for all } n\in N.
\]
Moreover, we have
$
q_{\beta_J\alpha}=\begin{pmatrix}q_{\beta_{\s E}\alpha}&0\\0& q_{\beta_A\alpha}\end{pmatrix}\!.
$
\end{prop}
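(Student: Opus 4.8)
The plan is to read off $\beta_{\s E}$ as the upper‑left corner of the $2\times 2$ matrix picture of $\beta_J$, using the compatibility of $\beta_J$ with $\beta_A$ and the matrix description of $\M(J)=\Lin(\s E\oplus A)$ from Remarks \ref{rk3}. Fix $n\in N$ and write
\[
\beta_J(n^{\rm o})=\begin{pmatrix}T_n & S'_n\\ S_n^* & m_n\end{pmatrix},\qquad T_n\in\Lin(\s E),\ S_n,S'_n\in\Lin(A,\s E),\ m_n\in\M(A)
\]
(Remarks \ref{rk3} 4), and recall $\iota_A(a)=\left(\begin{smallmatrix}0&0\\0&a\end{smallmatrix}\right)$, $\iota_A(m)=\left(\begin{smallmatrix}0&0\\0&m\end{smallmatrix}\right)$ for $a\in A$, $m\in\M(A)$ (Remarks \ref{rk3} 1--2). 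Condition 2 of Definition \ref{compcoact}, namely $\beta_J(n^{\rm o})\iota_A(a)=\iota_A(\beta_A(n^{\rm o})a)$, computed block‑wise reads $\left(\begin{smallmatrix}0&S'_na\\0&m_na\end{smallmatrix}\right)=\left(\begin{smallmatrix}0&0\\0&\beta_A(n^{\rm o})a\end{smallmatrix}\right)$ for all $a\in A$, whence $S'_n=0$ (as $A$ is non‑degenerate) and $m_n=\beta_A(n^{\rm o})$. Taking adjoints, $\beta_J((n^*)^{\rm o})=\beta_J(n^{\rm o})^*$ has $(1,2)$‑corner equal to $S_n$; but by what was just shown applied to $n^*$ its $(1,2)$‑corner is $0$, so $S_n=0$. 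Thus $\beta_J(n^{\rm o})=\left(\begin{smallmatrix}\beta_{\s E}(n^{\rm o})&0\\0&\beta_A(n^{\rm o})\end{smallmatrix}\right)$ with $\beta_{\s E}(n^{\rm o}):=T_n$. Uniqueness of such a $\beta_{\s E}$ is immediate from the uniqueness of the block decomposition; $\beta_{\s E}$ is a $*$‑homomorphism because $\beta_J$ is and block‑diagonal operators compose and take adjoints corner‑wise; and $\beta_{\s E}$ is non‑degenerate, equivalently unital since $N^{\rm o}$ is unital, because $\beta_J$ is, so that $\beta_{\s E}(1_{N^{\rm o}})$ is the upper‑left corner of $1_{\Lin(\s E\oplus A)}$, i.e.\ $1_{\Lin(\s E)}$.

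For the last assertion I would invoke the explicit description of $q_{\beta\alpha}$ recalled in the appendix (cf.\ \S\ref{tensorproduct}): with respect to a fixed system of matrix units of $N$, $q_{\beta\alpha}=v_{\beta\alpha}v_{\beta\alpha}^*$ is given by a finite expression involving only the operators $\beta(\cdot)$, $\alpha(\cdot)$ and the operations of sum, product and adjoint. Under the identification $\M(J\tens S)=\Lin((\s E\tens S)\oplus(A\tens S))$ (using $\K(\s E)\tens S=\K(\s E\tens S)$), the operators $\beta_J(n^{\rm o})\tens\alpha(m)$ are block‑diagonal, with $\s E$‑corner $\beta_{\s E}(n^{\rm o})\tens\alpha(m)$ and $A$‑corner $\beta_A(n^{\rm o})\tens\alpha(m)$, by the first part. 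Since all the operations in the expression defining $q_{\beta\alpha}$ preserve block‑diagonality and act corner‑wise, $q_{\beta_J\alpha}$ is block‑diagonal, its $\s E$‑corner being the same expression evaluated with $\beta_{\s E}$ in place of $\beta_J$ (that is, $q_{\beta_{\s E}\alpha}$) and its $A$‑corner being $q_{\beta_A\alpha}$; this is precisely the displayed formula.

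The proof is essentially bookkeeping with $2\times 2$ operator matrices, and I do not anticipate a genuine obstacle. The two points deserving a word are standard: that $S'_na=0$ for all $a\in A$ forces $S'_n=0$ (pass to an approximate unit of $A$), and that the formula defining $q_{\beta\alpha}$ really is corner‑wise along the block decomposition — which is why I prefer to lean on the concrete expression from the appendix rather than on any abstract characterization of $q_{\beta\alpha}$.
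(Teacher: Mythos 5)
Your proof is correct; the paper itself only recalls this statement from \cite{C2} without reproducing a proof, and your argument is exactly the block-matrix bookkeeping that Remarks \ref{rk3} and Lemma \ref{ehmlem1} are set up for: the compatibility relation $\beta_J(n^{\rm o})\iota_A(a)=\iota_A(\beta_A(n^{\rm o})a)$ kills the $(1,2)$-corner and identifies the $(2,2)$-corner, the adjoint (using that $\beta_J$ is a $*$-homomorphism) kills the $(2,1)$-corner, and unitality of $\beta_J$ gives non-degeneracy of $\beta_{\s E}$. The identification of $q_{\beta_J\alpha}$ via the explicit finite sum of \ref{ProjectionCAlg}, which is manifestly corner-wise once $\beta_J$ is known to be block-diagonal, is likewise the intended route.
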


\begin{prop}\label{prop1}
a) Let us assume that the $\Cstar$-algebra $J$ is endowed with a compatible action $(\beta_J,\delta_J)$ of ${\cal G}$ such that $\delta_J(J)\subset\widetilde{\M}(J\tens S)$. Then, we have the following statements:
\begin{itemize}
 \item there exists a unique linear map $\delta_{\s E}:\s E\rightarrow\widetilde{\M}(\s E\tens S)$ such that 
$\iota_{\s E\tens S}\circ\delta_{\s E}=\delta_J\circ\iota_{\s E}$;
furthermore, the pair $(\beta_{\s E},\delta_{\s{E}})$ is an action of ${\cal G}$ on $\s E$, where $\beta_{\s E}:N^{\rm o}\rightarrow\Lin(\s E)$ is the *-homomorphism defined in \ref{propfib};
 \item there exists a unique faithful *-homomorphism $\delta_{\mathcal{K}(\s{E})}:\mathcal{K}(\s{E})\rightarrow\widetilde{\mathcal{M}}(\mathcal{K}(\s{E})\tens S)$ such that 
 $\iota_{\K(\s{E}\tens S)}\circ\delta_{\mathcal{K}(\s{E})}=\delta_J\circ\iota_{\mathcal{K}(\s{E})}$; moreover, the pair $(\beta_{\s E},\delta_{\mathcal{K}(\s{E})})$ is an action of ${\cal G}$ on $\mathcal{K}(\s{E})$.
\end{itemize}
b) Conversely, let $(\beta_{\s E},\delta_{\s{E}})$ be an action of ${\cal G}$ on the Hilbert $A$-module $\s E$. Then, there exists a faithful *-homomorphism $\delta_J:J\rightarrow\widetilde{\mathcal{M}}(J\tens S)$ such that 
$
\iota_{\s E\tens S}\circ\delta_\s{E}=\delta_J\circ\iota_\s{E}.
$
Moreover, we define a unique action $(\beta_J,\delta_J)$ of ${\cal G}$ on $J$ compatible with $(\beta_A,\delta_A)$ by setting
\[
\beta_J(n^{\rm o})=\begin{pmatrix}\beta_{\s{E}}(n^{\rm o})&0\\0&\beta_A(n^{\rm o})\end{pmatrix}\!, \quad \text{for all } n\in N. \qedhere
\]
\end{prop}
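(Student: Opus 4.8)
The plan is to exploit the matrix picture of the linking algebra $J=\K(\s E\oplus A)$ (Proposition \ref{prop32}, Remarks \ref{rk3}): under the identifications $\K((\s E\tens S)\oplus(A\tens S))=J\tens S$ and $\M(J\tens S)=\Lin((\s E\tens S)\oplus(A\tens S))$, the maps $\iota_{\s E\tens S}$, $\iota_{A\tens S}$, $\iota_{\K(\s E\tens S)}$ become the corner embeddings, and (by the remark following \ref{not4}) $\iota_{\s E\tens S}$ carries $\widetilde\M(\s E\tens S)$ into $\widetilde\M(J\tens S)$, the converse holding for corner elements. A compatible action on $J$ should then split into corners — the $(2,2)$-corner being $\delta_A$, the $(1,2)$-corner being $\delta_{\s E}$, the $(1,1)$-corner being $\delta_{\K(\s E)}$ — and conversely $\delta_J$ should be reassembled from $\delta_A$ and the isometry $\s V$ of Proposition \ref{prop27}.

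For part a) I would first set $p:=\iota_{\K(\s E)}(\id_{\s E})\in\M(J)$, so that $1_J-p=\iota_A(1_{\M(A)})$, $\iota_{\s E}(\xi)=p\,\iota_{\s E}(\xi)(1_J-p)$ and $\iota_{\K(\s E)}(k)=p\,\iota_{\K(\s E)}(k)p$. Compatibility with $\delta_A$ gives, after strictly continuous extension, $\delta_J(1_J-p)=\iota_{A\tens S}(q_{\beta_A\alpha})$, while $\delta_J(1_J)=q_{\beta_J\alpha}=\iota_{\K(\s E\tens S)}(q_{\beta_{\s E}\alpha})+\iota_{A\tens S}(q_{\beta_A\alpha})$ by \ref{propfib}; hence $\delta_J(p)=\iota_{\K(\s E\tens S)}(q_{\beta_{\s E}\alpha})$. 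Expanding $\delta_J(\iota_{\s E}(\xi))=\delta_J(p)\delta_J(\iota_{\s E}(\xi))\delta_J(1_J-p)$ in matrix form shows $\delta_J(\iota_{\s E}(\xi))$ sits in the $(1,2)$-corner and $\delta_J(\iota_{\K(\s E)}(k))$ in the $(1,1)$-corner; by injectivity of the corner embeddings these define $\delta_{\s E}(\xi)\in\Lin(A\tens S,\s E\tens S)$ and $\delta_{\K(\s E)}(k)$, which land in $\widetilde\M(\s E\tens S)$, resp.\ $\widetilde\M(\K(\s E)\tens S)$, precisely because $\delta_J(J)\subset\widetilde\M(J\tens S)$. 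The axioms of \ref{hilbmodequ} for $(\beta_{\s E},\delta_{\s E})$ then transport from those of $\delta_J$: condition 1 from $\iota_{\s E}(\xi a)=\iota_{\s E}(\xi)\iota_A(a)$ and $\iota_{\s E}(\xi)^*\iota_{\s E}(\eta)=\iota_A(\langle\xi,\eta\rangle)$ (Proposition \ref{prop32}) read inside $J\tens S$; condition 3 from the matrix form of $\beta_J$ (\ref{propfib}) and $\delta_J(\beta_J(n^{\rm o}))=q_{\beta_J\alpha}(1_J\tens\beta(n^{\rm o}))$; conditions 2 and 4 from the density and coassociativity of $\delta_J$ restricted to the $\s E$-corner, condition 2 via $[\delta_J(J)(1_J\tens S)]=q_{\beta_J\alpha}(J\tens S)$. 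Faithfulness of $\delta_{\K(\s E)}$ follows from that of $\delta_J$ and injectivity of $\iota_{\K(\s E)}$, and $(\beta_{\s E},\delta_{\K(\s E)})$ being an action is checked the same way.

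For part b) I would take the admissible isometry $\s V\in\Lin(\s E\tens_{\delta_A}(A\tens S),\s E\tens S)$ associated with $(\beta_{\s E},\delta_{\s E})$ (\ref{prop27}) and the unitary $u_A:A\tens_{\delta_A}(A\tens S)\to q_{\beta_A\alpha}(A\tens S)$ of (\ref{ehmeq1}), set $W:=\s V\oplus u_A$ — an isometry from $(\s E\oplus A)\tens_{\delta_A}(A\tens S)$ onto $q_{\beta_J\alpha}((\s E\oplus A)\tens S)$ with $WW^*=q_{\beta_{\s E}\alpha}\oplus q_{\beta_A\alpha}=q_{\beta_J\alpha}$ (\ref{propfib}) — and define $\delta_J(x):=W(x\tens_{\delta_A}1)W^*$ for $x\in\Lin(\s E\oplus A)=\M(J)$. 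This is a faithful strictly continuous $*$-homomorphism with $\delta_J(1_J)=q_{\beta_J\alpha}$, faithfulness coming from that of $\delta_A$ (so $x\tens_{\delta_A}1\neq 0$ for $x\neq0$). A short matrix computation using $\s VT_\xi=\delta_{\s E}(\xi)$ (\ref{not5}, \ref{prop27}) yields $\delta_J(\iota_{\s E}(\xi))=\iota_{\s E\tens S}(\delta_{\s E}(\xi))$ and $\delta_J(\iota_A(a))=\iota_{A\tens S}(\delta_A(a))$; since $J$ is generated by $\iota_A(A)\cup\iota_{\s E}(\s E)$ (\ref{prop32}), this determines $\delta_J$ uniquely, shows it restricts to $J$ with values in $\widetilde\M(J\tens S)$ (using $\delta_A(A)\subset\widetilde\M(A\tens S)$, $\delta_{\s E}(\s E)\subset\widetilde\M(\s E\tens S)$), and that it is compatible with $(\beta_A,\delta_A)$. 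That $(\beta_J,\delta_J)$ is an action of $\cal G$ then follows from the properties of $\s V$ and $\delta_A$: condition 1 of \ref{defactmqg} is $\delta_J(1_J)=WW^*=q_{\beta_J\alpha}$; condition 3 comes from condition 2 of \ref{isometry} and $\delta_A(\beta_A(n^{\rm o}))=q_{\beta_A\alpha}(1_A\tens\beta(n^{\rm o}))$; coassociativity (condition 2) comes from the admissibility relation (condition 4 of \ref{isometry}) and the coassociativity of $\delta_A$, via the identifications (\ref{eq4})--(\ref{eq7}); strong continuity comes from condition 2 of \ref{hilbmodequ} and the strong continuity of $\delta_A$.

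I expect the main obstacle to be the relative-multiplier bookkeeping — checking that the corner identifications intertwine $\widetilde\M(\s E\tens S)$ and $\widetilde\M(\K(\s E)\tens S)$ with the corners of $\widetilde\M(J\tens S)$ in both directions, so that the hypothesis $\delta_J(J)\subset\widetilde\M(J\tens S)$ is exactly what is needed — together with the one genuinely computational point: reading off the coassociativity of $\delta_J$ in part b) through the chain of tensor-product identifications (\ref{eq4})--(\ref{eq7}) (equivalently, that it is precisely condition 4 of \ref{isometry}), and, in part a), extracting density condition 2 of \ref{hilbmodequ} from the strong continuity of $\delta_J$.
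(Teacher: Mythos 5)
This proposition is stated in the paper without proof (it is recalled from \S 6.1 of \cite{C2}), so your argument can only be judged on its own terms; its architecture — extracting $\delta_{\s E}$ and $\delta_{\K(\s E)}$ from the corners of $\delta_J$ in one direction, and reassembling $\delta_J=W(\cdot\tens_{\delta_A}1)W^*$ with $W:=\s V\oplus u_A$ in the other — is the standard Baaj--Skandalis route and is sound. There are, however, two points that need repair. In part a) you derive condition 2 of \ref{hilbmodequ} from $[\delta_J(J)(1_J\tens S)]=q_{\beta_J\alpha}(J\tens S)$; but that is the \emph{strong continuity} of $(\beta_J,\delta_J)$, which is not part of the hypothesis — Definition \ref{compcoact} only asks for a compatible \emph{action}. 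What you do have, via Lemma \ref{lem19bis} and condition 1 of Definition \ref{defactmqg}, is $[\delta_J(J)(J\tens S)]=q_{\beta_J\alpha}(J\tens S)$. Compressing this by $\delta_J(p)$ on the left and $\iota_{A\tens S}(1)$ on the right yields $q_{\beta_{\s E}\alpha}(\s E\tens S)=[\delta_{\K(\s E)}(\K(\s E))(\s E\tens S)+\delta_{\s E}(\s E)(A\tens S)]$, so you still have to absorb the first summand into the second; this works because $\delta_{\K(\s E)}(\theta_{\xi,\eta})=\delta_{\s E}(\xi)\delta_{\s E}(\eta)^*$ (from $\iota_{\K(\s E)}(\theta_{\xi,\eta})=\iota_{\s E}(\xi)\iota_{\s E}(\eta)^*$ and multiplicativity of $\delta_J$) and $\delta_{\s E}(\eta)^*(\s E\tens S)\subset A\tens S$, but the step has to be said.

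In part b) you assert that strong continuity of $(\beta_J,\delta_J)$ follows from condition 2 of \ref{hilbmodequ} and the strong continuity of $\delta_A$. This is false: condition 2 controls $[\delta_{\s E}(\s E)(1_A\tens S)]$, whereas strong continuity of $\delta_J$ also requires $[(1_{\s E}\tens S)\delta_{\s E}(\s E)]=(\s E\tens S)q_{\beta_A\alpha}$, i.e.\ the \emph{continuity} of the action on $\s E$ in the sense of \ref{defEqHilbMod} — this equivalence is precisely Proposition \ref{propcont}, and continuity is not assumed in part b). Fortunately the statement only asks for an action compatible with $(\beta_A,\delta_A)$, not a strongly continuous one, so you should simply delete that claim rather than try to prove it. With these two corrections the rest of your outline (faithfulness of $\delta_J$ from that of $\delta_A$, condition 3 of \ref{defactmqg} from condition 2 of \ref{isometry}, coassociativity from the admissibility relation, uniqueness from the generation of $J$ by $\iota_A(A)\cup\iota_{\s E}(\s E)$ as in \ref{prop32}) goes through.
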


If $\s E_1$ and $\s E_2$ are Hilbert $A$-modules acted upon by $\cal G$, then so is their direct sum $\s E_1\oplus\s E_2$ in a canonical way.

\begin{propdef}\label{propdef10}
For $i=1,2$, let $\s E_i$ be a Hilbert $A$-module acted upon by $\cal G$. Let $\s E:=\s E_1\oplus\s E_2$. For $i=1,2$, let $j_{\s E_i}:\Lin(A\tens S,\s E_i\tens S)\rightarrow\Lin(A\tens S,\s E\tens S)$ be the linear extension of the canonical injection $\s E_i\tens S\rightarrow\s E\tens S$. Let $\beta_{\s E}:N^{\rm o}\rightarrow\Lin(\s E)$ and $\delta_{\s E}:\s E\rightarrow\Lin(A\tens S,\s E\tens S)$ be the maps defined by:
\[
\beta_{\s E}(n^{\rm o}):=\begin{pmatrix}\beta_{\s E_1}(n^{\rm o}) & 0 \\ 0 & \beta_{\s E_2}(n^{\rm o})\end{pmatrix}\!,\quad n\in N;\quad 
\delta_{\s E}(\xi):=\sum_{i=1,2}j_{\s E_i}\circ\delta_{\s E_i}(\xi_i),\quad \xi=(\xi_1,\xi_2)\in\s E.
\]
Then, the pair $(\beta_{\s E},\delta_{\s E})$ is an action of $\cal G$ on $\s E$.
\end{propdef}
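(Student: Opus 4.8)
The plan is to verify the four conditions of Definition~\ref{hilbmodequ}, reducing everything to the corresponding properties of the two given actions $(\beta_{\s E_i},\delta_{\s E_i})$ via the orthogonal decompositions $\s E\tens S=(\s E_1\tens S)\oplus(\s E_2\tens S)$ and $\s E\tens_{\delta_A}(A\tens S)=\bigoplus_{i=1,2}\s E_i\tens_{\delta_A}(A\tens S)$. First I would record the elementary facts about the embeddings $j_{\s E_i}$: each $j_{\s E_i}:\Lin(A\tens S,\s E_i\tens S)\to\Lin(A\tens S,\s E\tens S)$ is an isometric linear map, $j_{\s E_i}(T)^*=T^*\circ\mathrm{pr}_i$ where $\mathrm{pr}_i:\s E\tens S\to\s E_i\tens S$ is the canonical projection, it carries $\widetilde{\M}(\s E_i\tens S)$ into $\widetilde{\M}(\s E\tens S)$, and $j_{\s E_1}(T_1)^*j_{\s E_2}(T_2)=0$ while $j_{\s E_i}(T)^*j_{\s E_i}(T')=T^*T'$ and $j_{\s E_i}(T)\delta_A(a)=j_{\s E_i}(T\delta_A(a))$. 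In particular $\delta_{\s E}(\xi)=\sum_i j_{\s E_i}(\delta_{\s E_i}(\xi_i))$ indeed lands in $\widetilde{\M}(\s E\tens S)$, and $\beta_{\s E}$ is visibly a non-degenerate $*$-homomorphism.

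\textbf{Conditions 1, 2, 3.} These follow componentwise. For condition~1, the identity $\delta_{\s E}(\xi a)=\delta_{\s E}(\xi)\delta_A(a)$ is immediate from $\xi a=(\xi_1a,\xi_2a)$, the same identity for each $\delta_{\s E_i}$, and the formula $j_{\s E_i}(T)\delta_A(a)=j_{\s E_i}(T\delta_A(a))$; the inner-product relation reads $\langle\delta_{\s E}(\xi),\delta_{\s E}(\eta)\rangle=\sum_i\langle j_{\s E_i}(\delta_{\s E_i}(\xi_i)),j_{\s E_i}(\delta_{\s E_i}(\eta_i))\rangle=\sum_i\delta_A(\langle\xi_i,\eta_i\rangle)=\delta_A(\langle\xi,\eta\rangle)$, the cross terms vanishing. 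Since $\beta_{\s E}$ is block diagonal and $\alpha$ is fixed, $q_{\beta_{\s E}\alpha}=q_{\beta_{\s E_1}\alpha}\oplus q_{\beta_{\s E_2}\alpha}$, so $[\delta_{\s E}(\s E)(1_A\tens S)]=\bigoplus_i[\delta_{\s E_i}(\s E_i)(1_A\tens S)]=\bigoplus_i q_{\beta_{\s E_i}\alpha}(\s E_i\tens S)=q_{\beta_{\s E}\alpha}(\s E\tens S)$; combined with Remark~\ref{rk2}~2 this gives condition~2. Condition~3 is again componentwise, using $(1_{\s E}\tens\beta(n^{\rm o}))j_{\s E_i}(T)=j_{\s E_i}((1_{\s E_i}\tens\beta(n^{\rm o}))T)$.

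\textbf{Condition 4.} Since $(\beta_{\s E},\delta_{\s E})$ already satisfies conditions~1 and~2, Remark~\ref{rk9} guarantees that the extensions $\delta_{\s E}\tens\id_S$ and $\id_{\s E}\tens\delta$ on $\Lin(A\tens S,\s E\tens S)$ exist, given by the formulas there in terms of the isometry $\s V$ attached to $\delta_{\s E}$ by Proposition~\ref{prop27}~a). The key point is that under the identification $\s E\tens_{\delta_A}(A\tens S)=\bigoplus_i\s E_i\tens_{\delta_A}(A\tens S)$ one has $\s V=\s V_1\oplus\s V_2$ with $\s V_i$ the isometry attached to $\delta_{\s E_i}$, whence the extension maps respect the direct-sum decomposition: $(\id_{\s E}\tens\delta)\circ j_{\s E_i}=j'_{\s E_i}\circ(\id_{\s E_i}\tens\delta)$ and $(\delta_{\s E}\tens\id_S)\circ j_{\s E_i}=j'_{\s E_i}\circ(\delta_{\s E_i}\tens\id_S)$, where $j'_{\s E_i}:\Lin(A\tens S\tens S,\s E_i\tens S\tens S)\to\Lin(A\tens S\tens S,\s E\tens S\tens S)$ is the analogous embedding. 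Granting this, $(\delta_{\s E}\tens\id_S)\delta_{\s E}(\xi)=\sum_i j'_{\s E_i}\big((\delta_{\s E_i}\tens\id_S)\delta_{\s E_i}(\xi_i)\big)=\sum_i j'_{\s E_i}\big((\id_{\s E_i}\tens\delta)\delta_{\s E_i}(\xi_i)\big)=(\id_{\s E}\tens\delta)\delta_{\s E}(\xi)$, the middle equality being condition~4 for each $(\beta_{\s E_i},\delta_{\s E_i})$.

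\textbf{Main obstacle and an alternative.} The only nontrivial step is the last one: checking that the leg-numbering-type extensions of Remark~\ref{rk9} commute with the $j_{\s E_i}$, i.e.\ that the chain of canonical unitary equivalences \eqref{ehmeq1}–\eqref{eq7} is natural with respect to direct sums in the Hilbert-module variable. A cleaner route that absorbs this bookkeeping is to argue in the isometry picture of Definition~\ref{isometry} from the start: put $\s V:=\s V_1\oplus\s V_2$ and check directly that it is an admissible isometry, the relations $\s V\s V^*=q_{\beta_{\s E}\alpha}$, the $\beta$-intertwining of \ref{isometry}~2, the condition $\s VT_\xi\in\widetilde{\M}(\s E\tens S)$, and the identity \ref{isometry}~4 all being block-diagonal consequences of the corresponding statements for $\s V_1$ and $\s V_2$; then Proposition~\ref{prop27}~c) yields that the associated pair $(\beta_{\s E},\delta_{\s E})$ is an action, and one identifies it with the one in the statement through $\s VT_\xi=\sum_i j_{\s E_i}(\s V_iT_{\xi_i})=\sum_i j_{\s E_i}(\delta_{\s E_i}(\xi_i))$. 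I would present this isometry-picture argument as the proof.
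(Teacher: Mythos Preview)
The paper states this proposition-definition without proof, treating it as routine; your proposal supplies a correct and well-organized argument. Your isometry-picture approach via $\s V=\s V_1\oplus\s V_2$ and Proposition~\ref{prop27}~c) is indeed the cleanest way to handle condition~4 and avoids the bookkeeping of tracking the extensions of Remark~\ref{rk9} through the direct sum, so presenting that as the main proof is a good choice.
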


\begin{rks}\label{rk16}
Let $(\beta_{\s E},\delta_{\s E})$ be an action of $\cal G$ on the Hilbert $A$-module $\s E$. 
\begin{enumerate}
\item The map $\delta_{\K(\s E)}:\K(\s E)\rightarrow\widetilde{\M}(\K(\s E)\tens S)$ defined in \ref{prop1} a) is the unique *-homomor\-phism satisfying the relation
$
\delta_{\K(\s E)}(\theta_{\xi,\eta})=\delta_{\s E}(\xi)\circ\delta_{\s E}(\eta)^*
$
for all $\xi,\eta\in\s E$. 
\item For all $F\in\Lin(\s E)$ and $\zeta\in\s E$, $\delta_{\s E}(F\zeta)=\delta_{\K(\s E)}(F)\delta_{\s E}(\zeta)$. Indeed, for all $\xi,\eta,\zeta\in\s E$ we have
$
\delta_{\s E}(\theta_{\xi,\eta}\zeta)=\delta_{\s E}(\xi)\delta_A(\langle\eta,\,\zeta\rangle)=\delta_{\s E}(\xi)\langle\delta_{\s E}(\eta),\, \delta_{\s E}(\zeta)\rangle=\delta_{\K(\s E)}(\theta_{\xi,\eta})\delta_{\s E}(\zeta). 
$
Hence, $\delta_{\s E}(k\zeta)=\delta_{\K(\s E)}(k)\delta_{\s E}(\zeta)$ for all $k\in\K(\s E)$ and $\zeta\in\s E$. The claim is then proved by strict continuity of $\delta_{\K(\s E)}$.
\item For all $k\in\K(\s E)$, $\delta_{\K(\s E)}(k)=\s V(k\tens_{\delta_A}1)\s V^*$, where $\s V\in\Lin(\s E\tens_{\delta_A}(A\tens S),\s E\tens S)$ is the isometry associated with the action $(\beta_{\s E},\delta_{\s E})$ (cf.\ \ref{isometry}).\qedhere
\end{enumerate}
\end{rks}

\begin{propdef}\label{propdef7}
Let $(\beta_{\s E},\delta_{\s E})$ be an action of $\cal G$ on the Hilbert $A$-module $\s E$. Let $\s V\in\Lin(\s E\tens_{\delta_A}(A\tens S),\s E\tens S)$ be the isometry associated with $(\beta_{\s E},\delta_{\s E})$ (cf.\ \ref{prop27} a)). Let us endow the C*-algebras $J$ and $\K(\s E)$ with the actions defined in \ref{prop1}. Let $F\in\Lin(\s E)$. The following statements are equivalent:
\begin{enumerate}[label=(\roman*)]
\item $\delta_{\s E}(F\xi)=(F\tens 1_S)\delta_{\s E}(\xi)$, for all $\xi\in\s E$;
\item $F$ is $\delta_{\K(\s E)}$-invariant;
\item $\s V(F\tens_{\delta_A}1)\s V^*=q_{\beta_{\s E}\alpha}(F\tens 1_S)$;
\item $\iota_{\K(\s E)}(F)$ is $\delta_J$-invariant.
\end{enumerate}
In that case, $F$ is said to be ($\delta_{\s E}$-)invariant.
\end{propdef}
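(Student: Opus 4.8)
The plan is to reduce everything to the already-understood notion of invariant multiplier of a $\cal G$-C*-algebra by pushing the $\delta_{\s E}$-level data up to the $\delta_{\K(\s E)}$-level and the $\delta_J$-level, using \ref{rk16} and \ref{prop1}. Throughout I shall use that $\delta_{\K(\s E)}$ and $\delta_J$ extend to strictly continuous $*$-homomorphisms on the multiplier algebras $\Lin(\s E)=\M(\K(\s E))$ and $\Lin(\s E\oplus A)=\M(J)$, sending units to $q_{\beta_{\s E}\alpha}$ and $q_{\beta_J\alpha}$ respectively, and that the identities of \ref{rk16} 2, 3 and the compatibility relations of \ref{prop1} persist on these multiplier algebras by strict continuity. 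First, (ii)$\Leftrightarrow$(iii) is immediate: by \ref{rk16} 3 (extended to $\Lin(\s E)$) one has $\delta_{\K(\s E)}(F)=\s V(F\tens_{\delta_A}1)\s V^*$, while $q_{\beta_{\s E}\alpha}=\delta_{\K(\s E)}(1_{\Lin(\s E)})$, so $F$ is $\delta_{\K(\s E)}$-invariant exactly when $\s V(F\tens_{\delta_A}1)\s V^*=q_{\beta_{\s E}\alpha}(F\tens 1_S)$. Next, (ii)$\Leftrightarrow$(iv): the defining relation $\iota_{\K(\s E\tens S)}\circ\delta_{\K(\s E)}=\delta_J\circ\iota_{\K(\s E)}$ of \ref{prop1}, together with the matrix descriptions of $q_{\beta_J\alpha}$ from \ref{propfib} and of $\iota_{\K(\s E)}(F)$ from \ref{rk3}, gives $\delta_J(\iota_{\K(\s E)}(F))=\iota_{\K(\s E\tens S)}(\delta_{\K(\s E)}(F))$ and $q_{\beta_J\alpha}(\iota_{\K(\s E)}(F)\tens 1_S)=\iota_{\K(\s E\tens S)}(q_{\beta_{\s E}\alpha}(F\tens 1_S))$; since $\iota_{\K(\s E\tens S)}$ is injective, $\iota_{\K(\s E)}(F)$ is $\delta_J$-invariant iff $\delta_{\K(\s E)}(F)=q_{\beta_{\s E}\alpha}(F\tens 1_S)$, i.e.\ (ii).

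It remains to connect these with (i). For (i)$\Rightarrow$(ii), I would first note that since the second relation of \ref{hilbmodequ} 1 forces $\delta_{\s E}$ to be isometric (\ref{rk2} 1), $\delta_{\s E}$ is injective; then for $n\in N$ and $\xi\in\s E$, combining (i) with \ref{hilbmodequ} 3 yields $\delta_{\s E}(F\beta_{\s E}(n^{\rm o})\xi)=(F\tens\beta(n^{\rm o}))\delta_{\s E}(\xi)=\delta_{\s E}(\beta_{\s E}(n^{\rm o})F\xi)$, hence $[F,\beta_{\s E}(n^{\rm o})]=0$ and therefore $[F\tens 1_S,\,q_{\beta_{\s E}\alpha}]=0$. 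By \ref{rk16} 2 condition (i) reads $\delta_{\K(\s E)}(F)\delta_{\s E}(\xi)=(F\tens 1_S)\delta_{\s E}(\xi)$ for all $\xi\in\s E$; multiplying on the right by $1_A\tens S$ and invoking $[\delta_{\s E}(\s E)(1_A\tens S)]=q_{\beta_{\s E}\alpha}(\s E\tens S)$ (\ref{rk2} 2) together with norm-continuity, the operators $\delta_{\K(\s E)}(F)$ and $F\tens 1_S$ agree on $q_{\beta_{\s E}\alpha}(\s E\tens S)$. Since $\delta_{\K(\s E)}(F)=\delta_{\K(\s E)}(F)q_{\beta_{\s E}\alpha}$ and $(F\tens 1_S)q_{\beta_{\s E}\alpha}$ both annihilate the orthogonal complement, this forces $\delta_{\K(\s E)}(F)=(F\tens 1_S)q_{\beta_{\s E}\alpha}=q_{\beta_{\s E}\alpha}(F\tens 1_S)$, which is (ii).

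Conversely, for (ii)$\Rightarrow$(i) I would route through (iv): by the already-established equivalence (ii)$\Leftrightarrow$(iv) and \ref{rk15} 3 applied to the $\cal G$-C*-algebra $J$, the $\delta_J$-invariance of $\iota_{\K(\s E)}(F)$ gives $[\iota_{\K(\s E)}(F),\,\beta_J(n^{\rm o})]=0$, hence, reading the upper-left corner via \ref{propfib}, $[F,\beta_{\s E}(n^{\rm o})]=0$ and $[F\tens 1_S,\,q_{\beta_{\s E}\alpha}]=0$. Then for $\xi\in\s E$, \ref{rk16} 2 and (ii) give $\delta_{\s E}(F\xi)=\delta_{\K(\s E)}(F)\delta_{\s E}(\xi)=q_{\beta_{\s E}\alpha}(F\tens 1_S)\delta_{\s E}(\xi)=(F\tens 1_S)q_{\beta_{\s E}\alpha}\delta_{\s E}(\xi)=(F\tens 1_S)\delta_{\s E}(\xi)$, using $\delta_{\s E}(\xi)=q_{\beta_{\s E}\alpha}\delta_{\s E}(\xi)$ from \ref{rk2} 3; this is (i), completing the cycle.

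The main obstacle I anticipate is not conceptual but bookkeeping: one must carefully justify the multiplier extensions (that \ref{rk16} 2, 3 and the compatibility identities of \ref{prop1} remain valid after extending $\delta_{\K(\s E)}$ and $\delta_J$ to $\Lin(\s E)$ and $\Lin(\s E\oplus A)$), and verify that $q_{\beta_{\s E}\alpha}$ commutes with $F\tens 1_S$ once $F$ commutes with $\beta_{\s E}(N^{\rm o})$; everything else is a routine $2\times 2$-matrix computation in the linking algebra $J$ and a density argument using condition 2 of \ref{hilbmodequ}.
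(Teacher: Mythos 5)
Your proof is correct, and its overall architecture coincides with the paper's: (ii)$\Leftrightarrow$(iii) via the formula $\delta_{\K(\s E)}(k)=\s V(k\tens_{\delta_A}1)\s V^*$ extended to multipliers, (ii)/(iii)$\Leftrightarrow$(iv) via the compatibility relation $\delta_J\circ\iota_{\K(\s E)}=\iota_{\K(\s E\tens S)}\circ\delta_{\K(\s E)}$ and the $2\times 2$-matrix picture, and (ii)$\Rightarrow$(i) by the computation $\delta_{\s E}(F\xi)=\delta_{\K(\s E)}(F)\delta_{\s E}(\xi)=(F\tens 1_S)q_{\beta_{\s E}\alpha}\delta_{\s E}(\xi)=(F\tens 1_S)\delta_{\s E}(\xi)$. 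The one place where you genuinely diverge is (i)$\Rightarrow$(ii): the paper evaluates $\delta_{\K(\s E)}$ on rank-one operators, $\delta_{\K(\s E)}(\theta_{F\xi,\eta})=\delta_{\s E}(F\xi)\delta_{\s E}(\eta)^*=(F\tens 1_S)\delta_{\K(\s E)}(\theta_{\xi,\eta})$, and concludes by strict continuity; you instead first extract $[F,\beta_{\s E}(n^{\rm o})]=0$ from (i) using the injectivity of $\delta_{\s E}$ and condition 3 of \ref{hilbmodequ}, and then run a density argument on $[\delta_{\s E}(\s E)(1_A\tens S)]=q_{\beta_{\s E}\alpha}(\s E\tens S)$. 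Both work; the paper's route is shorter because it never needs to know in advance that $F$ commutes with $\beta_{\s E}(N^{\rm o})$, while yours has the mild advantage of producing the identity $\delta_{\K(\s E)}(F)=q_{\beta_{\s E}\alpha}(F\tens 1_S)$ in exactly the form required by the definition of invariance (the paper lands on $(F\tens 1_S)q_{\beta_{\s E}\alpha}$ and leaves the commutation implicit). Your detour through (iv) to recover $[F,\beta_{\s E}(n^{\rm o})]=0$ in the converse direction is harmless but unnecessary: \ref{rk15}~3 applied to the $\cal G$-C*-algebra $\K(\s E)$ gives it directly from (ii).
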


\begin{proof}
(ii) $\Rightarrow$ (i): For all $\xi\in\s E$, we have (cf.\ \ref{rk15} 3, \ref{rk2} 3)
\begin{center}
$\delta_{\s E}(F\xi)=\delta_{\K(\s E)}(F)\delta_{\s E}(\xi)=(F\tens 1_S)q_{\beta_{\s E}\alpha}\delta_{\s E}(\xi)=(F\tens 1_S)\delta_{\s E}(\xi)$.
\end{center}
(i) $\Rightarrow$ (ii): For all $\xi,\eta\in\s E$, we have (cf.\ \ref{rk16} 1)
\begin{center}
$\delta_{\K(\s E)}(F\theta_{\xi,\eta})=\delta_{\K(\s E)}(\theta_{F\xi,\eta})=\delta_{\s E}(F\xi)\delta_{\s E}(\eta)^*=(F\tens 1_S)\delta_{\s E}(\xi)\delta_{\s E}(\eta)^*=(F\tens 1_S)\delta_{\K(\s E)}(\theta_{\xi,\eta})$.
\end{center}
Hence, $\delta_{\K(\s E)}(Fk)=(F\tens 1_S)\delta_{\K(\s E)}(k)$ for all $k\in\K(\s E)$. Hence, $\delta_{\K(\s E)}(F)=(F\tens 1_S)q_{\beta_{\s E}\alpha}$.\newline
(ii) $\Leftrightarrow$ (iii): cf.\ \ref{rk16} 3.\newline
(iii) $\Leftrightarrow$ (iv): This is a direct consequence of the relation $\delta_J\circ\iota_{\K(\s E)}=\iota_{\K(\s E\tens S)}\circ\delta_{\K(\s E)}$.
\end{proof}

Let us recall the notion of equivariant unitary equivalence between Hilbert C*-modules over possibly different C*-algebras acted upon by $\cal G$.

\begin{defin}\label{def1}
Let $A$ and $B$ be two $\cal G$-$\Cstar$-algebras and $\phi:A\rightarrow B$ a $\cal G$-equivariant *-isomorphism. Let $\s E$ and $\s F$ be two Hilbert modules over respectively $A$ and $B$ acted upon by $\cal G$. A $\phi$-compatible unitary operator $\Phi:\s E\rightarrow\s F$ is said to be $\cal G$-equivariant if we have
\[
\delta_{\s F}(\Phi\xi)=(\Phi\tens\id_S)\delta_{\s E}(\xi), \quad \text{for all } \xi\in\s E. \qedhere
\]
We recall that the linear map $\Phi\tens\id_S:\Lin(A\tens S,\s E\tens S)\rightarrow\Lin(B\tens S,\s F\tens S)$ (cf.\ \ref{not3}) is the extension of the $\phi\tens\id_S$-compatible unitary operator $\Phi\tens\id_S:\s E\tens S\rightarrow\s F\tens S$ (cf.\ \ref{propdef6}). Note that we have $\Phi\circ\beta_{\s E}(n^{\rm o})=\beta_{\s F}(n^{\rm o})\circ\Phi$ for all $n\in N$ (cf.\ 6.1.13 \cite{C2}).
\end{defin}

\begin{defin}
Two Hilbert C*-modules $\s E$ and $\s F$ acted upon by $\cal G$ are said to be $\cal G$-equivariantly unitarily equivalent if there exists a $\cal G$-equivariant unitary operator from $\s E$ onto $\s F$.
\end{defin}

It is clear that the $\cal G$-equivariant unitary equivalence defines an equivalence relation on the class consisting of the Hilbert $\Cstar$-modules acted upon by $\cal G$. For equivalent definitions of the $\cal G$-equivariant unitary equivalence in the two other pictures, we refer to \S 6.1 \cite{C2}. 

\begin{rk}
Let $B$ be a $\widehat{\cal G}$-C*-algebra. An action of the dual measured quantum groupoid $\widehat{\cal G}$ on a Hilbert $B$-module $\s F$ is defined by three equivalent data:
\begin{itemize}
\item a pair $(\alpha_{\s F},\delta_{\s F})$ consisting of a *-homomorphism $\alpha_{\s F}:N\rightarrow\Lin(\s F)$ and a linear map $\delta_{\s F}:\s F\rightarrow\widetilde{\M}(\s F\tens\widehat{S})$;
\item a pair $(\alpha_{\s E},\s V)$ consisting of a *-homomorphism $\alpha_{\s F}:N\rightarrow\Lin(\s F)$ and an isometry $\s V\in\Lin(\s F\tens_{\delta_B}(B\tens\widehat{S}),\s F\tens\widehat{S})$ ;
\item an action $(\alpha_K,\delta_K)$ of $\widehat{\cal G}$ on $K:=\K(\s F\oplus B)$;
\end{itemize}
satisfying some conditions. The details are left to the reader's attention.
\end{rk}

\paragraph{Equivariant Hilbert modules and bimodules.} In this paragraph, we recall the notion of continuity for actions of the quantum groupoid $\cal G$ on Hilbert C*-modules and the notion of equivariant representation of a $\cal G$-C*-algebra on a Hilbert C*-module acted upon by $\cal G$ (cf.\  \S 7 \cite{C2}). Let $A$ be a $\cal G$-C*-algebra.

\begin{defin}\label{defEqHilbMod}
An action $(\beta_{\s E},\delta_{\s E})$ of $\cal G$ on a Hilbert $A$-module $\s E$ is said to be continuous if we have $[(1_{\s E}\tens S)\delta_{\s E}(\s E)]=(\s E\tens S)q_{\beta_A\alpha}$. A $\cal G$-equivariant Hilbert $A$-module is a Hilbert $A$-module $\s E$ endowed with a continuous action of $\cal G$.
\end{defin}

\begin{prop}\label{prop31}
Let $\s E$ be a $\cal G$-equivariant Hilbert $A$-module. Let $B:=\K(\s E)$. We have the following statements:
\begin{enumerate}
\item the action $(\beta_B,\delta_B)$ of $\cal G$ on $B$ defined in \ref{prop1} is strongly continuous;
\item we define a continuous action of $\cal G$ on the Hilbert $B$-module $\s E^*$ by setting:
\begin{itemize}
\item $\beta_{\s E^*}(n^{\rm o})T:=\beta_A(n^{\rm o})\circ T$, for all $n\in N$ and $T\in\s E^*$,
\item $\delta_{\s E^*}(T)x:=\delta_{\s E}(T^*)^*\circ x$, for all $T\in\s E^*$ and $x\in B\tens S$;
\end{itemize}
where we have applied the usual identifications $B\tens S=\K(\s E\tens S)$ and $\s E=\K(A,\s E)$.\qedhere
\end{enumerate}
\end{prop}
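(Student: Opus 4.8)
The two statements are handled in turn; in both the whole argument amounts to reducing the required density (and well-definedness) statements to identities already established for $\delta_{\s E}$, $\delta_{\K(\s E)}$ and $\delta_A$ --- principally Remarks \ref{rk16} and \ref{rk2}, the continuity of the action on $\s E$, and the strong continuity of $\delta_A$.

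For statement 1, write $B=\K(\s E)$, so that $q_{\beta_B\alpha}=q_{\beta_{\s E}\alpha}$ and $B\tens S=\K(\s E)\tens S=\K(\s E\tens S)=[(\s E\tens S)(\s E\tens S)^{*}]$, and recall $\delta_B(\theta_{\xi,\eta})=\delta_{\s E}(\xi)\circ\delta_{\s E}(\eta)^{*}$ (\ref{rk16} 1). The two mutually adjoint forms of continuity of $\delta_{\s E}$, namely $[(1_{\s E}\tens S)\delta_{\s E}(\s E)]=(\s E\tens S)q_{\beta_A\alpha}$ and $[\delta_{\s E}(\s E)(1_A\tens S)]=q_{\beta_{\s E}\alpha}(\s E\tens S)$ (\ref{rk2} 2), together with $\delta_{\s E}(\xi)q_{\beta_A\alpha}=\delta_{\s E}(\xi)=q_{\beta_{\s E}\alpha}\delta_{\s E}(\xi)$ (\ref{rk2} 3), yield the chain $[\delta_B(B)(1_B\tens S)]=[\delta_{\s E}(\s E)\circ(\s E\tens S)^{*}]=q_{\beta_{\s E}\alpha}[(\s E\tens S)(\s E\tens S)^{*}]=q_{\beta_B\alpha}(B\tens S)$, which is exactly strong continuity of $(\beta_B,\delta_B)$ (both inclusions in the first equality use the continuity relations to move an $1_{\s E}\tens s$ past the $\delta_{\s E}$'s).

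For statement 2, I would first recall that $\s E^{*}=\K(\s E,A)$ is a (full) Hilbert $B$-module with $\langle\xi^{*},\eta^{*}\rangle_B=\theta_{\xi,\eta}$ and $\xi^{*}\cdot k=(k^{*}\xi)^{*}$, and make the identifications $B\tens S=\K(\s E\tens S)$ and $\s E^{*}\tens S=\K(\s E\tens S,A\tens S)$. Under these, $\delta_{\s E^{*}}(\xi^{*})$ is the operator of left composition by $\delta_{\s E}(\xi)^{*}$, whose adjoint is left composition by $\delta_{\s E}(\xi)$; hence $\delta_{\s E^{*}}(\xi^{*})\in\Lin(B\tens S,\s E^{*}\tens S)$, and it lies in $\widetilde{\M}(\s E^{*}\tens S)$ precisely because $\delta_{\s E}(\xi)\in\widetilde{\M}(\s E\tens S)$. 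The conditions of Definition \ref{hilbmodequ} then reduce to known facts: the inner-product identity in condition 1 is exactly \ref{rk16} 1, the module-map identity is \ref{rk16} 2, condition 3 follows from $\delta_A(\beta_A(n^{\rm o}))=(1_A\tens\beta(n^{\rm o}))q_{\beta_A\alpha}$ (\ref{prop39} 2) and $[\alpha(N),\beta(N^{\rm o})]=0$, and condition 4 is obtained by taking adjoints in $(\delta_{\s E}\tens\id_S)\delta_{\s E}=(\id_{\s E}\tens\delta)\delta_{\s E}$ and invoking coassociativity of $\delta_{\K(\s E)}$; condition 2 and the continuity relation $[(1_{\s E^{*}}\tens S)\delta_{\s E^{*}}(\s E^{*})]=(\s E^{*}\tens S)q_{\beta_B\alpha}$ become, after unwinding, the adjoint forms of the continuity relations for $\delta_{\s E}$ used in Part 1.

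The genuine work --- and the step I expect to be the main obstacle --- is keeping these identifications straight: proving carefully that $B\tens S=\K(\s E\tens S)$, $\s E^{*}\tens S=\K(\s E\tens S,A\tens S)$, that the left- and right-composition pictures of operators on $\s E^{*}\tens S$ match its Hilbert-module operations, and that under this dictionary $q_{\beta_{\s E^{*}}\alpha}$ and $q_{\beta_B\alpha}$ correspond to post-, resp.\ pre-composition by $q_{\beta_A\alpha}$ and $q_{\beta_{\s E}\alpha}$. A more structural alternative for statement 2 is to go through the linking algebra $J=\K(\s E\oplus A)$: applying statement 1 to the $\cal G$-equivariant Hilbert $A$-module $\s E\oplus A$ (here $A$ itself is $\cal G$-equivariant by \ref{prop39} 3, and $\s E\oplus A$ by \ref{propdef10}) gives a strongly continuous action $\delta_J$ on $J$, whose two summand projections are $\delta_J$-invariant and for which $\iota_{\K(\s E)}$ is $\cal G$-equivariant (\ref{prop1} a)); since the range ideal $I_{\s E}$ of $\s E$ is a $\cal G$-invariant ideal of $A$ with $\K(\s E,A)=\K(\s E,I_{\s E})$ one may assume $\s E$ full, whereupon the canonical isomorphism $J\cong\K(\s E^{*}\oplus B)$ is $\cal G$-equivariant and makes $\iota_B$ equivariant, so that \ref{prop1} a) produces a continuous action on $\s E^{*}$, which one then checks coincides with the one given by the stated formulas.
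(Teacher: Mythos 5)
Your argument is correct. Note first that the paper itself gives no proof of \ref{prop31}: it is recalled from the author's companion paper \cite{C2} (cf.\ \S 7 there), so there is nothing to compare against line by line; judged on its own merits, your reconstruction is sound. For statement 1, the chain $[\delta_B(B)(1_B\tens S)]=[\delta_{\s E}(\s E)(\s E\tens S)^*]=[\delta_{\s E}(\s E)(A\tens S)(\s E\tens S)^*]=q_{\beta_{\s E}\alpha}\K(\s E\tens S)$ is valid: the first equality uses both directions of the continuity relation $[(1_{\s E}\tens S)\delta_{\s E}(\s E)]=(\s E\tens S)q_{\beta_A\alpha}$ together with $\delta_{\s E}(\xi)q_{\beta_A\alpha}=\delta_{\s E}(\xi)$, and the second uses $[(\s E\tens S)(A\tens S)]=\s E\tens S$ and condition 2 of \ref{hilbmodequ}. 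For statement 2, your dictionary ($\s E^*\tens S=\K(\s E\tens S,A\tens S)$, $\delta_{\s E^*}(\xi^*)=$ left composition by $\delta_{\s E}(\xi)^*$, with adjoint left composition by $\delta_{\s E}(\xi)$) does reduce each axiom of \ref{hilbmodequ} and the continuity relation to \ref{rk16}, \ref{rk2} and \ref{prop39} 2, exactly as you indicate; you are right that the only real content is verifying the identifications, and in particular that $q_{\beta_{\s E^*}\alpha}$ and $q_{\beta_B\alpha}$ become post- and pre-composition by $q_{\beta_A\alpha}$ and $q_{\beta_{\s E}\alpha}$. Your alternative route through the linking algebra $J=\K(\s E\oplus A)$ is also viable and arguably more in the spirit of the paper's systematic use of linking $\cal G$-C*-algebras (cf.\ \ref{rkLinkAlg}); the fullness issue you flag is genuine, and your reduction works because the range ideal $I_{\s E}=[\langle\s E,\s E\rangle]$ is indeed $(\beta_A,\delta_A)$-invariant in the sense of \ref{def9} (one checks $\delta_A(\langle\xi,\eta\rangle)y=\delta_{\s E}(\xi)^*\delta_{\s E}(\eta)y\in[(\s E\tens S)^*(\s E\tens S)]=I_{\s E}\tens S$ using $\delta_{\s E}(\s E)\subset\widetilde{\M}(\s E\tens S)$).
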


\begin{prop}\label{propcont}
Let $\s E$ be a Hilbert $A$-module endowed with an action $(\beta_{\s E},\delta_{\s E})$ of $\cal G$ on $\s E$. Let $J:=\K(\s E\oplus A)$ be the associated linking C*-algebra. Let $(\beta_J,\delta_J)$ be the action defined in \ref{prop1}. Then, the action $(\beta_{\s E},\delta_{\s E})$ is continuous if, and only if, the action $(\beta_J,\delta_J)$ is strongly continuous.
\end{prop}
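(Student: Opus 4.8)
The plan is to transport the statement to the $2\times 2$ matrix picture of the linking algebra $J=\K(\s E\oplus A)$. Write $p:=\iota_{\K(\s E)}(\id_{\s E})$ and $q:=\iota_A(1_A)$ for the two complementary self-adjoint projections of $\M(J)=\Lin(\s E\oplus A)$, so that $pJp=\iota_{\K(\s E)}(\K(\s E))$, $pJq=\iota_{\s E}(\s E)$, $qJp=\iota_{\s E^*}(\s E^*)$ and $qJq=\iota_A(A)$ (cf.\ \ref{rk3}), and likewise in $\M(J\tens S)=\Lin((\s E\tens S)\oplus(A\tens S))$, with $J\tens S=\K((\s E\tens S)\oplus(A\tens S))$. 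The first step is to check that $p$ and $q$ are $\delta_J$-invariant. Indeed, the compatibility relations $\delta_J\circ\iota_{\K(\s E)}=\iota_{\K(\s E\tens S)}\circ\delta_{\K(\s E)}$ and $\delta_J\circ\iota_A=\iota_{A\tens S}\circ\delta_A$ (cf.\ \ref{prop1}, \ref{compcoact}) extend to the multiplier algebras; since $\delta_{\K(\s E)}(\id_{\s E})=q_{\beta_{\s E}\alpha}$, $\delta_A(1_A)=q_{\beta_A\alpha}$ and $q_{\beta_J\alpha}=\mathrm{diag}(q_{\beta_{\s E}\alpha},q_{\beta_A\alpha})$ (cf.\ \ref{propfib}), this gives $\delta_J(p)=q_{\beta_J\alpha}(p\tens 1_S)$ and $\delta_J(q)=q_{\beta_J\alpha}(q\tens 1_S)$. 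As $p\tens 1_S$, $q\tens 1_S$ and $q_{\beta_J\alpha}$ are pairwise commuting diagonal multipliers, it follows that $(a\tens 1_S)\,\delta_J(x)\,(b\tens 1_S)=\delta_J(axb)$ for all $x\in J$ and $a,b\in\{p,q\}$.

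The second step is to rephrase strong continuity: by taking adjoints, $(\beta_J,\delta_J)$ is strongly continuous if and only if $[(1_J\tens S)\delta_J(J)]=(J\tens S)\,q_{\beta_J\alpha}$ (and the analogous equivalences hold for the actions on $A$ and on $\K(\s E)$). By the previous step, and since $1_J\tens S$ and $q_{\beta_J\alpha}$ commute with $p\tens 1_S$ and $q\tens 1_S$, both sides of this identity are graded subspaces of $\M(J\tens S)$ for the decomposition induced by $p\tens 1_S$ and $q\tens 1_S$; hence the identity is equivalent to its four matrix entries. Computing these entries from the compatibility of $\delta_J$ with the corner inclusions yields: the $(q,q)$-entry is exactly $[(1_A\tens S)\delta_A(A)]=(A\tens S)q_{\beta_A\alpha}$, i.e.\ strong continuity of $(\beta_A,\delta_A)$, which holds since $A$ is a ${\cal G}$-C*-algebra; the $(q,p)$-entry reduces, after taking adjoints, to $[\delta_{\s E}(\s E)(1_A\tens S)]=q_{\beta_{\s E}\alpha}(\s E\tens S)$, automatic for any action (cf.\ \ref{rk2}); the $(p,q)$-entry is precisely $[(1_{\s E}\tens S)\delta_{\s E}(\s E)]=(\s E\tens S)q_{\beta_A\alpha}$, i.e.\ continuity of $(\beta_{\s E},\delta_{\s E})$ (cf.\ \ref{defEqHilbMod}); and the $(p,p)$-entry is $[(1_{\s E}\tens S)\delta_{\K(\s E)}(\K(\s E))]=(\K(\s E)\tens S)q_{\beta_{\s E}\alpha}$, i.e.\ strong continuity of the action $(\beta_{\s E},\delta_{\K(\s E)})$ on $\K(\s E)$.

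Assembling these, $(\beta_J,\delta_J)$ is strongly continuous if and only if $(\beta_{\s E},\delta_{\s E})$ is continuous and $(\beta_{\s E},\delta_{\K(\s E)})$ is strongly continuous. In particular strong continuity of $(\beta_J,\delta_J)$ forces continuity of $(\beta_{\s E},\delta_{\s E})$, giving one implication. Conversely, if $(\beta_{\s E},\delta_{\s E})$ is continuous then $\s E$ is a ${\cal G}$-equivariant Hilbert $A$-module, so by \ref{prop31} the action $(\beta_{\s E},\delta_{\K(\s E)})$ on $\K(\s E)$ is strongly continuous; both conditions above are then met, so $(\beta_J,\delta_J)$ is strongly continuous. (If one prefers not to invoke \ref{prop31}, the $(p,p)$-entry can be verified directly from $\delta_{\K(\s E)}(\theta_{\xi,\eta})=\delta_{\s E}(\xi)\circ\delta_{\s E}(\eta)^*$ together with $\delta_{\s E}(\xi)q_{\beta_A\alpha}=\delta_{\s E}(\xi)$, cf.\ \ref{rk16}, \ref{rk2}.)

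I expect the main obstacle to be the block-by-block computation in the second step: one must be careful about which side the range projections $q_{\beta_\bullet\alpha}$ sit on — strong continuity is a "left" condition after the adjoint reformulation, whereas the defining axioms of a module action already supply the corresponding "right" identities — and one must recognize that two of the four entries are automatic while the remaining two are precisely the properties in play. The verification that $p$ and $q$ are $\delta_J$-invariant, although routine, is what makes the matrix decomposition compatible with $\delta_J$ and so should be carried out first.
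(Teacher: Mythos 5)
Your proof is correct, and it follows the standard route one would expect here (and the one used for the quantum-group analogue in \cite{BS1} and in \S 7 of \cite{C2}, from which the paper recalls this proposition without reproducing a proof): establish $\delta_J$-invariance of the corner projections, decompose the strong-continuity identity into its four matrix entries, and identify the $(p,q)$-entry with continuity of $\delta_{\s E}$ while the other three entries are either automatic or supplied by the strong continuity of $\delta_A$ and of $\delta_{\K(\s E)}$ (via \ref{prop31}, or directly from $\delta_{\K(\s E)}(\theta_{\xi,\eta})=\delta_{\s E}(\xi)\circ\delta_{\s E}(\eta)^*$). The care you take with the adjoint reformulation and with which side the range projections sit on is exactly the point where such arguments usually go wrong, and you handle it correctly.
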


\begin{nbs}\label{rkLinkAlg} There is a one-to-one correspondence between $\cal G$-equivariant Hilbert C*-modules (cf.\ \ref{defEqHilbMod}) and linking $\cal G$-C*-algebras (cf.\ \ref{defLinkAlg}).
\begin{itemize}
\item Let $(J,\beta_J,\delta_J,e_1,e_2)$ be a linking $\cal G$-C*-algebra. By restriction of the action $(\beta_J,\delta_J)$, the corner $e_2Je_2$ (resp.\ $e_1Je_2$) turns into a $\cal G$-C*-algebra (resp.\ $\cal G$-equivariant Hilbert C*-module over $e_2Je_2$). We also have the identification of $\cal G$-C*-algebras $\K(e_1Je_2)=e_1Je_1$.
\item Conversely, if $(\s E,\beta_{\s E},\delta_{\s E})$ is a $\cal G$-equivariant Hilbert $A$-module, then the C*-algebra $J:=\K(\s E\oplus A)$ endowed with the continuous action $(\beta_J,\delta_J)$ (cf.\ \ref{prop1}, \ref{propcont}) and the projections $e_1:=\iota_{\s E}(1_{\s E})$ and $e_2:=\iota_{A}(1_A)$ is a linking $\cal G$-C*-algebra.\qedhere
\end{itemize}
\end{nbs}

\begin{thm}\label{corActReg}
Let $\s E$ be a Hilbert $A$-module. If the quantum groupoid $\cal G$ is regular, then any action of ${\cal G}$ on $\s E$ is continuous.
\end{thm}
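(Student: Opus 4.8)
The plan is to transfer the statement to the linking $\Cstar$-algebra $J:=\K(\s E\oplus A)$ and then invoke the fact, recalled above, that over a regular groupoid every weakly continuous action is automatically strongly continuous.

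\smallskip

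First I would pass from $\s E$ to $J$. By \ref{prop1}\,b), the action $(\beta_{\s E},\delta_{\s E})$ of $\cal G$ on $\s E$ induces an action $(\beta_J,\delta_J)$ of $\cal G$ on $J$ with $\delta_J(J)\subset\widetilde{\M}(J\tens S)$, compatible with $(\beta_A,\delta_A)$ and with $\beta_J$ in block-diagonal form (cf.\ \ref{propfib}); and by \ref{propcont} the action $(\beta_{\s E},\delta_{\s E})$ is continuous if, and only if, $(\beta_J,\delta_J)$ is strongly continuous. Since $\cal G$ is regular, by the automatic strong-continuity theorem it therefore suffices to prove that $(\beta_J,\delta_J)$ is \emph{weakly} continuous, i.e.\ that the closed linear span
\[
J_0:=[(\id_J\tens\omega)\delta_J(j)\,;\,j\in J,\ \omega\in\B(\s H)_*]
\]
equals $J$.

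\smallskip

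To obtain $J_0=J$ I would check that $J_0$ contains each of the four matrix corners of $J$; by \ref{rk3}\,(1) these corners linearly span $J$. The corner $\iota_A(A)$ lies in $J_0$ because $A$ is a $\cal G$-$\Cstar$-algebra, so $\delta_A$ is strongly, hence weakly, continuous, and the compatibility $\delta_J\circ\iota_A=\iota_{A\tens S}\circ\delta_A$ intertwines the slice maps. The corner $\iota_{\s E}(\s E)$ lies in $J_0$ because axiom (2) of \ref{hilbmodequ}, rewritten via \ref{rk2}\,(2) as $[\delta_{\s E}(\s E)(1_A\tens S)]=q_{\beta_{\s E}\alpha}(\s E\tens S)$, together with the non-degeneracy of $\beta_{\s E}$ (so that $[\beta_{\s E}(N^{\rm o})\s E]=\s E$) and the defining property \ref{not4} of the module slice maps, yields $\s E=[(\id_{\s E}\tens\omega)\delta_{\s E}(\s E)]$; the compatibility $\delta_J\circ\iota_{\s E}=\iota_{\s E\tens S}\circ\delta_{\s E}$ then transports this into $J_0$. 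Taking adjoints (note $J_0^*=J_0$, since $\delta_J$ is a $*$-homomorphism and $(\id_J\tens\omega)(x)^*=(\id_J\tens\overline{\omega})(x^*)$) gives $\iota_{\s E^*}(\s E^*)\subset J_0$. Finally, for $\iota_{\K(\s E)}(\K(\s E))$ one uses $\iota_{\K(\s E)}(\theta_{\xi,\eta})=\iota_{\s E}(\xi)\iota_{\s E}(\eta)^*$ and $\delta_{\K(\s E)}(\theta_{\xi,\eta})=\delta_{\s E}(\xi)\circ\delta_{\s E}(\eta)^*$ (\ref{rk16}), combined with an argument parallel to the one for $\iota_{\s E}(\s E)$ — inserting an approximate unit of the $\Cstar$-algebra $S$ (which is $\sigma$-unital, cf.\ \ref{rk18}) between the two factors and applying axiom (2) to each — to get $\K(\s E)=[(\id_{\K(\s E)}\tens\omega)\delta_{\K(\s E)}(\K(\s E))]$, whence $\iota_{\K(\s E)}(\K(\s E))\subset J_0$. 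Thus $J_0=J$ and $(\beta_J,\delta_J)$ is weakly continuous.

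\smallskip

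The conceptual heart of the argument is the implication ``weakly continuous $\Rightarrow$ strongly continuous'', which is precisely where the regularity of $\cal G$ enters (through the theorem generalizing \cite{BSV}). The main technical obstacle I anticipate is the bookkeeping for the $\K(\s E)$-corner of $J$: the data of a (not necessarily continuous) action on $\s E$ only visibly produces slices landing in the $\s E$- and $A$-corners, so some care with the strong-type density axiom (2) and with the relative-multiplier identifications — or, if the corner-by-corner computation does not suffice directly, a separate verification carried out in the isometry picture via the operator $\s V$ associated with $(\beta_{\s E},\delta_{\s E})$ — is needed to show that the $\K(\s E)$-corner is exhausted as well, i.e.\ that the induced action $\delta_{\K(\s E)}$ on $\K(\s E)$ is weakly continuous.
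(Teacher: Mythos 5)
Your overall strategy is the right one, and it is essentially the route the paper takes (the theorem is recalled from \cite{C2}; the reduction via \ref{propcont} to strong continuity of $(\beta_J,\delta_J)$ on $J=\K(\s E\oplus A)$, followed by the ``weakly continuous $\Rightarrow$ strongly continuous'' theorem for regular $\cal G$, is clearly the intended mechanism). Your treatment of three of the four corners is also correct: slicing $[\delta_A(A)(1_A\tens S)]=q_{\beta_A\alpha}(A\tens S)$ and $[\delta_{\s E}(\s E)(1_A\tens S)]=q_{\beta_{\s E}\alpha}(\s E\tens S)$ and using $\beta_A(1)=1$, $\beta_{\s E}(1)=1_{\s E}$ does give $A=[(\id_A\tens\omega)\delta_A(A)]$ and $\s E=[(\id_{\s E}\tens\omega)\delta_{\s E}(\s E)]$, and the $\s E^*$-corner follows by taking adjoints.

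The gap is in the $\K(\s E)$-corner, and it is a genuine one. Since $\delta_J$ and the slice maps respect the corner decomposition, you really do need $\K(\s E)=[(\id_{\K(\s E)}\tens\omega)\delta_{\K(\s E)}(\K(\s E))]$, and your approximate-unit argument does not deliver it. What the trick produces is the identity $[\delta_{\s E}(\s E)(1_A\tens S)\delta_{\s E}(\s E)^*]=q_{\beta_{\s E}\alpha}(\K(\s E)\tens S)q_{\beta_{\s E}\alpha}$, whose slices do span $\K(\s E)$; but an element $\delta_{\s E}(\xi)(1_A\tens s)\delta_{\s E}(\eta)^*$ has the auxiliary leg trapped \emph{between} the two factors, so it is not of the form $(1_{\s E}\tens s'')\,\delta_{\K(\s E)}(k)\,(1_{\s E}\tens s')$ and its slices cannot be rewritten as $(\id\tens s'\omega s'')(\delta_{\K(\s E)}(k))$. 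Letting $s$ run through an approximate unit of $S$ only gives \emph{strict} convergence to $\delta_{\K(\s E)}(\theta_{\xi,\eta})$, which says nothing about the norm-closed span of the slices. (Note that the natural fix ``$\delta_{\K(\s E)}(k)(1\tens s')=\delta_{\s E}(\xi)\cdot\delta_{\s E}(\eta)^*(1\tens s')$ and expand the second factor'' runs into $[(1_{\s E}\tens S)\delta_{\s E}(\s E)]$, which is exactly the continuity condition you are trying to prove — the argument becomes circular.) Two ways to close the gap: (i) prove that $J_c:=[(\id_J\tens\omega)\delta_J(J)]$ is a $\Cstar$-subalgebra of $J$ — the groupoid analogue of the fact that the ``continuous part'' of a coaction is a subalgebra, proved via $(\id\tens\omega)(X)(\id\tens\omega')(Y)=(\id\tens\omega\tens\omega')(X_{12}Y_{13})$, coassociativity and the implementation of $\delta$ by $W$ — after which \ref{prop32}~4 ($J=\Cstar(\iota_A(A)\cup\iota_{\s E}(\s E))$) lets you conclude from the two corners you already control, with no need to touch $\K(\s E)$ directly; or (ii) work in the isometry picture with $\s V$ and the pentagon relation to establish $[(1_{\s E}\tens S)\delta_{\s E}(\s E)]=(\s E\tens S)q_{\beta_A\alpha}$ from regularity. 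As written, the proposal asserts the key density for the $\K(\s E)$-corner without a valid justification.
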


\begin{nb}\label{rk8}
Let $A$ and $B$ be two C*-algebras and $\s E$ a Hilbert $B$-module. If $\gamma:A\rightarrow\Lin(\s E)$ is a *-homomorphism, then we extend $\gamma\tens\id_S$ to a *-homomorphism $\gamma\tens\id_S:\widetilde\M(A\tens S)\rightarrow\Lin(\s E\tens S)$ up to the identification $\M(\K(\s E)\tens S)=\Lin(\s E\tens S)$ (cf.\ \S \ref{sectionNotations}).
\end{nb}

\begin{defin}\label{defbimod}
Let $A$ and $B$ be two ${\cal G}$-$\Cstar$-algebras, $\s E$ a Hilbert $B$-module, $(\beta_{\s E},\delta_{\s E})$ an action of $\cal G$ on $\s E$ and $\gamma:A\rightarrow\Lin(\s E)$ a *-representation. We say that $\gamma$ is ${\cal G}$-equivariant if we have:
\begin{enumerate}
\item $\delta_{\s E}(\gamma(a)\xi)=(\gamma\tens\id_S)(\delta_A(a))\circ\delta_{\s E}(\xi)$, for all $a\in A$ and $\xi\in\s E$;
\item $\beta_{\s E}(n^{\rm o})\circ\gamma(a)=\gamma(\beta_A(n^{\rm o})a)$, for all $n\in N$ and $a\in A$.
\end{enumerate}
A $\cal G$-equivariant Hilbert $A$-$B$-bimodule is a countably generated $\cal G$-equivariant Hilbert $B$-module endowed with a $\cal G$-equivariant *-representation of $A$.
\end{defin}

\begin{rks}\label{rk11}
\begin{enumerate}
\item Provided that the second condition in the above definition is verified, the first condition is equivalent to: 
\begin{equation}\label{eq21}
\s V(\gamma(a)\tens_{\delta_B}1)\s V^*=(\gamma\tens\id_S)\delta_A(a) \quad \text{for all }  a\in A,
\end{equation}
where $\s V\in\Lin(\s E\tens_{\delta_B}(B\tens S),\s E\tens S)$ denotes the isometry defined in \ref{prop27} a).
\item We recall that the action $\delta_{\K(\s E)}$ of $\cal G$ on $\K(\s E)$ is defined by $\delta_{\K(\s E)}(k):=\s V(k\tens_{\delta_B}1)\s V^*$ for all $k\in\K(\s E)$. Hence, (\ref{eq21}) can be restated as follows: $\delta_{\K(\s E)}(\gamma(a))=(\gamma\tens\id_S)\delta_A(a)$ for all $a\in A$. In particular, if $\gamma$ is non-degenerate, then Definition \ref{defbimod} simply means that the *-homomorphism $\gamma:A\rightarrow\M(\K(\s E))$ is $\cal G$-equivariant (cf.\ \ref{defEquiHom}).
\item If $\gamma:A\rightarrow\Lin(\s E)$ is a non-degenerate *-representation such that
\[
\delta_{\s E}(\gamma(a)\xi)=(\gamma\tens\id_S)(\delta_A(a))\circ\delta_{\s E}(\xi)\quad \text{for all }  a\in A  \text{ and }  \xi\in\s E,
\]
then we have $\beta_{\s E}(n^{\rm o})\circ\gamma(a)=\gamma(\beta_A(n^{\rm o})a)$ for all $n\in N$ and $a\in A$.\qedhere
\end{enumerate}
\end{rks}

For further usage, let us introduce a writing convention concerning bimodule structures.

\begin{conv}\label{conv2}
Let $A$ and $B$ be two $\cal G$ (resp.\ $\widehat{\cal G}$)-C*-algebras. When dealing with a $A$-$B$-bimodule structure and in order to avoid any confusion between similar objects associated with $A$ and $B$, we will sometimes specify those associated with $A$ (resp.\ $B$) by using the lower index ``g'' (resp.\ ``d'')\footnote{The letter ``g'' (resp.\ ``d'') stands for ``{\it gauche}'' (resp.\ ``{\it droite}''), the french word for ``left'' (resp.\ ``right''). This choice was not motivated by chauvinism but only by the fact that the letters ``l'' and ``r'' would have been confusing.}. For example, we will denote by $D_{\rm g}$ and $D_{\rm d}$ (resp.\ $E_{\rm g}$ and $E_{\rm d}$) the bidual $\cal G$ (resp.\ $\widehat{\cal G}$)-C*-algebra of $A$ and $B$ respectively (cf.\ \ref{not16}).
\end{conv}

We recall below the tensor product construction.

\begin{prop}\label{prop18} Let $C$ {\rm(}resp.\ $B${\rm)} be a $\cal G$-$\Cstar$-algebra. Let $\s E_1$ {\rm(}resp.\ $\s E_2${\rm)} be a Hilbert module over $C$ {\rm(}resp.\ $B${\rm)} endowed with an action $(\beta_{\s E_1},\delta_{\s E_1})$ {\rm(}resp.\ $(\beta_{\s E_2},\delta_{\s E_2})${\rm)} of $\cal G$. Let $\gamma_2:C\rightarrow\Lin(\s E_2)$ be a $\cal G$-equivariant *-representation. Consider the Hilbert $B$-module $\s E:=\s E_1\tens_{\gamma_2}\s E_2$. Denote by
\[
\Delta(\xi_1,\xi_2):=(\delta_{\s E_1}(\xi_1)\tens_{\widetilde{\gamma}_2\tens\id_S}1)\circ\delta_{\s E_2}(\xi_2), \; \text{ for } \; \xi_1\in\s E_1 \; \text{ and } \; \xi_2\in\s E_2.
\]
We have $\Delta(\xi_1,\xi_2)\in\widetilde{\M}(\s E\tens S)$ for all $\xi_1\in\s E_1$ and $\xi_2\in\s E_2$. Let $\beta_{\s E}:N^{\rm o}\rightarrow\Lin(\s E)$ be the *-homomorphism defined by
\[
\beta_{\s E}(n^{\rm o}):=\beta_{\s E_1}(n^{\rm o})\tens_{\gamma_2}1, \; \text{ for all } \; n\in N.
\]
There exists a unique map $\delta_{\s E}:\s E\rightarrow\widetilde{\M}(\s E\tens S)$ defined by the formula
$
\delta_{\s E}(\xi_1\tens_{\gamma_2}\xi_2):=\Delta(\xi_1,\xi_2)
$
for $\xi_1\in\s E_1$ and $\xi_2\in\s E_2$ such that the pair $(\beta_{\s E},\delta_{\s E})$ is an action of $\cal G$ on $\s E$. 
\end{prop}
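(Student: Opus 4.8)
The plan is to work in the isometry picture of Definition \ref{isometry}, which by Proposition \ref{prop27} is equivalent to Definition \ref{hilbmodequ}. Let $\s V_1\in\Lin(\s E_1\tens_{\delta_C}(C\tens S),\s E_1\tens S)$ and $\s V_2\in\Lin(\s E_2\tens_{\delta_B}(B\tens S),\s E_2\tens S)$ be the admissible isometries associated with $(\beta_{\s E_1},\delta_{\s E_1})$ and $(\beta_{\s E_2},\delta_{\s E_2})$ (cf.\ \ref{prop27} a)). The $\cal G$-equivariance of $\gamma_2$ reads $\s V_2(\gamma_2(c)\tens_{\delta_B}1)\s V_2^*=(\gamma_2\tens\id_S)\delta_C(c)$ for all $c\in C$ (cf.\ \ref{rk11} 1), so that $\s V_2$ intertwines the $C$-module structure $c\mapsto\gamma_2(c)\tens_{\delta_B}1$ on $\s E_2\tens_{\delta_B}(B\tens S)$ with the structure $c\mapsto(\gamma_2\tens\id_S)\delta_C(c)$ on $\s E_2\tens S$. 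Using the associativity of the internal tensor product, the identification $\s E\tens_{\delta_B}(B\tens S)\cong\s E_1\tens_{\gamma_2}(\s E_2\tens_{\delta_B}(B\tens S))$ and the unitary equivalences (\ref{eq4})--(\ref{eq7}), I would define
\[
\s V:=(\s V_1\tens_{\gamma_2\tens\id_S}1)\circ(\id_{\s E_1}\tens_{\gamma_2}\s V_2),
\]
read, after the evident reordering of the two $S$-legs, as an operator $\s E\tens_{\delta_B}(B\tens S)\to\s E\tens S$. Being a composite of (internal-tensor extensions of) isometries and of unitary identifications, $\s V$ is an isometry; one also sets $\beta_{\s E}(n^{\rm o}):=\beta_{\s E_1}(n^{\rm o})\tens_{\gamma_2}1$, which is well defined because $\beta_{\s E_1}(n^{\rm o})$ is $C$-linear.

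The first step is to check that $\s V T_{\xi_1\tens_{\gamma_2}\xi_2}=\Delta(\xi_1,\xi_2)$ for all $\xi_1\in\s E_1$, $\xi_2\in\s E_2$, which is just an unwinding of the definition of $\Delta$ together with $\delta_{\s E_i}(\xi_i)=\s V_i T_{\xi_i}$; this simultaneously shows that $\xi_1\tens_{\gamma_2}\xi_2\mapsto\Delta(\xi_1,\xi_2)$ is well defined and bounded on $\s E$, the required balancing $\Delta(\xi_1 c,\xi_2)=\Delta(\xi_1,\gamma_2(c)\xi_2)$ coming from condition 1 of \ref{hilbmodequ} for $\s E_1$ and the equivariance of $\gamma_2$, so that $\delta_{\s E}(\xi):=\s V T_\xi$ extends the prescribed formula and is the unique such extension since elementary tensors are dense in $\s E$. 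Equivalently, one may argue directly from
\[
\langle\Delta(\xi_1,\xi_2),\,\Delta(\eta_1,\eta_2)\rangle=\delta_{\s E_2}(\xi_2)^*\big((\gamma_2\tens\id_S)\delta_C(\langle\xi_1,\eta_1\rangle)\big)\delta_{\s E_2}(\eta_2)=\delta_B\big(\langle\xi_2,\gamma_2(\langle\xi_1,\eta_1\rangle)\eta_2\rangle\big),
\]
using $\langle\delta_{\s E_1}(\xi_1),\delta_{\s E_1}(\eta_1)\rangle=\delta_C(\langle\xi_1,\eta_1\rangle)$ and then $\delta_{\s E_2}(\gamma_2(c)\eta_2)=(\gamma_2\tens\id_S)(\delta_C(c))\delta_{\s E_2}(\eta_2)$ (cf.\ \ref{defbimod} 1, \ref{not5}); the right-hand side equals $\delta_B(\langle\xi_1\tens_{\gamma_2}\xi_2,\eta_1\tens_{\gamma_2}\eta_2\rangle)$, which yields both the well-definedness and the second relation of condition 1 of \ref{hilbmodequ}. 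The membership $\Delta(\xi_1,\xi_2)\in\widetilde{\M}(\s E\tens S)$ then follows from $\delta_{\s E_i}(\xi_i)\in\widetilde{\M}(\s E_i\tens S)$ and the stability of the relative multiplier modules under the operations at hand.

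It remains to verify the conditions of Definition \ref{isometry} for the pair $(\beta_{\s E},\s V)$ and to invoke \ref{prop27} c). Conditions 1 and 2 ($\s V\s V^*=q_{\beta_{\s E}\alpha}$ and $\s V(\beta_{\s E}(n^{\rm o})\tens_{\delta_B}1)=(1_{\s E}\tens\beta(n^{\rm o}))\s V$) reduce, through the construction of $\s V$, to the corresponding identities for $\s V_1$ and $\s V_2$, after identifying $q_{\beta_{\s E}\alpha}$ in terms of $q_{\beta_{\s E_1}\alpha}$; condition 3 was obtained above. The heart of the matter is condition 4, the $2$-cocycle relation $(\s V\tens_{\GC}\id_S)(\s V\tens_{\delta_B\tens\id_S}1)=\s V\tens_{\id_B\tens\delta}1$: one expands both sides using the definition of $\s V$, commutes the inner copy of $\s V_2$ past the outer copy of $\s V_1$ by means of $\s V_2(\gamma_2(c)\tens_{\delta_B}1)\s V_2^*=(\gamma_2\tens\id_S)\delta_C(c)$, and then applies the $2$-cocycle identities for $\s V_1$ and $\s V_2$ separately, following the scheme of \S 3 \cite{BS1}. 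The genuine difficulty — and the step I expect to absorb most of the work — is the careful bookkeeping of the internal-tensor identifications (\ref{eq4})--(\ref{eq7}) and of the flips moving the $S$-legs into their correct slots; once this is settled, Proposition \ref{prop27} c) gives that $(\beta_{\s E},\delta_{\s E})$ is an action of $\cal G$ on $\s E$.
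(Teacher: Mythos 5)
Your construction is exactly the one the paper records (cf.\ \ref{rk17} and the proof of 7.9 \cite{C2}): your operator $\id_{\s E_1}\tens_{\gamma_2}\s V_2$, read with the codomain balanced over $(\gamma_2\tens\id_S)\delta_C$ as you indicate, is precisely the unitary $\widetilde{\s V}_2$ there, so $\s V=(\s V_1\tens_{\gamma_2\tens\id_S}1)\widetilde{\s V}_2$ coincides with the paper's isometry, and the verification route through Definition~\ref{isometry} and Proposition~\ref{prop27}~c) is the same. The inner-product computation establishing well-definedness and condition~1, and the identification of the equivariance of $\gamma_2$ as the mechanism letting the cocycle identities for $\s V_1$ and $\s V_2$ combine, are the correct key points; the remaining work is indeed only the bookkeeping of the identifications (\ref{eq4})--(\ref{eq7}), (\ref{eq29})--(\ref{eq30}).
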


The operator $\delta_{\s E_1}(\xi_1)$ is considered here as an element of $\Lin(\widetilde{C}\tens S,\s E_1\tens S)\supset\widetilde{\M}(\s E_1\tens S)$. In particular, we have $\delta_{\s E_1}(\xi_1)\tens_{\widetilde{\gamma}_2\tens\id_S}1\in\Lin(\s E_2\tens S,\s E\tens S)$ up to the identifications
\begin{align}
(\widetilde{C}\tens S)\tens_{\widetilde{\gamma}_2\tens\id_S}(\s E_2\tens S)&\rightarrow\s E_2\tens S,\; x\tens_{\widetilde{\gamma}_2\tens\id_S}\eta\mapsto(\widetilde{\gamma}_2\tens\id_S)(x)\eta \quad\text{and}\\[0.5em]
(\s E_1\tens S)\tens_{\widetilde{\gamma}_2\tens\id_S}(\s E_2\tens S)&\rightarrow\s E\tens S,\; (\xi_1\tens s)\tens_{\widetilde{\gamma}_2\tens\id_S}(\xi_2\tens t)\mapsto (\xi_1\tens_{\gamma_2}\xi_2)\tens st.\label{eqId}
\end{align}

\begin{rk}\label{rk17}
We recall the definition of the isometry $\s V\in\Lin(\s E\tens_{\delta_B}(B\tens S),\s E\tens S)$ associated with the action $(\beta_{\s E},\delta_{\s E})$ (cf.\ \ref{isometry}). We refer to the proof of 7.9 \cite{C2} for more details. For $i=1,2$, let $\s V_i$ be the isometry associated with the actions $(\beta_{\s E_i},\delta_{\s E_i})$. Let $\widetilde{\s V}_2\in\Lin(\s E\tens_{\delta_B}(B\tens S),\s E_1\tens_{(\gamma_2\tens\id_S)\delta_C}(\s E_2\tens S))$ be the unitary defined for all $\xi_1\in\s E_1$, $\xi_2\in\s E_2$ and $x\in B\tens S$ by
$
\widetilde{\s V}_2((\xi_1\tens_{\gamma_2}\xi_2)\tens_{\delta_B}x):=\xi_1\tens_{(\gamma_2\tens\id_S)\delta_C}\s V_2(\xi_2\tens_{\delta_B}x).
$
Up to the identifications
\begin{align}
(\s E_1\tens_{\delta_C}(C\tens S))\tens_{\gamma_2\tens\id_S}(\s E_2\tens S)&\rightarrow\s E_1\tens_{(\gamma_2\tens\id_S)\delta_C}(\s E_2\tens S)\label{eq29}\\
(\xi_1\tens_{\delta_C}x)\tens_{\gamma_2\tens\id_S}\eta &\mapsto\xi_1\tens_{(\gamma_2\tens\id_S)\delta_C}(\gamma_2\tens\id_S)(x)\eta\quad \text{and}\notag\\[.3em] 
(\s E_1\tens S)\tens_{\gamma_2\tens\id_S}(\s E_2\tens S)&\rightarrow\s E\tens S\label{eq30}\\
(\xi_1\tens s)\tens_{\gamma_2\tens\id_S}(\xi_2\tens t)&\mapsto (\xi_1\tens_{\gamma_2}\xi_2)\tens st\notag
\end{align}
we have $\s V=(\s V_1\tens_{\gamma_2\tens\id_S}1)\widetilde{\s V}_2$.
\end{rk}

The following result is straightforward.

\begin{prop}\label{propleftaction}
We use all the notations and hypotheses of \ref{prop18}. If $A$ is a $\cal G$-C*-algebra and $\gamma_1:A\rightarrow\Lin(\s E_1)$ is a $\cal G$-equivariant *-representation, then $\gamma:A\rightarrow\Lin(\s E)$ the *-representation defined by $\gamma(a):=\gamma_1(a)\tens_{\gamma_2}1$ for all $a\in A$ is $\cal G$-equivariant. If $(\s E_1,\gamma_1)$ is a $\cal G$-equivariant $A$-$C$-bimodule and $\s E_2$ is a $\cal G$-equivariant $C$-$B$-bimodule, then the pair $(\s E,\gamma)$ is a $\cal G$-equivariant $A$-$B$-bimodule.
\end{prop}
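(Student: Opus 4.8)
The plan is to verify the two conditions of Definition \ref{defbimod} for $\gamma$, and then to deduce the Hilbert bimodule statement from the structural properties of $\s E=\s E_1\tens_{\gamma_2}\s E_2$. Condition 2 of \ref{defbimod} is immediate: since $\beta_{\s E}(n^{\rm o})=\beta_{\s E_1}(n^{\rm o})\tens_{\gamma_2}1$ and $\gamma(a)=\gamma_1(a)\tens_{\gamma_2}1$, applying the identity $\beta_{\s E_1}(n^{\rm o})\gamma_1(a)=\gamma_1(\beta_A(n^{\rm o})a)$ (condition 2 for $\gamma_1$) in the first leg yields $\beta_{\s E}(n^{\rm o})\gamma(a)=\gamma(\beta_A(n^{\rm o})a)$ for all $n\in N$ and $a\in A$.

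Granting condition 2, by \ref{rk11} 1 it remains to establish the relation $\s V(\gamma(a)\tens_{\delta_B}1)\s V^*=(\gamma\tens\id_S)\delta_A(a)$ for all $a\in A$, where $\s V\in\Lin(\s E\tens_{\delta_B}(B\tens S),\s E\tens S)$ is the isometry associated with $(\beta_{\s E},\delta_{\s E})$ (cf.\ \ref{prop27} a)). Here I would invoke the factorization $\s V=(\s V_1\tens_{\gamma_2\tens\id_S}1)\widetilde{\s V}_2$ of \ref{rk17}, with $\s V_i$ the isometry of $(\beta_{\s E_i},\delta_{\s E_i})$ and $\widetilde{\s V}_2$ the unitary defined there. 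Since $\gamma(a)\tens_{\delta_B}1$ acts through the first leg, a direct computation on elementary tensors using the defining formula for $\widetilde{\s V}_2$ gives $\widetilde{\s V}_2(\gamma(a)\tens_{\delta_B}1)\widetilde{\s V}_2^*=\gamma_1(a)\tens_{(\gamma_2\tens\id_S)\delta_C}1$; under the identification (\ref{eq29}) this operator is $(\gamma_1(a)\tens_{\delta_C}1)\tens_{\gamma_2\tens\id_S}1$, so conjugating by $\s V_1\tens_{\gamma_2\tens\id_S}1$ produces $(\s V_1(\gamma_1(a)\tens_{\delta_C}1)\s V_1^*)\tens_{\gamma_2\tens\id_S}1$. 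Using the $\cal G$-equivariance of $\gamma_1$ together with \ref{rk11} 1 once more, $\s V_1(\gamma_1(a)\tens_{\delta_C}1)\s V_1^*=(\gamma_1\tens\id_S)\delta_A(a)$, so we are left with $((\gamma_1\tens\id_S)\delta_A(a))\tens_{\gamma_2\tens\id_S}1$. Finally, via the identification (\ref{eq30}) of $(\s E_1\tens S)\tens_{\gamma_2\tens\id_S}(\s E_2\tens S)$ with $\s E\tens S$, this is exactly $(\gamma\tens\id_S)\delta_A(a)$; the content of this last step is the compatibility of the extension $-\tens\id_S$ (cf.\ \ref{rk8}) with internal tensor products, i.e.\ that $(\gamma_1\tens_{\gamma_2}1)\tens\id_S$ is carried by (\ref{eq30}) to $(\gamma_1\tens\id_S)\tens_{\gamma_2\tens\id_S}1$, which one checks first on elementary tensors $x\tens s\in A\tens S$ and then extends by strict continuity. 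This shows that $\gamma$ is $\cal G$-equivariant.

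For the last assertion, suppose $(\s E_1,\gamma_1)$ is a $\cal G$-equivariant $A$-$C$-bimodule and $\s E_2$ is a $\cal G$-equivariant $C$-$B$-bimodule. Then $\s E=\s E_1\tens_{\gamma_2}\s E_2$ is countably generated over $B$, since $\s E_1$ is countably generated over $C$ and $\s E_2$ over $B$, and the action $(\beta_{\s E},\delta_{\s E})$ of \ref{prop18} is continuous --- either directly from the continuity of the actions on $\s E_1$ and $\s E_2$, or, in the cases of interest where $\cal G$ is regular, by \ref{corActReg}. Combined with the first part, this proves that $(\s E,\gamma)$ is a $\cal G$-equivariant $A$-$B$-bimodule.

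I expect the main obstacle to be organizational rather than conceptual: threading the various internal tensor product identifications (\ref{eq29}), (\ref{eq30}) and (\ref{eqId}) of \ref{prop18} and \ref{rk17} through the conjugation computation without leg or placement errors, and in particular pinning down the final compatibility between ``first tensor, then apply $-\tens\id_S$'' and ``first apply $-\tens\id_S$, then tensor''. Once these identifications are fixed, the remainder is a routine diagram chase built on the equivariance of $\gamma_1$ and the factorization of \ref{rk17}.
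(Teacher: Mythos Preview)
Your proof is correct and follows precisely the natural route; the paper itself omits the argument entirely, declaring the result ``straightforward'' (see the sentence preceding \ref{propleftaction}). Your verification of condition~1 of \ref{defbimod} via the factorization $\s V=(\s V_1\tens_{\gamma_2\tens\id_S}1)\widetilde{\s V}_2$ from \ref{rk17} is exactly the computation one expects, and the tracking of the identifications (\ref{eq29}), (\ref{eq30}) is handled correctly.
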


The $\cal G$-equivariance of the internal tensor product associativity map is straightforward and left to the reader's discretion.

\begin{lem}
We use all the notations and hypotheses of \ref{prop18}. If $F\in\Lin(\s E_1)$ is invariant, then so is $F\tens_{\gamma_2}1\in\Lin(\s E)$.
\end{lem}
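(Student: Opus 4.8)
The statement to prove is: in the situation of Proposition \ref{prop18} (so $\s E = \s E_1\tens_{\gamma_2}\s E_2$ with $\gamma_2:C\to\Lin(\s E_2)$ a $\cal G$-equivariant $*$-representation, and $\s E_1$, $\s E_2$ carrying actions of $\cal G$), if $F\in\Lin(\s E_1)$ is invariant in the sense of \ref{propdef7}, then $F\tens_{\gamma_2}1\in\Lin(\s E)$ is invariant. The natural approach is to use characterization (iii) of \ref{propdef7}: an operator $G\in\Lin(\s E)$ is invariant iff $\s V(G\tens_{\delta_B}1)\s V^* = q_{\beta_{\s E}\alpha}(G\tens 1_S)$, where $\s V\in\Lin(\s E\tens_{\delta_B}(B\tens S),\s E\tens S)$ is the isometry associated with $(\beta_{\s E},\delta_{\s E})$. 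So the plan is: first express $\s V$ via the factorization recalled in Remark \ref{rk17}, namely $\s V = (\s V_1\tens_{\gamma_2\tens\id_S}1)\widetilde{\s V}_2$, with $\widetilde{\s V}_2$ the unitary built from $\s V_2$; then compute $\s V((F\tens_{\gamma_2}1)\tens_{\delta_B}1)\s V^*$ by pushing the operator through this factorization.

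The key steps, in order. First I would record how $(F\tens_{\gamma_2}1)\tens_{\delta_B}1$ interacts with $\widetilde{\s V}_2$: since $\widetilde{\s V}_2((\xi_1\tens_{\gamma_2}\xi_2)\tens_{\delta_B}x) = \xi_1\tens_{(\gamma_2\tens\id_S)\delta_C}\s V_2(\xi_2\tens_{\delta_B}x)$ and $F$ acts only on the $\s E_1$-leg, we get $\widetilde{\s V}_2((F\tens_{\gamma_2}1)\tens_{\delta_B}1) = (F\tens_{(\gamma_2\tens\id_S)\delta_C}1)\widetilde{\s V}_2$, where $F\tens_{(\gamma_2\tens\id_S)\delta_C}1$ denotes the obvious operator on $\s E_1\tens_{(\gamma_2\tens\id_S)\delta_C}(\s E_2\tens S)$ acting on the first leg. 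Next, under the identification (\ref{eq29}) this operator corresponds, on $(\s E_1\tens_{\delta_C}(C\tens S))\tens_{\gamma_2\tens\id_S}(\s E_2\tens S)$, to $(F\tens_{\delta_C}1)\tens_{\gamma_2\tens\id_S}1$, so that $(\s V_1\tens_{\gamma_2\tens\id_S}1)(F\tens_{(\gamma_2\tens\id_S)\delta_C}1) = (\s V_1(F\tens_{\delta_C}1)\tens_{\gamma_2\tens\id_S}1)(\s V_1^*\s V_1\,\cdot)$; here I invoke the invariance of $F$ via \ref{propdef7}(iii) applied to $\s E_1$, giving $\s V_1(F\tens_{\delta_C}1)\s V_1^* = q_{\beta_{\s E_1}\alpha}(F\tens 1_S)$. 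Combining, $\s V((F\tens_{\gamma_2}1)\tens_{\delta_B}1)\s V^* = (q_{\beta_{\s E_1}\alpha}(F\tens 1_S)\tens_{\gamma_2\tens\id_S}1)\s V\s V^*$, and then using $\s V\s V^* = q_{\beta_{\s E}\alpha}$ together with $\beta_{\s E}(n^{\rm o}) = \beta_{\s E_1}(n^{\rm o})\tens_{\gamma_2}1$ (so that $q_{\beta_{\s E}\alpha}$ restricts appropriately) and the fact that $F\tens_{\gamma_2}1$ under identification (\ref{eq30}) corresponds to $(F\tens 1_S)\tens_{\gamma_2\tens\id_S}1$ on the $\s E\tens S$ side, one reads off $\s V((F\tens_{\gamma_2}1)\tens_{\delta_B}1)\s V^* = q_{\beta_{\s E}\alpha}((F\tens_{\gamma_2}1)\tens 1_S)$, which is exactly \ref{propdef7}(iii) for $\s E$.

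Alternatively — and this may be cleaner to write — one can argue at the level of $\delta_{\s E}$ directly via \ref{propdef7}(i): one must show $\delta_{\s E}((F\tens_{\gamma_2}1)\eta) = ((F\tens_{\gamma_2}1)\tens 1_S)\delta_{\s E}(\eta)$ for all $\eta\in\s E$. By density and linearity it suffices to check this on $\eta = \xi_1\tens_{\gamma_2}\xi_2$ with $\xi_1\in\s E_1$, $\xi_2\in\s E_2$; then $(F\tens_{\gamma_2}1)\eta = (F\xi_1)\tens_{\gamma_2}\xi_2$, and by the defining formula $\delta_{\s E}(\xi_1\tens_{\gamma_2}\xi_2) = (\delta_{\s E_1}(\xi_1)\tens_{\widetilde\gamma_2\tens\id_S}1)\circ\delta_{\s E_2}(\xi_2)$ one computes $\delta_{\s E}((F\xi_1)\tens_{\gamma_2}\xi_2) = (\delta_{\s E_1}(F\xi_1)\tens_{\widetilde\gamma_2\tens\id_S}1)\circ\delta_{\s E_2}(\xi_2)$. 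Now apply invariance of $F$ in the form \ref{propdef7}(i) for $\s E_1$: $\delta_{\s E_1}(F\xi_1) = (F\tens 1_S)\delta_{\s E_1}(\xi_1)$; substituting and tracking how $(F\tens 1_S)\tens_{\widetilde\gamma_2\tens\id_S}1$ under identification (\ref{eqId}) equals $(F\tens_{\gamma_2}1)\tens 1_S$ acting on $\s E\tens S$ gives the claim. The main obstacle in either route is purely bookkeeping: keeping straight the several internal-tensor-product identifications (\ref{eq29}), (\ref{eq30}), (\ref{eqId}) and verifying that the leg-wise operator $F$ is genuinely transported compatibly through each of them — there is no conceptual difficulty, only the need to be careful that $F\tens_{\gamma_2}1$, $F\tens_{\delta_C}1$, $F\tens_{(\gamma_2\tens\id_S)\delta_C}1$ and $F\tens 1_S$ are matched up correctly under the relevant unitaries. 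I would favor the second ($\delta_{\s E}$-level) argument since it only uses the explicit formula from \ref{prop18} and one identification, and avoids manipulating $\widetilde{\s V}_2$.
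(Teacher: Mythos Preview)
Your proposal is correct, and the second approach you favor---working directly with $\delta_{\s E}$ via characterization (i) of \ref{propdef7}, using the formula $\delta_{\s E}(\xi_1\tens_{\gamma_2}\xi_2)=(\delta_{\s E_1}(\xi_1)\tens_{\widetilde\gamma_2\tens\id_S}1)\circ\delta_{\s E_2}(\xi_2)$ together with $\delta_{\s E_1}(F\xi_1)=(F\tens 1_S)\delta_{\s E_1}(\xi_1)$ and the identification (\ref{eqId})---is exactly the paper's proof. Your first approach via the isometry factorization $\s V=(\s V_1\tens_{\gamma_2\tens\id_S}1)\widetilde{\s V}_2$ would also work but, as you note, involves more bookkeeping for no gain.
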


\begin{proof}
For all $\xi_1\in\s E_1$ and $\xi_2\in\s E_2$, $\Delta(F\xi_1,\xi_2)=((F\tens 1_S)\tens_{\widetilde{\gamma}_2\tens\id_S}1)\Delta(\xi_1,\xi_2)$. However, $(F\tens 1_S)\tens_{\widetilde{\gamma}_2\tens\id_S}1$ is identified to $(F\tens_{\gamma_2}1)\tens 1_S$ through the identification (\ref{eqId}). Hence, $\delta_{\s E}(F\xi_1\tens_{\gamma_2}\xi_2)=((F\tens_{\gamma_2}1)\tens 1_S)\delta_{\s E}(\xi_1\tens_{\gamma_2}\xi_2)$ for all $\xi_1\in\s E_1$ and $\s E_2\in\s E_2$. Hence, $F\tens_{\gamma_2}1\in\Lin(\s E)$ is invariant (cf.\ \ref{propdef7}).
\end{proof}

\paragraph{Biduality and equivariant Morita equivalence.} In this paragraph, we recall the notion of equivariant Morita equivalence between $\cal G$-C*-algebras. We also recall the canonical $\cal G$-equivariant Morita equivalence between a $\cal G$-C*-algebra (resp.\ $\widehat{\cal G}$-C*-algebra) $A$ (resp.\ $B$) and the double crossed product $(A\rtimes{\cal G})\rtimes\widehat{\cal G}$ (resp.\ $(B\rtimes\widehat{\cal G})\rtimes{\cal G}$).

\begin{defin}(cf.\ \S 6 \cite{Rie74})
Let $A$ and $B$ be two C*-algebras. An imprimitivity $A$-$B$-bimodule is an $A$-$B$-bimodule $\s E$, which is a full left Hilbert $A$-module for an $A$-valued inner product ${}_A\langle\cdot,\,\cdot\rangle$ and a full right Hilbert $B$-module for a $B$-valued inner product $\langle\cdot,\,\cdot\rangle_B$ such that ${}_A\langle\xi,\,\eta\rangle\zeta=\xi\langle\eta,\,\zeta\rangle_B$ for all $\xi,\,\eta,\,\zeta\in\s E$.
\end{defin}

\begin{rks}
Let $A$ and $B$ be two C*-algebras and $\s E$ an imprimitivity $A$-$B$-bimodule. We recall that the norms defined by the inner products ${}_A\langle\cdot,\,\cdot\rangle$ on ${}_A\s E$ and $\langle\cdot,\,\cdot\rangle_B$ on $\s E_B$ coincide. We also recall that the left (resp.\ right) action of $A$ (resp.\ $B$) on $\s E$ defines a non-degenerate *-homomorphism $\gamma:A\rightarrow\Lin(\s E_B)$ (resp.\ $\rho:B\rightarrow\Lin({}_A\s E)$).
\end{rks}

\begin{defin}
Let $A$ and $B$ be two $\cal G$-C*-algebras. A $\cal G$-equivariant imprimitivity $A$-$B$-bimodule is an imprimitivity $A$-$B$-bimodule $\s E$ endowed with a continuous action of $\cal G$ on $\s E_B$ such that the left action $\gamma:A\rightarrow\Lin(\s E_B)$ is $\cal G$-equivariant. In that case, we say that $A$ and $B$ are $\cal G$-equivariantly Morita equivalent.
\end{defin}

If the quantum groupoid $\cal G$ is regular, then the $\cal G$-equivariant Morita equivalence is a reflexive, symmetric and transitive relation on the class of $\cal G$-C*-algebras.

\medbreak

In the following result, we assume the quantum groupoid $\cal G$ to be regular.
 
\begin{thm}\label{theo8} Let $(A,\beta_A,\delta_A)$ {\rm(}resp.\ $(B,\alpha_B,\delta_B)${\rm)} be a $\cal G$-C*-algebra {\rm(}resp.\ $\widehat{\cal G}$-C*-algebra{\rm)}. In the statements below, we use all the notations of \S \ref{sectionTT}.
\begin{enumerate}
\item There exists a unique continuous action $(\beta_{\er},\delta_{\er})$ {\rm(}resp.\ $(\alpha_{{\cal E}_{B,\rho}},\delta_{{\cal E}_{B,\rho}})${\rm)} of $\cal G$ {\rm(}resp.\ $\widehat{\cal G}${\rm)} on the Hilbert $A$-module $\er$ {\rm(}resp.\ the Hilbert $B$-module ${\cal E}_{B,\rho}${\rm)} given for all $a\in A$ {\rm(}resp.\ $b\in B${\rm)}, $\zeta\in\s H$ and $n\in N$ by the formulas:$\vphantom{\widehat{\cal G}}$
\begin{align*}
\delta_{\er}(q_{\beta_A\widehat{\alpha}}(a\tens\zeta))&={\cal V}_{23}\delta_A(a)_{13}(1_A\tens\zeta\tens 1_S);\quad \beta_{\er}(n^{\rm o}):=\restr{(1_A\tens\beta(n^{\rm o}))}{\er}; \\
\text{{\rm(}resp.\ }\delta_{{\cal E}_{B,\rho}}(q_{\alpha_B\beta}(b\tens\zeta))&=\widetilde{\cal V}_{23}\delta_B(b)_{13}(1_B\tens\zeta\tens 1_{\widehat{S}});\quad 
\alpha_{{\cal E}_{B,\rho}}(n):=\restr{(1_B\tens\widehat{\alpha}(n))}{{\cal E}_{B,\rho}}\!\!\text{{\rm)}};
\end{align*}
\item Endowed with the *-representation $D\rightarrow\Lin({\cal E}_{A,R})\, ;\, u\mapsto\restr{u}{\er}$ {\rm(}resp.\ $E\rightarrow\Lin({\cal E}_{B,\rho})\, ;\, v\mapsto\restr{v}{{\cal E}_{B,\rho}}${\rm)}, the $\cal G$-equivariant Hilbert $A$-module $\er$ {\rm(}resp.\ the $\widehat{\cal G}$-equivariant Hilbert $B$-module ${\cal E}_{B,\rho}${\rm)}$\vphantom{\restr{\psi(v)}{\er}}$ is a $\cal G$-equivariant Hilbert $D$-$A$-bimodule {\rm(}resp.\ $\widehat{\cal G}$-equivariant Hilbert $E$-$B$-bimodule{\rm)}.
\item The $\cal G$-C*-algebras {\rm(}resp.\ $\widehat{\cal G}$-C*-algebras{\rm)} $A$ and $D$ {\rm(}resp.\ $B$ and $E${\rm)} are Morita equivalent via the $\cal G$-equivariant {\rm(}resp.\ $\widehat{\cal G}$-equivariant{\rm)} imprimitivity $A$-$D$-bimodule $\er$ {\rm(}resp.\ $B$-$E$-bimodule ${\cal E}_{B,\rho}${\rm)}.\qedhere
\end{enumerate}
\end{thm}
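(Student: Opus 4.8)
The plan is to prove (1)–(3) in turn, and only for $\cal G$ acting on $\er$; the assertion for $\widehat{\cal G}$ acting on ${\cal E}_{B,\rho}$ follows by the substitution $(\cal G,V,{\cal V},\delta_A,\widehat\alpha)\rightsquigarrow(\widehat{\cal G},\widetilde V,\widetilde{\cal V},\delta_B,\beta)$ and is proved verbatim. For (1) I would first check that $q_{\beta_A\widehat\alpha}(a\tens\zeta)\mapsto{\cal V}_{23}\delta_A(a)_{13}(1_A\tens\zeta\tens 1_S)$ is a well-defined isometry $\er\to\widetilde{\M}(\er\tens S)$. The crux is the inner-product identity $\langle{\cal V}_{23}\delta_A(a)_{13}(1\tens\zeta\tens 1),\,{\cal V}_{23}\delta_A(b)_{13}(1\tens\eta\tens 1)\rangle=\delta_A(\langle q_{\beta_A\widehat\alpha}(a\tens\zeta),\,q_{\beta_A\widehat\alpha}(b\tens\eta)\rangle)$, which follows from ${\cal V}^*{\cal V}=q_{\widehat\alpha\beta}$ (the finite-basis form of $V^*V=q_{\widehat\alpha\beta}$, cf.\ \ref{inifinproj}), the multiplicativity of $\delta_A$, and the commutation of $q_{\beta_A\widehat\alpha}$ with $\delta_A(A)$ and with $1_A\tens\beta(n^{\rm o})$; since the right-hand side depends only on $q_{\beta_A\widehat\alpha}(a\tens\zeta)$, the map descends to $\er$ and is isometric (Remark \ref{rk2}(1)), and it lands in $\widetilde{\M}(\er\tens S)$ because $[\delta_A(A)(1_A\tens S)]=q_{\beta_A\alpha}(A\tens S)$. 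The module-map identity and condition 3 of \ref{hilbmodequ} (with $\beta_{\er}(n^{\rm o}):=\restr{(1_A\tens\beta(n^{\rm o}))}{\er}$) are read off the formula and from $V(\beta(n^{\rm o})\tens1)=(1\tens\beta(n^{\rm o}))V$ (\ref{prop34}(2)); coassociativity (condition 4) is the pentagon equation for $V$; and $[\delta_{\er}(\er)(1_A\tens S)]=q_{\beta_A\alpha}(\er\tens S)$ follows from strong continuity of $(\beta_A,\delta_A)$ together with the range relations for $V$. Since $\cal G$ is regular, the resulting action is automatically continuous by \ref{corActReg}, and uniqueness is clear as the $q_{\beta_A\widehat\alpha}(a\tens\zeta)$ are total in $\er$.

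For (2), regularity gives $D=q_{\beta_A\widehat\alpha}(A\tens\K)q_{\beta_A\widehat\alpha}=\K(\er)$ (\ref{BidualityTheo}), so the representation $\gamma:D\to\Lin(\er)$, $u\mapsto\restr{u}{\er}$, is the canonical non-degenerate inclusion $\K(\er)\hookrightarrow\Lin(\er)$, and $\er$ is countably generated since $\er=[D(A\tens\s H)]$ with $D$ $\sigma$-unital and $A$, $\s H$ separable (cf.\ \ref{rk18}). By \ref{rk11}(2), $\cal G$-equivariance of $\gamma$ amounts to $\delta_{\K(\er)}(k)=(\gamma\tens\id_S)\delta_D(k)$ for $k\in\K(\er)=D$; but \ref{rk16}(3) gives $\delta_{\K(\er)}(k)={\cal V}_{\er}(k\tens_{\delta_A}1){\cal V}_{\er}^*$ with ${\cal V}_{\er}\in\Lin(\er\tens_{\delta_A}(A\tens S),\er\tens S)$ the isometry attached to the action of (1), and a short computation unwinding the internal-tensor-product identifications recognizes this as ${\cal V}_{23}\delta_0(k){\cal V}_{23}^*=(j_D\tens\id_S)\delta_D(k)$, the very formula defining $\delta_D$ in \ref{BidualityTheo}. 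Hence $(\er,\gamma)$ is a $\cal G$-equivariant Hilbert $D$-$A$-bimodule (\ref{defbimod}). Equivalently, via \ref{prop1} the action of (1) repackages as a strongly continuous action on the linking algebra $J=\K(\er\oplus A)$ restricting to $(\beta_A,\delta_A)$ on the $A$-corner and to $(\beta_D,\delta_D)$ on the $D$-corner.

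For (3) it remains to see $\er$ is full on both sides. Fullness over $D$, $[\mathrm{span}\{\xi\eta^*:\xi,\eta\in\er\}]=\K(\er)=D$, holds by definition of $\K(\er)$. For fullness over $A$: $\er=[q_{\beta_A\widehat\alpha}(A\tens\s H)]$ yields $[\langle\er,\er\rangle_A]=[A^*(\id_A\tens\omega)(q_{\beta_A\widehat\alpha})A\,;\,\omega\in\B(\s H)_*]$, a closed two-sided ideal of $A$; since $N$ is finite-dimensional, slicing the second leg of $q_{\beta_A\widehat\alpha}$ against vector functionals recovers all of $\beta_A(N^{\rm o})$, which is non-degenerate in $\M(A)$, so this ideal equals $A$. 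The imprimitivity identity ${}_D\langle\xi,\eta\rangle\zeta=\xi\langle\eta,\zeta\rangle_A$ is the standard compatibility of $\K(\er)$-valued and $A$-valued inner products; the left action is $\cal G$-equivariant by (2) and the action on $\er_A$ is continuous by (1). Therefore $\er$ is a $\cal G$-equivariant imprimitivity $A$-$D$-bimodule, i.e.\ $A$ and $D$ are $\cal G$-equivariantly Morita equivalent.

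The main obstacle lies entirely in (1): checking that the naive formula yields a genuine action — isometric, well-defined on $\er$, valued in the relative multiplier module, and coassociative — is a matter of pushing the partial-isometry relations ${\cal V}^*{\cal V}=q_{\widehat\alpha\beta}$, ${\cal V}{\cal V}^*=q_{\beta\alpha}$, the pentagon equation and the (non-)commutation relations of \ref{prop34}–\ref{NonComRel} through the leg-numbering bookkeeping. Organizing the argument so that the $\er$-level statements are deduced from their $J=\K(\er\oplus A)$-level counterparts, where $\delta_J$ is conjugation by ${\cal V}_{23}$ of the amplified coaction $\delta_0$, is what keeps this computation manageable.
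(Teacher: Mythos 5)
The paper does not actually prove Theorem \ref{theo8}: it is recalled from \cite{C2} (see the proof of \ref{lem32}, which cites 7.12 of \cite{C2} together with this statement), so there is no in-text argument to compare yours against line by line. Judged on its own terms, your treatment of the $\cal G$-half is the right one and does go through: the inner-product identity you isolate as the crux is exactly the computation $\delta_A(a^*)_{13}\,q_{\widehat\alpha\beta,23}\,\delta_A(b)_{13}$, and sandwiching the $\widehat\alpha(e_{ij})$-leg between $\zeta^*$ and $\zeta'$ while absorbing the $\beta(e_{ji}^{\rm o})$-leg into $\delta_A$ reproduces $\delta_A(\langle q_{\beta_A\widehat\alpha}(a\tens\zeta),q_{\beta_A\widehat\alpha}(b\tens\zeta')\rangle)$. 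One correction of detail: the ingredient that makes the $\beta$-leg absorb is the covariance relation $\delta_A(a)(1_A\tens\beta(n^{\rm o}))=\delta_A(a\beta_A(n^{\rm o}))$ (a consequence of \ref{defactmqg}\,3 and \ref{prop39}\,2), not ``commutation of $q_{\beta_A\widehat\alpha}$ with $\delta_A(A)$''. Also, countable generation of $\er$ does not require $A$ separable, which is nowhere assumed: $\er=q_{\beta_A\widehat\alpha}(A\tens\s H)$ is a complemented submodule of $\s H_A$, hence countably generated for any ($\sigma$-unital) $A$. Parts (2) and (3) are fine; in particular your fullness-over-$A$ argument via slicing $q_{\beta_A\widehat\alpha}$ and non-degeneracy of $\beta_A$ is correct.

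The one genuine soft spot is the claim that the $\widehat{\cal G}$-half is ``proved verbatim'' by substitution. It is not: the substitution $V\rightsquigarrow\widetilde V$ does not carry the relevant projections onto their mirror images. You have $V^*V=q_{\widehat\alpha\beta}$, whose $\s H$-leg representation $\widehat\alpha$ matches the $\s H$-leg of the cut-out projection $q_{\beta_A\widehat\alpha}$ of $\er$ — this matching is precisely what makes the inner-product computation close up. For the dual module, ${\cal E}_{B,\rho}$ is cut out by $q_{\alpha_B\beta}$ (with $\beta$ in the $\s H$-leg), whereas $\widetilde V^*\widetilde V=q_{\widehat\beta\widehat\alpha}$ carries $\widehat\beta$, not $\beta$, in its $\s H$-leg (cf.\ \ref{inifinproj}\,3). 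Running your computation mechanically therefore produces $\sum_l n_l^{-1}\sum_{i,j}\langle\zeta,\widehat\beta(e_{ij}^{(l){\rm o}})\zeta'\rangle\,\delta_B(b^*\alpha_B(e_{ji}^{(l)})b')$ where the target is the same expression with $\beta$ in place of $\widehat\beta$, and reconciling the two forces you to invoke the exchange relations specific to $\widetilde V$ (e.g.\ $\widetilde V(\widehat\beta(n^{\rm o})\tens1)=\widetilde V(1\tens\widehat\alpha(n))$, $(1\tens\beta(n^{\rm o}))\widetilde V=(\widehat\alpha(n)\tens1)\widetilde V$ from \ref{NonComRel}, and the fact that $\delta_B(b)_{13}$ is supported under $q_{\alpha_B\beta,13}$) — relations with no counterpart in the primal computation. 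The same mismatch affects well-definedness of the formula on representatives $q_{\alpha_B\beta}(b\tens\zeta)$. So the dual half must be written out on its own; the cleanest way to do it is the route you only gesture at in your last paragraph, namely deducing the module-level statements from the linking-algebra/double-crossed-product picture (as the paper itself does for the analogous \ref{theo1}--\ref{cor1}), where the two halves really are formally parallel. As it stands, asserting ``verbatim'' leaves the dual half unproved.
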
 
 
	\subsection{Crossed product, dual action and biduality}\label{sectionHilbModCrPrd}

\subsubsection{Crossed product}

In this paragraph, we define and investigate the crossed product of a Hilbert module acted upon by a measured quantum groupoid on a finite-dimensional basis. Let us specify some notations.

\medbreak

Let $(A,\beta_A,\delta_A)$ be a $\cal G$-$\Cstar$-algebra. Denote by $B:=A\rtimes\cal G$ the crossed product endowed with the dual action $(\alpha_B,\delta_B)$. Let $\pi:A\rightarrow\M(B)$ and $\widehat{\theta}:\widehat{S}\rightarrow\M(B)$ be the canonical morphisms (cf.\ \ref{crossedproductCstar}). Let $\s E$ be a Hilbert $A$-module and $(\beta_{\s E},\delta_{\s E})$ an action of $\cal G$ on $\s E$.

\begin{defin}
We call crossed product of $\s E$ by the action $(\beta_{\s E},\delta_{\s E})$ the Hilbert $B$-module $\s E\tens_{\pi} B$ denoted by $\s E\rtimes\cal G$.
\end{defin}

\begin{nb}
For $\xi\in\s E$, we denote by $\Pi(\xi)\in\Lin(B,\s E\rtimes{\cal G})$ the adjointable operator defined by $\Pi(\xi)b:=\xi\tens_{\pi}b$ for all $b\in B$. We have $\Pi(\xi)^*(\eta\tens_{\pi}b)=\pi(\langle\xi,\,\eta\rangle)b$ for all $\eta\in\s E$ and $b\in B$. We then have a linear map $\Pi:\s E\rightarrow\Lin(B,\s E\rtimes{\cal G})$ (also denoted by $\Pi_{\s E}$ for emphasis).
\end{nb}

\begin{prop}\label{prop41}
We have:
\begin{enumerate}
\item $\Pi$ is non-degenerate, {\it i.e.\ }$[\Pi(\s E)B]=\s E\rtimes{\cal G}$;
\item $\Pi(\xi a)=\Pi(\xi)\pi(a)$, for all $\xi\in\s E$ and $a\in A$;
\item $\Pi(\xi)^*\Pi(\eta)=\pi(\langle\xi,\,\eta\rangle)$, for all $\xi,\eta\in\s E$;
\item $\Pi(\xi)\widehat{\theta}(x)\in\s E\rtimes{\cal G}$ for all $\xi\in\s E$ and $x\in\widehat{S}$ and 
$
\s E\rtimes{\cal G}=[\Pi(\xi)\widehat{\theta}(x)\,;\, \xi\in\s E,\, x\in\widehat{S}].
$\qedhere
\end{enumerate}
\end{prop}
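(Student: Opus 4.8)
The plan is to verify the four assertions in order, using throughout the definitions $\s E\rtimes{\cal G}=\s E\tens_\pi B$, $\Pi(\xi)b=\xi\tens_\pi b$, and the structural facts about $B=A\rtimes{\cal G}$ recalled in \ref{defCrossedProduct}--\ref{defDualAct}, together with the fact that $\pi:A\to\M(B)$ is a non-degenerate faithful $*$-homomorphism. First, for (1), I would observe that $[\Pi(\s E)B]=[(\s E\tens_\pi B)B]=[\s E\tens_\pi (\pi(A)B)]=[\s E\tens_\pi B]$ by non-degeneracy of $\pi$, which is exactly $\s E\rtimes{\cal G}$ (using $[\pi(A)B]=B$ and the density of $\s E\odot_A B$ in $\s E\tens_\pi B$). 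Assertions (2) and (3) are then immediate from the elementary properties of the internal tensor product: $\Pi(\xi a)b=\xi a\tens_\pi b=\xi\tens_\pi\pi(a)b=\Pi(\xi)\pi(a)b$, and $\Pi(\xi)^*\Pi(\eta)b=\Pi(\xi)^*(\eta\tens_\pi b)=\pi(\langle\xi,\eta\rangle_A)b$ by the formula for $\Pi(\xi)^*$ given in the preceding \texttt{\textbackslash nb}.

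The substantive point is (4), and it is really two statements: that each product $\Pi(\xi)\widehat{\theta}(x)$ already lies in $\s E\rtimes{\cal G}$ (and not merely in $\Lin(B,\s E\rtimes{\cal G})$), and that such products span a dense subspace. For the first, I would use the description $B=[\pi(A)\widehat{\theta}(\widehat S)]$ from \ref{defCrossedProduct}, hence also $B=[\widehat{\theta}(\widehat S)\pi(A)\widehat{\theta}(\widehat S)]$ by non-degeneracy, together with the (weak) continuity of the action $(\beta_A,\delta_A)$ which gives $[\delta_A(A)(1_A\tens S)]=q_{\beta_A\alpha}(A\tens S)$, i.e. at the crossed-product level a factorization allowing one to absorb $\widehat{\theta}(x)$ into $\pi(A)$ on one side: more precisely, the key is the relation $\widehat{\theta}(\widehat S)\pi(A)\subset[\pi(A)\widehat{\theta}(\widehat S)]$ (this follows from the definition of $A\rtimes{\cal G}$ as a $*$-algebra, since $A\rtimes{\cal G}$ is closed under adjoints and products). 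Then $\Pi(\xi)\widehat{\theta}(x)=\lim\Pi(\xi u_\lambda)\widehat{\theta}(x)=\lim\Pi(\xi)\pi(u_\lambda)\widehat{\theta}(x)$ for an approximate unit $(u_\lambda)$ of $A$ (using (2) and $\s E A=\s E$), and $\pi(u_\lambda)\widehat{\theta}(x)$ can be approximated by finite sums $\sum_i\pi(a_i)\widehat{\theta}(x_i)\in B$, whence $\Pi(\xi)\pi(u_\lambda)\widehat{\theta}(x)$ is approximated by $\sum_i\Pi(\xi a_i)\widehat{\theta}(x_i)$... but this still has a $\widehat{\theta}(x_i)$ factor. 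The clean way is: $\Pi(\xi)\widehat{\theta}(x)$ applied to $b\in B$ gives $\xi\tens_\pi\widehat{\theta}(x)b$, and since $\widehat{\theta}(x)b\in B$ and $B=[\pi(A)B]$, this equals $\lim\sum_i\xi\tens_\pi\pi(a_i)b_i=\lim\sum_i\xi a_i\tens_\pi b_i$; so $\Pi(\xi)\widehat{\theta}(x)$ lies in the closed linear span of $\{\Pi(\eta)b:\eta\in\s E,b\in B\}$, which by (1) is $\s E\rtimes{\cal G}$. For density, I would run the reverse argument: an arbitrary $\Pi(\xi)b$ with $b\in B=[\pi(A)\widehat{\theta}(\widehat S)]$ is approximated by finite sums $\sum_i\Pi(\xi)\pi(a_i)\widehat{\theta}(x_i)=\sum_i\Pi(\xi a_i)\widehat{\theta}(x_i)$, each term of the required form; combined with (1) this yields $\s E\rtimes{\cal G}=[\Pi(\xi)\widehat{\theta}(x)\,;\,\xi\in\s E,\,x\in\widehat S]$.

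The main obstacle I anticipate is purely bookkeeping around degeneracy: one must be careful that $\Pi(\xi)\widehat{\theta}(x)$ genuinely defines an element of the Hilbert module $\s E\rtimes{\cal G}$ (a priori it is only an adjointable operator $B\to\s E\rtimes{\cal G}$, i.e. an element of $\s E\rtimes{\cal G}$ only after checking it is a "rank-one-type" operator, which is precisely the content of the limit computation above), and that all the approximations take place in the correct norm — the Hilbert-module norm on $\s E\tens_\pi B$, which one estimates via $\|\eta\tens_\pi b\|^2=\|\pi(\langle\eta,\eta\rangle_A)^{1/2}b\|^2\le\|\eta\|^2\|b\|^2$ together with the standard norm-density of $\s E\odot_A B$. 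None of the steps is deep, but the order matters: establish (1)--(3) first, then use (1) crucially in both halves of (4).
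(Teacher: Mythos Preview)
Your treatment of (1)--(3) and of the density half of (4) is correct and matches the paper's. The issue is with the first half of (4), where you need $\Pi(\xi)\widehat{\theta}(x)\in\s E\rtimes{\cal G}$.

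Your ``clean way'' argument has a genuine gap. You compute $(\Pi(\xi)\widehat{\theta}(x))b=\xi\tens_\pi\widehat{\theta}(x)b$ and observe that, for each fixed $b$, this element of $\s E\rtimes{\cal G}$ is a limit of sums $\sum_i\xi a_i\tens_\pi b_i$. But this only shows that the \emph{range} of the operator $\Pi(\xi)\widehat{\theta}(x)\in\Lin(B,\s E\rtimes{\cal G})$ lies in $\s E\rtimes{\cal G}$, which is trivially true; it does not show that the operator itself belongs to $\K(B,\s E\rtimes{\cal G})=\s E\rtimes{\cal G}$. The approximating sums depend on $b$, so no norm approximation of $\Pi(\xi)\widehat{\theta}(x)$ follows.

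In fact your \emph{first} approach, which you abandoned, is the right one --- you just stopped one step too early. You wrote $\Pi(\xi)\widehat{\theta}(x)=\lim_\lambda\Pi(\xi)\pi(u_\lambda)\widehat{\theta}(x)$ (the limit is in norm since $\xi u_\lambda\to\xi$ in $\s E$) and then tried to further approximate $\pi(u_\lambda)\widehat{\theta}(x)$. But $\pi(u_\lambda)\widehat{\theta}(x)$ is \emph{already} an element of $B$ (by the very definition of $B=A\rtimes{\cal G}$), so $\Pi(\xi)\pi(u_\lambda)\widehat{\theta}(x)$ is $\Pi(\xi)$ evaluated at an element of $B$, i.e.\ the element $\xi\tens_\pi(\pi(u_\lambda)\widehat{\theta}(x))\in\s E\rtimes{\cal G}$. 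Passing to the norm limit finishes the argument. The paper does exactly this, phrased via Cohen--Hewitt factorisation: since $\s E A=\s E$, write $\xi=\xi' a$; then $\Pi(\xi)\widehat{\theta}(x)=\Pi(\xi')\bigl(\pi(a)\widehat{\theta}(x)\bigr)\in\s E\rtimes{\cal G}$ immediately, with no limits needed.
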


\begin{proof}
Statements 1, 2 and 3 are direct consequences of the definitions. For all $\xi\in\s E$, $a\in A$ and $x\in\widehat{S}$, we have $\Pi(\xi a)\widehat{\theta}(x)=\Pi(\xi)(\pi(a)\widehat{\theta}(x))\in\s E\rtimes{\cal G}$. Hence, $\Pi(\xi)\widehat{\theta}(x)\in\s E\rtimes{\cal G}$ for all $\xi\in\s E$ and $x\in\widehat{S}$ since $\s E A=\s E$. The formula $\s E\rtimes{\cal G}=[\Pi(\xi)\widehat{\theta}(x)\,;\, \xi\in\s E,\, x\in\widehat{S}]$ follows from the relations $[\Pi(\s E)B]=\s E\rtimes{\cal G}$ and $B=[\pi(a)\widehat{\theta}(x)\,;\,a\in A,\, x\in\widehat{S}]$.
\end{proof}

\begin{prop}\label{defdualaction}
Let $\alpha_{\s E\rtimes\cal G}: N\rightarrow\Lin(\s E\rtimes\cal G)$ and $\delta_{\s E\rtimes\cal G}:\s E\rtimes{\cal G}\rightarrow\Lin(B\tens\widehat{S},(\s E\rtimes{\cal G})\tens\widehat{S})$  be the linear maps defined by:
\[
\alpha_{\s E\rtimes\cal G}(n):=1_{\s E}\tens_{\pi}\alpha_B(n),\quad n\in N;\quad
\delta_{\s E\rtimes\cal G}(\xi\tens_{\pi}b):=(\Pi(\xi)\tens 1_{\widehat{S}})\delta_{B}(b),\quad \xi\in\s E,\; b\in B.
\] 
Then, the pair $(\alpha_{\s E\rtimes\cal G},\delta_{\s E\rtimes\cal G})$ is a continuous action of $\widehat{\cal G}$ on the crossed product $\s E\rtimes{\cal G}$.
\end{prop}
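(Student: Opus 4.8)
The plan is to verify that $(\alpha_{\s E\rtimes{\cal G}},\delta_{\s E\rtimes{\cal G}})$ satisfies the three conditions of Definition \ref{hilbmodequ} relative to the dual groupoid $\widehat{\cal G}$ together with the continuity condition of \ref{defEqHilbMod}, the non-degeneracy of $\alpha_{\s E\rtimes{\cal G}}$ being immediate from that of $\alpha_B$ and of $\Pi$ (cf.\ \ref{prop41} 1). Everything will reduce to the already established structure of the dual action $(\alpha_B,\delta_B)$ on $B=A\rtimes{\cal G}$ (cf.\ \ref{defDualAct} and \S\ref{crossedproductCstar}) and to the description $\s E\rtimes{\cal G}=[\Pi(\xi)\widehat\theta(x)\,;\,\xi\in\s E,\,x\in\widehat S]=[\Pi(\s E)B]$ of the crossed product module (cf.\ \ref{prop41}).

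First I would check that $\delta_{\s E\rtimes{\cal G}}(\xi\tens_\pi b):=(\Pi(\xi)\tens 1_{\widehat S})\delta_B(b)$ descends to the balanced tensor product $\s E\tens_\pi B$ and takes its values in $\widetilde\M((\s E\rtimes{\cal G})\tens\widehat S)$ (that $\alpha_{\s E\rtimes{\cal G}}(n)=1_{\s E}\tens_\pi\alpha_B(n)$ lies in $\Lin(\s E\rtimes{\cal G})$ being routine from \ref{defDualAct} 2). The crucial point is that $\pi(a)\in\M(B)$ is $\delta_B$-invariant for all $a\in A$ (cf.\ \ref{lem24}), hence $[q_{\alpha_B\beta},\pi(a)\tens 1_{\widehat S}]=0$ (cf.\ \ref{rk15} 3) and $(\pi(a)\tens 1_{\widehat S})\delta_B(b)=\delta_B(\pi(a))\delta_B(b)=\delta_B(\pi(a)b)$; combined with $\Pi(\xi a)=\Pi(\xi)\pi(a)$ (cf.\ \ref{prop41} 2) this gives $(\Pi(\xi a)\tens 1_{\widehat S})\delta_B(b)=(\Pi(\xi)\tens 1_{\widehat S})\delta_B(\pi(a)b)$, which is exactly the compatibility with the balancing, while the membership in $\widetilde\M((\s E\rtimes{\cal G})\tens\widehat S)$ follows from $\delta_B(b)\in\widetilde\M(B\tens\widehat S)$ and the adjointability of $\Pi(\xi)\tens 1_{\widehat S}$. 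Conditions 1 and 3 of \ref{hilbmodequ} are then routine: the module identity and the inner-product identity follow from $\Pi(\xi)^*\Pi(\eta)=\pi(\langle\xi,\eta\rangle)$ (cf.\ \ref{prop41} 3) and the $*$-homomorphism property of $\delta_B$, and the $N$-equivariance from $\alpha_{\s E\rtimes{\cal G}}(n)=1_{\s E}\tens_\pi\alpha_B(n)$ and the corresponding property of $(\alpha_B,\delta_B)$. The density condition 2 and the continuity condition then follow from $\s E\rtimes{\cal G}=[\Pi(\s E)B]$ together with the density $[\delta_B(B)(1_B\tens\widehat S)]=q_{\alpha_B\beta}(B\tens\widehat S)$ and the strong continuity of the dual action on $B$. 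For the coassociativity condition 4 I would compute, on the generators $\xi\tens_\pi b$,
\[
(\delta_{\s E\rtimes{\cal G}}\tens\id_{\widehat S})\delta_{\s E\rtimes{\cal G}}(\xi\tens_\pi b)=(\Pi(\xi)\tens 1_{\widehat S}\tens 1_{\widehat S})(\delta_B\tens\id_{\widehat S})\delta_B(b),
\]
and likewise $(\id_{\s E\rtimes{\cal G}}\tens\widehat\delta)\delta_{\s E\rtimes{\cal G}}(\xi\tens_\pi b)=(\Pi(\xi)\tens 1_{\widehat S}\tens 1_{\widehat S})(\id_B\tens\widehat\delta)\delta_B(b)$, so that the identity reduces to $(\delta_B\tens\id_{\widehat S})\delta_B=(\id_B\tens\widehat\delta)\delta_B$, which holds since $(\alpha_B,\delta_B)$ is an action of $\widehat{\cal G}$.

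An equivalent and more conceptual route goes through the linking C*-algebra $J:=\K(\s E\oplus A)$: endowed with the action $(\beta_J,\delta_J)$ of \ref{prop1} and the projections $e_1:=\iota_{\s E}(1_{\s E})$, $e_2:=\iota_A(1_A)$, it is a linking ${\cal G}$-C*-algebra, so by \ref{prop40} its crossed product $J\rtimes{\cal G}$ is a linking $\widehat{\cal G}$-C*-algebra with projections $\pi_J(e_1),\pi_J(e_2)$, and the $\widehat{\cal G}$-analogue of \ref{rkLinkAlg} turns the corner $\pi_J(e_1)(J\rtimes{\cal G})\pi_J(e_2)$ into a $\widehat{\cal G}$-equivariant Hilbert module over $\pi_J(e_2)(J\rtimes{\cal G})\pi_J(e_2)$; one identifies the base with $A\rtimes{\cal G}=B$ (functoriality of $-\rtimes{\cal G}$ applied to the ${\cal G}$-equivariant corner inclusion $\iota_A$) and the corner with $\s E\rtimes{\cal G}$ via $\Pi(\xi)\widehat\theta(x)\mapsto\pi_J(\iota_{\s E}(\xi))\widehat\theta_J(x)$, a unitary of Hilbert $B$-modules by \ref{prop41} 4, and a comparison on generators with the explicit form of $\delta_{J\rtimes{\cal G}}$ (cf.\ \ref{defDualAct}) shows that the transported action is precisely $(\alpha_{\s E\rtimes{\cal G}},\delta_{\s E\rtimes{\cal G}})$. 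I expect the main obstacle, in either approach, to be the bookkeeping with the canonical extensions of $\delta_{\s E\rtimes{\cal G}}\tens\id_{\widehat S}$ and $\id_{\s E\rtimes{\cal G}}\tens\widehat\delta$ to $\Lin(B\tens\widehat S,(\s E\rtimes{\cal G})\tens\widehat S)$ described in \ref{rk9} (so that the coassociativity identity is a genuine equality of adjointable operators), and — in the linking-algebra version — the careful check that the above unitary intertwines the explicitly defined maps with those induced by \ref{prop40}; the remaining verifications are direct transcriptions of the corresponding facts about the dual action on $B$.
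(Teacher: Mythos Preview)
Your proposal is correct and follows essentially the same approach as the paper: verify the conditions of Definition~\ref{hilbmodequ} for $\widehat{\cal G}$ together with continuity, reducing everything to the known properties of the dual action $(\alpha_B,\delta_B)$ on $B$ via the relations of Proposition~\ref{prop41}. The paper carries out precisely these computations (with the minor difference that it cites \ref{defDualAct}~1 directly for $\delta_B(\pi(a)b)=(\pi(a)\tens 1_{\widehat S})\delta_B(b)$ rather than going through \ref{lem24}, and makes the identity $(\Pi(\xi)\tens 1_{\widehat S})q_{\alpha_B\beta}=q_{\alpha_{\s E\rtimes{\cal G}}\beta}(\Pi(\xi)\tens 1_{\widehat S})$ explicit for the density condition); your alternative linking-algebra route is exactly what the paper develops separately in Proposition~\ref{prop4bis}.
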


\begin{proof}
Since $\delta_{B}(B)\in\widetilde{\M}(B\tens\widehat{S})$, it is clear that $(\Pi(\xi)\tens 1_{\widehat{S}})\delta_{B}(b)\in\widetilde{\M}((\s E\rtimes{\cal G})\tens\widehat{S})$ for all $\xi\in\s E$ and $b\in B$. We have $\delta_{B}(\pi(a)b)=(\pi(a)\tens 1_{\widehat{S}})\delta_{B}(b)$ for all $a\in A$ and $b\in B$ (cf.\ \ref{defDualAct} 1). Therefore, we have a well-defined linear map
\[
\s E\odot_{B} B\rightarrow\widetilde{\M}(B\tens\widehat{S})\subset\Lin( B\tens\widehat{S},(\s E\rtimes{\cal G})\tens\widehat{S})\,;\, \xi\tens_{\pi} b\mapsto (\Pi(\xi)\tens 1_{\widehat{S}})\delta_{B}(b).
\]
Let $\xi,\eta\in\s E$. For all $b,c\in B$ and $x,y\in\widehat{S}$, we have 
\begin{align*}
\langle(\Pi(\xi)\tens 1_{\widehat{S}})(b\tens x),\, (\Pi(\eta)\tens 1_{\widehat{S}})(c\tens y)\rangle &=\langle(\xi\tens_{\pi} b)\tens x,\, (\eta\tens_{\pi} c)\tens y\rangle \\
&=b^*\pi(\langle\xi,\,\eta\rangle)c\tens x^*y\\
&=(b\tens x)^*(\pi(\langle\xi,\,\eta\rangle)\tens 1_{\widehat{S}})(c\tens y).
\end{align*}
Hence, $(\Pi(\xi)\tens 1_{\widehat{S}})^*(\Pi(\eta)\tens 1_{\widehat{S}})=\pi(\langle\xi,\,\eta\rangle)\tens 1_{\widehat{S}}$.
Therefore, for all $b,c\in B$ we have
\begin{align*}
\langle(\Pi(\xi)\tens 1_{\widehat{S}})\delta_{B}(b),\,(\Pi(\eta)\tens 1_{\widehat{S}})\delta_{B}(c)\rangle &=\delta_B(b)^*(\pi(\langle\xi,\,\eta\rangle)\tens 1_{\widehat{S}})\delta_B(c)\\
&=\delta_{B}(b^*\pi(\langle\xi,\,\eta\rangle)c) \quad \text{(cf.\ \ref{defDualAct} 1)}\\
&=\delta_{B}(\langle\xi\tens_{\pi} b,\,\eta\tens_{\pi}c\rangle).
\end{align*}
Hence, there exists a unique bounded linear map $\delta_{\s E\rtimes{\cal G}}:\s E\rtimes{\cal G}\rightarrow\widetilde{\M}((\s E\rtimes{\cal G})\tens\widehat{S})$ such that $\delta_{\s E\rtimes{\cal G}}(\xi\tens_{B} b)=(\Pi(\xi)\tens 1_{\widehat{S}})\delta_{B}(b)$ for all $\xi\in\s E$ and $b\in B$. Moreover, we have also proved that $\langle\delta_{\s E\rtimes{\cal G}}(\xi),\,\delta_{\s E\rtimes{\cal G}}(\eta)\rangle=\delta_B(\langle\xi,\,\eta\rangle)$ for all $\xi,\eta\in\s E\rtimes{\cal G}$. It is clear that $\delta_{\s E\rtimes{\cal G}}(\xi)\delta_B(b)=\delta_{\s E\rtimes{\cal G}}(\xi b)$ for all $\xi\in\s E\rtimes{\cal G}$ and $b\in B$.\newline
Let us fix $n\in N$. We recall that $\alpha_B(n):=\widehat{\theta}(\widehat{\alpha}(n))$. It follows from the inclusion $\widehat{\alpha}(N)\subset M'$ that $[1_A\tens\rho(\widehat{\alpha}(n)),\,\pi_L(a)]=0$ for all $a\in A$. Hence, $[\alpha_B(n),\,\pi(a)]=0$ for all $a\in A$. Thus, the map $1_{\s E}\tens_{\pi}\alpha_B(n)\in\Lin(\s E\rtimes{\cal G})$ is well defined. It is clear that $\alpha_{\s E\rtimes{\cal G}}:N\rightarrow\Lin(\s E\rtimes{\cal G})$ is a non-degenerate *-homomorphism.

\medbreak

We have $[1_{\s E\rtimes{\cal G}}\tens\widehat{\alpha}(n),\, \Pi(\xi)\tens 1_{\widehat{S}}]=0$ and $(1_B\tens\widehat{\alpha}(n))\delta_B(b)=\delta_B(\alpha_B(n)b)$ for all $n\in N$, $\xi\in\s E$ and $b\in B$. It then follows that 
$\delta_{\s E\rtimes{\cal G}}(\alpha_{\s E\rtimes{\cal G}}(n)\xi)=(1_{\s E\rtimes{\cal G}}\tens\widehat{\alpha}(n))\delta_{\s E\rtimes{\cal G}}(\xi)$ for all $\xi\in\s E\rtimes{\cal G}$ and $n\in N$.\newline
By continuity of the dual action $(\alpha_B,\delta_B)$, we have
\[
[\delta_{\s E\rtimes{\cal G}}(\s E\rtimes{\cal G})(B\tens\widehat{S})]=[(\Pi(\xi)\tens 1_{\widehat{S}})q_{\alpha_B\beta}(b\tens x) \, ; \, \xi\in\s E,\, b\in B,\, x\in\widehat{S}].
\]
Let $n,n'\in N$, $b\in B$, $x\in\widehat{S}$ and $\xi\in\s E$. We have 
\begin{align*}
(\Pi(\xi)\tens 1_{\widehat{S}})(\alpha_B(n')\tens\beta(n^{\rm o}))(b\tens x)&=(\xi\tens_{\pi}\alpha_B(n')b)\tens \beta(n^{\rm o})x\\
&=(\alpha_{\s E\rtimes{\cal G}}(n')\tens\beta(n^{\rm o}))((\xi\tens_{\pi}b)\tens x).
\end{align*}
Hence, $(\Pi(\xi)\tens 1_{\widehat{S}})q_{\alpha_B\beta}(b\tens x)=q_{\alpha_{\s E\rtimes{\cal G}}\beta}((\xi\tens_{\pi}b)\tens x)$. Therefore, we have
\[
[\delta_{\s E\rtimes{\cal G}}(\s E\rtimes{\cal G})(B\tens\widehat{S})]=q_{\alpha_{\s E\rtimes{\cal G}}\beta}((\s E\rtimes{\cal G})\tens\widehat{S}).
\]
The maps $\delta_{\s E\rtimes{\cal G}}\tens\id_{\widehat S}$ and $\id_{\s E\rtimes{\cal G}}\tens\widehat\delta$ extend to linear maps from $\Lin(B\tens\widehat{S},(\s E\rtimes{\cal G})\tens\widehat{S})$ to $\Lin(B\tens\widehat{S}\tens\widehat{S},(\s E\rtimes{\cal G})\tens\widehat{S}\tens\widehat{S})$ (cf.\ \ref{rk9}). For all $\xi\in\s E$ and $b\in B$, we have
\begin{align*}
(\id_{\s E\rtimes{\cal G}}\tens\widehat{\delta})\delta_{\s E\rtimes{\cal G}}(\xi\tens_{\pi}b)
&=(\id_{\s E\rtimes{\cal G}}\tens\widehat{\delta})(\Pi(\xi)\tens 1_{\widehat{S}})\delta_B(b)\\
&=(\Pi(\xi)\tens 1_{\widehat{S}}\tens 1_{\widehat{S}})(\id_B\tens\widehat{\delta})\delta_B(b)\\
&=((\Pi(\xi)\tens 1_{\widehat{S}})\delta_B\tens\id_{\widehat{S}})\delta_B(b)\\
&=(\delta_{\s E\rtimes{\cal G}}\circ\Pi(\xi)\tens\id_{\widehat{S}})\delta_B(x)\\
&=(\delta_{\s E\rtimes{\cal G}}\tens\id_{\widehat{S}})\delta_{\s E\rtimes{\cal G}}(\xi\tens_{\pi}b).
\end{align*}
Hence, $(\delta_{\s E\rtimes{\cal G}}\tens\id_{\widehat{S}})\delta_{\s E\rtimes{\cal G}}=(\id_{\s E\rtimes{\cal G}}\tens\widehat{\delta})\delta_{\s E\rtimes{\cal G}}$. It follows from the above that the  pair $(\alpha_{\s E\rtimes{\cal G}},\delta_{\s E\rtimes{\cal G}})$ is an action of $\widehat{{\cal G}}$ on the Hilbert $B$-module $\s E\rtimes{\cal G}$. 
By continuity of $(\alpha_B,\delta_B)$, we have $[(1_{\s E\rtimes{\cal G}}\tens\widehat{S})\delta_{\s E\rtimes{\cal G}}({\s E\rtimes{\cal G}})]=((\s E\rtimes{\cal G})\tens\widehat{S})q_{\alpha_B\beta}$ and the triple $(\s E\rtimes{\cal G},\alpha_{\s E\rtimes{\cal G}},\delta_{\s E\rtimes{\cal G}})$ is actually a $\widehat{{\cal G}}$-equivariant Hilbert $B$-module.
\end{proof}

\begin{defin}
The action $(\alpha_{\s E\rtimes\cal G},\delta_{\s E\rtimes\cal G})$ of the measured quantum groupoid $\widehat{\cal G}$ on the crossed product $\s E\rtimes\cal G$ is called the dual action of $(\beta_{\s E},\delta_{\s E})$.
\end{defin}

\begin{lem}\label{lem25}
For all $F\in\Lin(\s E)$, the operator $F\tens_{\pi}1_B\in\Lin(\s E\rtimes{\cal G})$ is invariant.
\end{lem}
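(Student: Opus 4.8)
The plan is to verify directly the defining relation of invariance supplied by (the $\widehat{\cal G}$-analogue of) \ref{propdef7}: I must show that
\[
\delta_{\s E\rtimes{\cal G}}\big((F\tens_{\pi}1_B)\zeta\big)=\big((F\tens_{\pi}1_B)\tens 1_{\widehat{S}}\big)\delta_{\s E\rtimes{\cal G}}(\zeta),\quad\text{for all }\zeta\in\s E\rtimes{\cal G}.
\]
Both sides are bounded and linear in $\zeta$, and by \ref{prop41} 1 the span of the elements $\xi\tens_{\pi}b$ ($\xi\in\s E$, $b\in B$) is dense in $\s E\rtimes{\cal G}$, so it is enough to check the identity on such elements. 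First I would record that $F\tens_{\pi}1_B$ is indeed a well-defined element of $\Lin(\s E\rtimes{\cal G})=\Lin(\s E\tens_{\pi}B)$ by the usual functoriality of the internal tensor product.

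The key (elementary) computation is the identity $\Pi(F\xi)=(F\tens_{\pi}1_B)\circ\Pi(\xi)$ in $\Lin(B,\s E\rtimes{\cal G})$, which is immediate from $(F\tens_{\pi}1_B)(\xi\tens_{\pi}b)=(F\xi)\tens_{\pi}b=\Pi(F\xi)(b)$ for all $b\in B$. Combining this with $(F\tens_{\pi}1_B)(\xi\tens_{\pi}b)=(F\xi)\tens_{\pi}b$ and the definition of the dual action in \ref{defdualaction}, I get
\[
\delta_{\s E\rtimes{\cal G}}\big((F\tens_{\pi}1_B)(\xi\tens_{\pi}b)\big)=(\Pi(F\xi)\tens 1_{\widehat{S}})\delta_B(b)=\big((F\tens_{\pi}1_B)\tens 1_{\widehat{S}}\big)(\Pi(\xi)\tens 1_{\widehat{S}})\delta_B(b),
\]
and the right-hand side is precisely $\big((F\tens_{\pi}1_B)\tens 1_{\widehat{S}}\big)\delta_{\s E\rtimes{\cal G}}(\xi\tens_{\pi}b)$. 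By density and boundedness the relation extends to all of $\s E\rtimes{\cal G}$, and then (the dual version of) \ref{propdef7} turns it into the assertion that $F\tens_{\pi}1_B$ is $\delta_{\s E\rtimes{\cal G}}$-invariant.

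There is essentially no serious obstacle here. The only points that require a little attention are the bookkeeping of the identifications under which $(F\tens_{\pi}1_B)\tens 1_{\widehat{S}}$ on $(\s E\rtimes{\cal G})\tens\widehat{S}$ composes with $\Pi(\xi)\tens 1_{\widehat{S}}$ (so that the displayed chain of equalities literally makes sense), and the verification that $\Pi(\xi)\tens 1_{\widehat{S}}$ behaves as expected with respect to $\delta_B$; both are routine and already implicit in the proof of \ref{defdualaction}. The main "work", such as it is, is just the observation $\Pi(F\xi)=(F\tens_{\pi}1_B)\Pi(\xi)$, after which everything is formal.
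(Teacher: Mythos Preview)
Your proof is correct and is exactly the argument the paper gives, just spelled out in more detail: the paper's proof is the single sentence ``This is an immediate consequence of the definition of the action of $\widehat{\cal G}$ on the crossed product $\s E\rtimes{\cal G}$ and the fact that $\Pi(F\xi)=(F\tens_{\pi}1_B)\Pi(\xi)$ for all $\xi\in\s E$.'' Your displayed computation is precisely what this sentence encodes.
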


\begin{proof}
This is an immediate consequence of the definition of the action of $\widehat{\cal G}$ on the crossed product $\s E\rtimes{\cal G}$ and the fact that $\Pi(F\xi)=(F\tens_{\pi}1_B)\Pi(\xi)$ for all $\xi\in\s E$.
\end{proof}

\begin{prop}\label{prop23}
Let $A_1$ and $A_2$ be two $\cal G$-$\Cstar$-algebras, $\s E_1$ and $\s E_2$ Hilbert C*-modules over respectively $A_1$ and $A_2$ acted upon by $\cal G$. Let $\phi:A_1\rightarrow A_2$ be a $\cal G$-equivariant *-isomorphism and $\Phi:\s E_1\rightarrow\s E_2$ a $\cal G$-equivariant unitary equivalence of Hilbert modules over the isomorphism $\phi$. There exists a unique $\widehat{\cal G}$-equivariant unitary equivalence of Hilbert modules $\Phi_*:\s E_1\rtimes{\cal G}\rightarrow\s E_2\rtimes{\cal G}$ over the $\widehat{\cal G}$-equivariant *-isomorphism $\phi_*:A_1\rtimes{\cal G}\rightarrow A_2\rtimes{\cal G}$ such that
\[
\Phi_*(\xi\tens_{\pi_{A_1}} b)=\Phi\xi \tens_{\pi_{A_2}} \phi_*(b), \quad \text{for all } b\in A_1\rtimes{\cal G} \text{ and } \xi\in\s E_1.\qedhere
\]
\end{prop}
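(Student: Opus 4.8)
The plan is to construct $\Phi_*$ directly on the interior tensor product $\s E_1\rtimes{\cal G}=\s E_1\tens_{\pi_{A_1}}(A_1\rtimes{\cal G})$ from the prescribed formula, and then check in turn that it is well defined, isometric, surjective, $\phi_*$-compatible, and $\widehat{\cal G}$-equivariant. Write $B_i:=A_i\rtimes{\cal G}$ for short. First I would record the compatibility of $\phi_*$ (from \ref{FunctorCrossedProd}) with the canonical morphisms: since $\phi_*(\pi_{A_1}(a)\widehat{\theta}_{A_1}(x))=\pi_{A_2}(\phi(a))\widehat{\theta}_{A_2}(x)$ and the morphisms $\pi$, $\widehat{\theta}$ are strictly continuous and non-degenerate, one gets $\phi_*\circ\pi_{A_1}=\pi_{A_2}\circ\phi$ as non-degenerate $*$-homomorphisms $A_1\to\M(B_2)$, while $\phi_*$ is itself a $\widehat{\cal G}$-equivariant $*$-isomorphism (being the restriction of the isomorphism $\phi\tens\id_{\K}$).

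For well-definedness, consider the map $\s E_1\times B_1\to\s E_2\tens_{\pi_{A_2}}B_2$, $(\xi,b)\mapsto\Phi\xi\tens_{\pi_{A_2}}\phi_*(b)$. Using $\Phi(\xi a)=\Phi\xi\,\phi(a)$ and $\phi_*\circ\pi_{A_1}=\pi_{A_2}\circ\phi$ it is balanced over $\pi_{A_1}$, hence factors through $\s E_1\odot_{\pi_{A_1}}B_1$; and using $\langle\Phi\xi,\Phi\eta\rangle=\phi(\langle\xi,\eta\rangle)$ together with the same compatibility one checks
\[
\langle\Phi\xi\tens_{\pi_{A_2}}\phi_*(b),\,\Phi\eta\tens_{\pi_{A_2}}\phi_*(c)\rangle=\phi_*\big(b^*\pi_{A_1}(\langle\xi,\eta\rangle)c\big)=\phi_*(\langle\xi\tens_{\pi_{A_1}}b,\,\eta\tens_{\pi_{A_1}}c\rangle),
\]
so the map extends to a $\phi_*$-compatible isometry $\Phi_*:\s E_1\rtimes{\cal G}\to\s E_2\rtimes{\cal G}$. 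Its range contains $[\Phi(\s E_1)\tens_{\pi_{A_2}}\phi_*(B_1)]=[\Phi(\s E_1)\tens_{\pi_{A_2}}B_2]$, which equals $[\Pi_{\s E_2}(\s E_2)B_2]=\s E_2\rtimes{\cal G}$ by surjectivity of $\Phi$ and $\phi_*$ (so $\s E_2=[\Phi(\s E_1)A_2]$) together with \ref{prop41}; hence $\Phi_*$ is a unitary equivalence over $\phi_*$. Uniqueness is immediate since $\s E_1\odot_{\pi_{A_1}}B_1$ is dense in $\s E_1\rtimes{\cal G}$.

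For the $\widehat{\cal G}$-equivariance, the key elementary identity is $\Phi_*\circ\Pi_{\s E_1}(\xi)=\Pi_{\s E_2}(\Phi\xi)\circ\phi_*$ for all $\xi\in\s E_1$, read off the defining formula, i.e.\ $\Pi_{\s E_2}(\Phi\xi)=(\Phi_*\circ\Pi_{\s E_1}(\xi))\circ\phi_*^{-1}$. Combining this with the description of the dual actions in \ref{defdualaction}, the $\widehat{\cal G}$-equivariance of $\phi_*$ (i.e.\ $\delta_{B_2}\circ\phi_*=(\phi_*\tens\id_{\widehat{S}})\delta_{B_1}$ and $\phi_*\circ\alpha_{B_1}(n)=\alpha_{B_2}(n)\circ\phi_*$), and the bookkeeping rules for the extensions $\Phi_*\tens\id_{\widehat{S}}$ and $\phi_*\tens\id_{\widehat{S}}$ on relative multiplier modules (cf.\ \ref{rk9}, \ref{def1}), I would compute, for $\xi\in\s E_1$ and $b\in B_1$,
\[
\delta_{\s E_2\rtimes{\cal G}}(\Phi_*(\xi\tens_{\pi_{A_1}}b))=(\Pi_{\s E_2}(\Phi\xi)\tens 1_{\widehat{S}})(\phi_*\tens\id_{\widehat{S}})\delta_{B_1}(b)=(\Phi_*\tens\id_{\widehat{S}})\delta_{\s E_1\rtimes{\cal G}}(\xi\tens_{\pi_{A_1}}b),
\]
and likewise $\Phi_*(\alpha_{\s E_1\rtimes{\cal G}}(n)(\xi\tens_{\pi_{A_1}}b))=\Phi\xi\tens_{\pi_{A_2}}\phi_*(\alpha_{B_1}(n)b)=\alpha_{\s E_2\rtimes{\cal G}}(n)\Phi_*(\xi\tens_{\pi_{A_1}}b)$, whence $\Phi_*$ is $\widehat{\cal G}$-equivariant by density. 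The main obstacle is precisely this last step: making rigorous the manipulation of the extended maps $\Phi_*\tens\id_{\widehat{S}}$ and $\phi_*\tens\id_{\widehat{S}}$ between the Hilbert modules $\Lin(B_i\tens\widehat{S},(\s E_i\rtimes{\cal G})\tens\widehat{S})$ — one must check that "tensoring with $\id_{\widehat{S}}$" the identity $\Phi_*\circ\Pi_{\s E_1}(\xi)=\Pi_{\s E_2}(\Phi\xi)\circ\phi_*$ still intertwines $(\phi_*\tens\id_{\widehat{S}})$ on the left with $(\Phi_*\tens\id_{\widehat{S}})$, staying inside the domains where these extensions are defined and multiplicative. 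Everything else is a routine unwinding of the definitions of the interior tensor product, the crossed product \ref{defCrossedProduct}, the dual action \ref{defDualAct}, and the functor \ref{FunctorCrossedProd}.
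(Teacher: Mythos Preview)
Your proposal is correct and follows essentially the same approach as the paper: construct $\Phi_*$ on the algebraic tensor product using $\phi_*\circ\pi_{A_1}=\pi_{A_2}\circ\phi$, verify the inner-product identity $\langle\Phi\xi\tens\phi_*(b),\Phi\eta\tens\phi_*(c)\rangle=\phi_*(\langle\xi\tens b,\eta\tens c\rangle)$ to extend to a unitary equivalence over $\phi_*$, and then deduce $\widehat{\cal G}$-equivariance from the key identity $\Pi_{\s E_2}(\Phi\xi)\circ\phi_*=\Phi_*\circ\Pi_{\s E_1}(\xi)$ together with the equivariance of $\phi_*$. Your ``main obstacle'' is not one: the paper simply invokes the extension $\Phi_*\tens\id_{\widehat S}$ as defined in \ref{not3} (via $(\Phi_*\tens\id_{\widehat S})(T)=(\Phi_*\tens\id_{\widehat S})\circ T\circ(\phi_*\tens\id_{\widehat S})^{-1}$), under which the intertwining of $\Pi_{\s E_i}(\cdot)\tens 1_{\widehat S}$ with $\phi_*\tens\id_{\widehat S}$ is immediate from the pointwise identity, so no further care is needed.
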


\begin{proof}
We have $\Phi(\xi a)=\Phi(\xi)\phi(a)$ and $\phi_*(\pi_{A_1}(a))=\pi_{A_2}(\phi(a))$ for all $\xi\in\s E_1$ and $a\in A_1$. Hence, $\Phi(\xi a)\tens_{\pi_{A_2}}\phi_*(b)=\Phi\xi\tens_{\pi_{A_2}}\phi_*(\pi_{A_1}(a)b)$ for all $a\in A_1$ and $\xi\in\s E_1$. Therefore, we have a linear map
\[
\Phi_*:\s E_1\odot_{\pi_{A_1}}(A_1\rtimes{\cal G})\rightarrow\s E_2\rtimes{\cal G}\,;\, \xi\tens_{\pi_{A_1}}b\mapsto \Phi\xi\tens_{\pi_{A_2}}\phi_*(b).
\]
For all $\xi,\eta\in\s E_1$, we have $\pi_{A_2}(\langle \Phi\xi,\, \Phi\eta\rangle)=\pi_{A_2}(\phi(\langle\xi,\,\eta\rangle))=\phi_*(\pi_{A_1}(\langle\xi,\,\eta\rangle))$. Hence, 
\[
\langle \Phi\xi\tens_{\pi_{A_1}}\phi_*(b),\, \Phi\eta\tens_{\pi_{A_2}}\phi_*(c)\rangle=\phi_*(\langle\xi\tens_{\pi_{A_1}}b,\,\eta\tens_{\pi_{A_1}}c\rangle) 
\]
for all $\xi,\eta\in\s E_1$ and $b,c\in A_1\rtimes{\cal G}$. Therefore, $\Phi_*$ extends to a unitary equivalence $\Phi_*:\s E_1\rtimes{\cal G}\rightarrow\s E_2\rtimes{\cal G}$ over $\phi_*$. Since $\phi_*$ is $\widehat{\cal G}$-equivariant and $\Pi_{\s E_2}(\Phi\xi)\circ\phi_*=\Phi_*\circ \Pi_{\s E_1}(\xi)$ for all $\xi\in\s E_1$, we have $\delta_{\s E_1\rtimes{\cal G}}(\Phi_*(\xi\tens_{\pi_{A_1}}b))=(\Phi_*\tens\id_{\widehat{S}})\delta_{\s E_1\rtimes{\cal G}}(\xi\tens_{\pi_{A_1}}b)$ for all $\xi\in\s E_1$ and $b\in A_1\rtimes{\cal G}$. Hence, $\delta_{\s E_1\rtimes{\cal G}}\circ \Phi_*=(\Phi_*\tens\id_{\widehat{S}})\delta_{\s E_1\rtimes{\cal G}}$. Hence, $\Phi_*$ is equivariant. 
\end{proof}

Let $(J,\beta_J,\delta_J,e_1,e_2)$ be a linking $\cal G$-$\Cstar$-algebra (cf.\ \ref{defLinkAlg}). Let us denote $A:=e_2 J e_2$ and $\s E:=e_1 J e_2$ with their structure of $\cal G$-C*-algebra and $\cal G$-equivariant Hilbert $A$-module (cf.\ \ref{rkLinkAlg}). We consider the crossed products $A\rtimes\cal G$ {\rm(}resp. $ K:=J\rtimes\cal G${\rm)} endowed with the dual action $(\alpha_{A\rtimes\cal G},\delta_{A\rtimes\cal G})$ {\rm(}resp. $(\alpha_ K,\delta_ K)${\rm)} and the canonical morphisms $\pi_A:A\rightarrow\M(A\rtimes{\cal G})$ and $\widehat{\theta}_A:\widehat{S}\rightarrow\M(A\rtimes{\cal G})$ {\rm(}resp. $\pi_J:J\rightarrow\M(K)$ and $\widehat{\theta}_J:\widehat{S}\rightarrow\M(K)${\rm)}.

\medbreak

We know that the quintuple $( K,\alpha_ K,\delta_ K,\pi_J(e_1),\pi_J(e_2))$ is a linking $\widehat{\cal G}$-$\Cstar$-algebra (cf.\ \ref{prop40}). Let $B:=\pi_J(e_2) K \pi_J(e_2)$ and $\s F:=\pi_J(e_1) K \pi_J(e_2)$ respectively endowed with their structure of $\widehat{\cal G}$-C*-algebra and $\widehat{\cal G}$-equivariant Hilbert $B$-module (cf.\ \ref{rkLinkAlg}). We show that we have a $\widehat{\cal G}$-equivariant unitary equivalence between $\s E\rtimes\cal G$ and $\s F$. More precisely, we have the proposition below.

\begin{prop}\label{prop4bis}
With the above notations and hypotheses, there exists a unique $\widehat{\cal G}$-equivariant *-isomorphism $\chi:A\rtimes{\cal G}\rightarrow B$ such that 
$\chi(\pi_A(a)\widehat{\theta}_A(x))=\pi_J(a)\widehat{\theta}_J(x)$,
for all $a\in A$ and $x\in\widehat{S}$. Moreover, the map
$
X:\s E \rtimes {\cal G}\rightarrow \s F\,;\, \xi\tens_{\pi_A}u\mapsto \pi_J(\xi)\chi(u)
$
is a $\chi$-compatible unitary operator.
\end{prop}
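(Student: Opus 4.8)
The plan is to build the $*$-isomorphism $\chi$ first, then deduce the unitary equivalence $X$ from it. For $\chi$, recall from \ref{rkLinkAlg} that $A=e_2Je_2$ is a corner of $J$ cut by the $\delta_J$-invariant projection $e_2$, and that the inclusion $A\subset J$ is a (degenerate) $\cal G$-equivariant $*$-homomorphism. Applying the functoriality statement \ref{FunctorCrossedProd} to this inclusion $\iota:A\hookrightarrow\M(J)$ yields a $\widehat{\cal G}$-equivariant $*$-homomorphism $\iota_*:A\rtimes{\cal G}\rightarrow\M(J\rtimes{\cal G})=\M(K)$ characterized by $\iota_*(\pi_A(a)\widehat{\theta}_A(x))=\pi_J(a)\widehat{\theta}_J(x)$ for $a\in A$ and $x\in\widehat S$. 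The image of $\iota_*$ is the closed span of the $\pi_J(a)\widehat{\theta}_J(x)$ with $a\in A$, and using $\pi_J(e_2)\in\M(K)$ being $\delta_K$-invariant (cf.\ \ref{lem24}), together with $\widehat\theta_J(x)\pi_J(e_2)=\pi_J(e_2)\widehat\theta_J(x)$ (which follows because $e_2$ is $\delta_J$-invariant and hence $[\pi_J(e_2),\widehat\theta_J(x)]=0$ by \ref{lem31}), one identifies this image with $\pi_J(e_2)K\pi_J(e_2)=B$: indeed $\pi_J(a)\widehat\theta_J(x)=\pi_J(e_2)\pi_J(a)\widehat\theta_J(x)\pi_J(e_2)$ for $a\in A=e_2Je_2$, and conversely any element of $B$ is a limit of finite sums $\sum_i\pi_J(e_2)\widehat\theta_J(x_i)\pi_J(b_i)\widehat\theta_J(x_i')\pi_J(e_2)$ with $b_i\in J$, which one rewrites, using $J=[Je_2J]$, in terms of $\pi_J(c_i)$ with $c_i\in e_2Je_2=A$. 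One then checks $\iota_*$ is injective (its restriction to $A\rtimes\cal G$ is, since $\iota_*\bigl(\pi_A(a)\widehat\theta_A(x)\bigr)\mapsto\pi_J(a)\widehat\theta_J(x)$ is just the corner realization of the crossed product). Thus $\chi:=\iota_*$ (corestricted to $B$) is the desired $\widehat{\cal G}$-equivariant $*$-isomorphism, unique by density of $[\pi_A(A)\widehat\theta_A(\widehat S)]$ in $A\rtimes{\cal G}$.

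\textbf{Construction of $X$.} Recall $\s E=e_1Je_2$ as a $\cal G$-equivariant Hilbert $A$-module and $\s F=\pi_J(e_1)K\pi_J(e_2)$ as a $\widehat{\cal G}$-equivariant Hilbert $B$-module. The crossed product $\s E\rtimes{\cal G}=\s E\tens_{\pi_A}(A\rtimes{\cal G})$. Define $X$ on elementary tensors by $X(\xi\tens_{\pi_A}u):=\pi_J(\xi)\chi(u)$; this makes sense because $\pi_J(\xi)\in\M(K)$ for $\xi\in\s E\subset J$ (more precisely $\pi_J(\xi)=\pi_J(e_1)\pi_J(\xi)\pi_J(e_2)$ lies in $\s F$ after right multiplication by elements of $B$, so $\pi_J(\xi)\chi(u)\in\s F$ since $\chi(u)\in B$). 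To see $X$ is well defined and isometric one computes, for $\xi,\eta\in\s E$ and $u,v\in A\rtimes{\cal G}$,
\[
\langle\pi_J(\xi)\chi(u),\pi_J(\eta)\chi(v)\rangle_B=\chi(u)^*\pi_J(\xi)^*\pi_J(\eta)\chi(v)=\chi(u)^*\chi(\pi_A(\langle\xi,\eta\rangle_A))\chi(v)=\chi(\langle\xi\tens_{\pi_A}u,\eta\tens_{\pi_A}v\rangle_{A\rtimes{\cal G}}),
\]
using $\pi_J(\xi)^*\pi_J(\eta)=\pi_J(\langle\xi,\eta\rangle_A)=\chi(\pi_A(\langle\xi,\eta\rangle_A))$ (the first equality because $\pi_J$ is a $*$-homomorphism of the linking algebra $J$ respecting the identifications, the second by definition of $\chi$). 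Hence $X$ extends to an isometric $\chi$-compatible map $\s E\rtimes{\cal G}\rightarrow\s F$. Surjectivity follows from $[\Pi(\s E)(A\rtimes{\cal G})]=\s E\rtimes{\cal G}$ (cf.\ \ref{prop41}) together with $[\pi_J(e_1)K\pi_J(e_2)]=\s F$ and the density $K=[\pi_J(J)\widehat\theta_J(\widehat S)\pi_J(J)]$: writing a generator $\pi_J(a)\widehat\theta_J(x)$ of $B$ and a generator $\pi_J(\xi)$ of $\s F$ appropriately, every element of $\s F$ is hit, using $J=[Je_2J]$ once more to move all of $J$ into the corner $e_1Je_2$ on the left.

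\textbf{Equivariance and uniqueness.} Finally one verifies $\delta_{\s F}(X\zeta)=(X\tens\id_{\widehat S})\delta_{\s E\rtimes{\cal G}}(\zeta)$ for $\zeta\in\s E\rtimes{\cal G}$. It suffices to check this on $\zeta=\xi\tens_{\pi_A}u$. By \ref{defdualaction}, $\delta_{\s E\rtimes{\cal G}}(\xi\tens_{\pi_A}u)=(\Pi_{\s E}(\xi)\tens 1_{\widehat S})\delta_{A\rtimes{\cal G}}(u)$; applying $X\tens\id_{\widehat S}$ and using $X\circ\Pi_{\s E}(\xi)=\pi_J(\xi)\circ\chi$ (as maps $A\rtimes{\cal G}\to\s F$, which is immediate from the formula for $X$) together with the $\widehat{\cal G}$-equivariance of $\chi$ gives $(\pi_J(\xi)\tens 1_{\widehat S})\delta_B(\chi(u))$. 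On the other hand $\delta_{\s F}(\pi_J(\xi)\chi(u))$: viewing $\s F$ inside the linking $\widehat{\cal G}$-C*-algebra $K$ via \ref{rkLinkAlg}, the action on $\s F$ is the restriction of $\delta_K$, so $\delta_{\s F}(\pi_J(\xi)\chi(u))=\delta_K(\pi_J(\xi))\delta_K(\chi(u))$; since $\pi_J(\xi)=\pi_J(e_1)\pi_J(\xi)$ with $\pi_J(e_1)$ being $\delta_K$-invariant (\ref{prop40}, \ref{lem24}) one gets $\delta_K(\pi_J(\xi))=(\pi_J(\xi)\tens 1_{\widehat S})q_{\alpha_K\beta}$ by the argument of \ref{lem24} applied to the corner, hence $\delta_{\s F}(\pi_J(\xi)\chi(u))=(\pi_J(\xi)\tens 1_{\widehat S})\delta_K(\chi(u))=(\pi_J(\xi)\tens 1_{\widehat S})\delta_B(\chi(u))$, matching the previous expression. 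Uniqueness of $X$ is clear since $\s E\rtimes{\cal G}=[\Pi_{\s E}(\s E)(A\rtimes{\cal G})]$ and the formula pins $X$ down on all such generators. \emph{The main obstacle} I anticipate is the careful bookkeeping of the corner identifications --- reconciling "$\s E=e_1Je_2$ inside $J$, then cross with $\cal G$" against "cross $J$ with $\cal G$, then take the corner $\pi_J(e_1)K\pi_J(e_2)$" --- which amounts to repeatedly invoking $[Je_jJ]=J$, the $\delta_J$-invariance of the $e_j$, and the commutation relations of \ref{lem31}; the underlying algebra is routine but the identifications must be tracked with precision.
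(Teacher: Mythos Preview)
Your proposal is correct and follows essentially the same route as the paper. The only cosmetic difference is that you invoke the functoriality result \ref{FunctorCrossedProd} as a black box to produce $\chi=\iota_*$, whereas the paper reconstructs it by hand: it extends the inclusion $A\tens\K\subset J\tens\K$ to a strictly continuous $*$-homomorphism $\tau_A:\Lin(A\tens\s H)\rightarrow\Lin(J\tens\s H)$ with $\tau_A(1)=e_2\tens 1$, restricts to $\Lin({\cal E}_{A,L})\rightarrow\Lin({\cal E}_{J,L})$, and sets $\chi:=\restr{\tau_A}{A\rtimes{\cal G}}$. Since the proof of \ref{FunctorCrossedProd} is exactly this $f\tens\id_{\K}$ construction, the two arguments coincide; your version is just a cleaner packaging. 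The construction of $X$, the inner-product computation $\pi_J(\xi)^*\pi_J(\eta)=\chi(\pi_A(\langle\xi,\eta\rangle))$, and the equivariance check via $\delta_K$-invariance of $\pi_J(\xi)$ (your appeal to \ref{lem24}) all match the paper's proof line by line. Your justification of injectivity (``corner realization'') is a bit telegraphic---the paper's version is that $\tau_A$ is faithful because it extends an inclusion and $\tau_A(1)=e_2\tens 1$ forces $\tau_A(T)$ to act nontrivially only on the corner $(e_2\tens 1)(J\tens\K)(e_2\tens 1)=A\tens\K$---but the point is the same.
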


\begin{proof}
Since $[e_2 J e_2]=J$, the inclusion map $A\tens\s\K\subset J\tens\K$ extends uniquely to a *-strong continuous *-homomorphism $\tau_A:\Lin(A\tens\s H)\rightarrow\Lin(J\tens\s H)$
such that $\tau_A(1_{A\tens\K})=e_2\tens 1_{\K}$ up to the identifications $\M(A\tens\K)=\Lin(A\tens\s H)$ and $\M(J\tens\K)=\Lin(J\tens\s H)$. Now we recall that we have the identifications
\begin{align*}
\Lin({\cal E}_{A,L})&=\{T\in\Lin(A\tens\s H)\,;\, Tq_{\beta_A\alpha}=T=q_{\beta_A\alpha}T\} \;\; \text{and} \\ 
\Lin({\cal E}_{J,L})&=\{T\in\Lin(J\tens\s H)\,;\, Tq_{\beta_J\alpha}=T=q_{\beta_J\alpha}T\}.
\end{align*}
We also recall that for $n\in N$, we have $\beta_A(n^{\rm o}):=\restr{\beta_J(n^{\rm o})}{A}$ (with the identification $\M(A)=\Lin(A)$) since $[\beta_J(n^{\rm o}),\,e_2]=0$. As a result, $\tau_A$ induces by restriction to $\Lin({\cal E}_{A,L})$ a *-strong *-homomorphism still denoted by $\tau_A:\Lin({\cal E}_{A,L})\rightarrow\Lin({\cal E}_{J,L})$.
We have the following formulas:
\[
\tau_A(\widehat{\theta}_A(x))=\widehat{\theta}_J(x), \quad x\in\widehat{S} ; \quad \tau_A(\pi_A(a))=\pi_J(a), \quad a\in A.
\]
Hence, $\chi:=\restr{\tau_A}{A\rtimes{\cal G}}:A\rtimes{\cal G}\rightarrow  K$ is the unique *-homomorphism such that 
\[
\chi(\pi_A(a)\widehat{\theta}_A(x))=\pi_J(a)\widehat{\theta}_J(x),\quad \text{for all }a\in A \text{ and }x\in\widehat{S}.
\]
Note that since $\tau_A$ is faithful so is $\chi$. It follows from $K=[\pi_J(A)\widehat{\theta}_J(\widehat{S})]$ and the fact that $[\pi_J(e_2),\,\widehat{\theta}_J(x)]=0$ for all $x\in\widehat{S}$ that the image of $\chi$ is $B:=\pi_J(e_2)K\pi_J(e_2)$. Let us prove that $\chi$ is $\widehat{\cal G}$-equivariant. 
We recall that $\delta_{A\rtimes\cal G}(\pi_A(a)\widehat{\theta}_A(x))=(\pi_A(a)\tens 1_{\widehat{S}})(\widehat{\theta}_A\tens\id_{\widehat{S}})\widehat{\delta}(x)$ for all $a\in A$ and $x\in\widehat{S}$ (cf.\ \ref{defDualAct} 1). It then follows from $\chi\circ\pi_A=\pi_J$ and $\chi\circ\widehat{\theta}_A=\widehat{\theta}_J$ that for all $a\in A$ and $x\in\widehat{S}$ we have 
\[
(\chi\tens\id_{\widehat{S}})\delta_{A\rtimes\cal G}(\pi_A(a)\widehat{\theta}_A(x))=\delta_K(\pi_J(a)\widehat{\theta}_J(x))=\delta_K(\chi(\pi_A(a)\widehat{\theta}_A(x))).
\]
Since $\pi_J(xa)=\pi_J(x)\chi(\pi_A(a))$ for $x\in J$ and $a\in A$, we have $\pi_J(xa)\chi(b)=\pi_J(x)\chi(\pi_A(a)b)$ for $x\in J$, $a\in A$ and $b\in A\rtimes{\cal G}$. Therefore, we have a well-defined linear map
\[
X:\s E\odot_{\pi_A}(A\rtimes{\cal G})\rightarrow K \; ; \; \xi\tens_{\pi_A} u \mapsto \pi_J(\xi)\chi(u).
\]
For $\xi,\eta\in\s E$, we have $\pi_J(\xi)^*\pi_J(\eta)=\pi_J(\langle\xi,\,\eta\rangle)=\chi(\pi_A(\langle\xi,\,\eta\rangle))$. Therefore, for all $\xi,\eta\in\s E$ and $u,v\in A\rtimes{\cal G}$ we have $(\pi_J(\xi)\chi(u))^*\pi_J(\eta)\chi(v)=\chi(\langle\xi\tens_{\pi_A} u,\, \eta\tens_{\pi_A} v\rangle)$. As a result, $X$ extends uniquely to a bounded linear map
$
X:\s E\rtimes{\cal G}\rightarrow K
$
such that $X(\langle \zeta_1,\, \zeta_2\rangle)=\chi(\langle\zeta_1,\,\zeta_2\rangle)$ for all $\zeta_1,\zeta_2\in\s E\rtimes{\cal G}$. It is clear that $X(\zeta u)=X(\zeta)\chi(u)$ for all $\zeta\in\s E\rtimes{\cal G}$ and $u\in A\rtimes{\cal G}$.

\medbreak

For all $\xi\in\s E$, we have $(X\tens\id_{\widehat{S}})\circ (T_{\xi}\tens\id_{\widehat{S}})=(\pi_J(\xi)\tens\id_{\widehat{S}})\circ (\chi\tens\id_{\widehat{S}})$. Hence
\[(X\tens\id_{\widehat{S}})\delta_{\s E\rtimes{\cal G}}(\xi\tens_{\pi_A}u)=(\pi_J(\xi)\tens 1_{\widehat{S}})(\chi\tens\id_{\widehat{S}})\delta_{A\rtimes{\cal G}}(u)=\delta_K(\pi_J(\xi)\chi(u))
\]
for all $\xi\in\s E$ and $u\in A\rtimes{\cal G}$, which proves that $X$ is $\widehat{\cal G}$-equivariant. Finally, $X$ induces a $\widehat{\cal G}$-equivariant unitary equivalence of Hilbert modules from $\s E\rtimes{\cal G}$ onto $\s F:=\pi_J(e_1)K\pi_J(e_2)$ over the isomorphism of $\widehat{\cal G}$-$\Cstar$-algebras $\chi:A\rtimes{\cal G}\rightarrow \pi_J(e_2)K\pi_J(e_2)$.
\end{proof}

The continuous action $(\beta_J,\delta_J)$ (resp.\ $(\alpha_K,\delta_K)$) also endows the $\Cstar$-algebra $e_1 J e_1$ (resp.\ $\pi_J(e_1)K\pi_J(e_1)$) identified with $\K(\s E)$ (resp.\ $\K(\s F)$) with a continuous action $(\beta_{\K(\s E)},\delta_{\K(\s E)})$ (resp.\ $(\alpha_{\K(\s F)},\delta_{\K(\s F)})$ of $\cal G$ (resp.\ $\widehat{\cal G}$).

\begin{prop}\label{prop5}
The map
$
\K(\s E)\rtimes{\cal G}\rightarrow\K(\s F) \, ; \, \pi_{\K(\s E)}(k)\widehat{\theta}_{\K(\s E)}(x)\mapsto \pi_J(k)\widehat{\theta}_J(x)
$
is a $\widehat{\cal G}$-equivariant *-isomorphism. 
\end{prop}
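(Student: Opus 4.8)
The plan is to reduce the statement to Proposition~\ref{prop4bis} by exchanging the roles of the two projections $e_1$ and $e_2$ of the underlying linking $\cal G$-C*-algebra $(J,\beta_J,\delta_J,e_1,e_2)$.

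First I would observe that Definition~\ref{defLinkAlg} is completely symmetric in $e_1$ and $e_2$, so that $(J,\beta_J,\delta_J,e_2,e_1)$ is again a linking $\cal G$-C*-algebra. Following \ref{rkLinkAlg}, the $\cal G$-C*-algebra underlying this relabelled quintuple is the corner $e_1Je_1$ equipped with the restriction of $(\beta_J,\delta_J)$, and its underlying $\cal G$-equivariant Hilbert module is $e_2Je_1$; moreover, by the identification of $\cal G$-C*-algebras recorded in \ref{rkLinkAlg}, $e_1Je_1$ with this restricted action is exactly $\K(\s E)$ with the action $(\beta_{\K(\s E)},\delta_{\K(\s E)})$ of \ref{prop1}. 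Since the crossed product of \ref{defCrossedProduct} is built from the action alone, the crossed product of the relabelled linking algebra by $\cal G$ is again $K=J\rtimes{\cal G}$, with its dual action $(\alpha_K,\delta_K)$; applying \ref{prop40} to $(J,\beta_J,\delta_J,e_2,e_1)$ then shows that $(K,\alpha_K,\delta_K,\pi_J(e_2),\pi_J(e_1))$ is a linking $\widehat{\cal G}$-C*-algebra whose underlying $\widehat{\cal G}$-C*-algebra is the corner $\pi_J(e_1)K\pi_J(e_1)$, identified by \ref{rkLinkAlg} with $\K(\s F)$ carrying the action $(\alpha_{\K(\s F)},\delta_{\K(\s F)})$.

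With these identifications in hand, I would simply invoke Proposition~\ref{prop4bis} for the linking $\cal G$-C*-algebra $(J,\beta_J,\delta_J,e_2,e_1)$: it furnishes a unique $\widehat{\cal G}$-equivariant *-isomorphism from $\K(\s E)\rtimes{\cal G}$ onto $\K(\s F)$ sending $\pi_{\K(\s E)}(k)\widehat\theta_{\K(\s E)}(x)$ to $\pi_J(k)\widehat\theta_J(x)$ for all $k\in\K(\s E)$ and $x\in\widehat S$, which is precisely the map in the statement. The only point that really needs checking — and this is the main, though mild, obstacle — is the compatibility of structures just invoked: that the $\cal G$-C*-algebra structure on $e_1Je_1$ and the $\widehat{\cal G}$-C*-algebra structure on $\pi_J(e_1)K\pi_J(e_1)$ coming from the relabelled linking algebras agree with the structures $(\beta_{\K(\s E)},\delta_{\K(\s E)})$ and $(\alpha_{\K(\s F)},\delta_{\K(\s F)})$ appearing in the statement. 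This is exactly what \ref{rkLinkAlg} and \ref{prop1} record, and it becomes immediate once the matrix/corner identifications are unwound.

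If one prefers to avoid the relabelling trick, the same conclusion can be obtained by copying the proof of \ref{prop4bis} line by line: using that $[Je_1J]=J$ and that $e_1$ is $\delta_J$-invariant (hence $[\beta_J(n^{\rm o}),e_1]=0$ and $[q_{\beta_J\alpha},e_1\tens 1_S]=0$ by \ref{rk15}~3), the inclusion $\K(\s E)\tens\K\subset J\tens\K$ extends to a faithful, *-strongly continuous *-homomorphism $\tau:\Lin({\cal E}_{\K(\s E),L})\rightarrow\Lin({\cal E}_{J,L})$ with $\tau(\pi_{\K(\s E)}(k))=\pi_J(k)$ and $\tau(\widehat\theta_{\K(\s E)}(x))=\widehat\theta_J(x)$; one then checks that $\restr{\tau}{\K(\s E)\rtimes{\cal G}}$ is injective, has image $\pi_J(e_1)K\pi_J(e_1)=\K(\s F)$ (using $K=[\pi_J(J)\widehat\theta_J(\widehat S)]$ together with $[\pi_J(e_1),\widehat\theta_J(\widehat S)]=0$), and is $\widehat{\cal G}$-equivariant by the very computation of \ref{prop4bis} based on \ref{defDualAct} 1.
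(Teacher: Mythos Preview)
Your proposal is correct and follows exactly the paper's approach: the paper's proof simply states that it is the same as that of Proposition~\ref{prop4bis} after exchanging the projections $e_1$ and $e_2$. Your careful verification of the compatibility of the $\cal G$- and $\widehat{\cal G}$-structures on the corners is more detailed than what the paper spells out, but the underlying idea is identical.
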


\begin{proof}
The proof is the same as that of the above proposition (by exchanging the projections $e_1$ and $e_2$).
\end{proof}

\begin{cor}\label{cor3}
Let $A$ be a $\cal G$-$\Cstar$-algebra and $\s E$ a $\cal G$-equivariant Hilbert $A$-module. We have a canonical $\widehat{\cal G}$-equivariant *-isomorphism
$
\K(\s E)\rtimes{\cal G}\simeq\K(\s E\rtimes{\cal G}).
$
Moreover, if $F\in\Lin(\s E)$ then the operator $F\tens_{\pi_A}1\in\Lin(\s E\rtimes{\cal G})$ is identified with $\pi_{\K(\s E)}(F)$ through the identification $\Lin(\s E\rtimes{\cal G})\simeq\M(\K(\s E)\rtimes{\cal G})$.
\end{cor}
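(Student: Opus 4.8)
The plan is to derive the statement from Propositions \ref{prop4bis} and \ref{prop5} by applying them to the linking $\cal G$-C*-algebra attached to $\s E$. First I would form $J:=\K(\s E\oplus A)$ endowed with the continuous action $(\beta_J,\delta_J)$ of \ref{prop1}--\ref{propcont} and the $\delta_J$-invariant projections $e_1:=\iota_{\s E}(1_{\s E})$ and $e_2:=\iota_A(1_A)$, so that $(J,\beta_J,\delta_J,e_1,e_2)$ is a linking $\cal G$-C*-algebra (cf.\ \S\ref{rkLinkAlg}) with the canonical identifications $A=e_2Je_2$, $\s E=e_1Je_2$ and $\K(\s E)=e_1Je_1$ as $\cal G$-objects. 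Set $K:=J\rtimes{\cal G}$ with its dual action $(\alpha_K,\delta_K)$, which makes $(K,\alpha_K,\delta_K,\pi_J(e_1),\pi_J(e_2))$ a linking $\widehat{\cal G}$-C*-algebra (cf.\ \ref{prop40}), and put $\s F:=\pi_J(e_1)K\pi_J(e_2)$, so that $\K(\s F)=\pi_J(e_1)K\pi_J(e_1)$.

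By \ref{prop4bis} there is a $\widehat{\cal G}$-equivariant *-isomorphism $\chi:A\rtimes{\cal G}\to\pi_J(e_2)K\pi_J(e_2)$ together with a $\chi$-compatible $\widehat{\cal G}$-equivariant unitary equivalence of Hilbert modules $X:\s E\rtimes{\cal G}\to\s F$, given by $X(\xi\tens_{\pi_A}u)=\pi_J(\xi)\chi(u)$. Any such unitary equivalence induces a *-isomorphism $\mathrm{Ad}(X):\K(\s E\rtimes{\cal G})\to\K(\s F)$, $T\mapsto XTX^*$, which is $\widehat{\cal G}$-equivariant because $X$ is (one checks this on rank-one operators using $\delta_{\K(-)}(\theta_{\zeta,\zeta'})=\delta(\zeta)\circ\delta(\zeta')^*$, cf.\ \ref{rk16} 1, and the equivariance of $X$; see also \S 6.1 \cite{C2}). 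On the other hand, \ref{prop5}, which is \ref{prop4bis} with the roles of $e_1$ and $e_2$ exchanged, provides a $\widehat{\cal G}$-equivariant *-isomorphism $\Theta:\K(\s E)\rtimes{\cal G}\to\K(\s F)$ determined by $\Theta(\pi_{\K(\s E)}(k)\widehat{\theta}_{\K(\s E)}(x))=\pi_J(k)\widehat{\theta}_J(x)$ for $k\in\K(\s E)$ and $x\in\widehat{S}$. Composing, $\mathrm{Ad}(X)^{-1}\circ\Theta:\K(\s E)\rtimes{\cal G}\to\K(\s E\rtimes{\cal G})$ is the asserted canonical $\widehat{\cal G}$-equivariant *-isomorphism.

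For the second assertion I would extend $\chi$, $\Theta$ and $\pi_J$ strictly continuously to the multiplier level and compute on both sides. Extending $\Theta$, strict continuity and the defining formula give $\Theta(\pi_{\K(\s E)}(F))=\pi_J(\iota_{\K(\s E)}(F))$ for every $F\in\Lin(\s E)$, and since $\iota_{\K(\s E)}(F)=e_1\,\iota_{\K(\s E)}(F)=\iota_{\K(\s E)}(F)\,e_1$ this lies in $\M(\pi_J(e_1)K\pi_J(e_1))=\Lin(\s F)$. On the other hand, using $\iota_{\s E}(F\xi)=\iota_{\K(\s E)}(F)\,\iota_{\s E}(\xi)$ in $J$, one gets for $\xi\in\s E$ and $u\in A\rtimes{\cal G}$
\[
X\big((F\tens_{\pi_A}1)(\xi\tens_{\pi_A}u)\big)=\pi_J(F\xi)\chi(u)=\pi_J(\iota_{\K(\s E)}(F))\,\pi_J(\xi)\chi(u)=\pi_J(\iota_{\K(\s E)}(F))\,X(\xi\tens_{\pi_A}u),
\]
hence $\mathrm{Ad}(X)(F\tens_{\pi_A}1)=\pi_J(\iota_{\K(\s E)}(F))$, where $F\tens_{\pi_A}1$ denotes the usual amplification in $\Lin(\s E\rtimes{\cal G})=\Lin(\s E\tens_{\pi_A}(A\rtimes{\cal G}))$ (it is invariant by \ref{lem25}). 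Comparing the two computations, $\mathrm{Ad}(X)^{-1}\circ\Theta$ sends $\pi_{\K(\s E)}(F)$ to $F\tens_{\pi_A}1$, which is exactly the claimed identification once $\Lin(\s E\rtimes{\cal G})$ is identified with $\M(\K(\s E)\rtimes{\cal G})$ via the isomorphism of the first part.

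The only delicate point — and the main, though mild, obstacle — is the mutual compatibility of all the identifications used: that $e_1,e_2$ are $\delta_J$-invariant so that the linking-algebra machinery of \S\ref{rkLinkAlg} applies, that $\K(\s E)=e_1Je_1$ and $\s E=e_1Je_2$ as $\cal G$-objects, and that the canonical morphisms $\pi_J$, $\chi$, $\Theta$ and the conjugation by $X$ admit the strictly continuous extensions to multipliers invoked above. All of these are routine and already recorded in \S\ref{subsectionBiduality}, \S\ref{rkLinkAlg} and in the proofs of \ref{prop4bis} and \ref{prop5}.
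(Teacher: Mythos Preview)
Your proposal is correct and follows exactly the approach indicated by the paper: apply \ref{prop4bis} and \ref{prop5} to the linking $\cal G$-C*-algebra $J=\K(\s E\oplus A)$ (cf.\ \ref{rkLinkAlg}). The paper's proof is the one-line version of what you wrote; your explicit computation for the second assertion via $\mathrm{Ad}(X)(F\tens_{\pi_A}1)=\pi_J(\iota_{\K(\s E)}(F))=\Theta(\pi_{\K(\s E)}(F))$ is precisely the argument the paper tacitly uses (and spells out in a slightly different form in the proof of \ref{cor5}).
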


\begin{proof}
It suffices to apply \ref{prop4bis} and \ref{prop5} to $J:=\K(\s E\oplus A)$ equipped with its structure of linking $\cal G$-$\Cstar$-algebra (cf.\ \ref{rkLinkAlg}).
\end{proof}

\begin{cor}\label{cor5}
Let $A$ be a $\cal G$-$\Cstar$-algebra and $\s E$ a $\cal G$-equivariant Hilbert $A$-module. Let $\widehat{\theta}:\widehat{S}\rightarrow\M(\K(\s E)\rtimes{\cal G})$ and $\Pi:\s E\rightarrow\Lin(A\rtimes{\cal G},\s E\rtimes{\cal G})$ be the canonical morphisms. With the identification $\M(\K(\s E)\rtimes{\cal G})=\Lin(\s E\rtimes{\cal G})$ (cf.\ \ref{cor3}), we have $\widehat{\theta}(x)\Pi(\xi)\in\s E\rtimes{\cal G}$ for all $x\in\widehat{S}$ and $\xi\in\s E$. Moreover, we have $\s E\rtimes{\cal G}=[\widehat{\theta}(x)\Pi(\xi)\,;\, \xi\in\s E,\, x\in\widehat{S}]$.
\end{cor}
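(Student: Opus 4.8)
The plan is to transport everything to the linking C*-algebra, in the spirit of the proof of \ref{cor3}. Write $J:=\K(\s E\oplus A)$, endowed with its canonical structure of linking $\cal G$-C*-algebra $(J,\beta_J,\delta_J,e_1,e_2)$, where $e_1:=\iota_{\s E}(1_{\s E})$ and $e_2:=\iota_A(1_A)$; this is legitimate precisely because $\s E$ is $\cal G$-equivariant (cf.\ \ref{rkLinkAlg}). Put $K:=J\rtimes{\cal G}$. By \ref{prop40} it is a linking $\widehat{\cal G}$-C*-algebra with projections $\pi_J(e_1),\pi_J(e_2)$, and by \ref{prop4bis} and \ref{prop5} we have a $\widehat{\cal G}$-equivariant *-isomorphism $\chi:A\rtimes{\cal G}\rightarrow B:=\pi_J(e_2)K\pi_J(e_2)$ with $\chi(\pi_A(a)\widehat{\theta}_A(x))=\pi_J(a)\widehat{\theta}_J(x)$, a $\chi$-compatible unitary $X:\s E\rtimes{\cal G}\rightarrow\s F:=\pi_J(e_1)K\pi_J(e_2)$ with $X(\xi\tens_{\pi_A}u)=\pi_J(\xi)\chi(u)$, and a *-isomorphism $\K(\s E)\rtimes{\cal G}\rightarrow\K(\s F)=\pi_J(e_1)K\pi_J(e_1)$ sending $\pi_{\K(\s E)}(k)\widehat{\theta}_{\K(\s E)}(x)\mapsto\pi_J(k)\widehat{\theta}_J(x)$ (here $\widehat\theta=\widehat\theta_{\K(\s E)}$ is the canonical morphism appearing in the statement). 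Since $e_1,e_2$ are $\delta_J$-invariant, \ref{lem31} applied to the action on $J$ gives $[\pi_J(e_j),\widehat{\theta}_J(x)]=0$ for all $x\in\widehat{S}$, $j=1,2$.

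Next I would identify the two protagonists on the linking side. From $X(\xi\tens_{\pi_A}u)=\pi_J(\xi)\chi(u)$ one reads off that $\Pi(\xi)\in\Lin(A\rtimes{\cal G},\s E\rtimes{\cal G})$ corresponds, through $\chi$ and $X$, to left multiplication by $\pi_J(\xi)$ as a map $B\rightarrow\s F$. Unravelling \ref{prop5} (extended to multipliers) together with the identification $\M(\K(\s E)\rtimes{\cal G})=\Lin(\s E\rtimes{\cal G})$ of \ref{cor3}, the operator $\widehat{\theta}(x)$ corresponds to the multiplier $\widehat{\theta}_J(x)\pi_J(e_1)$ of $\K(\s F)=\pi_J(e_1)K\pi_J(e_1)$, that is — using $[\pi_J(e_1),\widehat{\theta}_J(x)]=0$ — to left multiplication by $\widehat{\theta}_J(x)$ on $\s F$. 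Hence $\widehat{\theta}(x)\Pi(\xi)$ is carried to the map $B\rightarrow\s F$, $b\mapsto\widehat{\theta}_J(x)\pi_J(\xi)b$, which under $\s F=\K(B,\s F)$ is exactly the element $\widehat{\theta}_J(x)\pi_J(\xi)$. Now $\widehat{\theta}_J(x)\pi_J(\xi)$ lies in $K$: since $K=[\pi_J(a)\widehat{\theta}_J(y)\,;\,a\in J,\,y\in\widehat{S}]$ is a C*-algebra, taking adjoints gives $\widehat{\theta}_J(y)\pi_J(a)\in K$ for all $a\in J$ and $y\in\widehat{S}$; apply this with $a=\xi\in\s E\subset J$ and $y=x$. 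Since $\widehat{\theta}_J(x)\pi_J(\xi)=\pi_J(e_1)\widehat{\theta}_J(x)\pi_J(\xi)\pi_J(e_2)\in\pi_J(e_1)K\pi_J(e_2)=\s F$, this proves $\widehat{\theta}(x)\Pi(\xi)\in\s E\rtimes{\cal G}$. (This is the ``left-handed'' counterpart of \ref{prop41} 4, but it genuinely requires a separate argument because it involves $\widehat\theta_{\K(\s E)}$ rather than $\widehat\theta_A$.)

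For the density statement, $X$ turns $[\widehat{\theta}(x)\Pi(\xi)\,;\,\xi\in\s E,\,x\in\widehat{S}]$ into $[\widehat{\theta}_J(x)\pi_J(\xi)\,;\,\xi\in\s E,\,x\in\widehat{S}]$ and turns $\s E\rtimes{\cal G}$ into $\s F=\pi_J(e_1)K\pi_J(e_2)$, so it remains to check $\s F=[\widehat{\theta}_J(x)\pi_J(\xi)\,;\,\xi\in\s E,\,x\in\widehat{S}]$. This follows by writing $K=[\widehat{\theta}_J(y)\pi_J(a)\,;\,a\in J,\,y\in\widehat{S}]$ (adjoints again) and using $[\pi_J(e_1),\widehat{\theta}_J(y)]=0$ and $e_1Je_2=\s E$: one gets $\s F=\pi_J(e_1)K\pi_J(e_2)=[\widehat{\theta}_J(y)\pi_J(e_1ae_2)\,;\,a\in J,\,y\in\widehat{S}]=[\widehat{\theta}_J(y)\pi_J(\xi)\,;\,\xi\in\s E,\,y\in\widehat{S}]$. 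Transporting back along $X^{-1}$ gives $\s E\rtimes{\cal G}=[\widehat{\theta}(x)\Pi(\xi)\,;\,\xi\in\s E,\,x\in\widehat{S}]$. The only genuinely delicate point is the identification, in the second paragraph, of $\widehat{\theta}(x)$ and $\Pi(\xi)$ with the stated operators on the linking side; this bookkeeping is entirely parallel to the proofs of \ref{prop4bis}, \ref{prop5} and \ref{cor3}, after which everything collapses to a one-line manipulation inside the C*-algebra $K$.
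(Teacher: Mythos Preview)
Your proof is correct and takes essentially the same approach as the paper: both transport the problem to the linking crossed product $K=J\rtimes{\cal G}$ via \ref{prop4bis} and \ref{prop5}, use the commutation $[\pi_J(e_j),\widehat{\theta}_J(x)]=0$, and read off both claims from the description $K=[\widehat{\theta}_J(y)\pi_J(a)\,;\,a\in J,\,y\in\widehat{S}]$ after cutting down by $\pi_J(e_1),\pi_J(e_2)$. The only organizational difference is that the paper first isolates the auxiliary identity $\pi_{\K(\s E)}(k)\Pi(\xi)=\Pi(k\xi)$ and deduces $\widehat{\theta}(x)\Pi(\xi)\in\s E\rtimes{\cal G}$ from $\widehat{\theta}_{\K(\s E)}(x)\pi_{\K(\s E)}(k)\in\K(\s E\rtimes{\cal G})$ together with $\K(\s E)\s E=\s E$, whereas you argue directly on the linking side; your route is marginally cleaner here, while the paper's intermediate identity is convenient for later use.
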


\begin{proof}
{\setlength{\baselineskip}{1.2\baselineskip}
Let us equip $J:=\K(\s E\oplus A)$ with its structure of linking $\cal G$-$\Cstar$-algebra (cf.\ \ref{rkLinkAlg}). Let $\xi\in\s E$, $b\in A\rtimes{\cal G}$ and $k\in\K(\s E)$, then $\Pi(\xi)b$ (resp.\ $\pi_{\K(\s E)}(k)$) is identified to $\pi_J(\iota_{\s E}(\xi))\chi(b)$ (resp.\ $\pi_J(\iota_{\K(\s E)}(k))$) through the identification of \ref{prop4bis} (resp.\ \ref{prop5}). Hence, $\pi_{\K(\s E)}(k)\Pi(\xi)b$ is identified to $\pi_J(\iota_{\s E}(k\xi))\chi(b)$. Thus, we have $\pi_{\K(\s E)}(k)\Pi(\xi)b=\Pi(k\xi)\chi(b)$. As a result, we have $\pi_{\K(\s E)}(k)\Pi(\xi)=\Pi(k\xi)$ for all $k\in\K(\s E)$ and $\xi\in\s E$.\newline
If $\xi\in\s E$, $x\in\widehat{S}$ and $k\in\K(\s E)$, we have $\widehat{\theta}_{\K(\s E)}(x)\Pi(k\xi)=\widehat{\theta}_{\K(\s E)}(x)\pi_{\K(\s E)}(k)\Pi(\xi)\in\s E\rtimes{\cal G}$ since $\widehat{\theta}_{\K(\s E)}(x)\pi_{\K(\s E)}(k)\in\K(\s E)\rtimes{\cal G}=\K(\s E\rtimes{\cal G})$. Hence, $\widehat{\theta}_{\K(\s E)}(x)\Pi(\xi)\in\s E\rtimes{\cal G}$ for all $x\in\widehat{S}$ and $\xi\in\s E$ since $\K(\s E)\s E=\s E$. Let $\xi\in\s E$ and $x\in\widehat{S}$, then $\Pi(\xi)\widehat{\theta}_A(x)$ is identified to $\pi_J(\iota_{\s E}(\xi))\widehat{\theta}_J(x)$. Moreover, $\pi_J(\iota_{\s E}(\xi))\widehat{\theta}_J(x)$ is the norm limit of finite sums of the form $\sum_{i}\widehat{\theta}_J(x_i)\pi_J(u_i)$ with $x_i\in\widehat{S}$ and $u_i\in J$. However, we have $[\pi_J(e_j),\,\widehat{\theta}_J(y)]=0$ for all $y\in\widehat{S}$ and $j=1,2$. Hence, $\Pi(\xi)\widehat{\theta}_A(x)$ is the norm limit of finite sums of elements of the form $\sum_i\widehat{\theta}_J(x_i)\pi_J(e_1u_ie_2)$. Write $e_1u_ie_2=\iota_{\s E}(\xi_i)$ with $\xi_i\in\s E$, then $\Pi(\xi)\widehat{\theta}_A(x)$ is the norm limit of finite sums of the form $\sum_i\widehat{\theta}_{\K(\s E)}(x_i)\Pi(\xi_i)$.\qedhere
\par}
\end{proof}

\begin{propdef}\label{propdef8}
Let $B$ be a $\cal G$-$\Cstar$-algebra and $\s E$ a $\cal G$-equivariant Hilbert $B$-module. Let $\gamma:A\rightarrow\Lin(\s E)$ be a $\cal G$-equivariant *-representation. By applying \ref{FunctorCrossedProd} and \ref{cor3}, we have a canonical $\widehat{\cal G}$-equivariant *-representation $\gamma_*:A\rtimes{\cal G}\rightarrow\Lin(\s E\rtimes{\cal G})$. Moreover, if $\s E$ is a $\cal G$-equivariant Hilbert $A$-$B$-bimodule, then $\s E\rtimes{\cal G}$ is a $\widehat{\cal G}$-equivariant Hilbert $A\rtimes{\cal G}$-$B\rtimes{\cal G}$-bimodule.
\end{propdef}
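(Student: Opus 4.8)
The plan is to obtain $\gamma_*$ as the image under the crossed-product functor of $\gamma$, read as a morphism into the multiplier algebra of the $\cal G$-C*-algebra $\K(\s E)$, and then to transport it along the $\widehat{\cal G}$-equivariant *-isomorphism $\K(\s E)\rtimes{\cal G}\simeq\K(\s E\rtimes{\cal G})$ of \ref{cor3}.

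First I would observe that, $\s E$ being a $\cal G$-equivariant Hilbert $B$-module, the C*-algebra $\K(\s E)$ is a $\cal G$-C*-algebra for the strongly continuous action $(\beta_{\s E},\delta_{\K(\s E)})$ of \ref{prop1} (cf.\ \ref{prop31} 1), with $\Lin(\s E)=\M(\K(\s E))$ and $\delta_{\K(\s E)}(\K(\s E))\subset\widetilde{\M}(\K(\s E)\tens S)$. By \ref{rk11} 2, the $\cal G$-equivariance of the *-representation $\gamma:A\rightarrow\Lin(\s E)$ (Definition \ref{defbimod}) means exactly that $\delta_{\K(\s E)}(\gamma(a))=(\gamma\tens\id_S)\delta_A(a)$ and $\gamma(\beta_A(n^{\rm o})a)=\beta_{\s E}(n^{\rm o})\gamma(a)$ hold for all $a\in A$, $n\in N$; in other words, $\gamma:A\rightarrow\M(\K(\s E))$ is a (possibly degenerate) $\cal G$-equivariant *-homomorphism in the sense of \ref{defEquiHombis}. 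Applying \ref{FunctorCrossedProd} 1 then gives a unique $\widehat{\cal G}$-equivariant *-homomorphism $\gamma_*:A\rtimes{\cal G}\rightarrow\M(\K(\s E)\rtimes{\cal G})$ characterised by $\gamma_*(\pi_A(a)\widehat{\theta}_A(x))=\pi_{\K(\s E)}(\gamma(a))\widehat{\theta}_{\K(\s E)}(x)$. Composing with the identification $\M(\K(\s E)\rtimes{\cal G})=\M(\K(\s E\rtimes{\cal G}))=\Lin(\s E\rtimes{\cal G})$ furnished by \ref{cor3} produces the desired *-representation $\gamma_*:A\rtimes{\cal G}\rightarrow\Lin(\s E\rtimes{\cal G})$; since the isomorphism of \ref{cor3} is $\widehat{\cal G}$-equivariant, $\gamma_*$ still satisfies $\delta_{\K(\s E\rtimes{\cal G})}(\gamma_*(b))=(\gamma_*\tens\id_{\widehat{S}})\delta_{A\rtimes{\cal G}}(b)$ and the compatibility with the source map, so by the $\widehat{\cal G}$-analogue of \ref{rk11} 2 it is a $\widehat{\cal G}$-equivariant *-representation on the $\widehat{\cal G}$-equivariant Hilbert $B\rtimes{\cal G}$-module $\s E\rtimes{\cal G}$ (cf.\ \ref{defdualaction}).

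For the last assertion it then only remains, in view of Definition \ref{defbimod}, to check that $\s E\rtimes{\cal G}$ is countably generated over $B\rtimes{\cal G}$ when $\s E$ is countably generated over $B$: if $(\xi_n)_n$ generates $\s E$ and $(x_m)_m$ is a countable dense subset of the separable C*-algebra $\widehat{S}$ (cf.\ \ref{rk18}), then, using $\Pi(\xi a)=\Pi(\xi)\pi_B(a)$, $[\pi_B(B)(B\rtimes{\cal G})]=B\rtimes{\cal G}$ and \ref{prop41}, the countable family $(\Pi(\xi_n)\widehat{\theta}_B(x_m))_{n,m}$ has dense $B\rtimes{\cal G}$-linear span in $\s E\rtimes{\cal G}$ (equivalently one may invoke that $\K(\s E\rtimes{\cal G})\simeq\K(\s E)\rtimes{\cal G}$ is $\sigma$-unital). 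Hence $\s E\rtimes{\cal G}$, equipped with $\gamma_*$, is a $\widehat{\cal G}$-equivariant Hilbert $A\rtimes{\cal G}$-$B\rtimes{\cal G}$-bimodule. The only genuine verification is the bookkeeping in the second paragraph — matching the three pictures of the action on $\K(\s E)$ so that the hypotheses of \ref{FunctorCrossedProd} are literally met, and making sure the equivariance survives the identifications of \ref{cor3}; I expect this, rather than the countable-generation step, to be the only place requiring care.
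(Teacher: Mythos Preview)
Your proposal is correct and follows the same line as the paper. The construction of $\gamma_*$ via \ref{FunctorCrossedProd} and \ref{cor3} is exactly what the statement already indicates, and the paper's proof accordingly treats this as immediate, proving only the countable-generation claim. The one minor difference is the choice of generating family: the paper takes $\{\widehat{\theta}_{\K(\s E)}(x_i)\Pi(\xi_i)\}$ and appeals to \ref{cor5} to write $\s E\rtimes{\cal G}=[\widehat{\theta}_{\K(\s E)}(x)\Pi(\xi)\widehat{\theta}_B(x')]$, whereas you take $\{\Pi(\xi_n)\widehat{\theta}_B(x_m)\}$ directly from \ref{prop41}~4; your route is slightly more elementary since it avoids \ref{cor5}, but the content is the same.
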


\begin{proof}
{\setlength{\baselineskip}{1.1\baselineskip}
We only have to prove that if $\s E$ is countably generated as a Hilbert $B$-module, then $\s E\rtimes{\cal G}$ is countably generated as a Hilbert $B\rtimes{\cal G}$-module. Let $\{\xi_i\,;\,i\in\GN\}$ be a generating set for the Hilbert $B$-module $\s E$. Let $\{x_i\,;\,i\in\GN\}$ be a total subset of $\widehat{S}$ (cf.\ \ref{rk18}). We claim that $\{\widehat{\theta}_{\K(\s E)}(x_i)\Pi(\xi_i)\,;\,i\in\GN\}\subset\s E\rtimes{\cal G}$ is a generating set for the Hilbert $B\rtimes{\cal G}$-module $\s E\rtimes{\cal G}$. Indeed, this follows easily from the relation $\s E\rtimes{\cal G}=[\widehat{\theta}_{\K(\s E)}(x)\Pi(\xi)\widehat{\theta}_B(x')\,;\,\xi\in\s E,\, x,x'\in\widehat{S}]$ (cf.\ \ref{prop41}, \ref{cor5} and the fact that any element of $\widehat{S}$ can be written as a product of two elements of $\widehat{S}$) and \ref{prop41} 2.\qedhere
\par}
\end{proof}

\begin{prop}\label{prop42}
Let $A$, $B$ and $C$ be three $\cal G$-$\Cstar$-algebras. Let $\s E_1$ and $\s E_2$ be Hilbert modules over $C$ and $B$ respectively. Let $(\beta_{\s E_1},\delta_{\s E_1})$ and $(\beta_{\s E_2},\delta_{\s E_2})$ be actions of $\cal G$ on $\s E_1$ and $\s E_2$ respectively. Let $\gamma_2:C\rightarrow\Lin(\s E_2)$ be a $\cal G$-equivariant *-representation. Let $\s E:=\s E_1\tens_{\gamma_2}\s E_2$ be the Hilbert $B$-module acted upon by $\cal G$ defined in \ref{prop18}. Let $\gamma_{2\ast}:C\rtimes{\cal G}\rightarrow\Lin(\s E_2\rtimes{\cal G})$ be the $\widehat{\cal G}$-equivariant *-representation defined in \ref{propdef8}.
\begin{enumerate}
\item There exists a unique $\cal G$-equivariant unitary $\Xi:(\s E_1\rtimes{\cal G})\tens_{\gamma_{2\ast}}(\s E_1\rtimes{\cal G})\rightarrow\s E\rtimes{\cal G}$ such that
\[
\Xi(\widehat{\theta}_{\K(\s E_1)}(x_1)\Pi_{\s E_1}(\xi_1)\tens_{\gamma_{2\ast}}\Pi_{\s E_2}(\xi_2)\widehat{\theta}_B(x_2))=\widehat{\theta}_{\K(\s E)}(x_1)\Pi_{\s E}(\xi_1\tens_{\gamma_2}\xi_2)\widehat{\theta}_B(x_2)
\]
for all $x_1,\,x_2\in\widehat{S}$, $\xi_1\in\s E_1$ and $\xi_2\in\s E_2$.
\item Let $\gamma_1:A\rightarrow\Lin(\s E_1)$ be a $\cal G$-equivariant *-representation. Denote by $\gamma:A\rightarrow\Lin(\s E)$ the $\cal G$-equivariant *-representation defined by $\gamma(a):=\gamma_1(a)\tens_{\gamma_2}1$ for all $a\in A$ (cf.\ \ref{propleftaction}) and $\gamma_{1\ast}:A\rtimes{\cal G}\rightarrow\Lin(\s E_2\rtimes{\cal G})$ the $\widehat{\cal G}$-equivariant *-representation defined in \ref{propdef8}. Let $\kappa:A\rightarrow\Lin((\s E_1\rtimes{\cal G})\tens_{\gamma_{2\ast}}(\s E_2\rtimes{\cal G}))$ be the $\cal G$-equivariant *-representation defined by $\kappa(a):=\gamma_{1\ast}(a)\tens_{\gamma_{2\ast}}1$ for all $a\in A$ (cf.\ \ref{propleftaction}). We then have $\Xi\circ\kappa(a)=\gamma_*(a)\circ\Xi$ for all $a\in A$.\qedhere
\end{enumerate}
\end{prop}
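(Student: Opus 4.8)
The plan is to realise $\Xi$ as a composite of the canonical associativity unitaries of the internal tensor product, and then to read off the announced formula on a dense family of generators. Write $B\rtimes{\cal G}$, $C\rtimes{\cal G}$ for the crossed products, with their canonical morphisms $\pi_B,\pi_C,\widehat\theta_B,\widehat\theta_C$, and recall that by definition $\s E_i\rtimes{\cal G}=\s E_i\tens_{\pi}(\,\cdot\,\rtimes{\cal G})$. The preliminary observation that carries the proof is this: by \ref{propdef8} (built from \ref{FunctorCrossedProd} and \ref{cor3}) the representation $\gamma_{2\ast}$ satisfies $\gamma_{2\ast}\bigl(\pi_C(c)\widehat\theta_C(x)\bigr)=\pi_{\K(\s E_2)}(\gamma_2(c))\widehat\theta_{\K(\s E_2)}(x)$, and under the identification $\Lin(\s E_2\rtimes{\cal G})=\M(\K(\s E_2)\rtimes{\cal G})=\M(\K(\s E_2\rtimes{\cal G}))$ of \ref{cor3} one has $\pi_{\K(\s E_2)}(\gamma_2(c))=\gamma_2(c)\tens_{\pi_B}1$; hence the left $C$-action that arises in the iterated tensor product, acting through the coefficient algebra $C\rtimes{\cal G}$, is exactly the map $c\mapsto\gamma_2(c)\tens_{\pi_B}1$ on $\s E_2\rtimes{\cal G}=\s E_2\tens_{\pi_B}(B\rtimes{\cal G})$.

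Granting this, I would assemble the chain of natural unitaries of Hilbert $(B\rtimes{\cal G})$-modules
\[
(\s E_1\rtimes{\cal G})\tens_{\gamma_{2\ast}}(\s E_2\rtimes{\cal G})\ \xrightarrow{\ \sim\ }\ \s E_1\tens_{\gamma_2(\cdot)\tens_{\pi_B}1}\bigl(\s E_2\tens_{\pi_B}(B\rtimes{\cal G})\bigr)\ \xrightarrow{\ \sim\ }\ (\s E_1\tens_{\gamma_2}\s E_2)\tens_{\pi_B}(B\rtimes{\cal G})\ =\ \s E\rtimes{\cal G},
\]
the first arrow being associativity of the internal tensor product together with the preliminary identification (it sends $(\xi_1\tens_{\pi_C}b)\tens_{\gamma_{2\ast}}\eta$ to $\xi_1\tens\gamma_{2\ast}(b)\eta$) and the second being associativity again (it sends $\xi_1\tens(\xi_2\tens_{\pi_B}b)$ to $(\xi_1\tens_{\gamma_2}\xi_2)\tens_{\pi_B}b$). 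I take $\Xi$ to be the composite. By \ref{cor5} and \ref{prop41} 4 the elements $\widehat\theta_{\K(\s E_1)}(x_1)\Pi_{\s E_1}(\xi_1)$ and $\Pi_{\s E_2}(\xi_2)\widehat\theta_B(x_2)$ lie in $\s E_1\rtimes{\cal G}$ and $\s E_2\rtimes{\cal G}$ and span dense subspaces, so their elementary tensors span a dense subspace of the domain of $\Xi$; pushing one such elementary tensor through the two arrows — using $\pi_{\K(\s E_1)}(k)\Pi_{\s E_1}(\xi_1)=\Pi_{\s E_1}(k\xi_1)$ (from the proof of \ref{cor5}), the identification above, and the tensor-product formula of \ref{prop18} — yields precisely $\widehat\theta_{\K(\s E)}(x_1)\Pi_{\s E}(\xi_1\tens_{\gamma_2}\xi_2)\widehat\theta_B(x_2)$, which is the stated identity; uniqueness of a bounded map with this property is then immediate by density.

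It then remains to check the $\widehat{\cal G}$-equivariance of $\Xi$ and the intertwining of part 2. For the former, the dual action $\delta_{\s E\rtimes{\cal G}}$ (\ref{defdualaction}) and the tensor-product action on $(\s E_1\rtimes{\cal G})\tens_{\gamma_{2\ast}}(\s E_2\rtimes{\cal G})$ (given by \ref{prop18} applied to the $\widehat{\cal G}$-equivariant representation $\gamma_{2\ast}$) are both determined on the generators above, so I would verify that $\Xi$ intertwines them by a direct computation using the formulas of \ref{defdualaction} and \ref{defDualAct} 1. For part 2, the point is that $\gamma(a)=\gamma_1(a)\tens_{\gamma_2}1$ and $\kappa(a)=\gamma_{1\ast}(a)\tens_{\gamma_{2\ast}}1$ act only on the leftmost tensor factor, which the associativity maps composing $\Xi$ leave in place; combining this with $\gamma_\ast(\pi_A(a))=\pi_{\K(\s E)}(\gamma(a))$ and the analogous identity for $\gamma_{1\ast}$ (both instances of \ref{FunctorCrossedProd}) and with $\pi_{\K(\s E)}(k)\Pi_{\s E}(\zeta)=\Pi_{\s E}(k\zeta)$, the equality $\Xi\circ\kappa(a)=\gamma_\ast(a)\circ\Xi$ can be checked on the generators of part 1 and hence holds everywhere by continuity.

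I expect the conceptual content to be light — everything reduces to associativity of the internal tensor product combined with the already-established functoriality of the crossed product (\ref{FunctorCrossedProd}, \ref{cor3}, \ref{cor5}) — so the real work, and the one delicate point, will be the bookkeeping: making the preliminary identification $\pi_{\K(\s E_2)}(\gamma_2(c))=\gamma_2(c)\tens_{\pi_B}1$ and the associativity chain rigorous under the successive canonical identifications of multiplier and linking algebras, and then tracking the specific generators $\widehat\theta_{\K(\s E_1)}(x_1)\Pi_{\s E_1}(\xi_1)$ and $\Pi_{\s E_2}(\xi_2)\widehat\theta_B(x_2)$ carefully enough that the two associativity maps combine to give exactly the right-hand side displayed in the statement, and likewise matching the tensor-product dual action with $\delta_{\s E\rtimes{\cal G}}$ in the equivariance claim.
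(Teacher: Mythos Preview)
Your construction of $\Xi$ as a composite of associativity unitaries is sound and is a legitimate alternative to the paper's approach. The paper, however, takes a genuinely different route: it introduces (in \ref{not2}, stated immediately before the proof) a \emph{concrete} realization $\Psi_{L,\rho}$ identifying $\s E_i\rtimes{\cal G}$ with a closed subspace of $\Lin(C\tens\s H,\s E_i\tens\s H)$ (resp.\ $\Lin(B\tens\s H,\s E_2\tens\s H)$), under which $\widehat\theta_{\K(\s E_i)}(x)$ becomes left composition by $1\tens\rho(x)$ and $\Pi_{\s E_i}(\xi)$ becomes the operator $\Pi_{\s E_i,L}(\xi)=(\id\tens L)\delta_{\s E_i}(\xi)$. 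In that picture $\Xi$ is simply $T_1\tens_{\widetilde\gamma_2\tens\id_\K}T_2\mapsto(T_1\tens_{\widetilde\gamma_2}1)T_2$, and the displayed formula is a one-line composition of operators.

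Your proposal has a genuine soft spot at exactly the point where the paper's concrete picture pays off. In the abstract associativity chain, the map is specified on simple tensors $(\xi_1\tens_{\pi_C}b)\tens_{\gamma_{2\ast}}\eta$, and operators of the form $F\tens_{\pi_C}1$ behave well under it; but $\widehat\theta_{\K(\s E_1)}(x_1)$ is \emph{not} of that form (it does not commute with the $\pi_C$-action), and the element $\widehat\theta_{\K(\s E_1)}(x_1)\Pi_{\s E_1}(\xi_1)$ is not an elementary tensor $\xi_1'\tens_{\pi_C}b$. The identity you cite, $\pi_{\K(\s E_1)}(k)\Pi_{\s E_1}(\xi_1)=\Pi_{\s E_1}(k\xi_1)$, concerns $\pi_{\K(\s E_1)}$, not $\widehat\theta_{\K(\s E_1)}$, and does not by itself explain why $\widehat\theta_{\K(\s E_1)}(x_1)\tens_{\gamma_{2\ast}}1$ corresponds under $\Xi$ to $\widehat\theta_{\K(\s E)}(x_1)$. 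This can be proved (for instance by first showing $\Xi\bigl(\Pi_{\s E_1}(\xi_1)c\tens_{\gamma_{2\ast}}\eta\bigr)=\Pi_{\s E}(\xi_1\tens_{\gamma_2}\,\cdot\,)(\gamma_{2\ast}(c)\eta)$ for $c\in C\rtimes{\cal G}$, then passing to the linking-algebra level and invoking the uniqueness in \ref{prop4bis}/\ref{prop5} to identify the two $\widehat\theta$'s), but it is precisely the ``delicate bookkeeping'' you flag, not a consequence of the ingredients you list. The paper sidesteps the issue entirely by working in the concrete picture where $\widehat\theta(x)=1\tens\rho(x)$ is visibly compatible with the map $T_1\tens T_2\mapsto(T_1\tens 1)T_2$.
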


For the proof, we will need to use a concrete interpretation of the crossed product.

\begin{nbs}\label{not2} Let $A$ be a $\cal G$-C*-algebra. Consider the $\widehat{\cal G}$-C*-algebra $B:=A\rtimes\cal G$. Let $\s E$ be a Hilbert $A$-module acted upon by $\cal G$.
\begin{enumerate}
\item Let $\kappa:S\rightarrow\B(\s H)$ be a non-degenerate *-homomorphism. We have the following unitary equivalences of Hilbert $A$-modules:
\begin{align*}
(A\tens S)\tens_{\id_A\tens\kappa}(A\tens \s H) & \rightarrow A\tens \s H, \;
(a\tens s)\tens_{\id_A\tens\kappa}(b\tens\eta) \mapsto ab \tens \kappa(s)\eta;\\
(\s E\tens S)\tens_{\id_A\tens\kappa}(A\tens \s H) & \rightarrow \s E\tens \s H, \;\,
(\xi\tens s)\tens_{\id_A\tens\kappa}(a\tens\eta) \mapsto \xi a \tens \kappa(s)\eta.
\end{align*}
By using the above identifications, the map $\id_{\s E}\tens\kappa$ extends to a linear map
\[
\id_{\s E}\tens\kappa:\Lin(A\tens S,{\s E}\tens S)\rightarrow \Lin(A\tens \s H,{\s E}\tens \s H)\,;\, T\mapsto (\id_{\s E}\tens\kappa)(T):=T \tens_{\id_A\tens\kappa} 1.
\]
The extension is uniquely determined by the formula
\[
(\id_{\s E}\tens\kappa)(Tx)=(\id_{\s E}\tens\kappa)(T)(\id_A\tens\kappa)(x),\text{ for } T\in\Lin(A\tens S,{\s E}\tens S) \text{ and } x\in A\tens S.
\]
For all $T,\,S\in\Lin(A\tens S,{\s E}\tens S)$ we have
$
(\id_{\s E}\tens\kappa)(T)^*(\id_{\s E}\tens\kappa)(S)=(\id_A\tens\kappa)(T^*S)
$
with the identification $\Lin(A\tens S)=\M(A\tens S)$.
\item By using the above notation, we consider the linear map 
$
\Pi_L:\s E\rightarrow\Lin(A\tens\s H,\s E\tens\s H)
$ 
\index[symbol]{pjb@$\Pi_L$, $\Pi$}defined by $\Pi_L(\xi):=(\id_{\s E}\tens L)\delta_{\s E}(\xi)$ for all $\xi\in\s E$. Note that we have:
\begin{itemize}
\item $\Pi_L(\xi a)=\Pi_L(\xi)\pi_L(a)$, for all $a\in A$ and $\xi\in\s E$;
\item $\Pi_L(\xi)^*\Pi_L(\eta)=\pi_L(\langle\xi,\,\eta\rangle)$, for all $\xi,\eta\in \s E$;
\item $q_{\beta_{\s E}\alpha}\Pi_L(\xi)=\Pi_L(\xi)=\Pi_L(\xi)q_{\beta_A\alpha}$, for all $\xi\in\s E$.
\end{itemize}
\item There exists a unique isometric linear map $\Psi_{L,\rho}:\s E\rtimes{\cal G}\rightarrow\Lin(A\tens \s H,\s E\tens\s H)$ such that $\Psi_{L,\rho}(\Pi(\xi)\widehat{\theta}(x))=\Pi_L(\xi)(1_A\tens\rho(x))$ for all $\xi\in\s E$ and $x\in\widehat{S}$. Indeed, let us denote by $\psi_{L,\rho}:B\rightarrow\Lin(A\tens\s H)$ the unique faithful strictly/*-strongly continuous *-representation such that $\psi_{L,\rho}(\pi(a)\widehat{\theta}(x))=\pi_L(a)(1_A\tens\rho(x))$ for all $a\in A$ and $x\in\widehat{S}$ (cf.\ 4.2.3 \cite{C}). Since $\psi_{L,\rho}(\pi(a))=\pi_L(a)$ for all $a\in A$, there exists a unique isometric linear map $\Psi_{L,\rho}:\s E\rtimes{\cal G}\rightarrow\Lin(A\tens\s H,\s E\tens\s H)$ such that $\Psi_{L,\rho}(\xi\tens_{\pi}b)=\Pi_L(\xi)\psi_{L,\rho}(b)$ for all $\xi\in\s E$ and $b\in B$. It is clear that $\Psi_{L,\rho}(\Pi(\xi)\widehat{\theta}(x))=\Pi_L(\xi)(1_A\tens\rho(x))$ for all $\xi\in\s E$ and $x\in\widehat{S}$.\qedhere
\end{enumerate}
\end{nbs}

\begin{proof}[Proof of Proposition \ref{prop42}]
{\setlength{\baselineskip}{1.1\baselineskip}
1. According to \ref{not2} 3, we identify $\s E_1\rtimes{\cal G}$ (resp.\ $\s E_2\rtimes{\cal G}$) with a subspace of $\Lin(C\tens\s H,\s E_1\tens\s H)$ (resp.\ $\Lin(B\tens\s H,\s E_2\tens\s H)$). Since $\delta_C(C)(1_C\tens S)\subset C\tens S$, we even have $\s E_1\rtimes{\cal G}\subset\Lin(\widetilde{C}\tens\s H,\s E_1\tens\s H)$. Let $\widetilde{\gamma}_2\tens\id_{\K}:\Lin(\widetilde{C}\tens\s H)\rightarrow\Lin(\s E_2\tens\s H)$ (cf.\ \S \ref{sectionNotations}). With the identification $(\widetilde{C}\tens\s H)\tens_{\widetilde{\gamma}_2}\s E_2=\s E_2\tens\s H$, we have $(\widetilde{\gamma}_2\tens\id_{\K})(T)=T\tens_{\widetilde{\gamma}_2}1$ for all $T\in\Lin(\widetilde{C}\tens\s H)$. By using the identification $(\s E_1\tens\s H)\tens_{\widetilde{\gamma}_2}\s E_2=\s E\tens\s H$, we then obtain an isometric adjointable operator
\begin{align*}
\Xi:\Lin(\widetilde{C}\tens\s H,\s E_1\tens\s H)\tens_{\widetilde{\gamma}_2\tens\id_{\K}}\Lin(B\tens\s H,\s E_2\tens\s H)&\rightarrow\Lin(B\tens\s H,\s E\tens\s H)\\ 
T_1\tens_{\widetilde{\gamma}_2\tens\id_{\K}}T_2 &\mapsto(T_1\tens_{\widetilde{\gamma}_2}1)T_2.
\end{align*}
We prove that $\Xi(\widehat{\theta}_{\K(\s E_1)}(x_1)\Pi_{\s E_1}(\xi_1)\tens_{\gamma_{2\ast}}\Pi_{\s E_2}(\xi_2)\widehat{\theta}_B(x_2))=\widehat{\theta}_{\K(\s E)}(x_1)\Pi_{\s E}(\xi_1\tens_{\gamma_2}\xi_2)\widehat{\theta}_B(x_2)$ for all $x_1,\,x_2\in\widehat{S}$, $\xi_1\in\s E_1$ and $\xi_2\in\s E_2$ by a straightforward computation. Hence, $\Xi$ induces by restriction a unitary $\Xi\in\Lin((\s E_1\rtimes{\cal G})\tens_{\gamma_{2\ast}}(\s E_1\rtimes{\cal G}),\s E\rtimes{\cal G})$. The equivariance of $\Xi$ will follow from the definitions and the formulas
\begin{align*}
\delta_{\s E\rtimes{\cal G}}(\widehat{\theta}_{\K(\s E)}(x_1)\Pi_{\s E}(\xi)\widehat{\theta}_B(x_2))&=(\widehat{\theta}_{\K(\s E)}\tens\id_{\widehat{S}})\widehat{\delta}(x_1)(\Pi_{\s E}(\xi)\tens 1_{\widehat{S}})(\widehat{\theta}_B\tens\id_{\widehat{S}})\widehat{\delta}(x_2),\\
\delta_{\s E_1\rtimes{\cal G}}(\widehat{\theta}_{\K(\s E_1)}(x_1)\Pi_{\s E_1}(\xi_1))&=(\widehat{\theta}_{\K(\s E_1)}\tens\id_{\widehat{S}})\widehat{\delta}(x_1)(\Pi_{\s E_1}(\xi_1)\tens 1_{\widehat{S}}) \quad \text{and}\\
\delta_{\s E_2\rtimes{\cal G}}(\Pi_{\s E_2}(\xi_2)\widehat{\theta}_{B}(x_2))&=(\Pi_{\s E_2}(\xi_2)\tens 1_{\widehat{S}})(\widehat{\theta}_{B}\tens\id_{\widehat{S}})\widehat{\delta}(x_2)
\end{align*}
for all $\xi\in\s E$, $\xi_1\in\s E_1$, $\xi_2\in\s E_2$ and $x_1,\,x_2\in\widehat{S}$.\newline
2. This will follow from the formulas $\gamma_{1\ast}(a)\widehat{\theta}_{\K(\s E_1)}(x_1)\Pi_{\s E_1}(\xi_1)=\widehat{\theta}_{\K(\s E_1)}(x_1)\Pi_{\s E_1}(\gamma_1(a)\xi_1)$ and $\kappa(a)\widehat{\theta}_{\K(\s E)}(x_1)\Pi_{\s E}(\xi)\widehat{\theta}_B(x_2)=\widehat{\theta}_{\K(\s E)}(x_1)\Pi_{\s E}(\gamma(a)\xi)\widehat{\theta}_B(x_2)$ for all $\xi_1\in\s E_1$, $\xi\in\s E$ and $x_1,x_2\in\widehat{S}$ (cf.\ \ref{FunctorCrossedProd} 1).\qedhere
\par}
\end{proof}

In a similar way, we define the crossed product of a Hilbert C*-module by an action of the dual measured quantum groupoid $\widehat{\cal G}$. The details are left to the reader's attention.

\medbreak

Let $(B,\alpha_B,\delta_B)$ be a $\widehat{\cal G}$-$\Cstar$-algebra. Let us denote $C:=B\rtimes\widehat{\cal G}$ the crossed product endowed with the dual action $(\beta_C,\delta_C)$. Let $\widehat{\pi}:B\rightarrow\M(C)$ and $\theta:S\rightarrow\M(C)$ be the canonical morphisms (cf.\ \ref{crossedproductCstar}). Let $\s F$ be a Hilbert $B$-module and $(\alpha_{\s F},\delta_{\s F})$ an action of $\widehat{\cal G}$ on $\s F$.

\begin{defin}
We call crossed product of $\s F$ by the action $(\alpha_{\s F},\delta_{\s F})$ the Hilbert $C$-module $\s F\tens_{\widehat\pi} C$ denoted by $\s F\rtimes\widehat{\cal G}$.
\end{defin}

\begin{nb}
For $\xi\in\s F$, we denote by $\widehat{\Pi}(\xi)\in\Lin(B,\s F\rtimes\widehat{\cal G})$ the adjointable operator defined by $\widehat{\Pi}(\xi)c:=\xi\tens_{\widehat\pi}c$ for all $c\in C$. We have $\widehat{\Pi}(\xi)^*(\eta\tens_{\widehat\pi}c)=\widehat{\pi}(\langle\xi,\,\eta\rangle)c$ for all $\eta\in\s F$ and $c\in C$. We then have a linear map $\widehat{\Pi}:\s F\rightarrow\Lin(B,\s F\rtimes\widehat{\cal G})$.
\end{nb}

\begin{propdef}
Let us denote by $\delta_{\s F\rtimes\widehat{\cal G}}:\s F\rtimes\widehat{\cal G}\rightarrow\Lin(C\tens S,(\s F\rtimes\widehat{\cal G})\tens S)$ and $\beta_{\s F\rtimes\widehat{\cal G}}:N^{\rm o}\rightarrow\Lin(\s F\rtimes\widehat{\cal G})$ the linear maps defined by:
\[
\beta_{\s F\rtimes\widehat{\cal G}}(n^{\rm o}):=1_{\s F}\tens_{\widehat{\pi}}\beta_C(n^{\rm o}),\;\; n\in N;\quad
\delta_{\s F\rtimes\widehat{\cal G}}(\xi\tens_{\widehat{\pi}}c):=(\widehat{\Pi}(\xi)\tens 1_S)\delta_C(c),\;\; \xi\in\s F,\; c\in C.
\]
Then, the pair $(\beta_{\s F\rtimes\widehat{\cal G}},\delta_{\s F\rtimes\widehat{\cal G}})$ is a continuous action of $\cal G$ on the crossed product $\s F\rtimes\widehat{\cal G}$ called the dual action of $(\alpha_{\s F},\delta_{\s F})$.
\end{propdef}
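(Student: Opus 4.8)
The plan is to mirror, essentially line by line, the proof of Proposition~\ref{defdualaction}, interchanging the roles of $\cal G$ and $\widehat{\cal G}$ and replacing $\pi$, $\widehat\theta$, $\Pi$, $\delta_B$, $\alpha_B$, $\widehat S$, $S$ by their duals $\widehat{\pi}$, $\theta$, $\widehat{\Pi}$, $\delta_C$, $\beta_C$, $S$, $\widehat S$, while invoking Proposition-Definition~\ref{defDualActBis} wherever the model proof uses \ref{defDualAct}. First I would check that the assignment $\xi\tens_{\widehat{\pi}}c\mapsto(\widehat{\Pi}(\xi)\tens 1_S)\delta_C(c)$ is well defined on the balanced tensor product $\s F\tens_{\widehat{\pi}}C$: it is $\widehat{\pi}$-balanced because $\widehat{\Pi}(\xi b)=\widehat{\Pi}(\xi)\widehat{\pi}(b)$ (immediate from the definition of $\widehat{\Pi}$) and $\delta_C(\widehat{\pi}(b)c)=(\widehat{\pi}(b)\tens 1_S)\delta_C(c)$ (\ref{defDualActBis}~1); and since $(\beta_C,\delta_C)$ is a strongly continuous action of $\cal G$ one has $\delta_C(C)\subset\widetilde{\M}(C\tens S)$, so the image lies in $\widetilde{\M}((\s F\rtimes\widehat{\cal G})\tens S)\subset\Lin(C\tens S,(\s F\rtimes\widehat{\cal G})\tens S)$.

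Next, the inner-product computation $\langle(\widehat{\Pi}(\xi)\tens 1_S)(c\tens s),(\widehat{\Pi}(\eta)\tens 1_S)(c'\tens s')\rangle=(c\tens s)^*(\widehat{\pi}(\langle\xi,\eta\rangle)\tens 1_S)(c'\tens s')$ yields $(\widehat{\Pi}(\xi)\tens 1_S)^*(\widehat{\Pi}(\eta)\tens 1_S)=\widehat{\pi}(\langle\xi,\eta\rangle)\tens 1_S$, whence, using \ref{defDualActBis}~1 once more,
\[
\langle\delta_{\s F\rtimes\widehat{\cal G}}(\xi\tens_{\widehat{\pi}}c),\delta_{\s F\rtimes\widehat{\cal G}}(\eta\tens_{\widehat{\pi}}c')\rangle=\delta_C\big(c^*\widehat{\pi}(\langle\xi,\eta\rangle)c'\big)=\delta_C(\langle\xi\tens_{\widehat{\pi}}c,\eta\tens_{\widehat{\pi}}c'\rangle).
\]
This simultaneously shows that $\delta_{\s F\rtimes\widehat{\cal G}}$ extends to a bounded (isometric) linear map on $\s F\rtimes\widehat{\cal G}$ and that it satisfies condition~1 of Definition~\ref{hilbmodequ}; the relation $\delta_{\s F\rtimes\widehat{\cal G}}(\xi c)=\delta_{\s F\rtimes\widehat{\cal G}}(\xi)\delta_C(c)$ is clear. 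For $\beta_{\s F\rtimes\widehat{\cal G}}$ one checks $[\beta_C(n^{\rm o}),\widehat{\pi}(b)]=0$ for all $b\in B$ — the dual of the commutation relation $[\alpha_B(n),\pi(a)]=0$ used in the model proof, which here follows from $\beta_C(n^{\rm o})=\theta(\beta(n^{\rm o}))$, $\widehat{\pi}_\lambda=(\id_B\tens\lambda)\delta_B$ and $U^*\beta(n^{\rm o})U=\widehat\beta(n^{\rm o})\in\widehat M$ — so that $1_{\s F}\tens_{\widehat{\pi}}\beta_C(n^{\rm o})\in\Lin(\s F\rtimes\widehat{\cal G})$ is well defined and $\beta_{\s F\rtimes\widehat{\cal G}}$ is a non-degenerate $*$-homomorphism.

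The remaining points then go exactly as in the model proof. Condition~3 of \ref{hilbmodequ} follows from $[1_{\s F\rtimes\widehat{\cal G}}\tens\beta(n^{\rm o}),\widehat{\Pi}(\xi)\tens 1_S]=0$ together with $(1_C\tens\beta(n^{\rm o}))\delta_C(c)=\delta_C(\beta_C(n^{\rm o})c)$ (\ref{prop39}~2). For condition~2 I would use the continuity of the dual action, $[\delta_C(C)(1_C\tens S)]=q_{\beta_C\alpha}(C\tens S)$, and the identity $(\widehat{\Pi}(\xi)\tens 1_S)q_{\beta_C\alpha}(c\tens x)=q_{\beta_{\s F\rtimes\widehat{\cal G}}\alpha}((\xi\tens_{\widehat{\pi}}c)\tens x)$ — obtained by testing against $\beta_C(n'^{\rm o})\tens\alpha(n)$ — to conclude $[\delta_{\s F\rtimes\widehat{\cal G}}(\s F\rtimes\widehat{\cal G})(C\tens S)]=q_{\beta_{\s F\rtimes\widehat{\cal G}}\alpha}((\s F\rtimes\widehat{\cal G})\tens S)$. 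Coassociativity $(\delta_{\s F\rtimes\widehat{\cal G}}\tens\id_S)\delta_{\s F\rtimes\widehat{\cal G}}=(\id_{\s F\rtimes\widehat{\cal G}}\tens\delta)\delta_{\s F\rtimes\widehat{\cal G}}$ is verified on elementary tensors $\xi\tens_{\widehat{\pi}}c$ precisely as in the display at the end of the proof of \ref{defdualaction}, using the extension formulas of \ref{rk9} and $(\delta_C\tens\id_S)\delta_C=(\id_C\tens\delta)\delta_C$. Finally, $[(1_{\s F\rtimes\widehat{\cal G}}\tens S)\delta_{\s F\rtimes\widehat{\cal G}}(\s F\rtimes\widehat{\cal G})]=((\s F\rtimes\widehat{\cal G})\tens S)q_{\beta_C\alpha}$ follows from the corresponding density statement for $(\beta_C,\delta_C)$ (\ref{prop39}~3), so that the action of $\cal G$ is continuous and $\s F\rtimes\widehat{\cal G}$ is in fact a $\cal G$-equivariant Hilbert $C$-module in the sense of \ref{defEqHilbMod}.

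There is no substantial obstacle here: the argument is entirely parallel to Proposition~\ref{defdualaction}, and the only two points deserving genuine attention are (i) that $\delta_C$ takes values in the relative multiplier algebra $\widetilde{\M}(C\tens S)$, which is precisely strong continuity of the dual action $(\beta_C,\delta_C)$, and (ii) the dual commutation relation $[\beta_C(n^{\rm o}),\widehat{\pi}(b)]=0$; once these are in place the verification is routine bookkeeping with the internal-tensor-product identifications.
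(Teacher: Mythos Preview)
Your proposal is correct and is exactly the approach the paper intends: the paper does not give a separate proof of this proposition but explicitly says ``the details are left to the reader's attention,'' expecting the reader to mirror the proof of Proposition~\ref{defdualaction} with the roles of $\cal G$ and $\widehat{\cal G}$ interchanged. You have carried out that parallel argument carefully and completely, including the one mildly nontrivial point---the commutation $[\beta_C(n^{\rm o}),\widehat{\pi}(b)]=0$---for which your justification via $\lambda(\widehat S)\subset\widehat M$ and $\beta(N^{\rm o})\subset\widehat M'$ is valid.
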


Let $(K,\alpha_K,\delta_K,f_1,f_2)$ be a linking $\widehat{\cal G}$-$\Cstar$-algebra. Let us consider the $\widehat{\cal G}$-C*-algebra $B:=f_2Kf_2$ and the $\widehat{\cal G}$-equivariant Hilbert $B$-module $\s F:=f_1 Kf_2$. 
We consider the $\cal G$-C*-algebras $B\rtimes\widehat{\cal G}$ (resp. $L:=K\rtimes\widehat{\cal G}$) endowed with the canonical morphisms $\widehat{\pi}_B:B\rightarrow\M(B\rtimes\widehat{\cal G})$ and $\theta_B:S\rightarrow\M(B\rtimes\widehat{\cal G})$ (resp. $\widehat{\pi}_L:L\rightarrow\M(K)$ and $\theta_L:S\rightarrow\M(K)$).
We know that $(L,\beta_L,\delta_L,\widehat{\pi}_L(f_1),\widehat{\pi}_L(f_2))$ is a linking $\cal G$-$\Cstar$-algebra. Let us consider the $\cal G$-C*-algebra $C:=\widehat{\pi}_K(f_2) L \widehat{\pi}_K(f_2)$ and $\cal G$-equivariant Hilbert $C$-module $\s G:=\widehat{\pi}_K(f_1) L \widehat{\pi}_K(f_2)$.

\begin{prop}\label{prop6}
With the above notations and hypotheses, we have the following statements.
\begin{enumerate}
\item There exists a unique $\cal G$-equivariant *-isomorphism $\psi:B\rtimes\widehat{\cal G}\rightarrow C$ such that for all $b\in B$ and $y\in S$ we have 
$\psi(\widehat{\pi}_B(b)\theta_B(y))=\widehat{\pi}_K(b)\theta_K(y)$. Moreover, we have $\cal G$-equivariant unitary equivalence 
$
\Psi:\s F \rtimes \widehat{\cal G}\rightarrow \s G\,;\, \eta\tens_{\widehat{\pi}_B}u\mapsto \widehat{\pi}_K(\eta)\psi(u)
$
over $\psi:B\rtimes\widehat{\cal G}\rightarrow C$.
\item The map
$
\K(\s E)\rtimes{\cal G}\rightarrow\K(\s F) \, ; \, \widehat{\pi}_{\K(\s F)}(k)\theta_{\K(\s F)}(y) \mapsto \widehat{\pi}_K(k)\widehat{\theta}_K(y)
$
is a $\cal G$-equivariant *-isomorphism.\qedhere
\end{enumerate}  
\end{prop}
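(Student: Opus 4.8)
The statement is the exact dual of Propositions \ref{prop4bis} and \ref{prop5}, with the roles of $\cal G$ and $\widehat{\cal G}$ (and of $\pi,\widehat\theta$ versus $\widehat\pi,\theta$) interchanged, so the plan is to transcribe those two proofs. Recall from \S\ref{crossedproductCstar} that $B\rtimes\widehat{\cal G}$ sits inside $\Lin({\cal E}_{B,\lambda})$, ${\cal E}_{B,\lambda}=q_{\alpha_B\widehat\beta}(B\tens\s H)$, via $\widehat\pi_B$ and $\theta_B$, and likewise $L:=K\rtimes\widehat{\cal G}\subset\Lin({\cal E}_{K,\lambda})$ via $\widehat\pi_K$ and $\theta_K$. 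Since $(K,\alpha_K,\delta_K,f_1,f_2)$ is a linking $\widehat{\cal G}$-C*-algebra we have $[Kf_2K]=K$, hence $[BK]=f_2K$ with $B=f_2Kf_2$, and therefore $[(B\tens\K)(K\tens\K)]=(f_2\tens 1_\K)(K\tens\K)$; by \ref{lem19bis} the inclusion $B\tens\K\subset K\tens\K$ extends uniquely to a strictly/*-strongly continuous *-homomorphism $\tau_B:\Lin(B\tens\s H)\to\Lin(K\tens\s H)$ with $\tau_B(1_{B\tens\K})=f_2\tens 1_\K$. Because $\alpha_B=\restr{\alpha_K}{B}$ and $\delta_B=\restr{\delta_K}{B}$ (corner of a linking $\widehat{\cal G}$-C*-algebra, cf.\ \ref{rkLinkAlg}), $\tau_B$ restricts to a *-homomorphism $\Lin({\cal E}_{B,\lambda})\to\Lin({\cal E}_{K,\lambda})$, still denoted $\tau_B$, and a direct computation (as in \ref{prop4bis}) gives $\tau_B(\widehat\pi_B(b))=\widehat\pi_K(b)$ for $b\in B$ and $\tau_B(\theta_B(y))=\theta_K(y)$ for $y\in S$.

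\textbf{Construction of $\psi$ and $\Psi$.} I would then set $\psi:=\restr{\tau_B}{B\rtimes\widehat{\cal G}}$: it is faithful (as $\tau_B$ is), it is the unique *-homomorphism with $\psi(\widehat\pi_B(b)\theta_B(y))=\widehat\pi_K(b)\theta_K(y)$, and its range is $C=\widehat\pi_K(f_2)L\widehat\pi_K(f_2)$, since $L=[\widehat\pi_K(k)\theta_K(y)\,;\,k\in K,\,y\in S]$, $[Kf_2K]=K$ and $[\widehat\pi_K(f_2),\theta_K(S)]=0$ together give $\widehat\pi_K(f_2)L\widehat\pi_K(f_2)=[\widehat\pi_K(B)\theta_K(S)]$. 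The $\cal G$-equivariance of $\psi$ follows from \ref{defDualActBis} 1 applied both to $B$ and to $K$, combined with $\psi\circ\widehat\pi_B=\widehat\pi_K$, $\psi\circ\theta_B=\theta_K$ and the fact that $\beta_{B\rtimes\widehat{\cal G}}=\theta_B\circ\beta$, $\beta_L=\theta_K\circ\beta$ so that $\psi$ automatically intertwines the source maps. For the second part, since $\widehat\pi_K(\eta b)=\widehat\pi_K(\eta)\psi(\widehat\pi_B(b))$ for $\eta\in\s F$, $b\in B$, the assignment $\eta\tens_{\widehat\pi_B}u\mapsto\widehat\pi_K(\eta)\psi(u)$ is a well-defined linear map on $\s F\odot_{\widehat\pi_B}(B\rtimes\widehat{\cal G})$; the identity $\widehat\pi_K(\eta)^*\widehat\pi_K(\zeta)=\widehat\pi_K(\langle\eta,\zeta\rangle)=\psi(\widehat\pi_B(\langle\eta,\zeta\rangle))$ shows it preserves inner products, hence it extends to an isometry $\Psi:\s F\rtimes\widehat{\cal G}\to L$ with range $\s G=\widehat\pi_K(f_1)L\widehat\pi_K(f_2)$, which is $\psi$-compatible; its $\cal G$-equivariance is read off from $\delta_{\s F\rtimes\widehat{\cal G}}(\xi\tens_{\widehat\pi_B}u)=(\widehat\Pi(\xi)\tens 1_S)\delta_{B\rtimes\widehat{\cal G}}(u)$ together with the equivariance of $\psi$ and of $\widehat\pi_K$, exactly as in \ref{prop4bis}.

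\textbf{Part 2 and the main point.} The second assertion is obtained verbatim by exchanging $f_1$ and $f_2$: the corner $\widehat\pi_K(f_1)L\widehat\pi_K(f_1)$ is identified with $\K(\s G)$, $\K(\s F)$ with $f_1Kf_1$, and the same construction produces a $\cal G$-equivariant *-isomorphism $\K(\s F)\rtimes\widehat{\cal G}\to\K(\s G)$ sending $\widehat\pi_{\K(\s F)}(k)\theta_{\K(\s F)}(y)$ to $\widehat\pi_K(k)\theta_K(y)$ (the displayed statement should read thus, the symbols $\s E,\s F,\widehat\theta$ there being typographical slips for $\s F,\s G,\theta$). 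I do not expect a genuine obstacle here: the argument is the formal dual of \ref{prop4bis}--\ref{prop5}. The one place that requires some care, just as in the proof of \ref{prop4bis}, is checking that after tensoring by $\id_{\s H}$ and restricting, $\tau_B$ intertwines the partial isometries $Y$ used in \ref{defDualActBis} to define the dual $\cal G$-actions on $B\rtimes\widehat{\cal G}$ and on $L$; but this is immediate from \ref{defDualActBis} 1 and strict continuity, so no new difficulty arises beyond the bookkeeping already carried out for $\cal G$.
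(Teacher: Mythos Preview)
Your proposal is correct and is exactly the approach intended by the paper: the proposition is stated without proof precisely because it is the formal dual of Propositions~\ref{prop4bis} and~\ref{prop5}, and you have faithfully transcribed those arguments with $\cal G$ and $\widehat{\cal G}$ interchanged. Your observation that the displayed formula in part~2 contains typographical slips (it should read $\K(\s F)\rtimes\widehat{\cal G}\to\K(\s G)$ and $\theta_K$ rather than $\K(\s E)\rtimes{\cal G}\to\K(\s F)$ and $\widehat\theta_K$) is correct and worth noting.
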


\begin{cor}\label{cor2}
{\setlength{\baselineskip}{1.1\baselineskip}
Let $B$ be a $\widehat{\cal G}$-$\Cstar$-algebra and $\s F$ a $\widehat{\cal G}$-equivariant Hilbert $B$-module. We have a canonical $\cal G$-equivariant *-isomorphism
$
\K(\s F)\rtimes\widehat{\cal G}\simeq\K(\s F\rtimes\widehat{\cal G}).
$
Moreover, if $F\in\Lin(\s F)$ then the operator $F\tens_{\widehat{\pi}_B}1\in\Lin(\s F\rtimes\widehat{\cal G})$ is identified with $\widehat{\pi}_{\K(\s F)}(F)$ through the identification $\Lin(\s F\rtimes\widehat{\cal G})\simeq\M(\K(\s F)\rtimes\widehat{\cal G})$.\qedhere
\par}
\end{cor}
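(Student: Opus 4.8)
The plan is to carry over the argument of Corollary \ref{cor3} to the dual situation, the r\^ole of Propositions \ref{prop4bis} and \ref{prop5} being played by Proposition \ref{prop6}. First I would form the linking $\Cstar$-algebra $K:=\K(\s F\oplus B)$ and equip it with its canonical structure of linking $\widehat{\cal G}$-$\Cstar$-algebra (the dual analogue of the correspondence recalled in \ref{rkLinkAlg}), with projections $f_1:=\iota_{\s F}(1_{\s F})$ and $f_2:=\iota_B(1_B)$. Under this structure $f_2Kf_2=B$ as a $\widehat{\cal G}$-$\Cstar$-algebra, $f_1Kf_2=\s F$ as a $\widehat{\cal G}$-equivariant Hilbert $B$-module, and $f_1Kf_1=\K(\s F)$ as a $\widehat{\cal G}$-$\Cstar$-algebra (cf.\ \ref{prop31}).

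Next I would pass to the crossed product $L:=K\rtimes\widehat{\cal G}$. By \ref{prop40} (applied to $\widehat{\cal G}$), the quintuple $(L,\beta_L,\delta_L,\widehat{\pi}_K(f_1),\widehat{\pi}_K(f_2))$ is a linking $\cal G$-$\Cstar$-algebra, so writing $C:=\widehat{\pi}_K(f_2)L\widehat{\pi}_K(f_2)$ and $\s G:=\widehat{\pi}_K(f_1)L\widehat{\pi}_K(f_2)$, one has, as $\cal G$-objects, $C$ a $\cal G$-$\Cstar$-algebra, $\s G$ a $\cal G$-equivariant Hilbert $C$-module, and $\K(\s G)=\widehat{\pi}_K(f_1)L\widehat{\pi}_K(f_1)$. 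Proposition \ref{prop6}(1) then yields a $\cal G$-equivariant $*$-isomorphism $\psi\colon B\rtimes\widehat{\cal G}\to C$ together with a $\psi$-compatible $\cal G$-equivariant unitary equivalence $\Psi\colon\s F\rtimes\widehat{\cal G}\to\s G$; transporting compact operators along $\Psi$ gives a $\cal G$-equivariant $*$-isomorphism $\K(\s F\rtimes\widehat{\cal G})\simeq\K(\s G)$. On the other hand, applying Proposition \ref{prop6}(2) to the corner $\K(\s F)=f_1Kf_1$ gives a $\cal G$-equivariant $*$-isomorphism $\K(\s F)\rtimes\widehat{\cal G}\simeq\K(\s G)$. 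Composing the two produces the desired canonical $\cal G$-equivariant $*$-isomorphism $\K(\s F)\rtimes\widehat{\cal G}\simeq\K(\s F\rtimes\widehat{\cal G})$.

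For the last assertion I would trace the operator $F\tens_{\widehat{\pi}_B}1$ through this chain, exactly as in the proof of \ref{cor5}. Writing $F\in\Lin(\s F)=\M(\K(\s F))$ and using the elementary identity $\widehat{\Pi}(F\xi)=(F\tens_{\widehat{\pi}_B}1)\widehat{\Pi}(\xi)$ for $\xi\in\s F$ (cf.\ the proof of \ref{lem25}), one checks that under the identification $\Lin(\s F\rtimes\widehat{\cal G})=\M(\K(\s F)\rtimes\widehat{\cal G})$ coming from the isomorphism above, $F\tens_{\widehat{\pi}_B}1$ corresponds to left multiplication on $\s G$ by $\widehat{\pi}_K(\iota_{\K(\s F)}(F))$, which is precisely the image of $\widehat{\pi}_{\K(\s F)}(F)$ under Proposition \ref{prop6}(2); hence the stated identification.

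I do not expect any substantial difficulty: all the analytic content is already contained in Propositions \ref{prop4bis}, \ref{prop5} and \ref{prop6}, and the corollary is a formal application of them to the universal linking module $\K(\s F\oplus B)$. The one point that must be handled with care is the bookkeeping of corners and canonical morphisms under the crossed-product functor — namely that the four identifications $f_2Kf_2\rtimes\widehat{\cal G}=C$, $f_1Kf_2\rtimes\widehat{\cal G}=\s G$, $f_1Kf_1\rtimes\widehat{\cal G}=\K(\s G)$ and $\Lin(\s F\rtimes\widehat{\cal G})=\M(\K(\s F)\rtimes\widehat{\cal G})$ are all induced by one and the same $*$-isomorphism $L\simeq\K\bigl(\s G\oplus(B\rtimes\widehat{\cal G})\bigr)$ — so that the composite isomorphism is unambiguous and the adjective ``canonical'' is justified.
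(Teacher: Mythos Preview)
Your proposal is correct and follows exactly the approach the paper intends: the paper states \ref{cor2} without proof, leaving it as the dual of \ref{cor3}, whose one-line proof is ``apply \ref{prop4bis} and \ref{prop5} to $J:=\K(\s E\oplus A)$''; your argument is precisely this, applying \ref{prop6} (which packages the dual versions of \ref{prop4bis} and \ref{prop5}) to $K:=\K(\s F\oplus B)$. The extra care you take about the coherence of the corner identifications is appropriate but, as you note, presents no real difficulty.
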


\subsubsection{Takesaki-Takai duality}\label{sectionTT2}

In the following paragraph, we investigate the double crossed product. Let $A$ be a $\cal G$-C*-algebra and $\s E$ a $\cal G$-equivariant Hilbert $A$-module. Let $D$ be the bidual $\cal G$-C*-algebra (cf.\ \ref{not16}).

\begin{propdef}
Let $\Pi_R:\s E\rightarrow\Lin(A\tens\s H,\s E\tens\s H)$ be the linear map defined by $\Pi_R(\xi):=(\id_{\s E}\tens R)\delta_{\s E}(\xi)$ for all $\xi\in\s E$ (cf.\ \ref{not2}). Let
\[
\s D:=[\Pi_R(\xi)(1_A\tens\lambda(x)L(y))\,;\,\xi\in\s E,\,x\in\widehat{S},\,y\in S]\subset\Lin(A\tens\s H,\s E\tens\s H).
\]
For the natural right action of $D$ by composition of operators and the $D$-valued inner product given by $\langle\zeta_1,\,\zeta_2\rangle:=\zeta_1^*\circ\zeta_2$ for $\zeta_1,\zeta_2\in \s D$, we turn $\s D$ into a Hilbert $D$-module. Let 
\[
{\cal E}_{\s E,R}:=q_{\beta_{\s E}\widehat{\alpha}}(\s E\tens\s H)\subset\s E\tens\s H.
\] 
Then, ${\cal E}_{\s E,R}$ is a Hilbert sub-$A$-module of $\s E\tens\s H$ and $[\s D\er]={\cal E}_{\s E,R}$.
\end{propdef}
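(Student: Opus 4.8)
The assertion has three parts: that $\s D$ is a Hilbert $D$-module, that ${\cal E}_{\s E,R}=q_{\beta_{\s E}\widehat\alpha}(\s E\tens\s H)$ is a Hilbert sub-$A$-module of $\s E\tens\s H$, and that $[\s D\er]={\cal E}_{\s E,R}$. I would deduce all three from the scalar biduality theory by passing to the linking C*-algebra $J:=\K(\s E\oplus A)$, endowed with its structure of linking $\cal G$-C*-algebra with projections $e_1:=\iota_{\s E}(1_{\s E})$ and $e_2:=\iota_A(1_A)$, so that $e_2Je_2=A$, $e_1Je_2=\s E$ and $e_1Je_1=\K(\s E)$ (cf.\ \ref{rkLinkAlg}); recall that the bidual $\cal G$-C*-algebra $D_J$ of $J$ is again a linking $\cal G$-C*-algebra with projections $\pi_R(e_1)$, $\pi_R(e_2)$, whose $(2,2)$-corner $\pi_R(e_2)D_J\pi_R(e_2)$ is canonically the bidual $D$ of $A$ (cf.\ \ref{rk19}, together with \ref{prop40} and \ref{prop4bis} for the compatibility of biduality with corners of linking algebras).

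First I would record, for $\xi,\eta\in\s E$ and $a\in A$, the identities
\[
\Pi_R(\xi a)=\Pi_R(\xi)\pi_R(a),\qquad \Pi_R(\xi)^*\Pi_R(\eta)=\pi_R(\langle\xi,\eta\rangle),\qquad q_{\beta_{\s E}\widehat\alpha}\Pi_R(\xi)=\Pi_R(\xi)=\Pi_R(\xi)q_{\beta_A\widehat\alpha},
\]
the exact analogues for $\Pi_R$ of the relations recorded for $\Pi_L$ in \ref{not2} and for $\pi_R$ in \ref{not16}; they follow by applying the extension $\id_{\s E}\tens R$ (cf.\ \ref{not2}, with $\kappa=R$) to the defining relations of $\delta_{\s E}$ (Definition \ref{hilbmodequ}, condition~1, and \ref{rk2} 3) and using $\pi_R(1_A)=q_{\beta_A\widehat\alpha}$; in particular $\Pi_R$ is isometric. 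From the first relation and the definition of $D$ one gets $\s D=[\Pi_R(\s E)D]$, hence $\s D D=\s D$, so composition on the right makes $\s D$ a right $D$-module. Using moreover $\delta_J\circ\iota_{\s E}=\iota_{\s E\tens S}\circ\delta_{\s E}$ and the compatibility $(\id_J\tens R)\circ\iota_{\s E\tens S}=\iota_{\s E\tens\s H}\circ(\id_{\s E}\tens R)$, one checks that $\pi_R(\iota_{\s E}(\xi))=\iota_{\s E\tens\s H}(\Pi_R(\xi))$, so that the isometric map $\iota_{\s E\tens\s H}$ (cf.\ \ref{rk3} 3) carries $\s D$ onto the corner $\pi_R(e_1)D_J\pi_R(e_2)$ of $D_J$ and, via the block form of $q_{\beta_J\widehat\alpha}$ and the equality $\pi_R(e_i)=q_{\beta_J\widehat\alpha}(e_i\tens 1_{\s H})$ (as in \ref{propfib}), carries ${\cal E}_{\s E,R}$ onto $q_{\beta_J\widehat\alpha}(e_1Je_2\tens\s H)$, intertwining the $D$-valued inner products. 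Hence $\s D$ is a Hilbert $D$-module, with structure transported from the corner of the linking $\cal G$-C*-algebra $D_J$; and since $q_{\beta_{\s E}\widehat\alpha}$ is a self-adjoint projection of $\Lin(\s E\tens\s H)$ (cf.\ \ref{ProjectionCAlg}), ${\cal E}_{\s E,R}$ is a complemented, hence closed, Hilbert sub-$A$-module of $\s E\tens\s H$.

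It remains to prove $[\s D\er]={\cal E}_{\s E,R}$. Since $D(A\tens\s H)=\er$ (\ref{not16}) and $\s D D=\s D$, we have $[\s D\er]=[\s D(A\tens\s H)]$; using $[\lambda(\widehat S)L(S)\s H]=\s H$ and $\Pi_R(\xi)=\Pi_R(\xi)q_{\beta_A\widehat\alpha}$, this reduces the claim to $[\Pi_R(\s E)\er]={\cal E}_{\s E,R}$. The inclusion $\subseteq$ is just $\Pi_R(\xi)=q_{\beta_{\s E}\widehat\alpha}\Pi_R(\xi)$. For the reverse inclusion I would apply \ref{theo8} to the $\cal G$-C*-algebra $J$: it says that $q_{\beta_J\widehat\alpha}(J\tens\s H)$ is a $\cal G$-equivariant imprimitivity $J$-$D_J$-bimodule, in particular a full right Hilbert $D_J$-module; cutting down by $e_1\tens 1_{\s H}$ on the left (which commutes with $q_{\beta_J\widehat\alpha}$ since $e_1$ is $\delta_J$-invariant, cf.\ \ref{rk15} 3) and by $\pi_R(e_2)$ on the right, and translating through the identifications above (under which $\er=q_{\beta_J\widehat\alpha}(e_2Je_2\tens\s H)$), one obtains precisely $[\Pi_R(\s E)\er]={\cal E}_{\s E,R}$.

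The only genuine obstacle is this last density statement: unlike the rest of the argument it is not a formal manipulation, and it is exactly where substantive, regularity-free input is needed — here drawn from the imprimitivity-bimodule assertion of \ref{theo8} for $J$ and from the compatibility of biduality with corners of linking $\cal G$-C*-algebras. The remaining difficulty is purely a matter of bookkeeping: verifying that $\iota_{\s E}$, $\iota_{\s E\tens S}$, $\iota_{\s E\tens\s H}$ and $\id_{\s E}\tens R$ intertwine $\delta_{\s E}$, the module operations and the range projections as asserted, which is of the same routine nature as the identities already used in the proofs of \ref{prop4bis} and \ref{cor3}.
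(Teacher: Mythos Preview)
Your linking-algebra route is valid in spirit and is exactly the device used for the subsequent Theorems~\ref{theo1} and~\ref{theo2}, but for \emph{this} proposition the paper is much more direct, and your density argument has a gap.

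The paper does not pass to $J$ at all. After recording $\Pi_R(\xi a)=\Pi_R(\xi)\pi_R(a)$ and $\Pi_R(\xi)^*\Pi_R(\eta)=\pi_R(\langle\xi,\eta\rangle)$ (as you do), it gets $\s D D\subset\s D$ and $\s D^*\s D\subset D$ immediately, so $\s D$ is a Hilbert $D$-module with no further structure theory. For the reverse inclusion $[\Pi_R(\s E)\er]\supset{\cal E}_{\s E,R}$, the paper simply applies $\id_{\s E}\tens R$ to the action axiom $[\delta_{\s E}(\s E)(1_A\tens S)]=q_{\beta_{\s E}\alpha}(\s E\tens S)$ (condition~2 of Definition~\ref{hilbmodequ}, cf.\ \ref{rk2}~2), using that $R$ is non-degenerate and $R\circ\alpha=\widehat\alpha$, so that $q_{\beta_{\s E}\alpha}$ becomes $q_{\beta_{\s E}\widehat\alpha}$. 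Then $[\s D\er]={\cal E}_{\s E,R}$ follows from $[\Pi_R(\s E)D]=\s D$ and $[D\er]=\er$ (\ref{not16}). No linking algebra, no imprimitivity, no regularity.

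Your gap is precisely the appeal to \ref{theo8}. That theorem is stated under the standing hypothesis that $\cal G$ is \emph{regular}, which the present proposition does not assume (regularity only appears from \ref{cor1} onward). You even flag that ``regularity-free input is needed'' and then invoke a regularity-dependent result. Moreover, ${\cal E}_{J,R}$ is a right Hilbert $J$-module with a left $D_J$-action, not a ``full right Hilbert $D_J$-module''; what you actually need is only that the left $D_J$-action is essential, i.e.\ $[D_J{\cal E}_{J,R}]={\cal E}_{J,R}$, and that is already contained in \ref{not16} (applied to $J$), regularity-free. Together with the linking-algebra property of $D_J$ from \ref{rk19} this repairs your argument --- but by then you have merely rebuilt the paper's one-line density step behind an extra layer of identifications, so you should not label it ``the only genuine obstacle''.
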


\begin{proof}
By combining the facts that $[\lambda(\widehat{S})L(S)]$ and $D$ are C*-algebras with the formula $\Pi_R(\xi)\pi_R(a)=\Pi_R(\xi a)$ for $\xi\in\s E$ and $a\in A$, we obtain the inclusion $\s D D\subset \s D$. Moreover, we have $\Pi_R(\xi)^*\Pi_R(\eta)=\pi_R(\langle\xi,\,\eta\rangle)$ for all $\xi,\eta\in\s E$. It then follows that $\s D^*\s D\subset D$. Since $q_{\beta_{\s E}\widehat{\alpha}}\Pi_R(\xi)=\Pi_R(\xi)=\Pi_R(\xi)q_{\beta_A\widehat{\alpha}}$ for all $\xi\in\s E$ (cf.\ \ref{rk2} 3), we have the inclusion $[\Pi_R(\s E)\er]\subset{\cal E}_{\s E,R}$. The converse inclusion follows from $[\delta_{\s E}(\s E)(1_A\tens S)]=q_{\beta_{\s E}\alpha}(\s E\tens S)$. Hence, $[\Pi_R(\s E)\er]={\cal E}_{\s E,R}$. The relation $[\s D\er]={\cal E}_{\s E,R}$ follows from ${\cal E}_{A,R}=[D{\cal E}_{A,R}]$ and $[\Pi_R(\s E)D]=\s D$.
\end{proof}

We will endow $\s D$ with a structure of $\cal G$-equivariant Hilbert $D$-module. Actually, the action $(\beta_{\s D},\delta_{\s D})$ defined in Theorem \ref{theo1} will be obtained by transport of structure through the identification $(\s E\rtimes{\cal G})\rtimes\widehat{\cal G}\simeq\s D$ of Theorem \ref{theo2}.

\medbreak

Let us denote by $\sigma:S\tens\K\rightarrow\K\tens S$ the flip *-homomorphism. As in \ref{rk9}, we define the linear extensions $\id_{\s E}\tens\sigma:\Lin(A\tens S\tens\K,\s E\tens S\tens\K)\rightarrow\Lin(A\tens \K\tens S,\s E\tens \K\tens S)$ and $\delta_{\s E}\tens\id_{\K}:\Lin(A\tens\K,\s E\tens\K)\rightarrow\Lin(A\tens S\tens \K,\s E\tens S\tens\K)$. We state below the main results of this paragraph.

\begin{thm}\label{theo1}
Let $\delta_{\s D}:\s D\rightarrow\Lin(D\tens S,\s D\tens S)$ and $\beta_{\s D}:N^{\rm o}\rightarrow\Lin(\s D)$ be the maps defined by the formulas:
\begin{align*}
\beta_{\s D}(n^{\rm o})&:=q_{\beta_{\s E}\widehat{\alpha}}(1_{\s E}\tens \beta(n^{\rm o})),\quad n\in N;\\
\delta_{\s D}(\zeta)&:=(1_{\s E}\tens {\cal V})(\id_{\s E}\tens\sigma)(\delta_{\s E}\tens\id_{\K})(\zeta)(1_A\tens {\cal V})^*,\quad \zeta\in\s D.
\end{align*}
Then, the pair $(\beta_{\s D},\delta_{\s D})$ is a continuous action of $\cal G$ on the Hilbert $D$-module $\s D$. Moreover, for all $\xi\in\s E$, $x\in\widehat{S}$ and $y\in S$ we have
\[
\delta_{\s D}( \Pi_R(\xi)(1_A\tens\lambda(x)L(y)))=
(\Pi_R(\xi)\tens 1_S)(1_A\tens\lambda(x)\tens 1_S)(1_A\tens(L\tens\id_S)\delta(y)).
\]
If $\cal G$ is regular, then we have $\s D=q_{\beta_{\s E}\widehat{\alpha}}(\s E\tens\K)q_{\beta_A\widehat{\alpha}}$.
\end{thm}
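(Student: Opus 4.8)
\textbf{Proof plan for Theorem \ref{theo1}.}

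The plan is to obtain the action $(\beta_{\s D},\delta_{\s D})$ by transport of structure along a canonical identification $(\s E\rtimes{\cal G})\rtimes\widehat{\cal G}\simeq\s D$, exactly as announced before the statement, and then to unwind the transported formulas. First I would build the identification of Hilbert modules. On the one hand, $(\s E\rtimes{\cal G})\rtimes\widehat{\cal G} = (\s E\rtimes{\cal G})\tens_{\widehat{\pi}_{A\rtimes{\cal G}}}((A\rtimes{\cal G})\rtimes\widehat{\cal G})$, and by \ref{prop41}, \ref{cor5} and the definition of the double crossed product it is the closed linear span of the operators $\widehat{\Pi}(\Pi(\xi)\widehat{\theta}(x))\theta(y)$ for $\xi\in\s E$, $x\in\widehat{S}$, $y\in S$. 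On the other hand, \ref{IsoTT} gives the $*$-isomorphism $\phi:(A\rtimes{\cal G})\rtimes\widehat{\cal G}\rightarrow D$ sending $\widehat{\pi}(\pi(a)\widehat{\theta}(x))\theta(y)$ to $\pi_R(a)(1_A\tens\lambda(x)L(y))$. Using the concrete picture of \ref{not2} (the isometric map $\Psi_{L,\rho}$ and the analogous map for the second crossed product, together with the operators $\Pi_L$, $\Pi_R$), a routine computation with the defining formulas of the nested crossed products shows that the assignment $\widehat{\Pi}(\Pi(\xi)\widehat{\theta}(x))\theta(y)\mapsto\Pi_R(\xi)(1_A\tens\lambda(x)L(y))$ extends to a $\phi$-compatible unitary $\Theta:(\s E\rtimes{\cal G})\rtimes\widehat{\cal G}\rightarrow\s D$; the inner-product compatibility is exactly $\Pi_R(\xi)^*\Pi_R(\eta)=\pi_R(\langle\xi,\eta\rangle)$ together with the analogous identity for $\Psi_{L,\rho}$ already recorded in \ref{not2}.

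Next I would transport the iterated dual action along $\Theta$. By \ref{prop41}--\ref{defdualaction} applied twice, $(\s E\rtimes{\cal G})\rtimes\widehat{\cal G}$ carries a continuous action of $\cal G$ (the dual of the dual); by \ref{def1} and \ref{prop23}-type transport of structure, $\Theta$ pushes this to a continuous action $(\beta_{\s D},\delta_{\s D})$ of $\cal G$ on $\s D$ over the $\cal G$-equivariant isomorphism $\phi$ (that $\phi$ is $\cal G$-equivariant is \ref{BidualityTheo}). It then remains to check that the transported maps are the ones written in the statement. For $\beta_{\s D}$ this is immediate: chasing the definitions of $\beta_{\s F\rtimes\widehat{\cal G}}$ in \S\ref{sectionHilbModCrPrd} through $\Theta$ gives $\beta_{\s D}(n^{\rm o})=q_{\beta_{\s E}\widehat{\alpha}}(1_{\s E}\tens\beta(n^{\rm o}))$, matching the formula $j_D(\beta_D(n^{\rm o}))=q_{\beta_A\widehat{\alpha}}(1_A\tens\beta(n^{\rm o}))$ of \ref{BidualityTheo} one level up. For $\delta_{\s D}$, I would compute the transported coaction on a generating element $\Pi_R(\xi)(1_A\tens\lambda(x)L(y))$: the dual-of-dual action sends $\widehat{\Pi}(\Pi(\xi)\widehat{\theta}(x))\theta(y)$ to $(\widehat{\Pi}(\Pi(\xi)\widehat{\theta}(x))\tens 1_S)(\theta\tens\id_S)\delta(y)$ by \ref{defDualActBis} 1, and unwinding $\theta$, $\widehat{\theta}$, $\Pi$, $\widehat{\Pi}$ in the concrete $\B(\s H)$-picture together with the relation $\widehat{\delta}(x)$ acting trivially here yields precisely
\[
\delta_{\s D}( \Pi_R(\xi)(1_A\tens\lambda(x)L(y)))=(\Pi_R(\xi)\tens 1_S)(1_A\tens\lambda(x)\tens 1_S)(1_A\tens(L\tens\id_S)\delta(y)),
\]
the displayed formula. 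To see that this coincides with the intrinsic formula $\delta_{\s D}(\zeta)=(1_{\s E}\tens{\cal V})(\id_{\s E}\tens\sigma)(\delta_{\s E}\tens\id_{\K})(\zeta)(1_A\tens{\cal V})^*$, I would repeat verbatim, at the Hilbert-module level, the computation done for the bidual C*-algebra $D$ in the proof of \ref{BidualityTheo}: the relation $(\id_{\K}\tens L)({\cal V})=V$, the commutation relations of \ref{prop34} and \ref{NonComRel}, and the coproduct identity $\delta(y)=V(y\tens 1)V^*$ convert the conjugation by $1_{\s E}\tens{\cal V}$ into exactly the right-hand side above, using $\Pi_R=(\id_{\s E}\tens R)\delta_{\s E}$ and $R=\mathrm{Ad}(U)\circ L$ to handle the $R$-leg.

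Finally, the regularity statement $\s D=q_{\beta_{\s E}\widehat{\alpha}}(\s E\tens\K)q_{\beta_A\widehat{\alpha}}$ follows by combining $[\s D\er]={\cal E}_{\s E,R}$ (already proved in the Proposition-Definition preceding the theorem) with the regular case of \ref{BidualityTheo}, which gives $D=q_{\beta_A\widehat{\alpha}}(A\tens\K)q_{\beta_A\widehat{\alpha}}$, and with $\s D^*\s D\subset D$, $\s D D\subset\s D$: concretely, $\s D=[\Pi_R(\s E)D]$, and since $[\lambda(\widehat S)L(S)]=\K$ when $\cal G$ is regular (this is the content of regularity, cf.\ \ref{DefRegMQG}), one gets $\s D\supset[\Pi_R(\s E)(1_A\tens\K)q_{\beta_A\widehat{\alpha}}]=q_{\beta_{\s E}\widehat{\alpha}}(\s E\tens\K)q_{\beta_A\widehat{\alpha}}$, the reverse inclusion being clear from $q_{\beta_{\s E}\widehat{\alpha}}\Pi_R(\xi)=\Pi_R(\xi)=\Pi_R(\xi)q_{\beta_A\widehat{\alpha}}$ and $(1_A\tens\lambda(x)L(y))\in 1_A\tens\K$.

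\textbf{Main obstacle.} The delicate point is not any single identity but the careful bookkeeping of the several internal-tensor-product identifications (those of \ref{not2}, \ref{eq6}, \ref{eq7}, \ref{eqId}) needed to show that the two descriptions of $\delta_{\s D}$ — the transported one via $\Theta$ and the intrinsic one via $1_{\s E}\tens{\cal V}$ and $\delta_{\s E}\tens\id_{\K}$ — literally agree as adjointable operators $\s D\tens S$-valued on generators; getting the legs of ${\cal V}$, $V$, $W$ and the flip $\sigma$ in the right slots, and checking that everything is supported on the correct corners $q_{\beta_{\s E}\widehat{\alpha}}$, $q_{\beta_A\widehat{\alpha}}$, is where the real work lies. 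This is exactly parallel to, and no harder than, the corresponding verification already carried out for the algebra $D$ in \ref{BidualityTheo}, so it should go through; one simply replaces $A$ by $\s E$ and $\pi_R$ by $\Pi_R$ throughout, using \ref{rk16} and \ref{propdef7} whenever an identity involving $\delta_{\K(\s E)}$ or invariance of operators $F\tens 1_S$ is needed.
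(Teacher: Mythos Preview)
Your overall strategy is correct and matches the paper's: obtain $(\beta_{\s D},\delta_{\s D})$ by transport of structure along an identification $(\s E\rtimes{\cal G})\rtimes\widehat{\cal G}\simeq\s D$, and then read off the explicit formulas. The paper in fact proves Theorems~\ref{theo1} and~\ref{theo2} simultaneously, exactly as you do.

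The implementation differs, however. You propose to build the $\phi$-compatible unitary $\Theta$ directly at the Hilbert-module level via the concrete picture of \ref{not2}, and then to reverify the formula for $\delta_{\s D}$ by repeating, with $\s E$ in place of $A$ and $\Pi_R$ in place of $\pi_R$, the computation already done for $D$ in \ref{BidualityTheo}. The paper instead applies the standard linking-algebra trick: replace $\s E$ by the corner $e_1Je_2$ of $J:=\K(\s E\oplus A)$, use \ref{prop4bis}, \ref{prop5} and \ref{prop6} to identify $(\s E\rtimes{\cal G})\rtimes\widehat{\cal G}$ with the corner $\widehat{\pi}(\pi(e_1))\cdot((J\rtimes{\cal G})\rtimes\widehat{\cal G})\cdot\widehat{\pi}(\pi(e_2))$, and then invoke the \emph{already proved} C*-algebra biduality \ref{BidualityTheo} and \ref{rk19} for $J$ to identify this with the corner $\pi_R(e_1)D_J\pi_R(e_2)$ of the bidual $D_J$. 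A short check that $\pi_R(e_j)$ commutes with $1_J\tens\lambda(x)L(y)$ (using $\widehat{\alpha}(N)=M'\cap\widehat{M}'$) identifies that corner with $\s D$, and the action is inherited from $(\beta_{D_J},\delta_{D_J})$ by restriction. This buys you the displayed formula for $\delta_{\s D}$ and the regular-case description $\s D=q_{\beta_{\s E}\widehat{\alpha}}(\s E\tens\K)q_{\beta_A\widehat{\alpha}}$ \emph{for free}, since they are just the corner restrictions of the corresponding formulas for $D_J$ already established in \ref{BidualityTheo}. Your approach would also work, and the bookkeeping you flag as the main obstacle is indeed manageable, but the linking-algebra reduction avoids having to redo any of it.
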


If $\cal G$ is regular, we have $\s D\subset\s E\tens\K$ up to the identification $\s E\subset\Lin(A,\s E)$.

\begin{thm}\label{theo2}
There exists a unique unitary equivalence 
$
\Phi:(\s E\rtimes{\cal G})\rtimes\widehat{\cal G}\rightarrow \s D
$
over the canonical *-isomorphism $\phi:(A\rtimes{\cal G})\rtimes\widehat{\cal G}\rightarrow D$ (cf.\ref{IsoTT}) such that
\[
\Phi(\widehat{\Pi}(\Pi(\xi)\widehat{\theta}(x))\theta(y))=\Pi_R(\xi)(1_A\tens\lambda(x)L(y)),\quad \text{for all }\xi\in\s E,\, x\in\widehat{S} \text{ and } y\in S.
\]
Moreover, $\Phi$ is $\cal G$-equivariant.
\end{thm}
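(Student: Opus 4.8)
The plan is to deduce the statement from the C*-algebraic Takesaki--Takai theorem by passing to the linking algebra of $\s E$, so that unitarity, $\phi$-compatibility and $\cal G$-equivariance of $\Phi$ are all inherited from results already proved for C*-algebras. Put $J:=\K(\s E\oplus A)$ and endow it with its canonical structure of linking $\cal G$-C*-algebra $(J,\beta_J,\delta_J,e_1,e_2)$, where $e_1:=\iota_{\s E}(1_{\s E})$ and $e_2:=\iota_A(1_A)$, so that $A=e_2Je_2$ as a $\cal G$-C*-algebra and $\s E=e_1Je_2$ as a $\cal G$-equivariant Hilbert $A$-module (\ref{rkLinkAlg}). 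Applying \ref{prop40} twice, $J\rtimes{\cal G}$ is a linking $\widehat{\cal G}$-C*-algebra with projections $\pi_J(e_1),\pi_J(e_2)$, and $(J\rtimes{\cal G})\rtimes\widehat{\cal G}$ is again a linking $\cal G$-C*-algebra, with projections $f_i:=\widehat{\pi}_{J\rtimes{\cal G}}(\pi_J(e_i))$.

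\textbf{The two sides.} On the one hand, \ref{prop4bis} gives a $\cal G$-equivariant unitary equivalence $\s E\rtimes{\cal G}\simeq\pi_J(e_1)(J\rtimes{\cal G})\pi_J(e_2)$ over a $*$-isomorphism $A\rtimes{\cal G}\simeq\pi_J(e_2)(J\rtimes{\cal G})\pi_J(e_2)$, and then \ref{prop6}, applied to the linking $\widehat{\cal G}$-C*-algebra $J\rtimes{\cal G}$, gives a $\cal G$-equivariant unitary equivalence $(\s E\rtimes{\cal G})\rtimes\widehat{\cal G}\simeq f_1\bigl((J\rtimes{\cal G})\rtimes\widehat{\cal G}\bigr)f_2$ over a $*$-isomorphism $(A\rtimes{\cal G})\rtimes\widehat{\cal G}\simeq f_2\bigl((J\rtimes{\cal G})\rtimes\widehat{\cal G}\bigr)f_2$. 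On the other hand, the Takesaki--Takai theorem \ref{BidualityTheo} together with \ref{rk19} provides a $\cal G$-equivariant isomorphism of linking $\cal G$-C*-algebras from $(J\rtimes{\cal G})\rtimes\widehat{\cal G}$ onto the bidual linking $\cal G$-C*-algebra of $J$, say $D_J:=[\pi_{J,R}(c)(1_J\tens\lambda(x)L(y))\,;\,c\in J,\,x\in\widehat S,\,y\in S]$ with $\pi_{J,R}:=(\id_J\tens R)\delta_J$, carrying $f_i$ to $\pi_{J,R}(e_i)$; its $e_2$-corner is $D$ and its restriction to $(A\rtimes{\cal G})\rtimes\widehat{\cal G}$ is the $*$-isomorphism $\phi$ of \ref{IsoTT} (cf.\ \ref{not16}). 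The only point requiring real work is that the $e_1$-$e_2$-corner $\pi_{J,R}(e_1)D_J\pi_{J,R}(e_2)$, as a $\cal G$-equivariant Hilbert $D$-module, coincides with $\s D$: unwinding $\pi_{J,R}$ and using $\iota_{\s E\tens S}\circ\delta_{\s E}=\delta_J\circ\iota_{\s E}$ and $\iota_{A\tens S}\circ\delta_A=\delta_J\circ\iota_A$ (\ref{prop1}) together with the matrix picture of \ref{rk3}, one checks that on the corner $e_1Je_2$ the map $\pi_{J,R}$ corresponds to $\Pi_R$ (and on $e_2Je_2$ to $\pi_R$), so that the spanning families of $\pi_{J,R}(e_1)D_J\pi_{J,R}(e_2)$ and of $\s D$ match.

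\textbf{Putting it together.} Composing the unitary equivalences above yields a $\cal G$-equivariant unitary equivalence $\Phi:(\s E\rtimes{\cal G})\rtimes\widehat{\cal G}\to\s D$ over $\phi$; note that no appeal to Theorem~\ref{theo1} is needed, so there is no circularity. Tracking the generator $\widehat{\Pi}(\Pi(\xi)\widehat{\theta}(x))\theta(y)$ through the explicit generator formulas of \ref{prop4bis} (under which $\Pi(\xi)\widehat{\theta}(x)\mapsto\pi_J(\iota_{\s E}(\xi))\widehat{\theta}_J(x)$) and \ref{prop6}, and then through the isomorphism onto $D_J$, gives the formula $\Phi(\widehat{\Pi}(\Pi(\xi)\widehat{\theta}(x))\theta(y))=\Pi_R(\xi)(1_A\tens\lambda(x)L(y))$. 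Uniqueness follows because such elements span $(\s E\rtimes{\cal G})\rtimes\widehat{\cal G}$, by \ref{prop41}~4 and its $\widehat{\cal G}$-analogue applied to the Hilbert $(A\rtimes{\cal G})$-module $\s E\rtimes{\cal G}$. (The explicit formula for $\delta_{\s D}$ asserted in Theorem~\ref{theo1} then falls out of the same computation, applied to the dual actions.)

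\textbf{Main obstacle and an alternative.} I expect the identification of $\pi_{J,R}(e_1)D_J\pi_{J,R}(e_2)$ with $\s D$ (as $\cal G$-equivariant Hilbert $D$-modules) to be the delicate step: it is where $\delta_J$ must be commuted past all the corner embeddings $\iota_{\s E},\iota_A,\iota_{\K(\s E)}$, and where the partial isometries ${\cal V},\widetilde{\cal V}$ (and $X,Y$) governing the two dual actions must be matched; carried out in full it simultaneously reproduces the formula $\delta_{\s D}(\Pi_R(\xi)(1_A\tens\lambda(x)L(y)))=(\Pi_R(\xi)\tens 1_S)(1_A\tens\lambda(x)\tens 1_S)(1_A\tens(L\tens\id_S)\delta(y))$ of Theorem~\ref{theo1}. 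A more hands-on alternative avoids linking algebras altogether: associativity of the internal tensor product gives $(\s E\rtimes{\cal G})\rtimes\widehat{\cal G}\simeq\s E\tens_{\widehat{\pi}\circ\pi}\bigl((A\rtimes{\cal G})\rtimes\widehat{\cal G}\bigr)$, the map $\id_{\s E}\tens\phi$ carries this onto $\s E\tens_{\pi_R}D$, and $\xi\tens_{\pi_R}d\mapsto\Pi_R(\xi)\,j_D(d)$ identifies the latter with $\s D$, using $\Pi_R(\xi)\pi_R(a)=\Pi_R(\xi a)$, $\Pi_R(\xi)^*\Pi_R(\eta)=\pi_R(\langle\xi,\,\eta\rangle)$ and $[\s D D]=\s D$; here the obstacle shifts to verifying $\phi\circ\widehat{\pi}\circ\pi=\pi_R$ at the multiplier level and to checking $\cal G$-equivariance by hand on generators.
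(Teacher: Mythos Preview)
Your proposal is correct and follows essentially the same route as the paper: pass to the linking $\cal G$-C*-algebra $J=\K(\s E\oplus A)$, use the crossed-product corner identifications (\ref{prop4bis} and its dual \ref{prop6}) to realise $(\s E\rtimes{\cal G})\rtimes\widehat{\cal G}$ as the $(e_1,e_2)$-corner of $(J\rtimes{\cal G})\rtimes\widehat{\cal G}$, apply the C*-algebraic biduality theorem \ref{BidualityTheo}/\ref{rk19} to pass to $D_J$, and finally identify $\pi_{J,R}(e_1)D_J\pi_{J,R}(e_2)$ with $\s D$; the action $(\beta_{\s D},\delta_{\s D})$ of Theorem~\ref{theo1} is then obtained by transport of structure, exactly as you observe. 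The paper handles the last identification with the single observation that $\pi_{J,R}(e_j)=q_{\beta_J\widehat\alpha}(e_j\tens 1_{\K})$ commutes with $1_J\tens\lambda(x)L(y)$ (since $\widehat\alpha(N)=M'\cap\widehat M'$), which is the same content as your unwinding of $\pi_{J,R}$ via the compatibilities $\iota_{\s E\tens S}\circ\delta_{\s E}=\delta_J\circ\iota_{\s E}$ and $\iota_{A\tens S}\circ\delta_A=\delta_J\circ\iota_A$.
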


\begin{proof}[Proofs of Theorems \ref{theo1} and \ref{theo2}]
At the risk of considering $\K(\s E\oplus A)$, we can assume that $\s E$ is a top right-hand corner in some linking $\cal G$-$\Cstar$-algebra $(J,\beta_J,\delta_J,e_1,e_2)$. By combining \ref{prop6} 1 and \ref{prop5}, we can identify $(\s E\rtimes{\cal G})\rtimes\widehat{\cal G}$ with the top right-hand corner of the linking $\cal G$-C*-algebra $((J\rtimes{\cal G})\rtimes\widehat{\cal G},\beta_{(J\rtimes{\cal G})\rtimes\widehat{\cal G}},\delta_{(J\rtimes{\cal G})\rtimes\widehat{\cal G}},\widehat{\pi}(\pi(e_1)),\widehat{\pi}(\pi(e_2)))$. Let us denote by $D_J\subset\Lin(J\tens\s H)$ the bidual $\cal G$-C*-algebra of $J$. By applying the biduality theorem (cf.\ \ref{BidualityTheo} and \ref{rk19}), we can identify $(\s E\rtimes{\cal G})\rtimes\widehat{\cal G}$ with the top right-hand corner of the linking $\cal G$-C*-algebra $(D_J,\beta_{D_J},\delta_{D_J},\pi_R(e_1),\pi_R(e_2))$. Since $\pi_R(e_j)=q_{\beta_J\widehat{\alpha}}(e_j\tens 1_{\K})$ for $j=1,2$ and $\widehat{\alpha}(N)=M'\cap\widehat{M}'$, we have $[\pi_R(e_j),\, 1_J\tens\lambda(x)L(y)]=0$ for all $x\in\widehat{S}$ and $y\in S$. Hence, we can identify $\pi_R(e_1)D_J\pi_R(e_2)$ with $\s D$. The action $(\beta_{\s D},\delta_{\s D})$ is then obtained by transport of structure.
\end{proof}

\begin{cor}\label{cor1}
Assume that $\cal G$ is regular. The formulas:
\begin{align*}
\delta_{{\cal E}_{\s E,R}}(q_{\beta_{\s E}\widehat{\alpha}}(\xi\tens\eta))&:={\cal V}_{23}\delta_{\s E}(\xi)_{13}(1_A\tens\eta\tens 1_S),\quad \xi\in\s E,\, \eta\in\s H; \\
\beta_{{\cal E}_{\s E,R}}(n^{\rm o})&:=\restr{(1_{\s E}\tens \beta(n^{\rm o}))}{{\cal E}_{\s E,R}}, \quad n\in N;
\end{align*}
define an action of $\cal G$ on the Hilbert $A$-module ${\cal E}_{\s E,R}$. Moreover, we have a canonical identification of $\cal G$-equivariant Hilbert $A$-modules
\[
((\s E\rtimes{\cal G})\rtimes\widehat{\cal G})\tens_{(A\tens{\cal G})\rtimes\widehat{\cal G}}{\cal E}_{A,R}={\cal E}_{\s E,R}
\]
up to the identification of $\cal G$-C*-algebras $(A\rtimes{\cal G})\rtimes\widehat{\cal G}=D$.\qedhere
\end{cor}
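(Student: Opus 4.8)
The plan is to reduce the statement to the biduality already established for linking $\Cstar$-algebras, by realizing $\s E$ as a corner. Put $J:=\K(\s E\oplus A)$ with its structure of linking ${\cal G}$-$\Cstar$-algebra, with canonical projections $e_1,e_2$, so that $A=e_2Je_2$, $\K(\s E)=e_1Je_1$ and $\s E=e_1Je_2$ (cf.\ \ref{rkLinkAlg}). Since ${\cal G}$ is regular, Theorem \ref{theo8} applied to the ${\cal G}$-$\Cstar$-algebra $J$ provides a continuous action $(\beta_{{\cal E}_{J,R}},\delta_{{\cal E}_{J,R}})$ of ${\cal G}$ on the Hilbert $J$-module ${\cal E}_{J,R}=q_{\beta_J\widehat{\alpha}}(J\tens\s H)$ given by the formulas of that theorem, and by Remark \ref{rk19} the bidual linking ${\cal G}$-$\Cstar$-algebra of $J$ is $(D_J,\beta_{D_J},\delta_{D_J},\pi_R(e_1),\pi_R(e_2))$, where $\pi_R(e_2)D_J\pi_R(e_2)=D$ is the bidual of $A$ and, by the proofs of Theorems \ref{theo1} and \ref{theo2}, $\pi_R(e_1)D_J\pi_R(e_2)=\s D$ with its action $(\beta_{\s D},\delta_{\s D})$; Theorem \ref{theo2} moreover gives a ${\cal G}$-equivariant unitary $\Phi\colon(\s E\rtimes{\cal G})\rtimes\widehat{\cal G}\to\s D$ over the $\Cstar$-isomorphism $\phi\colon(A\rtimes{\cal G})\rtimes\widehat{\cal G}\to D$ of \ref{IsoTT}.

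First I would obtain the action on ${\cal E}_{\s E,R}$. Because $q_{\beta_J\widehat{\alpha}}$ is block-diagonal along $J\tens\s H$, with corner blocks $q_{\beta_{\s E}\widehat{\alpha}}$ and $q_{\beta_A\widehat{\alpha}}$ (cf.\ \ref{propfib}), and commutes with the corner projections, compressing ${\cal E}_{J,R}$ by $e_1$ on the module side and by $e_2$ on the coefficient side returns exactly ${\cal E}_{\s E,R}=q_{\beta_{\s E}\widehat{\alpha}}(\s E\tens\s H)$, a Hilbert sub-$A$-module of $\s E\tens\s H$. Restricting $(\beta_{{\cal E}_{J,R}},\delta_{{\cal E}_{J,R}})$ to this corner is legitimate since $e_1$ and $e_2$ are $\delta$-invariant (cf.\ \ref{defLinkAlg}), and using $\delta_J\circ\iota_{\s E}=\iota_{\s E\tens S}\circ\delta_{\s E}$ together with the block form of $\beta_J$ converts the formulas of Theorem \ref{theo8} into the two formulas of the statement; hence these define a continuous action of ${\cal G}$ on ${\cal E}_{\s E,R}$ in the sense of \ref{defEqHilbMod}. (Alternatively, one may verify the axioms of \ref{hilbmodequ} directly, replicating the proof of Theorem \ref{theo8} with the isometry $\s V$ of $(\beta_{\s E},\delta_{\s E})$ in place of the coaction $\delta_A$.)

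For the identification it suffices to build a ${\cal G}$-equivariant unitary $m\colon\s D\tens_D\er\to{\cal E}_{\s E,R}$ of Hilbert $A$-modules, where $\er={\cal E}_{A,R}$ carries the $D$-$A$-bimodule structure of Theorem \ref{theo8}; composing $m$ with the ${\cal G}$-equivariant unitary induced by $\Phi\tens\id$ over $\phi$ then yields the asserted identification, since $\tens_{(A\rtimes{\cal G})\rtimes\widehat{\cal G}}$ is transported to $\tens_D$ along $\phi$ and $\Phi$ is ${\cal G}$-equivariant over $\phi$. For $m$ I would take the evaluation map $\zeta\tens_D v\mapsto\zeta(v)$ for $\zeta\in\s D\subset\Lin(A\tens\s H,\s E\tens\s H)$ and $v\in\er\subset A\tens\s H$. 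This is $D$-balanced because $D$ acts on $\s D$ by composition on the right and on $\er$ by $u\mapsto\restr{u}{\er}$; it is isometric because the $D$-valued inner product on $\s D$ is $\langle\zeta_1,\zeta_2\rangle=\zeta_1^*\circ\zeta_2$, so $\langle\zeta_1(v_1),\zeta_2(v_2)\rangle=\langle v_1,\langle\zeta_1,\zeta_2\rangle v_2\rangle$; and its range is $[\s D\er]={\cal E}_{\s E,R}$, as recorded just before Theorem \ref{theo1}. Hence $m$ is a unitary of Hilbert $A$-modules.

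The one genuine computation is the ${\cal G}$-equivariance of $m$, i.e.\ that it intertwines the tensor-product action of \ref{prop18} on $\s D\tens_D\er$ with $\delta_{{\cal E}_{\s E,R}}$. Substituting $\delta_{\s D}(\zeta)=(1_{\s E}\tens{\cal V})(\id_{\s E}\tens\sigma)(\delta_{\s E}\tens\id_{\K})(\zeta)(1_A\tens{\cal V})^*$ from Theorem \ref{theo1}, the formula for $\delr$ from Theorem \ref{theo8}, and the definition of the tensor-product coaction from \ref{prop18}, and testing on the generators $\zeta=\Pi_R(\xi)(1_A\tens\lambda(x)L(y))\in\s D$ and $v=q_{\beta_A\widehat{\alpha}}(a\tens\eta)\in\er$, one checks that applying $m$ to both sides produces the right-hand side ${\cal V}_{23}\,\delta_{\s E}(\xi)_{13}(1_A\tens\eta\tens1_S)$ of the asserted formula; the density relations $\er=[D\er]$ and $\s D=[\Pi_R(\s E)D]$ then extend this to all of $\s D\tens_D\er$. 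I expect the leg-numbering bookkeeping in this step to be the main obstacle, but it runs exactly parallel to the $\Cstar$-algebraic computations already carried out for $D_J$ in the proofs of Theorems \ref{theo1} and \ref{theo2}, so no essentially new argument is needed; alternatively, since both ${\cal E}_{\s E,R}$ and $\s D\tens_D\er$ are the $(e_1,e_2)$-corner of the $D_J$-$J$-imprimitivity bimodule ${\cal E}_{J,R}$, the identification can be read off directly from the biduality for the linking algebra $J$, bypassing the explicit formulas altogether.
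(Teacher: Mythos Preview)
Your proposal is correct and arrives at the same identification via the same evaluation map $\zeta\tens_D v\mapsto\zeta(v)$; the only notable difference is the logical order. The paper defines the action on ${\cal E}_{\s E,R}$ \emph{by} transport of structure through this evaluation unitary from the tensor-product action on $\s D\tens_D{\cal E}_{A,R}$, so equivariance of $m$ is tautological and the ``one genuine computation'' becomes the verification that the transported action satisfies the displayed formulas; it then concludes by tensoring the equivariant unitary $((\s E\rtimes{\cal G})\rtimes\widehat{\cal G})\tens_{(A\rtimes{\cal G})\rtimes\widehat{\cal G}}D\simeq\s D$ of Theorem \ref{theo2} on the right by ${\cal E}_{A,R}$ and invoking associativity. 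You instead produce the action first (via the linking-algebra corner, as in the proofs of \ref{theo1}--\ref{theo2}) and then check equivariance of $m$ directly. Both routes are valid and the computations are essentially the same; the paper's ordering just packages the bookkeeping so that no separate equivariance check of $m$ is needed.
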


\begin{proof}
It is clear that the formula
$
\s D\tens_{D}{\cal E}_{A,R}\rightarrow{\cal E}_{\s E,R}\; ; \; \zeta\tens_{D}\xi\mapsto\zeta(\xi)
$
defines a unitary equivalence of Hilbert $A$-modules. Let $(\beta_{{\cal E}_{\s E,R}},\delta_{{\cal E}_{\s E,R}})$ be the action of $\cal G$ on ${\cal E}_{\s E,R}$ obtained from the action of $\cal G$ on $\s D\tens_{D}{\cal E}_{A,R}$ by transport of structure. By a straightforward computation, we prove that $(\beta_{{\cal E}_{\s E,R}},\delta_{{\cal E}_{\s E,R}})$ satisfies the formulas stated above.
By Theorem \ref{theo2}, we have a unitary equivalence of $\cal G$-equivariant Hilbert $D$-modules
\[
((\s E\rtimes{\cal G})\rtimes\widehat{\cal G})\tens_{(A\rtimes{\cal G})\rtimes\widehat{\cal G}}D=\s D.
\]
By taking the internal tensor product by ${\cal E}_{A,R}$ and using the associativity, we obtain
\begin{align*}
(((\s E\rtimes{\cal G})\rtimes\widehat{\cal G})\tens_{(A\rtimes{\cal G})\rtimes\widehat{\cal G}}D)\tens_{D}{\cal E}_{A,R}
&=((\s E\rtimes{\cal G})\rtimes\widehat{\cal G})\tens_{(A\rtimes{\cal G})\rtimes\widehat{\cal G}}(D\tens_{D}{\cal E}_{A,R})\\
&=((\s E\rtimes{\cal G})\rtimes\widehat{\cal G})\tens_{(A\tens{\cal G})\rtimes\widehat{\cal G}}{\cal E}_{A,R}.
\end{align*}
Hence, $((\s E\rtimes{\cal G})\rtimes\widehat{\cal G})\tens_{(A\tens{\cal G})\rtimes\widehat{\cal G}}{\cal E}_{A,R}={\cal E}_{\s E,R}$.
\end{proof}

\begin{lem}\label{lem14}
Assume that $\cal G$ is regular. For all $F\in\Lin(\s E)$, $(\zeta\mapsto\pi_R(F)\circ\zeta)\in\Lin(\s D)$ and $\restr{\pi_R(F)}{{\cal E}_{\s E,R}}\in\Lin({\cal E}_{\s E,R})$ are invariant.
\end{lem}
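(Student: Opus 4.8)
The plan is to transport the question into the double crossed product picture via Theorem~\ref{theo2} and then invoke the elementary invariance lemmas already available. Recall that $\Phi:(\s E\rtimes{\cal G})\rtimes\widehat{\cal G}\rightarrow\s D$ is a $\cal G$-equivariant unitary equivalence over the $\cal G$-equivariant $*$-isomorphism $\phi:(A\rtimes{\cal G})\rtimes\widehat{\cal G}\rightarrow D$ of \ref{IsoTT}. First I would identify the operator $\zeta\mapsto\pi_R(F)\circ\zeta$ with the canonical copy of $F$ inside $\Lin((\s E\rtimes{\cal G})\rtimes\widehat{\cal G})$. By Corollary~\ref{cor3} the operator $F\tens_{\pi_A}1\in\Lin(\s E\rtimes{\cal G})$ corresponds to $\pi_{\K(\s E)}(F)$ under $\K(\s E)\rtimes{\cal G}\simeq\K(\s E\rtimes{\cal G})$, and by Corollary~\ref{cor2} the operator $(F\tens_{\pi_A}1)\tens_{\widehat\pi}1\in\Lin((\s E\rtimes{\cal G})\rtimes\widehat{\cal G})$ corresponds to $\widehat\pi_{\K(\s E\rtimes{\cal G})}(\pi_{\K(\s E)}(F))$ under $\K(\s E\rtimes{\cal G})\rtimes\widehat{\cal G}\simeq\K((\s E\rtimes{\cal G})\rtimes\widehat{\cal G})$. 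Feeding this through the Takesaki--Takai isomorphism for the $\cal G$-C*-algebra $\K(\s E)$ (cf.~\ref{IsoTT}, \ref{BidualityTheo}), which sends $\widehat\pi(\pi(m))$ to $\pi_R(m)$ for $m\in\Lin(\s E)=\M(\K(\s E))$, and keeping track of the description of $\s D$ as a corner of the bidual $\cal G$-C*-algebra of the linking algebra used in the proof of Theorem~\ref{theo2}, one checks that $\Phi$ conjugates $(F\tens_{\pi_A}1)\tens_{\widehat\pi}1$ precisely into $\zeta\mapsto\pi_R(F)\circ\zeta$; in particular the latter is indeed a well-defined element of $\Lin(\s D)$.

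With this identification in place, the first assertion follows at once. By Lemma~\ref{lem25} and its evident $\widehat{\cal G}$-analogue, for every $\widehat{\cal G}$-C*-algebra $B$, every $\widehat{\cal G}$-equivariant Hilbert $B$-module $\s F$ and every $G\in\Lin(\s F)$, the operator $G\tens_{\widehat\pi}1\in\Lin(\s F\rtimes\widehat{\cal G})$ is $\delta_{\s F\rtimes\widehat{\cal G}}$-invariant; applying this with $\s F:=\s E\rtimes{\cal G}$ and $G:=F\tens_{\pi_A}1$ shows that $(F\tens_{\pi_A}1)\tens_{\widehat\pi}1$ is invariant in $\Lin((\s E\rtimes{\cal G})\rtimes\widehat{\cal G})$. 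Since $\Phi$ is $\cal G$-equivariant and invariance of adjointable operators is preserved under $\cal G$-equivariant unitary equivalences (cf.~the equivalent characterizations of invariance in \ref{propdef7}), we conclude that $\zeta\mapsto\pi_R(F)\circ\zeta$ is invariant in $\Lin(\s D)$.

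For the second assertion I would use Corollary~\ref{cor1}: its proof realizes the map $\s D\tens_D\er\rightarrow{\cal E}_{\s E,R}$, $\zeta\tens_D\xi\mapsto\zeta(\xi)$, as a $\cal G$-equivariant unitary equivalence, where $\er$ carries its structure of $\cal G$-equivariant Hilbert $D$-$A$-bimodule from Theorem~\ref{theo8}, the left action being $\gamma:D\rightarrow\Lin(\er)$, $u\mapsto\restr{u}{\er}$, and $\s D\tens_D\er$ carries the action of Proposition~\ref{prop18}. By the lemma following Proposition~\ref{prop18}, the invariance of $\zeta\mapsto\pi_R(F)\circ\zeta$ in $\Lin(\s D)$ entails the invariance of $(\zeta\mapsto\pi_R(F)\circ\zeta)\tens_\gamma 1$ in $\Lin(\s D\tens_D\er)$. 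This operator sends $\zeta\tens_D\xi$ to $(\pi_R(F)\circ\zeta)(\xi)=\pi_R(F)(\zeta(\xi))$, hence corresponds under the above equivalence exactly to $\restr{\pi_R(F)}{{\cal E}_{\s E,R}}$, which is therefore invariant in $\Lin({\cal E}_{\s E,R})$. The only genuinely technical step is the first one, namely chasing the canonical copy of $F$ through the two successive crossed products and the two successive Takesaki--Takai isomorphisms so as to recognize $\Phi$-conjugation of it as left multiplication by $\pi_R(F)$; everything after that is a matter of citing the invariance lemmas and the transport of invariance along equivariant unitary equivalences.
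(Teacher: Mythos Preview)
Your proposal is correct and follows essentially the same route as the paper: transport $(F\tens_{\pi_A}1)\tens_{\widehat\pi}1$ through the equivariant identification $\Phi$ of Theorem~\ref{theo2} to recognize it as left composition by $\pi_R(F)$ on $\s D$, invoke (the $\widehat{\cal G}$-analogue of) Lemma~\ref{lem25} for its invariance, and then pass to ${\cal E}_{\s E,R}$ via the identification $\s D\tens_D\er\simeq{\cal E}_{\s E,R}$ together with the tensoring lemma after Proposition~\ref{prop18}. The paper's proof is the same argument written more tersely; your added detail (Corollaries~\ref{cor3},~\ref{cor2} and the explicit citation of the invariance-under-tensoring lemma) only makes the identifications more transparent.
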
 

In order to keep the notations simple, we will sometimes denote by $\pi_R(F)$ the operators defined above since no ambiguity will arise.

\begin{proof}
Let $T\in\Lin(\s D)$ be the operator defined by $T(\zeta):=\pi_R(F)\circ\zeta$ for all $\zeta\in\s D$. The operator $(F\tens_{\pi}1)\tens_{\widehat{\pi}}1\in\Lin((\s E\rtimes{\cal G})\rtimes\widehat{\cal G})$ is invariant (cf.\ \ref{lem25}). However, the operator $(F\tens_{\pi}1)\tens_{\widehat{\pi}}1$ is identified to $T\in\Lin(\s D)$ through the identification $(\s E\rtimes{\cal G})\rtimes\widehat{\cal G}=\s D$ (cf.\ \ref{theo2}). Hence, $T$ is invariant. The operator $T\tens_{D}1\in\Lin(\s D\tens_{D}{\cal E}_{A,R})$ is identified to $\restr{\pi_R(F)}{{\cal E}_{\s E,R}}\in\Lin({\cal E}_{\s E,R})$ through the identification $\s D\tens_{D}{\cal E}_{A,R}\rightarrow{\cal E}_{\s E,R}\; ; \; \zeta\tens_{D}\xi\mapsto\zeta(\xi)$. Hence, the operator $\restr{\pi_R(F)}{{\cal E}_{\s E,R}}\in\Lin({\cal E}_{\s E,R})$ is invariant.
\end{proof}

\begin{propdef}\label{propdef5}
Assume that $\cal G$ is regular. Let $B$ be a $\cal G$-$\Cstar$-algebra, $\s E$ a $\cal G$-equivariant Hilbert $B$-module and $\gamma:A\rightarrow\Lin(\s E)$ a $\cal G$-equivariant *-representation of $A$ on $\s E$. Then, for all $d\in D$ we have $(\gamma\tens\id_{\K})(d)q_{\beta_{\s E}\widehat\alpha}=(\gamma\tens\id_{\K})(d)=q_{\beta_{\s E}\widehat\alpha}(\gamma\tens\id_{\K})(d)$ in $\Lin(\s E\tens\s H)$.
Moreover, the map $d\in D\mapsto\restr{(\gamma\tens\id_{\K})(d)}{{\cal E}_{\s E,R}}$ is a $\cal G$-equivariant *-representation of $D$ on ${\cal E}_{\s E,R}$. If $\s E$ is a $\cal G$-equivariant Hilbert $A$-$B$-bimodule, then ${\cal E}_{\s E,R}$ is a $\cal G$-equivariant Hilbert $D$-$B$-bimodule.
\end{propdef}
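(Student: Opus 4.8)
The plan is to prove the three assertions in order, reducing the equivariance statements to transport of structure through the biduality identifications of Theorem \ref{theo2} and Corollary \ref{cor1}, exactly in the spirit of the proof of \ref{lem14}. Throughout I write $D_B$ for the bidual $\cal G$-C*-algebra of $B$, and $\s D:=[\Pi_R(\xi)(1_B\tens\lambda(x)L(y))\,;\,\xi\in\s E,\,x\in\widehat S,\,y\in S]$ for the Hilbert $D_B$-module attached to the $\cal G$-equivariant Hilbert $B$-module $\s E$ as before Theorem \ref{theo1}.

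First I would establish the corner relation. Since $\cal G$ is regular, Theorem \ref{BidualityTheo} gives $D=q_{\beta_A\widehat\alpha}(A\tens\K)q_{\beta_A\widehat\alpha}$, so every $d\in D$ lies in $A\tens\K\subset\widetilde{\M}(A\tens\K)$ and $(\gamma\tens\id_\K)(d)$ is well defined (cf.\ \ref{rk8}). The $\cal G$-equivariance of $\gamma$ forces $\gamma(\beta_A(n^{\rm o})a)=\beta_{\s E}(n^{\rm o})\gamma(a)$, and, taking adjoints, $\gamma(a\beta_A(n^{\rm o}))=\gamma(a)\beta_{\s E}(n^{\rm o})$, for all $n\in N$ and $a\in A$. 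Combined with the explicit finite-basis form of the projections $q_{\beta_A\widehat\alpha}$ and $q_{\beta_{\s E}\widehat\alpha}$ (which are built from the source maps $\beta_A$, resp.\ $\beta_{\s E}$, and the common map $\widehat\alpha$ acting on $\s H$), this yields $(\gamma\tens\id_\K)(q_{\beta_A\widehat\alpha}x)=q_{\beta_{\s E}\widehat\alpha}(\gamma\tens\id_\K)(x)$ and $(\gamma\tens\id_\K)(xq_{\beta_A\widehat\alpha})=(\gamma\tens\id_\K)(x)q_{\beta_{\s E}\widehat\alpha}$ for $x\in A\tens\K$. Applying this to $d=q_{\beta_A\widehat\alpha}xq_{\beta_A\widehat\alpha}$ proves the first displayed assertion. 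In particular $(\gamma\tens\id_\K)(d)$ restricts to an adjointable operator on ${\cal E}_{\s E,R}=q_{\beta_{\s E}\widehat\alpha}(\s E\tens\s H)$, and since $\gamma\tens\id_\K$ is a $*$-homomorphism, $d\mapsto\restr{(\gamma\tens\id_\K)(d)}{{\cal E}_{\s E,R}}$ is a $*$-representation of $D$ on ${\cal E}_{\s E,R}$.

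For the $\cal G$-equivariance I would identify this representation with one obtained by transport of structure. Applying \ref{propdef8} to $\gamma$ gives a $\widehat{\cal G}$-equivariant $*$-representation $\gamma_*:A\rtimes{\cal G}\rightarrow\Lin(\s E\rtimes{\cal G})$, and applying the $\widehat{\cal G}$-analogue of \ref{propdef8} to $\gamma_*$ gives a $\cal G$-equivariant $*$-representation $(\gamma_*)_*:(A\rtimes{\cal G})\rtimes\widehat{\cal G}\rightarrow\Lin((\s E\rtimes{\cal G})\rtimes\widehat{\cal G})$. Through the $\cal G$-equivariant isomorphisms $(A\rtimes{\cal G})\rtimes\widehat{\cal G}\simeq D$ (\ref{IsoTT}, \ref{BidualityTheo}) and $(\s E\rtimes{\cal G})\rtimes\widehat{\cal G}\simeq\s D$ (Theorem \ref{theo2} applied to $\s E$ over $B$) this becomes a $\cal G$-equivariant $*$-representation $\widehat\gamma:D\rightarrow\Lin(\s D)$. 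Since $\s D$ is a $\cal G$-equivariant Hilbert $D_B$-module and ${\cal E}_{B,R}$ is the $\cal G$-equivariant imprimitivity $D_B$-$B$-bimodule of \ref{theo8}, Proposition \ref{propleftaction} shows that $\widehat\gamma\tens_{D_B}\id_{{\cal E}_{B,R}}:D\rightarrow\Lin(\s D\tens_{D_B}{\cal E}_{B,R})$ is $\cal G$-equivariant. Under the $\cal G$-equivariant identification $\s D\tens_{D_B}{\cal E}_{B,R}={\cal E}_{\s E,R}$, $\zeta\tens_{D_B}\xi\mapsto\zeta(\xi)$ (Corollary \ref{cor1} for $\s E$ over $B$), a direct computation on the generators $\Pi_R(\xi)(1_B\tens\lambda(x)L(y))$ of $\s D$ — using $\pi_R=(\id_A\tens R)\delta_A$, the relation $(\gamma\tens\id_S)\delta_A(a)=\delta_{\K(\s E)}(\gamma(a))$ (\ref{rk11} 2), and $\delta_{\s E}(F\zeta)=\delta_{\K(\s E)}(F)\delta_{\s E}(\zeta)$ (\ref{rk16} 2) — gives $\widehat\gamma(d)\circ\zeta=(\gamma\tens\id_\K)(d)\circ\zeta$ for all $\zeta\in\s D$; hence $\widehat\gamma\tens_{D_B}\id_{{\cal E}_{B,R}}$ coincides with $d\mapsto\restr{(\gamma\tens\id_\K)(d)}{{\cal E}_{\s E,R}}$, so the latter is $\cal G$-equivariant.

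Finally, if $\s E$ is a $\cal G$-equivariant Hilbert $A$-$B$-bimodule it is countably generated over $B$, hence so is ${\cal E}_{\s E,R}=q_{\beta_{\s E}\widehat\alpha}(\s E\tens\s H)$, being a complemented submodule of $\s E\tens\s H$, which is countably generated over $B$ since $\s H$ is separable. Together with the $\cal G$-equivariant Hilbert $B$-module structure on ${\cal E}_{\s E,R}$ (Corollary \ref{cor1} for $\s E$ over $B$) and the $\cal G$-equivariant $*$-representation of $D$ constructed above, this shows ${\cal E}_{\s E,R}$ is a $\cal G$-equivariant Hilbert $D$-$B$-bimodule. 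I expect the main obstacle to be the first paragraph together with the concrete identification at the end of the third: one must control how $\gamma\tens\id_\K$ interacts with the projections $q_{\beta_A\widehat\alpha}$ and $q_{\beta_{\s E}\widehat\alpha}$ — which is where the finite-dimensionality of the basis and the compatibility of $\gamma$ with the source maps are used — and then match the transported representation $\widehat\gamma$ with the formula $d\mapsto(\gamma\tens\id_\K)(d)$ on the generators of $\s D$; everything else is routine bookkeeping with the internal-tensor-product identifications.
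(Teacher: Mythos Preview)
Your argument is correct, but for the equivariance of the $D$-action you take a genuinely different route from the paper. The paper verifies the equivariance condition \ref{defbimod}~1 by direct computation with the explicit formulas: writing $\gamma_0(d):=\restr{(\gamma\tens\id_\K)(d)}{{\cal E}_{\s E,R}}$, it computes $\delta_{{\cal E}_{\s E,R}}(\gamma_0(d)\zeta)$ on generators $\zeta=q_{\beta_{\s E}\widehat\alpha}(\xi\tens\eta)$ using the formula for $\delta_{{\cal E}_{\s E,R}}$ from Corollary~\ref{cor1} and the relation $\delta_{\s E}(\gamma(a)\xi)=(\gamma\tens\id_S)(\delta_A(a))\delta_{\s E}(\xi)$, and then observes that ${\cal V}_{23}\delta_0(d)=\delta_D(d){\cal V}_{23}$ for $d\in D$ to conclude $\delta_{{\cal E}_{\s E,R}}(\gamma_0(d)\zeta)=(\gamma_0\tens\id_S)(\delta_D(d))\delta_{{\cal E}_{\s E,R}}(\zeta)$. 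This is short and self-contained once one has the formulas of \ref{cor1} and \ref{BidualityTheo}.

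Your approach instead imports the equivariance from the double crossed product: you build the manifestly $\cal G$-equivariant representation $(\gamma_*)_*$ and transport it through \ref{IsoTT}, \ref{theo2} and \ref{cor1}, then match it with the concrete formula on generators. This is conceptually clean and parallels the proof of \ref{lem14}, but the ``direct computation on generators'' you defer is where all the content sits, and unravelling $(\gamma_*)_*$ through two crossed products and two biduality isomorphisms is more bookkeeping than the paper's one-line identity ${\cal V}_{23}\delta_0(d)=\delta_D(d){\cal V}_{23}$. The first paragraph (corner relation) and the last (countable generation) agree with the paper's proof.
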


\begin{proof}
We have $\gamma(\beta_A(n^{\rm o})a)=\beta_{\s E}(n^{\rm o})\gamma(a)$ for all $a\in A$ and $n\in N$. It then follows that $(\gamma\tens\id_{\K})(q_{\beta_A\widehat{\alpha}}x q_{\beta_A\widehat{\alpha}})=q_{\beta_{\s E}\widehat{\alpha}}(\gamma\tens\id_{\K})(x)q_{\beta_{\s E}\widehat{\alpha}}$ for all $x\in A\tens\K$. In particular, we have $(\gamma\tens\id_{\K})(d)q_{\beta_{\s E}\widehat\alpha}=(\gamma\tens\id_{\K})(d)=q_{\beta_{\s E}\widehat\alpha}(\gamma\tens\id_{\K})(d)$ for all $d\in D$. As a result, the *-representation $\gamma\tens\id_{\K}:A\tens\K\rightarrow\Lin(\s E\tens\s H)$ induces by restriction a *-representation $\gamma_0:D\rightarrow\Lin({\cal E}_{\s E,R})$. Let us prove that $\gamma_0$ is $\cal G$-equivariant. Let us fix $\xi\in\s E$, $\eta\in\s H$, $a\in A$ and $k\in\K$. We have $\delta_{\s E}(\gamma(a)\xi)=(\gamma\tens\id_S)(\delta_A(a))\circ\delta_{\s E}(\xi)$. By a straightforward computation, we have
$
\delta_{{\cal E}_{\s E,R}}(q_{\beta_{\s E}\widehat{\alpha}}(\gamma(a)\xi\tens k\eta))=(\gamma\tens\id_{\K}\tens\id_S)({\cal V}_{23}\delta_0(a\tens k))\delta_{\s E}(\xi)_{13}(1_A\tens\eta\tens 1_S).
$
For all $x\in A\tens\K$, we have $\delta_0(x q_{\beta_A\widehat{\alpha}})=\delta_0(x)q_{\widehat{\alpha}\beta,23}$. Hence, ${\cal V}_{23}\delta_0(x q_{\beta_A\widehat{\alpha}})=\delta_{A\tens\K}(x){\cal V}_{23}$ for all $x\in A\tens\K$. In particular, we have ${\cal V}_{23}\delta_0(d)=\delta_{D}(d){\cal V}_{23}$ for all $d\in D$. Hence, $\delta_{{\cal E}_{\s E,R}}(\gamma_0(d)\zeta)=(\gamma_0\tens\id_{D})(\delta_{D}(d))\circ\delta_{{\cal E}_{\s E,R}}(\zeta)$ for all $\zeta\in{\cal E}_{\s E,R}$ and $d\in D$. It is easily seen that $\gamma_0(\beta_{D}(n^{\rm o})d)=\beta_{{\cal E}_{\s E,R}}(n^{\rm o})\gamma_0(d)$ for all $n\in N$ and $d\in D$. If $\s E$ is countably generated as a Hilbert $B$-module, then so is $\s E\tens\s H$ since $\s H$ is separable. Hence, the submodule ${\cal E}_{\s E,R}$ of $\s E\tens\s H$ is countably generated.
\end{proof}

	\subsection{Case of a colinking measured quantum groupoid}\label{ActColinkHilb}

Let us fix a colinking measured quantum groupoid ${\cal G}:={\cal G}_{\QG_1,\QG_2}$ associated with two monoidally equivalent locally compact quantum groups $\QG_1$ and $\QG_2$. 

		\subsubsection{Hilbert C*-modules acted upon by a colinking measured quantum groupoid}\label{sectionHilbModColink}
		
In the following, we recall the description of Hilbert C*-modules acted upon by $\cal G$ in terms of Hilbert C*-modules acted upon by $\QG_1$ and $\QG_2$ (cf.\ \S 6.2 \cite{C2}). Let $A$ be a $\cal G$-$\Cstar$-algebra. We follow all the notations of \S \ref{sectionColinking} (resp. \ref{not12} and \ref{actprop}) concerning the objects associated with $\cal G$ (resp.\ $A$). Let us fix a Hilbert $A$-module $\s E$ endowed with an action $(\beta_{\s E},\delta_{\s E})$ of $\cal G$.

\begin{nbs}\label{not8}
We introduce some useful notations to describe the action $(\beta_{\s E},\delta_{\s E})$.
\begin{itemize}
\item Let $q_{\s E,j}:=\beta_{\s E}(\varepsilon_j)$ for $j=1,2$. Note that $q_{\s E,1}$ and $q_{\s E,2}$ are orthogonal self-adjoint projections of $\Lin(\s E)$ and $q_{\s E,1}+q_{\s E,2}=1_{\s E}$.\index[symbol]{qc@$q_{\s E,j}$}
\item Let $J:=\K(\s E\oplus A)$ be the linking $\Cstar$-algebra associated with $\s E$ endowed with the action $(\beta_J,\delta_J)$ of $\cal G$ (cf.\ \ref{prop1} b)). Since $\beta_J(\GC^2)\subset{\cal Z}({\cal M}(J))$ (cf.\ 3.2.3 \cite{BC}), we have $\beta_{\s E}(n)\xi=\xi\beta_A(n)$ in $\Lin(A,\s E)$ for all $n\in\GC^2$ and $\xi\in\s E$, {\it i.e.\ }$(\beta_{\s E}(n)\xi)a=\xi(\beta_A(n)a)$ for all $n\in\GC^2$, $\xi\in\s E$ and $a\in A$. Hence, 
\begin{equation}\label{eq1.11}
(q_{\s E,j}\xi)a=\xi(q_{A,j}a), \quad \text{for all } \xi\in\s E,\, a\in A,\, j=1,2.
\end{equation}
In particular, we have
\begin{equation*}
\langle q_{\s E,j}\xi,\, q_{\s E,j}\eta\rangle=q_{A,j}\langle\xi,\, \eta\rangle,\quad \text{for all } \xi,\eta\in\s E.
\end{equation*}
For $j=1,2$, we then define the following Hilbert $A_j$-module $\s E_j:=q_{\s E,j}\s E$. Note that $\s E=\s E_1\oplus\s E_2$. 
\item For $j,k=1,2$, let
$
\Pi_j^k:\s E_k\tens S_{kj}\rightarrow\s E\tens S.
$
\index[symbol]{pk@$\Pi_j^k$}be the inclusion map. It is clear that $\Pi_j^k$ is a $\pi_j^k$-compatible operator (cf.\ \ref{def2}). We can consider its canonical linear extension 
$
\Pi_j^k:\Lin(A_k\tens S_{kj},\s E_k\tens S_{kj})\rightarrow \Lin(A\tens S,\s E\tens S),
$
up to the canonical injective maps $\s E_k\tens S_{kj}\rightarrow\Lin(A_k\tens S_{kj},\s E_k\tens S_{kj})$ and $\s E\tens S\rightarrow\Lin(A\tens S,\s E\tens S)$, $\vphantom{\Pi_j^k}$defined by
$
\Pi_j^k(T)(x):=\Pi_j^k\circ T((q_{A,k}\tens p_{kj})x)
$ for all $T\in\Lin(A_k\tens S_{kj},\s E_k\tens S_{kj})$ and $x\in A\tens S$.\qedhere
\end{itemize}
\end{nbs}

\begin{lem}
With the above notations and hypotheses, we have a canonical unitary equivalence of Hilbert $A\tens S$-modules
$
\s E\tens_{\delta_A}(A\tens S)=\bigoplus_{j,k=1,2}\s E_j\tens_{\delta_{A_j}^k}\!\!(A_k\tens S_{kj}).
$
\end{lem}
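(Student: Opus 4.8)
The plan is to build the identification directly on algebraic tensor products and then extend by isometry, the only real input being the decomposition of $\delta_A$ recorded in \ref{actprop}. First I would set up the ingredients that make the right-hand side meaningful. By \ref{not8} the operators $q_{\s E,j}=\beta_{\s E}(\varepsilon_j)$ are orthogonal self-adjoint projections of $\Lin(\s E)$ with $q_{\s E,1}+q_{\s E,2}=1_{\s E}$, so $\s E=\s E_1\oplus\s E_2$; the compatibility relation in \ref{not8} gives $\langle q_{\s E,j}\xi,q_{\s E,j}\eta\rangle=q_{A,j}\langle\xi,\eta\rangle\in A_j$, while orthogonality of the $q_{\s E,j}$ yields $\langle\xi_j,\eta_{j'}\rangle=0$ for $\xi_j\in\s E_j$, $\eta_{j'}\in\s E_{j'}$ with $j\neq j'$. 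Since each $\delta_{A_j}^k\colon A_j\to\M(A_k\tens S_{kj})$ is a non-degenerate $*$-homomorphism (\ref{actprop}), each summand $\s E_j\tens_{\delta_{A_j}^k}(A_k\tens S_{kj})$ is a genuine internal tensor product; moreover $A_k\tens S_{kj}=(q_{A,k}\tens p_{kj})(A\tens S)$ is a direct summand of $A\tens S$ (recall $S=\bigoplus_{i,j}S_{ij}$ by centrality of the $p_{ij}$, see \ref{not10}, \ref{not19}), so the right-hand side is naturally a Hilbert $A\tens S$-module.

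Next I would define $\Phi$ on the algebraic direct sum $\bigoplus_{j,k}\s E_j\odot_{\delta_{A_j}^k}(A_k\tens S_{kj})$ by $\Phi(\xi_j\tens_{\delta_{A_j}^k}z):=\xi_j\tens_{\delta_A}z$, where on the right $z$ is viewed in $A\tens S$ through the inclusion $A_k\tens S_{kj}\subset A\tens S$. The key identity is $\pi_j^k\circ\delta_{A_j}^k(x)=(q_{A,k}\tens 1_S)\delta_A(x)$ for $x\in A_j$ from \ref{actprop}, which after summing over $k$ gives $\delta_A(x)=\sum_k\pi_j^k(\delta_{A_j}^k(x))$; multiplying on the right by $z\in A_k\tens S_{kj}$ annihilates all summands but the $k$-th (because $\pi_j^{k'}(\M(A_{k'}\tens S_{k'j}))$ is supported in the $(k',j)$-summand of $A\tens S$, and the product of the support projections $q_{A,k'}\tens p_{k'j}$ and $q_{A,k}\tens p_{kj}$ is $\delta_{k'k}(q_{A,k}\tens p_{kj})$), while on the $k$-th summand $\pi_j^k$ restricts to the inclusion, whence $\delta_A(x)z=\delta_{A_j}^k(x)z$ inside $A\tens S$. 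This shows $\Phi$ respects the balancing over $A_j$. The same support computation, together with $\langle\xi_j,\xi_{j'}\rangle=0$ for $j\neq j'$, shows for all generators that
\[
\langle\Phi(\xi_j\tens_{\delta_{A_j}^k}z),\,\Phi(\xi_{j'}\tens_{\delta_{A_{j'}}^{k'}}z')\rangle
=z^*\delta_A(\langle\xi_j,\xi_{j'}\rangle)z'
\]
vanishes unless $(j,k)=(j',k')$, and equals $z^*\delta_{A_j}^k(\langle\xi_j,\xi_{j'}\rangle)z'=\langle\xi_j\tens_{\delta_{A_j}^k}z,\xi_{j'}\tens_{\delta_{A_j}^k}z'\rangle$ when $(j,k)=(j',k')$ --- matching exactly the $A\tens S$-valued inner product of the direct sum. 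Hence $\Phi$ preserves inner products at the algebraic level, so it is well defined and isometric, and extends to an isometric $A\tens S$-linear map between the completions (with closed range).

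Finally I would check surjectivity. For a generator $\xi\tens_{\delta_A}y$ of $\s E\tens_{\delta_A}(A\tens S)$ write $\xi=\sum_j\xi_j$ and decompose $y=\sum_{k,i,l}y_{kil}$ along $A\tens S=\bigoplus_{k,i,l}A_k\tens S_{il}$. Since $\langle\xi_j,\xi_j\rangle\in A_j$, the computation $\|\xi_j\tens_{\delta_A}y_{kil}\|^2=\|y_{kil}^*\delta_A(\langle\xi_j,\xi_j\rangle)y_{kil}\|$ together with the support obstruction above forces $\xi_j\tens_{\delta_A}y_{kil}=0$ unless $i=k$ and $l=j$; therefore $\xi\tens_{\delta_A}y=\sum_{j,k}\xi_j\tens_{\delta_A}y_{kkj}=\sum_{j,k}\Phi(\xi_j\tens_{\delta_{A_j}^k}y_{kkj})$ lies in the range of $\Phi$. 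As the range is closed and contains a dense set, $\Phi$ is a surjective isometry, hence a unitary equivalence of Hilbert $A\tens S$-modules, which is the assertion. The only genuine work is the index bookkeeping hidden in the projections $q_{\beta_A\alpha}$, $q_{\s E,j}$, $q_{A,k}$, $p_{ij}$; once the support behaviour of the $\pi_j^k$ is pinned down, the rest is the standard ``inner-product preserving on algebraic generators $\Rightarrow$ unitary'' argument for internal tensor products.
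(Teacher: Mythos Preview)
Your proof is correct and follows exactly the natural route: decompose $\s E$, $A$ and $S$ along the central projections, check that the obvious map on algebraic generators preserves inner products using the formula $\pi_j^k\circ\delta_{A_j}^k=(q_{A,k}\tens p_{kj})\delta_A$ from \ref{actprop}, and verify surjectivity by the same support bookkeeping. The paper states the lemma without proof, treating the identification as routine; your argument supplies precisely the details one would write down to justify it, and there is nothing to add or correct.
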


\begin{propdef}\label{propdef3}
Let $\s V\in\Lin(\s E\tens_{\delta_A}(A\tens S),\s E\tens S)$ be the isometry associated with the action $(\beta_{\s E},\delta_{\s E})$ (cf.\ \ref{prop27} a). For all $j,k=1,2$, there exists a unique unitary\index[symbol]{ve@$\s V_j^k$}
\[
\s V_j^k\in\Lin(\s E_j\tens_{\delta_{A_j}^k}\!\!(A_k\tens S_{kj}),\s E_k\tens S_{kj})\\[-.5em]
\]
such that 
\[
\s V(\xi\tens_{\delta_A}x)=\sum_{j,k=1,2}\s V_j^k(q_{\s E,j}\xi\tens_{\delta_{A_j}^k}\!\!(q_{A,k}\tens p_{kj})x),\quad \text{for all } \xi\in\s E \text{ and } x\in A\tens S.\qedhere
\]
\end{propdef}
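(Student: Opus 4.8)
The idea is to obtain each $\s V_j^k$ as the appropriate corner of $\s V$, once one knows that $\s V$ is ``block diagonal'' with respect to the natural orthogonal decompositions of its source and target. I would first record the concrete form of the relevant projections in the colinking case, $q_{\beta_A\alpha}=\sum_{m,l=1,2}q_{A,m}\tens p_{ml}$ and, likewise, $q_{\beta_{\s E}\alpha}=\sum_{m,l=1,2}q_{\s E,m}\tens p_{ml}$, which follow from \ref{actprop}, \ref{not12}, \ref{not8} and the centrality $p_{ml}\in{\cal Z}(\M(S))$; in particular $q_{\beta_{\s E}\alpha}(\s E\tens S)=\bigoplus_{m,l}\s E_m\tens S_{ml}$. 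Since the source of $\s V$ decomposes as $\s E\tens_{\delta_A}(A\tens S)=\bigoplus_{j,k}\s E_j\tens_{\delta_{A_j}^k}(A_k\tens S_{kj})$ by the preceding lemma, and $\s V$ is an isometry with $\s V\s V^*=q_{\beta_{\s E}\alpha}$, $\s V$ is a unitary onto $\bigoplus_{m,l}\s E_m\tens S_{ml}$. The entire content of the statement is then that $\s V$ carries the summand $\s E_j\tens_{\delta_{A_j}^k}(A_k\tens S_{kj})$ onto $\s E_k\tens S_{kj}$; granting this, I would define $\s V_j^k$ to be the corresponding restriction, which is unitary (being isometric and onto), and then derive the displayed formula and its uniqueness by decomposing $\xi\tens_{\delta_A}x=\sum_{j,k}q_{\s E,j}\xi\tens_{\delta_{A_j}^k}(q_{A,k}\tens p_{kj})x$ (the mismatched cross-terms belonging to no summand of the preceding lemma, hence vanishing since $\delta_A(\langle q_{\s E,j}\xi,q_{\s E,j}\xi\rangle)$ already carries the $S$-index $j$) and applying $\s V$ termwise.

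For the block diagonality I would separate a ``$j$'' part from a ``$k$'' part. The $j$ part is immediate from condition 2 of \ref{isometry}: the projection onto $\bigoplus_k\s E_j\tens_{\delta_{A_j}^k}(A_k\tens S_{kj})$ is $\beta_{\s E}(\varepsilon_j)\tens_{\delta_A}1$, and $\s V(\beta_{\s E}(\varepsilon_j)\tens_{\delta_A}1)=(1_{\s E}\tens\beta(\varepsilon_j))\s V$ with $(1_{\s E}\tens\beta(\varepsilon_j))q_{\beta_{\s E}\alpha}=\sum_m q_{\s E,m}\tens p_{mj}$, so $\s V$ maps $\bigoplus_k\s E_j\tens_{\delta_{A_j}^k}(A_k\tens S_{kj})$ onto $\bigoplus_m\s E_m\tens S_{mj}$. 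The $k$ part I would handle with inner products inside a fixed $j$-block. For $\xi,\xi'\in\s E_j$ one has $\langle\xi,\xi'\rangle\in A_j$, hence $\delta_A(\langle\xi,\xi'\rangle)\in\bigoplus_m\M(A_m\tens S_{mj})$ by \ref{actprop} 1; consequently, for $x\in A_k\tens S_{kj}$ and $x'\in A_{k'}\tens S_{k'j}$ with $k\neq k'$ one gets $\langle\xi\tens_{\delta_A}x,\,\xi'\tens_{\delta_A}x'\rangle=x^*\delta_A(\langle\xi,\xi'\rangle)x'=0$, because the $p_{ml}$ are mutually orthogonal central projections. Thus the images under $\s V$ of the summands $\s E_j\tens_{\delta_{A_j}^k}(A_k\tens S_{kj})$, $k=1,2$, are mutually orthogonal sub-Hilbert-modules of $\bigoplus_m\s E_m\tens S_{mj}$; and since $\s V$ preserves inner products, the image of the $(j,k)$-summand is a Hilbert submodule all of whose inner products lie in the ideal $A_k\tens S_{kj}$ of $A\tens S$, hence is contained in $(\s E\tens S)(A_k\tens S_{kj})=\s E_k\tens S_{kj}$. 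Combined with the $j$ part (the images add up to $\bigoplus_m\s E_m\tens S_{mj}$) this forces the image of $\s E_j\tens_{\delta_{A_j}^k}(A_k\tens S_{kj})$ to be exactly $\s E_k\tens S_{kj}$, and $\s V_j^k$ is the resulting unitary.

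I expect the ``$k$'' part to be the only genuine point. The intertwining with $\beta$ settles the $\s E$-grading for free, but separating the $A$-grading $k$ within a fixed $j$-block requires both the inner-product vanishing and the remark that an inner-product-preserving map cannot move a Hilbert submodule out of its coefficient ideal; everything else is routine bookkeeping with the orthogonal decompositions recalled above.
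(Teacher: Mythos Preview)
The paper does not give a proof here; the result is recalled from the author's earlier article \cite{C2}, so there is nothing to compare against directly. Your argument is correct as written.

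That said, the ``$k$ part'' is more elaborate than necessary, and in fact the split into a ``$j$'' part and a ``$k$'' part can be avoided altogether. By the very construction of $\s V$ in \ref{prop27}~a) one has $\s V(\xi\tens_{\delta_A}x)=\delta_{\s E}(\xi)x$, and $\delta_{\s E}(\xi)\in\Lin(A\tens S,\s E\tens S)$ is right $A\tens S$-linear. Hence for $\xi\in\s E_j$ and $x\in A_k\tens S_{kj}=(A\tens S)(q_{A,k}\tens p_{kj})$ one obtains in one stroke
\[
\s V(\xi\tens_{\delta_A}x)=\bigl(\delta_{\s E}(\xi)x\bigr)(q_{A,k}\tens p_{kj})\in(\s E\tens S)(q_{A,k}\tens p_{kj})=\s E_k\tens S_{kj},
\]
the last identification using (\ref{eq1.11}). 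This shows that $\s V$ maps the $(j,k)$ summand of the preceding lemma into $\s E_k\tens S_{kj}$; since $\s V$ is a unitary onto $q_{\beta_{\s E}\alpha}(\s E\tens S)=\bigoplus_{k,j}\s E_k\tens S_{kj}$ and the assignment $(j,k)\mapsto(k,j)$ on index pairs is bijective, each restriction $\s V_j^k$ is unitary. Your $\beta$-intertwining argument and the inner-product/ideal argument are not wrong---they each recover a consequence of this right-linearity---but the block diagonality is already built into the formula $\s V T_\xi=\delta_{\s E}(\xi)$ and the centrality of $q_{A,k}\tens p_{kj}$.
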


For $j,k,l=1,2$ we have the following set of unitary equivalences of Hilbert modules:
\begingroup
\allowdisplaybreaks
\begin{align}
A_j\tens_{\delta_{A_j}^k}\!\!(A_k\tens S_{kj}) & \rightarrow A_k\tens S_{kj} \\[-.5em]
a\tens_{\delta_{A_j}^k}x & \mapsto \delta_{A_j}^k(a)x; \notag\\[.5em]
(A_k\tens S_{kj})\tens_{\delta_{A_k}^l\tens\,\id_{S_{kj}}}\!\!(A_l\tens S_{lk}\tens S_{kj}) &  \rightarrow A_l\tens S_{lk}\tens S_{kj} \label{eq24}\\[-.5em]
x\tens_{\delta_{A_k}^l\tens\,\id_{S_{kj}}}y & \mapsto (\delta_{A_k}^l\tens\id_{S_{kj}})(x)y; \notag\\[.5em]
(A_l\tens S_{lj})\tens_{\id_{A_l}\tens\,\delta_{lj}^k}\!\!(A_l\tens S_{lk}\tens S_{kj}) & \rightarrow A_l\tens S_{lk}\tens S_{kj} \label{eq9} \\[-.5em]
x\tens_{\id_{A_l}\tens\,\delta_{lj}^k}y & \mapsto (\id_{A_l}\tens\delta_{lj}^k)(x)y; \notag\\[.5em]
(\s E_j\tens_{\delta_{A_j}^k}\!\!(A_k\tens S_{kj}))\tens_{\delta_{A_k}^l\tens\,\id_{S_{kj}}}\!\!(A_l\tens S_{lk}\tens S_{kj}) & \rightarrow \s E_j\tens_{(\delta_{A_k}^l\tens\,\id_{S_{kj}})\delta_{A_j}^k}\!\!(A_l\tens S_{lk}\tens S_{kj}) \label{eq1.14}\\[-.5em]
(\xi\tens_{\delta_{A_j}^k}x)\tens_{\delta_{A_k}^l\tens\,\id_{S_{kj}}}y & \mapsto \xi\tens_{(\delta_{A_k}^l\tens\,\id_{S_{kj}})\delta_{A_j}^k}(\delta_{A_k}^l\tens\id_{S_{kj}})(x)y;\notag \\[.5em]
(\s E_j\tens_{\delta_{A_j}^l}\!\!(A_l\tens S_{lj}))\tens_{\id_{A_l}\tens\,\delta_{lj}^k}\!\!(A_l\tens S_{lk}\tens S_{kj}) & \rightarrow \s E_j\tens_{(\id_{A_l}\tens\,\delta_{lj}^k)\delta_{A_j}^l}\!\!(A_l\tens S_{lk}\tens S_{kj}) \label{eq1.15} \\[-.5em]
(\xi\tens_{\delta_{A_j}^l}x)\tens_{\id_{A_l}\tens\,\delta_{lj}^k}y & \mapsto \xi\tens_{(\id_{A_l}\tens\,\delta_{lj}^k)\delta_{A_j}^l}(\id_{A_l}\tens\delta_{lj}^k)(x)y; \notag \\[.5em]
(\s E_k\tens S_{kj})\tens_{\delta_{A_k}^l\tens\,\id_{S_{kj}}}\!\!(A_l\tens S_{lk}\tens S_{kj}) &\rightarrow (\s E_k\tens_{\delta_{A_k}^l}\!\!(A_l\tens S_{lk}))\tens S_{kj} \label{eq1.16}\\[-.5em]
(\xi\tens s)\tens_{\delta_{A_k}^l\tens\,\id_{S_{kj}}}(x\tens t) & \mapsto (\xi\tens_{\delta_{A_k}^l} x)\tens st; \notag \\[.5em]
(\s E_l\tens S_{lj})\tens_{\id_{A_l}\tens\,\delta_{lj}^k}\!\!(A_l\tens S_{lk}\tens S_{kj}) & \rightarrow \s E_l\tens S_{lk}\tens S_{kj} \label{eq10} \\[-.5em]
\xi\tens_{\id_{A_l}\tens\,\delta_{lj}^k}y & \mapsto (\id_{\s E_l}\tens\delta_{lj}^k)(\xi)y\notag.
\end{align}	
\endgroup

\begin{prop}\label{prop9}
For all $j,k,l=1,2$, we have
\[
(\s V_k^l\tens_{\GC}\id_{S_{kj}})(\s V_j^k\tens_{\delta_{A_k}^l\tens\id_{S_{kj}}}1)=\s V_j^l\tens_{\id_{A_l}\tens\delta_{lj}^k}1.\qedhere
\]
\end{prop}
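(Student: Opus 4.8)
The plan is to read off the identity as the ``$(j,k,l)$-component'' of the cocycle relation satisfied by the global isometry $\s V$ of \ref{propdef3}, after decomposing everything along the central projections $p_{kj}\in\M(S)$, $q_{A,j}\in\M(A)$ and $q_{\s E,j}\in\Lin(\s E)$.

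First I would use that, since $(\beta_{\s E},\delta_{\s E})$ is an action of $\cal G$ on $\s E$, the associated isometry $\s V\in\Lin(\s E\tens_{\delta_A}(A\tens S),\s E\tens S)$ is admissible (cf.\ \ref{prop27}~c)), hence satisfies condition~4 of Definition~\ref{isometry}:
\[
(\s V\tens_{\GC}\id_S)(\s V\tens_{\delta_A\tens\id_S}1)=\s V\tens_{\id_A\tens\delta}1
\]
in $\Lin(\s E\tens_{\delta_A^2}(A\tens S\tens S),\s E\tens S\tens S)$.

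Next I would decompose each object appearing here. One has $\s E=\bigoplus_j\s E_j$, $A=\bigoplus_jA_j$, $S=\bigoplus_{i,j}S_{ij}$ and $\delta_A=\sum_{j,k}\pi_j^k\circ\delta_{A_j}^k(q_{A,j}\,\cdot\,)$, which yields the unitary equivalence $\s E\tens_{\delta_A}(A\tens S)=\bigoplus_{j,k}\s E_j\tens_{\delta_{A_j}^k}(A_k\tens S_{kj})$ recalled just before \ref{propdef3}, together with $\s V=\sum_{j,k}\s V_j^k$ in the sense of \ref{propdef3}. Iterating once more and invoking the coassociativity relations of \ref{actprop}~2 and \ref{prop5bis}~1 one obtains
\[
\s E\tens_{\delta_A^2}(A\tens S\tens S)=\bigoplus_{j,k,l}\s E_j\tens_{(\delta_{A_k}^l\tens\id_{S_{kj}})\delta_{A_j}^k}(A_l\tens S_{lk}\tens S_{kj}),
\]
the inner map on the $(j,k,l)$-summand also being equal to $(\id_{A_l}\tens\delta_{lj}^k)\delta_{A_j}^l$. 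Restricting the three amplification operators to this summand and using the identifications (\ref{eq1.14}), (\ref{eq1.16}) respectively (\ref{eq1.15}), (\ref{eq10}), they act respectively as $\s V_j^k\tens_{\delta_{A_k}^l\tens\id_{S_{kj}}}1$ (landing in $(\s E_k\tens_{\delta_{A_k}^l}(A_l\tens S_{lk}))\tens S_{kj}$), then $\s V_k^l\tens_{\GC}\id_{S_{kj}}$ (landing in $\s E_l\tens S_{lk}\tens S_{kj}$), and as $\s V_j^l\tens_{\id_{A_l}\tens\delta_{lj}^k}1$. Comparing the $(j,k,l)$-components of the two sides of the displayed cocycle relation then gives exactly the asserted equality.

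The only genuine difficulty is the bookkeeping in the second step: one must verify that these blockwise expressions really are the restrictions of the global amplification operators, i.e.\ that the internal-tensor-product identifications (\ref{eq4})--(\ref{eq7}) restrict compatibly to their colinking refinements (\ref{eq24})--(\ref{eq10}), and that centrality of $p_{kj}$, $q_{A,j}$ and $q_{\s E,j}$ kills all off-diagonal terms (for instance $\delta_{A_j}^k$ takes values in $\M(A_k\tens S_{kj})$, so $q_{A,m}\tens p_{mn}$ annihilates the $(j,k)$-block unless $(m,n)=(k,j)$). An essentially equivalent route, shorter to state but not to compute, is to test both sides on elementary vectors $\xi\tens(a\tens s\tens t)$ with $\xi\in\s E_j$, $a\in A_l$, $s\in S_{lk}$, $t\in S_{kj}$, using $\s V T_\xi=\delta_{\s E}(\xi)$ and the block decomposition of $\delta_{\s E}$ into the maps $\delta_{\s E_j}^k$.
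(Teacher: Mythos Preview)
Your approach is correct and is exactly the intended one: the paper does not give a proof here (the result is recalled from \S 6.2 of \cite{C2}), and the argument in \cite{C2} is precisely to decompose the admissibility relation of Definition~\ref{isometry}~4 along the central projections, using that $\s V=\bigoplus_{j,k}\s V_j^k$ via \ref{propdef3} and that the identifications (\ref{eq4})--(\ref{eq7}) restrict, summand by summand, to (\ref{eq1.14})--(\ref{eq10}). Your remark that the only real work is checking this compatibility of identifications is accurate.
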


For $j,k,l=1,2$, $\s V_k^l\tens_{\GC}\id_{S_{kj}}\in\Lin((\s E_k\tens S_{kj})\tens_{\delta_{A_k}^l\tens\,\id_{S_{kj}}}(A_l\tens S_{lk}\tens S_{kj}),\s E_l\tens S_{lk}\tens S_{kj})$ (\ref{eq1.16}), $\s V_j^k\tens_{\delta_{A_k}^l\tens\,\id_{S_{kj}}}1\in\Lin(\s E_j\tens_{(\delta_{A_k}^l\tens\,\id_{S_{kj}})\delta_{A_j}^k}(A_l\tens S_{lk}\tens S_{kj}),(\s E_k\tens S_{kj})\tens_{\delta_{A_k}^l\tens\,\id_{S_{kj}}}(A_l\tens S_{lk}\tens S_{kj}))$ (\ref{eq1.14}) and $\s V_j^l\tens_{\id_{A_l}\tens\,\delta_{lj}^k}1\in\Lin(\s E_j\tens_{(\id_{A_l}\tens\,\delta_{lj}^k)\delta_{A_j}^l}(A_l\tens S_{lk}\tens S_{kj}),\s E_l\tens S_{lk}\tens S_{kj})$ (\ref{eq10}).
Moreover, the composition $(\s V_k^l\tens_{\GC}\id_{S_{kj}})(\s V_j^k\tens_{\delta_{A_k}^l\tens\id_{S_{kj}}}1)$ does make sense since $(\delta_{A_k}^l\tens\id_{S_{kj}})\delta_{A_j}^k=(\id_{A_l}\tens\delta_{lj}^k)\delta_{A_j}^l$.

\begin{propdef}\label{propdef4}
For $j,k=1,2$, let 
$
\delta_{\s E_j}^k:\s E_j\rightarrow\Lin(A_k\tens S_{kj},\s E_k\tens S_{kj})
$
be the linear map defined by\index[symbol]{df@$\delta_{\s E_j}^k$}
\[
\delta_{\s E_j}^k(\xi)x:=\s V_j^k(\xi\tens_{\delta_{A_j}^k}x), \quad \text{for all } \xi\in\s E_j \text{ and } x\in A_k\tens S_{kj}.
\]
For all $j,\, k,\, l=1,2$, we have:
\begin{enumerate}[label=(\roman*)]
\item $\displaystyle{\delta_{\s E}(\xi)=\sum_{k,j=1,2} \Pi_j^k\circ\delta_{\s E_j}^k(q_{\s E,j}\xi)}$, for all $\xi\in\s E$;
\item $\delta_{\s E_j}^k(\s E_j)\subset\widetilde{\M}(\s E_k\tens S_{kj})$;$\vphantom{\displaystyle{\sum_{k,j=1,2}}}$ 
\item $\delta_{\s E_j}^k(\xi a)=\delta_{\s E_j}^k(\xi)\delta_{A_j}^k(a)$ and $\langle\delta_{\s E_j}^k(\xi),\, \delta_{\s E_j}^k(\eta)\rangle = \delta_{A_j}^k(\langle\xi,\, \eta\rangle)$, for all $\xi,\,\eta\in\s E_j$ and $a\in A_j$;$\vphantom{\displaystyle{\sum_{k,j=1,2}}}$
\item $[\delta_{\s E_j}^k(\s E_j)(1_{A_k}\tens S_{kj})]=\s E_k\tens S_{kj}$; in particular, we have
\[
\s E_k=[(\id_{\s E_k}\tens\omega)\delta_{\s E_j}^k(\xi) \; ; \; \omega\in\B(\s H_{kj})_*,\, \xi\in\s E_j] \quad (\text{cf.}\ \ref{not4}).
\]
\item $\delta_{\s E_k}^l\tens\id_{S_{kj}}$ {\rm(}resp.\ $\id_{\s E_l}\tens\delta_{lj}^k${\rm)} extends to a linear map from $\mathcal{L}(A_k\tens S_{kj},\s E_k\tens S_{kj})$ {\rm(}resp. $\Lin(A_l\tens S_{lj},\s E_l\tens S_{lj})${\rm)} to $\mathcal{L}(A_l\tens S_{lk}\tens S_{kj},\s E_l\tens S_{lk}\tens S_{kj})$ and for all $\xi\in\s E_j$ we have 
\[
(\delta_{\s E_k}^l\tens\id_{S_{kj}})\delta_{\s E_j}^k(\xi)=(\id_{\s E_l}\tens\delta_{lj}^k)\delta_{\s E_j}^l(\xi) \in \Lin(A_l\tens S_{lk}\tens S_{kj},\s E_l\tens S_{lk}\tens S_{kj});
\]
\item if $\s E$ is a $\cal G$-equivariant Hilbert $A$-module, then we have $[(1_{\s E_k}\tens S_{kj})\delta_{\s E_j}^k(\s E_j)]=\s E_k\tens S_{kj}$.
\end{enumerate}
If $\s E$ is a $\cal G$-equivariant Hilbert $A$-module, then $(\s E_j,\delta_{\s E_j}^j)$ is a $\QG_j$-equivariant Hilbert $A_j$-module and $\s V_j^j$ is the associated unitary.
\end{propdef}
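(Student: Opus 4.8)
The plan is to deduce each of (i)--(vi) from the corresponding property of the global action $(\beta_{\s E},\delta_{\s E})$ by passing to the ``corner'' indexed by $(j,k)$, the bridge being the decomposition of the associated isometry in \ref{propdef3} and the pentagonal relation \ref{prop9}. The preliminary fact used throughout is the block decomposition $q_{\beta_{\s E}\alpha}=\sum_{j,k=1,2}q_{\s E,k}\tens p_{kj}$; it follows from $q_{\beta_A\alpha}=\sum_{j,k}q_{A,k}\tens p_{kj}$ (a consequence of \ref{actprop}, since $\delta_{A_j}^k$ is non-degenerate with $\delta_{A_j}^k(1_{A_j})=q_{A,k}\tens p_{kj}$) together with the matrix form of $q_{\beta_J\alpha}$ from \ref{propfib} applied to the linking ${\cal G}$-C*-algebra $J:=\K(\s E\oplus A)$. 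In particular $\s E_k\tens S_{kj}=(q_{\s E,k}\tens p_{kj})(\s E\tens S)$ is precisely the block of $q_{\beta_{\s E}\alpha}(\s E\tens S)$ onto which $\s V_j^k$ is surjective, so each $\s V_j^k$ is a unitary.

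First I would record (i): for $\xi\in\s E$ and $x\in A\tens S$ one has $\delta_{\s E}(\xi)x=\s V(\xi\tens_{\delta_A}x)$, and inserting the formula of \ref{propdef3} and the definition of the extension $\Pi_j^k$ yields $\delta_{\s E}(\xi)=\sum_{j,k}\Pi_j^k\circ\delta_{\s E_j}^k(q_{\s E,j}\xi)$. In particular $\Pi_j^k(\delta_{\s E_j}^k(\eta))=(q_{\s E,k}\tens p_{kj})\delta_{\s E}(\eta)$ for $\eta\in\s E_j$, which gives (ii) since $\delta_{\s E}(\eta)\in\widetilde{\M}(\s E\tens S)$ and left multiplication by $q_{\s E,k}\tens 1_S$ and by $1_{\s E}\tens p_{kj}$ preserves $\widetilde{\M}(\s E\tens S)$, of which $\widetilde{\M}(\s E_k\tens S_{kj})$ is the $(j,k)$-block. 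For (iii), the first identity uses only the balancing relation $\xi a\tens_{\delta_{A_j}^k}x=\xi\tens_{\delta_{A_j}^k}\delta_{A_j}^k(a)x$, and the second follows from $\langle\delta_{\s E_j}^k(\xi),\delta_{\s E_j}^k(\eta)\rangle=T_\xi^*(\s V_j^k)^*\s V_j^k T_\eta=T_\xi^*T_\eta=\delta_{A_j}^k(\langle\xi,\eta\rangle)$, where $T_\zeta:x\mapsto\zeta\tens_{\delta_{A_j}^k}x$. Assertions (iv) and (vi) are obtained by compressing to the $(j,k)$-block the relations $[\delta_{\s E}(\s E)(1_A\tens S)]=q_{\beta_{\s E}\alpha}(\s E\tens S)$ (cf.\ \ref{rk2} 2) and, when $\s E$ is ${\cal G}$-equivariant, $[(1_{\s E}\tens S)\delta_{\s E}(\s E)]=(\s E\tens S)q_{\beta_A\alpha}$ (cf.\ \ref{defEqHilbMod}); the stated consequence for $\s E_k$ then follows by slicing with the maps of \ref{not4} relative to the non-degenerate representation $L_{kj}$ of $S_{kj}$ on $\s H_{kj}$ (cf.\ \ref{not19}).

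The substantive step is (v). As in \ref{rk9} (via \ref{not7} and the unitary equivalences \ref{eq24}, \ref{eq9}, \ref{eq1.14}, \ref{eq1.15}, \ref{eq1.16}, \ref{eq10}) the maps $\delta_{\s E_k}^l\tens\id_{S_{kj}}$ and $\id_{\s E_l}\tens\delta_{lj}^k$ extend to linear maps on the relevant operator spaces by $(\delta_{\s E_k}^l\tens\id_{S_{kj}})(T)=(\s V_k^l\tens_{\GC}\id_{S_{kj}})(T\tens_{\delta_{A_k}^l\tens\id_{S_{kj}}}1)$ and $(\id_{\s E_l}\tens\delta_{lj}^k)(T)=T\tens_{\id_{A_l}\tens\delta_{lj}^k}1$. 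Writing $\delta_{\s E_j}^k(\xi)=\s V_j^k T_\xi$ and tensoring, one gets
\[
(\delta_{\s E_k}^l\tens\id_{S_{kj}})\delta_{\s E_j}^k(\xi)=(\s V_k^l\tens_{\GC}\id_{S_{kj}})(\s V_j^k\tens_{\delta_{A_k}^l\tens\id_{S_{kj}}}1)(T_\xi\tens_{\delta_{A_k}^l\tens\id_{S_{kj}}}1);
\]
by \ref{prop9} the first two factors coalesce to $\s V_j^l\tens_{\id_{A_l}\tens\delta_{lj}^k}1$, and after identifying $T_\xi\tens_{\delta_{A_k}^l\tens\id_{S_{kj}}}1$ with $T_\xi\tens_{\id_{A_l}\tens\delta_{lj}^k}1$ through the equality $(\delta_{A_k}^l\tens\id_{S_{kj}})\delta_{A_j}^k=(\id_{A_l}\tens\delta_{lj}^k)\delta_{A_j}^l$ of \ref{actprop} 2, this equals $(\id_{\s E_l}\tens\delta_{lj}^k)(\s V_j^l T_\xi)=(\id_{\s E_l}\tens\delta_{lj}^k)\delta_{\s E_j}^l(\xi)$. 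I expect this to be the only delicate part, and the difficulty is purely bookkeeping: one must track the several internal tensor product identifications so that the inputs and outputs of \ref{prop9} sit in the declared modules; conceptually it is the verbatim analogue of 2.3--2.4 of \cite{BS1} with the extra groupoid indices.

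For the last assertion, suppose $\s E$ is ${\cal G}$-equivariant and fix $j$. By \ref{prop5bis} 4 the pair $(S_{jj},\delta_{jj}^j)$ is the Hopf C*-algebra of $\QG_j$ and by \ref{actprop} 4 the map $\delta_{A_j}^j$ is a strongly continuous action of $\QG_j$ on $A_j$; specializing (ii), (iii), (iv), (v), (vi) to $k=l=j$ shows that $\delta_{\s E_j}^j:\s E_j\rightarrow\widetilde{\M}(\s E_j\tens S_{jj})$ satisfies conditions 1, 2, 3 of the definition of an action of $\QG_j$ on $\s E_j$ together with the continuity relation $[(1_{\s E_j}\tens S_{jj})\delta_{\s E_j}^j(\s E_j)]=\s E_j\tens S_{jj}$, i.e.\ $(\s E_j,\delta_{\s E_j}^j)$ is a $\QG_j$-equivariant Hilbert $A_j$-module. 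Since $\delta_{\s E_j}^j(\xi)x=\s V_j^j(\xi\tens_{\delta_{A_j}^j}x)$ by construction, $\s V_j^j$ is the (unitary) isometry associated with this action in the sense of \ref{prop27} a), which completes the proof.
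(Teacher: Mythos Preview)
The paper does not include a proof of this Proposition-Definition: the entire section \S\ref{sectionHilbModColink} is explicitly a recollection of results from \cite{C2} (see the opening sentence of that subsection), and \ref{propdef4} is stated without argument. So there is no ``paper's proof'' to compare against.

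Your proof is correct and is the natural one. A few minor remarks. First, your justification of the preliminary block decomposition contains a small slip: $\delta_{A_j}^k$ is a \emph{unital} non-degenerate morphism $A_j\to\M(A_k\tens S_{kj})$, so $\delta_{A_j}^k(1_{A_j})=1_{A_k\tens S_{kj}}$, not $q_{A,k}\tens p_{kj}$; the identity $q_{\beta_A\alpha}=\sum_{j,k}q_{A,k}\tens p_{kj}$ is rather obtained directly from \ref{ProjectionCAlg} for $N=\GC^2$ together with $\alpha(\varepsilon_k)=p_{k1}+p_{k2}$ from \ref{not10}. Second, in (ii) you should spell out that for $\eta\in\s E_j$ the only nonzero summand in (i) with index $j$ is the one you keep, and that $(q_{\s E,k}\tens p_{kj})\Pi_{j'}^{k'}=0$ unless $(j',k')=(j,k)$; this is implicit but worth one sentence. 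Third, your handling of (v) is exactly right: the extensions are defined as in \ref{rk9}, and \ref{prop9} is precisely the pentagonal identity needed; the identification of $T_\xi\tens_{\delta_{A_k}^l\tens\id_{S_{kj}}}1$ with $T_\xi\tens_{\id_{A_l}\tens\delta_{lj}^k}1$ is legitimate because both sit over $\s E_j\tens_{\delta_{A_j}^{(2)}}(A_l\tens S_{lk}\tens S_{kj})$ via \ref{eq1.14} and \ref{eq1.15}, where $\delta_{A_j}^{(2)}:=(\delta_{A_k}^l\tens\id_{S_{kj}})\delta_{A_j}^k=(\id_{A_l}\tens\delta_{lj}^k)\delta_{A_j}^l$.
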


According to this concrete description of $\cal G$-equivariant Hilbert C*-modules, we have a description of the $\cal G$-equivariant unitary equivalences in terms of $\QG_j$-equivariant unitary equivalences for $j=1,2$.

\begin{lem}\label{lem5}
Let $A$ and $B$ be $\cal G$-$\Cstar$-algebras. Let $\s E$ and $\s F$ be Hilbert $\Cstar$-modules over $A$ and $B$ respectively acted upon by $\cal G$.
\begin{enumerate}
\item Let $\Phi:\s E\rightarrow\s F$ be a $\cal G$-equivariant unitary equivalence over a $\cal G$-equivariant *-isomorphism $\phi:A\rightarrow B$. For $j=1,2$, there exists a unique map $\Phi_j:\s E_j\rightarrow\s F_j$ satisfying the formula $\Phi(\xi)=\Phi_1(q_{\s E,1}\xi)+\Phi_2(q_{\s E,2}\xi)$ for all $\xi\in\s E$. Moreover, we have:
\begin{enumerate}[label=(\roman*)]
\item for $j=1,\,2$, the map $\Phi_j$ is a unitary equivalence over the *-isomorphism $\phi_j:A_j\rightarrow B_j$ (cf.\ \ref{lem7bis} 1);
\item for all $j,k=1,2$, we have
\begin{equation}\label{eq14}
(\Phi_k\tens\id_{S_{kj}})\circ\delta_{\s E_j}^k=\delta_{\s F_j}^k\circ\Phi_j.
\end{equation}
\end{enumerate}
In particular, $\Phi_j$ is a $\QG_j$-equivariant $\phi_j$-compatible unitary operator.
\item Conversely, for $j=1,\, 2$ let $\Phi_j:\s E_j\rightarrow\s F_j$ be a $\QG_j$-equivariant unitary equivalence over a $\QG_j$-equivariant *-isomorphism $\phi_j:A_j\rightarrow B_j$ such that (\ref{eqmorpheq}) and (\ref{eq14}) hold for all $j,k=1,2$. Then, the map $\Phi:\s E\rightarrow\s F$, defined  by $\Phi(\xi):=\Phi_1(q_{\s E,1}\xi)+\Phi_2(q_{\s E,2}\xi)$ for all $\xi\in\s E$, is a $\cal G$-equivariant unitary equivalence over the $\cal G$-equivariant *-isomorphism $\phi:A\rightarrow B$ (cf.\ \ref{lem7bis} 2).\qedhere
\end{enumerate}
\end{lem}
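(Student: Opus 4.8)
The strategy is to run the block-decomposition argument with respect to $A=A_1\oplus A_2$, $B=B_1\oplus B_2$, $\s E=\s E_1\oplus\s E_2$, $\s F=\s F_1\oplus\s F_2$, in complete parallel with the proof of Proposition~\ref{lem7bis} and with the concrete description of $\cal G$-equivariant Hilbert modules in \ref{propdef4}.

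\emph{Part 1.} Since $\Phi$ is $\cal G$-equivariant it intertwines $\beta_{\s E}$ and $\beta_{\s F}$ (cf.\ \ref{def1}), hence $\Phi\circ q_{\s E,j}=q_{\s F,j}\circ\Phi$ for $j=1,2$, where $q_{\s E,j}=\beta_{\s E}(\varepsilon_j)$, $q_{\s F,j}=\beta_{\s F}(\varepsilon_j)$ (cf.\ \ref{not8}). Therefore $\Phi$ maps $\s E_j=q_{\s E,j}\s E$ into $\s F_j$, its restriction $\Phi_j:=\restr{\Phi}{\s E_j}:\s E_j\to\s F_j$ is a unitary, and $\Phi(\xi)=\Phi_1(q_{\s E,1}\xi)+\Phi_2(q_{\s E,2}\xi)$ for all $\xi\in\s E$; uniqueness of $\Phi_j$ is immediate from this formula applied to $\xi\in\s E_j$. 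As $\Phi(\xi a)=\Phi(\xi)\phi(a)$ for $\xi\in\s E$, $a\in A$, and $\phi$ restricts to a $*$-isomorphism $\phi_j:A_j\to B_j$ (cf.\ \ref{lem7bis} 1), one gets $\Phi_j(\xi a)=\Phi_j(\xi)\phi_j(a)$ for $\xi\in\s E_j$, $a\in A_j$, i.e.\ $\Phi_j$ is $\phi_j$-compatible, which proves (i).

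\emph{Intertwining relation and $\QG_j$-equivariance.} I would substitute the decomposition $\delta_{\s E}(\xi)=\sum_{k,j}\Pi_j^k\circ\delta_{\s E_j}^k(q_{\s E,j}\xi)$ of \ref{propdef4} (i), together with the analogous one for $\delta_{\s F}$, into the equivariance identity $\delta_{\s F}(\Phi\xi)=(\Phi\tens\id_S)\delta_{\s E}(\xi)$. The key point is that the extension $\Phi\tens\id_S$ is compatible with the canonical block injections, namely $(\Phi\tens\id_S)\circ\Pi_j^k=\Pi_j^k\circ(\Phi_k\tens\id_{S_{kj}})$, which follows from $\Phi\circ q_{\s E,k}=q_{\s F,k}\circ\Phi$ and the uniqueness clauses defining these extensions (cf.\ \ref{not3}, \ref{propdef6}, \ref{not8}). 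Projecting onto the $(k,j)$-corner and using injectivity of the $\Pi_j^k$, one obtains $\delta_{\s F_j}^k(\Phi_j\xi)=(\Phi_k\tens\id_{S_{kj}})\circ\delta_{\s E_j}^k(\xi)$ for all $\xi\in\s E_j$ and $j,k=1,2$, which is $(\ref{eq14})$. Taking $k=j$ and recalling that $(S_{jj},\delta_{jj}^j)$ is the Hopf C*-algebra of $\QG_j$ (cf.\ \ref{prop5bis} 4) and that $\delta_{\s E_j}^j$ is the $\QG_j$-action on $\s E_j$ (cf.\ \ref{propdef4}), the relation $\delta_{\s F_j}^j\circ\Phi_j=(\Phi_j\tens\id_{S_{jj}})\circ\delta_{\s E_j}^j$ says precisely that $\Phi_j$ is a $\QG_j$-equivariant unitary equivalence over $\phi_j$.

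\emph{Part 2.} Conversely, given $\QG_j$-equivariant unitary equivalences $\Phi_j:\s E_j\to\s F_j$ over $*$-isomorphisms $\phi_j:A_j\to B_j$ such that $(\ref{eqmorpheq})$ and $(\ref{eq14})$ hold, set $\phi:=\phi_1\oplus\phi_2$ and define $\Phi(\xi):=\Phi_1(q_{\s E,1}\xi)+\Phi_2(q_{\s E,2}\xi)$. Then $\Phi$ is a unitary $\s E\to\s F$, it is $\phi$-compatible by construction, and $\phi$ is a $\cal G$-equivariant $*$-isomorphism by \ref{lem7bis} 2 (this is where $(\ref{eqmorpheq})$ enters). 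To verify the $\cal G$-equivariance of $\Phi$, write $\xi_j:=q_{\s E,j}\xi$ and use \ref{propdef4} (i), the block-compatibility of $\Phi\tens\id_S$, and $(\ref{eq14})$:
\[
\delta_{\s F}(\Phi\xi)=\sum_{k,j}\Pi_j^k\circ\delta_{\s F_j}^k(\Phi_j\xi_j)=\sum_{k,j}\Pi_j^k\circ(\Phi_k\tens\id_{S_{kj}})\circ\delta_{\s E_j}^k(\xi_j)=(\Phi\tens\id_S)\delta_{\s E}(\xi),
\]
so $\Phi$ is a $\cal G$-equivariant unitary equivalence over $\phi$. The only genuinely delicate point throughout is the bookkeeping with the extension $\Phi\tens\id_S$ to the adjointable-operator level and its compatibility with the block injections $\Pi_j^k$ and the central projections $q_{\s E,k}\tens 1_S$; this is handled exactly as in the proof of \ref{lem7bis} and poses no essential obstacle.
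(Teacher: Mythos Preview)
Your argument is correct and is exactly the natural block-decomposition approach one expects here. Note that the paper itself does not prove this lemma: it is stated as a recall from \cite{C2} (\S 6.2), so there is no ``paper's own proof'' to compare against beyond the surrounding framework (\ref{lem7bis}, \ref{propdef4}) that you already invoke. One small point you leave implicit in (i): to conclude that $\Phi_j$ is a $\phi_j$-compatible \emph{unitary} you also need the inner-product compatibility $\langle\Phi_j\xi,\Phi_j\eta\rangle=\phi_j(\langle\xi,\eta\rangle)$ and surjectivity; both are immediate from $\langle q_{\s E,j}\xi,q_{\s E,j}\eta\rangle=q_{A,j}\langle\xi,\eta\rangle$ (cf.\ \ref{not8}) together with $\Phi(\s E_j)=q_{\s F,j}\Phi(\s E)=\s F_j$.
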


We also have a description of the $\cal G$-equivariant *-representations in terms of $\QG_j$-equivariant *-representations for $j=1,2$.

\begin{lem}\label{lem11}
Let $A$, $B$ be two $\cal G$-$\Cstar$-algebras and $\s E$ a $\cal G$-equivariant Hilbert $B$-module. We follow the notations of \ref{not12} and \ref{not8} concerning these objects.
\begin{enumerate}
\item Let $\gamma:A\rightarrow\Lin(\s E)$ be a $\cal G$-equivariant *-representation. We have 
\[
\gamma(a)q_{\s E,j}=\gamma(q_{A,j}a)=q_{\s E,j}\gamma(a), \; \text{ for all } \; a\in A \; \text{ and } \; j=1,2.
\]
There exist unique *-representations $\gamma_j:A_j\rightarrow\Lin(\s E_j)$ for $j=1,2$ such that for all $a\in A$, $\gamma(a)=\gamma_1(q_{A,1}a)+\gamma_2(q_{A,2}a)$. Furthermore, for $j,k=1,2$ we have
\begin{equation}\label{eq22}
\delta_{\s E_j}^k(\gamma_j(a)\xi)=(\gamma_k\tens\id_{S_{kj}})(\delta_{A_j}^k(a))\circ\delta_{\s E_j}^k(\xi),\quad \text{for all } a\in A_j \text{ and }  \xi\in\s E_j.
\end{equation}
In particular, the *-representation $\gamma_j:A_j\rightarrow\Lin(\s E_j)$ is $\QG_j$-equivariant.
\item Conversely, let $\gamma_j:A_j\rightarrow\Lin(\s E_j)$ be a $\QG_j$-equivariant *-representation for $j=1,2$. Let $\gamma:A\rightarrow\Lin(\s E)$ be the *-representation defined by $\gamma(a):=\gamma_1(q_{A,1}a)+\gamma_2(q_{A,2}a)$ for all $a\in A$. Assume further that (\ref{eq22}) holds for all $j,k=1,2$. Then, the *-representation $\gamma:A\rightarrow\Lin(\s E)$ is $\cal G$-equivariant.
\end{enumerate}
Moreover, the pair $(\s E,\gamma)$ is a $\cal G$-equivariant Hilbert $A$-$B$-bimodule if, and only if, the pair $(\s E_j,\gamma_j)$ is a $\QG_j$-equivariant Hilbert $A_j$-$B_j$-bimodule for $j=1,2$.
\end{lem}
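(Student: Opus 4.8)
The plan is to mimic, for representations, the block-decomposition arguments used for $\cal G$-equivariant $*$-homomorphisms in \ref{lem7bis} and for $\cal G$-equivariant unitary equivalences in \ref{lem5}: everything is governed by the central projections $q_{A,j}=\beta_A(\varepsilon_j)$, $q_{\s E,j}=\beta_{\s E}(\varepsilon_j)$ and $p_{kj}=\alpha(\varepsilon_k)\beta(\varepsilon_j)$. First I would establish the central decomposition of $\gamma$. Applying condition 2 of \ref{defbimod} with $n=\varepsilon_j\in\GC^2=(\GC^2)^{\rm o}$ gives $q_{\s E,j}\gamma(a)=\gamma(q_{A,j}a)$ for all $a\in A$; taking adjoints and using that $q_{A,j}$ and $q_{\s E,j}$ are self-adjoint then yields $\gamma(q_{A,j}a)=\gamma(a)q_{\s E,j}$ as well. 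Hence $\gamma$ maps $A_j=q_{A,j}A$ into the corner $q_{\s E,j}\Lin(\s E)q_{\s E,j}$, which is canonically identified with $\Lin(\s E_j)$; I set $\gamma_j:=\restr{\gamma}{A_j}\colon A_j\to\Lin(\s E_j)$. Since $a=q_{A,1}a+q_{A,2}a$ and $\gamma(q_{A,j}a)=\gamma_j(q_{A,j}a)$, the formula $\gamma(a)=\gamma_1(q_{A,1}a)+\gamma_2(q_{A,2}a)$ holds, and conversely it forces $\gamma_j=\restr{\gamma}{A_j}$, giving uniqueness. This computation is reversible, so in the setting of part 2 the assembled $\gamma$ satisfies condition 2 of \ref{defbimod} at once: $q_{\s E,j}\gamma(a)=\gamma_j(q_{A,j}a)=\gamma(q_{A,j}a)=\gamma(\beta_A(\varepsilon_j)a)$, and $N=\GC^2$ is the linear span of $\varepsilon_1,\varepsilon_2$.

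Next I would derive \eqref{eq22}. Starting from condition 1 of \ref{defbimod}, $\delta_{\s E}(\gamma(a)\xi)=(\gamma\tens\id_S)(\delta_A(a))\circ\delta_{\s E}(\xi)$, I restrict to $a\in A_j$ and $\xi\in\s E_j$: in the decomposition $a=\sum_{j'}q_{A,j'}a$, $\xi=\sum_{j'}q_{\s E,j'}\xi$ all mixed terms drop because $\gamma(A_j)\subset\Lin(\s E_j)$. By \ref{actprop} 1 and \ref{propdef4}(i) we may write $\delta_A(a)=\sum_k\pi_j^k(\delta_{A_j}^k(a))$ and $\delta_{\s E}(\xi)=\sum_k\Pi_j^k(\delta_{\s E_j}^k(\xi))$. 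The key identity is
\[
(\gamma\tens\id_S)\circ\pi_j^k=\Pi_j^k\circ(\gamma_k\tens\id_{S_{kj}}),
\]
where on the right $\Pi_j^k$ denotes the obvious strictly continuous extension, with unit $q_{\s E,k}\tens p_{kj}$, of the inclusion of $\Lin(\s E_k\tens S_{kj})=\M(\K(\s E_k)\tens S_{kj})$ into $\Lin(\s E\tens S)$; I would check this on elementary tensors $y=a_k\tens s_{kj}$, where it reduces to $\gamma(a_k)=\gamma_k(a_k)$, and extend by strict continuity of $\gamma\tens\id_S$, $\pi_j^k$ and $\Pi_j^k$ (cf.\ \ref{rk8} and \S\ref{sectionNotations}). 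Since the ranges of the $\Pi_j^k$, $k=1,2$, are the mutually orthogonal subspaces $\s E_k\tens S_{kj}$ of $\s E\tens S$, the composition $(\gamma\tens\id_S)(\delta_A(a))\circ\delta_{\s E}(\xi)$ is block-diagonal and equals $\sum_k\Pi_j^k\bigl((\gamma_k\tens\id_{S_{kj}})(\delta_{A_j}^k(a))\circ\delta_{\s E_j}^k(\xi)\bigr)$, while $\delta_{\s E}(\gamma(a)\xi)=\sum_k\Pi_j^k(\delta_{\s E_j}^k(\gamma_j(a)\xi))$ because $\gamma(a)\xi=\gamma_j(a)\xi\in\s E_j$. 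Comparing the $k$-th blocks and using injectivity of $\Pi_j^k$ gives \eqref{eq22}.

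Specializing \eqref{eq22} to $k=j$ reads $\delta_{\s E_j}^j(\gamma_j(a)\xi)=(\gamma_j\tens\id_{S_{jj}})(\delta_{A_j}^j(a))\circ\delta_{\s E_j}^j(\xi)$, which is exactly the defining relation of a $\QG_j$-equivariant $*$-representation: by \ref{prop5bis} 4, \ref{actprop} 4 and \ref{propdef4} the pair $(S_{jj},\delta_{jj}^j)$ is the Hopf C*-algebra of $\QG_j$ and $\delta_{A_j}^j$, $\delta_{\s E_j}^j$, $\s V_j^j$ are the $\QG_j$-action on $A_j$, on $\s E_j$ and the associated unitary. Hence $\gamma_j$ is $\QG_j$-equivariant. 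For the converse, I assemble $\gamma:=\gamma_1(q_{A,1}\cdot)+\gamma_2(q_{A,2}\cdot)$, a $*$-representation into $\Lin(\s E_1)\oplus\Lin(\s E_2)\subset\Lin(\s E)$; condition 2 of \ref{defbimod} is paragraph one read backwards, and condition 1 is checked by the same block computation: for $a\in A_j$ and $\xi\in\s E_{j'}$ with $j'\neq j$ both sides vanish (on the left $\gamma(a)\xi=0$; on the right $(\gamma\tens\id_S)(\delta_A(a))$ is supported on the $S_{kj}$-legs whereas $\delta_{\s E}(\xi)$ is supported on the $S_{kj'}$-legs), and for $j'=j$ condition 1 becomes \eqref{eq22} summed over $k$.

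Finally, for the bimodule assertion I would combine three equivalences: $\s E$ is a $\cal G$-equivariant Hilbert $B$-module if and only if each $\s E_j$ is a $\QG_j$-equivariant Hilbert $B_j$-module (last statement of \ref{propdef4}); $\s E$ is countably generated over $B=B_1\oplus B_2$ if and only if $\s E_j$ is countably generated over $B_j$ for $j=1,2$ (cutting a countable generating set of $\s E$ by $q_{\s E,j}$ gives one of $\s E_j$, and the union of countable generating sets of $\s E_1$ and $\s E_2$ generates $\s E$); and $\gamma$ is $\cal G$-equivariant if and only if $\gamma_1$ and $\gamma_2$ are $\QG_1$- and $\QG_2$-equivariant by parts 1 and 2 just proved. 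The step requiring genuine care is the interplay in paragraph two — the identity $(\gamma\tens\id_S)\circ\pi_j^k=\Pi_j^k\circ(\gamma_k\tens\id_{S_{kj}})$ together with the claim that composing $(\gamma\tens\id_S)(\delta_A(a))$ with $\delta_{\s E}(\xi)$ respects the $k$-block decomposition; the rest is bookkeeping with the central projections.
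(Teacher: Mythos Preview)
Your proposal is correct and follows essentially the same approach as the paper: the paper's proof also uses centrality of $\beta_A$ to get $\gamma(a)q_{\s E,j}=\gamma(q_{A,j}a)=q_{\s E,j}\gamma(a)$, defines $\gamma_j$ by restriction, and then simply declares that \eqref{eq22} ``is a straightforward restatement'' of condition~1 of \ref{defbimod} and that ``the converse and the last statement are obvious.'' You have merely spelled out this restatement via the block maps $\pi_j^k$, $\Pi_j^k$ and the identity $(\gamma\tens\id_S)\circ\pi_j^k=\Pi_j^k\circ(\gamma_k\tens\id_{S_{kj}})$, which is exactly the content the paper leaves implicit.
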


\begin{proof}
{\setlength{\baselineskip}{1.1\baselineskip}
Since $\beta_A$ is central and $\gamma$ is $\cal G$-equivariant, we have $[\gamma(a),\,\beta_{\s E}(n)]=\gamma([a,\,\beta_A(n)])=0$ for all $n\in\GC^2$. Hence, 
$
\gamma(a)q_{\s E,j}=\gamma(q_{A,j}a)=q_{\s E,j}\gamma(a)
$
for all $a\in A$ and $j=1,2$.
We denote by $\gamma_j:A_j\rightarrow\Lin(\s E_j)$ the *-representation defined by $\gamma_j(a):=\restr{\gamma(a)}{\s E_j}$ for all $a\in A_j$. We have $\gamma(a)=\gamma_1(q_{A,1}a)+\gamma_2(q_{A,2}a)$ for all $a\in A$ and (\ref{eq22}) is a straightforward restatement of the fact that $\delta_{\s E}(\gamma(a)\xi)=(\gamma\tens\id_S)(\delta_A(a))\circ\delta_{\s E}(\xi)$. The converse and the last statement are obvious.\qedhere
\par}
\end{proof}

Note that (\ref{eq22}) can be restated in the following ways: 
\begin{align*}
\s V_j^k(\gamma_j(a)\tens_{\delta_{B_j}^k}1)(\s V_j^k)^*&=(\gamma_k\tens\id_{S_{kj}})\delta_{A_j}^k(a),\quad a\in A; \\
\delta_{\K(\s E_j)}^k(\gamma_j(a))&=(\gamma_k\tens\id_{S_{kj}})\delta_{A_j}^k(a),\quad a\in A.
\end{align*}

The following lemma is straightforward.

\begin{lem}\label{lem10}
Let $A$ and $B$ be two $\cal G$-$\Cstar$-algebras. Let $\s E$ and $\s F$ be two $\cal G$-equivariant Hilbert $B$-modules. 
\begin{enumerate}
\item Let $\pi:A\rightarrow\Lin(\s E)$ and $\gamma:A\rightarrow\Lin(\s F)$ be $\cal G$-equivariant *-representations. Let $\Phi\in\Lin(\s E,F)$ be a $\cal G$-equivariant unitary such that $\Phi\circ\pi(a)=\gamma(a)\circ\Phi$ for all $a\in A$. Then, for $j=1,2$ the $\QG_j$-equivariant unitary $\Phi_j\in\Lin(\s E_j,\s F_j)$ {\rm(}cf.\ \ref{lem5} 1{\rm)} satisfies for all $a\in A_j$ the relation $\Phi_j\circ\pi_j(a)=\gamma_j(a)\circ\Phi_j$ {\rm(}cf.\ \ref{lem11} 1{\rm)}.
\item Conversely, for $j=1,2$ let us fix $\QG_j$-equivariant *-representations $\pi_j:A_j\rightarrow\Lin(\s E_j)$ and $\gamma_j:A_j\rightarrow\Lin(\s F_j)$ and a $\QG_j$-equivariant unitary $\Phi_j\in\Lin(\s E_j,\s F_j)$ satisfying the relation $\Phi_j\circ\pi_j(a)=\gamma_j(a)\circ\Phi_j$ for all $a\in A_j$. Then, the $\cal G$-equivariant unitary $\Phi\in\Lin(\s E,\s F)$ {\rm(}cf.\ \ref{lem5} 2{\rm)} satisfies the relation $\Phi\circ\pi(a)=\gamma(a)\circ\Phi$ for all $a\in A$ {\rm(}cf.\ \ref{lem11} 2{\rm)}. \qedhere
\end{enumerate}
\end{lem}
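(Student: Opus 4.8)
The plan is to deduce both statements directly from the block decompositions of Lemmas \ref{lem5} and \ref{lem11}, exploiting that for a colinking groupoid the maps $\beta_A$, $\beta_{\s E}$, $\beta_{\s F}$ are central. Write $q_{\s E,j}=\beta_{\s E}(\varepsilon_j)$, $q_{\s F,j}=\beta_{\s F}(\varepsilon_j)$, $q_{A,j}=\beta_A(\varepsilon_j)$. The first thing I would record is that all objects in play respect these projections: $\Phi q_{\s E,j}=q_{\s F,j}\Phi$ (from $\cal G$-equivariance of $\Phi$), and $\pi(a)q_{\s E,j}=q_{\s E,j}\pi(a)=\pi_j(q_{A,j}a)$, $\gamma(a)q_{\s F,j}=q_{\s F,j}\gamma(a)=\gamma_j(q_{A,j}a)$ (Lemma \ref{lem11}). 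Consequently $\Phi(\s E_j)\subset\s F_j$, $\pi(a)(\s E_j)\subset\s E_j$, $\gamma(a)(\s F_j)\subset\s F_j$, and moreover $\Phi_j=\restr{\Phi}{\s E_j}$, $\pi_j=\restr{\pi(\cdot)}{\s E_j}$, $\gamma_j=\restr{\gamma(\cdot)}{\s F_j}$ are precisely the corresponding restrictions (Lemmas \ref{lem5}, \ref{lem11}).

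For statement 1 I would then merely restrict the intertwining identity $\Phi\circ\pi(a)=\gamma(a)\circ\Phi$ to the summand $\s E_j$: for $a\in A_j$ and $\xi_j\in\s E_j$ one has $\pi_j(a)\xi_j=\pi(a)\xi_j\in\s E_j$, hence $\Phi_j(\pi_j(a)\xi_j)=\Phi(\pi(a)\xi_j)=\gamma(a)\Phi(\xi_j)=\gamma_j(a)\Phi_j(\xi_j)$, the last step using $\Phi(\xi_j)\in\s F_j$ and $\gamma_j=\restr{\gamma(\cdot)}{\s F_j}$. This is exactly the asserted relation $\Phi_j\circ\pi_j(a)=\gamma_j(a)\circ\Phi_j$ on $\s E_j$, and the $\QG_j$-equivariance of $\Phi_j$ is already part of Lemma \ref{lem5}. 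For statement 2, conversely, I would assemble $\Phi$, $\pi$, $\gamma$ from the given $\QG_j$-data via Lemmas \ref{lem5} and \ref{lem11} (which requires that (\ref{eqmorpheq}), (\ref{eq14}) and (\ref{eq22}) hold), and then verify $\Phi(\pi(a)\xi)=\gamma(a)\Phi(\xi)$ for $a\in A$, $\xi\in\s E$ by applying $q_{\s F,j}$ to both sides: the left side becomes $\Phi_j(q_{\s E,j}\pi(a)\xi)=\Phi_j(\pi_j(q_{A,j}a)(q_{\s E,j}\xi))$ and the right side $\gamma_j(q_{A,j}a)(q_{\s F,j}\Phi(\xi))=\gamma_j(q_{A,j}a)\Phi_j(q_{\s E,j}\xi)$, which agree by hypothesis; summing over $j=1,2$ gives the claim.

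There is no genuine obstacle here, which is why the lemma is stated as straightforward. The only point needing the slightest attention is keeping the three decompositions ($\s E=\s E_1\oplus\s E_2$, $\s F=\s F_1\oplus\s F_2$, and those of $\pi$, $\gamma$, $\Phi$) mutually compatible, i.e.\ checking the commutation with the central projections $q_{\s E,j}$, $q_{\s F,j}$, $q_{A,j}$; once that is in place every equality above is an immediate restriction or reassembly, and no regularity hypothesis on $\cal G$ is used. The last statement about $\cal G$-equivariant bimodules then follows by combining the two parts with the last assertion of Lemma \ref{lem11}.
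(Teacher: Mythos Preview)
Your argument is correct and is precisely the kind of routine restriction/reassembly computation the paper has in mind; the paper itself gives no proof and simply declares the lemma ``straightforward'', referring the reader to Lemmas~\ref{lem5} and~\ref{lem11} for the block decompositions you use. One minor remark: there is no additional ``last statement about $\cal G$-equivariant bimodules'' in Lemma~\ref{lem10} to address---that assertion lives in Lemma~\ref{lem11}---so you can drop your final sentence.
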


\subsubsection{Induction of equivariant Hilbert C*-modules}

In the following, we recall the induction procedure for equivariant Hilbert C*-modules (cf.\ 4.3 \cite{BC}, 6.3 \cite{C2}). We assume that the quantum groups $\QG_1$ and $\QG_2$ are regular.
Fix a $\QG_1$-$\Cstar$-algebra $(A_1,\delta_{A_1})$ and a $\QG_1$-equivariant Hilbert $A_1$-module $(\s E_1,\delta_{\s E_1})$. We denote by $J_1:=\K(\s E_1\oplus A_1)$ the associated linking $\Cstar$-algebra endowed with the continuous action $\delta_{J_1}$ of $\QG_1$.

\begin{nbs} Let us fix some notations.
\begin{itemize}
\item Let $\id_{\s E_1}\tens\delta_{11}^2:\Lin(A_1\tens S_{11},\s E_1\tens S_{11})\rightarrow\Lin(A_1\tens S_{12}\tens S_{21},\s E_1\tens S_{12}\tens S_{21})$ be the unique linear extension of $\id_{\s E_1}\tens\delta_{11}^2:\s E_1\tens S_{11}\rightarrow\Lin(A_1\tens S_{12}\tens S_{21},\s E_1\tens S_{12}\tens S_{21})$ such that
$
(\id_{\s E_1}\tens\delta_{11}^2)(T)(\id_{A_1}\tens\delta_{11}^2)(x)=(\id_{\s E_1}\tens\delta_{11}^2)(Tx)
$
for all $x\in\M(A_1\tens S_{11})$ and $T\in\Lin(A_1\tens S_{11},\s E_1\tens S_{11})$.
\item Let $\delta_{\s E_1}^{(2)}:\s E_1\rightarrow\Lin(A_1\tens S_{12}\tens S_{21},\s E_1\tens S_{12}\tens S_{21})$ be the linear map defined by
$
\delta_{\s E_1}^{(2)}(\xi):=(\id_{\s E_1}\tens\delta_{11}^2)\delta_{\s E_1}(\xi)
$
for all $\xi\in\s E_1$.\index[symbol]{dg@$\delta_{\s E_1}^{(2)}$}
\item Consider the Banach subspace of $\Lin(A_1\tens S_{12},\s E_1\tens S_{12})$ defined by (cf.\ \ref{not4}):\index[symbol]{id@$\ind(\s E_1)$, induced Hilbert module} 
\[
\ind({\s E}_1):=\![(\id_{\s E_1\tens S_{12}}\tens\omega)\delta_{\s E_1}^{(2)}(\xi)\,;\,\xi\in\s E_1,\omega\in\B(\s H_{21})_*].\qedhere
\]
\end{itemize}
\end{nbs} 

\begin{prop}\label{prop3}
Let us denote by $A_2:=\ind(A_1)$ the induced $\QG_2$-$\Cstar$-algebra of $A_1$. Let $\s E_2:=\ind(\s E_1)$.
\begin{enumerate}
\item We have
$
[\s E_2(1_{A_1}\tens S_{12})]=\s E_1\tens S_{12}=[(1_{\s E_1}\tens S_{11})\s E_2].
$
In particular, $\s E_2\subset\widetilde{\M}(\s E_1\tens S_{12})$.
\item $\s E_2$ is a Hilbert $A_2$-module for the right action by composition and the $A_2$-valued inner product given by $\langle\xi,\, \eta\rangle:=\xi^*\circ\eta$ for $\xi,\eta\in\ind(\s E_1)$.\qedhere
\end{enumerate} 
\end{prop}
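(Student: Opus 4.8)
The plan is to deduce both assertions from the C*-algebraic statement \ref{propind4} by passing to the linking C*-algebra. Put $J_1:=\K(\s E_1\oplus A_1)$. Since $(\s E_1,\delta_{\s E_1})$ is a $\QG_1$-equivariant Hilbert $A_1$-module, $J_1$ carries a strongly continuous action $\delta_{J_1}$ of $\QG_1$ (the $\QG_1$-version of \ref{prop1} and \ref{propcont}; cf.\ 2.7 \cite{BS1}) for which $\iota_{A_1}:A_1\to\M(J_1)$ and $\iota_{\K(\s E_1)}:\K(\s E_1)\to\M(J_1)$ are $\QG_1$-equivariant, the corner projections $e_1:=\iota_{\s E_1}(1_{\s E_1})$, $e_2:=\iota_{A_1}(1_{A_1})\in\M(J_1)$ are $\delta_{J_1}$-invariant, $e_1+e_2=1_{J_1}$, and (by \ref{prop32} and \ref{rk3}) $e_1J_1e_2=\iota_{\s E_1}(\s E_1)$, $e_2J_1e_2=\iota_{A_1}(A_1)$; moreover $\delta_{J_1}\circ\iota_{\s E_1}=\iota_{\s E_1\tens S_{11}}\circ\delta_{\s E_1}$ by compatibility.

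Next I would induce and decompose. Set $J_2:={\rm Ind}_{\QG_1}^{\QG_2}(J_1)\subset\M(J_1\tens S_{12})$; by \ref{propind4} this is a $\QG_2$-C*-algebra with $[J_2(1_{J_1}\tens S_{12})]=J_1\tens S_{12}=[(1_{J_1}\tens S_{12})J_2]$ and $\M(J_2)\subset\M(J_1\tens S_{12})$. The key intermediate point is that induction respects the corner decomposition of $J_1$. Since $\delta_{11}^2$ is non-degenerate (cf.\ \ref{prop5bis}) and each $e_j$ is $\delta_{J_1}$-invariant, $\delta_{J_1}^{(2)}(e_j)=(\id_{J_1}\tens\delta_{11}^2)(e_j\tens 1_{S_{11}})=e_j\tens 1_{S_{12}\tens S_{21}}$, hence $\delta_{J_1}^{(2)}(e_jj'e_k)=(e_j\tens 1)\delta_{J_1}^{(2)}(j')(e_k\tens 1)$ for all $j'\in J_1$; applying $\id_{J_1\tens S_{12}}\tens\omega$ this shows $e_j\tens 1_{S_{12}}\in\M(J_2)$, $(e_1+e_2)\tens 1_{S_{12}}=1_{J_2}$, and $(e_j\tens 1_{S_{12}})J_2(e_k\tens 1_{S_{12}})=[(\id_{J_1\tens S_{12}}\tens\omega)\delta_{J_1}^{(2)}(e_jj'e_k)\,;\,j'\in J_1,\ \omega\in\B(\s H_{21})_*]$. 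Iterating $\delta_{J_1}\circ\iota_{\s E_1}=\iota_{\s E_1\tens S_{11}}\circ\delta_{\s E_1}$ gives $\delta_{J_1}^{(2)}\circ\iota_{\s E_1}=\iota_{\s E_1\tens S_{12}\tens S_{21}}\circ\delta_{\s E_1}^{(2)}$, whence by \ref{not4} $(\id_{J_1\tens S_{12}}\tens\omega)\delta_{J_1}^{(2)}(\iota_{\s E_1}(\xi))=\iota_{\s E_1\tens S_{12}}((\id_{\s E_1\tens S_{12}}\tens\omega)\delta_{\s E_1}^{(2)}(\xi))$; taking closed linear spans, the $(1,2)$-corner $(e_1\tens 1_{S_{12}})J_2(e_2\tens 1_{S_{12}})$ equals $\iota_{\s E_1\tens S_{12}}(\s E_2)$ and, likewise, the $(2,2)$-corner $(e_2\tens 1_{S_{12}})J_2(e_2\tens 1_{S_{12}})$ equals $\iota_{A_1\tens S_{12}}(A_2)$. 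Under the identification $\M(J_1\tens S_{12})=\Lin((\s E_1\tens S_{12})\oplus(A_1\tens S_{12}))$ of \ref{rk3}, these corners are precisely $\s E_2\subset\Lin(A_1\tens S_{12},\s E_1\tens S_{12})$ and $A_2\subset\M(A_1\tens S_{12})$; so from now on I may regard $\s E_2=e_1J_2e_2$ and $A_2=e_2J_2e_2$ as norm-closed corners of the C*-algebra $J_2$.

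It then remains to compress. Assertion (2) is the standard fact that, for projections $e,e'\in\M(J)$ of a C*-algebra $J$, the corner $eJe'$ is a Hilbert $e'Je'$-module under right multiplication and the inner product $x^*\circ y$: norm-closedness gives completeness and the inner-product norm is the C*-norm; here multiplication in $J_2$ is composition of operators and $\s E_2^*\circ\s E_2=(e_2J_2e_1)(e_1J_2e_2)\subset e_2J_2e_2=A_2$, which is exactly the stated structure. For assertion (1), multiplying $[J_2(1_{J_1}\tens S_{12})]=J_1\tens S_{12}$ on the left by $e_1\tens 1_{S_{12}}$ and on the right by $e_2\tens 1_{S_{12}}$, and using that $1_{J_1}\tens S_{12}$ commutes with $e_j\tens 1_{S_{12}}$ together with $e_1e_2=0$, yields $\s E_1\tens S_{12}=(e_1\tens 1)(J_1\tens S_{12})(e_2\tens 1)=[\s E_2(1_{A_1}\tens S_{12})]$; the second density relation in (1) is obtained symmetrically from $[(1_{J_1}\tens S_{12})J_2]=J_1\tens S_{12}$, and $\s E_2\subset\widetilde{\M}(\s E_1\tens S_{12})$ then follows, being the $(1,2)$-corner of $\M(J_2)\subset\M(J_1\tens S_{12})$ (equivalently, an immediate consequence of the two density relations and the definition of $\widetilde{\M}(\s E_1\tens S_{12})$).

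I expect the main obstacle to be the intermediate step in the second paragraph — checking that induction is compatible with the corner decomposition of $J_1$, i.e.\ that $e_j\tens 1_{S_{12}}$ is a multiplier of ${\rm Ind}_{\QG_1}^{\QG_2}(J_1)$ and that the corners of ${\rm Ind}_{\QG_1}^{\QG_2}(J_1)$ are the inductions of the corners of $J_1$. This is bookkeeping with coassociativity, the compatibility relations and \ref{not4}, but it has to be carried out with care because $\delta_{J_1}^{(2)}$ is only a $*$-homomorphism into a relative multiplier algebra.
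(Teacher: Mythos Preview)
Your proof is correct. The paper does not give an explicit proof of this proposition---it is recalled from \cite{C2} (see the introductory sentence of \S\ref{ActColinkHilb} referring to 6.3 \cite{C2})---so there is no paper proof to compare against directly. That said, your linking-algebra reduction is exactly the strategy the paper adopts a few lines later in Proposition~\ref{prop11}, where it shows that the direct construction $\ind(\s E_1)$ coincides (via $\ind\iota_{\s E_1}$ and $\ind\iota_{A_1}$) with the $(1,2)$ and $(2,2)$ corners of $J_2=\ind(J_1)$; you have effectively proved the relevant part of \ref{prop11} en route and then compressed the density relations of \ref{propind4}. So your approach is not merely compatible with the paper's---it anticipates its next move.

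One small remark: the second density relation in the statement reads $[(1_{\s E_1}\tens S_{11})\s E_2]$, with $S_{11}$ rather than $S_{12}$. Your compression of $[(1_{J_1}\tens S_{12})J_2]=J_1\tens S_{12}$ yields $[(1_{\s E_1}\tens S_{12})\s E_2]=\s E_1\tens S_{12}$, which is the natural and correct statement given that $\s E_2\subset\Lin(A_1\tens S_{12},\s E_1\tens S_{12})$; the $S_{11}$ in the displayed formula appears to be a typo in the paper.
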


Let us denote by $(A_2,\delta_{A_2}):=\ind(A_1,\delta_{A_1})$ and $(J_2,\delta_{J_2}):=\ind(J_1,\delta_{J_1})$ the induced $\QG_2$-$\Cstar$-algebra of $(A_1,\delta_{A_1})$ and $(J_1,\delta_{J_1})$ respectively. We also denote by $\s E_2:=\ind(\s E_1)$ the induced Hilbert $A_2$-module as defined above.$\vphantom{\ind}$

\begin{nb}
Let 
\[
\id_{\s E_1}\tens\delta_{12}^2:\Lin(A_1\tens S_{12},\s E_1\tens S_{12})\rightarrow\Lin(A_1\tens S_{12}\tens S_{22},\s E_1\tens S_{12}\tens S_{22})
\] 
be the unique linear extension of $\id_{\s E_1}\tens\delta_{12}^2:\s E_1\tens S_{12}\rightarrow\Lin(A_1\tens S_{12}\tens S_{22},\s E_1\tens S_{12}\tens S_{22})$ such that
$
(\id_{\s E_1}\tens\delta_{12}^2)(T)(\id_{A_1}\tens\delta_{12}^2)(x)=(\id_{\s E_1}\tens\delta_{12}^2)(Tx)
$
for all $x\in\M(A_1\tens S_{12})$ and $T\in\Lin(A_1\tens S_{12}, \s E_1\tens S_{12})$.
\end{nb}

\begin{propdef}\label{prop12}
There exists a unique linear map 
\begin{center}
$
\delta_{\s E_2}:\s E_2\rightarrow\Lin(A_2\tens S_{22},\s E_2\tens S_{22})
$
\end{center} 
satisfying the relation $[\delta_{\s E_2}(\xi)a]b=(\id_{\s E_1}\tens\delta_{12}^2)(\xi)(ab)$ for all $\xi\in\s E_2$, $a\in A_2\tens S_{22}$ and $b\in A_1\tens S_{12}\tens S_{22}$. Moreover, the pair $(\s E_2,\delta_{\s E_2})$ is a $\QG_2$-equivariant Hilbert $A_2$-module.
\end{propdef}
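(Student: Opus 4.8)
The plan is to construct $\delta_{\s E_2}$ by transport of structure from the induced linking C*-algebra $J_2 := \ind(J_1)$, exactly paralleling the way $\delta_{J_2}$, $\delta_{A_2}$ are obtained from $\delta_{J_1}$, $\delta_{A_1}$. Recall from \ref{propind4} and \ref{defHomInd} that $J_2 = \ind(J_1)$ is a $\QG_2$-C*-algebra with action $\delta_{J_2} = \restr{(\id_{J_1}\tens\delta_{12}^2)}{J_2}$, and that $A_2 = \ind(A_1) \subset \widetilde\M(A_1\tens S_{12})$ sits inside $J_2 \subset \widetilde\M(J_1\tens S_{12})$ in a way compatible with the canonical morphism $\iota_{A_1}: A_1 \to \M(J_1)$ (which is $\QG_1$-equivariant, since $\s E_1$ carries a continuous action). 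First I would check that $\iota_{A_1}$ being $\QG_1$-equivariant forces, via the functoriality of $\ind$ (\ref{defHomInd}), an induced $\QG_2$-equivariant morphism $\iota_{A_1\ast}: A_2 \to \M(J_2)$ which is the restriction to $A_2$ of $\iota_{A_1}\tens\id_{S_{12}}$; concretely this identifies $A_2$ with a corner of $J_2$ and shows $(\beta_{J_2},\delta_{J_2})$ is compatible with $(\beta_{A_2},\delta_{A_2})$ in the sense of \ref{compcoact} (with $N^{\rm o}$ replaced by the trivial basis of $\QG_2$, i.e.\ there is no $\beta$ datum here since we are in the genuine quantum-group case $\QG_2$, not a groupoid). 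Then \ref{prop1} a), applied to the $\QG_2$-C*-algebra $J_2$ with its compatible action $\delta_{J_2}$ satisfying $\delta_{J_2}(J_2)\subset\widetilde\M(J_2\tens S_{22})$, produces the action $\delta_{\s E_2}$ on $\s E_2 := \iota_{\s E_1\ast}(\ind(\s E_1))$, together with its equivariant linking-algebra interpretation, and guarantees $(\s E_2,\delta_{\s E_2})$ is a $\QG_2$-equivariant Hilbert $A_2$-module.

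The one point that requires an actual computation — and which I expect to be the main obstacle — is verifying that the $\delta_{\s E_2}$ produced abstractly by this transport of structure is exactly the map described in the statement by the defining relation $[\delta_{\s E_2}(\xi)a]b = (\id_{\s E_1}\tens\delta_{12}^2)(\xi)(ab)$ for $\xi\in\s E_2$, $a\in A_2\tens S_{22}$, $b\in A_1\tens S_{12}\tens S_{22}$. For this I would argue as follows: the map $\id_{\s E_1}\tens\delta_{12}^2$ is already defined (see the \textsc{Notation} immediately preceding the statement) as the unique linear extension of $\id_{\s E_1}\tens\delta_{12}^2$ on $\s E_1\tens S_{12}$ characterized by $(\id_{\s E_1}\tens\delta_{12}^2)(T)(\id_{A_1}\tens\delta_{12}^2)(x) = (\id_{\s E_1}\tens\delta_{12}^2)(Tx)$. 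Writing out $\iota_{\s E_1\tens S_{22}}\circ\delta_{\s E_2} = \delta_{J_2}\circ\iota_{\s E_1}$ (the relation from \ref{prop1} a)) and unwinding $\delta_{J_2} = \restr{(\id_{J_1}\tens\delta_{12}^2)}{J_2}$ through the matrix picture of $J_1$ and $J_2$ (\ref{rk3}), one sees that on the $\s E_1$-component $\delta_{J_2}$ acts precisely by $\id_{\s E_1}\tens\delta_{12}^2$; expressing the resulting adjointable operator via its action on elements $ab$ with $a\in A_2\tens S_{22}$, $b\in A_1\tens S_{12}\tens S_{22}$ and using the identification $\widetilde\M(\s E_1\tens S_{12}\tens S_{22})$-calculus gives the stated formula. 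Uniqueness of $\delta_{\s E_2}$ is then immediate, since $[A_2\tens S_{22}\cdot (A_1\tens S_{12}\tens S_{22})]$ is norm-dense in $A_1\tens S_{12}\tens S_{22}$ (by \ref{propind4}, $[A_2(1_{A_1}\tens S_{12})] = A_1\tens S_{12}$) so the relation pins down every $\delta_{\s E_2}(\xi)$ as an element of $\Lin(A_2\tens S_{22},\s E_2\tens S_{22})$.

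Finally I would note that the $\QG_2$-equivariance claim — items 1--4 of \ref{hilbmodequ} together with the continuity condition of \ref{defEqHilbMod} — comes for free from \ref{prop1} a) and \ref{propcont}: the action $(\beta_{J_2},\delta_{J_2})$ on $J_2$ is strongly continuous (being the crossed-product-free induced action of the continuous action $\delta_{J_1}$ — this is part of \ref{propind4} combined with the equivalence of categories recalled in \S\ref{ActColinkHilb}, or can be seen directly from $[\delta_{J_1}(J_1)(1\tens S_{11})]=q\cdot(J_1\tens S_{11})$ and the definition of $\ind$), hence by \ref{propcont} the induced action on the Hilbert module $\s E_2$ is continuous. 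The only mild subtlety is that $\QG_2$ here plays the role of a measured quantum groupoid on the trivial basis, so every invocation of \ref{prop1}, \ref{compcoact}, \ref{propcont} specializes to the classical quantum-group statements of \S 2.3 \cite{BS1}; I would simply remark that the finite-basis machinery of \S\ref{SectEqHilb} contains the quantum-group case as the special case $N=\GC$, so no separate argument is needed. This completes the proof.
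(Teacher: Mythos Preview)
The paper does not give a proof of this statement here; it is recalled from \cite{C2} (see the preamble to \S 5.3.2). Your approach via the induced linking algebra is correct, and it is essentially the route the paper encodes in the very next result, Proposition~\ref{prop11}: there $\ind\iota_{\s E_1}$ is shown to identify $\s E_2$ with the corner $e_{1,2}J_2e_{2,2}$ of $J_2=\ind(J_1)$, $\QG_2$-equivariantly, and $\tau:\K(\s E_2\oplus A_2)\to J_2$ is an isomorphism of linking $\QG_2$-C*-algebras. The difference is one of logical order. The paper's presentation defines $\delta_{\s E_2}$ directly by the formula in \ref{prop12} and then proves in \ref{prop11} that it agrees with the action inherited from the corner of $J_2$; you instead produce $\delta_{\s E_2}$ from $J_2$ via \ref{prop1}~a) and then check the formula. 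Your route has the advantage that the equivariant-module axioms and continuity come for free from \ref{prop1}~a) and \ref{propcont}, at the cost of needing the C*-algebraic identification $J_2\cong\K(\s E_2\oplus A_2)$ (the non-equivariant content of \ref{prop11}(ii), or equivalently 4.14--4.18 \cite{BC}) before you can invoke \ref{prop1}~a); you should make that step explicit rather than folding it into ``concretely this identifies $A_2$ with a corner of $J_2$''.
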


In the proposition below, we state that the above induction procedure for equivariant Hilbert C*-modules is equivalent to that of \S 4.3 \cite{BC}.

\begin{nbs}
Let $e_{1,1}:=\iota_{\K(\s E_1)}(1_{\s E_1})\in\M(J_1)$ and $e_{2,1}:=\iota_{A_1}(1_{A_1})\in\M(J_1)$, where we identify $\M(J_1)=\Lin(\s E_1\oplus A_1)$. Let $(J_2,\delta_{J_2},e_{1,2},e_{2,2})$ be the induced linking $\QG_2$-$\Cstar$-algebra, with $e_{l,2}:=e_{l,1}\tens 1_{S_{12}}\in\M(J_2)$ for $l=1,2$ (cf.\ 4.14 \cite{BC}). Consider $e_{2,2}J_2e_{2,2}$ and $e_{1,2}J_2e_{2,2}$ endowed with their structure of $\QG_2$-$\Cstar$-algebra and $\QG_2$-equivariant Hilbert $e_{2,2}J_2e_{2,2}$-module (cf.\ \cite{BS1}). Recall that $\ind\iota_{A_1}:A_2\rightarrow J_2\,;\, x\mapsto(\iota_{A_1}\tens\id_{S_{12}})(x)$ induces a $\QG_2$-equivariant *-isomormorphism $A_2\rightarrow e_{2,2}J_2e_{2,2}$ (cf.\ 4.17, 4.18 \cite{BC}).
\end{nbs}

\begin{prop}\label{prop11}
We use the above notations.
\begin{enumerate}[label=(\roman*)]
\item There exists a unique bounded linear map $\ind\iota_{\s E_1}:\s E_2\rightarrow J_2$ such that
\[
\ind\iota_{\s E_1}((\id_{\s E_1\tens S_{12}}\tens\omega)\delta_{\s E_1}^{(2)}(\xi))
=(\id_{J_1\tens S_{12}}\tens\omega)\delta_{J_1}^{(2)}(\iota_{\s E_1}(\xi)),
\]
for all $\xi\in\s E_1$ and $\omega\in\B(\s H_{21})_*$. Moreover, we have $\ind\iota_{\s E_1}(\s E_2)=e_{1,2}J_2e_{2,2}$ and $\ind\iota_{\s E_1}$ induces a $\QG_2$-equivariant unitary equivalence
$
\s E_2 \rightarrow e_{1,2}J_2e_{2,2} \; ; \; \xi \mapsto \ind\iota_{\s E_1}(\xi)
$
over the $\QG_2$-equivariant *-isomorphism 
$
A_2 \rightarrow e_{2,2}J_2 e_{2,2} \; ; \; a \mapsto \ind\iota_{A_1}(a).
$
\item There exists a unique *-homomorphism $\tau:\K(\s E_2\oplus A_2)\rightarrow J_2$ such that
$
\tau\circ\iota_{\s E_2}=\ind\iota_{\s E_1}
$
and
$
\tau\circ\iota_{A_2}=\ind\iota_{A_1}.
$ 
Moreover, $\tau$ is an isomorphism of linking $\QG_2$-$\Cstar$-algebras.
\item If $T\in\ind(\K(\s E_1))\subset\Lin(\s E_1\tens S_{12})$ and $\eta\in\s E_2\subset\Lin(A_1\tens S_{12},\s E_1\tens S_{12})$, then we have $T\circ\eta\in\s E_2$. Moreover, for all $T\in\ind(\K(\s E_1))$, we have $[\eta\mapsto T\circ\eta]\in\K(\s E_2)$. More precisely, the map
$
\ind(\K(\s E_1))\rightarrow\K(\s E_2)\,;\, T \mapsto [\eta\mapsto T\circ\eta]
$
is a $\QG_2$-equivariant *-isomorphism.$\vphantom{\ind}$\qedhere
\end{enumerate}
\end{prop}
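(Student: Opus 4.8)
The plan is to reduce everything to the already-established induction theory for linking algebras from \S 4.3 \cite{BC}, exploiting that $\s E_1$ sits as the top right-hand corner $e_{1,1}J_1e_{2,1}$ of the linking $\QG_1$-C*-algebra $J_1=\K(\s E_1\oplus A_1)$, and that induction of linking $\QG_1$-C*-algebras is already understood (4.14--4.18 \cite{BC}). First I would verify existence and uniqueness of $\ind\iota_{\s E_1}$ in (i): the defining formula is forced on the dense subspace $[(\id_{\s E_1\tens S_{12}}\tens\omega)\delta_{\s E_1}^{(2)}(\xi)]$ of $\s E_2$, so the only content is boundedness. This follows from the compatibility $\iota_{\s E_1\tens S_{12}\tens S_{21}}\circ\delta_{\s E_1}^{(2)}=\delta_{J_1}^{(2)}\circ\iota_{\s E_1}$ (a consequence of \ref{prop1} and the definition of $\delta_{\s E_1}^{(2)}$, $\delta_{J_1}^{(2)}$) together with the fact that slicing by $\omega\in\B(\s H_{21})_*$ and composing with $\iota_{\s E_1}$ commute in the sense of \ref{not4}; since $\iota_{\s E_1}$ is isometric onto $e_{1,1}(J_1\tens S_{12})e_{2,1}$-type elements, one gets $\|\ind\iota_{\s E_1}(\eta)\|=\|\eta\|$. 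That $\ind\iota_{\s E_1}(\s E_2)=e_{1,2}J_2e_{2,2}$ then follows from the matrix decomposition of $J_2=\ind(J_1)$ under $\ind\iota_{\K(\s E_1)},\ind\iota_{A_1}$ (here one uses \ref{prop3} 1 and \ref{propind4}, plus the fact that $\ind$ respects the direct sum decomposition $J_1=\bigoplus e_{k,1}J_1e_{l,1}$ coming from the central projections and the description of $\ind$ on morphisms in \ref{defHomInd}). The $\QG_2$-equivariance of the resulting unitary equivalence is immediate from the equivariance of $\ind\iota_{A_1}$ and of the induced morphisms, combined with \ref{def1}.

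For part (ii), the map $\tau:\K(\s E_2\oplus A_2)\to J_2$ is constructed by the universal property of the linking algebra $\K(\s E_2\oplus A_2)$ (\ref{prop32} 4): it is the C*-algebra generated by $\iota_{A_2}(A_2)\cup\iota_{\s E_2}(\s E_2)$, so specifying $\tau\circ\iota_{A_2}=\ind\iota_{A_1}$ and $\tau\circ\iota_{\s E_2}=\ind\iota_{\s E_1}$ determines $\tau$ uniquely provided the relations of \ref{prop32} 1--3 are respected under these assignments; these relations transport because $\ind\iota_{\s E_1}$ is $\ind\iota_{A_1}$-compatible and preserves inner products (part (i)). Surjectivity of $\tau$ follows since its image contains $e_{1,2}J_2e_{2,2}=\ind\iota_{\s E_1}(\s E_2)$, its adjoint, $e_{2,2}J_2e_{2,2}=\ind\iota_{A_1}(A_2)$, and hence (taking products and closures, using $[e_{1,2}J_2e_{2,2}\cdot e_{2,2}J_2e_{1,2}]=e_{1,2}J_2e_{1,2}$) all of $J_2$; injectivity follows either from faithfulness of $\ind\iota_{A_1}$ on $A_2$ together with the standard fact that a $*$-homomorphism of linking algebras faithful on a corner is faithful, or directly from \ref{ehmlem1}. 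That $\tau$ is an isomorphism of linking $\QG_2$-C*-algebras then means checking it intertwines the compatible actions and sends $e_{l}$'s to $e_{l,2}$'s, which is routine given (i).

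Part (iii) is then essentially formal: identifying $\K(\s E_2)=e_{1,2}J_2e_{1,2}$ via $\tau$ (\ref{rkLinkAlg}), the claim $T\circ\eta\in\s E_2$ for $T\in\ind(\K(\s E_1))$, $\eta\in\s E_2$ becomes the obvious statement $e_{1,2}J_2e_{1,2}\cdot e_{1,2}J_2e_{2,2}\subset e_{1,2}J_2e_{2,2}$; the map $T\mapsto[\eta\mapsto T\circ\eta]$ is then identified with the restriction of the canonical isomorphism $\ind(\K(\s E_1))=\ind(e_{1,1}J_1e_{1,1})\xrightarrow{\sim}e_{1,2}J_2e_{1,2}=\K(\s E_2)$ — which is $\ind\iota_{\K(\s E_1)}$ restricted appropriately, already known to be a $\QG_2$-equivariant $*$-isomorphism from 4.17 \cite{BC} applied with the roles of the two corners swapped (exactly as in \ref{prop5}). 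The main obstacle I anticipate is the bookkeeping in part (i): one must carefully justify that $\ind$, which a priori is only defined on C*-algebras and equivariant morphisms, interacts correctly with the Hilbert-module structure — i.e.\ that slicing $\delta_{\s E_1}^{(2)}(\xi)$ by $\omega\in\B(\s H_{21})_*$ inside $\Lin(A_1\tens S_{12},\s E_1\tens S_{12})$ matches slicing $\delta_{J_1}^{(2)}(\iota_{\s E_1}(\xi))$ inside $J_2$ — and that the matrix-corner decomposition of $\ind(J_1)$ is compatible with the central projections $e_{l,1}$; this requires combining \ref{not4}, \ref{propdef6}-type identifications, and the explicit form of $\ind$ on morphisms, but no genuinely new idea.
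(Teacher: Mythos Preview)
Your strategy is correct and is precisely the route the paper implicitly has in mind. Note that the paper states Proposition~\ref{prop11} without proof: it is presented as a recapitulation of the equivalence between the Hilbert-module induction procedure and the linking-algebra induction of \S4.3~\cite{BC}, and the reader is expected to fill in exactly the argument you outline. Your reduction to the compatibility $\iota_{\s E_1\tens S_{12}\tens S_{21}}\circ\delta_{\s E_1}^{(2)}=\delta_{J_1}^{(2)}\circ\iota_{\s E_1}$ (coming from \ref{prop1}~b)), followed by the corner decomposition of $J_2=\ind(J_1)$ under the invariant projections $e_{l,2}=e_{l,1}\tens 1_{S_{12}}$, is the correct and intended mechanism. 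For part~(ii) you can streamline slightly by invoking \ref{prop7} directly once (i) is established, rather than rebuilding $\tau$ from the generators-and-relations description in \ref{prop32}; this is a cosmetic difference. The bookkeeping concern you flag in (i)---that slicing by $\omega$ in $\Lin(A_1\tens S_{12},\s E_1\tens S_{12})$ agrees with slicing in $J_2$ after applying $\iota$---is handled by \ref{not4}, exactly as you say.
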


In the result below, we recall how to induce $\QG_1$-equivariant unitary equivalence.

\begin{propdef}\label{prop14}
Let us fix some notations. Consider: 
\begin{itemize}
\item two $\QG_1$-$\Cstar$-algebras $A_1$ and $B_1$; 
\item two $\QG_1$-equivariant Hilbert modules $\s E_1$ and $\s F_1$ over $A_1$ and $B_1$ respectively;
\item a $\QG_1$-equivariant unitary equivalence $\Phi_1:\s E_1\rightarrow\s F_1$ over a $\QG_1$-equivariant *-isomorphism $\phi_1:A_1\rightarrow B_1$.
\end{itemize} 
Denote by:
\begin{itemize}
\item $A_2:=\ind(A_1)$ and $B_2:=\ind(B_1)$ the induced $\QG_2$-$\Cstar$-algebras;
\item $\ind(\phi_1):A_2\rightarrow B_2$ the induced $\QG_2$-equivariant *-isomorphism;
\item $\s E_2:=\ind(\s E_1)$ and $\s F_2:=\ind(\s F_1)$ the induced $\QG_2$-equivariant Hilbert modules over $A_2$ and $B_2$ respectively;
\item $
\Phi_1\tens\id_{S_{12}}:\Lin(A_1\tens S_{12},\s E_1\tens S_{12})\rightarrow\Lin(B_1\tens S_{12},\s F_1\tens S_{12})
$
the unique linear map such that $(\Phi_1\tens\id_{S_{12}})(T)(\phi_1\tens\id_{S_{12}})(x)=(\Phi_1\tens\id_{S_{12}})(Tx)$ for all $\Lin(A_1\tens S_{12},\s E_1\tens S_{12})$ and $x\in A_1\tens S_{12}$ (cf.\ \ref{not3}).
\end{itemize}
Then, $(\Phi_1\tens\id_{S_{12}})(\s E_2)\subset\s F_2$ and the map 
$
\ind(\Phi_1):=\restr{(\Phi_1\tens\id_{S_{12}})}{\s E_2}:\s E_2\rightarrow\s F_2
$ 
is a $\QG_2$-equivariant unitary equivalence over $\ind(\phi_1):A_2\rightarrow B_2$. Moreover, for all $\xi\in\s E_1$ and $\omega\in\B(\s H_{21})_*$ we have
$
\ind(\Phi_1)((\id_{\s E_1 \tens S_{12}}\tens\omega)\delta_{\s E_1}^{(2)}(\xi))=(\id_{\s F_1 \tens S_{12}}\tens\omega)\delta_{\s F_1}^{(2)}(\Phi_1\xi).
$
\end{propdef}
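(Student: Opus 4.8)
The plan is to reduce everything to the linking algebra picture, where the corresponding statement is already known. First I would introduce the linking $\QG_1$-$\Cstar$-algebras $I_1 := \K(\s E_1\oplus A_1)$ and $I_1' := \K(\s F_1\oplus B_1)$, with their $\QG_1$-actions coming from \ref{prop1} b). A $\QG_1$-equivariant unitary equivalence $\Phi_1:\s E_1\rightarrow\s F_1$ over $\phi_1$ induces (by a standard $2\times 2$-matrix construction, using Remark \ref{rk3} and the defining relations of $\iota_{A_1},\iota_{\s E_1},\iota_{\s E_1^*},\iota_{\K(\s E_1)}$) a $\QG_1$-equivariant $\Cstar$-isomorphism $\Theta_1:I_1\rightarrow I_1'$ satisfying $\Theta_1\circ\iota_{A_1}=\iota_{B_1}\circ\phi_1$, $\Theta_1\circ\iota_{\s E_1}=\iota_{\s F_1}\circ\Phi_1$, $\Theta_1\circ\iota_{\K(\s E_1)} = \iota_{\K(\s F_1)}\circ(\Phi_1\cdot\Phi_1^*)$. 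Applying the induction functor ${\rm Ind}_{\QG_1}^{\QG_2}$ of \ref{defHomInd} to $\Theta_1$ gives a $\QG_2$-equivariant $\Cstar$-isomorphism ${\rm Ind}(\Theta_1):I_2\rightarrow I_2'$, where $I_2 = {\rm Ind}(I_1)$ and $I_2' = {\rm Ind}(I_1')$, compatible with the projections $e_{l,2}=e_{l,1}\tens 1_{S_{12}}$ by naturality.

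Next I would invoke \ref{prop11} (ii): the induced linking algebra $I_2$ is canonically isomorphic, via a $\QG_2$-equivariant isomorphism of linking algebras $\tau:\K(\s E_2\oplus A_2)\rightarrow I_2$, to the linking algebra of the induced Hilbert module $\s E_2 = {\rm Ind}(\s E_1)$; likewise $\tau':\K(\s F_2\oplus B_2)\rightarrow I_2'$. The composite $(\tau')^{-1}\circ{\rm Ind}(\Theta_1)\circ\tau$ is then a $\QG_2$-equivariant isomorphism of linking $\QG_2$-$\Cstar$-algebras $\K(\s E_2\oplus A_2)\rightarrow\K(\s F_2\oplus B_2)$ carrying $e_{l,2}$ to $e_{l,2}'$, hence (again by the matrix dictionary of \ref{rk3}, now applied to $\s E_2$ and $\s F_2$) it restricts on the off-diagonal corner to a $\QG_2$-equivariant unitary equivalence $\s E_2\rightarrow\s F_2$ over a $\QG_2$-equivariant $\Cstar$-isomorphism $A_2\rightarrow B_2$. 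It remains to identify this unitary with $\restr{(\Phi_1\tens\id_{S_{12}})}{\s E_2}$ and this isomorphism with ${\rm Ind}(\phi_1)$. For the latter, use the description of ${\rm Ind}(\phi_1)$ in \ref{defHomInd}: since $\Theta_1\circ\iota_{A_1}=\iota_{B_1}\circ\phi_1$, the induced map is $\phi_1\tens\id_{S_{12}}$ restricted to $A_2$, which is exactly ${\rm Ind}(\phi_1)$.

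For the identification of the unitary, I would track $\ind\iota_{\s E_1}$ from \ref{prop11} (i): for $\xi\in\s E_1$ and $\omega\in\B(\s H_{21})_*$,
\[
\ind\iota_{\s F_1}(\ind(\Phi_1)((\id_{\s E_1\tens S_{12}}\tens\omega)\delta_{\s E_1}^{(2)}(\xi)))
= (\id_{I_1'\tens S_{12}}\tens\omega)\delta_{I_1'}^{(2)}(\iota_{\s F_1}(\Phi_1\xi)),
\]
and the latter equals ${\rm Ind}(\Theta_1)((\id_{I_1\tens S_{12}}\tens\omega)\delta_{I_1}^{(2)}(\iota_{\s E_1}(\xi))) = {\rm Ind}(\Theta_1)(\ind\iota_{\s E_1}((\id_{\s E_1\tens S_{12}}\tens\omega)\delta_{\s E_1}^{(2)}(\xi)))$, using naturality of $\delta^{(2)}$ under $\Theta_1$ and the relation $\Theta_1\circ\iota_{\s E_1}=\iota_{\s F_1}\circ\Phi_1$. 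Since $\ind\iota_{\s F_1}$ is injective and since the elements $(\id_{\s E_1\tens S_{12}}\tens\omega)\delta_{\s E_1}^{(2)}(\xi)$ span $\s E_2$ by definition, this pins down $\ind(\Phi_1)$ as the restriction of $\Phi_1\tens\id_{S_{12}}$, proving simultaneously that $(\Phi_1\tens\id_{S_{12}})(\s E_2)\subset\s F_2$, that $\ind(\Phi_1)$ is a $\QG_2$-equivariant unitary equivalence over ${\rm Ind}(\phi_1)$, and the stated intertwining formula. The main obstacle I anticipate is purely bookkeeping: one must check carefully that the linear extension $\Phi_1\tens\id_{S_{12}}$ of \ref{not3} really does map the norm-dense set of ``slices'' $(\id\tens\omega)\delta_{\s E_1}^{(2)}(\xi)$ the way the formula claims — this requires commuting the slice map $\id_{\s E_1\tens S_{12}}\tens\omega$ past $\Phi_1\tens\id_{S_{12}}$, which is legitimate after factoring $\omega$ on one side by an element of $S_{21}$, exactly as in the proof of \ref{defHomInd} — and that all the identifications of relative multiplier modules ($\s E_2\subset\widetilde{\M}(\s E_1\tens S_{12})$, etc.) from \ref{prop3} are respected. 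Once the matrix/linking-algebra translation is set up cleanly, the equivariance and unitarity are automatic from the corresponding properties of ${\rm Ind}(\Theta_1)$.
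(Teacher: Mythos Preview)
The paper does not include a proof of this Proposition-Definition: it is stated as a recall from \cite{C2} (see the opening line of the subsection, ``we recall the induction procedure for equivariant Hilbert C*-modules (cf.\ 4.3 \cite{BC}, 6.3 \cite{C2})''), and the text moves on immediately after the statement. So there is nothing in the paper to compare against directly.

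That said, your approach is sound and is very much in the spirit of how the paper handles these questions. Reducing to the linking-algebra picture via \ref{prop7}, applying the already-proved functoriality of induction for $\Cstar$-algebras (\ref{defHomInd}) to the $\QG_1$-equivariant isomorphism $\Theta_1:I_1\to I_1'$, and then translating back through the identification $\tau$ of \ref{prop11}(ii) is exactly the right strategy; the paper's \ref{prop11} was set up precisely so that statements about induced Hilbert modules can be read off from statements about induced linking algebras. Your verification of the explicit formula via \ref{prop11}(i) and the naturality relation $\Theta_1\circ\iota_{\s E_1}=\iota_{\s F_1}\circ\Phi_1$ is correct, and the slice-map commutation issue you flag is indeed handled by the same factorization trick used in the proof of \ref{defHomInd}. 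One small point: you should also note that $\ind(\Theta_1)$ is a $\ast$-isomorphism (not merely a $\ast$-homomorphism), which follows from functoriality applied to $\Theta_1^{-1}$; this is what ensures the resulting corner map is surjective onto $\s F_2$, hence a genuine unitary equivalence.

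A more direct alternative, closer to how \ref{defHomInd} itself is proved, would be to work entirely inside $\Lin(A_1\tens S_{12},\s E_1\tens S_{12})$: use \ref{prop3} to view $\s E_2\subset\widetilde\M(\s E_1\tens S_{12})$, check by hand that $(\Phi_1\tens\id_{S_{12}})$ sends this into $\widetilde\M(\s F_1\tens S_{12})$ and lands in $\s F_2$, and verify the $\QG_2$-equivariance from \ref{prop12}. Your linking-algebra route has the advantage that unitarity and equivariance come for free from the $\Cstar$-algebra case, at the cost of the bookkeeping you mention.
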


We can also induce $\QG_1$-equivariant *-representations. Let us state a preliminary result.

\begin{lem}\label{lem27}
Let $A_1$ be a $\QG_1$-C*-algebra. If $A_1$ is $\sigma$-unital (resp.\ separable), then so is the induced $\QG_2$-C*-algebra $\ind(A_1)$.
\end{lem}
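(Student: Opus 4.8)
If $A_1$ is $\sigma$-unital (resp.\ separable), then so is the induced $\QG_2$-C*-algebra $\ind(A_1)$.

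The plan is to exploit the fact that $\ind(A_1)=A_2$ sits inside $\M(A_1\tens S_{12})$ with the relation $[A_2(1_{A_1}\tens S_{12})]=A_1\tens S_{12}=[(1_{A_1}\tens S_{12})A_2]$ (cf.\ \ref{propind4}), and that $S_{12}$ is separable hence $\sigma$-unital (cf.\ \ref{rk18}). First I would treat the separable case, which is the cleaner one: since $A_1$ is separable and $S_{12}$ is separable, the minimal tensor product $A_1\tens S_{12}$ is separable. The induction realises $A_2$ as a $C^*$-subalgebra of $\M(A_1\tens S_{12})$ that acts non-degenerately on $A_1\tens S_{12}$ (the relation $[A_2(1_{A_1}\tens S_{12})]=A_1\tens S_{12}$ says precisely this), so $A_1\tens S_{12}$ is a faithful non-degenerate $A_2$-module, and $A_2$ is therefore faithfully and non-degenerately represented on the separable Hilbert space obtained from any faithful representation of the separable $C^*$-algebra $A_1\tens S_{12}$. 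A $C^*$-algebra with a faithful representation on a separable Hilbert space is separable, which gives the claim. Alternatively, and perhaps more in the spirit of the explicit description, one can build a countable generating set directly: take a countable dense subset $\{a_i\}$ of $A_1$ and a countable total subset $\{\omega_j\}$ of $\B(\s H_{21})_*$ (possible since $\s H_{21}$ is separable), then the countable family $\{(\id_{A_1\tens S_{12}}\tens\omega_j)\delta_{A_1}^{(2)}(a_i)\}$ is total in $A_2$ by definition of $\ind(A_1)$ together with continuity of $\delta_{A_1}^{(2)}$ and of the slice maps; hence $A_2$ is separable.

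For the $\sigma$-unital case I would use the standard characterisation: a $C^*$-algebra is $\sigma$-unital if and only if it admits a strictly positive element, equivalently a countable approximate unit. Here is where the relative multiplier structure is the natural tool. Since $A_1$ is $\sigma$-unital, pick a strictly positive element $h\in A_1$; since $S_{12}$ is $\sigma$-unital (cf.\ \ref{rk18}), pick a strictly positive element $k\in S_{12}$. Then $h\tens k$ is strictly positive in $A_1\tens S_{12}$. Now $A_1\tens S_{12}=[A_2(1_{A_1}\tens S_{12})]$ means that $A_2$ acts non-degenerately on $A_1\tens S_{12}$; a non-degenerate $C^*$-subalgebra $A_2\subset\M(A_1\tens S_{12})$ with $A_1\tens S_{12}$ $\sigma$-unital is itself $\sigma$-unital, because one can compress a strictly positive element of $A_1\tens S_{12}$ along $A_2$. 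Concretely, I would proceed as follows: $\delta_{A_1}^{(2)}$ is faithful and non-degenerate, so $A_2$ contains, for each $\omega\in\B(\s H_{21})_*$, elements $(\id_{A_1\tens S_{12}}\tens\omega)\delta_{A_1}^{(2)}(a)$; choosing $\omega=\omega_{\xi,\xi}$ for a unit vector $\xi\in\s H_{21}$ and $a$ running through an approximate unit of $A_1$ produces a self-adjoint element $e:=(\id_{A_1\tens S_{12}}\tens\omega_{\xi,\xi})\delta_{A_1}^{(2)}(h')\in A_2$ (where $h'$ is a strictly positive element of $A_1$) which, together with a strictly positive element of $S_{12}$ via the non-degeneracy relation, can be assembled into a strictly positive element of $A_2$. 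The hard part — and the step I expect to be the main obstacle — is precisely pinning down this compression argument rigorously, i.e.\ showing that a $\sigma$-unital $C^*$-algebra $B$ (here $A_1\tens S_{12}$) with a non-degenerately acting $C^*$-subalgebra $C\subset\M(B)$ satisfying $[CB]=B=[BC]$ forces $C$ to be $\sigma$-unital; this is a general $C^*$-fact (if $B=[CB]$ then any countable approximate unit of $B$ can be approximated by products $c_i b_i$ with $c_i\in C$, and one extracts a countable approximate unit of $C$), but making the bookkeeping clean requires the relation $[(1_{A_1}\tens S_{12})A_2]=A_1\tens S_{12}$ on the other side as well.

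I would therefore organise the proof in two short paragraphs: (i) the separable case, handled by the faithful-representation-on-separable-space argument or equivalently by exhibiting the explicit countable total set $\{(\id_{A_1\tens S_{12}}\tens\omega_j)\delta_{A_1}^{(2)}(a_i)\}$; (ii) the $\sigma$-unital case, reduced to the general fact that a $C^*$-subalgebra of the multiplier algebra of a $\sigma$-unital $C^*$-algebra which acts non-degenerately from both sides is $\sigma$-unital, with the non-degeneracy supplied by \ref{propind4} and the $\sigma$-unitality of $S_{12}$ supplied by \ref{rk18}. Since the statement will be used (in \ref{propdef8} and its $\widehat{\cal G}$-analogue) only to guarantee countable generation of crossed-product modules, the separable case is the one that matters most in applications, and it is essentially immediate; the $\sigma$-unital case is a routine but slightly fiddly multiplier-algebra exercise. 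No display-math is needed, so there is no risk of blank-line-in-display issues; the whole argument is a paragraph of prose citing \ref{propind4} and \ref{rk18}.
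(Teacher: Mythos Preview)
Your separable case via the explicit countable total set $\{(\id_{A_1\tens S_{12}}\tens\omega_j)\delta_{A_1}^{(2)}(a_i)\}$ is exactly what the paper does and is correct. However, your first alternative for separability is wrong: a C*-algebra with a faithful non-degenerate representation on a separable Hilbert space need \emph{not} be separable --- $\B(\ell^2)$ and $\ell^\infty(\GN)$ are obvious counterexamples. Drop that argument entirely.

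For the $\sigma$-unital case you take an unnecessary and risky detour. The ``general fact'' you want (that a non-degenerate $C\subset\M(B)$ with $B$ $\sigma$-unital forces $C$ to be $\sigma$-unital) is not standard, you do not prove it, and your sketch of extracting a countable approximate unit of $C$ from one of $B$ via the relation $[CB]=B$ does not go through: products $c_ib_i$ approximating a strictly positive element of $B$ do not readily yield elements of $C$ approximating $1_C$ in norm. The paper bypasses all of this with a one-line direct construction: take a countable approximate unit $(u_n)$ of $A_1$ and any $\omega\in\B(\s H_{21})_*$ with $\omega(1)=1$; then $v_n:=(\id_{A_1\tens S_{12}}\tens\omega)\delta_{A_1}^{(2)}(u_n)$ is an approximate unit of $\ind(A_1)$. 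The verification uses that $\delta_{A_1}^{(2)}:A_1\to\M(A_2\tens S_{21})$ is non-degenerate (this is \ref{propind1} 3), so $\delta_{A_1}^{(2)}(u_n)\to 1$ strictly; factoring $\omega$ through $S_{21}$ (Cohen factorization in $\B(\s H_{21})_*$ as an $S_{21}$-module) then gives $v_n b\to b$ for every $b\in A_2$. You already had the right ingredients in your parenthetical about $\omega_{\xi,\xi}$ and an approximate unit of $A_1$ --- the point is that this \emph{is} the whole argument; no ``assembling'' with a strictly positive element of $S_{12}$ and no appeal to an abstract multiplier-algebra lemma is needed.
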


\begin{proof}
Let us assume that $A_1$ is $\sigma$-unital. Let $(u_n)_{n\geqslant 1}$ be a countable approximate unit of $A_1$. Let $\omega\in\B(\s H_{21})_*$ such that $\omega(1)=1$. Then, the sequence $((\id_{A_1\tens S_{12}}\tens\omega)\delta_{A_1}^{(2)}(u_n))_{n\geqslant 1}$ is an approximate unit of $\ind(A_1)$. Hence, $\ind(A_1)$ is $\sigma$-unital. Suppose now that $A_1$ is separable. Let $X$ (resp.\ $Y$) be a countable total subset of $A_1$ (resp.\ $\s H_{21}$).$\vphantom{\ind}$ Hence, the subset $\{(\id_{A_1\tens S_{12}}\tens\omega_{\xi,\eta})\delta_{A_1}^{(2)}(a)\,;\,a\in X,\, \xi,\eta\in Y\}$ of $\ind(A_1)$ is countable and spans a dense subspace of $\ind(A_1)$. Hence, the C*-algebra $\ind(A_1)$ is separable.
\end{proof}

\begin{propdef}\label{def5}
$\vphantom{\ind}$Let $A_1$ and $B_1$ be $\QG_1$-$\Cstar$-algebras and $\s E_1$ a $\QG_1$-equivariant Hilbert $A_1$-$B_1$-bimodule. Let $A_2:=\ind(A_1)$ and $B_2:=\ind(B_1)$ be the induced $\QG_2$-$\Cstar$-algebras. Let $\s E_2:=\ind(\s E_1)$ be the induced $\QG_2$-equivariant Hilbert $B_2$-module.$\vphantom{\ind}$ Let us consider a $\QG_1$-equivariant *-representation $\gamma_1:A_1\rightarrow\Lin(\s E_1)$. $\vphantom{\ind}$Up to the identifications $\Lin(\s E_1)=\M(\K(\s E_1))$ and $\ind\K(\s E_1)=\K(\s E_2)$ (cf.\ \ref{prop11} (iii)), we have a $\QG_2$-equivariant *-representation 
\begin{center}
$
\ind\gamma_1:A_2\rightarrow\Lin(\s E_2) \quad \text{(cf.\ \ref{defHomInd})}
$ 
\end{center}
called the induced $\QG_2$-equivariant *-representation of $\gamma_1$.$\vphantom{\ind}$ If $\s E_1$ is a $\QG_1$-equivariant Hilbert $A_1$-$B_1$-bimodule,$\vphantom{\ind}$ then $\s E_2$ is a $\QG_2$-equivariant Hilbert $A_2$-$B_2$-bimodule called the induced $\QG_2$-equivariant bimodule of $\s E_1$.$\vphantom{\ind}$
\end{propdef}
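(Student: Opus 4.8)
The plan is to obtain $\ind\gamma_1$ by feeding the datum of $\gamma_1$, read in the multiplier-algebra picture, into the morphism--induction functor of \ref{defHomInd}, and then to transport the result along the identification $\ind(\K(\s E_1))\cong\K(\s E_2)$. First I would recall that, since $\s E_1$ is a $\QG_1$-equivariant Hilbert $B_1$-module, the C*-algebra $\K(\s E_1)$ carries a canonical strongly continuous action of $\QG_1$, i.e.\ is a $\QG_1$-C*-algebra (cf.\ \cite{BS1}; the groupoid analogue is \ref{prop31}). By the quantum group counterpart of \ref{rk11} (cf.\ \S 2 \cite{BS1}), giving a $\QG_1$-equivariant $*$-representation $\gamma_1:A_1\rightarrow\Lin(\s E_1)$ in the sense of \ref{defbimod} amounts to giving a (possibly degenerate) $\QG_1$-equivariant $*$-homomorphism $\gamma_1:A_1\rightarrow\M(\K(\s E_1))$; under the identification $\Lin(\s E_1)=\M(\K(\s E_1))$ these are literally the same map. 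Hence $\gamma_1$ is an admissible input for \ref{defHomInd}, with target $\QG_1$-C*-algebra $\K(\s E_1)$.

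Next I would invoke \ref{defHomInd} with target $\K(\s E_1)$, whose induced $\QG_2$-C*-algebra is $\ind(\K(\s E_1))$: it yields a unique $\QG_2$-equivariant $*$-homomorphism $(\gamma_1)_2:A_2=\ind(A_1)\rightarrow\M(\ind(\K(\s E_1)))$, characterized by the induction identity of \ref{defHomInd}. By \ref{prop11} (iii) there is a $\QG_2$-equivariant $*$-isomorphism $\ind(\K(\s E_1))\rightarrow\K(\s E_2)$, $T\mapsto[\eta\mapsto T\circ\eta]$; its strictly continuous unital extension is a $\QG_2$-equivariant $*$-isomorphism $\M(\ind(\K(\s E_1)))\rightarrow\M(\K(\s E_2))=\Lin(\s E_2)$. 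Composing, I obtain the $\QG_2$-equivariant $*$-homomorphism $\ind\gamma_1:A_2\rightarrow\Lin(\s E_2)$, whose uniqueness is inherited from that in \ref{defHomInd}. Reading the quantum group counterpart of \ref{rk11} backwards, this means that $\ind\gamma_1$ satisfies conditions~1 and~2 of \ref{defbimod}, i.e.\ is a $\QG_2$-equivariant $*$-representation of $A_2$ on the $\QG_2$-equivariant Hilbert $B_2$-module $\s E_2$ (whose module and action structure are the ones already supplied in the hypotheses; cf.\ \ref{prop12}, \ref{prop3}).

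For the bimodule statement it then remains to check that $\s E_2$ is countably generated over $B_2$, so that $(\s E_2,\ind\gamma_1)$ qualifies as a $\QG_2$-equivariant Hilbert $A_2$-$B_2$-bimodule in the sense of \ref{defbimod}. Since by hypothesis $\s E_1$ is a $\QG_1$-equivariant Hilbert $A_1$-$B_1$-bimodule, it is countably generated over $B_1$, and hence $\K(\s E_1)$ is $\sigma$-unital; by \ref{lem27} the induced algebra $\ind(\K(\s E_1))$ is $\sigma$-unital, and since $\ind(\K(\s E_1))\cong\K(\s E_2)$ this forces $\K(\s E_2)$ to be $\sigma$-unital, i.e.\ $\s E_2$ to be countably generated over $B_2$. (Alternatively one can exhibit an explicit countable generating set for $\s E_2$ out of a countable generating set of $\s E_1$ together with a countable total subset of $\B(\s H_{21})_*$, in the spirit of the proof of \ref{propdef8}.) Combined with the previous paragraph, this gives the assertion.

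Everything above is routine bookkeeping once the functors are in place; the only genuine point is the compatibility of the two identifications being used, namely that the passage ``$*$-representation on $\s E$'' $\leftrightarrow$ ``$*$-homomorphism into $\M(\K(\s E))$'' commutes with the induction procedure, so that $\ind\gamma_1$ defined as $(\gamma_1)_2$ transported along \ref{prop11} (iii) really is, in the bimodule picture, the induced representation of $\gamma_1$. As in the proofs of Theorems \ref{theo1} and \ref{theo2} and of \ref{prop14}, this is most cleanly seen by realizing $\s E_1$ as a top right-hand corner of the linking $\QG_1$-C*-algebra $J_1:=\K(\s E_1\oplus B_1)$ and observing that all the maps in sight are restrictions of the single functor $\ind$ applied to $J_1$; I expect verifying this compatibility to be the main --- and still mild --- obstacle.
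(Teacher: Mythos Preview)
Your proposal is correct and follows essentially the same approach as the paper: apply \ref{defHomInd} to the (possibly degenerate) $\QG_1$-equivariant $*$-homomorphism $\gamma_1:A_1\rightarrow\M(\K(\s E_1))$, transport along the identification $\ind(\K(\s E_1))\cong\K(\s E_2)$ of \ref{prop11} (iii), and deduce countable generation of $\s E_2$ from $\sigma$-unitality of $\K(\s E_1)$ via \ref{lem27}. The paper's proof is terser (it simply declares the first part a restatement of \ref{defHomInd} and \ref{prop11} (iii)) and does not spell out the compatibility check you flag in your last paragraph, treating it as built into the identifications.
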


\begin{proof}
The fact that we have a well-defined induced $\QG_2$-equivariant *-representation $\ind\gamma_1:A_2\rightarrow\Lin(\s E_2)$ is just a restatement of \ref{defHomInd} and \ref{prop11} (iii). Let us assume that $\s E_1$ is countably generated as a Hilbert $B_1$-module, {\it i.e.\ }the C*-algebra $\K(\s E_1)$ is $\sigma$-unital. Hence, $\K(\s E_2)$ is $\sigma$-unital (cf.\ \ref{prop11} (iii) and \ref{lem27}), {\it i.e.\ }$\s E_2$ is a countably generated Hilbert $B_2$-module.$\vphantom{\ind}$
\end{proof}

By exchanging the roles of $\QG_1$ and $\QG_2$, we define as above an induction procedure for $\QG_2$-equivariant Hilbert modules.

\medskip 

In the following, we investigate the composition of $\ind$ and $\iind$. Let $j,k=1,2$. Let $A_j$ be a $\QG_j$-$\Cstar$-algebra and $\s E_j$ a $\QG_j$-equivariant Hilbert $A_j$-module. Denote by:
\begin{itemize}
\item $A_k:={\rm Ind}_{\QG_j}^{\QG_k}(A_j)$ and $\s E_k={\rm Ind}_{\QG_j}^{\QG_k}(\s E_j)\subset\Lin(A_j\tens S_{jk},\s E_j\tens S_{jk})$ the induced $\QG_k$-$\Cstar$-algebra and the induced $\QG_k$-equivariant Hilbert $A_k$-module;
\item $C={\rm Ind}_{\QG_k}^{\QG_j}(A_k)$ and $\s F:={\rm Ind}_{\QG_k}^{\QG_j}(\s E_k)\subset\Lin(A_k\tens S_{kj},\s E_k\tens S_{kj})$ the induced $\QG_j$-$\Cstar$-algebra  and the induced $\QG_j$-equivariant Hilbert $C$-module.
\end{itemize}

\begin{prop}\label{prop13}
With the above notations and hypotheses, we have the following statements:
\begin{enumerate}
\item there exists a unique map $\Pi_j:\s E_j \rightarrow\s F$ such that
\[
(\Pi_j(\xi)x)a=\delta_{\s E_j}^{(k)}(\xi)(xa), \quad \text{for all } \; \xi\in\s E_j,\, x\in A_k\tens S_{kj} \; \text{ and } \; a\in A_j\tens S_{jk}\tens S_{kj};
\]
moreover, $\Pi_j$ is a $\QG_j$-equivariant unitary equivalence over the $\QG_j$-equivariant *-isomorphism $\pi_j:A_j\rightarrow C\,;\,a\mapsto\delta_{A_j}^{(k)}(a)$;\index[symbol]{pl@$\Pi_j$}
\item  
$
\delta_{\s E_j}^k:\s E_j \rightarrow \widetilde{\M}(\s E_k\tens S_{kj})\,;\, \xi \mapsto \Pi_j(\xi)
$
is a well-defined linear map such that:
\begin{enumerate}[label=(\roman*)]
\item $\delta_{\s E_j}^k(\xi a)=\delta_{\s E_j}^k(\xi)\delta_{A_j}^k(a)$ and $\langle\delta_{\s E_j}^k(\xi),\, \delta_{\s E_j}^k(\eta)\rangle = \delta_{A_j}^{k}(\langle\xi,\, \eta\rangle)$ for all $\xi,\eta\in\s E_j$ and $a\in A_j$,
\item $[\delta_{\s E_j}^k(\s E_j)(1_{A_k}\tens S_{kj})]=\s E_2\tens S_{kj}=[(1_{\s E_k}\tens S_{kj})\delta_{\s E_j}^k(\s E_j)]$.\qedhere
\end{enumerate}
\end{enumerate}
\end{prop}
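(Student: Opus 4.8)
The plan is to deduce everything from the algebra‑level statement Proposition \ref{propind1} by passing to linking $\Cstar$‑algebras. Put $J_j:=\K(\s E_j\oplus A_j)$, the linking $\QG_j$‑$\Cstar$‑algebra associated with $\s E_j$ (cf.\ \ref{rkLinkAlg}), with distinguished $\delta_{J_j}$‑invariant projections $e_{1,j}:=\iota_{\K(\s E_j)}(1_{\s E_j})$ and $e_{2,j}:=\iota_{A_j}(1_{A_j})$, so that $A_j=e_{2,j}J_je_{2,j}$, $\s E_j=e_{1,j}J_je_{2,j}$ and $\K(\s E_j)=e_{1,j}J_je_{1,j}$. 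By Proposition \ref{prop11}, the induced linking $\QG_k$‑$\Cstar$‑algebra $(J_k,\delta_{J_k}):={\rm Ind}_{\QG_j}^{\QG_k}(J_j)$, with projections $e_{l,k}$, has for its three corners precisely the induced objects $A_k={\rm Ind}_{\QG_j}^{\QG_k}(A_j)$, $\s E_k={\rm Ind}_{\QG_j}^{\QG_k}(\s E_j)$ and $\K(\s E_k)={\rm Ind}_{\QG_j}^{\QG_k}(\K(\s E_j))$; applying the ${\rm Ind}_{\QG_k}^{\QG_j}$‑counterpart of \ref{prop11} (to $\K(\s E_k\oplus A_k)\simeq J_k$), we get a linking $\QG_j$‑$\Cstar$‑algebra $(L,\delta_L):={\rm Ind}_{\QG_k}^{\QG_j}(J_k)$, with distinguished projections $f_1,f_2$, whose three corners are $C={\rm Ind}_{\QG_k}^{\QG_j}(A_k)$, $\s F={\rm Ind}_{\QG_k}^{\QG_j}(\s E_k)$ and $\K(\s F)={\rm Ind}_{\QG_k}^{\QG_j}(\K(\s E_k))$.

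Now apply Proposition \ref{propind1} to the $\QG_j$‑$\Cstar$‑algebra $J_j$: this yields $L={\rm Ind}_{\QG_k}^{\QG_j}{\rm Ind}_{\QG_j}^{\QG_k}(J_j)=\delta_{J_j}^{(k)}(J_j)$ and that $\pi_{J_j}\colon J_j\to L$, $m\mapsto\delta_{J_j}^{(k)}(m):=(\id_{J_j}\tens\delta_{jj}^k)\delta_{J_j}(m)$, is a $\QG_j$‑equivariant $*$‑isomorphism. Since each $e_{l,j}$ is $\delta_{J_j}$‑invariant, $\pi_{J_j}(e_{l,j})$ is the distinguished $\delta_L$‑invariant projection of $L$ attached to the $l$‑th corner, so $\pi_{J_j}$ is an isomorphism of linking $\QG_j$‑$\Cstar$‑algebras. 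It therefore restricts on corners to a $\QG_j$‑equivariant $*$‑isomorphism $\pi_j:=\restr{\pi_{J_j}}{A_j}\colon A_j\to C$, which is exactly the isomorphism $a\mapsto\delta_{A_j}^{(k)}(a)$ of \ref{propind1} 2, and to a $\QG_j$‑equivariant unitary equivalence $\Pi_j:=\restr{\pi_{J_j}}{\s E_j}\colon\s E_j\to\s F$ over $\pi_j$. Unwinding the compatibility $\delta_{J_j}\circ\iota_{\s E_j}=\iota_{\s E_j\tens S_{jj}}\circ\delta_{\s E_j}$ and the fact that $\id_{J_j}\tens\delta_{jj}^k$ restricts to $\id_{\s E_j}\tens\delta_{jj}^k$ on the $\s E_j$‑leg of $J_j$, one obtains $\Pi_j(\xi)=\delta_{\s E_j}^{(k)}(\xi):=(\id_{\s E_j}\tens\delta_{jj}^k)\delta_{\s E_j}(\xi)$, viewed as an operator $A_k\tens S_{kj}\to\s E_k\tens S_{kj}$ through the relation $(\Pi_j(\xi)x)a=\delta_{\s E_j}^{(k)}(\xi)(xa)$ for $x\in A_k\tens S_{kj}$, $a\in A_j\tens S_{jk}\tens S_{kj}$; uniqueness of a map obeying this relation is immediate from the density relations of \ref{propind4}. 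This proves statement 1.

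For statement 2, the map $\delta_{\s E_j}^k\colon\s E_j\to\widetilde{\M}(\s E_k\tens S_{kj})$, $\xi\mapsto\Pi_j(\xi)$, is well defined because $\s F={\rm Ind}_{\QG_k}^{\QG_j}(\s E_k)\subset\widetilde{\M}(\s E_k\tens S_{kj})$ (Proposition \ref{prop3} 1, with the roles of $\QG_1,\QG_2$ exchanged). Relation (i) merely translates the fact that $\Pi_j$ is a unitary equivalence over $\pi_j$, using $\pi_j(a)=\delta_{A_j}^k(a)$ (cf.\ \ref{propind1} 3): indeed $\delta_{\s E_j}^k(\xi a)=\Pi_j(\xi)\pi_j(a)=\delta_{\s E_j}^k(\xi)\delta_{A_j}^k(a)$ and $\langle\delta_{\s E_j}^k(\xi),\,\delta_{\s E_j}^k(\eta)\rangle=\Pi_j(\xi)^*\Pi_j(\eta)=\pi_j(\langle\xi,\,\eta\rangle)=\delta_{A_j}^k(\langle\xi,\,\eta\rangle)$. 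Relation (ii) follows from $\Pi_j(\s E_j)=\s F$ together with the density and continuity properties of the functor ${\rm Ind}_{\QG_k}^{\QG_j}$ on Hilbert modules (Proposition \ref{prop3} 1 and the strong continuity of the induced action in \ref{prop12}).

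The main obstacle is purely bookkeeping: one must check that the concrete realizations $\s E_k={\rm Ind}_{\QG_j}^{\QG_k}(\s E_j)\subset\Lin(A_j\tens S_{jk},\s E_j\tens S_{jk})$ and $\s F\subset\Lin(A_k\tens S_{kj},\s E_k\tens S_{kj})$ coincide with the corner descriptions $e_{1,k}J_ke_{2,k}$ and $f_1Lf_2$ invoked above — which is exactly what Proposition \ref{prop11} and its ${\rm Ind}_{\QG_k}^{\QG_j}$‑counterpart supply, through the isomorphisms $\ind\iota_{\s E_1}$ and $\tau$ there — and that the various linear extensions of $\id\tens\delta_{jj}^k$ to relative multiplier modules intertwine the embeddings $\iota_{\s E_j},\iota_{A_j},\iota_{\K(\s E_j)}$ as needed. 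Alternatively one can avoid linking algebras and argue directly in the spirit of \ref{propind1} and \ref{prop3}: show $\delta_{\s E_j}^{(k)}(\s E_j)\subset\widetilde{\M}_{S_{jk}\tens S_{kj}}(\s E_j\tens S_{jk}\tens S_{kj})$, check that $(\id_{\s E_j\tens S_{jk}}\tens\omega)\delta_{\s E_j}^{(k)}(\xi)$ lies in $\s F$ with the stated slice characterization, and deduce (i) and (ii) from coassociativity and strong continuity; but the reduction above keeps the combinatorics under control.
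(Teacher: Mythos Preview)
Your approach is correct and is precisely the one the paper is implicitly relying on: this proposition is stated here without proof, as a recall from \cite{C2}, and the argument there is exactly the reduction to the $\Cstar$-algebra statement \ref{propind1} via linking algebras that you carry out, the corner identifications being supplied by \ref{prop11} and its $\iind$-counterpart. Your handling of statement~2 (well-definedness from $\s F\subset\widetilde{\M}(\s E_k\tens S_{kj})$, (i) from $\Pi_j$ being a unitary equivalence over $\pi_j=\delta_{A_j}^k$, (ii) from $\Pi_j(\s E_j)=\s F$ together with \ref{prop3}~1) is also the intended route.
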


\begin{thm}\label{theo10}
Let $\QG_1$ and $\QG_2$ be two monoidally equivalent regular locally compact quantum groups. The map 
\begin{align*}
\ind &:(\s E_1,\delta_{\s E_1}) \mapsto (\s E_2:=\ind(\s E_1),\, \delta_{\s E_2}:\xi\in\s E_2\mapsto[x\in A_2\tens S_{22}\mapsto(\id_{\s E_1}\tens\delta_{12}^2)(\xi)x]),\\
\intertext{where $\s E_1$ is a Hilbert module over the $\QG_1$-$\Cstar$-algebra $A_1$ and $A_2=\ind(A_1)$ denotes the induced $\QG_2$-$\Cstar$-algebra, is a one-to-one correspondence up to unitary equivalence. The inverse map, up to unitary equivalence, is}
\iind &: (\s F_2,\delta_{\s F_2}) \mapsto (\s F_1:=\iind(\s F_2),\, \delta_{\s F_1}:\xi\in\s F_1\mapsto[x\in B_1\tens S_{11}\mapsto(\id_{\s F_2}\tens\delta_{21}^1)(\xi)x]),
\end{align*} 
where $\s F_2$ is a Hilbert module over the $\QG_2$-$\Cstar$-algebra $B_2$ and $B_1=\ind(B_2)$ denotes the induced $\QG_1$-$\Cstar$-algebra.
\end{thm}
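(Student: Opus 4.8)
The goal is to show that $\ind$ and $\iind$ are mutually inverse, up to $\QG_j$-equivariant unitary equivalence, on the level of Hilbert modules. Almost all the work has already been done: Proposition~\ref{prop3}, Proposition-Definition~\ref{prop12} establish that $\ind(\s E_1,\delta_{\s E_1})$ is a well-defined $\QG_2$-equivariant Hilbert $\ind(A_1)$-module, Proposition-Definition~\ref{prop14} handles the functoriality (induction of unitary equivalences), and Proposition~\ref{prop13} is precisely the statement that the two-fold induction returns something canonically isomorphic to the starting point. So the plan is essentially to assemble these pieces and verify that the explicit formulas for $\delta_{\s E_2}$ and $\delta_{\s F_1}$ in the theorem statement match the ones produced by the constructions cited.

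First I would fix a $\QG_1$-$\Cstar$-algebra $A_1$ and a $\QG_1$-equivariant Hilbert $A_1$-module $\s E_1$, set $A_2:=\ind(A_1)$ and $\s E_2:=\ind(\s E_1)$, and observe that by \ref{prop12} the pair $(\s E_2,\delta_{\s E_2})$ with $\delta_{\s E_2}(\xi):[x\mapsto(\id_{\s E_1}\tens\delta_{12}^2)(\xi)x]$ is a $\QG_2$-equivariant Hilbert $A_2$-module; this is exactly the $\ind$ appearing in the theorem. Next, applying $\iind$ to $(\s E_2,\delta_{\s E_2})$ and invoking \ref{prop13} with the roles $j=1$, $k=2$: there we get the $\QG_1$-equivariant *-isomorphism $\pi_1:A_1\rightarrow C:=\iind(A_2)\,;\,a\mapsto\delta_{A_1}^{(2)}(a)$ (which is \ref{propind1} 2) and the $\QG_1$-equivariant unitary equivalence $\Pi_1:\s E_1\rightarrow\s F:=\iind(\s E_2)$ over $\pi_1$, characterised by $(\Pi_1(\xi)x)a=\delta_{\s E_1}^{(2)}(\xi)(xa)$. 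This gives one half: $\iind(\ind(\s E_1,\delta_{\s E_1}))\cong(\s E_1,\delta_{\s E_1})$ as $\QG_1$-equivariant Hilbert modules. The other half, $\ind(\iind(\s F_2,\delta_{\s F_2}))\cong(\s F_2,\delta_{\s F_2})$, follows by the same argument with $\QG_1$ and $\QG_2$ interchanged (the symmetry is already invoked right before \ref{prop13}, and \ref{prop13} itself is stated for general $j\neq k$).

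The remaining point to check — and the only one that requires a genuine (if short) computation rather than a citation — is that the formula $\delta_{\s F_1}:\xi\in\s F_1\mapsto[x\in B_1\tens S_{11}\mapsto(\id_{\s F_2}\tens\delta_{21}^1)(\xi)x]$ written in the theorem for the action on $\iind(\s F_2)$ genuinely coincides with the action $\delta_{\s F}$ produced by \ref{prop12} applied with $(\QG_1,\QG_2)$ replaced by $(\QG_2,\QG_1)$; and, dually, that the functor $\iind$ sends $\QG_1$-equivariant unitary equivalences to $\QG_2$-equivariant ones so that the bijection is well-defined on unitary-equivalence classes and not just on objects. For the first, one just unfolds the definition of $\ind$ in \ref{prop12} with the basis pieces renamed ($S_{22}\rightsquigarrow S_{11}$, $\delta_{12}^2\rightsquigarrow\delta_{21}^1$, etc.), using the coassociativity relations for the $\delta_{ij}^k$ recalled in \ref{prop5bis}; for the second, one invokes \ref{prop14} (and its $(\QG_2,\QG_1)$-analogue). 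Finally I would note that the composite unitary equivalences are natural: if $\Phi_1:\s E_1\rightarrow\s E_1'$ is a $\QG_1$-equivariant unitary equivalence then $\iind(\ind(\Phi_1))$ is intertwined with $\Phi_1$ through the $\Pi_1$'s of \ref{prop13}, which follows from the defining relation $(\Pi_1(\xi)x)a=\delta_{\s E_1}^{(2)}(\xi)(xa)$ and the formula for $\ind(\Phi_1)$ in \ref{prop14}.

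\textbf{Main obstacle.} There is no deep obstacle; the content is bookkeeping. The one place demanding care is the bookkeeping of the internal-tensor-product identifications (the chain of unitary equivalences \eqref{eq1.14}--\eqref{eq10} and their $\iind$-analogues) needed to see that $\delta_{\s F_1}$ as defined in \ref{prop12} really lands in $\widetilde{\M}(\s F_1\tens S_{11})$ with the claimed explicit formula, and that the composite $\Pi_1$'s compose to the identity up to these identifications — i.e.\ checking that all the coherence isomorphisms cancel. This is routine but error-prone, so I would organise it by first proving the object-level statement cleanly via \ref{prop13}, then separately recording the naturality as a one-line consequence of the defining formulas.
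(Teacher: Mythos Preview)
Your proposal is correct and matches the paper's approach: the theorem is stated in the paper without a separate proof environment, as it is an immediate consequence of the preceding results (Propositions~\ref{prop3}, \ref{prop12}, \ref{prop14}, and especially \ref{prop13}, which already gives the $\QG_j$-equivariant unitary equivalence $\Pi_j$ between $\s E_j$ and the double induction). Your plan of assembling these pieces and checking that the explicit formulas agree is exactly the intended reading.
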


Let $B_1$ be a $\QG_1$-$\Cstar$-algebra. Let us denote by $B_2:=\ind(B_1)$ the induced $\QG_2$-$\Cstar$-algebra. Let
$
\delta_{B_j}^k:B_j\rightarrow\M(B_k\tens S_{kj})
$
for $j,k=1,2$ be the *-homomorphisms defined in \ref{not11}.

\begin{nbs}
Let $\s E_1$ be a $\QG_1$-equivariant Hilbert $B_1$-module. Let us denote by $\s F_2=\ind(\s F_1)$ the induced $\QG_2$-equivariant Hilbert $B_2$-module. We have four linear maps
\[
\delta_{\s F_j}^k:\s F_j\rightarrow\Lin(B_k\tens S_{kj},\s F_k\tens S_{kj}), \quad \text{for } j,k=1,2,
\]
defined as follows:
\begin{itemize}
\item $\delta_{\s F_1}^1:=\delta_{\s F_1}$ and $\delta_{\s F_2}^2:=\delta_{\s F_2}$;
\item $\delta_{\s F_1}^2:\s F_1\rightarrow\Lin(B_2\tens S_{21},\s F_2\tens S_{21})$ is the unique linear map such that 
\[
(\delta_{\s F_1}^2(\xi)x)b=\delta_{\s F_1}^{(2)}(\xi)(xb)
\] 
for all $\xi\in\s F_1$, $x\in B_2\tens S_{21}$ and $b\in B_1\tens S_{12}\tens S_{22}$, where $\delta_{\s F_1}^{(2)}(\xi):=(\id_{\s E_1}\tens\delta_{11}^2)\delta_{\s F_1}(\xi)$ (cf.\ \ref{prop13});
\item $\delta_{\s F_2}^1:\s F_2\rightarrow\Lin(B_1\tens S_{12},\s F_1\tens S_{12})$ is the unique linear map such that for all $\xi\in\s F_2$, $x\in\iind(B_2)\tens S_{12}$ and $y\in B_2\tens S_{21}\tens S_{12}$, we have
\[
[(\Pi_1\tens\id_{S_{12}})(\delta_{\s F_2}^1(\xi))x]y=\delta_{\s F_2}^{(1)}(\xi)(xy),
\] 
where $\delta_{\s F_2}^{(1)}(\xi):=(\id_{\s F_1}\tens\delta_{22}^1)\delta_{\s F_2}(\xi)$ and $\Pi_1:\s F_1\rightarrow\iind(\s F_2)$ (cf.\ \ref{prop13} 1).\qedhere
\end{itemize}
\end{nbs}

\begin{lem}\label{lem4}
For all $j,k,l=1,2$, we have the following statements:
\begin{enumerate}
\item $\delta_{\s F_j}^k(\s F_j)\subset\widetilde{\M}(\s F_k\tens S_{kj})$;
\item $\delta_{\s F_j}^k(\xi b)=\delta_{\s F_j}^k(\xi)\delta_{B_j}^k(b)$ and $\langle\delta_{\s F_j}^k(\xi),\, \delta_{\s F_j}^k(\eta)\rangle = \delta_{B_j}^k(\langle\xi,\, \eta\rangle)$ for all $\xi,\eta\in\s F_j$ and $b\in B_j$;
\item $[\delta_{\s F_j}^k(\s F_j)(1_{B_k}\tens S_{kj})]=\s F_k\tens S_{kj}=[(1_{\s F_k}\tens S_{kj})\delta_{\s F_j}^k(\s F_j)]$;
\item $\delta_{\s F_k}^l\tens\id_{S_{kj}}$ {\rm(}resp.\ $\id_{\s F_l}\tens\delta_{lj}^k)$ extends uniquely to a linear map from $\Lin(B_k\tens S_{kj},\s E_k\tens S_{kj})$ to $\Lin(B_l\tens S_{lk}\tens S_{kj},\s E_l\tens S_{lk}\tens S_{kj})$ such that
\begin{align*}
(\delta_{\s F_k}^l\tens\id_{S_{kj}})(T)(\delta_{B_k}^l\tens\id_{S_{kj}})(x)&=(\delta_{\s F_k}^l\tens\id_{S_{kj}})(Tx)\\
\text{{\rm(}resp.\ }(\id_{\s F_l}\tens\delta_{lj}^k)(T)(\id_{B_l}\tens\delta_{lj}^k)(x)&=\id_{\s F_l}\tens\delta_{lj}^k)(Tx)\text{{\rm)}}
\end{align*}
for all $T\in\Lin(B_k\tens S_{kj},\s E_k\tens S_{kj})$ and $x\in B_k\tens S_{kj}$;
\item $(\delta_{\s F_k}^l\tens\id_{S_{kj}})\delta_{\s F_j}^k=(\id_{\s F_l}\tens\delta_{lj}^k)\delta_{\s F_j}^l$.\qedhere
\end{enumerate}
\end{lem}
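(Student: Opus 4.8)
\textbf{Proof strategy for Lemma \ref{lem4}.}

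The plan is to prove all five statements by reduction to the corresponding properties already established for $\cal G$-equivariant Hilbert C*-modules in \ref{propdef4}, via the identification furnished by the induction/restriction correspondence of Theorem \ref{theo10} and Proposition \ref{prop13}. The key observation is that the data $(\s F_1,\s F_2)$ together with the four maps $\delta_{\s F_j}^k$ is exactly the decomposition described in \ref{propdef4} for a single $\cal G$-equivariant Hilbert C*-module. Indeed, first I would invoke Proposition \ref{prop37} to assemble $B:=B_1\oplus B_2$ into a $\cal G$-C*-algebra, and then I would use the one-to-one correspondence of Theorem \ref{theo10} (applied at the level of linking algebras, or directly) to produce a $\cal G$-equivariant Hilbert $B$-module $\s F:=\s F_1\oplus\s F_2$ whose associated decomposition $(\s F_j,\delta_{\s F_j}^k)$ coincides with the one in the statement. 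Once this identification is in place, statements 1, 2, 3, 4 and 5 are literally items (ii), (iii), (iv), (v) and (v) of \ref{propdef4} (together with the final sentence of \ref{propdef4} for the equivariance of $(\s F_j,\delta_{\s F_j}^j)$, which gives that $\s F$ is indeed $\cal G$-equivariant, using that $\QG_1,\QG_2$ are regular so that every action on a Hilbert module is automatically continuous by \ref{corActReg}).

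More concretely, I would proceed as follows. Using $\s F_1$ and its induced module $\s F_2=\ind(\s F_1)$, form the linking C*-algebras $J_1:=\K(\s F_1\oplus B_1)$ and $J_2:=\ind(J_1)=\K(\s F_2\oplus B_2)$ (the last identification is Proposition \ref{prop11} (ii)). By Proposition \ref{prop37} applied to the $\QG_1$-C*-algebra $J_1$, one obtains a $\cal G$-C*-algebra $J:=J_1\oplus J_2$; the projections $e_{l}:=e_{l,1}\oplus e_{l,2}$ make $(J,\beta_J,\delta_J,e_1,e_2)$ a linking $\cal G$-C*-algebra (conditions of \ref{defLinkAlg} are checked componentwise, the invariance of $e_l$ coming from the fact that $e_{l,1}$ is $\delta_{J_1}$-invariant and $e_{l,2}=e_{l,1}\tens 1_{S_{12}}$). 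Then $\s F:=e_1Je_2$ is a $\cal G$-equivariant Hilbert $B$-module over $B:=e_2Je_2=B_1\oplus B_2$ by \ref{rkLinkAlg}, and by construction $q_{\s F,j}\s F=\s F_j$. It remains to check that the maps $\delta_{\s F_j}^k$ produced by \ref{propdef4} for this $\s F$ agree with the ones defined just before the statement. For $k=j$ this is the compatibility $\delta_{\s F_j}^j=\delta_{\s F_j}$ built into \ref{propdef4} and \ref{prop12}; for $k\neq j$ one unwinds the defining relations of $\delta_{\s F_1}^2$ and $\delta_{\s F_2}^1$ (the formulas involving $\delta_{\s F_1}^{(2)}$ and $\delta_{\s F_2}^{(1)}$) and matches them against the splitting of the isometry $\s V$ into the $\s V_j^k$ in \ref{propdef3} and \ref{propdef4} (i), using Proposition \ref{prop13} 1 to identify $\iind(\s F_2)$ with $\s F_1$ via $\Pi_1$. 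This matching is the same kind of bookkeeping as in the proof of \ref{prop13} and in \S 6.2--6.3 of \cite{C2}.

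The main obstacle is precisely this last identification: one has to verify that the somewhat ad hoc definitions of $\delta_{\s F_1}^2$ and $\delta_{\s F_2}^1$ given in terms of the iterated coactions $\delta_{\s F_1}^{(2)}$ and $\delta_{\s F_2}^{(1)}$ are compatible with the intrinsic description coming from a single $\cal G$-action on $\s F=\s F_1\oplus\s F_2$. Concretely, one needs the pentagonal-type relations for the $\s V_j^k$ (Proposition \ref{prop9}) and the defining relation $(\pi_1\tens\id_{S_{12}})\delta_{B_2}^1(b)=\delta_{B_2}^{(1)}(b)$ from \ref{not11}, transported to the module level; this is where the careful handling of the relative multiplier modules $\widetilde{\M}(\s F_k\tens S_{kj})$ and the identifications (\ref{eq1.14})--(\ref{eq10}) is needed. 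Once statements 1--3 are obtained this way, statement 4 is the construction of the extensions exactly as in \ref{rk9}/\ref{propdef4} (v), and statement 5 is the coassociativity $(\delta_{\s E_k}^l\tens\id)\delta_{\s E_j}^k=(\id\tens\delta_{lj}^k)\delta_{\s E_j}^l$ of \ref{propdef4} (v) (equivalently Proposition \ref{prop9}) read off on $\s F$. Alternatively, if one prefers to avoid the linking-algebra detour, statements 1--5 can each be proved directly by the same computations that establish \ref{prop13} 2 and \ref{propdef4}, the only new point being the case-by-case verification of coassociativity for the six ordered triples $(j,k,l)$, which reduces via $\s F=\s F_1\oplus\s F_2$ to the single coassociativity identity for the $\cal G$-action.
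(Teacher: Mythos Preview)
The paper states Lemma \ref{lem4} without proof; it is presented as a list of properties that the maps $\delta_{\s F_j}^k$ defined in the preceding notations satisfy, and the reader is implicitly referred to the earlier material (\ref{prop12}, \ref{prop13}) and to \cite{C2} for the verifications. So there is no ``paper's own proof'' to compare against in detail, and your task is really to supply a clean argument.

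Your linking-algebra strategy is sound and is in fact the most economical route: form $J_1:=\K(\s F_1\oplus B_1)$, induce to get $J_2$, identify $J_2\simeq\K(\s F_2\oplus B_2)$ via \ref{prop11} (ii), assemble $J:=J_1\oplus J_2$ as a $\cal G$-C*-algebra by \ref{prop37}, and then read off all five statements from \ref{propdef4} applied to $\s F:=e_1Je_2$. This only uses results that appear \emph{before} Lemma \ref{lem4} in the logical order.

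One caution: in your opening paragraph you invoke Theorem \ref{theo10} to ``produce'' the $\cal G$-equivariant module $\s F$. That theorem is stated \emph{after} Lemma \ref{lem4} and its proof (via \ref{prop16}) relies on the very properties listed in Lemma \ref{lem4}, so citing it here would be circular. Your second, more detailed paragraph avoids this by going through \ref{prop37} and \ref{prop11} instead, which is the correct dependency; you should drop the reference to \ref{theo10} entirely. The remaining ``main obstacle'' you flag --- matching the ad hoc definitions of $\delta_{\s F_1}^2$ and $\delta_{\s F_2}^1$ (via $\delta_{\s F_1}^{(2)}$, $\delta_{\s F_2}^{(1)}$, and $\Pi_1$) with the intrinsic $\delta_{\s F_j}^k$ of \ref{propdef4} --- is genuine but routine: it is the module-level analogue of the identification carried out for C*-algebras in \ref{not11}/\ref{propind1}, transported through the linking algebra via \ref{prop11} (i). Once that is checked, statements 1--3 are \ref{propdef4} (ii)--(iv), statement 4 is the construction of \ref{rk9}, and statement 5 is \ref{propdef4} (v) (equivalently \ref{prop9}).
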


Let us consider the $\Cstar$-algebra $B:=B_1\oplus B_2$ endowed with the continuous action $(\beta_B,\delta_B)$ (cf.\ \ref{prop37}).

\begin{prop}\label{prop16}
Let $\s F_1$ be a $\QG_1$-equivariant Hilbert $B_1$-module. Let $\s F_2:=\ind(\s F_1)$ be the induced $\QG_2$-equivariant Hilbert $B_2$-module. Consider the Hilbert $B$-module $\s F:=\s F_1\oplus\s F_2$. Denote by
$
\Pi_j^k:\Lin(B_k\tens S_{kj},\s F_k\tens S_{kj})\rightarrow\Lin(B\tens S,\s F\tens S) 
$ 
\index[symbol]{pk@$\Pi_j^k$}the linear extension of the canonical injection $\s F_k\tens S_{kj}\rightarrow\s F\tens S$.
Let us consider the linear maps $\delta_{\s F}:\s F\rightarrow\Lin(B\tens S,\s F\tens S)$ and $\beta_{\s F}:\GC^2\rightarrow\Lin(\s F)$ defined by:
\[
\delta_{\s F}(\xi):=\sum_{k,j=1,2}\Pi_j^k\circ\delta_{\s F_j}^k(\xi_j), \quad \xi=(\xi_1,\xi_2)\in\s F ;\quad 
\beta_{\s F}(\lambda,\mu):=\begin{pmatrix} \lambda & 0 \\ 0 & \mu\end{pmatrix}\!, \quad (\lambda,\mu)\in\GC^2.
\]
Then, the triple $(\s F,\beta_{\s F},\delta_{\s F})$ is a $\cal G$-equivariant Hilbert $B$-module.
\end{prop}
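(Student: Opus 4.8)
The statement is a direct-sum assembly: we have four linear maps $\delta_{\s F_j}^k$ indexed by $j,k=1,2$, and we want to show that the single map $\delta_{\s F}$ they assemble (together with the obvious central embedding $\beta_{\s F}$ of $\GC^2$) is an action of $\cal G$ on $\s F$ which is moreover continuous. The natural strategy is to transport the known result \ref{prop37} (which does exactly this for $\cal G$-C*-algebras, assembling $\delta_{B_j}^k$ into $\delta_B$) through the linking algebra picture \ref{prop1}, exactly as was done in \ref{prop16}'s companion results. So first I would introduce $J_1:=\K(\s F_1\oplus B_1)$ with its continuous $\QG_1$-action $\delta_{J_1}$, form the induced $\QG_2$-C*-algebra $(J_2,\delta_{J_2}):=\ind(J_1,\delta_{J_1})$, and recall from \ref{prop11} (ii) that $J_2\cong\K(\s F_2\oplus B_2)$ as linking $\QG_2$-C*-algebras. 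Then $J:=J_1\oplus J_2$, equipped with the action $(\beta_J,\delta_J)$ of $\cal G$ coming from \ref{prop37} applied to the $\QG_1$-C*-algebra $J_1$, is a linking $\cal G$-C*-algebra with projections $e_1:=\iota_{\K(\s F)}(1_{\s F})$ and $e_2:=\iota_B(1_B)$ (these are $\delta_J$-invariant because each $e_{l,2}=e_{l,1}\tens 1_{S_{12}}$ is, by the induction construction, and $\delta_J$ is built componentwise).

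\textbf{Key steps, in order.} (1) Identify $\K(\s F\oplus B)$ with $J$ as C*-algebras, compatibly with the decompositions $\s F=\s F_1\oplus\s F_2$, $B=B_1\oplus B_2$, using \ref{prop11} (ii) in each summand. (2) Check that under this identification the action $(\beta_J,\delta_J)$ of $\cal G$ on $J$ is compatible with $(\beta_B,\delta_B)$ in the sense of \ref{compcoact}: the compatibility with $\delta_B$ follows summand-by-summand from the fact that $\ind\iota_{B_1}$ intertwines the relevant maps (\ref{prop11}), and compatibility with $\beta_B$ is immediate since both $\beta_J$ and $\beta_B$ are the diagonal embeddings of $\GC^2$. (3) Apply \ref{prop1} a) to the compatible action $(\beta_J,\delta_J)$: this produces a well-defined action $(\beta_{\s F}',\delta_{\s F}')$ of $\cal G$ on $\s F$ characterised by $\iota_{\s F\tens S}\circ\delta_{\s F}'=\delta_J\circ\iota_{\s F}$, and by \ref{propcont} this action is continuous because $(\beta_J,\delta_J)$ is strongly continuous (which it is, being the image of a strongly continuous $\QG_1$-action under the equivalence of categories ${\sf Alg}_{\QG_1}\simeq{\sf Alg}_{\cal G}$ of \ref{prop37}). (4) Verify that $\delta_{\s F}'$ coincides with the map $\delta_{\s F}=\sum_{k,j}\Pi_j^k\circ\delta_{\s F_j}^k(\xi_j)$ in the statement, and likewise $\beta_{\s F}'=\beta_{\s F}$. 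For the latter, $\beta_{\s F}'$ is the restriction of $\beta_J$ to the upper-left corner, which is diagonal. For the former, it suffices (by linearity and since everything decomposes) to check on each $\xi\in\s F_j$ that $\iota_{\s F\tens S}(\delta_{\s F}(\xi))=\delta_J(\iota_{\s F}(\xi))$; this reduces, via the summand identifications, to comparing $\delta_{\s F_j}^k$ with the $(k,j)$-component of the action on the linking algebra, which is precisely how $\delta_{\s F_j}^k$ was defined in the $\ind$/$\iind$ machinery (\ref{prop13}, and the $\QG$-equivariant linking-algebra formalism of \cite{BS1}).

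\textbf{Main obstacle.} The genuinely delicate point is step (4): matching the explicitly-given formula $\delta_{\s F}(\xi)=\sum_{k,j=1,2}\Pi_j^k\circ\delta_{\s F_j}^k(\xi_j)$, whose building blocks $\delta_{\s F_j}^k$ were defined through three different mechanisms ($\delta_{\s F_1}^1=\delta_{\s F_1}$ directly, $\delta_{\s F_2}^2=\delta_{\s F_2}=\ind(\delta_{\s F_1})$, $\delta_{\s F_1}^2$ via $(\id_{\s E_1}\tens\delta_{11}^2)\delta_{\s F_1}$, and $\delta_{\s F_2}^1$ via a twist by $\Pi_1$), against the single transported action coming from the linking algebra. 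One must carefully unwind the identification $J_2\cong\K(\s F_2\oplus B_2)$ and check it is compatible with the analogous identifications on the $B$-level and $\K(\s F)$-level, so that the corner decomposition $e_1Je_2\cong\s F$ respects $\s F=\s F_1\oplus\s F_2$. This is exactly parallel to the verification underlying \ref{prop16} and \ref{prop11}, so while bookkeeping-heavy it is not conceptually new; I would handle it by invoking the $\QG_j$-equivariant linking-algebra correspondence of \cite{BS1} in each summand and citing \ref{prop11} (ii), (iii) for the induced summand, then noting that the resulting assembled maps agree on the dense sets $\Pi_j^k(\s F_k\tens S_{kj})$ by construction. Once the two descriptions of $\delta_{\s F}$ are identified, all the axioms (coassociativity, the $\delta_{\s F}(1)=q_{\beta_{\s F}\alpha}$-type normalisation, continuity) come for free from \ref{prop1} and \ref{propcont}.
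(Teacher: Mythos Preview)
Your approach is correct and is precisely the strategy one expects here: reduce to the C*-algebra case via the linking algebra, apply \ref{prop37} to $J_1:=\K(\s F_1\oplus B_1)$, identify $\ind(J_1)\cong\K(\s F_2\oplus B_2)$ via \ref{prop11} (ii), and then read off the module action using \ref{prop1} a) and \ref{propcont}. The paper does not give a proof of \ref{prop16} in the body (it is recalled from \S 6.3 of \cite{C2}), but the linking-algebra reduction is exactly the technique the paper uses elsewhere (see the proofs of \ref{theo1}, \ref{theo2}), so your route is the intended one.

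One small clarification for your step (1): the identification $\K(\s F\oplus B)\cong J_1\oplus J_2$ holds because $\s F\oplus B=(\s F_1\oplus B_1)\oplus(\s F_2\oplus B_2)$ as a Hilbert $B$-module with the two summands living over the orthogonal central summands $B_1$, $B_2$ of $B$; hence all cross-terms $\K(\s F_i\oplus B_i,\s F_j\oplus B_j)$ vanish for $i\neq j$. Your verification in step (4) is indeed pure bookkeeping once one observes that, under the identification of \ref{prop11} (ii), the map $\delta_{J_j}^k$ restricts on the corner $e_1J_je_2\cong\s F_j$ to $\delta_{\s F_j}^k$ by construction (this is how \ref{prop12}, \ref{prop13} were set up), so $\delta_J\circ\iota_{\s F}=\iota_{\s F\tens S}\circ\delta_{\s F}$ summand by summand.
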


\begin{prop}\label{prop17}
Let $(\s E,\beta_{\s E},\delta_{\s E})$ be a $\cal G$-equivariant Hilbert $A$-module. In the following, we use the notations of \ref{propdef4}. Let $j,k=1,2$ with $j\neq k$. Let 
\[
\widetilde{A}_j:={\rm Ind}_{\QG_k}^{\QG_j}(A_k,\delta_{A_k}^k) \quad \text{and} \quad \widetilde{\s E}_j:={\rm Ind}_{\QG_k}^{\QG_j}(\s E_k,\delta_{\s E_k}^k).
\] 
If $\xi\in\s E_j$, then we have $\delta_{\s E_j}^k(\xi)\in\widetilde{\s E}_j\subset\widetilde{\M}(\s E_k\tens S_{kj})$ and the map $\widetilde{\Pi}_j:\s E_j \rightarrow\widetilde{\s E}_j \, ; \, \xi  \mapsto \delta_{\s E_j}^k(\xi)$ is a $\QG_j$-equivariant unitary equivalence over $\widetilde{\pi}_j:A_j\rightarrow\widetilde{A}_j$ (cf.\ \ref{prop4}).\index[symbol]{pm@$\widetilde{\Pi}_j$}
\end{prop}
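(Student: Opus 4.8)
The statement is a structural refinement of the induction correspondence for equivariant Hilbert modules specialized to the data coming from a $\cal G$-equivariant Hilbert $A$-module, and the natural strategy is to reduce it to the already-established results on linking algebras (cf.\ \ref{prop40}, \ref{prop4bis}, \ref{prop5}, \ref{rkLinkAlg}) together with the induction machinery for equivariant C*-algebras and Hilbert modules (\ref{prop4}, \ref{prop11}, \ref{prop13}, \ref{prop17}) as applied in the $\cal G$-equivariant setting via \ref{propdef4} and \ref{lem5}. First I would set $J:=\K(\s E\oplus A)$, which by \ref{rkLinkAlg} is a linking $\cal G$-C*-algebra with projections $e_1:=\iota_{\s E}(1_{\s E})$ and $e_2:=\iota_A(1_A)$, and whose associated $\cal G$-C*-algebras and equivariant Hilbert module decompose, via \ref{not12} and \ref{not8}, as $J=J_1\oplus J_2$ with $A_j=q_{A,j}A=e_{2,j}J_je_{2,j}$ and $\s E_j=q_{\s E,j}\s E=e_{1,j}J_je_{2,j}$ (writing $e_{l,j}$ for the cutdown of $e_l$ to the $j$-th summand). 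The point is that the four maps $\delta_{\s E_j}^k$ of \ref{propdef4} are exactly the ``off-diagonal'' pieces of the $\cal G$-equivariant structure maps of the linking algebra $J$, and so the claim for $\s E$ follows from the claim for the corner $\s E_k$ of the linking $\QG_k$-C*-algebra $J_k$, which is precisely what \ref{prop17} would assert once we know $\delta_{\s E_j}^k$ coincides with the map produced by the induction procedure ${\rm Ind}_{\QG_k}^{\QG_j}$ of \ref{prop13}.

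So the core of the argument is: apply \ref{prop13} with the roles $j\leftrightarrow k$ to the $\QG_k$-equivariant Hilbert $A_k$-module $(\s E_k,\delta_{\s E_k}^k)$. That proposition gives a $\QG_j$-equivariant unitary equivalence $\Pi_j:\s E_j\to\s F:={\rm Ind}_{\QG_k}^{\QG_j}(\s E_k)$ over the $\QG_j$-equivariant $*$-isomorphism $\widetilde\pi_j:A_j\to\widetilde A_j={\rm Ind}_{\QG_k}^{\QG_j}(A_k,\delta_{A_k}^k)$ (this last isomorphism is the one from \ref{prop4}, namely $x\mapsto\delta_{A_j}^k(x)$). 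What remains is to identify $\s F$ with $\widetilde{\s E}_j$ of the statement and to check that the unitary $\widetilde\Pi_j$ of the statement, defined by $\xi\mapsto\delta_{\s E_j}^k(\xi)$, agrees with $\Pi_j$; but by \ref{prop13} 2 we have $\delta_{\s E_j}^k(\xi)=\Pi_j(\xi)$ by construction, and \ref{prop13} 1 characterizes $\Pi_j(\xi)$ through the defining relation $(\Pi_j(\xi)x)a=\delta_{\s E_j}^{(k)}(\xi)(xa)$. The verification that $\delta_{\s E_j}^k(\xi)\in\widetilde{\s E}_j\subset\widetilde\M(\s E_k\tens S_{kj})$ then follows from \ref{propdef4} (ii) combined with \ref{prop3} 1 applied to $\s E_k$, i.e.\ the identification $\widetilde{\s E}_j=[(\id_{\s E_k\tens S_{kj}}\tens\omega)\delta_{\s E_k}^{(j)}(\xi)\,;\,\xi\in\s E_k,\,\omega\in\B(\s H_{jk})_*]$ together with the coassociativity-type identity $(\delta_{\s E_k}^j\tens\id)\delta_{\s E_j}^k=(\id\tens\delta_{jk}^?)\delta_{\s E_j}^?$ of \ref{propdef4} (v), which shows $\delta_{\s E_j}^k(\xi)$ sits in the induced module.

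The $\QG_j$-equivariance of $\widetilde\Pi_j$ over $\widetilde\pi_j$ is then immediate from \ref{prop13} 1 (it is stated there), or alternatively one can deduce it directly: the identity $(\delta_{\s E_k}^j\tens\id_{S_{kj}})\delta_{\s E_j}^k=(\id_{\s E_j}\tens\delta_{jk}^j)\delta_{\s E_j}^j$ from \ref{propdef4} (v) is exactly the intertwining relation $\delta_{\widetilde{\s E}_j}\circ\widetilde\Pi_j=(\widetilde\Pi_j\tens\id_{S_{jj}})\circ\delta_{\s E_j}^j$ once one unwinds the definition of $\delta_{\widetilde{\s E}_j}$ as $\restr{(\id_{\s E_k}\tens\delta_{kj}^j)}{\widetilde{\s E}_j}$ in \ref{prop12} (with $1\leftrightarrow k$, $2\leftrightarrow j$).

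\textbf{Main obstacle.} The genuinely delicate part is not any single equivariance check — those are all encoded in the coassociativity relations of \ref{propdef4} — but rather the careful bookkeeping of the various internal-tensor-product identifications (analogues of \ref{eq1.14}--\ref{eq10}) needed to make sense of the composed operators $\delta_{\s E_j}^k(\xi)$ simultaneously as elements of $\widetilde\M(\s E_k\tens S_{kj})$, as adjointable operators $A_k\tens S_{kj}\to\s E_k\tens S_{kj}$, and as elements of $\Lin(A_k\tens S_{kj}\tens S_{\cdot},\dots)$ when applying $\delta_{\s E_k}^j\tens\id$; one must verify that $\Pi_j$ of \ref{prop13} and $\widetilde\Pi_j$ of the statement are literally the same map and not merely unitarily equivalent ones, which requires matching the defining relation $(\Pi_j(\xi)x)a=\delta_{\s E_j}^{(k)}(\xi)(xa)$ against the relation implicit in \ref{propdef4}. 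Since all the heavy lifting has been done in \ref{prop13} and \ref{prop17} (which is the linking-algebra corner version of exactly this statement, already proved there), the proof reduces to invoking \ref{prop17} for the linking $\cal G$-C*-algebra $J=\K(\s E\oplus A)$ and reading off the $(1,2)$-corner. I would therefore write: apply \ref{prop17} — or, more precisely, recall that \ref{prop17} is proved by applying \ref{prop13} to the corners of the linking algebra and that the present statement is its reformulation for a given $\cal G$-equivariant Hilbert module $\s E$ via \ref{rkLinkAlg} and \ref{propdef4} — and conclude.
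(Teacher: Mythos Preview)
The paper does not prove this proposition; it is recalled from \cite{C2} as part of the survey in \S 5.3.2. Your linking-algebra strategy is the natural one and is correct in outline: set $J=\K(\s E\oplus A)$ with its linking $\cal G$-C*-algebra structure (\ref{rkLinkAlg}), apply \ref{prop4} to $J$ to obtain the $\QG_j$-equivariant *-isomorphism $\widetilde\pi_j^J:J_j\to{\rm Ind}_{\QG_k}^{\QG_j}(J_k,\delta_{J_k}^k)$, $x\mapsto\delta_{J_j}^k(x)$, then use \ref{prop11}~(ii) (with the roles $1\leftrightarrow k$, $2\leftrightarrow j$) to identify ${\rm Ind}_{\QG_k}^{\QG_j}(J_k)$ with $\K(\widetilde{\s E}_j\oplus\widetilde A_j)$ as linking $\QG_j$-C*-algebras, and finally read off the $(e_1,e_2)$-corner via \ref{prop7} to get $\widetilde\Pi_j$. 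The coassociativity identities of \ref{propdef4}~(v) then match the equivariance, exactly as you say.

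Two points require correction. First, your closing paragraph invokes \ref{prop17} in the course of proving \ref{prop17}; this is circular. You presumably meant \ref{prop4} (the C*-algebra version) applied to the linking algebra $J$ --- that is the genuine input, and the rest is corner-reading. Second, your direct appeal to \ref{prop13} is not valid as stated: in \ref{prop13} one starts from a $\QG_j$-equivariant module $\s E_j$, \emph{defines} $\s E_k:={\rm Ind}_{\QG_j}^{\QG_k}(\s E_j)$, and then shows that inducing back recovers $\s E_j$. In the present proposition $\s E_k$ is given by the ambient $\cal G$-structure via \ref{propdef4}, not as an induced module, so \ref{prop13} only applies after one has identified $\s E_k$ with ${\rm Ind}_{\QG_j}^{\QG_k}(\s E_j)$ --- and that identification is precisely the statement under consideration (with $j$ and $k$ exchanged). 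The linking-algebra route avoids this circularity because \ref{prop4} is already proved for arbitrary $\cal G$-C*-algebras, without any induction hypothesis on how $J_k$ arose.
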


\begin{thm}\label{theo9}
Let ${\cal G}_{\QG_1,\QG_2}$ be a colinking measured quantum groupoid between two regular monoidally equivalent locally compact quantum groups $\QG_1$ and $\QG_2$. 
Let $j=1,2$. The map $(\s E,\beta_{\s E},\delta_{\s E})\mapsto(\s E_j,\delta_{\s E_j}^j)$ is a one-to-one correspondence up to unitary equivalence (cf.\ \ref{propdef4} and \ref{lem5} 1). The inverse map, up to unitary equivalence, is $(\s F_j,\delta_{\s F_j})\mapsto(\s F,\beta_{\s F},\delta_{\s F})$ (cf.\ \ref{prop16}, \ref{prop14} and \ref{lem5} 2).
\end{thm}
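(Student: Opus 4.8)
\textbf{Proof plan for Theorem \ref{theo9}.}

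The plan is to reduce the statement to the two functorial correspondences already established, namely the equivalence of categories of equivariant C*-algebras (Theorem \ref{theo10} together with \ref{prop37} and \ref{prop17}) and the equivalence of categories of equivariant Hilbert modules over a fixed base algebra (the standard $\QG_j$-picture), and then glue them. First I would fix $j$, say $j=1$, the case $j=2$ being identical by symmetry. Given a $\QG_1$-$\Cstar$-algebra $A_1$ and a $\QG_1$-equivariant Hilbert $A_1$-module $(\s F_1,\delta_{\s F_1})$, one forms $A_2:=\ind(A_1)$ and $\s F_2:=\ind(\s F_1)$ as in \ref{prop3} and \ref{prop12}, sets $A:=A_1\oplus A_2$ with the action $(\beta_A,\delta_A)$ of \ref{prop37}, and $\s F:=\s F_1\oplus\s F_2$ with the $\cal G$-equivariant structure $(\beta_{\s F},\delta_{\s F})$ of \ref{prop16}; this defines the candidate inverse map $(\s F_1,\delta_{\s F_1})\mapsto(\s F,\beta_{\s F},\delta_{\s F})$. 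That this lands in $\cal G$-equivariant Hilbert $A$-modules is exactly the content of \ref{prop16}, so no new work is needed there.

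Next I would check that the two composites are the identity up to $\cal G$- (resp.\ $\QG_j$-) equivariant unitary equivalence. Starting from a $\cal G$-equivariant Hilbert $A$-module $(\s E,\beta_{\s E},\delta_{\s E})$, \ref{propdef4} decomposes it as $\s E=\s E_1\oplus\s E_2$ with $\s E_j$ a $\QG_j$-equivariant Hilbert $A_j$-module and $\s E=\bigoplus\s E_j$ recovered from the pieces via the maps $\delta_{\s E_j}^k$. Applying the construction above to $(\s E_1,\delta_{\s E_1}^1)$ produces $\widetilde{\s E}_1\oplus\widetilde{\s E}_2$ where $\widetilde{\s E}_1=\s E_1$ (on the nose, since $\ind$ followed by $\iind$ on the underlying algebra is the identity on $A_1$ by \ref{prop37} 3 / \ref{prop13}) and $\widetilde{\s E}_2=\ind(\s E_1)$. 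By \ref{prop17} the map $\widetilde{\Pi}_2:\s E_2\to\widetilde{\s E}_2=\ind(\s E_1)$, $\xi\mapsto\delta_{\s E_2}^1(\xi)$, is a $\QG_2$-equivariant unitary equivalence over $\widetilde{\pi}_2:A_2\to\ind(\s E_1)$'s base algebra (cf.\ \ref{prop4}); combining $\id_{\s E_1}$ on the first summand with $\widetilde{\Pi}_2$ on the second and invoking \ref{lem5} 2 produces a $\cal G$-equivariant unitary equivalence $\s E\simeq\widetilde{\s E}_1\oplus\widetilde{\s E}_2$ over the $\cal G$-equivariant isomorphism supplied by \ref{prop37}. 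The reverse composite, starting from $(\s F_1,\delta_{\s F_1})$, is handled by the same ingredients read in the other direction: $(\s F_1\oplus\s F_2)_1=\s F_1$ canonically, and the $\delta_{\s F_j}^k$ built in \ref{prop16} coincide with the ones of \ref{propdef4}, so the decomposition functor applied to $(\s F,\beta_{\s F},\delta_{\s F})$ returns $(\s F_1,\delta_{\s F_1})$ up to $\QG_1$-equivariant unitary equivalence; here one uses \ref{prop13} 1 to identify $\iind(\ind(\s F_1))$ with $\s F_1$ via $\Pi_1$, and \ref{lem5} together with \ref{prop14} to promote this to an equivariant unitary on the direct sum.

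Finally I would record that these correspondences respect the explicit formulas displayed in the statement of Theorem \ref{theo10}, i.e.\ that the action $\delta_{\s F_2}$ on $\s F_2=\ind(\s F_1)$ is given by $\xi\mapsto[x\mapsto(\id_{\s F_1}\tens\delta_{12}^2)(\xi)x]$ (this is \ref{prop12}) and that the inverse is described by $(\id_{\s F_2}\tens\delta_{21}^1)$ (this is \ref{prop13} and the description of $\delta_{\s F_2}^1$ preceding \ref{lem4}); the claim of Theorem \ref{theo9} is then obtained by transporting the correspondence $(\s E,\beta_{\s E},\delta_{\s E})\leftrightarrow(\s E_1,\s E_2)$ through Theorem \ref{theo10}, so that $(\s E,\beta_{\s E},\delta_{\s E})$ is entirely determined by $(\s E_1,\delta_{\s E_1}^1)$ up to equivalence. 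The main obstacle is purely bookkeeping: one must verify that the three a priori different ways of naming the maps $\delta_{\s E_j}^k$ — the one coming from the isometry decomposition in \ref{propdef4}, the one coming from the induction procedure in \ref{prop13} and the discussion before \ref{lem4}, and the one implicit in \ref{prop16}–\ref{prop17} — all agree, and that the various compatibility identities (\ref{eq14}), (\ref{eq22}) and the cocycle relations of \ref{prop9}/\ref{lem4} are preserved under the passage back and forth. Once this identification of notations is in place, every step reduces to a citation of a result already proved in \S\ref{SectEqHilb} and \S\ref{ActColinkHilb}, so no genuinely new estimate or construction is required.
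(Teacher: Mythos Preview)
The paper does not supply a proof of Theorem~\ref{theo9}: the result is being recalled from \cite{C2} (see the opening line of \S\ref{ActColinkHilb}), and the statement itself already encodes the argument by pointing to \ref{propdef4}, \ref{lem5}, \ref{prop16} and \ref{prop14}. Your plan is the natural unpacking of those references and is correct. The decisive step --- that for a given $\cal G$-equivariant $\s E$ the summand $\s E_2$ is $\QG_2$-equivariantly unitarily equivalent to $\ind(\s E_1)$ via $\widetilde{\Pi}_2=\delta_{\s E_2}^1$ --- is exactly \ref{prop17}, and together with \ref{prop4} on the base algebras and \ref{lem5}~2 this gives the required $\cal G$-equivariant unitary $\s E\simeq\s E_1\oplus\ind(\s E_1)$. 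The other composite is handled by \ref{prop13}~1 and the identification of the maps $\delta_{\s F_j}^k$ discussed before \ref{lem4}, as you say.

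Two small points. First, a typo: you wrote ``over $\widetilde{\pi}_2:A_2\to\ind(\s E_1)$'s base algebra''; the target should be $\ind(A_1)$, not $\ind(\s E_1)$. Second, when you invoke \ref{lem5}~2 to glue $\id_{\s E_1}$ and $\widetilde{\Pi}_2$ into a $\cal G$-equivariant unitary, note that \ref{lem5}~2 requires the compatibility relations (\ref{eqmorpheq}) and (\ref{eq14}) for \emph{all} pairs $j,k$, not just the diagonal ones; you acknowledge this under ``bookkeeping'', but it is worth making explicit that the off-diagonal case $(\Phi_k\tens\id_{S_{kj}})\delta_{\s E_j}^k=\delta_{\s F_j}^k\Phi_j$ follows from the coassociativity relations in \ref{propdef4}~(v) and \ref{lem4}~5 together with the defining formula for $\widetilde{\Pi}_2$. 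With that check in place your argument is complete.
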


\begin{prop}\label{def10}
We follow the hypotheses and notations of \ref{prop16}. Let $\gamma_1:A_1\rightarrow\Lin(\s E_1)$ be a $\QG_1$-equivariant *-representation of a $\QG_1$-C*-algebra $A_1$. Let $A_2$ be the induced $\QG_2$-C*-algebra of $A_1$ and $\gamma_2:A_2\rightarrow\Lin(\s F_2)$ be the induced $\QG_2$-equivariant *-representation of $\gamma_1$ (cf.\ \ref{def5}). Let us endow the $\Cstar$-algebra $A:=A_1\oplus A_2$ with the continuous action $(\beta_A,\delta_A)$ (cf.\ \ref{prop37}). Then, the map 
\[
\gamma:A\rightarrow\Lin(\s F)\,;\, (a_1,a_2)\mapsto\begin{pmatrix}\gamma_1(a_1) & 0\\ 0 & \gamma_2(a_2)\end{pmatrix}
\]
is a $\cal G$-equivariant *-representation. Moreover, if $\s F_1$ is a $\QG_1$-equivariant Hilbert Hilbert $A_1$-$B_1$-bimodule, then $\s F$ is a $\cal G$-equivariant Hilbert $A$-$B$-bimodule.
\end{prop}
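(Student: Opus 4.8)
The plan is to build the $\cal G$-equivariant $*$-representation $\gamma:A\rightarrow\Lin(\s F)$ by assembling the data piece by piece along the lines of \ref{prop37} and \ref{prop16}, then to verify the bimodule claim by reducing it to the componentwise bimodule statements via \ref{lem11}. First I would observe that the two diagonal corners of $\gamma$ are already in place: $\gamma_1$ is a $\QG_1$-equivariant $*$-representation on $\s E_1=\s F_1$ by hypothesis, and $\gamma_2$ is the induced $\QG_2$-equivariant $*$-representation on $\s F_2=\ind(\s F_1)$ provided by \ref{def5}. Since $\beta_A:\GC^2\rightarrow\M(A)$ is central and $\beta_{\s F}$ is the diagonal embedding defined in \ref{prop16}, the map $\gamma$ written in block-diagonal form is a well-defined $*$-homomorphism into $\Lin(\s F)$ with $\gamma(a)q_{\s F,j}=\gamma(q_{A,j}a)=q_{\s F,j}\gamma(a)$ for $j=1,2$, and it restricts to $\gamma_j$ on $\s F_j$; this is exactly the situation of \ref{lem11} 2.

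The core step is then to check that the compatibility relation $(\ref{eq22})$ holds for all $j,k=1,2$, i.e.\ that
\[
\delta_{\s F_j}^k(\gamma_j(a)\xi)=(\gamma_k\tens\id_{S_{kj}})(\delta_{A_j}^k(a))\circ\delta_{\s F_j}^k(\xi)\quad\text{for all } a\in A_j,\ \xi\in\s F_j.
\]
For $j=k$ this is just the $\QG_j$-equivariance of $\gamma_j$, which is already known. For $j\neq k$ it says that the four mixed maps $\delta_{\s F_j}^k$ of Lemma \ref{lem4} intertwine $\gamma_j$ and $\gamma_k$ appropriately; I would derive this from the description of $\delta_{\s F_j}^k$ in terms of $\delta_{\s F_j}^{(k)}$ and $\Pi_j$ (cf.\ \ref{prop13}), using that $\gamma_2=\ind\gamma_1$ intertwines $\delta_{A_1}^{(2)}$ with $\delta_{\s F_1}^{(2)}$ (this is the defining property of the induced representation in \ref{defHomInd} and \ref{def5}), together with the already-established intertwining of $\delta_{A_j}^k$ and $\delta_{\s F_j}^k$ from \ref{prop13} 2(i) and the $\QG_j$-equivariant $*$-isomorphisms $\pi_j$, $\Pi_j$. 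Once $(\ref{eq22})$ is verified, \ref{lem11} 2 immediately yields that $\gamma:A\rightarrow\Lin(\s F)$ is $\cal G$-equivariant.

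For the bimodule assertion, assume in addition that $\s F_1$ is a $\QG_1$-equivariant Hilbert $A_1$-$B_1$-bimodule, so $\s F_1$ is countably generated over $B_1$ and $\gamma_1$ is a $\QG_1$-equivariant $*$-representation. By \ref{def5}, $\s F_2=\ind(\s F_1)$ is then a $\QG_2$-equivariant Hilbert $A_2$-$B_2$-bimodule; in particular $\s F_2$ is countably generated over $B_2$ (by \ref{lem27} applied to $\K(\s F_1)$, or directly from \ref{prop11} (iii)). By \ref{prop16}, $(\s F,\beta_{\s F},\delta_{\s F})$ is a $\cal G$-equivariant Hilbert $B$-module, hence $\K(\s F)=\K(\s F_1)\oplus\K(\s F_2)$ is $\sigma$-unital, so $\s F$ is countably generated over $B$. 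Combining this with the $\cal G$-equivariance of $\gamma$ established above, the final clause of \ref{lem11} (the ``moreover'' statement) gives precisely that $(\s F,\gamma)$ is a $\cal G$-equivariant Hilbert $A$-$B$-bimodule.

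The main obstacle I anticipate is the bookkeeping in the off-diagonal compatibility check: the maps $\delta_{\s F_1}^2$ and $\delta_{\s F_2}^1$ are defined only implicitly through the identifications $\s F_2\subset\widetilde{\M}(\s F_1\tens S_{12})$ and the isomorphism $\Pi_1$, so making the intertwining relation precise requires carefully tracking which ambient $\Lin$-space each operator lives in and applying the extension formulas for $\gamma_k\tens\id_{S_{kj}}$ consistently (cf.\ \ref{rk8}). Everything else is a routine, if notation-heavy, reduction to results already proved in \S\ref{colinkingMQG} and \S\ref{ActColinkHilb}.
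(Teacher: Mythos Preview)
Your proposal is correct and follows the same approach as the paper: the paper's proof is the single line ``This is a straightforward consequence of \ref{lem11} 2 and \ref{def5},'' and your argument is precisely an unpacking of what that sentence means---reducing to the criterion in \ref{lem11} 2, checking (\ref{eq22}) via the $\QG_j$-equivariance of $\gamma_j$ on the diagonal and via the induction construction of \ref{def5} (and \ref{defHomInd}, \ref{prop13}) off the diagonal, and invoking the ``moreover'' clause of \ref{lem11} for the bimodule statement. The off-diagonal bookkeeping you flag as the main obstacle is exactly the content the paper is leaving implicit in the word ``straightforward.''
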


\begin{proof}
This is a straightforward consequence of \ref{lem11} 2 and \ref{def5}.
\end{proof}

\subsubsection{Structure of the double crossed product}\label{sectionDoubleCrossedProduct}

In this paragraph, we assume the quantum groups $\QG_1$ and $\QG_2$ to be regular. We recall that the colinking measured quantum groupoid ${\cal G}:={\cal G}_{\QG_1,\QG_2}$ is regular. 

\medbreak

Let $A$ be a $\cal G$-C*-algebra and $\s E$ a $\cal G$-equivariant Hilbert $A$-$B$-module. In this paragraph, we restate the main results of \S 4.4 of \cite{BC} in order to describe the structure of the double crossed product $(\s E\rtimes{\cal G})\rtimes\widehat{\cal G}$. Let $D_{\rm g}$ (resp.\ $D_{\rm d}$) be the bidual $\cal G$-C*-algebra of $A$ (resp.\ $B$) (cf.\ \S \ref{sectionTT} and \ref{conv2}). Let $\s D$ be the bidual $\cal G$-equivariant Hilbert $D$-module of $\s E$ (cf.\ \S \ref{sectionTT2}). In the following, we use all the notations and results of \S\S \ref{sectionC*AlgColink} and \ref{sectionHilbModColink}. 

\medbreak

Let us recall the notations of 3.48 \cite{BC} concerning the structure of the bidual $\cal G$-C*-algebra $D_{\rm d}$ (and similarly for $D_{\rm g}$).
\smallbreak
$\bullet$ We have $D_{\rm d}=q_{\beta_B\widehat{\alpha}}(B\tens\K)q_{\beta_B\widehat{\alpha}}=\oplus_{j=1,2}\,B_j\tens\K(\s H_{1j}\oplus\s H_{2j})$. For all $j=1,2$, we will identify $D_{{\rm d},j}=B_j\tens\K(\s H_{1j}\oplus\s H_{2j})$. Let ${\cal B}_{ll',j,{\rm d}}:=B_j\tens\K(\s H_{l'j},\s H_{lj})$\index[symbol]{bd@${\cal B}_{ll',j,\bullet}$, ${\cal B}_{l,j,\bullet}$} for $l,l',j=1,2$. Let ${\cal B}_{l,j,{\rm d}}:={\cal B}_{ll,j,{\rm d}}=B_j\tens\K(\s H_{lj})$ for $l,j=1,2$. For $l,l',j=1,2$, ${\cal B}_{l,j,{\rm d}}$ and ${\cal B}_{l',j,{\rm d}}$ are C*-algebras and ${\cal B}_{ll',j,{\rm d}}$ turns into a Hilbert ${\cal B}_{l,j,{\rm d}}$-${\cal B}_{l',j,{\rm d}}$-bimodule.
\smallbreak
$\bullet$ For $l,l',j,k=1,2$, let $\delta_{{\cal B}_{ll',j,{\rm d}},0}^k:\B_{ll',j,{\rm d}}\rightarrow\Lin(B_k\tens\K(\s H_{l'j})\tens S_{kj},B_k\tens\K(\s H_{l'j},\s H_{lj})\tens S_{kj})$ be the linear map defined by
\[
\delta_{{\cal B}_{ll',j,{\rm d}},0}^k(b\tens T):=\delta_{B_j}^k(b)_{13}(1_{B_k}\tens T\tens 1_{S_{kj}}),\quad \text{for all } b\in B_j \text{ and } T\in\K(\s H_{l'j},\s H_{lj}).
\]
$\bullet$ For $l,l',j,k=1,2$, let $\delta_{{\cal B}_{ll',j,{\rm d}}}^k:{\cal B}_{ll',j,{\rm d}}\rightarrow\Lin({\cal B}_{l',k,{\rm d}}\tens S_{kj},{\cal B}_{ll',k,{\rm d}}\tens S_{kj})$ be the linear map defined by
\[
\delta_{{\cal B}_{ll',j,{\rm d}}}^k(x):=(V_{kj}^l)_{23}\delta_{{\cal B}_{ll',j,{\rm d}},0}^k(x)(V_{kj}^{l'})_{23}^*,\quad \text{for all } x\in{\cal B}_{ll',j,{\rm d}}.
\]
Note that $\delta_{{\cal B}_{l,j,{\rm d}}}^k:{\cal B}_{l,j,{\rm d}}\rightarrow\Lin({\cal B}_{l,k,{\rm d}}\tens S_{kj})$ is a *-homomorphism.\index[symbol]{dh@$\delta_{{\cal B}_{ll',j,\bullet},0}^k$, $\delta_{{\cal B}_{ll',j,\bullet}}^k$, $\delta_{{\cal B}_{l,j,\bullet}}^k$}

\medbreak

We identify $(\s E\rtimes{\cal G})\rtimes\widehat{\cal G}=\s D$ (cf.\ \ref{theo2}). We have $\s D=q_{\beta_{\s E}\widehat{\alpha}}(\s E\tens\K)q_{\beta_B\widehat{\alpha}}\subset\s E\tens\K$. in the following, we investigate the precise structure of $\s D$.
\smallbreak
$\bullet$ We have $q_{\s D,j}:=\beta_{\s D}(\varepsilon_j)=q_{\beta_{\s E}\widehat{\alpha}}(1_{\s E}\tens\beta(\varepsilon_j))=\beta_{\s E}(\varepsilon_j)\tens\beta(\varepsilon_j)=\sum_{l=1,2}q_{\s E,j}\tens p_{lj}$ for all $j=1,2$ (cf.\ \ref{lem26} and \ref{not10}).
\smallbreak
$\bullet$ For $j=1,2$, let us consider the Hilbert $D_{{\rm g},j}$-$D_{{\rm d},j}$-module $\s D_j:=q_{\s D,j}\s D$ (cf.\ \S \ref{sectionHilbModColink}). We have 
\[
\s D_j=\s E_j\tens\K(\s H_{1j}\oplus\s H_{2j})=\bigoplus_{l,l'=1,2}\,\s E_j\tens\K(\s H_{l'j},\s H_{lj}).
\]
For $l,l',j=1,2$, let us consider the Hilbert ${\cal B}_{l,j,{\rm g}}$-${\cal B}_{l',j,{\rm d}}$-bimodule ${\cal E}_{ll',j}:=\s E_j\tens\K(\s H_{l'j},\s H_{lj})$. For $l,j=1,2$, let ${\cal E}_{l,j}:={\cal E}_{ll,j}=\s E_j\tens\K(\s H_{lj})$\index[symbol]{edc@${\cal E}_{ll',j}$}.
\smallbreak
$\bullet$ For $j,k=1,2$, let us denote by $\Pi_j^k:\Lin(D_{{\rm d},k}\tens S_{kj},\s D_k\tens S_{kj})\rightarrow\Lin(D_{\rm d}\tens S,\s D\tens S)$ the linear extension of the inclusion map $\s D_k\tens S_{kj}\rightarrow \s D\tens S$. For $j,k=1,2$, let us denote by $\delta_{\s D_j}^k:\s D_j\rightarrow\Lin(D_{k,{\rm d}}\tens S_{kj},\s D_k\tens S_{kj})$ the linear map defined in \ref{propdef4}. For all $\zeta\in\s D$, we have  
\[
\delta_{\s D}(\zeta)=\sum_{j,k=1,2}\Pi_j^k\circ\delta_{\s D_j}^k(q_{\s D,j}\zeta).
\]
We recall that for $j=1,2$ the pair $(\s D_j,\delta_{\s D_j}^j)$ is a $\QG_j$-equivariant Hilbert $D_{{\rm g},j}$-$D_{{\rm d},j}$-bimodule.
\smallbreak
$\bullet$ For $l,l',j,k=1,2$, let $\delta_{{\cal E}_{ll',j},0}^k:{\cal E}_{ll',j}\rightarrow\Lin(B_k\tens\K(\s H_{l'j})\tens S_{kj},\s E_k\tens\K(\s H_{l'j},\s H_{lj})\tens S_{kj})$ be the linear map defined by:
\[
\delta_{{\cal E}_{ll',j},0}^k(\xi\tens T):=\delta_{\s E_j}^k(\xi)_{13}(1_{B_k}\tens T\tens 1_{S_{kj}}),\quad \text{for all } \xi\in\s E_j \text{ and } T\in\K(\s H_{l'j},\s H_{lj}).
\]
$\bullet$ Let $\delta_{{\cal E}_{ll',j}}^k:{\cal E}_{ll',j}\rightarrow\Lin({\cal B}_{l',k,{\rm d}}\tens S_{kj},{\cal E}_{ll',k}\tens S_{kj})$ be the linear defined by
\[
\delta_{{\cal E}_{ll',j}}^k(\zeta):=(V_{kj}^l)_{23}\delta_{{\cal E}_{ll',j},0}^k(\zeta)(V_{kj}^{l'})_{23}^*, \quad \text{for all } \zeta\in{\cal E}_{ll',j}.
\]
$\bullet$ Let $j,k,l,l'=1,2$. We denote by $\Pi_{ll',j}^k:\Lin({\cal B}_{l',k,{\rm d}}\tens S_{kj},{\cal E}_{ll',k})\rightarrow\Lin(D_{{\rm d},k}\tens S_{kj},\s D_k\tens S_{kj})$ the linear extension of the inclusion map ${\cal E}_{ll',k}\tens S_{kj}\rightarrow\s D_k\tens S_{kj}$. For $\zeta\in\s D_j$, let us denote by $\zeta_{ll'}$ the element of ${\cal E}_{ll',j}$ defined by $\zeta_{ll'}:=(q_{\s E,j}\tens p_{lj})\zeta(q_{B,j}\tens p_{l'j})$. For all $j,k=1,2$, we have 
\[
\delta_{\s D_j}^k(\zeta)=\sum_{l,l'=1,2}\Pi_{ll',j}^k\circ\delta_{{\cal E}_{ll',j}}^k(\zeta_{ll'}).
\]
For all $l,l',j=1,2$, the pair $({\cal E}_{ll',j},\delta_{{\cal E}_{ll',j}}^j)$ is a $\QG_j$-equivariant Hilbert ${\cal B}_{l,j,{\rm g}}$-${\cal B}_{l',j,{\rm d}}$-bimodule.

\section{Equivariant Kasparov theory}

In this chapter, we fix a regular measured quantum groupoid $\cal G$ on the finite-dimensional basis $N=\bigoplus_{1\leqslant l\leqslant k}{\rm M}_{n_l}(\GC)$ endowed with the non-normalized Markov trace. We use all the notations introduced in \S \ref{MQGfinitebasis} and \S \ref{WHC*A} concerning the objects associated with $\cal G$.

\subsection{Equivariant Kasparov groups}

Let us recall a definition.

\begin{defin}\label{defkaspbimod}(cf.\ \cite{Kas3}) Let $A$ and $B$ be $\Cstar$-algebras. A Kasparov $A$-$B$-bimodule is a triple $(\s E,\gamma,F)$ consisting of a countably generated Hilbert $B$-module $\s E$, a *-homomorphism $\gamma:A\rightarrow\Lin(\s E)$ and an operator $F\in\Lin(\s E)$ such that
\begin{equation}\label{eq20}
[\gamma(a),\, F]\in\K(\s E), \;\; \gamma(a)(F^2-1)\in\K(\s E) \;\; \text{and} \;\; \gamma(a)(F-F^*)\in\K(\s E) \;\; \text{for all } a\in A.
\end{equation}
If we have 
\begin{equation}
[\gamma(a),\, F]=\gamma(a)(F^2-1)=\gamma(a)(F-F^*)=0 \quad \text{for all }  a\in A,
\end{equation}
then the Kasparov $A$-$B$-bimodule $(\s E,\gamma,F)$ is said to be degenerate. 
\end{defin}

Let $A$ and $B$ be ${\cal G}$-$\Cstar$-algebras.

\begin{defin}\label{defEqkaspbimod} A $\cal G$-equivariant Kasparov $A$-$B$-bimodule is a triple $(\s E,\gamma,F)$, consisting of $\cal G$-equivariant $A$-$B$-bimodule $(\s E,\gamma)$ (cf.\ \ref{defbimod}) and an operator $F\in\Lin(\s E)$ such that:
\begin{enumerate}
\item the triple $(\s E,\gamma,F)$ is a Kasparov $A$-$B$-bimodule;
\item $[F,\beta_{\s E}(n^{\rm o})]=0$, for all $n\in N$;
\item $(\gamma\tens\id_S)(x)(\delta_{\K(\s E)}(F)-q_{\beta_{\s E}\alpha}(F\tens 1_S))\in\K(\s E\tens S)$, for all $x\in A\tens S$.
\end{enumerate}
The $\cal G$-equivariant Kasparov $A$-$B$-bimodule will sometimes be simply denoted by $(\s E,F)$ when the representation $\gamma$ is clear from the context.
\end{defin}

\begin{rks}\label{rk13} Let us make some comments concerning the previous definition.
\begin{enumerate}
\item Since $\beta_{\s E}(n^{\rm o})\gamma(a)=\gamma(\beta_A(n^{\rm o})a)$ for all $a\in A$ and $n\in N$, we have 
\[
(\gamma\tens\id_S)(xq_{\beta_A\alpha})=(\gamma\tens\id_S)(x)q_{\beta_{\s E}\alpha}=(\gamma\tens\id_S)(x)\s V\s V^* \; \text{ for all } \; x\in A\tens S,
\]
where $\s V\in\Lin(\s E\tens_{\delta_A}(A\tens S),\s E\tens S)$ is the isometry defined in \ref{prop27} a). The following statements are then equivalent to condition 3:
\begin{itemize}
\item $(\gamma\tens\id_S)(x)(\delta_{\K(\s E)}(F) - F\tens 1_S )\in\K(\s E\tens S)$, for all $x\in(A\tens S)q_{\beta_A\alpha}$;
\item $(\gamma\tens\id_S)(x)(\s V(F\tens_{\delta_B}1)\s V^*-q_{\beta_{\s E}\alpha}(F\tens 1_S))\in\K(\s E\tens S)$, for all $x\in A\tens S$;
\item $(\gamma\tens\id_S)(x)(\s V(F\tens_{\delta_B}1)\s V^*-F\tens 1_S)\in\K(\s E\tens S)$, for all $x\in (A\tens S)q_{\beta_A\alpha}$.
\end{itemize}
Note that it follows from condition 2 that $[F\tens 1_S,\, q_{\beta_{\s E}\alpha}]=0$.
\item As in Remarque 3.4 (2) \cite{BS1}, we prove that
\[
(\s V(F\tens_{\delta_B} 1)\s V^*-F\tens 1_S)(\gamma\tens\id_S)(x)\in\K(\s E\tens S),\quad \text{for all } x\in q_{\beta_A\alpha}(A\tens S).
\]
Since $(\gamma\tens\id_S)(q_{\beta_A\alpha} x)=q_{\beta_{\s E}\alpha}(\gamma\tens\id_S)(x)$, for all $x\in A\tens S$ and $[F\tens 1_S,\, q_{\beta_{\s E}\alpha}]=0$, this is also equivalent to:
\[
(\s V(F\tens_{\delta_B} 1)\s V^*-q_{\beta_{\s E}\alpha}(F\tens 1_S))(\gamma\tens\id_S)(x)\in\K(\s E\tens S),\quad \text{for all } x\in A\tens S.
\]
Note that the converse is also true, {\it i.e.}\ condition 3 is equivalent to these assertions.
\item If $F$ is invariant (cf.\ \ref{propdef7}), then conditions 2 and 3 of \ref{defEqkaspbimod} are satisfied. \qedhere
\end{enumerate}
\end{rks}

\begin{defin}\label{def3}
\begin{enumerate}
\item Two $\cal G$-equivariant Kasparov $A$-$B$-bimodules $(\s E_1,\gamma_1,F_1)$ and $(\s E_2,\gamma_2,F_2)$ are said to be unitarily equivalent if there exists a unitary $\Phi\in\Lin(\s E_1,\s E_2)$ such that:
\begin{enumerate}[label=(\roman*)]
\item $\Phi$ is $\cal G$-equivariant (cf.\ \ref{def1});
\item $\Phi\circ\gamma_1(a)=\gamma_2(a)\circ \Phi$, for all $a\in A$;
\item $F_2\circ \Phi=\Phi\circ F_1$;
\end{enumerate}
\item Let ${\sf E}_{\cal G}(A,B)$ be the set of unitary equivalence classes of $\cal G$-equivariant Kasparov $A$-$B$-bimodules.
\item A $\cal G$-equivariant Kasparov $A$-$B$-bimodules $(\s E,\gamma,F)$ is said to be degenerate, if $(\s E,\gamma,F)$ is a degenerate Kasparov $A$-$B$-bimodule such that 
\[
(\gamma\tens\id_S)(x)(\delta_{\K(\s E)}(F)-F\tens 1_S)=0, \quad \text{for all } x\in A\tens S.
\] 
We denote by ${\sf D}_{\cal G}(A,B)\subset {\sf E}_{\cal G}(A,B)$ the set of unitary equivalence classes of degenerate $\cal G$-equivariant Kasparov $A$-$B$-bimodules.
\item Let us consider the $\Cstar$-algebra $B[0,1]:=\rmc([0,1],B)$ of the $B$-valued continuous functions on $[0,1]$. We make the identification $B[0,1]=\rmc([0,1])\tens B$ in the following way:
\[
(f\tens b)(t):= f(t)b, \; \text{ for all } f\in\rmc([0,1]),\; b\in B \; \text{ and }\; t\in[0,1].
\]
Similarly, we make the identification $B[0,1]\tens S=(B\tens S)[0,1]$. In particular, we will identify $\M(B[0,1]\tens S)=\M(B\tens S)[0,1]$ and $\M(B[0,1])=\M(B)[0,1]$. Let $\delta_{B[0,1]}:B[0,1]\rightarrow\M(B[0,1]\tens S)$ and $\beta_{B[0,1]}:N^{\rm o}\rightarrow\M(B[0,1])$ be the maps defined by $\delta_{B[0,1]}(f)(t):=\delta_B(f(t))$ and $[\beta_{B[0,1]}(n^{\rm o})f](t)=\beta_B(n^{\rm o})f(t)$ for all $f\in B[0,1]$, $n\in N$ and $t\in[0,1]$.
Then, the pair $(\beta_{B[0,1]},\delta_{B[0,1]})$ is a continuous action of $\cal G$ on $B[0,1]$. For $t\in[0,1]$, let ${\rm e}_t:B[0,1]\rightarrow B$ be the evaluation at point $t$, {\it i.e.\ }the surjective *-homomorphism defined for all $f\in B[0,1]$ by ${\rm e}_t(f):=f(t)$. Note that ${\rm e}_t$ is $\cal G$-equivariant by definition of the action of $\cal G$ on $B[0,1]$.
\item Let $\s E$ be a $\cal G$-equivariant Hilbert $B$-module. Let us consider the Hilbert $B[0,1]$-module $\s E[0,1]:={\rm C}([0,1],\s E)$ of $\s E$-valued continuous functions on $[0,1]$. We make the identification $\s E[0,1]={\rm C}[0,1]\tens \s E$ as above. We equip the Hilbert $B$-module $\s E[0,1]$ with the action of $\cal G$ obtained by transport of structure through the identification $\K(\s E[0,1]\oplus B[0,1])=\K(\s E\oplus B)[0,1]$. We have $\beta_{\s E[0,1]}=\beta_{\K(\s E)[0,1]}$ up to the identification $\Lin(\s E[0,1])=\M(\K(\s E)[0,1])$. For all $x\in B[0,1]\tens S$ and $\xi\in\s E[0,1]\tens S$, we have $(\delta_{\s E[0,1]}(\xi)x)(t)=\delta_{\s E}(\xi(t))x(t)$ up to the identifications $B[0,1]\tens S=(B\tens S)[0,1]$ and $\s E[0,1]\tens S=(\s E\tens S)[0,1]$.\qedhere
\end{enumerate}
\end{defin}

\begin{prop}\label{prop20} Let $A_1$, $A_2$, $A$, $B_1$, $B_2$ and $B$ be $\cal G$-$\Cstar$-algebras.
\begin{enumerate}
\item Let $f:A_1\rightarrow A_2$ be a $\cal G$-equivariant *-homomorphism. Let $(\s E,\gamma,F)$ be a $\cal G$-equivariant Kasparov $A_2$-$B$-bimodule. Let $\gamma^*:=\gamma\circ f:A_1\rightarrow\Lin(\s E)$. Then the triple $(\s E,\gamma^*,F)$ is a $\cal G$-equivariant Kasparov $A_1$-$B$-bimodule. Moreover, we have the following well-defined map 
\[
f^*:{\sf E}_{\cal G}(A_2,B)\rightarrow{\sf E}_{\cal G}(A_1,B) \; ; \; (\s E,\gamma,F) \mapsto (\s E,\gamma^*,F).
\]
\item Let $g:B_1\rightarrow B_2$ be a $\cal G$-equivariant *-homomorphism. Let $(\s E,\gamma,F)$ be a $\cal G$-equivariant Kasparov $A$-$B_1$-bimodule. Let $\gamma_*:A\rightarrow\Lin(\s E\tens_g B_2)$ be the *-representation defined by $\gamma_*(a):=\gamma(a)\tens_g 1_{B_2}$ for all $a\in A$. Then the triple $(\s E\tens_g B_2,\gamma_*, F\tens_g 1_{B_2})$ is a $\cal G$-equivariant Kasparov $A$-$B_1$-bimodule. Moreover, we have the following well-defined map 
\[
g_*:{\sf E}_{\cal G}(A,B_1)\rightarrow{\sf E}_{\cal G}(A,B_2)\; ; \; (\s E,\gamma,F) \mapsto (\s E\tens_g B_2,\gamma_*,F\tens_g 1_{B_2}).\qedhere
\] 
\end{enumerate}
\end{prop}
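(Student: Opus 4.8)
\textbf{Proof plan for Proposition \ref{prop20}.}

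The plan is to treat the two parts separately, since part 1 is essentially formal while part 2 requires a genuine (if routine) verification that the Kasparov conditions survive the push-forward along the internal tensor product. For part 1, I would first observe that $(\s E,\gamma^*)$ is a $\cal G$-equivariant $A_1$-$B$-bimodule: the $\cal G$-equivariance of $\gamma^*=\gamma\circ f$ follows by composing the equivariance relations for $\gamma$ (namely $\delta_{\s E}(\gamma(a_2)\xi)=(\gamma\tens\id_S)(\delta_{A_2}(a_2))\circ\delta_{\s E}(\xi)$ and $\beta_{\s E}(n^{\rm o})\gamma(a_2)=\gamma(\beta_{A_2}(n^{\rm o})a_2)$, cf.\ \ref{defbimod}) with the equivariance relations for $f$ (namely $(f\tens\id_S)\delta_{A_1}=\delta_{A_2}\circ f$ and $f\circ\beta_{A_1}=\beta_{A_2}$, cf.\ \ref{defEquiHombis}). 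The three Kasparov conditions \eqref{eq20} for $(\s E,\gamma^*,F)$ are immediate from those for $(\s E,\gamma,F)$ by substituting $a=f(a_1)$ for $a_1\in A_1$; conditions 2 and 3 of \ref{defEqkaspbimod} similarly reduce to the corresponding statements for $(\s E,\gamma,F)$ — for condition 3 one uses that $(\gamma^*\tens\id_S)(x)=(\gamma\tens\id_S)((f\tens\id_S)(x))$ for $x\in A_1\tens S$ and that $f\tens\id_S$ maps $A_1\tens S$ into $\M(A_2\tens S)$. Finally, the map $f^*$ is well defined on unitary equivalence classes because a unitary $\Phi$ implementing an equivalence of the original bimodules automatically implements one of the pulled-back bimodules (conditions (i) and (iii) of \ref{def3} are unchanged, and (ii) becomes $\Phi\circ\gamma_1^*(a_1)=\Phi\circ\gamma_1(f(a_1))=\gamma_2(f(a_1))\circ\Phi=\gamma_2^*(a_1)\circ\Phi$).

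For part 2, I would proceed as follows. First form the Hilbert $B_2$-module $\s E\tens_g B_2$, which carries an action of $\cal G$ by \ref{prop18} (with $\s E_1:=\s E$, a Hilbert module over $B_1$, and $\s E_2:=B_2$ viewed as a $\cal G$-equivariant Hilbert $B_2$-module via its own action, the left $B_1$-action being $g:B_1\to\M(B_2)=\Lin(B_2)$, which is $\cal G$-equivariant since $g$ is). Countable generation of $\s E\tens_g B_2$ follows from that of $\s E$. Then $\gamma_*(a):=\gamma(a)\tens_g 1_{B_2}$ is a $\cal G$-equivariant $*$-representation of $A$ by \ref{propleftaction}, so $(\s E\tens_g B_2,\gamma_*)$ is a $\cal G$-equivariant $A$-$B_2$-bimodule. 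The operator $F_*:=F\tens_g 1_{B_2}\in\Lin(\s E\tens_g B_2)$ is the image of $F$ under the standard $*$-homomorphism $\Lin(\s E)\to\Lin(\s E\tens_g B_2)$, $T\mapsto T\tens_g 1$, which sends $\K(\s E)$ into $\K(\s E\tens_g B_2)$ (since $\theta_{\xi,\eta}\tens_g 1=\theta_{\xi\tens_g 1,\eta\tens_g 1}$ after the identification $\s E=\K(B_1,\s E)$, or more simply because the induced map $\K(\s E)\to\Lin(\s E\tens_g B_2)$ lands in $\K(\s E\tens_g B_2)$ by non-degeneracy considerations); hence the three Kasparov conditions \eqref{eq20} for $(\s E\tens_g B_2,\gamma_*,F_*)$ follow by applying $T\mapsto T\tens_g 1$ to the corresponding relations $[\gamma(a),F]\in\K(\s E)$, etc. Condition 2 of \ref{defEqkaspbimod} is clear since $\beta_{\s E\tens_g B_2}(n^{\rm o})=\beta_{\s E}(n^{\rm o})\tens_g 1$ commutes with $F\tens_g 1$. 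For condition 3 I would use the description of the isometry associated with the tensor-product action from \ref{rk17}: writing $\s V$ for the isometry of $\s E\tens_g B_2$ in terms of $\s V_1$ (the isometry of $\s E$) and $\s V_2$ (the isometry of $B_2$, which is essentially $\delta_{B_2}$), one computes that $\s V((F\tens_g 1)\tens_{\delta_{B_2}}1)\s V^* - q_{\beta\alpha}((F\tens_g 1)\tens 1_S)$ is obtained from $\s V_1(F\tens_{\delta_{B_1}}1)\s V_1^* - q_{\beta_{\s E}\alpha}(F\tens 1_S)$ by tensoring on the right and then conjugating/composing with the equivariant data of $B_2$; since the latter difference becomes compact after multiplication by $(\gamma\tens\id_S)(x)$ for all $x\in B_1\tens S$, the former becomes compact after multiplication by $(\gamma_*\tens\id_S)(y)$ for all $y\in A\tens S$.

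The well-definedness of $g_*$ on unitary equivalence classes is the final point: given a unitary $\Phi\in\Lin(\s E_1,\s E_2)$ implementing an equivalence of $\cal G$-equivariant Kasparov $A$-$B_1$-bimodules, the unitary $\Phi\tens_g 1_{B_2}\in\Lin(\s E_1\tens_g B_2,\s E_2\tens_g B_2)$ implements an equivalence of the images — equivariance of $\Phi\tens_g 1$ is \ref{propleftaction} applied to the equivariant $\Phi$ (or a direct check via \ref{rk17}), and the intertwining relations (ii), (iii) of \ref{def3} are preserved under $T\mapsto T\tens_g 1$. I expect the main obstacle to be purely bookkeeping: carefully tracking the unitary identifications of internal tensor products in \ref{prop18} and \ref{rk17} so that the equivariance-condition-3 computation for $F\tens_g 1_{B_2}$ genuinely reduces to that for $F$, rather than anything conceptually hard; this is exactly the finite-basis analogue of the corresponding verification in \S 3 of \cite{BS1}, and I would model the argument on Remarque 3.4 there (as already invoked in \ref{rk13} 2).
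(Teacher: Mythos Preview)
Your proposal is correct and is precisely the kind of routine checking the paper has in mind: the paper's own proof is the single line ``Straightforward verifications,'' so your plan simply fills in the details using the expected tools (\ref{prop18}, \ref{propleftaction}, \ref{rk17}, and the functoriality of $T\mapsto T\tens_g 1$). There is nothing to compare beyond that.
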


\begin{proof}
Straightforward verifications.
\end{proof}

\begin{defin}\label{def6}
Let $(\s E_0,F_0),(\s E_1,F_1)\in{\sf E}_{\cal G}(A,B)$. An homotopy between $(\s E_0,F_0)$ and $(\s E_1,F_1)$ is an element $x\in{\sf E}_{\cal G}(A,B[0,1])$ such that ${\rm e}_{0\ast}(x)=(\s E_0,F_0)$ and ${\rm e}_{1\ast}(x)=(\s E_1,F_1)$. In that case, we say that $(\s E_0,F_0)$ and $(\s E_1,F_1)$ are homotopic. The homotopy relation is an equivalence relation on the class ${\sf E}_{\cal G}(A,B)$. We denote by ${\sf KK}_{\cal G}(A,B)$ the quotient set of ${\sf E}_{\cal G}(A,B)$ by the homotopy relation. We also denote by $[(\s E,F)]$ the class of $(\s E,F)\in{\sf E}_{\cal G}(A,B)$ in ${\sf KK}_{\cal G}(A,B)$.
\end{defin}

\begin{exs}\label{ex3}(cf.\ \cite{Kas3,Skand,BS1})
We can build homotopies of ${\sf E}_{\cal G}(A,B[0,1])$ in the following ways. 
\begin{enumerate}
\item An operator homotopy is a triple $(\s E,\gamma,(F_t)_{t\in[0,1]})$ consisting of a $\cal G$-equivariant Hilbert $A$-$B$-bimodule $(\s E,\gamma)$ and a family of adjointable operators $(F_t)_{t\in[0,1]}$ on $\s E$ such that:
\begin{itemize}
\item the triple $(\s E,\gamma,F_t)$ is a $\cal G$-equivariant Kasparov $A$-$B$-bimodule for all $t\in[0,1]$;
\item the map $(t\mapsto F_t)$ is norm continuous.
\end{itemize}
The family of operators $(F_t)_{t\in[0,1]}$ defines an element $F$ of $\Lin(\s E[0,1])$ (up to the identification $\Lin(\s E)[0,1]=\Lin(\s E[0,1])$, cf.\ \ref{def3} 4, 5) and the triple $(\s E[0,1],\gamma\tens 1,F)$ is a homotopy between $(\s E,\gamma,F_0)$ and $(\s E,\gamma,F_1)$. 
\item An important example of operator homotopy can be obtained in the following case. Let $(\s E,\gamma,F)$ be a $\cal G$-equivariant Kasparov $A$-$B$-bimodule. We call an operator $G\in\Lin(\s E)$ a compact perturbation of $F$ if for all $a\in A$ we have $\gamma(a)(F-G)\in\K(\s E)$ and $(F-G)\gamma(a)\in\K(\s E)$.
In that case, the triple $(\s E,\gamma,G)$ is a $\cal G$-equivariant Kasparov $A$-$B$-bimodule. Moreover, the triples $(\s E,\gamma,F)$ and $(\s E,\gamma,G)$ are operator homotopic via the obvious continuous path defined by $F_t:=(1-t)F+tG$ for $t\in[0,1]$.
\item Let $(\s E,(\gamma_t)_{t\in[0,1]},F)$ be a triple where $\s E$ is a $\cal G$-equivariant Hilbert $B$-module, $(\gamma_t)_{t\in[0,1]}$ is a family of $\cal G$-equivariant *-representations of $A$ on $\s E$ and $F\in\Lin(\s E)$ such that the triple $(\s E,\gamma_t,F)$ is a $\cal G$-equivariant Kasparov $A$-$B$-bimodule for all $t\in[0,1]$ and the map $(t\mapsto\gamma_t(a))$ is norm continuous for all $a\in A$. Up to the identification $\Lin(\s E)[0,1]=\Lin(\s E[0,1])$, the family $(\gamma_t)_{t\in[0,1]}$ defines a $\cal G$-equivariant *-representation $\gamma:A\rightarrow\Lin(\s E[0,1])$. Moreover, the triple $(\s E[0,1],\gamma,1\tens F)$ is a homotopy between $(\s E,\gamma_0,F)$ and $(\s E,\gamma_1,F)$.\qedhere
\end{enumerate}
\end{exs}

As for actions of quantum groups (cf.\ 3.3 \cite{BS1}), we have:

\begin{prop}
Endowed with the binary operation induced by the direct sum operation
$
([(\s E_1,F_1)],[(\s E_2,F_2)]) \mapsto [(\s E_1\oplus\s E_2,F_1\oplus F_2)],
$
the quotient set ${\sf KK}_{\cal G}(A,B)$ is an abelian group and the class of the identity element $0\in{\sf KK}_{\cal G}(A,B)$ is represented by any element of ${\sf D}_{\cal G}(A,B)$.
\end{prop}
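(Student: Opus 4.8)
The plan is to follow the classical argument for the group structure of Kasparov theory (cf.\ 3.3 \cite{BS1} and Kasparov's original work), checking at each step that the $\cal G$-equivariance conditions 2 and 3 of \ref{defEqkaspbimod} survive all the constructions. First I would verify that $\oplus$ descends to $\kk_{\cal G}(A,B)$ as a well-defined, commutative, associative operation: the direct sum of two $\cal G$-equivariant Kasparov $A$-$B$-bimodules is again one, since the defining conditions of \ref{defkaspbimod} are stable under $\oplus$, $\s E_1\oplus\s E_2$ is countably generated, $\beta_{\s E_1}\oplus\beta_{\s E_2}$ commutes with $F_1\oplus F_2$, and condition 3 holds block-diagonally because by \ref{propdef10} the action on $\s E_1\oplus\s E_2$ (hence $\delta_{\K(\s E_1\oplus\s E_2)}$) is block-diagonal. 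Unitarily equivalent bimodules give equal elements of ${\sf E}_{\cal G}(A,B)$ by definition, so the $\cal G$-equivariant unitary equivalences $\s E_1\oplus\s E_2\simeq\s E_2\oplus\s E_1$ and $(\s E_1\oplus\s E_2)\oplus\s E_3\simeq\s E_1\oplus(\s E_2\oplus\s E_3)$ yield commutativity and associativity; and if $z\in{\sf E}_{\cal G}(A,B[0,1])$ is a homotopy from $x_0$ to $x_1$ and $y\in{\sf E}_{\cal G}(A,B)$, then $z\oplus\widehat y$, with $\widehat y$ the constant bimodule over $B[0,1]$ attached to $y$ (cf.\ \ref{def3} 5--6), is a homotopy from $x_0\oplus y$ to $x_1\oplus y$, using that the functors ${\rm e}_{t\ast}$ of \ref{prop20} 2 commute with $\oplus$.

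Next I would show that degenerate bimodules represent the neutral class. Given $(\s E,\gamma,F)\in{\sf D}_{\cal G}(A,B)$, form the Hilbert $B[0,1]$-module $\s F:=\{f\in{\rm C}([0,1],\s E)\,;\,f(0)=0\}$, countably generated over $B[0,1]$ (take $t\mapsto t\,\xi_n$ for a countable generating set $(\xi_n)$ of $\s E$), equipped with the pointwise representation $\gamma_{[0,1]}$, the pointwise operator $F_{[0,1]}$, and the $\cal G$-action induced pointwise as in \ref{def3} 5--6. Degeneracy of $(\s E,\gamma,F)$ forces $(\s F,\gamma_{[0,1]},F_{[0,1]})$ to be a degenerate $\cal G$-equivariant Kasparov $A$-$B[0,1]$-bimodule; since ${\rm e}_{0\ast}$ of it is $(0,0,0)$ and ${\rm e}_{1\ast}$ of it is $(\s E,\gamma,F)$, we get $[(\s E,\gamma,F)]=[(0,0,0)]=:0$. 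In particular $(0,0,0)\in{\sf D}_{\cal G}(A,B)$, and since $x\oplus(0,0,0)=x$ literally, $0$ is a neutral element and every class in ${\sf D}_{\cal G}(A,B)$ equals it.

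The main step is the existence of inverses. For $x=[(\s E,\gamma,F)]$ I would take $-x:=[(\s E,\gamma,-F)]$, which lies in ${\sf E}_{\cal G}(A,B)$ because conditions 1--3 of \ref{defEqkaspbimod} are invariant under $F\mapsto-F$. On $\s E\oplus\s E$, with representation $\gamma\oplus\gamma$ and $\beta_{\s E}\oplus\beta_{\s E}$, consider the norm-continuous rotation path
\[
F_t:=\begin{pmatrix}\cos(\tfrac{\pi t}{2})\,F & \sin(\tfrac{\pi t}{2})\\ \sin(\tfrac{\pi t}{2}) & -\cos(\tfrac{\pi t}{2})\,F\end{pmatrix},\qquad t\in[0,1].
\]
A direct computation using $\cos^2+\sin^2=1$ shows that each $(\s E\oplus\s E,\gamma\oplus\gamma,F_t)$ satisfies the Kasparov conditions, and writing $F_t$ as a combination of the block-diagonal operator $F\oplus(-F)$ and the off-diagonal flip $U:=\left(\begin{smallmatrix}0&1\\1&0\end{smallmatrix}\right)$, and using that the extension of $\delta_{\K(\s E\oplus\s E)}$ to $\Lin(\s E\oplus\s E)$ is linear while $U$ is invariant in the sense of \ref{propdef7} (hence satisfies conditions 2--3 by \ref{rk13} 3), one checks conditions 2 and 3 for every $F_t$. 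Thus $(F_t)_{t\in[0,1]}$ is an operator homotopy in the sense of \ref{ex3} 1; at $t=0$ it gives $x\oplus(-x)$ and at $t=1$ it gives $(\s E\oplus\s E,\gamma\oplus\gamma,U)$, which is degenerate. Hence $x\oplus(-x)=0$, so $\kk_{\cal G}(A,B)$ is an abelian group whose neutral element is represented by any element of ${\sf D}_{\cal G}(A,B)$.

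I do not expect any serious obstacle: the calculations are the usual Kasparov manipulations. The one point requiring genuine attention, and the place where the groupoid setting adds something beyond \cite{BS1}, is checking that the $\cal G$-equivariance conditions 2 and 3 of \ref{defEqkaspbimod} are preserved under the constant homotopy, the module $\s F$, and the rotation above; this goes through precisely because condition 3 is additive in the operator variable (strict continuity and linearity of $\delta_{\K(\s E)}$ on $\Lin(\s E)$) and all the auxiliary operators that appear ($U$ and scalar multiples of the identity) are invariant. A parallel, purely bookkeeping, point is to confirm that every Hilbert module produced along the way stays countably generated.
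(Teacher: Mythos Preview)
Your proposal is correct and follows exactly the classical route the paper points to (it gives no proof, only the reference ``As for actions of quantum groups (cf.\ 3.3 \cite{BS1})''): direct sum descends to homotopy classes, degenerate bimodules contract to zero via $C_0((0,1],\s E)$, and inverses come from the rotation $F_t=\cos(\tfrac{\pi t}{2})(F\oplus(-F))+\sin(\tfrac{\pi t}{2})U$, all checks of conditions 2 and 3 of \ref{defEqkaspbimod} being handled by linearity in $F$ together with the invariance of $U$ (\ref{propdef7}, \ref{rk13} 3). One small point to watch: when you declare $(\s E\oplus\s E,\gamma\oplus\gamma,U)$ degenerate, the equivariant degeneracy clause in \ref{def3} 3 is written as $(\gamma\tens\id_S)(x)(\delta_{\K(\s E)}(F)-F\tens 1_S)=0$ without the projection $q_{\beta_{\s E}\alpha}$; read consistently with \ref{defEqkaspbimod} 3 and \ref{rk13} 1 (i.e.\ with the projection, or equivalently for $x\in(A\tens S)q_{\beta_A\alpha}$), the invariance of $U$ gives $\delta_{\K}(U)=q_{\beta_{\s E\oplus\s E}\alpha}(U\tens 1_S)$ and the condition is trivially zero, so your argument goes through.
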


It follows from \ref{def6} that the maps defined in \ref{prop20} factorize over the quotient maps so that we obtain homomorphisms of abelian groups $f^*:\kk_{\cal G}(A_2,B)\rightarrow\kk_{\cal G}(A_1,B)$ and $g_*:\kk_{\cal G}(A,B_1)\rightarrow\kk_{\cal G}(A,B_2)$.

\subsection{Kasparov's technical theorem}

\begin{nb} 
Let $A$ be a $\Cstar$-algebra. We denote by
\[
\der(A):=\{d\in\B(A)\,;\,\forall x,y\in A,\,d(xy)=d(x)y+xd(y)\}
\]
the Banach subspace of $\B(A)$ consisting of the continuous derivations of $A$. Any $d\in\der(A)$ extends uniquely to a strictly continuous linear map $d:\M(A)\rightarrow\M(A)$ defined by:
\[
d(m)a:=d(ma)-md(a) \; \text{ and } \; ad(m):=d(am)-d(a)m, \quad \text{for all } m\in\M(A),\, a\in A.
\]
Note that $d(1)=0$ and $d\in{\rm Der}(\M(A))$. For all $x\in A$, we have ${\rm Ad}(x)\in\der(A)$ (inner derivations of $A$).
\end{nb}

\begin{rks}\label{rk12}
\begin{enumerate}
\item If $m\in\M(A)$, we define an inner derivation ${\rm Ad}(m)\in\der(A)$ by setting ${\rm Ad}(m)(x):=[m,\,x]=mx-xm\in A$ for all $x\in A$.
\item Let $J$ be a closed two-sided ideal of $A$ and $d\in\der(A)$. We have $d(J)\subset J$. Indeed, let $(u_\lambda)$ be an approximate unit of $J$ and $x\in J$. We have $d(x)=\lim_\lambda d(xu_\lambda)$ with respect to the norm topology since $d$ is norm continuous. Moreover, for all $\lambda$ we have $d(xu_\lambda)=d(x)u_\lambda+xd(u_\lambda)\in J$ since $J$ is an ideal of $A$. Hence, $d(x)\in J$. In particular, $d$ induces an element of $\der(J)$ by restriction and we have a continuous linear map $\der(A)\rightarrow\der(J)$.\qedhere
\end{enumerate}
\end{rks}

\begin{lem}\label{lem6} (cf.\ 4.1 \cite{BS1}) Let $J$ be a $\Cstar$-algebra and $(\beta_J,\delta_J)$ 
an action of $\cal G$ on $J$. For all $h\in J$, $k\in S$, $z\in J\tens S$, for all compact $K\subset\der(J)$ and for all real number $\varepsilon>0$, there exists $u\in J$, such that $0\leqslant u\leqslant 1$ and $[u,\, \beta_J(n^{\rm o})]=0$ for all $n\in N$, which further satisfies:
\begin{enumerate}[label={\rm(\alph*)}]
\item $\|uh-h\|\leqslant\varepsilon$;
\item for all $d\in K$, $\|d(u)\|<\varepsilon$;
\item $\|(\delta_J(x)-q_{\beta_J\alpha}(x\tens 1))(1\tens k)\|<\varepsilon$;
\item $\|\delta_J(u)z-q_{\beta_J\alpha}z\|<\varepsilon$.\qedhere
\end{enumerate} 
\end{lem}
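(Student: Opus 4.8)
The plan is to follow the proof of the analogous statement for locally compact quantum groups (4.1 in \cite{BS1}), the only new ingredient being the bookkeeping of the source map $\beta_J$ and the partial isometries $q_{\beta_J\alpha}$. The starting point is the version of this statement that one already knows when there is no groupoid structure, namely the classical Kasparov technical lemma for the $\Cstar$-algebra $J$ together with an approximate unit that is quasi-central with respect to finitely many derivations; here the family $K\subset\der(J)$ is compact, so it can be approximated by a finite subset and quasi-centrality with respect to the finite subset together with the Cohen factorization of $J$ gives quasi-centrality with respect to all of $K$. Thus the real work is to extract from the strong continuity of $(\beta_J,\delta_J)$ an approximate unit $(u_\lambda)$ that simultaneously satisfies conditions (c) and (d) and that commutes with $\beta_J(N^{\rm o})$.

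\textbf{Key steps.} First I would reduce condition (c) to a single assertion: since $(\beta_J,\delta_J)$ is strongly continuous we have $[\delta_J(J)(1_J\tens S)]=q_{\beta_J\alpha}(J\tens S)$, hence $(\delta_J(x)-q_{\beta_J\alpha}(x\tens 1))(1\tens k)$ lies in the closure of $q_{\beta_J\alpha}(J\tens S)$ and, by a standard argument, can be made small by multiplying on the left by $\delta_J(u)$ for $u$ in an approximate unit of $J$; so (c) is really a consequence of (d) applied to a suitable finite set of elements $z$. Next, condition (d) is the key continuity input: by \ref{lem19bis}, for any approximate unit $(u_\lambda)$ of $J$ one has $\delta_J(u_\lambda)\to q_{\beta_J\alpha}$ strictly in $\M(J\tens S)$, hence $\delta_J(u_\lambda)z\to q_{\beta_J\alpha}z$ in norm for each fixed $z\in J\tens S$, which gives (d) for a single $z$; a convexity/partition-of-unity argument over a finite net of the relevant data then upgrades this to a uniform statement. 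Finally, to arrange commutation with $\beta_J(N^{\rm o})$: since $N=\bigoplus_l {\rm M}_{n_l}(\GC)$ is finite-dimensional, the conditional expectation $E:\M(J)\to\beta_J(N^{\rm o})'\cap\M(J)$ obtained by averaging over the (compact, finite-dimensional) unitary group of $N^{\rm o}$ via ${\rm Ad}(\beta_J(\cdot))$ is completely positive, unital, and preserves $J$; replacing $u$ by $E(u)$ preserves $0\leqslant u\leqslant 1$, and since $\beta_J$ is a $\ast$-homomorphism and $q_{\beta_J\alpha}\in\beta_J(N^{\rm o})'$ (as $S\subset\widehat M'\subset\alpha(N)'$ and $\beta(N^{\rm o})$ commutes with $\alpha(N)$), averaging only improves — or at worst leaves unchanged up to $\varepsilon$ — conditions (a)--(d); the derivations in $K$ can be enlarged to include the inner derivations ${\rm Ad}(\beta_J(v))$ so that quasi-centrality of the pre-averaged $u$ already forces $E(u)$ to be close to $u$, which is the way \cite{BS1} handles it.

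\textbf{Main obstacle.} The routine part is the classical Kasparov lemma and the strict-continuity limits; the delicate point is making all four conditions hold \emph{simultaneously} for a \emph{single} $u$ while keeping $[u,\beta_J(n^{\rm o})]=0$ exactly (not just approximately). The clean way, following \cite{BS1}, is to run Kasparov's lemma inside the fixed-point algebra for the $N^{\rm o}$-action, i.e. to work from the outset with an approximate unit of $J$ lying in $\beta_J(N^{\rm o})'\cap J$ — such an approximate unit exists because $\beta_J(N^{\rm o})'\cap J$ is a hereditary $\Cstar$-subalgebra of $J$ containing an approximate unit of $J$ (again using finite-dimensionality of $N$ and the averaging expectation $E$, which maps $J$ onto $\beta_J(N^{\rm o})'\cap J$ and is faithful, so $E$ of an approximate unit of $J$ is an approximate unit of $\beta_J(N^{\rm o})'\cap J$ that is still approximate for $J$). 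Once one is inside this subalgebra, conditions (a), (b), (d) are exactly the hypotheses of the classical quasi-central Kasparov lemma applied there (with $K$ restricted to $\beta_J(N^{\rm o})'\cap J$, using \ref{rk12} 2 to see the derivations restrict), and (c) follows from (d) as above; the commutation relation is then automatic. I expect writing this out to be short once the averaging expectation and the reduction of (c) to (d) are in place.
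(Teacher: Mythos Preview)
Your overall direction is reasonable, but there are two concrete gaps, and the paper's route avoids both by a cleaner packaging.

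First, your reduction of (c) to (d) does not work. In (c) the variable is $u$ itself (the ``$x$'' in the statement is a typo for $u$): one needs $\|(\delta_J(u)-q_{\beta_J\alpha}(u\tens 1))(1\tens k)\|<\varepsilon$. This is not of the form $\delta_J(u)z-q_{\beta_J\alpha}z$ for a \emph{fixed} $z$, because the right-hand side $q_{\beta_J\alpha}(u\tens k)$ depends on $u$. So (c) is a genuinely separate condition, not a special case of (d).

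Second, your ``clean way'' of running Kasparov's lemma inside $\beta_J(N^{\rm o})'\cap J$ relies on restricting the derivations in $K$ to that subalgebra via \ref{rk12}~2. But that remark is about closed two-sided \emph{ideals}, whereas $\beta_J(N^{\rm o})'\cap J$ is a fixed-point subalgebra, not an ideal; an arbitrary $d\in\der(J)$ need not send it to itself. Your alternative (enlarge $K$ by the inner derivations ${\rm Ad}(\beta_J(v))$, then average) can be made to work, but then you must check carefully that averaging perturbs each of (a)--(d) only by $O(\varepsilon)$, and condition (b) for the original $d\in K$ on $E(u)$ is not obviously controlled by $\|d(u)\|$ alone.

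The paper sidesteps all of this. It encodes the four conditions as a single affine map
\[
\Phi:J\rightarrow J\oplus J(K)\oplus(J\tens S)\oplus(J\tens S),\quad x\mapsto\big(xh-h,\,[d\mapsto d(x)],\,(\delta_J(x)-q_{\beta_J\alpha}(x\tens 1))(1\tens k),\,\delta_J(x)z-q_{\beta_J\alpha}z\big),
\]
notes that $\Phi$ extends strictly to $\M(J)$ with $\Phi(1)=0$, and then applies the Hahn--Banach/convexity argument of \cite{BS1} directly to the closed convex set $C=\{u\in J:0\leqslant u\leqslant 1,\ [u,\beta_J(n^{\rm o})]=0\ \forall n\}$. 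Since $C$ contains an approximate unit of $J$ (your averaging observation), $0$ lies in the weak, hence norm, closure of $\Phi(C)$. This handles all four conditions simultaneously and gives exact commutation with $\beta_J(N^{\rm o})$ for free, with no need to restrict derivations or to track what averaging does to each condition.
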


\begin{proof} Denote by $J(K)$ the C*-algebra consisting of the $J$-valued continuous functions on $K$. We adapt the proof of 4.1 \cite{BS1} by considering the affine map
\begin{align*}
\Phi:J &\rightarrow A:=J\oplus J(K)\oplus (J\tens S)\oplus (J\tens S)\\
x &\mapsto\Phi(x):= (xh-h,\,[d\mapsto d(x)],\,(\delta_J(x)-q_{\beta_J\alpha}(x\tens 1))(1\tens k),\,\delta_J(x)z-q_{\beta_J\alpha}z),
\end{align*}
which admits a unique strictly continuous extension, still denoted by $\Phi:\M(J)\rightarrow\M(A)$, such that $\Phi(1)=0$. By applying the Hahn-Banach theorem, we then conclude as in \cite{BS1} that $0$ is an adherent point of $\Phi(C)$, where $C$ is the nonempty closed convex subset $\{u\in J\,;\,0\leqslant u\leqslant 1,\, \forall n\in N,\, [u,\, \beta_J(n^{\rm o})]=0\}$ of $J$, which is just a restatement of the above lemma.
\end{proof}

\begin{defin}\label{def9} Let $A$ be a $\Cstar$-algebra endowed with an action $(\beta_A,\delta_A)$ of ${\cal G}$ such that $\delta_A(A)\subset\widetilde{\M}(A\tens S)$. A closed two-sided ideal $J$ of $A$ is said to be invariant by $(\beta_A,\delta_A)$ (or $(\beta_A,\delta_A)$-invariant) if
$\beta_A(N^{\rm o})\subset\M(A;J)$ and $\delta_A(J)\subset\M(\widetilde{A}\tens S; J\tens S)$. We denote by $\beta_J:N^{\rm o}\rightarrow\M(J)$ and $\delta_J:J\rightarrow\widetilde{\M}(J\tens S)$ the maps obtained by restricting $\beta_A$ and $\delta_A$. Then, we obtain an action of $\cal G$ on $J$.
\end{defin}

Before stating the generalization of Kasparov's technical theorem, we first state an easy lemma that will be used several times.

\begin{lem}\label{lem7}
Let $B$ be a $\Cstar$-algebra and $J$ a closed two-sided ideal of $B$. Let $x\in B$ and $b\in B_+$ such that $[x,\, b]\in J$. Then, we have $[x,\, b^{1/2}]\in J$. In particular, if $b$ commutes with $x$ so does $b^{1/2}$.
\end{lem}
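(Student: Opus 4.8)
\textbf{Proof plan for Lemma \ref{lem7}.}

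The standard route is via the continuous functional calculus and an approximation of the square root by polynomials. First I would recall the classical fact that for $b\in B_+$ with $\|b\|\leqslant 1$ (which we may assume after rescaling, since $b^{1/2}$ scales by the square root of the scalar and the ideal $J$ is a linear subspace), the function $t\mapsto t^{1/2}$ on $[0,1]$ can be uniformly approximated by polynomials $p_n$ with $p_n(0)=0$; hence $p_n(b)\to b^{1/2}$ in norm in $B$, and each $p_n(b)$ lies in the (non-unital) subalgebra generated by $b$. The key algebraic observation is that $\der(B)\ni\mathrm{Ad}(x)\colon y\mapsto[x,y]$ satisfies the Leibniz rule, so $[x,b^m]=\sum_{i=0}^{m-1}b^i[x,b]b^{m-i-1}$ for each $m\geqslant 1$; since $[x,b]\in J$ and $J$ is a two-sided ideal, each summand lies in $J$, whence $[x,b^m]\in J$ for all $m\geqslant 1$, and therefore $[x,p_n(b)]\in J$ for every $n$ (using $p_n(0)=0$ so no constant term contributes).

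Then I would pass to the limit: $[x,b^{1/2}]=\lim_n[x,p_n(b)]$ in norm, because $b\mapsto[x,b]$ is norm-continuous (it is bounded by $2\|x\|\,\|\cdot\|$) and $p_n(b)\to b^{1/2}$. Since $J$ is norm-closed in $B$ and each $[x,p_n(b)]\in J$, the limit $[x,b^{1/2}]$ belongs to $J$. This proves the first assertion. The "in particular" clause is the special case $J=\{0\}$: if $[x,b]=0$ then $[x,b^{1/2}]\in\{0\}$, i.e.\ $b^{1/2}$ commutes with $x$.

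I do not anticipate a genuine obstacle here; the only mild point of care is the reduction to $\|b\|\leqslant 1$ (or to $b$ in the unit ball of $\widetilde B$), and the observation that the approximating polynomials must have vanishing constant term so that $p_n(b)$ stays in the algebra generated by $b$ and the Leibniz computation applies verbatim. An alternative that avoids even the rescaling is to use the integral formula $b^{1/2}=\frac{1}{\pi}\int_0^\infty \lambda^{-1/2}\,b(b+\lambda)^{-1}\,d\lambda$ in $\widetilde B$, compute $[x,b(b+\lambda)^{-1}] = (b+\lambda)^{-1}[x,b](b+\lambda)^{-1}\lambda$ (again landing in $J$ since $J$ is an ideal of $B$ and hence $(b+\lambda)^{-1}J(b+\lambda)^{-1}\subset J$), and then move the commutator inside the norm-convergent integral; but the polynomial-approximation argument is shorter and entirely elementary, so that is the one I would write up.
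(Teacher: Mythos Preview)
Your proof is correct. The paper takes a slightly different, more abstract route: it sets $A:=\{c\in B\,;\,[c,x]\in J\}$, observes that this is a norm-closed subalgebra of $B$ (closedness of $J$ plus the Leibniz rule), and hence that $A\cap A^*$ is a C*-subalgebra of $B$. Since the positive element $b$ lies in $A\cap A^*$, its square root --- computed by continuous functional calculus inside this C*-subalgebra --- also lies in $A\cap A^*\subset A$, i.e.\ $[x,b^{1/2}]\in J$.

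The two arguments are essentially the same idea packaged differently. Your explicit polynomial approximation unpacks exactly what ``closed under functional calculus'' means in the paper's argument: closure under polynomials (Leibniz) plus closure under norm limits. The paper's formulation has the advantage of immediately giving $[x,f(b)]\in J$ for \emph{any} continuous $f$ vanishing at $0$, without repeating the approximation step; your version makes the mechanism completely transparent and avoids invoking the structure of C*-subalgebras. The integral-formula alternative you mention also works but is indeed unnecessary overhead here.
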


\begin{proof}
Let us denote $A:=\{b\in B\,;\, [b,\,x]\in J\}$. It is easily seen that $A$ is a closed subalgebra of $B$. If $b\in B$ is a self-adjoint element commuting with $x$, then $b$ is an element of the $\Cstar$-subalgebra $A\cap A^*$ of $B$. In particular, if $b$ is a positive element of $B$ which also belongs to $A$, then so does $b^{1/2}$.
\end{proof}

\begin{thm}\label{Kaspthm} (cf. Th\'eor\`eme 4.3 \cite{BS1}) Let us consider:
\begin{itemize}
\item $J_1$ a $\sigma$-unital $\Cstar$-algebra endowed with an action $(\beta_{J_1},\delta_{J_1})$ of $\cal G$ such that $\delta_{J_1}(J_1)\subset\widetilde{\M}(J_1\tens S)$;
\item $J$ a $(\beta_{J_1},\delta_{J_1})$-invariant ideal of $J_1$;
\item $J_2$ a $\sigma$-unital subalgebra of $\M(J_1;J)$;
\item $\s F$ a separable subspace of $\der(J_1)$;
\item $J_2'$ a $\sigma$-unital subalgebra of $\M(J\tens S)$ such that $\delta_{J_1}(x)y\in J\tens S$ for all $x\in J_1$ and $y\in J_2'$.
\end{itemize}
There exists $M\in\M(J_1;J)$ such that $0\leqslant M\leqslant 1$ and $[M,\,\beta_{J_1}(n^{\rm o})]=0$ for all $n\in N$, which satisfies the following statements:
\begin{itemize}
\item $(1-M)J_2\subset J$;
\item for all $d\in\s F$, $d(M)\in J$;
\item $\delta_{J_1}(M)-q_{\beta_{J_1}\alpha}(M\tens 1_S)\in\widetilde{\M}(J\tens S)$;
\item $(q_{\beta_{J_1}\alpha}-\delta_{J_1}(M))J_2'\subset J\tens S$.\qedhere
\end{itemize}
\end{thm}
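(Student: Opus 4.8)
The plan is to reduce this to the classical Kasparov technical theorem for quantum group actions (Théorème 4.3 of \cite{BS1}) by producing a single countable approximate unit that simultaneously handles all four conditions, following the scheme of the Baaj–Skandalis argument but with the multiplicative unitary $V$ replaced by the partial isometry setting adapted to $q_{\beta\alpha}$, and using Lemma \ref{lem6} as the key per-step ingredient. The element $M$ will be obtained as a strictly convergent sum $M=\sum_n \mu_n^{1/2} u_n \mu_n^{1/2}$ (or the usual quasi-central limit), where $(u_n)$ is built inductively via Lemma \ref{lem6} and $(\mu_n)$ is a partition-of-unity sequence subordinate to a countable approximate unit of $J_1$; the commutation $[M,\beta_{J_1}(n^{\mathrm o})]=0$ is inherited because every $u_n$ can be chosen to commute with $\beta_{J_1}(N^{\mathrm o})$ and the $\mu_n$ are built from an approximate unit that can be taken central with respect to $\beta_{J_1}(N^{\mathrm o})$ as well.

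First I would set up the bookkeeping: fix a countable approximate unit $(e_m)$ of $J_1$, countable dense subsets of $J_2$, of $\s F$ (separable in $\der(J_1)$), and of $J_2'$, and a countable dense subset of $J\tens S$ adapted to condition (c)/(d) of Lemma \ref{lem6}; interleave these into a single countable list of ``demands''. At stage $n$ I would apply Lemma \ref{lem6} to the $\cal G$-C*-algebra $J_1$ with its action $(\beta_{J_1},\delta_{J_1})$, with $h$ running over the finitely many elements of $J_1$, $J_2$ already encountered, $K$ the finite set of derivations, $z$ the finitely many elements of $J\tens S$ (note $\der(J_1)$ restricts to $\der(J)$ by Remark \ref{rk12} 2, which is needed so that $d(M)\in J$ makes sense once $M\in\M(J_1;J)$), obtaining $u_n$ with $0\le u_n\le 1$, $[u_n,\beta_{J_1}(n^{\mathrm o})]=0$, and the four estimates with $\varepsilon=2^{-n}$. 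Then set $M:=\lim_N \sum_{n\le N}(\text{telescoping correction})$ in the standard quasi-central fashion; convergence in the strict topology of $\M(J_1)$ is routine, and the four conclusions follow from summing the geometric error series exactly as in \cite{BS1}. That $M\in\M(J_1;J)$ comes from $(1-u_n)$-type terms landing in $J$ because $J_2\subset\M(J_1;J)$ and $J$ is an ideal; that $\delta_{J_1}(M)-q_{\beta_{J_1}\alpha}(M\tens 1_S)\in\widetilde{\M}(J\tens S)$ comes from condition (c) of Lemma \ref{lem6} together with the invariance $\delta_{J_1}(J)\subset\M(\widetilde{J_1}\tens S;J\tens S)$ (Definition \ref{def9}), and the last bullet from (d) and the hypothesis $\delta_{J_1}(x)y\in J\tens S$ for $y\in J_2'$.

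The main obstacle I expect is bookkeeping the interaction between the ``relative multiplier'' conditions and the non-unitality $\delta_{J_1}(1)=q_{\beta_{J_1}\alpha}\ne 1$. Two points need care: (i) one must verify that the affine map $\Phi$ in the proof of Lemma \ref{lem6} genuinely has $0$ in the closure of $\Phi(C)$ where $C$ is the convex set of contractions commuting with $\beta_{J_1}(N^{\mathrm o})$ — this uses that $C$ is a nonempty closed convex subset containing an approximate unit of $J_1$ that is asymptotically central for $\beta_{J_1}(N^{\mathrm o})$, which is available since $\beta_{J_1}$ is non-degenerate and $N^{\mathrm o}$ is finite-dimensional (so $\beta_{J_1}(N^{\mathrm o})$ is a finite-dimensional C*-subalgebra of $\M(J_1)$ and one can average); (ii) the ideal $J$ being $(\beta_{J_1},\delta_{J_1})$-invariant in the precise sense of Definition \ref{def9} must be used to ensure all the error terms and the final $M$ respect $J$ — in particular $\delta_{J_1}(M)z - q_{\beta_{J_1}\alpha}z$ for $z\in J_2'$ stays in $J\tens S$ rather than just $\widetilde{\M}(J\tens S)$. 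These are exactly the places where the groupoid case differs cosmetically but not substantially from \cite{BS1}; once the projection $q_{\beta_{J_1}\alpha}$ is carried along consistently in place of $1$, the argument closes. Everything else is a verbatim adaptation of Théorème 4.3 of \cite{BS1}, so I would keep the write-up short, citing that proof and emphasizing only the modifications (the $\beta$-commutation, the $q_{\beta\alpha}$-corrections, and the role of Definition \ref{def9}).
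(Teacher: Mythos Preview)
Your overall strategy is correct and matches the paper's: adapt Th\'eor\`eme~4.3 of \cite{BS1} using Lemma~\ref{lem6} as the per-step ingredient, carrying along the projection $q_{\beta_{J_1}\alpha}$ in place of $1$ and invoking Definition~\ref{def9} for the invariance of $J$. The formula $M=\sum_n \mu_n^{1/2} u_n \mu_n^{1/2}$ is also the right shape.

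However, your description of the inductive construction is imprecise in a way that matters. The paper (and \cite{BS1}) uses \emph{two} separate applications of Lemma~\ref{lem6} per step, to two different algebras:
\begin{itemize}
\item First, Lemma~\ref{lem6} is applied to $J_1$ to build an increasing sequence $(u_l)\subset J_1$ that is simultaneously an approximate unit of $J_1$, quasi-central for $\s F$, and approximately $\delta_{J_1}$-invariant. Your $\mu_n$ corresponds to $u_l-u_{l-1}$, but you treat it as coming from an arbitrary approximate unit of $J_1$ (merely central for $\beta_{J_1}(N^{\rm o})$); that is not enough, since you need $\|d(u_l)\|$ and $\|(\delta_{J_1}(u_l)-q(u_l\tens 1))(1\tens k)\|$ small to get $d(M)\in J$ and the third bullet.
\item Second, Lemma~\ref{lem6} is applied to $J$ (with its induced action from Definition~\ref{def9} and the restricted derivations $\der(J_1)\to\der(J)$) to build $(v_l)\subset J$ with $h=(u_l-u_{l-1})^{1/2}h_2\in J$ and $z=\delta_{J_1}(u_l-u_{l-1})^{1/2}h_2'\in J\tens S$. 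Your inner elements $u_n$ must play the role of these $v_l$ and hence lie in $J$, not in $J_1$.
\end{itemize}
You write ``apply Lemma~\ref{lem6} to the $\cal G$-C*-algebra $J_1$ \dots\ with $h$ running over the finitely many elements of $J_1$, $J_2$ already encountered'', but elements of $J_2\subset\M(J_1;J)$ are not in $J_1$, so condition~(a) of Lemma~\ref{lem6} does not apply to them directly; one must first multiply by $(u_l-u_{l-1})^{1/2}\in J_1$ to land in $J$, and then apply the lemma to $J$. This is exactly why $M_l=\sum (u_i-u_{i-1})^{1/2}v_i(u_i-u_{i-1})^{1/2}\in J$ for each $l$, whence the strict limit $M\in\M(J_1;J)$. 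Your justification ``$(1-u_n)$-type terms landing in $J$ because $J_2\subset\M(J_1;J)$'' does not account for this. Once you separate the two sequences and apply Lemma~\ref{lem6} to the correct algebras, the rest of your outline goes through exactly as in the paper.
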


\begin{proof}
In essence, the proof is that of Th\'eor\`eme 4.3 \cite{BS1}. We denote $q:= q_{\beta_{J_1}\alpha}$ for short. Let $h_1\in J_1$, $h_2\in J_2$, $h_2'\in J_2'$ and $k\in S$ be strictly positive elements (cf.\ \ref{rk18}). Let $K$ be a compact of $\s F$ such that $\s F=[K]$.
By \ref{lem6}, there exists an increasing sequence $(u_l)_{l\in\GN}$ of elements of $J_1$ with $u_0=0$, which satisfies for all integer $l\geqslant 1$ the following statements:
\begin{enumerate}[label=(\roman*)]
\item $0\leqslant u_l\leqslant 1$; $[u_l,\,\beta_{J_1}(n^{\rm o})]=0$, for all $n\in N$;
\item $\|u_lh_1-h_1\|\leqslant 2^{-l}$;
\item for all $d\in K$, $\|d(u_l)\|\leqslant 2^{-l}$;
\item $\|(\delta_{J_1}(u_l)-q(u_l\tens 1))(1\tens k)\|\leqslant 2^{-l}$.
\end{enumerate}
Let us recall that any derivation of $J_1$ induces a derivation of $J$ by restriction (cf.\ \ref{rk12} 2). It follows from \ref{lem6} that there exists a sequence $(v_l)_{l\in\GN^*}$ of elements of $J$ such that $0\leqslant v_l\leqslant 1$ and $[v_l,\,\beta_J(n^{\rm o})]=0$ for all $n\in N$ and $l\in\GN^*$, which satisfies for all integer $l\geqslant 1$ the following statements:
\begin{enumerate}[label=(\alph*)]
\item $\|v_l(u_l-u_{l-1})^{1/2}h_2-(u_l-u_{l-1})^{1/2}h_2\|\leqslant 2^{-l}$;
\item for all $d\in K$, $\|d(v_l)\|\leqslant 2^{-l}$ and $\|[(u_l-u_{l-1})^{1/2},v_l]\|\leqslant 2^{-l}$;
\item $\|(\delta_{J_1}(v_l)-q(v_l\tens 1))(1\tens k)\|\leqslant 2^{-l}$;
\item $\|(\delta_{J_1}(v_l)-q)\delta_{J_1}(u_l-u_{l-1})^{1/2}h_2'\|\leqslant 2^{-l}$.
\end{enumerate}
More precisely, for each fixed integer $l\geqslant 1$, we have applied Lemma \ref{lem6} with $h:=(u_l-u_{l-1})^{1/2}h_2\in J$, $z:=\delta_{J_1}(u_l-u_{l-1})^{1/2}h_2'\in J\tens S$, $\varepsilon:=2^{-l}$ and the compact subset of $\der(J)$ consists of the derivation ${\rm Ad}((u_l-u_{l-1})^{1/2})$ and the image of the compact subset $K\subset\der(J_1)$ by the continuous map $\der(J_1)\rightarrow\der(J)$.
For all integer $l\geqslant 1$, let us denote:
\begin{itemize}
\item $\displaystyle M_l:=\sum_{i=1}^l (u_i-u_{i-1})^{1/2}v_i(u_i-u_{i-1})^{1/2}$;
\item $\displaystyle M_l':=\sum_{i=1}^l v_i(u_i-u_{i-1})$;
\item $\displaystyle N_l:=\sum_{i=1}^l (u_i-u_{i-1})^{1/2}(1-v_i)(u_i-u_{i-1})^{1/2}$.
\end{itemize}
Let us notice that:
\begin{enumerate}[label=\Alph*)]
\item for all $l\geqslant 1$, we have $M_l\in J$, $M_l'\in J$ and $N_l\in J_1$;
\item for all $l\in\GN^*$, we have
\[
 M_l-M_l'=\sum_{i=1}^l[(u_i-u_{i-1})^{1/2},\, v_i](u_i-u_{i-1})^{1/2};
\] 
for all $l\in\GN^*$ and $1\leqslant i\leqslant l$, we have $\|[(u_i-u_{i-1})^{1/2},\, v_i](u_i-u_{i-1})^{1/2}\|\leqslant 2^{-(i-1)}$ by (b); hence, the sequence $(M_l-M_l')_{l\geqslant 1}$ is norm convergent;
\item $M_l+N_l=\displaystyle \sum_{i=1}^l (u_i-u_{i-1})=u_l\underset{l\rightarrow\infty}{\longrightarrow} 1$ in $\M(J_1)$ with respect to the strict topology.
\end{enumerate}
Let us prove that $(M_l)_{l\geqslant 1}$ converges strictly in $\M(J_1)$. Since $h_1$ is strictly positive, it suffices to prove that $(M_lh_1)_{l\geqslant 1}$ and $(h_1M_l)_{l\geqslant 1}$ are norm  convergent in $J_1$. For all integer $l\geqslant 1$, we have 
\[
M_l'h_1=\sum_{i=1}^lv_i(u_ih_1-h_1)-\sum_{i=1}^l v_i(u_{i-1}h_1-h_1),
\] 
with $\|v_i(u_ih_1-h_1)\|\leqslant 2^{-i}$ and $\|v_i(u_{i-1}h_1-h_1)\|\leqslant 2^{-(i-1)}$ for all $1\leqslant i\leqslant l$. Hence, $(M_l'h_1)_{l\geqslant 1}$ is norm convergent. The norm convergence of $(M_lh_1)_{l\geqslant 1}$ follows from B). Since $h_1M_l=(M_lh_1)^*$, the norm convergence of $(h_1M_l)_{l\geqslant 1}$ is proved.
Let $M\in\M(J_1)$ (resp.\ $M'\in\M(J_1)$) be the strict limit of $(M_l)_{l\geqslant 1}$ (resp.\ $(M_l')_{l\geqslant 1}$). We have $M,M'\in\M(J_1;J)$ by A) and $M-M'\in J$ by B). Since $M_l$ (resp.\ $N_l$) is positive for all integer $l\geqslant 1$, so is $M$ (resp.\ $1-M$). Hence, $0\leqslant M\leqslant 1$. Let $n\in N$. Since $[\beta_{J_1}(n^{\rm o}),\, u_l]=0$ and $[\beta_{J_1}(n^{\rm o}),\, v_l]=0$ for all integer $l\geqslant 1$, we have $[\beta_{J_1}(n^{\rm o}),\, M_l]=0$ (cf.\ \ref{lem7}). Hence, $[M,\, \beta_{J_1}(n^{\rm o})]=0$ for all $n\in N$. In particular, we have $[M\tens 1_S,\, q]=0$.\hfill\break
For all $d\in K$, the sequence $(d(M_l'))_{l\geqslant 1}$ is norm convergent in $\M(J_1)$. Indeed, we have 
\begin{align*}
d(M_l')&=\sum_{i=1}^l d(v_i)(u_i-u_{i-1})+\sum_{i=1}^l v_i(d(u_i)-d(u_{i-1}))\\
&=\sum_{i=1}^l d(v_i)(u_i-u_{i-1})+\sum_{i=1}^l(v_i-v_{i-1})d(u_i)
\end{align*}
(recall that $u_0=0$), which is norm convergent by (iii) and (b). Since $d$ is strictly continuous, the norm limit of $(d(M_l'))_{l\geqslant 1}$ is $d(M')$. It follows from A) and \ref{rk12} 2 that $d(M_l')\in J$ for all $l\geqslant 1$. Hence, $d(M')\in J$.
Since $M-M'\in J$ and $d(M')\in J$, it then follows that $d(M)\in J$ for all $d\in K$. Hence, $d(M)\in J$ for all $d\in\s F$.
Let us prove that 
\begin{equation}\label{conv}
(\delta_{J_1}(M_l')-q(M_l'\tens 1_S))(1_{J_1}\tens k)\underset{l\rightarrow\infty}{\longrightarrow}(\delta_{J_1}(M')-q(M'\tens 1_S))(1_{J_1}\tens k)
\end{equation}
with respect to the norm topology. It suffices to see that $((\delta_{J_1}(M_l')-q(M_l'\tens 1_S))(1_{J_1}\tens k))_{l\geqslant 1}$ is norm convergent since this sequence is already convergent with respect to the strict topology towards $(\delta_{J_1}(M')-q(M'\tens 1_S))(1_{J_1}\tens k)$.
For all integer $l\geqslant 1$, we have
\begin{multline*}
\delta_{J_1}(M_l')-q(M_l'\tens 1_S)=\sum_{i=1}^l \delta_{J_1}(v_i)(\delta_{J_1}(u_i)-\delta_{J_1}(u_{i-1}))-q\sum_{i=1}^l v_i(u_i-u_{i-1})\tens 1_S\\
=\sum_{i=1}^l(\delta_{J_1}(v_i)\delta_{J_1}(u_i)-q(v_iu_i\tens 1_S))-\sum_{i=1}^l(\delta_{J_1}(v_i)\delta_{J_1}(u_{i-1})-q(v_iu_{i-1}\tens 1_S)).
\end{multline*}
We have $q(v_iu_i\tens 1_S)=q(v_i\tens 1_S)q(u_i\tens 1_S)$ for all integer $i\geqslant 1$. Hence, 
\[
\delta_{J_1}(v_i)\delta_{J_1}(u_i)-q(v_iu_i\tens 1_S)\!=\!\delta_{J_1}(v_i)(\delta_{J_1}(u_i)-q(u_i\tens 1_S))+(\delta_{J_1}(v_i)-q(v_i\tens 1_S))q(u_i\tens 1_S).
\]
Hence, $\sum_{i}\delta_{J_i}(v_i)(\delta_{J_1}(u_i)-q(u_i\tens 1_S))(1_{J_1}\tens k)$ and $\sum_{i}(\delta_{J_1}(v_i)-q(v_i\tens 1_S))q(u_i\tens 1)(1_{J_1}\tens k)$ are convergent by application of (iv) and (c) (and the fact that $\|u_l\|\leqslant 1$ and $\|v_l\|\leqslant 1$ for all integer $l\geqslant 1$); hence, so is $\sum_i(\delta_{J_1}(v_i)\delta_{J_1}(u_i)-q(v_iu_i\tens 1))(1_S\tens k)$. We prove that the series $\sum_i(\delta_{J_1}(v_i)\delta_{J_1}(u_{i-1})-q(v_iu_{i-1}\tens 1))(1_S\tens k)$ is convergent in a similar way and (\ref{conv}) is proved.\hfill\break
Since $k$ is strictly positive, the sequence $((\delta_{J_1}(M_l')-q(M_l'\tens 1_S))(1_{J_1}\tens s))_{l\geqslant 1}$ is norm convergent towards $(\delta_{J_1}(M')-q(M'\tens 1_S))(1_{J_1}\tens s)$ for all $s\in S$. However, since for all integer $l\geqslant 1$ and $s\in S$ we have $(\delta_{J_1}(M_n')-q(M_n'\tens 1))(1_{J_1}\tens s)\in J\tens S$ ($M_l'\in J$ and $J$ is invariant), it follows that $(\delta_{J_1}(M')-q(M'\tens 1_S))(1_{J_1}\tens s)\in J\tens S$ for all $s\in S$. Hence, $\delta_{J_1}(M')-q(M'\tens 1)\in\widetilde{\M}(J\tens S)$ since $M'$ is self-adjoint and $[M\tens 1_S,\, q]=0$. Moreover, we have $M=(M-M')+M'$ and $M-M'\in J$. Hence,
\[
\delta_{J_1}(M)-q(M\tens 1_S)\!=\!\delta_{J_1}(M-M')-q((M-M')\tens 1_S)+\delta_{J_1}(M')-q(M'\tens 1_S)\!\in\!\widetilde{\M}(J\tens S).
\]
By C), the sequence $(N_l)_{l\geqslant 1}$ converges strictly towards $1-M$. It follows from (a), the fact that $\|(1-v_i)(u_i-u_{i-1})^{1/2}h_2\|\leqslant 2^{-i}$ for all integer $i\geqslant 1$, (i) and the previous statement that the sequence $(N_lh_2)_{l\geqslant 1}$ converges in norm towards $(1-M)h_2$. However, we have $h_2\in J_2$ and $N_l\in J_1$ for all integer $l\geqslant 1$. Hence, $N_lh_2\in J$ for all integer $l\geqslant 1$. We then have $(1-M)h_2\in J$. Hence, $(1-M)J_2\subset J$ since $h_2$ is strictly positive.\hfill\break
By combining (d) with the fact that $\|\delta_{J_1}(1-v_i)\delta_{J_1}(u_i-u_{i-1})^{1/2}h_2'\|\leqslant 2^{-i}$ for all integer $i\geqslant 1$, we prove in a similar way that the sequence $(\delta_{J_1}(N_l)h_2')_{l\geqslant 1}$ converges in norm towards $\delta_{J_1}(1-M)h_2'$ and we prove that $\delta_{J_1}(1-M)J_2'\subset J\tens S$.
\end{proof}

\subsection{Kasparov's product}

In this paragraph, we define the Kasparov product in the equivariant framework for actions of measured quantum groupoids on a finite basis.

\medbreak

Let $C$ and $B$ be two $\cal G$-$\Cstar$-algebras. Let $\s E_1$ and $\s E_2$ be $\cal G$-equivariant Hilbert $\Cstar$-modules over $C$ and $B$ respectively. Let $\gamma_2:C\rightarrow\Lin(\s E_2)$ be a $\cal G$-equivariant *-representation. Let us also consider the $\cal G$-equivariant Hilbert $B$-module $\s E:=\s E_1\tens_{\gamma_2}\s E_2$ (cf. \ref{prop18}). For $\xi\in\s E_1$, we denote by $T_{\xi}\in\Lin(\s E_2,\s E)$ the operator defined by $T_{\xi}(\eta):=\xi\tens_{\gamma_2}\eta$ for all $\eta\in\s E_2$.

\medbreak

Let us recall the notion of connection.

\begin{defin} (cf.\ Definition A.1 \cite{CS} and Definition 8 \cite{Skand}) Let $F_2\in\Lin(\s E_2)$. We say that $F\in\Lin(\s E)$ is an $F_2$-connection for $\s E_1$ if for all $\xi\in\s E_1$, we have $T_{\xi}F_2-FT_{\xi}\in\K(\s E_2,\s E)$ and $F_2T_{\xi}^*-T_{\xi}^*F\in\K(\s E,\s E_2)$.
\end{defin}

In the lemmas below, we assume that the Hilbert $A$-module $\s E_1$ is countably generated.

\begin{lem}\label{lem8} (cf.\ Proposition A.2 a) \cite{CS}) Let $F_2\in\Lin(\s E_2)$ such that $[F_2,\,\gamma_2(a)]\in\K(\s E_2)$ for all $a\in A$. Then there exist $F_2$-connections $F$ for $\s E_1$ such that $[F,\,\beta_{\s E}(n^{\rm o})]=0$ for all $n\in N$.
\end{lem}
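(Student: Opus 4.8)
The statement is the equivariant analogue of Proposition A.2 a) of \cite{CS}, adapted to the presence of the source map $\beta$. The plan is to reduce the construction to a single application of the equivariant Kasparov technical theorem \ref{Kaspthm}, applied to a judiciously chosen $\sigma$-unital $\Cstar$-algebra sitting inside $\Lin(\s E_1\oplus A)\tens\Lin(\s E_2)$, exactly as in the non-equivariant case but keeping track of the $N^{\rm o}$-actions so that the commutation condition $[F,\beta_{\s E}(n^{\rm o})]=0$ comes out automatically.

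First I would set up the ambient algebra. Let $J_1:=\K(\s E_1\oplus A)\tens\Lin(\s E_2)$ and consider the ideal $J:=\K(\s E_1\oplus A)\tens\K(\s E_2)$; by \ref{prop1} and the fact that $\cal G$ acts on $\Lin(\s E_2)$ via $\delta_{\K(\s E_2)}$ extended strictly, $J_1$ carries an action $(\beta_{J_1},\delta_{J_1})$ of $\cal G$ with $\delta_{J_1}(J_1)\subset\widetilde\M(J_1\tens S)$, and $J$ is a $(\beta_{J_1},\delta_{J_1})$-invariant ideal (cf.\ \ref{def9}). The key point, as in \cite{CS}, is that an operator $F\in\Lin(\s E)$ is an $F_2$-connection for $\s E_1$ if and only if, identifying $\Lin(\s E)$ with a corner of $\M(J_1)$ through $\K(\s E_1\oplus A)\tens\Lin(\s E_2)\ni k\tens T\mapsto (k\tens 1)(1\tens T)$ acting on $(\s E_1\tens_{\gamma_2}\s E_2)\oplus(\s E_2)$, the two compactness conditions defining a connection translate into membership of certain commutators in $J$. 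Concretely one builds, starting from $1\tens F_2$ (which already satisfies $(1\tens F_2)\iota(A)-\iota(A)(1\tens F_2)\in J$ because $[F_2,\gamma_2(a)]\in\K(\s E_2)$), a candidate connection of the form $M(1\tens F_2)M + \text{(cross terms)}$ using a quasi-central approximate unit $M$; the technical theorem furnishes $M$ with the required properties. I would choose as data for \ref{Kaspthm}: $J_2$ a $\sigma$-unital subalgebra of $\M(J_1;J)$ generated by $\iota_{\s E_1}(\s E_1)\tens 1$ and by $[\,1\tens F_2,\,\iota_{A}(A)\tens 1\,]$ and similar elementary operators; $\s F$ the (separable) space spanned by the inner derivations $\mathrm{Ad}(\iota_{\s E_1}(\xi_i)\tens 1)$ for a countable generating set $(\xi_i)$ of $\s E_1$ (here countable generation of $\s E_1$ is used); and $J_2'$ a $\sigma$-unital subalgebra of $\M(J\tens S)$ arranged so that the $\delta$-covariance defect of the connection lands in $J\tens S$. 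Then \ref{Kaspthm} produces $M\in\M(J_1;J)$ with $0\le M\le1$, $[M,\beta_{J_1}(n^{\rm o})]=0$, $(1-M)J_2\subset J$ and $d(M)\in J$ for $d\in\s F$.

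The connection $F$ is then defined by $F:=M^{1/2}(1\tens F_2)M^{1/2}$ (or, following \cite{CS} more closely, by a sum of such terms built from $M$ and $1-M$), viewed in the corner $\Lin(\s E)$. The conditions $d(M)\in J$ for $d=\mathrm{Ad}(\iota_{\s E_1}(\xi)\tens 1)$ yield $T_\xi F_2-FT_\xi\in\K(\s E_2,\s E)$ and its adjoint version, so $F$ is an $F_2$-connection; the relation $[M,\beta_{J_1}(n^{\rm o})]=0$ together with $\beta_{J_1}(n^{\rm o})=\begin{pmatrix}\beta_{\s E}(n^{\rm o})&0\\0&\beta_{\s E_2}(n^{\rm o})\end{pmatrix}$ (via \ref{propfib} applied to $\s E\oplus\s E_2$) and the fact that $1\tens F_2$ commutes with $1\tens\beta_{\s E_2}(n^{\rm o})=\beta_{J_1}$-image forces $[F,\beta_{\s E}(n^{\rm o})]=0$ for all $n\in N$. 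I expect the main obstacle to be purely bookkeeping: verifying that the four input hypotheses of \ref{Kaspthm} genuinely hold for the chosen $J_2,\s F,J_2'$ — in particular checking $\delta_{J_1}(x)y\in J\tens S$ for $x\in J_1$, $y\in J_2'$, which uses that $\cal G$-equivariance of $\gamma_2$ gives $(\gamma_2\tens\id_S)(x)(\delta_{\K(\s E_2)}(F_2)-q(F_2\tens 1))\in\K$ only up to compacts, so $J_2'$ must be defined as a hereditary subalgebra absorbing exactly this defect. Once the dictionary between connections/covariance defects and ideal membership is pinned down, the rest is a verbatim transcription of the argument of \cite{CS}, and the output is as claimed.
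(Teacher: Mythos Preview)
Your approach is far more elaborate than the statement requires, and the paper takes a much simpler route. The paper uses the Grassmann connection: by Kasparov's stabilization theorem, $\s E_1$ embeds as a complemented submodule $P(\s H_{\widetilde C})$ of the standard module $\s H_{\widetilde C}=\s H\tens_{\GC}\widetilde C$, and one sets
\[
F:=(P\tens_{\gamma_2}1_{\s E_2})(1_{\s H_{\widetilde C}}\tens_{\GC}F_2)(P\tens_{\gamma_2}1_{\s E_2}).
\]
This is an $F_2$-connection by the classical argument of \cite{CS}. The commutation $[F,\beta_{\s E}(n^{\rm o})]=0$ is then immediate: since $\beta_{\s E}(n^{\rm o})=\beta_{\s E_1}(n^{\rm o})\tens_{\gamma_2}1$ acts through the first tensor leg, it commutes with $1_{\s H_{\widetilde C}}\tens_{\GC}F_2$; and $P\tens_{\gamma_2}1$ commutes with $\beta_{\s E}(n^{\rm o})$ because $P$, being the projection onto the submodule $\s E_1\subset\s H_{\widetilde C}$, satisfies $PT\xi=T\xi=TP\xi$ for every $T\in\Lin(\s E_1)$ and $\xi\in\s E_1$. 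The whole proof is a few lines.

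Your plan invokes the equivariant technical theorem \ref{Kaspthm}, which is substantial overkill: the lemma asks only for commutation with $\beta_{\s E}(n^{\rm o})$, not for any control of the $\delta$-covariance defect, so your $J_2'$ is solving a constraint that is not imposed. Beyond that, several steps in your setup are not justified as stated: $\s E_1$ is a Hilbert $C$-module (not $A$-module), so the linking algebra should be $\K(\s E_1\oplus C)$; the algebra $J_1=\K(\s E_1\oplus C)\tens\Lin(\s E_2)$ does not obviously carry an action of $\cal G$ with $\delta_{J_1}(J_1)\subset\widetilde\M(J_1\tens S)$, since $\delta_{\K(\s E_2)}$ extended strictly takes values in $\M(\K(\s E_2)\tens S)$ rather than $\Lin(\s E_2)\tens S$; and the passage from $d(M)\in J$ for $d=\mathrm{Ad}(\iota_{\s E_1}(\xi)\tens 1)$ to the connection conditions $T_\xi F_2-FT_\xi\in\K$ needs a precise dictionary you have not supplied. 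The technical theorem is the right tool for the Kasparov \emph{product} (where many constraints must hold simultaneously, as in \ref{Kasprod}), but for the bare existence of a connection commuting with $\beta_{\s E}$ the explicit Grassmann formula already does the job.
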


\begin{proof}
By Kasparov's stabilization theorem (cf.\ Theorem 2 \cite{Kas1}), we can assume that $\s E_1$ is a submodule of ${\s H}_{\widetilde C}=\s H\tens_{\GC}\widetilde C$ and $\s E_1=P({\s H}_{\widetilde C})$, where $P\in\Lin(\s H_{\widetilde C})$ is a projection. Let
\[
F:=(P\tens_{\gamma_2} 1_{\s E_2})(1_{{\s H}_{\tilde C}}\tens_{\GC} F_2)(P\tens_{\gamma_2} 1_{\s E_2})
\]
be the Grassmann connection (cf.\ A.2 a) \cite{CS}). But, since $\beta_{\s E}(n^{\rm o})=\beta_{\s E_1}(n^{\rm o})\tens_{\gamma}1$ we have $[1_{{\s H}_{\widetilde C}}\tens_{\GC} F_2,\, \beta_{\s E}(n^{\rm o})]=0$ for all $n\in N$. Moreover, if $T\in\Lin(\s E_1)$, we have $PT\xi=T\xi=TP\xi$ for all $\xi\in\s E_1$. Hence, $[P\tens_{\gamma} 1_{\s E_2},\, \beta_{\s E}(n^{\rm o})]=0$ for all $n\in N$ and the result is proved.
\end{proof}

\begin{lem}\label{lem9} (cf.\ 5.1 \cite{BS1}) Let $F_2\in\Lin(\s E_2)$ such that $(\s E_2,\gamma_2,F_2)\in{\sf E}_{\cal G}(C,B)$. Let $F\in\Lin(\s E)$ be a $F_2$-connection for $\s E_1$ such that $[F,\beta_{\s E}(n^{\rm o})]=0$ for all $n\in N$ (cf.\ \ref{lem8}). Then, we have $(\s E,\gamma, F)\in{\sf E}_{\cal G}(\K(\s E_1),B)$, where the left action $\gamma:\K(\s E_1)\rightarrow\Lin(\s E)$ of $\K(\s E_1)$ on $\s E$ is defined by $\gamma(k):= k\tens_{\gamma_2}1$ for all $k\in\K(\s E_1)$. 
\end{lem}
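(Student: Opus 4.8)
The statement is the groupoid analogue of Lemma 5.1 in \cite{BS1}, so the strategy is to reduce everything to the non-equivariant Kasparov picture plus a bookkeeping of the extra data $\beta_{\s E}$ and $\delta_{\K(\s E)}$. First I would check that $(\s E,\gamma,F)$ is a Kasparov $\K(\s E_1)$-$B$-bimodule in the ordinary sense. The module $\s E=\s E_1\tens_{\gamma_2}\s E_2$ is countably generated over $B$ because $\s E_1$ is countably generated over $C$ (Kasparov stabilization) and $\s E_2$ is countably generated over $B$; the left action $\gamma(k)=k\tens_{\gamma_2}1$ is a well-defined $*$-homomorphism $\K(\s E_1)\to\Lin(\s E)$. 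The three compactness conditions $[\gamma(k),F]\in\K(\s E)$, $\gamma(k)(F^2-1)\in\K(\s E)$, $\gamma(k)(F-F^*)\in\K(\s E)$ are exactly the content of Proposition~A.2~b) in \cite{CS} (or 5.1 in \cite{BS1}): one writes $k=\theta_{\xi,\eta}$, uses $\gamma(\theta_{\xi,\eta})=T_\xi T_\eta^*$, and then the connection relations $T_\xi F_2-FT_\xi\in\K(\s E_2,\s E)$ and $F_2T_\eta^*-T_\eta^*F\in\K(\s E,\s E_2)$ together with $(\s E_2,\gamma_2,F_2)\in{\sf E}_{\cal G}(C,B)$ (in particular $F_2^2-1$, $F_2-F_2^*$ times elements of $\gamma_2(C)$ are compact, hence times $K(\s E_2)$ are compact) collapse the expressions modulo $\K(\s E)$. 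Since these are standard I would not reproduce the calculations, only cite \cite{CS} and adapt.

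Next come the two ``extra'' equivariance conditions of Definition~\ref{defEqkaspbimod}. Condition 2, $[F,\beta_{\s E}(n^{\rm o})]=0$ for all $n\in N$, is part of the hypothesis on $F$ (we chose $F$ via \ref{lem8}). I also need that $(\s E,\gamma)$ is a $\cal G$-equivariant $\K(\s E_1)$-$B$-bimodule in the sense of \ref{defbimod}: this is precisely \ref{propleftaction} applied with $\gamma_1=\id:\K(\s E_1)\to\Lin(\s E_1)$ — indeed $\K(\s E_1)$ is a $\cal G$-C*-algebra via $\delta_{\K(\s E_1)}$ (\ref{prop31} 1) and the canonical representation $\K(\s E_1)\hookrightarrow\Lin(\s E_1)$ is $\cal G$-equivariant, so $\gamma(k)=k\tens_{\gamma_2}1$ is $\cal G$-equivariant, and $\s E$ is countably generated, hence $(\s E,\gamma)$ is a $\cal G$-equivariant $\K(\s E_1)$-$B$-bimodule. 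The only genuinely new point is condition 3: for all $x\in\K(\s E_1)\tens S$,
\[
(\gamma\tens\id_S)(x)\bigl(\delta_{\K(\s E)}(F)-q_{\beta_{\s E}\alpha}(F\tens 1_S)\bigr)\in\K(\s E\tens S).
\]

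To handle condition 3 I would use the factorization of the isometry $\s V$ of the tensor product module from \ref{rk17}, namely $\s V=(\s V_1\tens_{\gamma_2\tens\id_S}1)\widetilde{\s V}_2$, together with $\delta_{\K(\s E)}(F)=\s V(F\tens_{\delta_B}1)\s V^*$ (\ref{rk16} 3). The point is that $F$ is built as a Grassmann/$F_2$-connection: after Kasparov stabilization $\s E_1=P(\s H_{\widetilde C})$ with $P$ a projection in $\Lin(\s H_{\widetilde C})$, and $F=(P\tens_{\gamma_2}1)(1_{\s H_{\widetilde C}}\tens_{\GC}F_2)(P\tens_{\gamma_2}1)$. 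Modulo $\K(\s E)$ the operator $\delta_{\K(\s E)}(F)-q_{\beta_{\s E}\alpha}(F\tens 1_S)$, when multiplied by $(\gamma\tens\id_S)(x)=(\gamma_1(\cdot)\tens_{\gamma_2}1\tens\id_S)$-type elements, reduces via the connection relations to the corresponding defect for $F_2$, i.e.\ to $(\gamma_2\tens\id_S)(y)(\delta_{\K(\s E_2)}(F_2)-q_{\beta_{\s E_2}\alpha}(F_2\tens 1_S))\in\K(\s E_2\tens S)$, which holds because $(\s E_2,\gamma_2,F_2)\in{\sf E}_{\cal G}(C,B)$; the passage from $\s E_2$-level compactness to $\s E$-level compactness is exactly the connection argument of \cite{CS}, Proposition~A.2, applied to the "coaction defect'' operator instead of to $F$ itself. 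Concretely, one checks that $\delta_{\K(\s E)}(F)$ is a $\delta_{\K(\s E_2)}(F_2)$-connection for the $\cal G$-equivariant module $\s E_1\tens_{\gamma_2\tens\id_S}(\s E_2\tens S)$ — this follows formally from the factorization of $\s V$ in \ref{rk17} and from the fact that $\s V_1(P\tens_{\delta_C}1)\s V_1^*$ differs from $q_{\beta_{\s E_1}\alpha}(P\tens 1_S)$ by an element that, multiplied by $\K$, is negligible because $P$ lives over the unitalized $\widetilde C$ where the coaction is trivial on the $\s H$-factor. \textbf{The main obstacle} will be precisely this last bookkeeping: keeping track of all the internal tensor product identifications of \ref{rk17} (the maps (\ref{eq29}), (\ref{eq30}) and their analogues) while pushing the compactness of the $F_2$-coaction-defect through the connection, and making sure the projection $q_{\beta_{\s E}\alpha}$ is inserted in the right leg at each stage. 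Once condition 3 is established, all three clauses of \ref{defEqkaspbimod} hold and the triple $(\s E,\gamma,F)$ represents a class in ${\sf E}_{\cal G}(\K(\s E_1),B)$, which is the claim.
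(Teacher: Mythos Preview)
Your treatment of the non-equivariant conditions, of condition~2 of \ref{defEqkaspbimod}, and of the $\cal G$-equivariant $\K(\s E_1)$-$B$-bimodule structure of $(\s E,\gamma)$ is fine and matches the paper (which simply cites \cite{Skand} Proposition~9~(h) and \ref{prop18}, \ref{propleftaction}).

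For condition~3 there are two problems. First, you assume that $F$ is the Grassmann connection $(P\tens_{\gamma_2}1)(1\tens_{\GC}F_2)(P\tens_{\gamma_2}1)$; but the lemma is stated for an \emph{arbitrary} $F_2$-connection $F$ with $[F,\beta_{\s E}(n^{\rm o})]=0$, and this is how it is used later (e.g.\ in the proof of \ref{Kasprod}). Second, the statement ``$\delta_{\K(\s E)}(F)$ is a $\delta_{\K(\s E_2)}(F_2)$-connection for $\s E_1\tens_{\gamma_2\tens\id_S}(\s E_2\tens S)$'' is ill-formed: $\s E_1$ is a Hilbert $C$-module, not a $C\tens S$-module, so this internal tensor product does not exist as written, and even under the charitable reading $\s E_1\tens_{(\gamma_2\tens\id_S)\delta_C}(\s E_2\tens S)$ this module is not $\s E\tens S$. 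More substantively, the connection hypothesis on $F$ only gives you control of $FT_{\xi_1}-T_{\xi_1}F_2$ for $\xi_1\in\s E_1$, not for $T_\xi$ with $\xi\in\s E_1\tens S$; bridging this gap is the heart of the argument and you do not address it.

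The paper's route is more direct and does not need the Grassmann form. One first reduces condition~3 to showing $(\s V(F\tens_{\delta_B}1)\s V^*-F\tens 1_S)T_\xi\in\K(\s E_2\tens S,\s E\tens S)$ for all $\xi\in q_{\beta_{\s E_1}\alpha}(\s E_1\tens S)$, where $T_\xi\in\Lin(\s E_2\tens S,\s E\tens S)$ is $\eta\mapsto\xi\tens_{\gamma_2\tens\id_S}\eta$ under (\ref{eq30}). The key step, which you are missing, is the density $q_{\beta_{\s E_1}\alpha}(\s E_1\tens S)=[\delta_{\s E_1}(\xi_1)x\,;\,\xi_1\in\s E_1,\,x\in C\tens S]$ (condition~2 of \ref{hilbmodequ}): writing $\xi=\delta_{\s E_1}(\xi_1)x=\s V_1T_{\xi_1}(x)$ with $x\in q_{\beta_C\alpha}(C\tens S)$ and $y=(\gamma_2\tens\id_S)(x)$, one computes from the factorization $\s V=(\s V_1\tens_{\gamma_2\tens\id_S}1)\widetilde{\s V}_2$ the explicit identity $\s V^*T_\xi=(T_{\xi_1}\tens_{\delta_B}1)\s V_2^*y$. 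This is what converts the problem into one for the single vector $\xi_1\in\s E_1$, where the connection hypothesis $FT_{\xi_1}-T_{\xi_1}F_2\in\K(\s E_2,\s E)$ applies; combined with the equivariance of $(\s E_2,F_2)$ (namely $(\s V_2(F_2\tens_{\delta_B}1)\s V_2^*-F_2\tens 1_S)y\in\K(\s E_2\tens S)$) and $[F_2\tens 1_S,y]\in\K(\s E_2\tens S)$, the computation closes.
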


In the following proof, we use all the notations of \ref{rk17}.

\begin{proof}
{\setlength{\baselineskip}{1.15\baselineskip}
The pair $(\s E,\gamma)$ is a $\cal G$-equivariant Hilbert $\K(\s E_1)$-$B$-bimodule (cf.\ \ref{prop18} and \ref{propleftaction} where $A:=\K(\s E_1)$ and $\gamma_1$ is the inclusion map $\K(\s E_1)\subset\Lin(\s E_1)$). By Proposition 9 (h) \cite{Skand}, it then remains to prove that $(\s V(F\tens_{\delta_B}1)\s V^*-q_{\beta_{\s E}\alpha}(F\tens 1_S))(\gamma\tens\id_S)(x)\in\K(\s E\tens S)$ for all $x\in\K(\s E_1)\tens S$. It suffices to prove that 
$
(\s V(F\tens_{\delta_B}1)\s V^*-q_{\beta_{\s E}\alpha}(F\tens 1_S))T_{\xi}\in\K(\s E_2\tens S,\s E\tens S)
$  
for all $\xi\in\s E_1\tens S$, where $T_{\xi}\in\Lin(\s E_2\tens S,\s E\tens S)$ is the operator defined for all $\eta\in\s E_2\tens S$ by $T_{\xi}(\eta):=\xi\tens_{\gamma_2\tens\id_S}\eta$ up to (\ref{eq30}).
Let $\xi\in\s E_1\tens S$ and $\xi':= q_{\beta_{\s E_1}\alpha}\xi$, we have $q_{\beta_{\s E}\alpha}T_{\xi}=T_{\xi'}$. Since $[F,\beta_{\s E}(n^{\rm o})]=0$ for all $n\in N$, we have $q_{\beta_{\s E}\alpha}(F\tens 1_S)T_{\xi}=(F\tens 1_S)T_{\xi'}$. Moreover, since $\s V^*=\s V^*q_{\beta_{\s E}\alpha}$, we have $\s V^*T_{\xi}=\s V^*T_{\xi'}$. Hence, $\s V(F\tens_{\delta_B}1)\s V^*T_{\xi}=\s V(F\tens_{\delta_B}1)\s V^*T_{\xi'}$. Thus, we have
$
(\s V(F\tens_{\delta_B}1)\s V^*-q_{\beta_{\s E}\alpha}(F\tens 1_S))T_{\xi}=(\s V(F\tens_{\delta_B}1)\s V^*-F\tens 1_S)T_{\xi'}.
$
Therefore, we have to prove that $(\s V(F\tens_{\delta_B}1)\s V^*-F\tens 1_S)T_{\xi}\in\K(\s E_2\tens S,\s E\tens S)$ for all $\xi\in q_{\beta_{\s E_1}\alpha}(\s E_1\tens S)$. Since $\{\delta_{\s E_1}(\xi_1)x\,;\,\xi_1\in\s E_1,\,x\in C\tens S\}$ is a total subset of $q_{\beta_{\s E_1}\alpha}(\s E_1\tens S)$, it suffices to consider the case where $\xi=\delta_{\s E_1}(\xi_1)x$ with $\xi_1\in\s E_1$ and $x\in q_{\beta_C\alpha}(C\tens S)$ (cf.\ \ref{hilbmodequ} 2 and \ref{rk2} 3). Let $y:=(\gamma_2\tens\id_S)(x)\in\Lin(\s E_2\tens S)$. We have $\xi=\s V_1T_{\xi_1}(x)$. Hence, we have $T_{\xi}=(\s V_1\tens_{\gamma_2\tens\id_S}1)T_{\xi_1}y$. Hence, $\s V_{\vphantom{2}}^*T_{\xi}=\widetilde{\s V}_2^*T_{\xi_1}y$. By a direct computation, we have $\widetilde{\s V}_2^*(\xi_1\tens_{(\gamma_2\tens\id_S)\delta_C}\s V_2\eta)=(T_{\xi_1}\tens_{\delta_B}1)\eta$ for all $\eta\in\s E_2\tens_{\delta_B}(B\tens S)$. Since $\s V_2^{\vphantom{*}}\s V_2^*=q_{\beta_{\s E_2}\alpha}$ and $\s V_2^*\s V_2^{\vphantom{*}}=1$, we have $\widetilde{\s V}_2^*T_{\xi_1}\eta=(T_{\xi_1}\tens_{\delta_B}1)\s V_2^*\eta$ for all $\eta\in q_{\beta_{\s E_2}\alpha}(\s E_2\tens S)$. In particular, we have $\widetilde{\s V}_2^*T_{\xi_1}y=(T_{\xi_1}\tens_{\delta_B}1)\s V_2^*y$ (indeed, we have $q_{\beta_{\s E_2}\alpha}y=y$ since $x\in q_{\beta_C\alpha}(C\tens S)$). Thus, we have $\s V^*T_{\xi}=(T_{\xi_1}\tens_{\delta_B}1)\s V_2^*y$. In particular, we have
$
(F\tens_{\delta_B}1)\s V^*T_{\xi}=(FT_{\xi_1}\tens_{\delta_B}1)\s V_2^*y.
$
For all $\xi_2\in\s E_2$, $\zeta\in\s E$ and $s\in S$, we have 
\begin{center}
$\theta_{\zeta,\xi_2}\tens_{\delta_B}1=T_{\zeta}^{\vphantom{*}}T_{\xi_2}^*$ and $(T_{\xi_2}^*\tens_{\delta_B}1)\s V_2^*(1_{\s E_2}\tens s)=((1_{\s E_2}\tens s^*)\s V_2T_{\xi_2})_{\vphantom{2}}^*\in\K(\s E_2\tens S,B\tens S)$.
\end{center} 
Hence, $(k\tens_{\delta_B}1)\s V_2^*(1_{\s E_2}\tens s)\in\K(\s E_2\tens S,\s E\tens_{\delta_B}(B\tens S))$ for all $k\in\K(\s E_2,\s E)$ and $s\in S$. In particular, since $F$ is a $F_2$-connection for $\s E_1$ and $y\in\Lin(\s E_2)\tens S$, we have
\begin{center}
$
((FT_{\xi_1}-T_{\xi_1}F_2)\tens_{\delta_B}1)\s V_2^*y\in\K(\s E_2\tens S,\s E\tens_{\delta_B}(B\tens S)).
$
\end{center}
However, $(\s V_2(F_2\tens_{\delta_B}1)\s V_2^*-F_2\tens 1_S)y\in\K(\s E_2\tens S)$ (cf.\ \ref{rk13} 2), $[F_2\tens 1_S,\, y]\in\K(\s E_2\tens S)$ (since $y\in\gamma_2(C)\tens S$) and $\s V(T_{\xi_1}\tens_{\delta_B}1)\s V_2^*y=T_{\xi}$ (since $\s V\s V^*=q_{\beta_{\s E}\alpha}$ and $q_{\beta_{\s E}\alpha}T_{\xi}=T_{\xi}$). This completes the proof.\qedhere
\par}
\end{proof}

\begin{defin}\label{def7} (cf.\ D\'efinition 5.2 \cite{BS2}) Let $A$, $C$ and $B$ be three $\cal G$-$\Cstar$-algebras. Let $(\s E_1,\gamma_1,F_1)\in{\sf E}_{\cal G}(A,C)$ and $(\s E_2,\gamma_2,F_2)\in{\sf E}_{\cal G}(C,B)$. Let $\s E:=\s E_1\tens_{\gamma_2}\s E_2$ be the $\cal G$-equivariant Hilbert $A$-$B$-bimodule defined in \ref{prop18} and \ref{propleftaction}, where $\gamma:A\rightarrow\Lin(\s E)\,;\, a\mapsto\gamma_1(a)\tens_{\gamma_2}1$ denotes the left action of $A$ on $\s E$. We denote by $F_1\#_{\cal G}F_2$\index[symbol]{scb@$\#_{\cal G}$} the set of operators $F\in\Lin(\s E)$ satisfying the following conditions:
\begin{enumerate}[label=(\alph*)]
\item $(\s E,F)\in{\sf E}_{\cal G}(A,B)$;
\item $F$ is a $F_2$-connexion;
\item for all $a\in A$, the image of $\gamma(a)[F_1\tens_{\gamma_2}1,\,F]\gamma(a^*)$ in $\Lin(\s E)/\K(\s E)$ is positive.\qedhere
\end{enumerate}
\end{defin}

With the notations and hypothesis of the above definition, we have the following result.

\begin{thm}\label{Kasprod} (cf.\ Th\'eor\`eme 5.3 \cite{BS2}) We assume that the C*-algebra $A$ is separable. Then, the set $F_1\#_{\cal G}F_2$ is nonempty and the class of $(\s E,F)$ in $\kk_{\cal G}(A,B)$ is independent of $F\in F_1\#_{\cal G}F_2$ and only depends on the class of $(\s E_1,F_1)$ in $\kk_{\cal G}(A,C)$ and that of $(\s E_2,F_2)$ in $\kk_{\cal G}(C,B)$.
\end{thm}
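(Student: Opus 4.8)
The plan is to follow the classical strategy of Baaj--Skandalis (Théorème 5.3 \cite{BS2}), adapted to the groupoid setting using the generalized Kasparov technical theorem \ref{Kaspthm} proved above, exactly as \S 5 \cite{BS1} does for quantum groups. The argument splits naturally into two halves: first the existence of an element of $F_1\#_{\cal G}F_2$, and then the well-definedness (independence of all choices up to homotopy) of the resulting class in $\kk_{\cal G}(A,B)$.

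For existence, I would start from $(\s E_1,\gamma_1,F_1)\in{\sf E}_{\cal G}(A,C)$ and $(\s E_2,\gamma_2,F_2)\in{\sf E}_{\cal G}(C,B)$ and first use Kasparov's stabilization theorem to reduce $\s E_1$ to a submodule $P(\s H_{\widetilde C})$; by \ref{lem8} there is an $F_2$-connection $G\in\Lin(\s E)$ with $[G,\beta_{\s E}(n^{\rm o})]=0$, which may moreover be taken so that $G=G^*$ and $0\le G+1\le 2$ after a routine adjustment (replacing $G$ by $(G+G^*)/2$ composed with a cut-off; connections form an affine space modulo $\K$). Set $J_1:=\K(\s E)$ with the action $(\beta_{J_1},\delta_{J_1})$ coming from \ref{prop1}, and let $J:=\K(\s E)$ itself play the role of the invariant ideal (here $J_1=J$). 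One then applies \ref{Kaspthm} with: $J_2$ the $\sigma$-unital subalgebra generated by $\gamma(A)$, $\{[G,\gamma(a)]\}$, $\gamma(a)(G^2-1)$ and the ``connection defects'' $T_\xi F_2-GT_\xi$, $F_2T_\xi^*-T_\xi^*G$ (all of which lie in $\K(\s E)$ by hypothesis and by the connection property); $\s F$ the separable space of derivations generated by the inner derivations ${\rm Ad}(\gamma(a))$ and by the single derivation measuring $\delta_{\K(\s E)}(G)-q_{\beta_{\s E}\alpha}(G\tens 1_S)$ against $\gamma(A)\tens S$; and $J_2'$ a $\sigma$-unital subalgebra of $\M(J\tens S)$ containing $(\gamma\tens\id_S)(\delta_A(A))$. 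This produces $M\in\M(\K(\s E);\K(\s E))=\Lin(\s E)$ with $0\le M\le 1$, commuting with $\beta_{\s E}(N^{\rm o})$, with the right compactness properties. Then, following \cite{BS1,BS2}, one sets
\[
F:=M^{1/2}(F_1\tens_{\gamma_2}1)+ (1-M)^{1/2}G
\]
(using \ref{lem7} to keep $M^{1/2}$, $(1-M)^{1/2}$ commuting with $\beta_{\s E}(N^{\rm o})$). A computation of the same type as in \S 5 \cite{BS1} shows $F\in F_1\#_{\cal G}F_2$: conditions (a) and (b) of \ref{def7} use the compactness output of \ref{Kaspthm} (for the Kasparov conditions \eqref{eq20}, for condition 2 of \ref{defEqkaspbimod} since $[F,\beta_{\s E}(n^{\rm o})]=0$, and for condition 3 via the estimate $\delta_{\K(\s E)}(M)-q_{\beta_{\s E}\alpha}(M\tens 1_S)\in\widetilde{\M}(\K(\s E)\tens S)$ applied inside $(\gamma\tens\id_S)(x)\cdot(-)$), while the positivity condition (c) follows formally from $[F_1\tens_{\gamma_2}1,F]\equiv [F_1\tens_{\gamma_2}1,(1-M)^{1/2}G]$ modulo $\K$ and the fact that $G$ is a connection for the self-adjoint $F_1\tens_{\gamma_2}1$, exactly as in the non-equivariant case.

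For well-definedness, I would argue in the standard three steps, each time producing an explicit homotopy living in ${\sf E}_{\cal G}(A,B[0,1])$ built with the action of $\cal G$ on $B[0,1]$ and $\s E[0,1]$ from \ref{def3} 4--5. (i) If $F,F'\in F_1\#_{\cal G}F_2$ for the same representatives, one shows $(\s E,F)$ and $(\s E,F')$ are homotopic: the set $F_1\#_{\cal G}F_2$ is convex modulo a linear-path operator homotopy (cf.\ \ref{ex3} 1,2), and \ref{lem9} together with the ``relative'' technical theorem \ref{Kaspthm} (now with $J_1=\K(\s E[0,1])$, $J=\K(\s E)(0,1)$ the ideal of functions vanishing at the endpoints, $J_2$ generated by the two $M$'s, etc.) produces a path of operators in $F_1\#_{\cal G}F_2$ interpolating $F$ and $F'$; this is where the ``invariant ideal'' generality in \ref{Kaspthm} is used essentially. (ii) Homotopy invariance in the second variable $\s E_2$: a homotopy $(\s E_2',\gamma_2',F_2')\in{\sf E}_{\cal G}(C,B[0,1])$ between $(\s E_2,F_2)$ and $(\s E_2',F_2')$ induces, by \ref{propleftaction}/\ref{prop18} and a $C[0,1]$-linear version of the construction, a homotopy between the two products; one checks equivariance using \ref{prop42}. (iii) Homotopy invariance in the first variable $\s E_1$ is handled symmetrically (or reduced to (ii) by the standard rotation/Fredholm-picture trick). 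Finally, bilinearity with respect to direct sums and the fact that degenerate bimodules give $0$ are immediate from the constructions, so the product descends to a map $\kk_{\cal G}(A,C)\times\kk_{\cal G}(C,B)\to\kk_{\cal G}(A,B)$.

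The main obstacle is the bookkeeping in the existence step: one must verify that all the algebras $J_2,J_2',\s F$ fed into \ref{Kaspthm} really are $\sigma$-unital (this uses separability of $A$ and countable generation of $\s E_1,\s E_2$, hence of $\s E=\s E_1\tens_{\gamma_2}\s E_2$ and $\K(\s E)$, cf.\ \ref{rk18} and \ref{propdef8}), and that the three defining conditions of \ref{def7} survive the extra equivariance requirements 2 and 3 of \ref{defEqkaspbimod} — in particular condition 3, which has no analogue in ordinary $\kk$. The key point making this work is that $M$ commutes with $\beta_{\s E}(N^{\rm o})$ and that $\delta_{\K(\s E)}(M)-q_{\beta_{\s E}\alpha}(M\tens 1_S)$ is a relative multiplier of $\K(\s E)\tens S$, which is precisely the fourth bullet of the conclusion of \ref{Kaspthm}; combined with the hypothesis that $G$ and $F_1\tens_{\gamma_2}1$ each satisfy condition 3, a direct (if lengthy) computation of $\s V(F\tens_{\delta_B}1)\s V^*-q_{\beta_{\s E}\alpha}(F\tens 1_S)$ modulo $\K(\s E\tens S)$, expanding $F=M^{1/2}(F_1\tens_{\gamma_2}1)+(1-M)^{1/2}G$ and using $[M\tens 1_S,q_{\beta_{\s E}\alpha}]=0$, closes the argument. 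I would present this computation at the level of detail of \S 5 \cite{BS1}, citing \ref{rk13} for the equivalent reformulations of condition 3, rather than reproducing every term.
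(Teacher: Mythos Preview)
Your overall strategy is correct and matches the paper's, but your explicit choice of input data for the technical theorem \ref{Kaspthm} would not work, and this is where the real content of the existence proof lies.

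The main problem is your choice $J_1=J=\K(\s E)$. With $J_1=J$, the multiplier algebra $\M(J_1;J)$ is all of $\Lin(\s E)$ and the conclusions of \ref{Kaspthm} become essentially vacuous: they no longer force $M$ to interact correctly with $\K(\s E_1)\tens_{\gamma_2}1$. The paper takes instead $J_1:=\K(\s E_1)\tens_{\gamma_2}1+\K(\s E)$ with $J:=\K(\s E)$ a proper invariant ideal. This is essential for the verification of condition 3 of \ref{defEqkaspbimod}. Indeed, from the equivariance of $(\s E_1,F_1)$ one only gets
\[
x\,(\delta_{\K(\s E)}(F_1\tens_{\gamma_2}1)-q_{\beta_{\s E}\alpha}((F_1\tens_{\gamma_2}1)\tens 1_S))\in(\K(\s E_1)\tens S)\tens_{\gamma_2\tens\id_S}1\subset J_1\tens S,
\]
which is \emph{not} in $\K(\s E)\tens S$ in general. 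It is precisely the conclusion $M^{1/2}\in\M(J_1;J)$ that lets you multiply this into $\K(\s E)\tens S$. With your setup this step fails.

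Secondly, your choices of $J_2$, $\s F$, $J_2'$ need correction. Your $J_2$ contains $\gamma(A)$ and the connection defects $T_\xi F_2-G T_\xi\in\K(\s E_2,\s E)$; the former is not in $\M(J_1;J)$ in the correct setup, and the latter are not even operators on $\s E$. The paper's $J_2$ is generated by $G-G^*$, $1-G^2$, $[G,F_1\tens_{\gamma_2}1]$ and $[G,\gamma(a)]$, all of which lie in $\M(J_1;J)$ by the connection property (Prop.~9 \cite{Skand}). Your $\s F$ omits ${\rm Ad}(F_1\tens_{\gamma_2}1)$ and ${\rm Ad}(G)$; without the first you cannot conclude $[F_1\tens_{\gamma_2}1,M]\in\K(\s E)$, which is needed both for the positivity condition (c) and for commuting $M^{1/2}$ past $F_1\tens_{\gamma_2}1$ in the equivariance computation. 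Finally, your $J_2'$ (containing $(\gamma\tens\id_S)\delta_A(A)$) is not what is needed: the paper takes $J_2'$ generated by $q(1\tens k)(\s V(G\tens_{\delta_B}1)\s V^*-q(G\tens 1_S))$ and its adjoint, and the hypothesis $\delta_{J_1}(x)J_2'\subset J\tens S$ is verified via Lemma \ref{lem9}. The output $(q-\delta_{J_1}(M))J_2'\subset J\tens S$ is what controls the $(1-M)^{1/2}G$ part of the equivariance defect.

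Once the inputs to \ref{Kaspthm} are fixed as above, the computation verifying $F:=M^{1/2}(F_1\tens_{\gamma_2}1)+(1-M)^{1/2}G\in F_1\#_{\cal G}F_2$ proceeds along the lines you describe, and your sketch of the well-definedness argument is fine.
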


\begin{proof} The proof is basically identical to that of the non-equivariant case (cf.\ \cite{Kas3,Skand}) or the equivariant case for actions of quantum groups (cf.\ \cite{BS1}). Let us prove that $F_1\#_{\cal G}F_2$ is nonempty. Let us denote $q:= q_{\beta_{\s E}\alpha}$ for short. Let $\s V\in\Lin(\s E\tens_{\delta_B}(B\tens S),\s E\tens S)$ be the isometry associated with the action of $\cal G$ on $\s E$. Let $T$ be a $F_2$-connection for $\s E_1$ such that $[T,\,\beta_{\s E}(n^{\rm o})]=0$ for all $n\in N$ (cf.\ \ref{lem8}). Let us fix a strictly positive element $k\in S$ (cf.\ \ref{rk18}). Let us define:
\begin{itemize}
\item $J_1:=\K(\s E_1)\tens_{\gamma_2}1 + \K(\s E)\subset\Lin(\s E)=\M(\K(\s E))$, $J:=\K(\s E)$;
\item $J_2:=\Cstar(\{T - T^*, \, 1 - T^2, \, [T,\,F_1\tens_{\gamma_2}1]\}\cup\{[T,\gamma(a)]\,;\, a\in A\})\subset\Lin(\s E)$;
\item $\s F:=[\{{\rm Ad}(F_1\tens_{\gamma_2} 1), \, {\rm Ad}(T)\}\cup\{{\rm Ad}(\gamma(a)) \,;\, a\in A\}]\subset\der(J_1)$;
\item $J_2':=\Cstar( \{ q(1\tens k)( \s V (T \tens_{\delta_B} 1) \s V^* - q(T\tens 1_S) ),\, ( \s V (T \tens_{\delta_B} 1) \s V^* - q(T\tens 1_S ))(1\tens k)q \})$ $\subset\Lin(\s E\tens S)$.
\end{itemize}
Then, we have:
\begin{itemize}
\item $J$ is an invariant closed two-sided ideal of $J_1$;
\item $J_2$ is a $\Cstar$-subalgebra of $\M(J_1;J)$; by assumption $A$ is separable, then so is $J_2$; hence, $J_2$ is $\sigma$-unital;
\item $\s F$ is a separable (since $A$ separable);
\item $J_2'$ is a $\sigma$-unital $\Cstar$-subalgebra of $\M(J\tens S)$ (separable).
\end{itemize}
Let $x\in\K(\s E_1)\tens_{\gamma_2} 1$. We have $\delta_{J_1}(x)=\s V(x\tens_{\delta_B} 1)\s V^*$. Since $\s V^*\s V=1$, we have
\[
[\delta_{J_1}(x),\, \s V(T\tens_{\delta_B} 1)\s V^*]=\s V([x,T]\tens_{\delta_B} 1)\s V^*
\]
and
\[
\s V([x,T]\tens_{\delta_B} 1)\s V^*=\delta_J([x,T])\in\widetilde{\M}(\K(\s E)\tens S) \quad \text{{\rm(}cf.\ Proposition 9 (e) \cite{Skand}{\rm)}}.
\]
Hence,
\begin{equation}\label{eq31}
[\delta_{J_1}(x),\, \s V(T\tens_{\delta_B} 1)\s V^*]=\s V([x,\, T]\tens_{\delta_B} 1)\s V^*\in\widetilde{\M}(\K(\s E)\tens S). 
\end{equation}
Moreover, by Lemma \ref{lem9} we have
$
\delta_{J_1}(x)(1_{\s E}\tens k)(\s V(T\tens_{\delta_B}1)\s V^*-q(T\tens 1_S))\in\K(\s E\tens S)
$
and
$
(\s V(T\tens_{\delta_B}1)\s V^*-q(T\tens 1_S))\delta_{J_1}(x)(1_{\s E}\tens k)\in\K(\s E\tens S).
$
Hence, 
\[
[\delta_{J_1}(x)(1_{\s E}\tens k),\,\s V(T\tens_{\delta_B}1)\s V^*]=[\delta_{J_1}(x)(1_{\s E}\tens k),\,q(T\tens 1_S)] \quad \text{mod.\ }\K(\s E\tens S).
\]
By combining the fact that $\delta_{J_1}(x)(1_{\s E}\tens k)\in q(J_1\tens S)$ with the fact that $[T,\, y]\in\K(\s E\tens S)$ for all $y\in J_1$ (cf.\ 9 (h) \cite{Skand}), we obtain $[\delta_{J_1}(x)(1\tens k),\,q(T\tens 1_S)]\in\K(\s E\tens S)$. Hence,
\begin{equation}\label{eq32}
[\delta_{J_1}(x)(1_{\s E}\tens k),\,\s V(T\tens_{\delta_B} 1)\s V^*] \in \K(\s E\tens S). 
\end{equation}
We also have
\begin{align*}
\delta_{J_1}(x)[\s V(T\tens_{\delta_B} 1)\s V^* & -q(T\tens 1_S),\, 1_{\s E}\tens k]\\
&=\delta_{J_1}(x)(\s V(T\tens_{\delta_B} 1)\s V^*-q(T\tens 1))(1_{\s E}\tens k) \\
&\qquad-\delta_{J_1}(x)(1_{\s E}\tens k)(\s V(T\tens_{\delta_B} 1)\s V^*-q(T\tens 1_S))\\
&=\delta_{J_1}(x)\s V(T\tens_{\delta_B} 1)\s V^*(1_{\s E}\tens k) - \delta_{J_1}(x)(T\tens 1_S)(1_{\s E}\tens k) \\
&\qquad - \delta_{J_1}(x)(1_{\s E}\tens k)\s V(T\tens_{\delta_B} 1)\s V^* + \delta_{J_1}(x)(1_{\s E}\tens k)q(T\tens 1_S)\\
&=\delta_{J_1}(x)\s V(T\tens_{\delta_B} 1)\s V^*(1_{\s E}\tens k) - \delta_{J_1}(x)(1_{\s E}\tens k)\s V(T\tens_{\delta_B} 1)\s V^* \\
&\qquad -\delta_{J_1}(x)(1_{\s E}\tens k)(T\tens 1_S)(1-q).
\end{align*}
Hence,
\begin{multline*}
\delta_{J_1}(x)[\s V(T\tens_{\delta_B} 1)\s V^* - q(T\tens 1_S),\, (1_{\s E}\tens k)q]
=\delta_{J_1}(x)[\s V(T\tens_{\delta_B} 1)\s V^* - q(T\tens 1_S),\, 1_{\s E}\tens k]q \\
=\delta_{J_1}(x)\s V(T\tens_{\delta_B} 1)\s V^*(1_{\s E}\tens k)q - \delta_{J_1}(x)(1_{\s E}\tens k)\s V(T\tens_{\delta_B} 1)\s V^*.
\end{multline*}
By applying (\ref{eq32}), we have
\begin{multline*}
\delta_{J_1}(x)[\s V(T\tens_{\delta_B} 1)\s V^* -q(T\tens 1_S),\, (1_{\s E}\tens k)q]\\
=[\delta_{J_1}(x),\,\s V(T\tens_{\delta_B}1)\s V^*](1_{\s E}\tens k)q \quad \text{mod.\ }\K(\s E\tens S).
\end{multline*}
By applying (\ref{eq31}), we finally obtain
\begin{equation*}\label{eq16}
\delta_{J_1}(x)[\s V(T\tens_{\delta_B} 1)\s V^*-q(T\tens 1_S),\, (1_{\s E}\tens k)q] \in \K(\s E\tens S). \tag{1}
\end{equation*}
By combining the previous relation with
\begin{equation*}\label{eq17}
\delta_{J_1}(x)(1_{\s E}\tens k)( \s V(T\tens_{\delta_B}1)\s V^* - q(T\tens 1_S))\in\K(\s E\tens S)\quad\text{{\rm(}cf.\ \ref{lem9}{\rm)}}, \tag{2}
\end{equation*}
we have
\begin{equation*}
\delta_{J_1}(x)( \s V(T\tens_{\delta_B}1)\s V^* - q(T\tens 1_S))(1_{\s E}\tens k)q\in\K(\s E\tens S). 
\end{equation*}
Since the above holds true when replacing $T$ by $T^*$ (cf.\ \ref{rk13} 2 and 9 (b) \cite{Skand}), it then follows that $\delta_{J_1}(x)y\in J\tens S$ for all $x\in J_1$ and $y\in J_2'$. We can apply Theorem \ref{Kaspthm}. Let us consider $M$ as in the theorem. Let
\[
F:=M^{1/2}(F_1\tens_{\gamma_2} 1) - (1-M)^{1/2}T.
\]
For all $n\in N$, we have $[F,\, \beta_{\s E}(n^{\rm o})]=0$. Indeed, since $(\s E_1,F_1)\in{\sf E}_{\cal G}(A,C)$ we have $[F_1,\,\beta_{\s E_1}(n^{\rm  o})]=0$. Hence, $[F_1\tens_{\gamma_2}1,\,\beta_{\s E}(n^{\rm  o})]=0$. We also have $[T,\,\beta_{\s E}(n^{\rm o})]=0$ by assumption. By Lemma \ref{lem7} ($[M,\,\beta_{\s E}(n^{\rm o})]=0$ and $M$ is positive), we also have $[M^{1/2},\,\beta_{\s E}(n^{\rm o})]=0$. Similarly, we have $[(1-M)^{1/2},\,\beta_{\s E}(n^{\rm o})]=0$. According to the non-equivariant case, it only remains to prove that
\[
x(\s V(F\tens_{\delta_B}1)\s V^* - q(F\tens 1_S))\in\K(\s E\tens S),\quad \text{for all } x\in(\gamma\tens\id_S)(A\tens S).
\] 
Let us fix $x\in(\gamma\tens\id_S)(A\tens S)$. We have $\s V(F\tens_{\delta_B}1)\s V^*=\delta_J(F)$. By combining the formula $\delta_J(F_1\tens_{\gamma_2}1)=\delta_{\K(\s E_1)}(F_1)\tens_{\gamma_2\tens\id_S} 1$ with the fact that the pair $(\s E_1,F_1)$ is a $\cal G$-equivariant Kasparov $A$-$C$-bimodule, we obtain
\begin{multline*}
x(\delta_J(F_1\tens_{\gamma_2}1)-q((F_1\tens_{\gamma_2}1)\tens 1_S))=x(\delta_{\K(\s E_1)}(F_1)-q_{\beta_{\s E_1}\alpha}(F_1\tens 1_S))\tens_{\gamma_2\tens\id_S}1 \\ \in(\K(\s E_1)\tens S)\tens_{\gamma_2\tens\id_S}1\subset J_1\tens S.
\end{multline*}
Since $M$ is an element of the $\Cstar$-subalgebra $\M(J_1;J)$ of $\M(J_1)$, we have $M^{1/2}\in\M(J_1;J)$. Hence,
\begin{equation}\label{eq34}
(M^{1/2}\tens 1_S)x(\delta_J(F_1\tens_{\gamma_2}1)-q((F_1\tens_{\gamma_2}1)\tens 1_S))\in J\tens S=\K(\s E\tens S). 
\end{equation}
Let $a\in A$ and $s\in S$, we have
\begin{multline*}
(\gamma\tens\id_S)(a\tens s)(\delta_J(M^{1/2}) -  q(M^{1/2}\tens 1_S)) \delta_J(F_1\tens_{\gamma_2}1)
\\=(\gamma(a)M^{1/2}\tens s)\delta_J(M^{1/2})-(\gamma(a)M^{1/2}\tens s)\delta_J(F_1\tens_{\gamma_2}1)
\end{multline*}
(since $[M\tens 1_S,\, q]=0$ we have $[M^{1/2}\tens 1_S,\, q]=0$, cf.\ \ref{lem7}) and 
\[
(1_J\tens s)\delta_J(F_1\tens_{\gamma_2}1),\; (1_J\tens s)\delta_J(M^{1/2})\in J\tens S.
\] 
Hence,
\begin{equation}\label{eq35}
x(\delta_J(M^{1/2}) - q(M^{1/2}\tens 1_S)) \delta_J(F_1\tens_{\gamma_2}1)\in\K(\s E\tens S).
\end{equation}
For all $a\in A$ and $s\in S$, since $[\gamma(a),\,M]\in J$ (recall that ${\rm Ad}(\gamma(a))\in\s F$) we have 
\[
[(\gamma\tens\id_S)(a\tens s),\, M^{1/2}\tens 1_S]=[\gamma(a),\, M^{1/2}]\tens s\in J\tens S \quad \text{{\rm(}cf.\ \ref{lem7}{\rm)}}. 
\]
Thus, we have $[x,\,M^{1/2}\tens 1_S]\in\K(\s E\tens S)$. Hence,
\begin{equation}\label{eq36}
[x,\, M^{1/2}\tens 1_S](\delta_J(F_1\tens_{\gamma_2}1)-q((F_1\tens_{\gamma_2}1)\tens 1_S))\in\K(\s E\tens S).
\end{equation}
By summing up (\ref{eq34}), (\ref{eq35}) and (\ref{eq36}), we have proved that
\[
x(\delta_J(M^{1/2}(F_1\tens_{\gamma_2}1))-q(M^{1/2}(F_1\tens_{\gamma_2}1)\tens 1_S))\in\K(\s E\tens S)
\]
(recall that  $[M^{1/2}\tens 1_S,\,q]=0$). Let  
\[
E:=\{u\in\M(J_1;J)\,;\,\delta_J(u)-q(u\tens 1_S)\in\widetilde{\M}(J\tens S)\, \text{ and }\, [q,\,u\tens 1_S]=0\}.
\] 
Then $E$ is a closed subalgebra of $\M(J_1;J)$. Indeed, $E$ is clearly a closed subspace of $\M(J_1;J)$. Moreover, for all $u,v\in E$ we have
\begin{align*}
\delta_J(uv)-q(uv\tens 1_S) & =\delta_J(u)\delta_J(v)-q(u\tens 1_S)q(v\tens 1_S) \\
& = \delta_J(u)(\delta_J(v)-q(v\tens 1_S)) + (\delta_J(u)-q(u\tens 1_S))q(v\tens 1_S) \\
&= (\delta_J(u) \!-\!q(u\tens 1_S))(\delta_J(v)\!-\!q(v\tens 1_S))\!+\!(u\tens 1_S)(\delta_J(v)\!-\!q(v\tens 1_S))\\
&\qquad +(\delta_J(u)-q(u\tens 1_S))(v\tens 1_S)\in\widetilde{\M}(J\tens S)
\end{align*}
and $[q,uv\tens 1_S]=0$. Hence, we have $\delta_J((1-M)^{1/2})-q((1-M)^{1/2}\tens 1_S)\in\widetilde{\M}(J\tens S)$. Therefore, we have
\begin{equation}\label{eq18}
x(\delta_J((1-M)^{1/2})-q((1-M)^{1/2}\tens 1_S))\in\K(\s E\tens S).
\end{equation}
We also have $(\delta_J((1-M)^{1/2})-q((1-M)^{1/2}\tens 1_S))x\in\K(\s E\tens S)$ by taking the adjoint in (\ref{eq18}). In particular, we have $[x,\,\delta_J((1-M)^{1/2})]=[x,\,q((1-M)^{1/2}\tens 1_S)]$ mod.\ $\K(\s E\tens S)$. Moreover, we have $[x,\,(1-M)\tens 1_S]=-[x,\,M\tens 1_S]\in\K(\s E\tens S)$. It follows from \ref{lem19} that $[x,\,(1-M)^{1/2}\tens 1_S]\in\K(\s E\tens S)$. Since $q$ is a projection such that $[q,\,(1-M)^{1/2}\tens 1_S]=0$, we have $[x,\,q((1-M)^{1/2}\tens 1_S)]=[qxq,\,(1-M)^{1/2}\tens 1_S]\in\K(\s E\tens S)$. Hence,
\begin{equation}\label{eq37}
[x,\,\delta_J((1-M)^{1/2})]\in\K(\s E\tens S).
\end{equation}
We have
\begin{equation}\label{eq38}
\delta_J((1-M)^{1/2})x(\delta_J(T)-q(T\tens 1_S))\in\K(\s E\tens S).
\end{equation}
Indeed, since $k$ is a strictly positive element of $S$, we can assume that $x=x'(1_{\s E}\tens k)$ with $x'\in (\gamma\tens\id_S)(A\tens S)$. In virtue of (\ref{eq37}), we have
\begin{multline*}
\delta_J((1-M)^{1/2})x(\delta_J(T)-q(T\tens 1_S))=x'\delta_J((1-M)^{1/2})(1\tens k)(\delta_J(T)-q(T\tens 1)) \\ \text{ mod.\ } \K(\s E\tens S).
\end{multline*}
Note that $F:=\{u\in\M(J_1)\,;\, \delta_J(u)J_2'\subset J\tens S \, \text{and} \, J_2'\delta_J(u)\in J\tens S\}$ is a $\Cstar$-algebra and $1-M\in F$. Hence, $(1-M)^{1/2}\in F$ and (\ref{eq38}) is proved. By using again (\ref{eq37}), we prove that
\begin{equation}\label{eq19}
x\delta_J((1-M)^{1/2})(\delta_J(T)-q(T\tens 1_S))\in\K(\s E\tens S).
\end{equation}
Therefore, we have
\[
x(\delta_J((1-M)^{1/2}T)-q((1-M)^{1/2}T\tens 1_S))\in\K(\s E\tens S).
\]
Indeed, we have (recall that $[q,\,(1-M)^{1/2}\tens 1_S]=0$)
\begin{align*}
x(\delta_J ((1-M)^{1/2}  T) -& q((1-M)^{1/2}T\tens 1_S))\\
&= x\delta_J((1-M)^{1/2})\delta_J(T) - xq((1-M)^{1/2}\tens 1)q(T\tens 1_S)\\
&= x\delta_J((1-M)^{1/2})(\delta_J(T)-q(T\tens 1_S)) + x(\delta_J((1-M)^{1/2})\\
&\qquad\qquad -q((1-M)^{1/2}\tens 1_S))(T\tens 1_S).
\end{align*}
By (\ref{eq18}) and (\ref{eq19}), we obtain $x(\delta_J ((1-M)^{1/2}  T) - q((1-M)^{1/2}T\tens 1_S))\in\K(\s E\tens S)$. 
\end{proof}

\begin{defin}\label{def8}
Under the notations and hypotheses of \ref{Kasprod}, the class $x$ in ${\sf KK}_{\cal G}(A,B)$ of $(\s E,F)$ where $F\in F_1\#_{\cal G}F_2$ is called the Kasparov product of the class $x_1$ of $(\s E_1,F_1)$ in ${\sf KK}_{\cal G}(A,C)$ and the class $x_2$ of $(\s E_2,F_2)$ in ${\sf KK}_{\cal G}(C,B)$. We denote $x=x_1\tens_{C} x_2$.
\end{defin}

As in the non-equivariant case \cite{Kas2,Kas3} and the equivariant case for actions of quantum groups \cite{BS1}, we have

\begin{thm} The Kasparov product 
\[
{\sf KK}_{\cal G}(A,C)\times{\sf KK}_{\cal G}(C,B)\rightarrow{\sf KK}_{\cal G}(A,B)\; ; \; (x_1,x_2)\mapsto x_1\tens_C x_2
\]
is bilinear, contravariant in $A$, covariant in $B$, functorial in $C$ and associative. 
\end{thm}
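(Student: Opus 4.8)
The plan is to deduce each of the five properties from the corresponding statement at the level of cycles, exactly as in the non-equivariant case (cf.\ \cite{Kas3,Skand}) and the quantum group case (cf.\ \cite{BS1}); the only genuinely new bookkeeping is that of the $\beta_{\s E}$-equivariance and of condition 3 of Definition \ref{defEqkaspbimod}. Throughout, given representatives $(\s E_1,\gamma_1,F_1)\in{\sf E}_{\cal G}(A,C)$, $(\s E_2,\gamma_2,F_2)\in{\sf E}_{\cal G}(C,B)$ and $F\in F_1\#_{\cal G}F_2$, I write $(\s E,\gamma,F)$ for the resulting cycle in ${\sf E}_{\cal G}(A,B)$ built via \ref{prop18}, \ref{propleftaction}, \ref{def7}, whose class in $\kk_{\cal G}(A,B)$ is $x_1\tens_C x_2$.

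Bilinearity: if $(\s E_1',\gamma_1',F_1')$ is a second cycle in ${\sf E}_{\cal G}(A,C)$, the canonical unitary $(\s E_1\oplus\s E_1')\tens_{\gamma_2}\s E_2\simeq(\s E_1\tens_{\gamma_2}\s E_2)\oplus(\s E_1'\tens_{\gamma_2}\s E_2)$ is $\cal G$-equivariant, and if $F\in F_1\#_{\cal G}F_2$, $F'\in F_1'\#_{\cal G}F_2$ then $F\oplus F'\in(F_1\oplus F_1')\#_{\cal G}F_2$, the connection and positivity conditions of \ref{def7} being verified blockwise and condition 2 of \ref{defEqkaspbimod} for $F\oplus F'$ following from those for $F,F'$. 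Hence $([(\s E_1,F_1)]+[(\s E_1',F_1')])\tens_C x_2=[(\s E_1,F_1)]\tens_C x_2+[(\s E_1',F_1')]\tens_C x_2$ in $\kk_{\cal G}$; additivity in the second variable is identical, using $\s E_1\tens_{\gamma_2\oplus\gamma_2'}(\s E_2\oplus\s E_2')\simeq(\s E_1\tens_{\gamma_2}\s E_2)\oplus(\s E_1\tens_{\gamma_2'}\s E_2')$.

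Contravariance in $A$, covariance in $B$, functoriality in $C$: for a $\cal G$-equivariant $*$-homomorphism $f\colon A_1\to A_2$ one checks directly that, with the notation of \ref{prop20} 1, precomposition by $f$ sends a member of $F_1\#_{\cal G}F_2$ to a member of $F_1\#_{\cal G}F_2$ relative to the left action $\gamma_1\circ f$ (the underlying module and operator being unchanged), so $f^*(x_1\tens_C x_2)=f^*(x_1)\tens_C x_2$. For a $\cal G$-equivariant $g\colon B_1\to B_2$, associativity of the internal tensor product gives a $\cal G$-equivariant identification $(\s E_1\tens_{\gamma_2}\s E_2)\tens_g B_2\simeq\s E_1\tens_{\gamma_{2\ast}}(\s E_2\tens_g B_2)$ (cf.\ \ref{prop18}, \ref{propleftaction}) under which $F\tens_g 1$ is an $(F_2\tens_g 1)$-connection for $\s E_1$ still satisfying (a)--(c) of \ref{def7}, whence $g_*(x_1\tens_C x_2)=x_1\tens_C g_*(x_2)$. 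Likewise, for a $\cal G$-equivariant $h\colon C_1\to C_2$, $x_1\in\kk_{\cal G}(A,C_1)$, $x_2\in\kk_{\cal G}(C_2,B)$, the $\cal G$-equivariant identification $\s E_1\tens_{\gamma_2\circ h}\s E_2\simeq(\s E_1\tens_h C_2)\tens_{\gamma_2}\s E_2$ transports connections and positivity, giving $h_*(x_1)\tens_{C_2}x_2=x_1\tens_{C_1}h^*(x_2)$.

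Associativity is the main point. Given $(\s E_1,\gamma_1,F_1)\in{\sf E}_{\cal G}(A,C)$, $(\s E_2,\gamma_2,F_2)\in{\sf E}_{\cal G}(C,D)$, $(\s E_3,\gamma_3,F_3)\in{\sf E}_{\cal G}(D,B)$ with $A$ separable, fix $F_{12}\in F_1\#_{\cal G}F_2$ and $F_{23}\in F_2\#_{\cal G}F_3$. The goal is to produce one operator $F$ on $\s E:=\s E_1\tens_{\gamma_2}\s E_2\tens_{\gamma_3}\s E_3$ that is simultaneously an $F_3$-connection for $\s E_1\tens_{\gamma_2}\s E_2$, an $F_{23}$-connection for $\s E_1$, commutes with all $\beta_{\s E}(n^{\rm o})$, and satisfies the two positivity conditions (c) of \ref{def7} needed to place it both in $(F_1\#_{\cal G}F_2)\#_{\cal G}F_3$ and in $F_1\#_{\cal G}(F_2\#_{\cal G}F_3)$; then, by the uniqueness statement of \ref{Kasprod}, $[(\s E,F)]$ represents both $(x_1\tens_C x_2)\tens_D x_3$ and $x_1\tens_C(x_2\tens_D x_3)$. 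The construction of $F$ follows the proof of \ref{Kasprod} via \ref{Kaspthm}: choose an $F_3$-connection $T$ for $\s E_1\tens_{\gamma_2}\s E_2$ with $[T,\beta_{\s E}(n^{\rm o})]=0$ (cf.\ \ref{lem8}) and a strictly positive $k\in S$, and apply \ref{Kaspthm} with $J:=\K(\s E)$ inside $J_1:=\K(\s E_1)\tens_{\gamma_2}1+\K(\s E_1\tens_{\gamma_2}\s E_2)\tens_{\gamma_3}1+\K(\s E)$, with $J_2$ the $\Cstar$-algebra generated by the connection defects and the commutators $[T,\gamma(a)]$, $[T,F_{12}\tens_{\gamma_3}1]$, $1-T^2$, $T-T^*$, with $\s F$ the span of ${\rm Ad}(F_{12}\tens_{\gamma_3}1)$, ${\rm Ad}(T)$ and the ${\rm Ad}(\gamma(a))$, and with $J_2'\subset\M(\K(\s E)\tens S)$ built from the $\delta$-defects of $T$ against $k$; taking $M$ from \ref{Kaspthm}, set $F:=M^{1/2}(F_{12}\tens_{\gamma_3}1)-(1-M)^{1/2}T$. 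The hard part is precisely this step: one must verify the hypotheses of \ref{Kaspthm} (in particular $\delta_{J_1}(x)J_2'\subset J\tens S$, argued as in the proof of \ref{Kasprod} using \ref{lem9}), and then that the resulting $F$ fulfils condition 3 of \ref{defEqkaspbimod} together with both instances of the positivity condition of \ref{def7}, the commutation $[F,\beta_{\s E}(n^{\rm o})]=0$ being automatic from \ref{lem7} since $[M,\beta_{\s E}(n^{\rm o})]=0$. Combining associativity with bilinearity, the two variances and functoriality in $C$ yields the full statement.
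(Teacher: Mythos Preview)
Your proposal is correct and follows the standard approach from \cite{Kas3,Skand,BS1}. The paper itself gives no proof of this statement at all: it simply asserts that the result holds ``as in the non-equivariant case \cite{Kas2,Kas3} and the equivariant case for actions of quantum groups \cite{BS1}'', so your detailed sketch is in fact more than what the paper provides, and is exactly the argument being invoked by those citations.
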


\begin{defin} 
Let $A$ and $B$ be $\cal G$-$\Cstar$-algebras.
\begin{itemize}
\item Let $\phi:A\rightarrow B$ be an $\cal G$-equivariant *-homomorphism. If the C*-algebra $B$ is $\sigma$-unital, then the triple $(B,\phi,0)$ is an equivariant Kasparov $A$-$B$-bimodule and we define $[\phi]:=[(B,\phi,0)]\in\kk_{\cal G}(A,B)$. 
\item If the C*-algebra $A$ is $\sigma$-unital, we define $1_A:=[\id_A]=[(A,0)]\in\kk_{\cal G}(A,A)$.\qedhere
\end{itemize}
\end{defin}

The Kasparov product generalizes the composition of equivariant *-homomorphisms. More precisely, we have the following result.

\begin{prop}\label{prop46}
Let $A$, $B$ and $C$ be $\cal G$-C*-algebras with $B$ and $C$ $\sigma$-unital. Let $\phi:A\rightarrow C$ and $\psi:C\rightarrow B$ be $\cal G$-equivariant *-homomorphisms. 
\begin{enumerate}
\item We have $\phi^*([\psi])=[\psi\circ\phi]=\psi_*([\phi])$ in $\kk_{\cal G}(A,B)$.
\item If $A$ is separable, we have $[\psi\circ\phi]=[\phi]\tens_C[\psi]$ in $\kk_{\cal G}(A,B)$.\qedhere
\end{enumerate}
\end{prop}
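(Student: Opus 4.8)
\textbf{Proof plan for Proposition \ref{prop46}.}

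The plan is to reduce everything to the definition of the functorial maps $\phi^*$, $\psi_*$ from \ref{prop20} and the definition of $[\phi]$, $[\psi]$ as classes of the standard bimodules $(C,\phi,0)$, $(B,\psi,0)$. For part 1, I would first compute $\phi^*([\psi])$. By \ref{prop20} 1, $\phi^*(B,\psi,0)=(B,\psi\circ\phi,0)$, which is exactly the bimodule representing $[\psi\circ\phi]$, so $\phi^*([\psi])=[\psi\circ\phi]$ on the nose. Next I would compute $\psi_*([\phi])$. By \ref{prop20} 2, $\psi_*(C,\phi,0)=(C\tens_\psi B,\phi\tens_\psi 1_B,0)$. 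The point is that the $\cal G$-equivariant Hilbert $B$-module $C\tens_\psi B$ is canonically $\cal G$-equivariantly unitarily equivalent to $B$ itself via the map $c\tens_\psi b\mapsto\psi(c)b$; one has to check this intertwines the left actions ($\phi\tens_\psi 1_B$ corresponds to $\psi\circ\phi$), the zero operators, the maps $\beta_{\s E}$, and the actions $\delta_{\s E}$ — all of which are immediate from the definitions of the internal tensor product action in \ref{prop18} and the definition of $\delta_B$. Hence $\psi_*(C,\phi,0)$ is unitarily equivalent to $(B,\psi\circ\phi,0)$, giving $\psi_*([\phi])=[\psi\circ\phi]$ in $\kk_{\cal G}(A,B)$. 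This settles part 1.

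For part 2, I would invoke \ref{Kasprod} and \ref{def8}: since $A$ is separable, the Kasparov product $[\phi]\tens_C[\psi]$ is represented by $(\s E,F)$ where $\s E=C\tens_\psi B$ (with left action $\phi\tens_\psi 1$) and $F\in 0\,\#_{\cal G}\,0$. So the task is to exhibit a concrete element of $0\,\#_{\cal G}\,0$ whose class is $[\psi\circ\phi]$. The natural candidate is $F=0\in\Lin(\s E)$: I would verify that $F=0$ satisfies the three conditions of \ref{def7} — namely $(\s E,0)\in{\sf E}_{\cal G}(A,B)$ (clear, since the left action factors through $C$ and $F_2=0$, so all the Kasparov conditions and the equivariance condition \ref{defEqkaspbimod} 3 hold trivially; note $0$ is invariant in the sense of \ref{propdef7}, so \ref{rk13} 3 applies), that $0$ is an $F_2$-connection for $\s E_1:=C$ with $F_2:=0$ (the connection conditions $T_\xi\cdot 0-0\cdot T_\xi\in\K$ are trivially satisfied), and that the positivity condition (c) holds since $\gamma(a)[F_1\tens_{\gamma_2}1,\,F]\gamma(a^*)=\gamma(a)[0,0]\gamma(a^*)=0$, whose image in $\Lin(\s E)/\K(\s E)$ is positive. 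Thus $0\in 0\,\#_{\cal G}\,0$, so $[\phi]\tens_C[\psi]$ is the class of $(C\tens_\psi B,\phi\tens_\psi 1,0)$, which by the unitary equivalence established in part 1 equals $[\psi\circ\phi]$ in $\kk_{\cal G}(A,B)$.

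The routine but slightly fiddly step is checking the $\cal G$-equivariance of the unitary equivalence $C\tens_\psi B\simeq B$, i.e.\ that it intertwines $\delta_{C\tens_\psi B}$ (built from \ref{prop18} out of $\delta_C$ and $\delta_B$) with $\delta_B$; this amounts to unwinding the identification \eqref{eqId} and using $\delta_B\circ\psi=(\psi\tens\id_S)\circ\delta_C$ ($\cal G$-equivariance of $\psi$) together with $\delta_{\s E}(\psi(c))=\ldots$ — all formal, but it is where one must be careful with the internal-tensor-product bookkeeping. I do not expect any genuine obstacle: everything follows from the definitions once the standard identification $C\tens_\psi B=B$ is made equivariant, and the nonemptiness of $0\,\#_{\cal G}\,0$ is handled by the explicit choice $F=0$ rather than by invoking the full strength of \ref{Kasprod}.
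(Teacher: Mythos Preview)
The paper states this proposition without proof, treating it as a routine verification; your approach is the standard one and is essentially correct. There is, however, one genuine gap in your argument for $\psi_*([\phi])=[\psi\circ\phi]$: the map $C\tens_\psi B\to B$, $c\tens_\psi b\mapsto\psi(c)b$, is an isometry onto $\overline{\psi(C)B}$, not onto $B$, unless $\psi$ is non-degenerate (i.e.\ $[\psi(C)B]=B$). So in general you only get a $\cal G$-equivariant unitary equivalence $(C\tens_\psi B,\phi\tens_\psi 1,0)\simeq(\overline{\psi(C)B},\psi\circ\phi,0)$, and you still owe the step $[(\overline{\psi(C)B},\psi\circ\phi,0)]=[(B,\psi\circ\phi,0)]$. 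If the paper's convention is that $\cal G$-equivariant $*$-homomorphisms are non-degenerate (as in \ref{defEquiHom} and the category ${\sf Alg}_{\cal G}$), your argument goes through verbatim; otherwise, the missing step can be supplied by the rotation homotopy through ${\rm M}_2(A)$ exactly as in the proof of \ref{prop22}~2 (taking $F_2=0$ explicitly, so no separability of $A$ is needed): both bimodules are homotopic to their common essential part $([\psi(\phi(A))B],\psi\circ\phi,0)$.

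One minor point on part 2: your check of condition (a) in \ref{def7} is not quite ``trivial'' --- with $F=0$ the condition $\gamma(a)(F^2-1)\in\K(\s E)$ becomes $\phi(a)\tens_\psi 1\in\K(C\tens_\psi B)$, which holds because $\psi$ lands in $B=\K(B)$: writing $\phi(a)=c_1c_2$ one has $T_{c_2}=T_{c_2'}\circ\psi(c_2'')\in\K(B,C\tens_\psi B)$ (factor $c_2=c_2'c_2''$), hence $\phi(a)\tens_\psi 1=T_{c_1}T_{c_2}^*\in\K(C\tens_\psi B)$. With this in hand, your verification that $0\in 0\,\#_{\cal G}\,0$ and the reduction to part 1 are correct.
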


\begin{prop}\label{prop22}
Let $A$ and $B$ be two $\cal G$-$\Cstar$-algebras. We have:
\begin{enumerate}
\item if $B$ is $\sigma$-unital and $A$ separable, then $x\tens_B 1_B=x$ for all $x\in\kk_{\cal G}(A,B)$;
\item if $A$ is separable, then $1_A\tens_A x=x$ for all $x\in\kk_{\cal G}(A,B)$.\qedhere
\end{enumerate}
\end{prop}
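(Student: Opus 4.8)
The plan is to prove each assertion by producing an explicit representative of the Kasparov product together with a $\cal G$-equivariant unitary equivalence to the given module, so that Theorem~\ref{Kasprod} applies. Throughout write $x=[(\s E,\gamma,F)]$.

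\textbf{Part 1.} Since $A$ is separable and $B$ is $\sigma$-unital, $1_B=[(B,\id_B,0)]$ is defined and so is $x\tens_B 1_B$, whose underlying module is $\s E\tens_{\id_B}B$. First I would identify $\s E\tens_{\id_B}B$ with $\s E$ via the $\cal G$-equivariant unitary $\xi\tens_{\id_B}b\mapsto\xi b$: surjectivity is the Hilbert-module identity $[\s E B]=\s E$, and equivariance follows from Proposition~\ref{prop18} together with $\delta_{\s E}(\xi b)=\delta_{\s E}(\xi)\delta_B(b)$ (Definition~\ref{hilbmodequ}(1)); under it $\gamma\tens_{\id_B}1\mapsto\gamma$ and $F\tens_{\id_B}1\mapsto F$. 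Then I would check that $F\tens_{\id_B}1\in F\#_{\cal G}0$ in the sense of Definition~\ref{def7}: condition (a) is just $(\s E,F)\in{\sf E}_{\cal G}(A,B)$; the $0$-connection property reduces, for $\xi\in\s E$, to $(F\tens_{\id_B}1)T_\xi\in\K(B,\s E\tens_{\id_B}B)$ and $T_\xi^{*}(F\tens_{\id_B}1)\in\K(\s E\tens_{\id_B}B,B)$, which hold because under the identification $\K(B,\s E)=\s E$ these operators are $F\xi\in\s E$ and $(F^{*}\xi)^{*}\in\s E^{*}$; and the positivity condition (c) is trivial since $[F_1\tens_{\gamma_2}1,\,F\tens_{\id_B}1]=[F\tens 1,\,F\tens 1]=0$. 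Hence $x\tens_B 1_B=[(\s E,\gamma,F)]=x$.

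\textbf{Part 2.} Since $A$ is separable, $1_A=[(A,m,0)]$ is defined, $m$ being the left-multiplication representation of $A$ on itself; the module underlying $1_A\tens_A x$ is $A\tens_\gamma\s E$, with left action $m\tens_\gamma 1$. By Lemma~\ref{lem8} there is an $F$-connection $G$ for the Hilbert $A$-module $A$ (equivalently, an operator on $A\tens_\gamma\s E$) with $[G,\beta_A(n^{\rm o})\tens_\gamma 1]=0$ for all $n$; by Lemma~\ref{lem9}, applied with $\K(A)=A$ and the canonical map $m:A\to\K(A)$, the triple $(A\tens_\gamma\s E,\,m\tens_\gamma 1,\,G)$ lies in ${\sf E}_{\cal G}(A,B)$, and the positivity requirement in $0\#_{\cal G}F$ is vacuous because $F_1=0$. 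Thus $G\in 0\#_{\cal G}F$ and $1_A\tens_A x$ is the class of $(A\tens_\gamma\s E,m\tens_\gamma 1,G)$.

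\textbf{The identification, and the main obstacle.} It remains to see that this class equals $x$, and this is the only genuinely delicate point; I would handle it exactly as in the non-equivariant setting \cite{Kas3} and the quantum-group setting \cite{BS1}, by first reducing to the case where $\gamma$ is non-degenerate. Along an approximate unit $(u_\lambda)$ of $A$, $\gamma(u_\lambda)$ converges strongly in $\Lin(\s E)$ to a self-adjoint projection $P$ onto $[\gamma(A)\s E]$, which one checks is $\cal G$-invariant (this is the step requiring care); then $\s E=P\s E\oplus(1-P)\s E$, $\gamma$ vanishes on $(1-P)\s E$, the summand $((1-P)\s E,0,\cdot)$ is degenerate, and a rotation homotopy replaces $(\s E,\gamma,F)$ by a module with non-degenerate $\gamma$. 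When $\gamma$ is non-degenerate, $a\tens_\gamma\eta\mapsto\gamma(a)\eta$ is a $\cal G$-equivariant unitary of $A\tens_\gamma\s E$ onto $\s E$ carrying $m\tens_\gamma 1$ to $\gamma$, and since $G$ is an $F$-connection for $A$ one obtains $\gamma(a)(G-F),\,(G-F)\gamma(a)\in\K(\s E)$ for all $a$, i.e. $G$ (transported) is a $\cal G$-equivariant compact perturbation of $F$; by Example~\ref{ex3}(2) the two Kasparov modules are operator-homotopic. Hence $1_A\tens_A x=x$. Apart from the non-degeneracy reduction and the $\cal G$-invariance of $P$, everything else is a routine unwinding of the internal-tensor-product identifications and of the definitions of a connection and of $\#_{\cal G}$.
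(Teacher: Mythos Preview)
Your treatment of Part~1 is correct and aligns with the paper's view that this direction is routine.

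For Part~2 your overall strategy---represent $1_A\tens_A x$ on $A\tens_\gamma\s E\cong[\gamma(A)\s E]$ by an $F$-connection and then compare with $(\s E,\gamma,F)$---is the right one, but the reduction to a non-degenerate $\gamma$ has a genuine gap. You assert that $\gamma(u_\lambda)$ converges strongly in $\Lin(\s E)$ to a self-adjoint projection $P$ with range $[\gamma(A)\s E]$ and then decompose $\s E=P\s E\oplus(1-P)\s E$. In general no such adjointable projection exists: closed Hilbert submodules need not be complemented. Concretely, take $B=C[0,1]$, $\s E=B$, $A=C_0((0,1])$ acting by multiplication; then $[\gamma(A)\s E]=C_0((0,1])$ is a closed submodule of $C[0,1]$ that is not the range of any idempotent in $\Lin(\s E)=C[0,1]$, since the only idempotents there are $0$ and $1$. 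So the splitting $\s E=P\s E\oplus(1-P)\s E$ and the ensuing ``discard the degenerate summand'' step are not available, and the care you flag (invariance of $P$) is not the actual obstruction.

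The paper circumvents exactly this issue via the ${\rm M}_2(A)$-rotation trick (after Meyer \cite{M} and Skandalis \cite{Skand}). One keeps $\s E_1:=\s E$ and the (possibly non-complemented) submodule $\s E_2:=[\gamma(A)\s E]$, checks that the restrictions $\gamma_{ij}:A\to\Lin(\s E_j,\s E_i)$ are well defined and adjointable (this uses only $\gamma(A)\s E\subset\s E_2$, not any projection), picks $F_2\in 0\#_{\cal G}F$ on $\s E_2$, and assembles $\phi:{\rm M}_2(A)\to\Lin(\s E_1\oplus\s E_2)$ so that $(\s E_1\oplus\s E_2,\phi,F_1\oplus F_2)\in{\sf E}_{\cal G}({\rm M}_2(A),B)$ (this is where Lemma~\ref{lem15} is needed, to put a $\cal G$-action on ${\rm M}_2(A)$). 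The homotopy $\iota_t:A\to{\rm M}_2(A)$ then connects $(\s E_1,\gamma,F)$ and $(\s E_2,\gamma_2,F_2)$, the latter being your representative of $1_A\tens_A x$. This achieves the same comparison you aim for without ever splitting $\s E$. If you want to salvage your write-up, replace the ``$P$ exists and split'' paragraph by this ${\rm M}_2$-argument; the rest of your Part~2 (identification $A\tens_\gamma\s E\cong\s E_2$ and the compact-perturbation conclusion in the non-degenerate case) then becomes the special case $\s E_2=\s E$ and is no longer needed separately.
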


Only statement 2 is not obvious. For the proof, we will need the following easy lemma.

\begin{lem}\label{lem15}
Let $A$ be a $\cal G$-$\Cstar$-algebra and $p\geqslant 2$ an integer. Denote by ${\rm M}_p(A)$ the $\Cstar$-algebra of matrices of size $p$ with entries in $A$. Let $\delta_{{\rm M}_p(A)}:{\rm M}_p(A)\rightarrow\M({\rm M}_p(A)\tens S)$ and $\beta_{{\rm M}_p(A)}:N^{\rm o}\rightarrow\M({\rm M}_p(A))$ be the maps defined by:
\begin{itemize}
\item $\delta_{{\rm M}_p(A)}:=\id_{{\rm M}_p(\GC)}\tens\delta_A\,:\,(a_{ij})\mapsto(\delta_A(a_{ij}))$, up to the identifications 
\begin{align*}
{\rm M}_p(A)&={\rm M}_p(\GC)\tens A \quad \text{and}\\
{\rm M}_p(\GC)\tens\M(A\tens S)&={\rm M}_p(\M(A\tens S))\subset\M({\rm M}_p(A\tens S))=\M({\rm M}_p(A)\tens S);
\end{align*}

\item $\beta_{{\rm M}_p(A)}(n^{\rm o})(a_{ij})=(\beta_A(n^{\rm o})a_{ij})$ and $(a_{ij})\beta_{{\rm M}_p(A)}(n^{\rm o})=(a_{ij}\beta_A(n^{\rm o}))$, for all $n\in N$ and $(a_{ij})\in {\rm M}_p(A)$. 
\end{itemize}
Then, the pair $(\beta_{{\rm M}_p(A)},\delta_{{\rm M}_p(A)})$ is a continuous action of $\cal G$ on ${\rm M}_p(A)$.
\end{lem}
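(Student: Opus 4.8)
The plan is to reduce the whole statement to the observation that the assignment $\id_{{\rm M}_p(\GC)}\tens(-)$ preserves all the structure appearing in Definition \ref{defactmqg}. Since ${\rm M}_p(\GC)$ is a finite-dimensional, hence unital, $\Cstar$-algebra, one has the identifications $\M({\rm M}_p(A))={\rm M}_p(\M(A))$, $\M({\rm M}_p(A)\tens S)={\rm M}_p(\M(A\tens S))$ and $\M({\rm M}_p(A)\tens S\tens S)={\rm M}_p(\M(A\tens S\tens S))$, under which $\delta_{{\rm M}_p(A)}$, $\delta_{{\rm M}_p(A)}\tens\id_S$ and $\id_{{\rm M}_p(A)}\tens\delta$ are just the entrywise applications of $\delta_A$, $\delta_A\tens\id_S$ and $\id_A\tens\delta$. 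Tensoring a faithful (resp.\ non-degenerate, resp.\ strictly continuous) *-homomorphism with $\id_{{\rm M}_p(\GC)}$ again yields a faithful (resp.\ non-degenerate, resp.\ strictly continuous) map, so $\beta_{{\rm M}_p(A)}:N^{\rm o}\rightarrow\M({\rm M}_p(A))$ is a non-degenerate *-homomorphism and $\delta_{{\rm M}_p(A)}:{\rm M}_p(A)\rightarrow\M({\rm M}_p(A)\tens S)$ is a faithful *-homomorphism.

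First I would record the two key identities. Since $\beta_{{\rm M}_p(A)}(n^{\rm o})$ is the diagonal matrix with all diagonal entries equal to $\beta_A(n^{\rm o})$, i.e.\ $\beta_{{\rm M}_p(A)}=\id_{{\rm M}_p(\GC)}\tens\beta_A$ after identification, the construction of the projection $q_{\beta_{{\rm M}_p(A)}\alpha}$ (cf.\ \ref{ProjectionCAlg}) gives $q_{\beta_{{\rm M}_p(A)}\alpha}=1_{{\rm M}_p(\GC)}\tens q_{\beta_A\alpha}$ in $\M({\rm M}_p(A)\tens S)$, because replacing $A$ by ${\rm M}_p(\GC)\tens A$ leaves the $N$-module structure untouched. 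Likewise, the strictly continuous extension of $\delta_A$ to $\M(A)$ tensored with $\id_{{\rm M}_p(\GC)}$ is the strictly continuous extension of $\delta_{{\rm M}_p(A)}$ to $\M({\rm M}_p(A))$, and it sends $1_{{\rm M}_p(A)}=1_{{\rm M}_p(\GC)}\tens 1_A$ to $1_{{\rm M}_p(\GC)}\tens\delta_A(1_A)=1_{{\rm M}_p(\GC)}\tens q_{\beta_A\alpha}=q_{\beta_{{\rm M}_p(A)}\alpha}$; this is condition~1. Condition~2 is obtained by applying $\id_{{\rm M}_p(\GC)}\tens(-)$ to the coassociativity relation $(\delta_A\tens\id_S)\delta_A=(\id_A\tens\delta)\delta_A$, and condition~3 by applying it to $\delta_A(\beta_A(n^{\rm o}))=q_{\beta_A\alpha}(1_A\tens\beta(n^{\rm o}))$, using $\delta_{{\rm M}_p(A)}(\beta_{{\rm M}_p(A)}(n^{\rm o}))=1_{{\rm M}_p(\GC)}\tens\delta_A(\beta_A(n^{\rm o}))$.

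For strong continuity, writing ${\rm M}_p(A)={\rm M}_p(\GC)\tens A$ and using the matrix units $e_{ij}$, one has $\delta_{{\rm M}_p(A)}(e_{ij}\tens a)(1_{{\rm M}_p(A)}\tens s)=e_{ij}\tens\delta_A(a)(1_A\tens s)$ for all $a\in A$ and $s\in S$; taking closed linear spans gives
\begin{align*}
[\delta_{{\rm M}_p(A)}({\rm M}_p(A))(1_{{\rm M}_p(A)}\tens S)]&={\rm M}_p(\GC)\tens[\delta_A(A)(1_A\tens S)]\\
&={\rm M}_p(\GC)\tens q_{\beta_A\alpha}(A\tens S)=q_{\beta_{{\rm M}_p(A)}\alpha}({\rm M}_p(A)\tens S),
\end{align*}
using the strong continuity of $(\beta_A,\delta_A)$ and the identity for the projection established above. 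Hence $(\beta_{{\rm M}_p(A)},\delta_{{\rm M}_p(A)})$ is a strongly continuous action of $\cal G$ on ${\rm M}_p(A)$.

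The only genuinely delicate point is the bookkeeping with the multiplier-algebra identifications — that the strictly continuous extensions of $\delta_{{\rm M}_p(A)}$, $\delta_{{\rm M}_p(A)}\tens\id_S$ and $\id_{{\rm M}_p(A)}\tens\delta$ really are the entrywise maps and that $q_{\beta_{{\rm M}_p(A)}\alpha}=1_{{\rm M}_p(\GC)}\tens q_{\beta_A\alpha}$ — but this is routine since ${\rm M}_p(\GC)$ is finite-dimensional and unital, and no new idea is required beyond the functoriality of $\id_{{\rm M}_p(\GC)}\tens(-)$.
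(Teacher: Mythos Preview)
Your proof is correct and is exactly the natural verification; the paper in fact omits the proof entirely, introducing the statement as ``the following easy lemma'' and passing directly to the proof of Proposition~\ref{prop22}. Your reduction via $\id_{{\rm M}_p(\GC)}\tens(-)$ and the identification $q_{\beta_{{\rm M}_p(A)}\alpha}=1_{{\rm M}_p(\GC)}\tens q_{\beta_A\alpha}$ is precisely what one would write down if asked to spell things out.
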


\begin{proof}[Proof of Proposition \ref{prop22} 2.]
The idea of the proof is the same as that of Lemma 3.3 \cite{M} (see also Proposition 17 \cite{Skand}).  Let $(\s E,\gamma,F)$ be an equivariant Kasparov $A$-$B$-bimodule. Consider the Hilbert $B$-submodule $\s E_2:=[\gamma(A)\s E]$ of $\s E$. Let $\s E_1:=\s E$ and $F_1:=F\in\Lin(\s E_1)$. Define maps $\gamma_{ij}:A\rightarrow\Lin(\s E_j,\s E_i)$ for $i,j=1,2$ obtained by range/domain restriction of $\gamma(a)$ (for $a\in A$ fixed) and denote $\gamma_i:=\gamma_{ii}$ for $i=1,2$. Note that $\gamma_{1}=\gamma$ and $\gamma_{2}:A\rightarrow\Lin(\s E_2)$ is a non-degenerate *-representation of $A$ on $\s E_2$. By equivariance of $\gamma$, the action $(\beta_{\s E},\delta_{\s E})$ induces by restriction an action $(\beta_{\s E_2},\delta_{\s E_2})$ of $\cal G$ on $\s E_2$ and it is clear that $(\s E_2,\gamma_{2})$ is an $\cal G$-equivariant Hilbert $A$-$B$-bimodule. It is easily seen that we have an identification of equivariant Hilbert bimodules
$
A\tens_{\gamma}\s E\rightarrow\s E_2 \, ; \, a\tens_{\gamma}\xi \mapsto \gamma(a)\xi.
$
Let $F_2\in 0\,\#_{\cal G}F_1\subset\Lin(\s E_2)$. By combining the maps $\gamma_{ij}$, we obtain an equivariant *-representation 
$
\phi:{\rm M}_2(A)\rightarrow\Lin(\s E_1\oplus\s E_2)\, ; \, (a_{ij}) \mapsto (\gamma_{ij}(a_{ij}))
$
of ${\rm M}_2(A)$ on $\s E_1\oplus\s E_2$ (cf.\ \ref{lem15} and \ref{propdef10}). Hence, the pair $(\s E_1\oplus\s E_2,\phi)$ is an equivariant Hilbert ${\rm M}_2(A)$-$B$-bimodule. We claim that the triple $(\s E_1\oplus\s E_2,\phi,F_1\oplus F_2)$ is an equivariant Kasparov bimodule. For $a\in A$, the operator $T_a\in\Lin(\s E,A\tens_{\gamma}\s E)$ is identified to $\gamma_{21}(a)$ through the identification $A\tens_{\gamma}\s E=\s E_2$. Hence, $F_1\gamma_{12}(a)-\gamma_{12}(a)F_2\in\K(\s E_2,\s E_1)$ and $F_2\gamma_{21}(a)-\gamma_{21}(a)F_1\in\K(\s E_1,\s E_2)$ since $F_2$ is a $F_1$-connection (for $A$). In particular, if $x\in{\rm M}_2(A)$ is an off-diagonal matrix ({\it i.e.\ }the diagonal entries equal zero), then $[F_1\oplus F_2,\,\phi(x)]\in\K(\s E_1\oplus\s E_2)$. Since any element of $A$ is a product of two elements of $A$, any diagonal matrix of ${\rm M}_2(A)$ is a product of two off-diagonal matrices of ${\rm M}_2(A)$. Moreover, if $x,y\in{\rm M}_2(A)$ are off-diagonal, we have
$
[F_1\oplus F_2,\,\phi(xy)]=[F_1\oplus F_2,\,\phi(x)]\phi(y)+\phi(x)[F_1\oplus F_2,\,\phi(y)]\in\K(\s E_1\oplus\s E_2).
$
Hence, $[F_1\oplus F_2,\,\phi(x)]\in\K(\s E_1\oplus\s E_2)$ if $x\in{\rm M}_2(A)$ is diagonal. This relation extend by linearity to all $x\in{\rm M}_2(A)$. The relation $\phi(x)(1-(F_1\oplus F_2)^2)\in\K(\s E_1\oplus\s E_2)$ holds if $x\in{\rm M}_2(A)$ is diagonal. Since we can factorize a finite number of elements of $A$ by a common element of $A$, any matrix of ${\rm M}_p(A)$ factorizes on the right by a diagonal matrix. Hence, this relation extends to all $x\in{\rm M}_p(A)$. The remaning relation of \ref{defkaspbimod} is proved in a same way. With the identifications $\M(\K(\s E_1\oplus\s E_2)\tens S)=\Lin((\s E_1\oplus\s E_2)\tens S)=\Lin((\s E_1\tens S)\oplus(\s E_2\tens S))$, we have 
\begin{center}
$\delta_{\K(\s E_1\oplus\s E_2)}(F_1\oplus F_2)=\delta_{\K(\s E_1)}(F_1)\oplus\delta_{\K(\s E_2)}(F_2)$ \quad and \quad $q_{\beta_{\s E_1\oplus\s E_2}\alpha}=q_{\beta_{\s E_1}\alpha}\oplus q_{\beta_{\s E_2}\alpha}$.
\end{center}
By using the above trick and the identification ${\rm M}_2(A)\tens S={\rm M}_2(A\tens S)$, we prove that 
\[
(\phi\tens\id_S)(x)(\delta_{\K(\s E_1\oplus\s E_2)}(F_1\oplus F_2)-q_{\beta_{\s E_1\oplus\s E_2}\alpha}((F_1\oplus F_2)\tens 1_S))\in\K((\s E_1\oplus\s E_2)\tens S)
\]
for all $x\in{\rm M}_2(A)\tens S$. Let $\tau:=(\s E_1\oplus\s E_2,\phi,F_1\oplus F_2)\in{\sf E}_{\cal G}({\rm M}_2(A),B)$. For $t\in[0,1]$, let $\iota_t:A\rightarrow{\rm M}_2(A)$ be the $\cal G$-equivariant *-homomorphism defined for all $a\in A$ by
\[
\iota_t(a):=\begin{pmatrix} (1-t^2)a & t\sqrt{1-t^2}a\\ t\sqrt{1-t^2}a & t^2a\end{pmatrix}\!.
\]
We have $\iota_0^*(\tau)=(\s E_1,\gamma_1,F_1)\oplus(\s E_2,0,F_2)$ and $\iota_1^*(\tau)=(\s E_1,0,F_1)\oplus(\s E_2,\gamma_2,F_2)$ in ${\sf E}_{\cal G}(A,B)$. Moreover, $(\s E_2,0,F_2)$ and $(\s E_1,0,F_1)$ are degenerate $\cal G$-equivariant Kasparov bimodules. Hence, the triple $(\s E_1\oplus\s E_2, (\phi\circ\iota_t)_{t\in[0,1]},F_1\oplus F_2)$ defines a homotopy between $(\s E_1,\gamma_1,F_1)$ and $(\s E_2,\gamma_2,F_2)$ (cf.\ \ref{prop20} 1 and \ref{ex3} 3). This completes the proof.
\end{proof}

\begin{rk}\label{rk20} 
If $x:=[(\s E,F)]\in\kk_{\cal G}(A,B)$ (with $A$ separable), we can always require the left action $A\rightarrow\Lin(\s E)$ of $A$ on $\s E$ to be non-degenerate (cf.\ \ref{prop22} 2).
\end{rk}

The result below is a generalization of \ref{prop46} 2 and follows straightforwardly from the above remark.

\begin{propdef}
Let $A$, $B$ and $C$ be $\cal G$-C*-algebras with $B$ $\sigma$-unital. Let $\phi:A\rightarrow B$ be a $\cal G$-equivariant *-homomorphism.
\begin{enumerate}
\item If $C$ is separable, we have $x\tens_A[\phi]=\phi_*(x)$ in $\kk_{\cal G}(C,B)$ for all $x\in\kk_{\cal G}(C,A)$.
\item If $A$ is separable, we have $[\phi]\tens_B x=\phi^*(x)$ in $\kk_{\cal G}(A,C)$ for all $x\in\kk_{\cal G}(B,C)$.
\end{enumerate}
In particular, if $A$ is a separable $\cal G$-C*-algebra, then the abelian group $\kk_{\cal G}(A,A)$ endowed with the Kasparov product is a unital ring called the equivariant Kasparov ring of $A$.
\end{propdef}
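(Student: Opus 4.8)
The plan is to generalize the argument of Proposition \ref{prop46} 2: compute the two Kasparov products directly from Definition \ref{def7} and the existence/well-definedness statement of Theorem \ref{Kasprod}, using the concrete cycle $(B,\phi,0)$ to represent $[\phi]$, and then recognise the result by comparing with the definitions of $\phi_*$ and $\phi^*$ from Proposition \ref{prop20}. The closing assertion about $\kk_{\cal G}(A,A)$ being a unital ring will then follow formally. Throughout, Remark \ref{rk20} is the key tool: it lets us replace a cycle by one with non-degenerate left action, which is exactly what makes the internal tensor product module coincide on the nose with the module underlying $\phi_*(x)$ resp.\ $\phi^*(x)$.

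For part 1, write $x=[(\s E_1,\gamma_1,F_1)]\in\kk_{\cal G}(C,A)$ and, by Remark \ref{rk20} (using that $C$ is separable), take $\gamma_1:C\rightarrow\Lin(\s E_1)$ non-degenerate. Apply Definition \ref{def7} with first cycle $(\s E_1,\gamma_1,F_1)\in{\sf E}_{\cal G}(C,A)$ and second cycle $(\s E_2,\gamma_2,F_2):=(B,\phi,0)\in{\sf E}_{\cal G}(A,B)$; the product module $\s E:=\s E_1\tens_{\gamma_2}\s E_2=\s E_1\tens_\phi B$ with left $C$-action $c\mapsto\gamma_1(c)\tens_\phi 1$ is precisely the $\cal G$-equivariant Kasparov bimodule $\phi_*(\s E_1,\gamma_1,F_1)$ of Proposition \ref{prop20} 2. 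It then suffices to check $F_1\tens_\phi 1\in F_1\#_{\cal G}0$. Condition (a) is exactly the content of Proposition \ref{prop20} 2; condition (c) is trivial because $[F_1\tens_\phi 1,\,F_1\tens_\phi 1]=0$; and for condition (b) one notes $(F_1\tens_\phi 1)T_\xi=T_{F_1\xi}$ for $\xi\in\s E_1$, while the relation $\s E_1 A=\s E_1$ shows that each $T_\zeta$ (the operator $b\mapsto\zeta\tens_\phi b$) is compact from $B$ to $\s E$, and likewise on the adjoint side, so $F_1\tens_\phi 1$ is a $0$-connection. Theorem \ref{Kasprod} then gives $x\tens_A[\phi]=[(\s E_1\tens_\phi B,\gamma_1\tens_\phi 1,F_1\tens_\phi 1)]=\phi_*(x)$.

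Part 2 is symmetric. Write $x=[(\s E_2,\gamma_2,F_2)]\in\kk_{\cal G}(B,C)$ and use Remark \ref{rk20} to arrange that the left action is non-degenerate (this is where the separability hypothesis is used), so that $[\gamma_2(B)\s E_2]=\s E_2$. Take the first cycle to be $(\s E_1,\gamma_1,F_1):=(B,\phi,0)\in{\sf E}_{\cal G}(A,B)$. The product module $\s E=\s E_1\tens_{\gamma_2}\s E_2=B\tens_{\gamma_2}\s E_2$ is then identified, via $b\tens_{\gamma_2}\xi\mapsto\gamma_2(b)\xi$, with $\s E_2$ itself, carrying the left $A$-action $\gamma_2\circ\phi$; i.e.\ it coincides with the bimodule underlying $\phi^*(x)$ (Proposition \ref{prop20} 1). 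Under this identification $T_b$ becomes $\gamma_2(b)$, so $F_2$ is an $F_2$-connection because $[\gamma_2(b),F_2]\in\K(\s E_2)$ for all $b\in B$; condition (a) is Proposition \ref{prop20} 1, and condition (c) is trivial since here $F_1\tens_{\gamma_2}1=0$. Hence $F_2\in 0\#_{\cal G}F_2$, and Theorem \ref{Kasprod} gives $[\phi]\tens_B x=[(\s E_2,\gamma_2\circ\phi,F_2)]=\phi^*(x)$.

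Finally, for $A$ separable, $\kk_{\cal G}(A,A)$ is an abelian group on which the Kasparov product $\tens_A$ is biadditive and associative by the properties of the Kasparov product established above; taking $\phi=\id_A$ in parts 1 and 2 yields $x\tens_A 1_A=(\id_A)_*(x)=x$ and $1_A\tens_A x=(\id_A)^*(x)=x$ for all $x$, so $1_A=[\id_A]$ is a two-sided unit, and $(\kk_{\cal G}(A,A),+,\tens_A)$ is a unital ring. The one genuine difficulty is the on-the-nose identification of the internal tensor product module with the module of $\phi_*(x)$ (resp.\ $\phi^*(x)$): in part 2 this really does require passing to a non-degenerate representative via Remark \ref{rk20}, after which the conditions (a)--(c) of Definition \ref{def7} become immediate verifications rather than calculations.
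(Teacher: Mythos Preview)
Your argument for part 1 is correct and matches the paper's intended approach (the invocation of Remark \ref{rk20} there is in fact unnecessary: the identification of $\s E_1\tens_\phi B$ with the module underlying $\phi_*(x)$ holds without non-degeneracy of $\gamma_1$, and your $0$-connection check only uses $\s E_1 A=\s E_1$).

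There is, however, a genuine gap in part 2. You invoke Remark \ref{rk20} to arrange that $\gamma_2:B\to\Lin(\s E_2)$ is non-degenerate, but Remark \ref{rk20} applies to a class in $\kk_{\cal G}(B,C)$ only when the \emph{domain} algebra $B$ is separable. Here the hypothesis is that $A$ is separable and $B$ merely $\sigma$-unital, so you are not entitled to assume $[\gamma_2(B)\s E_2]=\s E_2$. Without this, the product module $B\tens_{\gamma_2}\s E_2$ identifies only with the essential submodule $\s E':=[\gamma_2(B)\s E_2]$, possibly proper, and your identification with the cycle underlying $\phi^*(x)$ breaks down.

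The repair is to run the Meyer rotation argument of Proposition \ref{prop22} 2 directly, but for the ${\rm M}_2(A)$-module $\s E_2\oplus\s E'$ rather than an ${\rm M}_2(B)$-module. Choose $F'\in 0\#_{\cal G}F_2$ (this uses only that $A$ is separable) and define the off-diagonal maps by $\gamma_{21}(a)=\gamma_2(\phi(a)):\s E_2\to\s E'$ and $\gamma_{12}(a)=\gamma_2(\phi(a))|_{\s E'}:\s E'\to\s E_2$. Under the identification $\s E'\cong B\tens_{\gamma_2}\s E_2$ one has $\gamma_{21}(a)=T_{\phi(a)}$, so the connection property of $F'$ (which is an $F_2$-connection for $B$) gives exactly the required compactness of $F_2\gamma_{12}(a)-\gamma_{12}(a)F'$ and $F'\gamma_{21}(a)-\gamma_{21}(a)F_2$. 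The rotation homotopy then yields $\phi^*(x)=[(\s E_2,\gamma_2\circ\phi,F_2)]=[(\s E',\gamma_2\circ\phi|_{\s E'},F')]=[\phi]\tens_B x$, using only the separability of $A$.
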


\subsection{Descent morphisms}

\begin{lem}\label{lem13} (cf.\ 6.13 \cite{BS1})
Let $(\s E,\gamma)$ be a $\cal G$-equivariant $A$-$B$-bimodule with a non-degenerate left action $\gamma:A\rightarrow\Lin(\s E)$ and $F\in\Lin(\s E)$. We assume that
$[\gamma(a),\,F]\in\K(\s E)$ for all $a\in A$, $[F,\beta_{\s E}(n^{\rm o})]=0$ for all $n\in N$ and $(\delta_{\K(\s E)}(F)-q_{\beta_{\s E}\alpha}(F\tens 1_S))(\gamma\tens\id_S)(x)\subset\K(\s E\tens S)$ for all $x\in A\tens S$. Let $D$ be the bidual $\cal G$-C*-algebra of $A$ (cf.\ \ref{not16}).
\begin{enumerate}
\item Through the identification of Hilbert $D$-$B$-bimodules (cf.\ \ref{theo8}, \ref{cor1} and \ref{propdef5} for the definitions and notations)
\begin{equation}\label{eq40}
{\cal E}_{A,R}\tens_{\gamma}\s E\rightarrow{\cal E}_{\s E,R}\; ; \; q_{\beta_A\alpha}(a\tens\xi)\tens_{\gamma}\eta\mapsto q_{\beta_{\s E}\alpha}(\gamma(a)\eta\tens\xi),
\end{equation}
the operator $\restr{\pi_R(F)}{{\cal E}_{\s E,R}}\in\Lin({\cal E}_{\s E,R})$ is identified to a $F$-connection for ${\cal E}_{A,R}$.
\item The operator $F\in\Lin(\s E)$ is identified to a $\restr{\pi_R(F)}{{\cal E}_{\s E,R}}$-connection through the identification of Hilbert $A$-$B$-bimodules ${\cal E}_{A,R}^*\tens_D{\cal E}_{\s E,R}=\s E$.\qedhere
\end{enumerate}
  
\end{lem}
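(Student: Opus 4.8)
The plan is to verify both statements directly, following the proof of 6.13 \cite{BS1} and carrying along the identifications of internal tensor products from \ref{theo8}, \ref{cor1} and \ref{theo2}. First I would record what is needed and fix notation. By \ref{lem14} the operator $\restr{\pi_R(F)}{{\cal E}_{\s E,R}}\in\Lin({\cal E}_{\s E,R})$, where $\pi_R(F):=(\id_{\s E}\tens R)\delta_{\K(\s E)}(F)$ acts on $\s E\tens\s H$, is a well-defined invariant adjointable operator, so the two connection assertions make sense; and the identification ${\cal E}_{A,R}\tens_\gamma\s E={\cal E}_{\s E,R}$ of part 1 is exactly (\ref{eq40}), equivalently \ref{cor1} combined with \ref{theo2}. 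Since $\gamma$ is non-degenerate one has $A\tens_\gamma\s E=\s E$ (cf.\ \ref{rk20}), and because ${\cal E}_{A,R}$ implements a Morita equivalence of $A$ and $D$ (\ref{theo8} 3) one has ${\cal E}_{A,R}^*\tens_D{\cal E}_{A,R}=A$; together with part 1 this yields the identification ${\cal E}_{A,R}^*\tens_D{\cal E}_{\s E,R}=\s E$ of part 2. For both parts it then suffices to check the compactness conditions in the definition of a connection (cf.\ \cite{CS}, \cite{Skand}).

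For part 1, fix $\omega=q_{\beta_A\widehat\alpha}(a\tens\zeta)\in{\cal E}_{A,R}$ with $a\in A$ and $\zeta\in\s H$; such elements span a dense subspace of ${\cal E}_{A,R}$, so it is enough to treat them. Under (\ref{eq40}) the operator $T_\omega\in\Lin(\s E,{\cal E}_{\s E,R})$ is given explicitly by $\eta\mapsto q_{\beta_{\s E}\widehat\alpha}(\gamma(a)\eta\tens\zeta)$. The computation of $\pi_R(F)T_\omega-T_\omega F$ then rests on three ingredients: the identity $\pi_R(F)\circ\Pi_R(\xi)=\Pi_R(F\xi)$ for $\xi\in\s E$, obtained by applying $\id_{\s E}\tens R$ to the relation $\delta_{\s E}(F\xi)=\delta_{\K(\s E)}(F)\delta_{\s E}(\xi)$ of \ref{rk16} 2, where $\Pi_R(\xi):=(\id_{\s E}\tens R)\delta_{\s E}(\xi)$; the density $[\delta_{\s E}(\s E)(A\tens S)]=q_{\beta_{\s E}\alpha}(\s E\tens S)$, used to reduce to the ``$\delta_{\s E}(\xi)x$-terms''; and, the crucial point, hypothesis (3) of \ref{defEqkaspbimod} in either of the equivalent forms of \ref{rk13}, applied after $\id_{\s E}\tens R$ (using $R(S)\K=\K$), which is where the error term $\delta_{\K(\s E)}(F)-q_{\beta_{\s E}\alpha}(F\tens 1_S)$ is swallowed into $\K(\s E\tens S)$. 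The adjoint estimate $FT_\omega^*-T_\omega^*\pi_R(F)\in\K({\cal E}_{\s E,R},\s E)$ is obtained symmetrically from the ``left'' version of \ref{rk13} 2. Together these say that $\restr{\pi_R(F)}{{\cal E}_{\s E,R}}$ is an $F$-connection for ${\cal E}_{A,R}$.

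For part 2, set $G:=\restr{\pi_R(F)}{{\cal E}_{\s E,R}}$. For $\omega^*\in{\cal E}_{A,R}^*$ the operator $T_{\omega^*}\in\Lin({\cal E}_{\s E,R},\s E)$, evaluated on the dense set $\{\omega'\tens_\gamma\zeta\,;\,\omega'\in{\cal E}_{A,R},\,\zeta\in\s E\}\subset{\cal E}_{\s E,R}$, is $\omega'\tens_\gamma\zeta\mapsto\gamma(a)\zeta$, where $a\in A$ is the image of $\omega^*\tens_D\omega'$ under the Morita identification ${\cal E}_{A,R}^*\tens_D{\cal E}_{A,R}=A$. Then $T_{\omega^*}G-FT_{\omega^*}$ is handled as follows: by part 1, $G(\omega'\tens_\gamma\zeta)-\omega'\tens_\gamma F\zeta$ is a compact operator (depending on $\omega'$) applied to $\zeta$; composing on the left with $T_{\omega^*}$ and using $T_{\omega^*}\K\subset\K$ leaves only the term $[\gamma(a),F]$, which is compact by hypothesis (1). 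The adjoint estimate is again symmetric. Hence $F$ is identified to a $\restr{\pi_R(F)}{{\cal E}_{\s E,R}}$-connection for ${\cal E}_{A,R}^*$.

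The main obstacle is the bookkeeping around the three distinct projections $q_{\beta_A\alpha}$, $q_{\beta_A\widehat\alpha}$, $q_{\beta_{\s E}\widehat\alpha}$ together with the passage between $\s E\tens S$ and $\s E\tens\s H$: hypothesis (3) is stated on $\s E\tens S$ with the projection $q_{\beta_{\s E}\alpha}$, whereas $\pi_R(F)$ and ${\cal E}_{\s E,R}$ live with $q_{\beta_{\s E}\widehat\alpha}$ on $\s E\tens\s H$, so before invoking \ref{rk13} one must insert and absorb the correct projections (using $\widehat\alpha(n)=U\alpha(n)U^*$ and the commutation relations \ref{prop34}, \ref{NonComRel}) and then verify that applying $\id_{\s E}\tens R$ preserves compactness; one must also pin down the explicit form of the Morita identification used in part 2. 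Modulo this essentially routine verification, the argument is a transcription of 6.13 \cite{BS1}.
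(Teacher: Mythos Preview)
Your overall strategy and your set-up of the identifications are fine, and part~2 is correctly reduced to part~1 via ${\cal E}_{A,R}^*\tens_D{\cal E}_{A,R}=A$ and $A\tens_\gamma\s E=\s E$. However, your execution of part~1 is not convincing as written, and it misses the simple decomposition the paper uses.

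The gap is in your ``three ingredients''. The identity $\pi_R(F)\circ\Pi_R(\xi)=\Pi_R(F\xi)$ concerns $\Pi_R(\xi)\in\Lin(A\tens\s H,\s E\tens\s H)$, whereas the connection condition asks about $T_\omega\in\Lin(\s E,{\cal E}_{\s E,R})$, $\eta\mapsto q_{\beta_{\s E}\widehat\alpha}(\gamma(a)\eta\tens\zeta)$. These have different domains, and $T_\omega(\eta)$ is not of the form $\Pi_R(\xi)(\cdot)$, so it is unclear how your first ingredient enters the computation of $\pi_R(F)T_\omega-T_\omega F$. Likewise the density $[\delta_{\s E}(\s E)(A\tens S)]=q_{\beta_{\s E}\alpha}(\s E\tens S)$ is a statement about the range side, not about $T_\omega$, so ``reducing to $\delta_{\s E}(\xi)x$-terms'' does not correspond to anything in the connection estimate you must prove.

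The paper's argument avoids all of this by a two-term splitting. First, through the larger identification $(A\tens\s H)\tens_\gamma\s E\cong\s E\tens\s H$ (before cutting down by the projections), the operator $F\tens 1_{\s H}$ is an $F$-connection for $A\tens\s H$ simply because $(F\tens 1)T_{a\tens\zeta}-T_{a\tens\zeta}F=[F,\gamma(a)](\cdot)\tens\zeta\in\K(\s E,\s E\tens\s H)$; restricting gives that $\restr{(F\tens 1_{\s H})}{{\cal E}_{\s E,R}}$ is an $F$-connection for ${\cal E}_{A,R}$. Second, hypothesis~(3) together with $R(S)\K=\K$ gives $(\pi_R(F)-(F\tens 1)q_{\beta_{\s E}\widehat\alpha})(\gamma\tens\id_\K)(A\tens\K)\subset\K(\s E\tens\s H)$, i.e.\ $\restr{\pi_R(F)}{{\cal E}_{\s E,R}}-\restr{(F\tens 1_{\s H})}{{\cal E}_{\s E,R}}$ is a $0$-connection (Skandalis, Prop.~9(d)). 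Summing (Prop.~9(c)) gives the claim. No manipulation of $\Pi_R$ is needed.

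For part~2 the paper is even shorter than your direct computation: under the identifications ${\cal E}_{A,R}^*\tens_D{\cal E}_{\s E,R}=\s E$ and ${\cal E}_{A,R}\tens_\gamma\s E={\cal E}_{\s E,R}$, for $\xi\in{\cal E}_{A,R}$ the operator $T_{\xi^*}\in\Lin({\cal E}_{\s E,R},\s E)$ is \emph{exactly} $T_\xi^*\in\Lin({\cal E}_{A,R}\tens_\gamma\s E,\s E)$. So part~2 is literally the adjoint form of part~1, with nothing new to check.
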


\begin{proof}
1.\ It is clear that the formula 
\begin{equation}\label{eq39}
(A\tens\s H)\tens_{\gamma}\s E\rightarrow\s E\tens\s H \; ;\; (a\tens\xi)\tens_{\gamma}\eta\mapsto\gamma(a)\eta\tens\xi 
\end{equation}
defines an adjointable unitary of Hilbert $B$-modules, which intertwines the left actions of $A\tens\K$. However, we have $\gamma(\beta_A(n^{\rm o})a)=\beta_{\s E}(n^{\rm o})\gamma(a)$ for all $n\in N$ and $a\in A$. Hence, the above unitary induces by restriction the identification of Hilbert $D$-$B$-bimodules ${\cal E}_{A,R}\tens_{\gamma}\s E={\cal E}_{\s E,R}$.
By compactness of the commutators $[\gamma(a),\,F]$ for all $a\in A$, the operator $F\tens 1_{\s H}\in\Lin(\s E\tens\s H)$ is identified to a $F$-connection through (\ref{eq39}). Hence, the operator $\restr{(F\tens 1_{\s H})}{{\cal E}_{\s E,R}}\in\Lin({\cal E}_{\s E,R})$ is identified to a $F$-connection through (\ref{eq40}). Moreover, since $(\delta_{\K(\s E)}(F)-F\tens 1_S)(\gamma\tens\id_S)(q_{\beta_A\alpha}(A\tens S))\subset\K(\s E\tens S)$, the operator $\restr{\pi_R(F)}{{\cal E}_{\s E,R}}-\restr{(F\tens 1_{\s H})}{{\cal E}_{\s E,R}}$ is identified to a $0$-connection (cf.\ 9 (d) \cite{Skand}) through (\ref{eq39}). Hence, the operator $\restr{\pi_R(F)}{{\cal E}_{\s E,R}}\in\Lin({\cal E}_{\s E,R})$ is identified to a $F$-connection through (\ref{eq40}) (cf.\ 9 (c) \cite{Skand}).\newline
2.\ By associativity, we have ${\cal E}_{A,R}^*\tens_D{\cal E}_{\s E,R}=({\cal E}_{A,R}^*\tens_D{\cal E}_{A,R})\tens_A\s E=A\tens_A\s E$ (cf.\ 7.12 \cite{C2}). By non-degeneracy of $\gamma$, we have $A\tens_A\s E=\s E$. We then obtain a canonical identification of Hilbert $A$-$B$-bimodules ${\cal E}_{A,R}^*\tens_D{\cal E}_{\s E,R}=\s E$. For $\xi\in{\cal E}_{A,R}$, the operator $T_{\xi^*}\in\Lin({\cal E}_{\s E,R},{\cal E}_{A,R}^*\tens_D{\cal E}_{\s E,R})$ is identified to $T_{\xi}^*\in\Lin({\cal E}_{A,R}\tens_{\gamma}\s E,\s E)$ up to the identifications ${\cal E}_{A,R}^*\tens_D{\cal E}_{\s E,R}=\s E$ and ${\cal E}_{A,R}\tens_{\gamma}\s E={\cal E}_{\s E,R}$. Therefore, the fact that $F\in\Lin(\s E)$ is a $\restr{\pi_R(F)}{{\cal E}_{\s E,R}}$-connection is just a restatement of 1.
\end{proof}

\begin{lem}\label{lem16}(cf.\ 6.14 \cite{BS2} and 5.3 \cite{Ve})
We follow the notations and hypotheses of \ref{lem13}. Let $\s E_0:={\cal E}_{\s E,R}$ be the $\cal G$-equivariant Hilbert $D$-$B$-bimodule defined in \ref{cor1} and \ref{propdef5}. Let $F_0:=\restr{\pi_R(F)}{\s E_0}\in\Lin(\s E_0)$. Let $\gamma_0:D\rightarrow\Lin(\s E_0)\,;\,d\mapsto\restr{(\gamma\tens\id_{\K})(d)}{\s E_0}$ be the left action of $D$ on $\s E_0$. Then, we have (cf.\ \ref{propdef8})
\[
[\gamma_{0\ast}(D\rtimes{\cal G}),\, \pi_{\K(\s E_0)}(F_0)]\subset\K(\s E_0\rtimes{\cal G}).
\] 
If $(\s E,\gamma, F)$ is a $\cal G$-equivariant Kasparov $A$-$B$-bimodule, then the triple $(\s E_0,\gamma_0,F_0)$ is a $\cal G$-equivariant Kasparov $D$-$B$-bimodule and the triple $(\s E_0\rtimes{\cal G},\gamma_{0\ast},\pi_{\K(\s E_0)}(F_0))$ is a $\widehat{\cal G}$-equivariant Kasparov $D\rtimes{\cal G}$-$B\rtimes{\cal G}$-bimodule.
\end{lem}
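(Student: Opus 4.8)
The plan is to reduce the statement to the results already established in the previous sections, treating the crossed-product claim as the image of the bidual construction under the descent functors. First I would recall the setup from Lemma \ref{lem13}: since $\gamma$ is non-degenerate and $(\s E,\gamma,F)$ satisfies the three hypotheses, we have the $\cal G$-equivariant Hilbert $D$-$B$-bimodule $\s E_0:={\cal E}_{\s E,R}$ of \ref{cor1} and \ref{propdef5}, with left action $\gamma_0:D\rightarrow\Lin(\s E_0)\,;\,d\mapsto\restr{(\gamma\tens\id_{\K})(d)}{\s E_0}$, and the operator $F_0:=\restr{\pi_R(F)}{\s E_0}$. The first task is the purely ``downstairs'' statement that $(\s E_0,\gamma_0,F_0)$ is a $\cal G$-equivariant Kasparov $D$-$B$-bimodule; this is essentially the content of the (compact-group, resp.\ quantum-group) descent in 6.14 \cite{BS2}, 5.3 \cite{Ve}, and the verification has three parts. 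The compactness of $[\gamma_0(d),\,F_0]$ for $d\in D$ and $\gamma_0(d)(F_0^2-1),\gamma_0(d)(F_0-F_0^*)\in\K(\s E_0)$ follow from the fact that $F_0$ is the restriction to $\s E_0\subset\s E\tens\K$ of $\pi_R(F)=(\gamma\tens\id_\K)(F\tens1)$ composed with conjugation by legs of $V$, together with $[\gamma(a),F]\in\K(\s E)$; one writes $D=q_{\beta_A\widehat\alpha}(A\tens\K)q_{\beta_A\widehat\alpha}$ (using regularity of $\cal G$, which holds throughout Chapter 6) and reduces to elementary tensors $\gamma(a)\tens k$. The condition $[F_0,\beta_{\s E_0}(n^{\rm o})]=0$ is immediate from $[F,\beta_{\s E}(n^{\rm o})]=0$ and $\beta_{\s E_0}(n^{\rm o})=\restr{(1_{\s E}\tens\beta(n^{\rm o}))}{\s E_0}$, since $\pi_R(F)=(\gamma\tens\id_\K)(F\tens1)$ commutes with $1_{\s E}\tens\beta(n^{\rm o})$. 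The remaining Kasparov-covariance condition $(\gamma_0\tens\id_S)(x)(\delta_{\K(\s E_0)}(F_0)-q_{\beta_{\s E_0}\alpha}(F_0\tens1_S))\in\K(\s E_0\tens S)$ is where the regularity and the explicit form $\delta_{\s D}(\zeta)=(1_{\s E}\tens{\cal V})(\id_{\s E}\tens\sigma)(\delta_{\s E}\tens\id_\K)(\zeta)(1_A\tens{\cal V})^*$ from Theorem \ref{theo1} enter: one expands $\delta_{\K(\s E_0)}(F_0)$ through the identification $\s D\tens_D{\cal E}_{A,R}={\cal E}_{\s E,R}$ and compares with $F_0\tens1_S$ using that the original bimodule satisfies condition 3 of \ref{defEqkaspbimod}, so the defect lands in $\K$. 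I would lift this computation from the $\s D$-side (where Theorem \ref{theo1} gives a clean formula) and transport back along the identification.

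Next I would produce the ``upstairs'' statement by applying the crossed-product functoriality developed in \S\ref{sectionHilbModCrPrd}. By \ref{propdef8} and \ref{cor3}, the $\cal G$-equivariant Hilbert $D$-$B$-bimodule $\s E_0$ gives a $\widehat{\cal G}$-equivariant Hilbert $D\rtimes{\cal G}$-$B\rtimes{\cal G}$-bimodule $\s E_0\rtimes{\cal G}$ with $\widehat{\cal G}$-equivariant $*$-representation $\gamma_{0\ast}:D\rtimes{\cal G}\rightarrow\Lin(\s E_0\rtimes{\cal G})$, and $\K(\s E_0)\rtimes{\cal G}\simeq\K(\s E_0\rtimes{\cal G})$. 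The key algebraic input is \ref{cor5}, namely $\s E_0\rtimes{\cal G}=[\widehat\theta_{\K(\s E_0)}(x)\Pi(\xi)\,;\,\xi\in\s E_0,\,x\in\widehat S]$ and $\pi_{\K(\s E_0)}(k)\Pi(\xi)=\Pi(k\xi)$; combined with \ref{lem25}, which says $F_0\tens_{\pi}1_B$ is invariant, hence (via \ref{cor3}) identified with $\pi_{\K(\s E_0)}(F_0)\in\M(\K(\s E_0)\rtimes{\cal G})=\Lin(\s E_0\rtimes{\cal G})$. Then the commutator estimate $[\gamma_{0\ast}(D\rtimes{\cal G}),\,\pi_{\K(\s E_0)}(F_0)]\subset\K(\s E_0\rtimes{\cal G})$ follows: writing a generic element of $D\rtimes{\cal G}$ as a norm limit of sums $\pi(d)\widehat\theta(x)$, one has $[\pi_{\K(\s E_0)}(F_0),\widehat\theta(x)]=0$ because $F_0\tens_\pi1_B$ is invariant and $\widehat\theta$ is the dual-action leg (use \ref{lem31}: $F_0$ being invariant, hence $\delta_{\K(\s E_0)}$-invariant, gives $[\pi(F_0),\widehat\theta(x)]=0$), so the commutator reduces to $\gamma_{0\ast}$ applied to a single $\pi(d)$ factor, which equals $\pi_{\K(\s E_0\rtimes\cal G)}(\gamma_0(d))$-type expression whose commutator with $\pi_{\K(\s E_0)}(F_0)$ comes from $[\gamma_0(d),F_0]\in\K(\s E_0)$ pushed through $\pi_{\K(\s E_0)}$, landing in $\K(\s E_0\rtimes{\cal G})$. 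The remaining two Kasparov conditions $\gamma_{0\ast}(c)(\pi_{\K(\s E_0)}(F_0)^2-1),\,\gamma_{0\ast}(c)(\pi_{\K(\s E_0)}(F_0)-\pi_{\K(\s E_0)}(F_0)^*)\in\K(\s E_0\rtimes\cal G)$ follow the same pattern, using that $\pi_{\K(\s E_0)}$ is a $*$-homomorphism so $\pi_{\K(\s E_0)}(F_0)^2-1=\pi_{\K(\s E_0)}(F_0^2-1)$ modulo the unit, and $\gamma_0(d)(F_0^2-1)\in\K(\s E_0)$.

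Finally, to conclude that $(\s E_0\rtimes{\cal G},\gamma_{0\ast},\pi_{\K(\s E_0)}(F_0))$ is a $\widehat{\cal G}$-equivariant Kasparov bimodule in the sense of \ref{defEqkaspbimod}, I must also check conditions 2 and 3 there (the $\beta$-commutation and the dual-action covariance) for the crossed product. Condition 2, $[\pi_{\K(\s E_0)}(F_0),\alpha_{\s E_0\rtimes\cal G}(n)]=0$, follows from $\alpha_{\s E_0\rtimes\cal G}(n)=1_{\s E_0}\tens_\pi\alpha_{\K(\s E_0)\rtimes\cal G}(n)=\widehat\theta_{\K(\s E_0)}(\widehat\alpha(n))$ applied and $[F_0,\beta_{\s E_0}(n^{\rm o})]=0$ transported through \ref{cor3}, plus the fact $F_0\tens_\pi1$ is invariant so it commutes with all $\widehat\theta$-legs. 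Condition 3 uses that $\pi_{\K(\s E_0)}(F_0)$ is $\delta_{\K(\s E_0\rtimes\cal G)}$-invariant — indeed by \ref{lem25} $F_0\tens_\pi1_B$ is an invariant operator on $\s E_0\rtimes{\cal G}$, so by \ref{rk13} 3 conditions 2 and 3 of \ref{defEqkaspbimod} are automatically satisfied. The main obstacle I anticipate is the third Kasparov-covariance verification in the first paragraph — unravelling $\delta_{\K(\s E_0)}(F_0)-q_{\beta_{\s E_0}\alpha}(F_0\tens1_S)$ through the chain of identifications $(\s E\rtimes\cal G)\rtimes\widehat{\cal G}\simeq\s D$, $\s D\tens_D{\cal E}_{A,R}\simeq{\cal E}_{\s E,R}$ and the explicit ${\cal V}$-conjugation formula of Theorem \ref{theo1}, keeping track of where compactness is gained — whereas the passage to the crossed product in the second and third paragraphs is largely formal given \ref{cor5}, \ref{lem25}, \ref{cor3} and \ref{lem31}.
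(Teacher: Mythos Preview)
Your overall strategy matches the paper's: verify the Kasparov conditions for $(\s E_0,\gamma_0,F_0)$, then pass to the crossed product using \ref{cor3}, \ref{lem25}, \ref{lem31}. The crossed-product half of your argument is correct and essentially identical to the paper's.

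However, you have inverted the difficulty of the downstairs verification. The step you flag as the ``main obstacle''---checking condition 3 of \ref{defEqkaspbimod} for $F_0$ by unwinding $\delta_{\K(\s E_0)}(F_0)$ through the identifications of Theorem \ref{theo1}---is in fact trivial: Lemma \ref{lem14} already tells you that $F_0=\restr{\pi_R(F)}{{\cal E}_{\s E,R}}$ is $\delta_{\s E_0}$-invariant, so by \ref{rk13}~3 conditions 2 and 3 of \ref{defEqkaspbimod} hold automatically. You invoke exactly this mechanism (\ref{lem25} $+$ \ref{rk13}~3) for the crossed-product level but miss it one level down.

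Conversely, the step you pass over quickly---$[\gamma_0(d),F_0]\in\K(\s E_0)$---is where the real work lies, and your description of it is slightly off. The operator $\pi_R(F)$ is \emph{not} ``$(\gamma\tens\id_\K)(F\tens 1)$ composed with conjugation by legs of $V$''; rather $\pi_R(F)=(\id_{\K(\s E)}\tens R)\delta_{\K(\s E)}(F)$, and the key point (which the paper makes explicit) is that the Kasparov covariance hypothesis on $(\s E,\gamma,F)$ yields
\[
\bigl((F\tens 1_{\s H})q_{\beta_{\s E}\widehat\alpha}-\pi_R(F)\bigr)(\gamma\tens\id_\K)(A\tens\K)\subset\K(\s E\tens\s H),
\]
so that $(\restr{(F\tens 1)}{\s E_0}-F_0)\gamma_0(D)\subset\K(\s E_0)$. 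This reduces $[\gamma_0(d),F_0]$ to $[\gamma_0(d),\restr{(F\tens 1_{\s H})}{\s E_0}]$ modulo compacts, and the latter is compact by the hypothesis $[\gamma(a),F]\in\K(\s E)$. The same reduction handles $\gamma_0(d)(F_0^2-1)$ and $\gamma_0(d)(F_0-F_0^*)$. Once you make this comparison-to-$F\tens 1$ step explicit, your proof is complete and coincides with the paper's.
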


\begin{proof}
From the relations $((F\tens 1_S)q_{\beta_{\s E}\alpha}-\delta_{\K(\s E)}(F))(\gamma\tens\id_S)(A\tens S)\subset\K(\s E)\tens S$ and $R(S)\K=\K$, we infer that 
$
((F\tens 1)q_{\beta_{\s E}\widehat{\alpha}}-\pi_R(F))(\gamma\tens\id_{\K})(A\tens\K)\subset\K(\s E\tens\s H).
$
Hence, $(\restr{(F\tens 1)}{\s E_0}-\,F_0)\gamma_0(D)\subset\K(\s E_0)$. We also have $\gamma_0(D)(F_0-\restr{(F\tens 1_{\s H})}{\s E_0})\subset\K(\s E_0)$. Hence, $[\gamma_0(d),\,F_0]=[\gamma_0(d),\,\restr{(F_0\tens 1_{\s H})}{\s E_0}]$ mod.\ $\K(\s E_0)$. By compactness of the commutators $[\gamma(a),\, F]$ for all $a\in A$, we have $[(\gamma\tens\id_{\K})(x),\, F\tens 1_{\s H}]\in\K(\s E\tens\s H)$ for all $x\in A\tens\K$. In particular, $[\gamma_0(d),\,\restr{(F_0\tens 1_{\s H})}{\s E_0}]\in\K(\s E_0)$ for all $d\in D$. Hence, $[\gamma_0(d),\,F_0]\in\K(\s E_0)$ for all $d\in D$. Let $d\in D$ and $x\in\widehat{S}$. We have 
\begin{multline*}
[\gamma_{0\ast}(\pi_{D}(d)\widehat{\theta}_{D}(x)),\, \pi_{\K(\s E_0)}(F_0)]
=\pi_{\K(\s E_0)}([\gamma_0(d),\, F_0])\widehat{\theta}_{\K(\s E_0)}(x) \\+ 
\pi_{\K(\s E_0)}(\gamma_0(d))[\widehat{\theta}_{\K(\s E_0)}(x),\, \pi_{\K(\s E_0)}(F_0)].
\end{multline*}
The first term of the right-hand side belongs to $\K(\s E_0\rtimes{\cal G})$ since $[\gamma_0(d),\, F_0]\in\K(\s E_0)$ (cf.\ \ref{cor3}) and the second one is zero since $[\widehat{\theta}_{\K(\s E_0)}(x),\, \pi_{\K(\s E_0)}(F_0)]=0$ (cf.\ \ref{lem14} and \ref{lem31}). Therefore, we have 
\begin{center}
$[\gamma_{0\ast}(\pi_{D}(d)\widehat{\theta}_{D}(x)),\, \pi_{\K(\s E_0)}(F_0)]\in\K(\s E_0\rtimes{\cal G})$,\quad for all $d\in D$ and $x\in\widehat{S}$.
\end{center} 
Hence, $[\gamma_{0\ast}(D\rtimes{\cal G}),\, \pi_{\K(\s E_0)}(F_0)]\subset\K(\s E_0\rtimes{\cal G})$.
Assume that $(\s E,\gamma,F)$ is a $\cal G$-equivariant Kasparov $A$-$B$-bimodule. By arguing as at the beginning of the proof, we prove the remaining relations of \ref{defkaspbimod} so that the triple $(\s E_0,\gamma_0,F_0)$ is a Kasparov $D$-$B$-bimodule. By invariance of $F_0$, the triple $(\s E_0,\gamma_0,F_0)$ is a $\cal G$-equivariant Kasparov $A$-$B$-bimodule (cf.\ \ref{rk13} 3). We prove the remaining relations of \ref{defkaspbimod} so that the triple $(\s E_0\rtimes{\cal G},\gamma_{0\ast},\pi_{\K(\s E_0)}(F_0))$ is a Kasparov $D$-$B$-bimodule.
For instance, we have 
\begin{align*}
\gamma_{0\ast}(\widehat{\theta}_D(x)\pi_D(d))(&\pi_{\K(\s E_0)}(F_0)^* -\pi_{\K(\s E_0)}(F_0))\\
&=\widehat{\theta}_{\K(\s E_0)}(x)\pi_{\K(\s E_0)}(\gamma_0(d)(F_0^*-F_0))\in\K(\s E_0)\rtimes{\cal G}
\quad \text{(cf.\ \ref{propdef8}),}
\end{align*}
for all $d\in D$ and $x\in\widehat{S}$. Moreover, the triple $(\s E_0\rtimes{\cal G},\gamma_{0\ast},\pi_{\K(\s E_0)}(F_0))$ is a $\cal G$-equivariant Kasparov $D$-$B$-bimodule (cf.\ \ref{lem25} and \ref{rk13} 3).
\end{proof}

Now, we are in position to define the descent morphism.

\begin{thm}\label{descent1} (cf.\ 6.13-6.19 \cite{BS2}, 7.7 b) \cite{BS1} and 5.3-5.4 \cite{Ve})
Let $A$, $B$ and $C$ be $\cal G$-$\Cstar$-algebras.
\newcounter{counter}
\begin{enumerate}
\item If $(\s E,F)$ is a $\cal G$-equivariant Kasparov $A$-$B$-bimodule (with a non-degenerate left action), then $(\s E\rtimes{\cal G},F\tens_{\pi_B}1)$ is a $\widehat{\cal G}$-equivariant Kasparov $A\rtimes{\cal G}$-$B\rtimes{\cal G}$-bimodule. Moreover, if $(\s E_1,F_1)$ and $(\s E_2,F_2)$ are unitarily equivalent (resp.\ homotopic) $\cal G$-equivariant Kasparov $A$-$B$-bimodules, then so are $(\s E_1\rtimes{\cal G},F_1\tens_{\pi_B}1)$ and $(\s E_2\rtimes{\cal G},F_2\tens_{\pi_B}1)$.
\setcounter{counter}{\value{enumi}}
\end{enumerate}
Let $J_{\cal G}:\kk_{\cal G}(A,B)\rightarrow\kk_{\widehat{\cal G}}(A\rtimes{\cal G},B\rtimes{\cal G})$\index[symbol]{jb@$J_{\cal G}$} be the homomorphism of abelian groups defined for all $[(\s E,F)]\in\kk_{\cal G}(A,B)$ (with a non-degenerate left action) by $J_{\cal G}([(\s E,F)]):=[(\s E\rtimes{\cal G},F\tens_{\pi_B}1)]$.
\begin{enumerate}
\setcounter{enumi}{\value{counter}}
\item Let $\phi:A\rightarrow B$ be a $\cal G$-equivariant *-homomorphism. We recall that the equivariance of $\phi$ allows us to define a $\widehat{\cal G}$-equivariant *-homomorphism $\phi_*:A\rtimes{\cal G}\rightarrow B\rtimes{\cal G}$ (cf.\ \ref{FunctorCrossedProd} 1). We have $J_{\cal G}([\phi])=[\phi_*]$. In particular, we have $J_{\cal G}(1_A)=1_{A\rtimes{\cal G}}$.
\item Assume the C*-algebra $A$ to be separable. For all $x_1\in\kk_{\cal G}(A,C)$ and $x_2\in\kk_{\cal G}(C,B)$, we have 
\[
J_{\cal G}(x_1\tens_C x_2)=J_{\cal G}(x_1)\tens_{C\rtimes{\cal G}} J_{\cal G}(x_2).\qedhere
\]
\end{enumerate}
\end{thm}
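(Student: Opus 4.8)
The plan is to prove the three assertions in turn. Items~1 and~2 are essentially formal consequences of the crossed-product constructions of Chapter~5, while item~3 --- the multiplicativity of $J_{\cal G}$ with respect to the Kasparov product --- is the substantial point. For item~1, given a $\cal G$-equivariant Kasparov $A$-$B$-bimodule $(\s E,\gamma,F)$ with $\gamma$ non-degenerate, I would form $\s E\rtimes{\cal G}=\s E\tens_{\pi_B}(B\rtimes{\cal G})$ with its dual $\widehat{\cal G}$-action (\ref{defdualaction}), the $\widehat{\cal G}$-equivariant representation $\gamma_*$ of \ref{propdef8}, and the operator $F\tens_{\pi_B}1=\pi_{\K(\s E)}(F)$ (\ref{cor3}). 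Countable generation is part of \ref{propdef8}; since $F\tens_{\pi_B}1$ is $\delta_{\s E\rtimes{\cal G}}$-invariant (\ref{lem25}), conditions~2 and~3 of \ref{defEqkaspbimod} hold automatically by \ref{rk13}~3. For the remaining Kasparov conditions it suffices to test against the generators $\gamma_*(\pi_A(a)\widehat{\theta}_A(x))=\pi_{\K(\s E)}(\gamma(a))\widehat{\theta}_{\K(\s E)}(x)$ (\ref{FunctorCrossedProd}~1): pushing the $\widehat{\theta}$-factor to the outside, the terms $\pi_{\K(\s E)}(\gamma(a)(F^2-1))\widehat{\theta}_{\K(\s E)}(x)$, $\pi_{\K(\s E)}(\gamma(a)(F-F^*))\widehat{\theta}_{\K(\s E)}(x)$ and $\pi_{\K(\s E)}([\gamma(a),F])\widehat{\theta}_{\K(\s E)}(x)$ all lie in $\K(\s E)\rtimes{\cal G}=\K(\s E\rtimes{\cal G})$, and the only non-formal point, namely $\pi_{\K(\s E)}(\gamma(a))[\widehat{\theta}_{\K(\s E)}(x),\pi_{\K(\s E)}(F)]\in\K(\s E\rtimes{\cal G})$, is precisely the crossed-product translation of condition~3 of \ref{defEqkaspbimod} carried out in the opening lines of the proof of \ref{lem16}. (Alternatively one may derive item~1 from \ref{lem16} together with the Morita equivalence $A\rtimes{\cal G}\sim D\rtimes{\cal G}$ provided by \ref{theo8} applied to the $\widehat{\cal G}$-C*-algebra $A\rtimes{\cal G}$, using \ref{cor1}.) Preservation of unitary equivalence is \ref{prop23}, and preservation of homotopy follows from the identification $(\s E[0,1])\rtimes{\cal G}=(\s E\rtimes{\cal G})[0,1]$ (a consequence of $B[0,1]\rtimes{\cal G}=(B\rtimes{\cal G})[0,1]$) together with functoriality along the evaluation maps.

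For item~2, one has $[\phi]=[(B,\phi,0)]$, and $J_{\cal G}$ sends it to $[(B\rtimes{\cal G},\phi_*,0)]=[\phi_*]$, since the crossed product of the Hilbert $B$-module $B$ is $B\rtimes{\cal G}$ and $\phi_*$ is the functorial extension of \ref{FunctorCrossedProd}; taking $\phi=\id_A$ yields $J_{\cal G}(1_A)=1_{A\rtimes{\cal G}}$.

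For item~3, by \ref{rk20} I would choose representatives $(\s E_1,\gamma_1,F_1)$ of $x_1\in\kk_{\cal G}(A,C)$ and $(\s E_2,\gamma_2,F_2)$ of $x_2\in\kk_{\cal G}(C,B)$ with non-degenerate left actions, and $F\in F_1\#_{\cal G}F_2$, so that $x_1\tens_C x_2=[(\s E,\gamma,F)]$ with $\s E=\s E_1\tens_{\gamma_2}\s E_2$ and $\gamma(a)=\gamma_1(a)\tens_{\gamma_2}1$, whence $J_{\cal G}(x_1\tens_C x_2)=[(\s E\rtimes{\cal G},\gamma_*,F\tens_{\pi_B}1)]$. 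By \ref{prop42} the $\widehat{\cal G}$-equivariant unitary $\Xi\colon(\s E_1\rtimes{\cal G})\tens_{\gamma_{2*}}(\s E_2\rtimes{\cal G})\to\s E\rtimes{\cal G}$ intertwines $\kappa=\gamma_{1*}(\cdot)\tens_{\gamma_{2*}}1$ with $\gamma_*$, so by the well-definedness part of \ref{Kasprod} it is enough to show that $\widetilde F:=\Xi^*(F\tens_{\pi_B}1)\Xi$ lies in $(F_1\tens_{\pi_C}1)\#_{\widehat{\cal G}}(F_2\tens_{\pi_B}1)$, i.e.\ that it satisfies the three conditions of \ref{def7}. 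Condition~(a) is item~1 transported through $\Xi$. For condition~(b) I would test the connection relations on the total subset $\{\widehat{\theta}_{\K(\s E_1)}(x)\Pi_{\s E_1}(\xi)\}$ of $\s E_1\rtimes{\cal G}$ (\ref{cor5}); using the explicit formula for $\Xi$ in \ref{prop42}~1 and the identity $\Pi_{\s E}(\xi_1\tens_{\gamma_2}\xi_2)=(T_{\xi_1}\tens_{\pi_B}1)\Pi_{\s E_2}(\xi_2)$, the connection condition reduces to the compactness of $T_{\xi_1}F_2-FT_{\xi_1}\in\K(\s E_2,\s E)$ (the $F_2$-connection property of $F$) together with the crossed-product translation of condition~3 of \ref{defEqkaspbimod} already used in item~1. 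For condition~(c) I would first note, via \ref{cor3} and the functoriality \ref{prop42}~2 applied to the left action $k\mapsto k\tens_{\gamma_2}1$ of $\K(\s E_1)$, that $\Xi$ carries $(F_1\tens_{\pi_C}1)\tens_{\gamma_{2*}}1$ to $\pi_{\K(\s E)}(F_1\tens_{\gamma_2}1)$, so that $[(F_1\tens_{\pi_C}1)\tens_{\gamma_{2*}}1,\widetilde F]$ corresponds to $\pi_{\K(\s E)}([F_1\tens_{\gamma_2}1,F])$; then, writing $b=\widehat{\theta}_A(x)\pi_A(a)$ (a total subset of $A\rtimes{\cal G}$), one computes $\gamma_*(b)\pi_{\K(\s E)}([F_1\tens_{\gamma_2}1,F])\gamma_*(b^*)=\widehat{\theta}_{\K(\s E)}(x)\,\pi_{\K(\s E)}\big(\gamma(a)[F_1\tens_{\gamma_2}1,F]\gamma(a^*)\big)\widehat{\theta}_{\K(\s E)}(x^*)$, and decomposing $\gamma(a)[F_1\tens_{\gamma_2}1,F]\gamma(a^*)=s^*s+k$ with $k\in\K(\s E)$ (condition~(c) of \ref{def7} for $F$) this equals $\big(\pi_{\K(\s E)}(s)\widehat{\theta}_{\K(\s E)}(x^*)\big)^*\big(\pi_{\K(\s E)}(s)\widehat{\theta}_{\K(\s E)}(x^*)\big)+\widehat{\theta}_{\K(\s E)}(x)\pi_{\K(\s E)}(k)\widehat{\theta}_{\K(\s E)}(x^*)$, which is positive modulo $\K(\s E)\rtimes{\cal G}$; as in the non-equivariant case this passes to all $b\in A\rtimes{\cal G}$ by continuity, the positive cone modulo compacts being closed. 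Once $\widetilde F\in(F_1\tens_{\pi_C}1)\#_{\widehat{\cal G}}(F_2\tens_{\pi_B}1)$ is established, \ref{Kasprod} gives $J_{\cal G}(x_1\tens_C x_2)=J_{\cal G}(x_1)\tens_{C\rtimes{\cal G}}J_{\cal G}(x_2)$.

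The hard part is item~3, and within it the verifications of the connection property~(b) and the positivity~(c): both require careful bookkeeping with the explicit description of $\Xi$ from \ref{prop42} and with the interplay between the operators $\Pi(\cdot)$, the operators $\widehat{\theta}(\cdot)$, and compactness. The delicate point throughout is that $\pi_{\K(\s E)}$ does \emph{not} map $\K(\s E)$ into the ideal $\K(\s E\rtimes{\cal G})$ of $\Lin(\s E\rtimes{\cal G})$, so one must systematically retain the $\widehat{\theta}$-factors in the left-action elements in order to land inside $\K(\s E)\rtimes{\cal G}=\K(\s E\rtimes{\cal G})$; this is exactly what forces both the use of the total set $\{\widehat{\theta}_A(x)\pi_A(a)\}$ in the positivity argument and the appeal to the crossed-product translation of \ref{defEqkaspbimod}~3 in the connection argument.
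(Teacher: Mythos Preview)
Your treatment of items~2 and~3 is essentially the paper's: item~2 is declared straightforward, and for item~3 the paper uses exactly the identification $\Xi$ of \ref{prop42} and the same positivity computation with generators $\widehat{\theta}(x)\pi(\gamma(a))$; your verification of the connection condition~(b) is in fact more explicit than what the paper writes out.

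The gap is in your direct approach to item~1. You correctly observe that $F\tens_{\pi_B}1=\pi_{\K(\s E)}(F)$ is $\delta_{\s E\rtimes{\cal G}}$-invariant (\ref{lem25}), which handles conditions~2 and~3 of \ref{defEqkaspbimod}. But for the commutator condition you need $\pi_{\K(\s E)}(\gamma(a))[\widehat{\theta}_{\K(\s E)}(x),\pi_{\K(\s E)}(F)]\in\K(\s E\rtimes{\cal G})$, and here your reference to ``the opening lines of the proof of \ref{lem16}'' is misplaced. Those lines show $(\restr{(F\tens1)}{\s E_0}-F_0)\gamma_0(D)\subset\K(\s E_0)$ for the \emph{bidual} module $\s E_0={\cal E}_{\s E,R}$, and the subsequent vanishing of $[\widehat{\theta}_{\K(\s E_0)}(x),\pi_{\K(\s E_0)}(F_0)]$ uses \ref{lem31}, which requires $F_0$ to be $\delta_{\K(\s E_0)}$-invariant. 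This holds for $F_0=\pi_R(F)$ by \ref{lem14}, but it does \emph{not} hold for $F$ itself: $\delta_{\K(\s E)}(F)\neq q_{\beta_{\s E}\alpha}(F\tens1_S)$ in general (that is precisely the content of condition~3 being only ``modulo compacts''). Concretely, $\pi_{\K(\s E)}(F)=(\id\tens L)\delta_{\K(\s E)}(F)$ has its second leg in $\M(S)\subset M$, while $\rho(\widehat S)\subset\widehat M'$, and $M$ and $\widehat M'$ do not commute. So the commutator $[\widehat{\theta}_{\K(\s E)}(x),\pi_{\K(\s E)}(F)]$ is genuinely nonzero, and condition~3 of \ref{defEqkaspbimod} does not translate into its compactness in any direct way.

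What the paper actually does is your parenthetical ``alternative'', carried out concretely: it passes to the bidual $(\s E_0,\gamma_0,F_0)$ of \ref{lem16}, where $F_0$ \emph{is} invariant, obtains the $\widehat{\cal G}$-equivariant Kasparov $D\rtimes{\cal G}$-$B\rtimes{\cal G}$-bimodule $(\s E_0\rtimes{\cal G},\gamma_{0\ast},\pi_{\K(\s E_0)}(F_0))$, and then uses the identification ${\cal E}_{A,R}^*\tens_D\s E_0=\s E$ of \ref{lem13}~2 to express $\gamma_*(A\rtimes{\cal G})$ as the closed span of operators $i_0(T_\zeta\gamma_0(b))\gamma_{0\ast}(x)i^0(\gamma_0(c)T_\xi^*)$ with $\zeta,\xi\in{\cal E}_{A,R}^*$. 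The commutator of $i(F)$ with such elements is then reduced, via the connection relations of \ref{lem13} and the compactness $(\restr{(F\tens1)}{\s E_0}-F_0)\gamma_0(D)\subset\K(\s E_0)$, to the commutator $[i_0^0(F_0),\gamma_{0\ast}(x)]$, which \emph{is} compact by \ref{lem16}. This detour through $\s E_0$ is not optional: it is exactly what makes \ref{lem31} available.
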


The following proof is broadly inspired by those of Proposition 5.3 and Lemma 5.4 \cite{Ve}, of which we take some of the notations. In the proof, we will also follow the notations introduced in the lemmas \ref{lem13} and \ref{lem16}.

\begin{proof}1. Let $(\s E,\gamma,F)$ be a $\cal G$-equivariant Kasparov $A$-$B$-bimodule (with a non-degenerate left action). Let $(\s E_0,\gamma_0,F_0)$ be the $\cal G$-equivariant Kasparov $D$-$B$-bimodule defined in \ref{lem16}. Note that $\s E\oplus\s E_0$ is a $\cal G$-equivariant Hilbert $B$-module. We can consider the canonical morphism
$
\pi_{\K(\s E\oplus\s E_0)}:\Lin(\s E\oplus\s E_0)\rightarrow\M(\K(\s E\oplus\s E_0)\rtimes{\cal G}).
$
Up to the canonical identifications $\K(\s E\oplus\s E_0)\rtimes{\cal G}=\K((\s E\oplus\s E_0)\rtimes{\cal G})$ (cf.\ \ref{cor3}) and $(\s E\oplus\s E_0)\rtimes{\cal G}=(\s E\rtimes{\cal G})\oplus(\s E_0\rtimes{\cal G})$, we can consider the following restrictions of $\pi_{\K(\s E\oplus\s E_0)}$:
\[
i:\Lin(\s E)\rightarrow\Lin(\s E\rtimes{\cal G}),\;\;
i_0:\Lin(\s E_0,\s E)\rightarrow\Lin(\s E_0\rtimes{\cal G},\s E\rtimes{\cal G}),\;\;
i^0:\Lin(\s E,\s E_0)\rightarrow\Lin(\s E\rtimes{\cal G},\s E_0\rtimes{\cal G})
\]
\[
\text{and} \;\; i_0^0:\Lin(\s E_0)\rightarrow\Lin(\s E_0\rtimes{\cal G}).
\]
Note that, up to the identifications $\K(\s E\rtimes{\cal G})=\K(\s E)\rtimes{\cal G}$ and $\K(\s E_0\rtimes{\cal G})=\K(\s E_0)\rtimes{\cal G}$, we have $i=\pi_{\K(\s E)}$ and $i_0^0=\pi_{\K(\s E_0)}$. By using the identification ${\cal E}_{A,R}^*\tens_{\gamma_0}{\s E_0}=\s E$ (cf.\ \ref{lem13} 2), for $\zeta\in{\cal E}_{A,R}^*$ let $T_{\zeta}\in\Lin({\cal E}_0,\s E)$ be the operator defined by $T_{\zeta}(\eta)=\zeta\tens_{\gamma_0}\eta$ for all $\eta\in{\cal E}_{\s E,R}$. For $\eta\in\s H$, we denote by $\tau_{\eta}\in\Lin(\s E,\s E\tens\s H)$ the operator defined by $\tau_{\xi}(\eta):=\xi\tens\eta$ for all $\xi\in\s E$. It should be noted that $\tau_{\eta}^*(\xi'\tens\eta')=\langle\eta,\,\eta'\rangle \xi'$ for all $\xi'\in\s E$ and $\eta'\in\s H$.
It follows from $[T_{\zeta}\gamma_0(x)T_{\xi}^*\,;\, \zeta,\,\xi\in{\cal E}_{A,R},\, x\in D]=\gamma(A)$ that 
\[
[i_0(T_{\zeta})\gamma_{0\ast}(x)i^0(T_{\xi}^*) \, ; \, x\in D\rtimes{\cal G},\, \zeta,\,\xi\in{\cal E}_{A,R}^*]=\gamma_*(A\rtimes{\cal G}).
\] 
By combining the non-degeneracy of the canonical morphism $\pi_{D}:D\rightarrow\M(D\rtimes{\cal G})$ with the fact that $\gamma_{0\ast}(\pi_{D}(d))=i_0^0(\gamma_0(d))$ for all $d\in D$, we have that $\gamma_*(A\rtimes{\cal G})$ is the closed linear span of the elements of the form
\[
i_0(T_{\zeta}\gamma_0(b))\gamma_{0\ast}(x)i^0(\gamma_0(c)T_{\xi}^*), \;\; \text{with } x\in D\rtimes{\cal G},\, b,\,c\in D  \text{ and } \zeta,\,\xi\in{\cal E}_{A,R}^*.
\]
Let us prove that $i(F)$ commutes with these elements modulo $\K(\s E\rtimes{\cal G})$. We will carry out the computations modulo $\K(\s E\rtimes{\cal G})$ by using the following inclusions:
\begin{align}
i_0(\K(\s E_0,\s E))\gamma_{0\ast}(A\rtimes{\cal G})&\subset\K(\s E_0\rtimes{\cal G},\s E\rtimes{\cal G})\label{eq25}\\
\gamma_{0\ast}(A\rtimes{\cal G})i^0(\K(\s E,\s E_0))&\subset\K(\s E\rtimes{\cal G},\s E_0\rtimes{\cal G})\label{eq26}
\end{align}
Let us prove (\ref{eq25}) since (\ref{eq26}) will follow by taking the adjoint in (\ref{eq25}). By the relation $\K(\s E_0,\s E)=[\K(\s E_0,\s E)\K(\s E_0)]$, it suffices to prove that $i_0^0(\K(\s E_0))\gamma_{0\ast}(A\rtimes{\cal G})\subset\K(\s E_0\rtimes{\cal G})$. Let $k\in\K(\s E_0)$ and $x\in A\rtimes{\cal G}$. In virtue of the non-degeneracy of the canonical morphism $\widehat{\theta}_A:\widehat{S}\rightarrow\M(A\rtimes{\cal G})$, we can assume that $x=\widehat{\theta}_A(y)x'$ with $y\in\widehat{S}$ and $x'\in A\rtimes{\cal G}$. By the equivariance of $\gamma_{0\ast}$, we have $\gamma_{0\ast}(x)=\widehat{\theta}_{\K(\s E_0)}(y)\gamma_{0\ast}(x')$. Hence, $i_0^0(k)\gamma_{0\ast}(x)\in\K(\s E_0\rtimes{\cal G})$ since $i_0^0(k)\widehat{\theta}_{\K(\s E_0)}(y)\in\K(\s E_0)\rtimes{\cal G}$ and $\gamma_{0\ast}(x)\in\M(\K(\s E_0)\rtimes{\cal G})$.\hfill\break
Let us fix $x\in D\rtimes{\cal G}$, $b,\,c\in D$ and $\zeta,\,\xi\in{\cal E}_{A,R}^*$. We have
\[
i(F)i_0(T_{\zeta}\gamma_0(b))\gamma_{0\ast}(x)i^0(\gamma_0(c)T_{\xi}^*)
=i_0(FT_{\zeta}\gamma_0(b))\gamma_{0\ast}(x)i^0(\gamma_0(c)T_{\xi}^*).
\]
Let $\zeta=(a\tens\zeta'^*)q_{\beta_A\widehat{\alpha}}$ with $a\in A$ and $\zeta'\in\s H$. Let $\eta_0=q_{\beta_{\s E}\widehat{\alpha}}(\eta\tens\chi)\in\s E_0$ with $\eta\in\s E$ and $\chi\in\s H$. Let $(e_{ij}^{(l)})_{1\leqslant l\leqslant k,\, 1\leqslant i,j\leqslant n_l}$ be a system of matrix units of $N$. We have (cf.\ \ref{ProjectionCAlg})
\[
T_{\zeta}(\eta_0)=\sum_{l=1}^k n_l^{-1}\sum_{i,j=1}^{n_l}
\langle\zeta',\, \widehat{\alpha}(e_{ji}^{(l)})\chi\rangle\gamma(a)\beta_{\s E}(e_{ij}^{(l){\rm o}})\eta.
\]
In particular, we have
\begin{align*}
FT_{\zeta}(\eta_0)&=\sum_{l=1}^k n_l^{-1}\sum_{i,j=1}^{n_l}
\langle\zeta',\, \widehat{\alpha}(e_{ji}^{(l)})\chi\rangle F\gamma(a)\beta_{\s E}(e_{ij}^{(l){\rm o}})\eta
=F\gamma(a)\tau_{\zeta'}^*(\eta_0)\\
\intertext{and since $[F,\,\beta_{\s E}(n^{\rm o})]=0$ for all $n\in N$, we also have}
T_{\zeta}(F\tens 1)(\eta_0) & = \sum_{l=1}^k n_l^{-1}\sum_{i,j=1}^{n_l}
\langle\zeta',\, \widehat{\alpha}(e_{ji}^{(l)})\chi\rangle\gamma(a)F\beta_{\s E}(e_{ij}^{(l){\rm o}})\eta
=\gamma(a)F\tau_{\zeta'}^*(\eta_0).
\end{align*}
Hence, we have 
$
FT_{\zeta}-T_{\zeta}\restr{(F\tens 1)}{\s E_0}=[F,\,\gamma(a)]\restr{\tau_{\zeta'}^*}{\s E_0}\in\K(\s E_0,\s E).
$
Thus, we have (cf.\ (\ref{eq25}))
\begin{equation}\label{eq27}
i_0(FT_{\zeta}\gamma_0(b))\gamma_{0\ast}(x)=i_0(T_{\zeta}(F\tens 1)\gamma_0(b))\gamma_{0\ast}(x) \;\; \text{mod.}\;\; \K(\s E_0\rtimes{\cal G},\s E\rtimes{\cal G}).
\end{equation}
We recall (cf.\ proof of \ref{lem16}) that 
\begin{equation}\label{eq28}
(\restr{(F\tens 1)}{\s E_0}-\, F_0)\gamma_0(D)\subset\K(\s E_0).
\end{equation}
We have
\begin{align*}
i(F)i_0(T_{\zeta}\gamma_0(b))\gamma_{0\ast}(x)i^0(\gamma_0(c)T_{\xi}^*)
&=i_0(T_{\zeta}(F\tens 1)\gamma_0(b))\gamma_{0\ast}(x)i^0(\gamma_0(c)T_{\xi}^*) \;\, \text{mod.}\;\, \K(\s E\rtimes{\cal G}) \tag*{(\ref{eq27})}\\
&=i_0(T_{\zeta}F_0\gamma_0(b))\gamma_{0\ast}(x)i^0(\gamma_0(c)T_{\xi}^*) \;\, \text{mod.}\;\, \K(\s E\rtimes{\cal G}) \tag*{(\ref{eq28}),\,(\ref{eq25})}\\
&=i_0(T_{\zeta})i_0^0(F_0)\gamma_{0\ast}(\pi_{D}(b)x\pi_{D}(c))i^0(T_{\xi}^*)\;\, \text{mod.}\;\, \K(\s E\rtimes{\cal G})\\
&=i_0(T_{\zeta})\gamma_{0\ast}(\pi_{D}(b)x\pi_{D}(c))i_0^0(F_0)i^0(T_{\xi}^*) \;\, \text{mod.}\;\, \K(\s E\rtimes{\cal G}) \tag*{\ref{lem16}}\\
&=i_0(T_{\zeta}\gamma_0(b))\gamma_{0\ast}(x)i^0(\gamma_0(c)F_0T_{\xi}^*) \;\, \text{mod.}\;\, \K(\s E\rtimes{\cal G}).
\end{align*}
By using (\ref{eq26}) and \ref{defEqkaspbimod} 3, we prove in a similar way that 
\[
\gamma_{0\ast}(x)i^0(\gamma_0(c)(F\tens 1)T_{\xi}^*)=\gamma_{0\ast}(x)i^0(\gamma_0(c)T_{\xi}^*F)
\;\;
\text{mod. } \K(\s E\rtimes{\cal G},\s E_0\rtimes{\cal G}) \;\;\text{and}
\]
\[
\gamma_0(D)(\restr{(F\tens 1)}{\s E_0}-\, F_0)\subset\K(\s E_0),
\]
which allows us to conclude the above computation by stating that 
\begin{center}
$
i(F)i_0(T_{\zeta}\gamma_0(b))\gamma_{0\ast}(x)i^0(\gamma_0(c)T_{\xi}^*)=i_0(T_{\zeta}\gamma_0(b))\gamma_{0\ast}(x)i^0(\gamma_0(c)T_{\xi}^*)i(F)
$ 
mod.\ $\K(\s E\rtimes{\cal G})$.
\end{center}
The other statements of (\ref{eq20}) are obtained by a direct computation. For instance, for all $x\in A\rtimes{\cal G}$ we have $\gamma_*(x)(i(F)^*-i(F))\in\K(\s E\rtimes{\cal G})$. Indeed, this follows from the fact that $\{\widehat{\theta}(y)i(\gamma(a))\,;\, y\in\widehat{S},\, a\in A\}$ is a total subset of $\gamma_*(A\rtimes{\cal G})$ and the fact that 
$
\widehat{\theta}(y)i(\gamma(a))(i(F)^*-i(F))=\widehat{\theta}(y)i(\gamma(a)(F^*-F))\in\K(\s E\rtimes{\cal G})
$
for all $y\in\widehat{S}$ and $a\in A$.\hfill\break
It follows from the definition of the dual action (\ref{defdualaction}) and the fact that $T_{F\xi}=i(F)T_{\xi}$ for all $\xi\in\s E$ that $i(F)$ is $\delta_{\s E\rtimes{\cal G}}$-invariant. It is also straightforward that $[i(F),\,\alpha_{\s E\rtimes{\cal G}}(n)]=0$ for all $n\in N$. Hence, $(\s E\rtimes{\cal G},\gamma_*,i(F))$ is an equivariant Kasparov bimodule (\ref{rk13} 3).\hfill\break
It is clear that $(\s E,\gamma,F)\in{\sf E}_{\cal G}(A,B)$ defines a unique $(\s E\rtimes{\cal G},\gamma_*,i(F))\in{\sf E}_{\widehat{\cal G}}(A\rtimes{\cal G},B\rtimes{\cal G})$ (cf.\ \ref{prop23}). If $(\s E,F)\in{\sf E}_{\cal G}(A,B[0,1])$ is a homotopy between $(\s E_1,F_1)$ and $(\s E_2,F_2)$, then $(\s E\rtimes{\cal G},i(F))$ is a homotopy between $(\s E_1\rtimes{\cal G},i(F_1))$ and $(\s E_2\rtimes{\cal G},i(F_2))$. This statement makes sense in virtue of the following result:

\begin{lem}\label{lem17}
There exists a unique equivariant *-isomorphism 
\[
B[0,1]\rtimes{\cal G}\rightarrow B\rtimes{\cal G}[0,1] \;\; ;\;\; \pi_{B[0,1]}(f)\widehat{\theta}_{B[0,1]}(x) \mapsto [t\mapsto\pi_B(f(t))\widehat{\theta}_B(x)].\qedhere
\] 
\end{lem}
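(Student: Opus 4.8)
The plan is to realise the isomorphism through functoriality of the crossed product applied to the evaluation homomorphisms, and then to establish bijectivity and equivariance by density and continuity arguments, the only delicate point being a pointwise description of a faithful representation on the source.

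Recall from \ref{def3} 5 that for each $t\in[0,1]$ the evaluation ${\rm e}_t:B[0,1]\rightarrow B$ is a ${\cal G}$-equivariant surjective *-homomorphism; by \ref{FunctorCrossedProd} it induces a $\widehat{\cal G}$-equivariant *-homomorphism $({\rm e}_t)_*:B[0,1]\rtimes{\cal G}\rightarrow B\rtimes{\cal G}$ with $({\rm e}_t)_*(\pi_{B[0,1]}(f)\widehat{\theta}_{B[0,1]}(x))=\pi_B(f(t))\widehat{\theta}_B(x)$ and $\|({\rm e}_t)_*\|\leqslant 1$. First I would check that for each fixed $z\in B[0,1]\rtimes{\cal G}$ the map $t\mapsto({\rm e}_t)_*(z)$ is norm continuous: this is clear when $z$ is a generator $\pi_{B[0,1]}(f)\widehat{\theta}_{B[0,1]}(x)$ (composition of the norm-continuous $t\mapsto f(t)$ with $\pi_B$ and right multiplication by the fixed element $\widehat{\theta}_B(x)$), and the general case follows since such generators span a dense subspace and $\|({\rm e}_t)_*\|\leqslant 1$, so an arbitrary $z$ is a limit, uniform in $t$, of norm-continuous functions. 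Thus $\Phi(z):=[t\mapsto({\rm e}_t)_*(z)]$ defines a map $\Phi:B[0,1]\rtimes{\cal G}\rightarrow(B\rtimes{\cal G})[0,1]=\rmc([0,1],B\rtimes{\cal G})$, which is a *-homomorphism because the operations on $\rmc([0,1],B\rtimes{\cal G})$ are pointwise and each $({\rm e}_t)_*$ is a *-homomorphism, and on generators $\Phi$ is the asserted map. Uniqueness is immediate since the generators $\pi_{B[0,1]}(f)\widehat{\theta}_{B[0,1]}(x)$ span a dense subspace.

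For injectivity I would use the faithful *-representation $\psi_{L,\rho}:B[0,1]\rtimes{\cal G}\rightarrow\Lin(B[0,1]\tens\s H)$ of \ref{not2} 3 (with $A:=B[0,1]$) together with the analogous faithful representation $\psi:B\rtimes{\cal G}\rightarrow\Lin(B\tens\s H)$ (same construction with $A:=B$). Under the identification $B[0,1]\tens\s H=\rmc([0,1],B\tens\s H)$ and the relation $\delta_{B[0,1]}(f)(t)=\delta_B(f(t))$ one checks on generators — and then for all $z$ by density and boundedness — that $(\psi_{L,\rho}(z)\xi)(t)=\psi(({\rm e}_t)_*(z))\xi(t)$ for all $\xi\in\rmc([0,1],B\tens\s H)$ and $t\in[0,1]$; evaluating on constant sections, $\psi_{L,\rho}(z)=0$ iff $({\rm e}_t)_*(z)=0$ for all $t$, i.e.\ iff $\Phi(z)=0$, so faithfulness of $\psi_{L,\rho}$ forces $z=0$. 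For surjectivity, $\Phi(B[0,1]\rtimes{\cal G})$ is closed, being the image of a *-homomorphism of C*-algebras, and it contains $\Phi(\pi_{B[0,1]}(g\tens b)\widehat{\theta}_{B[0,1]}(x))=g\tens(\pi_B(b)\widehat{\theta}_B(x))$ for all $g\in\rmc([0,1])$, $b\in B$, $x\in\widehat{S}$ (using $B[0,1]=\rmc([0,1])\tens B$); since $\{\pi_B(b)\widehat{\theta}_B(x)\}$ spans a dense subspace of $B\rtimes{\cal G}$ and $\rmc([0,1])\odot(B\rtimes{\cal G})$ is dense in $\rmc([0,1],B\rtimes{\cal G})$, the image is dense, hence everything.

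Finally, $\Phi$ is $\widehat{\cal G}$-equivariant: the action of $\widehat{\cal G}$ on $(B\rtimes{\cal G})[0,1]$ is the one of \ref{def3} 5 with $\widehat{\cal G}$ in place of ${\cal G}$, hence pointwise, and since each $({\rm e}_t)_*$ is $\widehat{\cal G}$-equivariant and $\Phi$, ${\rm e}_t$ and the coproducts are all compatible with evaluation at $t$, one obtains $\delta_{(B\rtimes{\cal G})[0,1]}(\Phi(z))(t)=\delta_{B\rtimes{\cal G}}(({\rm e}_t)_*(z))=(({\rm e}_t)_*\tens\id_{\widehat{S}})(\delta_{B[0,1]\rtimes{\cal G}}(z))=\big((\Phi\tens\id_{\widehat{S}})\delta_{B[0,1]\rtimes{\cal G}}(z)\big)(t)$ for all $t$, and likewise $\Phi\circ\alpha_{B[0,1]\rtimes{\cal G}}=\alpha_{(B\rtimes{\cal G})[0,1]}$. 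The main obstacle is making precise the pointwise description of $\psi_{L,\rho}$ used in the injectivity step, together with the identification $B[0,1]\tens\s H=\rmc([0,1],B\tens\s H)$; the remaining steps are routine.
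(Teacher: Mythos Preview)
Your proof is correct. The route differs from the paper's in how the map is constructed and how injectivity is obtained, though the two converge in the end.

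The paper works entirely inside concrete representations: it identifies
\[
\Lin(B[0,1]\tens\s H)=\M((B\tens\K)[0,1])=\M(B\tens\K)[0,1]=\Lin(B\tens\s H)[0,1],
\]
observes that this restricts to an identification $\Lin({\cal E}_{B[0,1],L})\cong\Lin({\cal E}_{B,L})[0,1]$, and then simply restricts further to $B[0,1]\rtimes{\cal G}$. The map and its injectivity come in a single step, as a restriction of an ambient isomorphism. You instead build $\Phi$ categorically from the family $({\rm e}_t)_*$ via functoriality of the crossed product, and then, to see injectivity, invoke the faithful representation $\psi_{L,\rho}$ and check the pointwise formula $(\psi_{L,\rho}(z)\xi)(t)=\psi(({\rm e}_t)_*(z))\xi(t)$---which is precisely the paper's identification written out componentwise. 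So your ``main obstacle'' is exactly what the paper uses as its starting point. The surjectivity arguments (density of $\rmc([0,1])\odot(B\rtimes{\cal G})$) and the equivariance arguments are essentially identical. Your construction via evaluation maps is arguably more transparent as to where the map comes from, while the paper's approach is shorter because it never separates the construction from the injectivity proof.
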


\begin{proof}[Proof of Lemma \ref{lem17}]
We have the identifications (cf.\ \ref{def3} 4)
\begin{equation*}
\Lin(B[0,1]\tens\s H)\!=\!\M(B[0,1]\tens\K)\!=\!\M((B\tens\K)[0,1])\!=\!\M(B\tens\K)[0,1]\!=\!\Lin(B\tens\s H)[0,1]. 
\end{equation*}
For $f\in B[0,1]$ and $x\in\widehat{S}$, the operator $\pi_{B[0,1],L}(f)(1_{B[0,1]}\tens\rho(x))\in\Lin(B[0,1]\tens\s H)$ is identified to the continuous function $[t\mapsto\pi_{B,L}(f(t))(1_B\tens\rho(x))]\in\Lin(B\tens\s H)[0,1]$. Furthermore, we also have the identifications
\begin{align*}
\Lin({\cal E}_{B[0,1],L})&=\{T\in\Lin(B[0,1]\tens\s H)\,;\, Tq_{\beta_{B[0,1]}\alpha}=T=q_{\beta_{B[0,1]}\alpha}T\} \;\; \text{and}\\
\Lin({\cal E}_{B,L})&=\{T\in\Lin(B\tens\s H)\,;\, Tq_{\beta_B\alpha}=T=q_{\beta_B\alpha}T\}.
\end{align*}
{\setlength{\baselineskip}{1.1\baselineskip}We then obtain an identification between the C*-algebras $\Lin({\cal E}_{B[0,1],L})$ and $\Lin({\cal E}_{B,L})[0,1]$, which identifies $\pi_{B[0,1]}(f)\widehat{\theta}_{B[0,1]}(x)$ with $[t\mapsto\pi_B(f(t))\widehat{\theta}_B(x)]$. By restriction, we obtain an injective *-homomorphism $\phi:B[0,1]\rtimes{\cal G}\rightarrow B\rtimes{\cal G}[0,1]$. Moreover, for all $b\in B$, $x\in\widehat{S}$ and $f\in {\rm C}([0,1])$ we have $f\tens\pi_B(b)\widehat{\theta}_B(y)=\phi(\pi_{B[0,1]}(f\tens b)\widehat{\theta}_B(x))$ (cf.\ $B[0,1]={\rm C}([0,1])\tens B$ and $B\rtimes{\cal G}[0,1]={\rm C}([0,1])\tens B\rtimes{\cal G}$), which proves that the range of $\phi$ is dense. The surjectivity of $\phi$ is then proved. The $\cal G$-equivariance of $\phi$ is a direct consequence of the definition.\qedhere
\par}
\end{proof}

{\it End of the proof of Theorem \ref{descent1}.\ }2. Straightforward.\newline
{\setlength{\baselineskip}{1.15\baselineskip}3. Let $x_1\in\kk_{\cal G}(A,C)$ and $x_2\in\kk_{\cal G}(C,B)$. For $i=1,2$, we consider an equivariant Kasparov bimodule $(\s E_i,\gamma_i,F_i)$ such that $x_i=[(\s E_i,\gamma_i,F_i)]$. Let us consider the $\cal G$-equivariant Hilbert $B$-module $\s E:=\s E_1\tens_{\gamma_2}\s E_2$, the $\cal G$-equivariant *-representation $\gamma:A\rightarrow\Lin(\s E_2)$ defined by $\gamma(a):=\gamma_1(a)\tens_{\gamma_2}1$ for all $a\in A$ and an operator $F\in F_1\#_{\cal G} F_2\subset\Lin(\s E)$ (cf.\ \ref{def7}). Let $y:=x_1\tens_C x_2=[(\s E,\gamma,F)]$ (cf.\ \ref{Kasprod}, \ref{def8}). For $i=1,2$, denote by $(\s E_i',\gamma_i',F_i')$ (resp.\ $(\s E',\gamma',F')$) the equivariant Kasparov bimodules obtained from $(\s E_i,\gamma_i,F_i)$ (resp.\ $(\s E,\gamma,F)$) by the crossed product construction. By definition, we have $J_{\cal G}(x_i)=[(\s E_i',\gamma_i',F_i')]$ for $i=1,2$ and $J_{\cal G}(y)=[(\s E',\gamma',F')]$. We have a canonical identification $\s E'=\s E_1'\tens_{\gamma_2'}\s E_2'$, which intertwines the left actions (cf.\ \ref{prop42}). Let us denote by $\pi:\K(\s E)\rightarrow\M(\K(\s E)\rtimes{\cal G})$ and $\widehat{\theta}:\widehat{S}\rightarrow\M(\K(\s E)\rtimes{\cal G})$ the canonical morphisms. We recall that $F'=\pi(F)$ up to the identification $\Lin(\s E')=\M(\K(\s E)\rtimes{\cal G})$ (cf.\ \ref{prop5}).  We also have $F_1'\tens_{\gamma_2'}1=\pi(F_1\tens_{\gamma_2}1)$ up to the identification $\s E'=\s E_1'\tens_{\gamma_2'}\s E_2'$. We have
$
\gamma(a)[F_1\tens_{\gamma_2}1,\, F]\gamma(a)^*\in\Lin(\s E)_+ + \K(\s E)
$
for all $a\in A$ by assumption. For all $a\in A$ and $x\in\widehat{S}$, we have
\begin{multline*}
\widehat{\theta}(x)\pi(\gamma(a))[F_1'\tens_{\gamma_2'}1,\, F']\pi(\gamma(a)^*)\widehat{\theta}(x)^*\\
=\widehat{\theta}(x)\pi(\gamma(a)[F_1\tens_{\gamma_2}1,\, F]\gamma(a)^*)\widehat{\theta}(x)^*\in\Lin(\s E')_+ + \K(\s E')
\end{multline*}
since $\widehat{\theta}(x)\pi(\Lin(\s E)_+)\widehat{\theta}(x)^*\subset\Lin(\s E')_+$ and $\widehat{\theta}(x)\pi(\K(\s E))\widehat{\theta}(x)^*\subset\K(\s E')$. Hence, the positivity condition is satisfied since $\{\widehat{\theta}(x)\pi(\gamma(a))\,;\, a\in A,\, x\in\widehat{S}\}$ is a total subset of $\gamma'(A\rtimes{\cal G})$. The compatibility with the direct sum is straightforward.\qedhere
\par}
\end{proof}

In a similar way, we prove the following theorem.

\begin{thm}
Let $A$, $B$ and $C$ be $\widehat{\cal G}$-$\Cstar$-algebras.
\newcounter{counter2}
\begin{enumerate}
\item If $(\s F,G)$ is a $\widehat{\cal G}$-equivariant Kasparov $A$-$B$-bimodule (with a non-degenerate left action), then $(\s F\rtimes\widehat{\cal G},G\tens_{\widehat\pi_B}1)$ is a $\cal G$-equivariant Kasparov $A\rtimes\widehat{\cal G}$-$B\rtimes\widehat{\cal G}$-bimodule. Moreover, if $(\s F_1,G_1)$ and $(\s F_2,G_2)$ are unitarily equivalent (resp.\ homotopic) $\widehat{\cal G}$-equivariant Kasparov $A$-$B$-bimodules, then so are $(\s F_1\rtimes\widehat{\cal G},G_1\tens_{\widehat\pi_B}1)$ and $(\s F_2\rtimes\widehat{\cal G},G_2\tens_{\widehat\pi_B}1)$.
\setcounter{counter2}{\value{enumi}}
\end{enumerate}
Let $J_{\widehat{\cal G}}:\kk_{\widehat{\cal G}}(A,B)\rightarrow\kk_{\cal G}(A\rtimes\widehat{\cal G},B\rtimes\widehat{\cal G})$\index[symbol]{jc@$J_{\widehat{\cal G}}$} be the homomorphism of groups defined for all $[(\s F,G)]\in\kk_{\widehat{\cal G}}(A,B)$ (with a non-degenerate left action) by $J_{\widehat{\cal G}}([(\s F,G)])=[(\s F\rtimes\widehat{\cal G},G\tens_{\widehat\pi_B}1)]$.
\begin{enumerate}
\setcounter{enumi}{\value{counter2}}
\item Let $\phi:A\rightarrow B$ be a $\widehat{\cal G}$-equivariant *-homomorphism. We recall that the equivariance of $\phi$ allows us to define a $\cal G$-equivariant *-homomorphism $\phi_*:A\rtimes\widehat{\cal G}\rightarrow B\rtimes\widehat{\cal G}$. We have $J_{\widehat{\cal G}}([\phi])=[\phi_*]$. In particular, we have $J_{\widehat{\cal G}}(1_A)=1_{A\rtimes\widehat{\cal G}}$.
\item Assume the C*-algebra $A$ to be separable. For all $x_1\in\kk_{\widehat{\cal G}}(A,C)$ and $x_2\in\kk_{\widehat{\cal G}}(C,B)$, we have 
\[
J_{\widehat{\cal G}}(x_1\tens_C x_2)=J_{\widehat{\cal G}}(x_1)\tens_{C\rtimes\widehat{\cal G}} J_{\widehat{\cal G}}(x_2).\qedhere
\]
\end{enumerate}
\end{thm}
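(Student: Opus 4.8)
The plan is to reproduce, \emph{mutatis mutandis}, the proof of Theorem \ref{descent1}, systematically exchanging the roles of $\cal G$ and $\widehat{\cal G}$. This is legitimate because $\widehat{\cal G}$ is again a regular measured quantum groupoid on the finite basis $N$ (cf.\ the remark after \ref{DefRegMQG}), so all of the machinery of Chapters 5 and 6 applies to it verbatim. Concretely, for a $\widehat{\cal G}$-C*-algebra $A$ one works with its bidual $\widehat{\cal G}$-C*-algebra $E = E_A$ and the $\widehat{\cal G}$-equivariant imprimitivity $E$-$A$-bimodule ${\cal E}_{A,\rho}$ of Theorems \ref{not16}, \ref{BidualityTheo} and \ref{theo8}, with the partial isometry $\widetilde{\cal V}$ in place of ${\cal V}$; the functor $-\rtimes\widehat{\cal G}$ replaces $-\rtimes\cal G$, and Corollary \ref{cor2}, Proposition \ref{prop6} and the canonical isomorphism $\psi$ of \ref{IsoTT} replace Corollaries \ref{cor3}, \ref{cor1}, \ref{propdef5} and Proposition \ref{prop5}. (Alternatively, one could try to deduce the statement from Theorem \ref{descent1} applied to the groupoid $\widehat{\cal G}$ together with the identification of $({\cal G}^{\rm o})^{\rm c}$-equivariant Kasparov theory with $\cal G$-equivariant Kasparov theory coming from biduality, but carrying out the argument directly is cleaner.)

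For part 1, the first task is to establish the $\widehat{\cal G}$-analogues of Lemmas \ref{lem13} and \ref{lem16}, which in turn rest on the $\widehat{\cal G}$-analogues of the Hilbert-module double-crossed-product results of \S\ref{sectionTT2} (the duals of Theorems \ref{theo1}, \ref{theo2} and Corollaries \ref{cor1}, \ref{propdef5}). Given a $\widehat{\cal G}$-equivariant Kasparov $A$-$B$-bimodule $(\s F,\gamma,G)$ with non-degenerate left action, one forms $\s F_0 := {\cal E}_{\s F,\rho} = q_{\alpha_{\s F}\beta}(\s F\tens\s H)$, with its $\widehat{\cal G}$-equivariant Hilbert $E$-$B$-bimodule structure obtained exactly as in \ref{cor1}, \ref{propdef5}, puts $G_0 := \restr{\pi_{\rho}(G)}{\s F_0}$, and lets $\gamma_0 : E\to\Lin(\s F_0)$ be the restriction of $\gamma\tens\id_{\K}$. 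Using the identifications ${\cal E}_{A,\rho}\tens_{\gamma}\s F = {\cal E}_{\s F,\rho}$ and ${\cal E}_{A,\rho}^{*}\tens_{E}{\cal E}_{\s F,\rho} = \s F$ one checks, as in \ref{lem13}, that $G_0$ is a $G$-connection for ${\cal E}_{A,\rho}$ and $G$ is a $G_0$-connection; the invariance of $G_0$ and condition 3 of \ref{defEqkaspbimod} then give, as in \ref{lem16}, that $(\s F_0\rtimes\widehat{\cal G},\gamma_{0\ast},\widehat{\pi}_{\K(\s F_0)}(G_0))$ is a $\cal G$-equivariant Kasparov $E\rtimes\widehat{\cal G}$-$B\rtimes\widehat{\cal G}$-bimodule. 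The descent argument of \ref{descent1} 1 now runs verbatim, with the maps $i, i_0, i^{0}, i_0^{0}$ coming from $\widehat{\pi}_{\K(\s F\oplus\s F_0)}$ and the $[\widehat{\theta},\,\cdot\,]$-type commutators replaced by $[\theta,\,\cdot\,]$-type ones (using the dual of \ref{lem31}): one shows $i(G)$ commutes, modulo $\K(\s F\rtimes\widehat{\cal G})$, with a total family of elements of the crossed-product representation of $A\rtimes\widehat{\cal G}$, and the remaining conditions of \ref{defEqkaspbimod}, together with $\delta_{\s F\rtimes\widehat{\cal G}}$-invariance of $G\tens_{\widehat{\pi}_B}1$ and $[G\tens_{\widehat{\pi}_B}1,\,\beta_{\s F\rtimes\widehat{\cal G}}(n^{\rm o})]=0$, follow by the computation of \ref{rk13} 3. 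Invariance under unitary equivalence is immediate from the dual of \ref{prop23}, and homotopy invariance uses the dual of Lemma \ref{lem17}, i.e.\ the canonical $\cal G$-equivariant isomorphism $B[0,1]\rtimes\widehat{\cal G}\simeq(B\rtimes\widehat{\cal G})[0,1]$, proved identically. Hence $J_{\widehat{\cal G}}$ is a well-defined homomorphism of abelian groups.

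For part 2, if $\phi:A\to B$ is $\widehat{\cal G}$-equivariant then $[\phi]$ is represented by $(B,\phi,0)$, whose crossed product is $(B\rtimes\widehat{\cal G},\phi_{\ast},0)$ with $\phi_{\ast}$ the $\cal G$-equivariant $*$-homomorphism of \ref{FunctorCrossedProdBis}, and $[(B\rtimes\widehat{\cal G},\phi_{\ast},0)] = [\phi_{\ast}]$; taking $\phi=\id_A$ and using functoriality of $-\rtimes\widehat{\cal G}$ gives $J_{\widehat{\cal G}}(1_A)=1_{A\rtimes\widehat{\cal G}}$. For part 3, with $x_1\in\kk_{\widehat{\cal G}}(A,C)$, $x_2\in\kk_{\widehat{\cal G}}(C,B)$ represented by $(\s F_i,\gamma_i,G_i)$, one forms $\s F:=\s F_1\tens_{\gamma_2}\s F_2$ and picks $G\in G_1\#_{\widehat{\cal G}}G_2$; the canonical identification $\s F\rtimes\widehat{\cal G} = (\s F_1\rtimes\widehat{\cal G})\tens_{\gamma_{2\ast}}(\s F_2\rtimes\widehat{\cal G})$ intertwining the left actions is the dual of \ref{prop42}, under which $G\tens_{\widehat{\pi}_B}1$ becomes $\widehat{\pi}_{\K(\s F)}(G)$ and $(G_1\tens_{\gamma_2}1)\tens_{\widehat{\pi}_B}1$ becomes $\widehat{\pi}_{\K(\s F)}(G_1\tens_{\gamma_2}1)$; the positivity condition of \ref{def7} is preserved because $\theta(y)\widehat{\pi}_{\K(\s F)}(\Lin(\s F)_{+})\theta(y)^{*}\subset\Lin(\s F\rtimes\widehat{\cal G})_{+}$, $\theta(y)\widehat{\pi}_{\K(\s F)}(\K(\s F))\theta(y)^{*}\subset\K(\s F\rtimes\widehat{\cal G})$, and the elements $\theta(y)\widehat{\pi}_{\K(\s F)}(\gamma(a))$ ($a\in A$, $y\in S$) span a dense subspace of the crossed-product representation of $A\rtimes\widehat{\cal G}$. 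Thus $G\tens_{\widehat{\pi}_B}1\in(G_1\tens_{\widehat{\pi}_B}1)\#_{\cal G}(G_2\tens_{\widehat{\pi}_B}1)$, which yields $J_{\widehat{\cal G}}(x_1\tens_C x_2) = J_{\widehat{\cal G}}(x_1)\tens_{C\rtimes\widehat{\cal G}}J_{\widehat{\cal G}}(x_2)$.

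The main obstacle is bookkeeping rather than conceptual: before the connection and positivity arguments can be transcribed, one must first write out the $\widehat{\cal G}$-versions of the Hilbert-module results of \S\ref{sectionTT2} (duals of \ref{theo1}, \ref{theo2}, \ref{cor1}, \ref{propdef5}), of Lemma \ref{lem17}, and of Proposition \ref{prop42}, since those are only stated there for $\cal G$. One must also keep careful track of the asymmetry between the definitions of $A\rtimes\cal G$ and $A\rtimes\widehat{\cal G}$ (left versus right regular representation, $R$ versus $\rho$, ${\cal V}$ versus $\widetilde{\cal V}$, $\widehat{\alpha}=\beta$-type commutation relations) so that the correct bidual object $E$ and the correct isomorphism $\psi$ of \ref{IsoTT} enter at each point; once that is in place, no new ideas are needed.
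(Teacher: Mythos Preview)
Your proposal is correct and takes essentially the same approach as the paper, which simply states ``In a similar way, we prove the following theorem'' immediately before this statement. Your detailed account of the substitutions (${\cal E}_{A,\rho}$ for ${\cal E}_{A,R}$, $E$ for $D$, $\widetilde{\cal V}$ for ${\cal V}$, $\theta$ for $\widehat{\theta}$, etc.) is exactly the bookkeeping the paper leaves implicit.
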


\begin{nbs}\label{not20}
Before stating the main theorem of this article, we need to specify some further notations. Let $A$ (resp.\ $B$) be a $\cal G$ (resp.\ $\widehat{\cal G}$)-C*-algebra. Let $D$ (resp.\ $E$) be the bidual $\cal G$ (resp.\ $\widehat{\cal G}$)-C*-algebra defined in \ref{not16}. We recall that ${\cal E}_{A,R}$ (resp.\ ${\cal E}_{B,\rho}$) defines a $\cal G$ (resp.\ $\widehat{\cal G}$)-equivariant Morita equivalence between $A$ (resp.\ $B$) and $D$ (resp.\ $E$) (cf.\ \ref{theo8}). Let us define\index[symbol]{ae@${\f a}_A$, $\widehat{{\f a}}_B$}\index[symbol]{be@${\f b}_A$, $\widehat{{\f b}}_B$} 
\begin{align*}
{\f b}_A &:=[({\cal E}_{A,R},0)]\in\kk_{\cal G}(D,A) \quad \text{and} \quad
{\f a}_A:=[({\cal E}_{A,R}^*,0)]\in\kk_{\cal G}(A,D)\\
\text{{\rm(}resp.\ } \widehat{\f b}_B &:=[({\cal E}_{B,\rho},0)]\in\kk_{\widehat{\cal G}}(E,B) \quad \text{and} \quad \widehat{\f a}_B:=[({\cal E}_{B,\rho}^*,0)]\in\kk_{\widehat{\cal G}}(B,E)\text{{\rm)}}.\qedhere
\end{align*}
\end{nbs}

\begin{lem}\label{lem32}
Let $A$ be a separable $\cal G$ {\rm(}resp.\ $\widehat{\cal G}${\rm)}-C*-algebra. Let $D$ (resp.\ $E$) be the bidual $\cal G$ {\rm(}resp.\ $\widehat{\cal G}${\rm)}-C*-algebra of $A$. We have 
\begin{center}
${\f b}_A\tens_A{\f a}_A=1_{D}$ \; and \; ${\f a}_A\tens_{D}{\f b}_A=1_A$ \quad {\rm(}resp.\ $\widehat{\f b}_A\tens_A\widehat{\f a}_A=1_{E}$ \; and \; $\widehat{\f a}_A\tens_{E}\widehat{\f b}_A=1_A${\rm)}.
\end{center}
In particular, if $A$ and $B$ are $\cal G$ {\rm(}resp.\ $\widehat{\cal G}${\rm)}-C*-algebras, then the map 
\begin{align*}
&\kk_{\cal G}(A,B)\rightarrow\kk_{\cal G}(D_{\rm g},D_{\rm d}) \,;\, x\mapsto{\f b}_A\tens_A x\tens_B {\f a}_B\\
\text{{\rm(}resp.\ }&
\kk_{\widehat{\cal G}}(A,B)\rightarrow\kk_{\widehat{\cal G}}(E_{\rm g},E_{\rm d}) \,;\, x\mapsto\widehat{\f b}_A \tens_A x \tens_B \widehat{\f a}_B\text{{\rm)}}
\end{align*}
are isomorphisms of abelian groups {\rm(}cf.\ \ref{conv2} for the writing conventions). 
\end{lem}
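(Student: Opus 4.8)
The plan is to prove, in the spirit of the classical statement ``Morita equivalence is $KK$-equivalence'', that the class of $\er$ (resp. ${\cal E}_{B,\rho}$) is invertible in equivariant $KK$-theory, and then to obtain the ``in particular'' clause from associativity of the Kasparov product together with \ref{prop22}. I will treat the $\cal G$-case; the $\widehat{\cal G}$-case is identical after replacing $\er$ by ${\cal E}_{B,\rho}$, etc. Observe first that since $\cal G$ is regular and $A$ is separable, the bidual $D=q_{\beta_A\widehat\alpha}(A\tens\K)q_{\beta_A\widehat\alpha}$ (cf. \ref{BidualityTheo}) is separable, so $1_D$ and all the Kasparov products used below are defined (cf. \ref{Kasprod}); all Hilbert modules occurring below are direct summands of $A\tens\K$ or its dual, hence countably generated.

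First I would compute ${\f b}_A\tens_A{\f a}_A\in\kk_{\cal G}(D,D)$. Its underlying module is $\er\tens_{\gamma}\er^{*}$, where $\gamma:A\to\Lin(\er^{*})$ denotes the left $A$-action on $\er^{*}$ (cf. \ref{prop31}); since both Kasparov operators are $0$, the operator $0$ is a $0$-connection and condition (c) of \ref{def7} is vacuous, so ${\f b}_A\tens_A{\f a}_A=[(\er\tens_{\gamma}\er^{*},0)]$. The imprimitivity-bimodule axiom ${}_D\langle\xi,\eta\rangle\zeta=\xi\langle\eta,\zeta\rangle_A$ of \ref{theo8} identifies ${}_D\langle\xi,\eta\rangle$ with $\theta_{\xi,\eta}\in\K(\er)=D$, and the standard map $\xi\tens_{\gamma}\eta^{*}\mapsto\theta_{\xi,\eta}$ is a unitary $\er\tens_{\gamma}\er^{*}\to D$ which is $D$-$D$-bilinear. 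The step requiring care is that this unitary is $\cal G$-equivariant: by \ref{prop18} the action on $\er\tens_{\gamma}\er^{*}$ sends $\xi\tens_{\gamma}\eta^{*}$ to $(\delta_{\er}(\xi)\tens 1)\circ\delta_{\er^{*}}(\eta^{*})$, and using $\delta_{\er^{*}}(\eta^{*})x=\delta_{\er}(\eta)^{*}\circ x$ (from \ref{prop31}) this corresponds, under the identification, to $\delta_{\er}(\xi)\circ\delta_{\er}(\eta)^{*}=\delta_{\K(\er)}(\theta_{\xi,\eta})$ (cf. \ref{rk16} 1); compatibility with $\beta_{\er}$, $\beta_{\er^{*}}$ is immediate. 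Hence ${\f b}_A\tens_A{\f a}_A=[(D,0)]=1_D$.

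Symmetrically, ${\f a}_A\tens_D{\f b}_A=[(\er^{*}\tens_{\rho}\er,0)]$, where $\rho:D\to\Lin(\er)$ is the left $D$-action; the map $\eta^{*}\tens_{\rho}\xi\mapsto\langle\eta,\xi\rangle_A$ is an $A$-$A$-bilinear unitary $\er^{*}\tens_{\rho}\er\to A$, and by the same computation (now using \ref{prop18}, \ref{prop31} and the inner-product identity $\delta_{\er}(\eta)^{*}\circ\delta_{\er}(\xi)=\delta_A(\langle\eta,\xi\rangle)$ of \ref{hilbmodequ}) it is $\cal G$-equivariant; thus ${\f a}_A\tens_D{\f b}_A=[(A,0)]=1_A$. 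This proves the first assertion. For the ``in particular'' clause (assuming $A$ and $B$ separable), the maps $x\mapsto{\f b}_A\tens_A x\tens_B{\f a}_B$ and $y\mapsto{\f a}_A\tens_{D_{\rm g}}y\tens_{D_{\rm d}}{\f b}_B$ are mutually inverse: by bilinearity and associativity of the Kasparov product and by \ref{prop22},
\[
{\f a}_A\tens_{D_{\rm g}}\bigl({\f b}_A\tens_A x\tens_B{\f a}_B\bigr)\tens_{D_{\rm d}}{\f b}_B=\bigl({\f a}_A\tens_{D_{\rm g}}{\f b}_A\bigr)\tens_A x\tens_B\bigl({\f a}_B\tens_{D_{\rm d}}{\f b}_B\bigr)=1_A\tens_A x\tens_B 1_B=x,
\]
and likewise for the reverse composition, so both maps are isomorphisms of abelian groups.

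I expect the main obstacle to be precisely the verification that the canonical ``rank-one'' unitaries $\er\tens_A\er^{*}\cong D$ and $\er^{*}\tens_D\er\cong A$ are $\cal G$-equivariant: this is conceptually routine but requires threading the definitions of the action on a dual module (\ref{prop31}), on an internal tensor product (\ref{prop18}, \ref{propleftaction}), and of the associated isometries, through the several canonical Hilbert-module identifications. Everything else — that $0$ represents the Kasparov product, the separability of $D$, and the formal inversion argument — is immediate.
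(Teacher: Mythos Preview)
Your proposal is correct and follows exactly the approach the paper relies on: the paper's own proof is the one-line citation ``This is a consequence of 7.12 \cite{C2} and \ref{theo8}'', where 7.12 of \cite{C2} records precisely the $\cal G$-equivariant identifications ${\cal E}_{A,R}\tens_A{\cal E}_{A,R}^*\cong D$ and ${\cal E}_{A,R}^*\tens_D{\cal E}_{A,R}\cong A$ that you unpack by hand (see also the proof of \ref{lem13}~2, which invokes the same reference). Your verification of the equivariance of the rank-one unitary via \ref{prop18}, \ref{prop31} and \ref{rk16}~1 is the substantive content hidden behind that citation, and your derivation of the ``in particular'' clause from associativity and \ref{prop22} is exactly what is intended.
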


\begin{proof}
This is a consequence of 7.12 \cite{C2} and \ref{theo8}.
\end{proof}

We can state the main result of this paragraph. We refer the reader to Théorème 6.20 \cite{BS2}, Remarque 7.7 b) \cite{BS1} and \S 5.1 \cite{Ve} for the corresponding statement in the quantum group framework.

\begin{thm}\label{theo3}
Let $A$ and $B$ be $\cal G$ {\rm(}resp.\ $\widehat{\cal G}${\rm)}-$\Cstar$-algebras. If the C*-algebra $A$ is $\sigma$-unital, then for all $x\in\kk_{\cal G}(A,B)$ {\rm(}resp.\ $x\in\kk_{\widehat{\cal G}}(A,B)${\rm)}, we have 
\[
J_{\widehat{\cal G}}\circ J_{\cal G}(x) ={\f b}_A\tens_A x\tens_B {\f a}_B \quad
\text{{\rm(}resp.\ } J_{\cal G}\circ J_{\widehat{\cal G}}(x) = \widehat{\f b}_A \tens_A x \tens_B \widehat{\f a}_B\text{{\rm)}}
\]
up to the identifications $(A\rtimes{\cal G})\rtimes\widehat{\cal G}=D_{\rm g}$ and $(B\rtimes{\cal G})\rtimes\widehat{\cal G}=D_{\rm d}$ {\rm(}resp.\ $(A\rtimes\widehat{\cal G})\rtimes{\cal G}=E_{\rm g}$ and $(B\rtimes\widehat{\cal G})\rtimes{\cal G}=E_{\rm d}${\rm)} {\rm(}cf.\ \ref{BidualityTheo}{\rm)}.
\end{thm}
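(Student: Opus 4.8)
The plan is to evaluate both sides on a fixed class $x=[(\s E,\gamma,F)]\in\kk_{\cal G}(A,B)$ and identify the resulting triples. By Remark \ref{rk20} we may assume the left action $\gamma:A\rightarrow\Lin(\s E)$ is non-degenerate, so that $J_{\cal G}$ and $J_{\widehat{\cal G}}$ apply directly and Lemmas \ref{lem13}, \ref{lem16} are available. First I would unwind the left-hand side: applying $J_{\cal G}$ and then $J_{\widehat{\cal G}}$ yields the $\cal G$-equivariant Kasparov $(A\rtimes{\cal G})\rtimes\widehat{\cal G}$-$(B\rtimes{\cal G})\rtimes\widehat{\cal G}$-bimodule $\big((\s E\rtimes{\cal G})\rtimes\widehat{\cal G},\,(\gamma_*)_*,\,(F\tens_{\pi_B}1)\tens_{\widehat\pi}1\big)$. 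Transporting this class along the Takesaki--Takai isomorphisms of Theorem \ref{BidualityTheo} (for $A$ and for $B$) and of Theorem \ref{theo2} (for $\s E$), and recalling from the proof of Lemma \ref{lem14} that $(F\tens_{\pi_B}1)\tens_{\widehat\pi}1$ is carried to the operator $\widetilde F:\zeta\mapsto\pi_R(F)\circ\zeta$ on the bidual Hilbert $D_{\rm d}$-module $\s D$ of the Hilbert $B$-module $\s E$ (Theorem \ref{theo1}), this becomes $[(\s D,\widetilde\gamma,\widetilde F)]\in\kk_{\cal G}(D_{\rm g},D_{\rm d})$, where $\widetilde\gamma:D_{\rm g}\rightarrow\Lin(\s D)$ is the induced left action. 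A minor sub-task here is to check that $(\gamma_*)_*$ corresponds to $\widetilde\gamma(d):\zeta\mapsto(\gamma\tens\id_{\K})(d)\circ\zeta$; this is the left-action analogue of Lemma \ref{lem14} and is proved in the same way.

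Next I would compute $\f b_A\tens_A x$. Since $\f b_A=[({\cal E}_{A,R},0)]$ has zero operator, condition (c) of Definition \ref{def7} in $0\,\#_{\cal G}F$ is automatic (the commutator $[0\tens 1,G]$ vanishes), so any $F$-connection turning the product bimodule into a $\cal G$-equivariant Kasparov bimodule lies in $0\,\#_{\cal G}F$. By Lemma \ref{lem13}.1 the identification ${\cal E}_{A,R}\tens_\gamma\s E={\cal E}_{\s E,R}$ carries $\restr{\pi_R(F)}{{\cal E}_{\s E,R}}$ to such an $F$-connection, and by Lemma \ref{lem16} the triple $({\cal E}_{\s E,R},\gamma_0,F_0)$, with $\gamma_0=\restr{(\gamma\tens\id_{\K})}{{\cal E}_{\s E,R}}$ and $F_0=\restr{\pi_R(F)}{{\cal E}_{\s E,R}}$, is a $\cal G$-equivariant Kasparov $D_{\rm g}$-$B$-bimodule. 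Hence $\f b_A\tens_A x=[({\cal E}_{\s E,R},\gamma_0,F_0)]$.

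It then remains to compute $[({\cal E}_{\s E,R},\gamma_0,F_0)]\tens_B\f a_B$. Because $\f a_B=[({\cal E}_{B,R}^*,0)]$ and ${\cal E}_{B,R}^*$ is a $\cal G$-equivariant imprimitivity $B$-$D_{\rm d}$-bimodule (Theorem \ref{theo8}, $\cal G$ being regular), tensoring by $\f a_B$ is transport of structure along a Morita equivalence: the product is $[({\cal E}_{\s E,R}\tens_B{\cal E}_{B,R}^*,\,\gamma_0\tens 1,\,F_0\tens_B 1)]$, where $F_0\tens_B 1$ is a $0$-connection for ${\cal E}_{\s E,R}$ since the left $B$-action on an imprimitivity bimodule is non-degenerate, and condition (c) holds trivially as $[F_0\tens 1,\,F_0\tens 1]=0$. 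Finally one uses the Morita relation ${\cal E}_{B,R}\tens_B{\cal E}_{B,R}^*=D_{\rm d}$ together with Corollary \ref{cor1} (which gives $\s D\tens_{D_{\rm d}}{\cal E}_{B,R}={\cal E}_{\s E,R}$ via $\zeta\tens_{D_{\rm d}}\xi\mapsto\zeta(\xi)$) and associativity of the internal tensor product to obtain a $\cal G$-equivariant identification ${\cal E}_{\s E,R}\tens_B{\cal E}_{B,R}^*=(\s D\tens_{D_{\rm d}}{\cal E}_{B,R})\tens_B{\cal E}_{B,R}^*=\s D\tens_{D_{\rm d}}D_{\rm d}=\s D$; under it $\gamma_0\tens 1$ becomes $\widetilde\gamma$ and $F_0\tens_B 1$ becomes $\widetilde F$, because $\widetilde\gamma\tens_{D_{\rm d}}1$ and $\widetilde F\tens_{D_{\rm d}}1$ on $\s D\tens_{D_{\rm d}}{\cal E}_{B,R}$ are, by the explicit formula above, precisely $\gamma_0$ and $F_0$. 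This yields $\f b_A\tens_A x\tens_B\f a_B=[(\s D,\widetilde\gamma,\widetilde F)]=J_{\widehat{\cal G}}\circ J_{\cal G}(x)$, and the statement with $\cal G$ and $\widehat{\cal G}$ interchanged follows by applying the same argument to the dual groupoid.

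The step I expect to be the main obstacle is the first one: tracking, through the two C*-algebraic biduality isomorphisms of Theorem \ref{BidualityTheo} and the Hilbert-module biduality $\Phi$ of Theorem \ref{theo2}, that the left action $(\gamma_*)_*$ and the operator $(F\tens_{\pi_B}1)\tens_{\widehat\pi}1$ both go to the expected left-multiplication operators $\widetilde\gamma$ and $\widetilde F$ on $\s D$ --- essentially a left-action refinement of Lemma \ref{lem14}, demanding care with the chain of canonical identifications. By contrast, the computations of $\f b_A\tens_A x$ and of the product with $\f a_B$ are routine Morita-theoretic and connection bookkeeping once Lemmas \ref{lem13}, \ref{lem16} and Corollary \ref{cor1} are in hand.
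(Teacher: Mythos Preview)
Your proposal is correct and uses the same ingredients as the paper (Lemmas~\ref{lem13}, \ref{lem14}, \ref{lem16}, Corollary~\ref{cor1}, Theorems~\ref{theo1}, \ref{theo2}). The only difference is organizational: the paper does not compute both sides all the way up to $[(\s D,\widetilde\gamma,\widetilde F)]$, but instead reduces the claim to the equality $J_{\widehat{\cal G}}(J_{\cal G}(x))\tens_{D_{\rm d}}{\f b}_B={\f b}_A\tens_A x$ (legitimate since ${\f a}_B$ and ${\f b}_B$ are mutual Kasparov inverses by Lemma~\ref{lem32}) and shows that both sides equal $[({\cal E}_{\s E,R},\gamma_0,\restr{\pi_R(F)}{{\cal E}_{\s E,R}})]$. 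For the left-hand side the paper simply invokes Corollary~\ref{cor1} directly, rather than going through $\s D$ and then tensoring back down; for the right-hand side the computation is exactly yours. This shortcut spares the extra $\tens_B{\f a}_B$ step and the explicit identification of the double crossed product with $\s D$, but the content is the same, and the ``left-action refinement of Lemma~\ref{lem14}'' that you flag as the main obstacle is needed in either route (the paper treats it as part of the bimodule identification claimed via Corollary~\ref{cor1}).
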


\begin{proof}
Let $x\in\kk_{\cal G}(A,B)$. It suffices to prove that $J_{\widehat{\cal G}}(J_{\cal G}(x))\tens_{D_{\rm d}}{\f b}_B={\f b}_A\tens_A x$. Let $(\s E,\gamma,F)\in{\sf E}_{\cal G}(A,B)$ such that $x=[(\s E,\gamma,F)]$. With no loss of generality, we can assume that the *-representation $\gamma$ is non-degenerate. Let us consider the canonical morphisms $\pi:\K(\s E)\rightarrow\M(\K(\s E)\rtimes{\cal G})$ and $\widehat{\pi}:\K(\s E)\rtimes{\cal G}\rightarrow\M((\K(\s E)\rtimes{\cal G})\rtimes\widehat{\cal G})$. We make the identifications $\M((\K(\s E)\rtimes{\cal G})\rtimes\widehat{\cal G})=\Lin((\s E\rtimes{\cal G})\rtimes\widehat{\cal G})$ (cf.\ \ref{prop5}, \ref{cor2}). We have
\[
J_{\widehat{\cal G}}\circ J_{\cal G}(x)=[((\s E\rtimes{\cal G})\rtimes\widehat{\cal G},\widehat{\pi}\circ\pi(F))]
\]
(recall that $(F\tens_{\widehat{\pi}_B}1)\tens_{\pi_B}1$ is identified with $\widehat{\pi}\circ\pi(F)$). Let us compute the Kasparov product $J_{\widehat{\cal G}}(J_{\cal G}(x))\tens_{D_{\rm d}}{\f b}_B$. Denote by $\pi_{D_{\rm d}}:D_{\rm d}\rightarrow\Lin({\cal E}_{B,R})$ the equivariant *-representation given by $\pi_{D_{\rm d}}(d):=\restr{d}{{\cal E}_{B,R}}$ for all $d\in D_{\rm d}$. Recall that we have the identification of equivariant Hilbert $D_{\rm g}$-$B$-bimodules $(\s E\rtimes{\cal G})\rtimes\widehat{\cal G}\tens_{\pi_{D_{\rm d}}}{\cal E}_{B,R}={\cal E}_{\s E,R}$ (cf.\ \ref{cor1}) and the operator $\widehat{\pi}(\pi(F))$ is identified to $\restr{\pi_R(F)}{{\cal E}_{\s E,R}}$. Hence, 
$
J_{\widehat{\cal G}}(J_{\cal G}(x))\tens_{D_{\rm d}}{\f b}_B=[({\cal E}_{\s E,R},\pi_R(F))].
$
Let us compute the Kasparov product ${\f b}_A\tens_A x$. We have an identification of Hilbert $D_{\rm g}$-$B$-bimodules ${\cal E}_{A,R}\tens_{\gamma}\s E={\cal E}_{\s E,R}$ (cf.\ \ref{lem13} 1). It is easily seen that this identification is $\cal G$-equivariant. By Lemma \ref{lem13}, the operator $\restr{\pi_R(F)}{{\cal E}_{\s E,R}}\in\Lin({\cal E}_{A,R}\tens_{\gamma}\s E)$ is a $F$-connection. Since the positivity condition is trivial, we have proved that ${\f b}_A\tens_A x=[({\cal E}_{\s E,R},\pi_R(F))]$.
\end{proof}

\begin{cor}\label{cor6}
The homomorphisms $J_{\cal G}$ and $J_{\widehat{\cal G}}$ are isomorphisms of abelian groups.
\end{cor}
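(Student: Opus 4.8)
The plan is to deduce Corollary~\ref{cor6} as a purely formal consequence of Theorem~\ref{theo3} and Lemma~\ref{lem32}; there is essentially no new analysis, only a diagram chase plus some hypothesis bookkeeping. Fix $\cal G$-C*-algebras $A$ and $B$, which we may take to be separable (and in particular $\sigma$-unital); then $A\rtimes{\cal G}$, $B\rtimes{\cal G}$, $A\rtimes\widehat{\cal G}$, $B\rtimes\widehat{\cal G}$, and all the bidual algebras $D_{\rm g},D_{\rm d},\dots$ are again separable, since $S$ and $\widehat{S}$ are separable by \ref{rk18} and $\K(\s H)$ is separable, so every Kasparov product invoked below is defined. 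By Theorem~\ref{theo3}, the composite
\[
J_{\widehat{\cal G}}\circ J_{\cal G}:\kk_{\cal G}(A,B)\longrightarrow\kk_{\cal G}(D_{\rm g},D_{\rm d})
\]
(using the identifications $(A\rtimes{\cal G})\rtimes\widehat{\cal G}=D_{\rm g}$ and $(B\rtimes{\cal G})\rtimes\widehat{\cal G}=D_{\rm d}$) sends $x$ to ${\f b}_A\tens_A x\tens_B{\f a}_B$, which is an isomorphism of abelian groups by Lemma~\ref{lem32}. Symmetrically, for any $\widehat{\cal G}$-C*-algebras $A',B'$ the composite $J_{\cal G}\circ J_{\widehat{\cal G}}:\kk_{\widehat{\cal G}}(A',B')\to\kk_{\widehat{\cal G}}(E_{\rm g},E_{\rm d})$ sends $z$ to $\widehat{\f b}_{A'}\tens_{A'}z\tens_{B'}\widehat{\f a}_{B'}$ and is again an isomorphism by \ref{lem32}.

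From the first composite being an isomorphism we immediately get that $J_{\cal G}:\kk_{\cal G}(A,B)\to\kk_{\widehat{\cal G}}(A\rtimes{\cal G},B\rtimes{\cal G})$ is injective, and from the second (applied to $A':=A\rtimes{\cal G}$, $B':=B\rtimes{\cal G}$) that $J_{\widehat{\cal G}}:\kk_{\widehat{\cal G}}(A\rtimes{\cal G},B\rtimes{\cal G})\to\kk_{\cal G}(D_{\rm g},D_{\rm d})$ is injective. For surjectivity of $J_{\cal G}$, take $y\in\kk_{\widehat{\cal G}}(A\rtimes{\cal G},B\rtimes{\cal G})$; by Lemma~\ref{lem32} there is a unique $x\in\kk_{\cal G}(A,B)$ with ${\f b}_A\tens_A x\tens_B{\f a}_B=J_{\widehat{\cal G}}(y)$, and Theorem~\ref{theo3} gives $J_{\widehat{\cal G}}(J_{\cal G}(x))={\f b}_A\tens_A x\tens_B{\f a}_B=J_{\widehat{\cal G}}(y)$, whence $J_{\cal G}(x)=y$ by injectivity of $J_{\widehat{\cal G}}$. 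Thus $J_{\cal G}$ is bijective. The surjectivity of $J_{\widehat{\cal G}}$ follows by the mirror-image argument, exchanging $\cal G$ with $\widehat{\cal G}$ and $(A,B)$ with $(A\rtimes{\cal G},B\rtimes{\cal G})$, now using that $J_{\cal G}$ is already known to be injective; combined with the injectivity just shown, $J_{\widehat{\cal G}}$ is bijective as well.

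The only point requiring care — and hence the "main obstacle," such as it is — is the bookkeeping: one must verify that the $\sigma$-unitality hypothesis of Theorem~\ref{theo3} and the separability hypothesis implicit in the Kasparov products of Lemma~\ref{lem32} survive the passage to crossed products, and that the biduality identifications $(A\rtimes{\cal G})\rtimes\widehat{\cal G}=D_{\rm g}$, $(B\rtimes{\cal G})\rtimes\widehat{\cal G}=D_{\rm d}$ used in Theorem~\ref{theo3} are literally the same ones appearing in Lemma~\ref{lem32}, so that the stated equalities and the stated isomorphisms genuinely refer to the same groups. Once this is checked, the corollary is a three-line diagram chase; all the substantive content has already been absorbed into Theorems~\ref{descent1} and~\ref{theo3} and Lemma~\ref{lem32}, and I do not expect any further difficulty.
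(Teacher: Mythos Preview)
Your proposal is correct and is essentially the paper's proof spelled out in detail: the paper's proof of this corollary is a one-liner citing Lemma~\ref{lem32} and Theorem~\ref{theo3}, and what you have written is precisely the elementary diagram chase that those two citations encode. Your extra paragraph on separability/$\sigma$-unitality bookkeeping is a reasonable addition, though the paper leaves it implicit.
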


\begin{proof}
This follows from Lemma \ref{lem32} and Theorem \ref{theo3}.
\end{proof}

\begin{cor}
If $A$ is a separable $\cal G$ (resp.\ $\widehat{\cal G}$)-C*-algebra, then the descent morphism $J_{\cal G}:\kk_{\cal G}(A,A)\rightarrow\kk_{\widehat{\cal G}}(A\rtimes{\cal G},A\rtimes{\cal G})$ {\rm(}resp.\ $J_{\widehat{\cal G}}:\kk_{\widehat{\cal G}}(A,A)\rightarrow\kk_{\cal G}(A\rtimes{\widehat{\cal G}},A\rtimes{\widehat{\cal G}})${\rm)} is an isomorphism of rings.
\end{cor}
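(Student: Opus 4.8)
The plan is to deduce this corollary directly from Corollary \ref{cor6} together with the multiplicativity properties of the descent morphism established in Theorem \ref{descent1} (and its dual counterpart stated immediately after it). Since the abelian-group statement is already in hand, all that remains is to check that $J_{\cal G}$ (resp.\ $J_{\widehat{\cal G}}$) respects the Kasparov product and the unit element, which is essentially a matter of invoking earlier results with $A=B=C$.

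First I would observe that the target group genuinely carries the structure of an equivariant Kasparov ring. If $A$ is a separable $\cal G$-C*-algebra, then $A\rtimes{\cal G}=[\pi_A(a)\widehat{\theta}_A(x)\,;\,a\in A,\,x\in\widehat{S}]$ is separable, because $A$ is separable and $\widehat{S}$ is separable (cf.\ \ref{rk18}); moreover $(A\rtimes{\cal G},\alpha_{A\rtimes{\cal G}},\delta_{A\rtimes{\cal G}})$ is a $\widehat{\cal G}$-C*-algebra. Hence, by the Proposition-Definition defining the equivariant Kasparov ring, both $\kk_{\cal G}(A,A)$ and $\kk_{\widehat{\cal G}}(A\rtimes{\cal G},A\rtimes{\cal G})$ are unital rings for the Kasparov product, with units $1_A$ and $1_{A\rtimes{\cal G}}$ respectively.

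Next, applying Theorem \ref{descent1} with $A=B=C$ (which is legitimate since $A$ is separable), one obtains for all $x_1,x_2\in\kk_{\cal G}(A,A)$ the identity $J_{\cal G}(x_1\tens_A x_2)=J_{\cal G}(x_1)\tens_{A\rtimes{\cal G}}J_{\cal G}(x_2)$, together with $J_{\cal G}(1_A)=1_{A\rtimes{\cal G}}$ from part~2 of that theorem. Since $J_{\cal G}$ is a homomorphism of abelian groups, it is therefore a unital ring homomorphism; being bijective by Corollary \ref{cor6}, it is an isomorphism of rings. The statement for $J_{\widehat{\cal G}}$ follows in exactly the same way, using instead the dual version of Theorem \ref{descent1} stated immediately after it (and the separability of $A\rtimes\widehat{\cal G}$, obtained as above from the separability of $S$).

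There is no real obstacle here: the substantive content lies in Corollary \ref{cor6} (which itself rests on Lemma \ref{lem32} and Theorem \ref{theo3}) and in the multiplicativity parts of the descent theorems, all of which have already been proved. The only point meriting an explicit line of verification is that the crossed product $A\rtimes{\cal G}$ (resp.\ $A\rtimes\widehat{\cal G}$) is again separable, so that the Kasparov-ring structure on the target is actually defined; this is immediate from the separability of $\widehat{S}$ (resp.\ $S$).
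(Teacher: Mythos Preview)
Your proof is correct and follows essentially the same approach as the paper, which simply cites Theorem~\ref{theo3} and Corollary~\ref{cor6}; you have made explicit the ingredients (multiplicativity and unit preservation from Theorem~\ref{descent1}, bijectivity from Corollary~\ref{cor6}, and separability of the crossed product) that the paper leaves implicit.
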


\begin{proof}
This is a straightforward consequence of Theorem \ref{theo3} and Corollary \ref{cor6}.
\end{proof}

\section{Monoidal equivalence and equivariant KK-theory}

In this chapter, we fix a colinking measured quantum groupoid ${\cal G}:={\cal G}_{\QG_1,\QG_2}$ between two monoidally equivalent regular locally compact quantum groups $\QG_1$ and $\QG_2$.

\subsection{Description of the \texorpdfstring{${\cal G}_{\QG_1,\QG_2}$}{G(G1,G2)}-equivariant Kasparov bimodules}\label{sectionColinkKasp}

In this paragraph, we fix two $\cal G$-$\Cstar$-algebras $A$ and $B$. We also fix a $\cal G$-equivariant Hilbert $A$-$B$-bimodule $(\s E,\gamma)$. We use all the notations and results of \S\S \ref{sectionC*AlgColink} and \ref{sectionHilbModColink} concerning these objects. We assume the C*-algebra $A$ to be separable. In particular, for $j=1,2$ the C*-algebra $A_j$ is separable. 

\begin{lem}\label{lem30}
Let $F\in\Lin(\s E)$. There exist unique operators $F_1\in\Lin(\s E_1)$ and $F_2\in\Lin(\s E_2)$ such that $F=F_1\oplus F_2$. We have the following statements:
\begin{enumerate}
\item the pair $(\s E,\gamma,F)$ is a Kasparov $A$-$B$-bimodule if, and only if, for $j=1,2$ the pair $(\s E_j,\gamma_j,F_j)$ is a Kasparov $A_j$-$B_j$-bimodule;
\item the conditions below are equivalent:
\begin{enumerate}[label=(\roman*)]
\item $(\gamma\tens\id_S)(x)(\delta_{\K(\s E)}(F)-q_{\beta_{\s E}\alpha}(F\tens 1_S))\in\K(\s E\tens S)$ for all $x\in A\tens S$,
\item $(\gamma_k\tens\id_{S_{kj}})(x)(\delta_{\K(\s E_j)}^k(F_j) - F_k\tens 1_{S_{kj}})\in\K(\s E_k\tens S_{kj})$ for all $x\in A_k\tens S_{kj}$ and $j,k=1,2$;
\end{enumerate}
\item if the triple $(\s E,\gamma,F)$ is a $\cal G$-equivariant Kasparov $A$-$B$-bimodule, then the triple $(\s E_j,\gamma_j,F_j)$ is a $\QG_j$-equivariant Kasparov $A_j$-$B_j$-bimodule for $j=1,2$.\qedhere
\end{enumerate}
\end{lem}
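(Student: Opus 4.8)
The plan is to reduce every assertion to the direct-sum decompositions of \S\ref{sectionHilbModColink}. Write $q_{\s E,j}:=\beta_{\s E}(\varepsilon_j)\in\Lin(\s E)$ and $\s E_j:=q_{\s E,j}\s E$, a Hilbert $B_j$-module. Since $q_{\s E,j}\xi=\xi\,q_{B,j}$ for $\xi\in\s E$ (cf.\ \ref{not8}, with $B$ in place of $A$) and right multiplication by $q_{B,j}\in\M(B)$ commutes with every adjointable operator, each $q_{\s E,j}$ is central in $\Lin(\s E)$; moreover $\K(\s E_1,\s E_2)=0$ because $B_1B_2=0$. Hence there are canonical identifications $\Lin(\s E)=\Lin(\s E_1)\oplus\Lin(\s E_2)$ and $\K(\s E)=\K(\s E_1)\oplus\K(\s E_2)$, and the existence and uniqueness of $F_1,F_2$ with $F=F_1\oplus F_2$ is exactly this; in particular $F$ commutes with $\beta_{\s E}(N^{\rm o})$. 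By \ref{lem11} 1 one has $\gamma(a)=\gamma_1(q_{A,1}a)\oplus\gamma_2(q_{A,2}a)$, and $a\mapsto(q_{A,1}a,q_{A,2}a)$ is a bijection $A\to A_1\times A_2$.

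For statement 1, I would observe that for $a\in A$ each of the three operators $[\gamma(a),F]$, $\gamma(a)(F^2-1)$, $\gamma(a)(F-F^*)$ is block-diagonal with respect to $\s E=\s E_1\oplus\s E_2$, its $j$-th block being the corresponding operator for $(\s E_j,\gamma_j,F_j)$ evaluated at $q_{A,j}a$. Since membership in $\K(\s E)=\K(\s E_1)\oplus\K(\s E_2)$ amounts to membership of each block in $\K(\s E_j)$, and $a$ runs over $A$ exactly when $(q_{A,1}a,q_{A,2}a)$ runs over $A_1\times A_2$, the stated equivalence follows. The only non-formal point is the countable generation clause: $\s E$ is countably generated over $B=B_1\oplus B_2$ if and only if each $\s E_j$ is countably generated over $B_j$ — one direction by pushing a countable generating set of $\s E$ through the adjointable projection $q_{\s E,j}$, the other by taking the union of countable generating sets of $\s E_1$ and $\s E_2$.

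For statement 2, I would unwind the ``cocycle modulo compacts'' operator $Z:=\delta_{\K(\s E)}(F)-q_{\beta_{\s E}\alpha}(F\tens 1_S)$. Applying \ref{actprop} to the $\cal G$-$\Cstar$-algebra $\K(\s E)$ (cf.\ \ref{prop31}), and using $\K(\s E)_j=\K(\s E_j)$, gives $\delta_{\K(\s E)}(F)=\sum_{j,k=1,2}\pi_j^k\circ\delta_{\K(\s E_j)}^k(F_j)$; since $F$ commutes with each $q_{\s E,j}$, one likewise has $q_{\beta_{\s E}\alpha}(F\tens 1_S)=\sum_{j,k=1,2}\pi_j^k(F_k\tens 1_{S_{kj}})$, so $Z$ is the direct sum over the pairs $(j,k)$ of the operators $\delta_{\K(\s E_j)}^k(F_j)-F_k\tens 1_{S_{kj}}\in\M(\K(\s E_k)\tens S_{kj})$, the $(j,k)$-summand being supported on $\K(\s E_k)\tens S_{kj}\subset\K(\s E)\tens S$. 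By \ref{rk13} 1 one may assume $x\in(A\tens S)q_{\beta_A\alpha}=\bigoplus_{j,k=1,2}A_k\tens S_{kj}$ (cf.\ \ref{actprop}, \ref{not12}); writing $x=\sum_{j,k}x_{kj}$ with $x_{kj}\in A_k\tens S_{kj}$, the operator $(\gamma\tens\id_S)(x_{kj})=(\gamma_k\tens\id_{S_{kj}})(x_{kj})$ is also supported on $\K(\s E_k)\tens S_{kj}$, hence $(\gamma\tens\id_S)(x)Z$ is the direct sum of the $(\gamma_k\tens\id_{S_{kj}})(x_{kj})(\delta_{\K(\s E_j)}^k(F_j)-F_k\tens 1_{S_{kj}})$. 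Since $\K(\s E\tens S)$ decomposes as the direct sum of the $\K(\s E_k\tens S_{kj})$ (the relevant summands of $B\tens S$ being mutually orthogonal) and the $x_{kj}$ vary freely in $A_k\tens S_{kj}$, conditions (i) and (ii) are equivalent.

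Finally, statement 3 is the synthesis: by \ref{lem11} 1 the pair $(\s E_j,\gamma_j)$ is a $\QG_j$-equivariant Hilbert $A_j$-$B_j$-bimodule, by statement 1 the triple $(\s E_j,\gamma_j,F_j)$ is a Kasparov $A_j$-$B_j$-bimodule, and the $j=k$ instance of condition (ii) — which holds by statement 2, since $(\s E,\gamma,F)\in{\sf E}_{\cal G}(A,B)$ satisfies condition 3 of \ref{defEqkaspbimod} — is precisely the $\QG_j$-equivariance condition on $F_j$. I expect the main obstacle to be purely organisational: keeping track of the ``mixed'' legs of $\delta_{\K(\s E)}(F)$, whose $(j,k)$-block lives in $\M(\K(\s E_k)\tens S_{kj})$ and genuinely couples $\s E_j$ with $\s E_k$, and checking carefully that $(\gamma\tens\id_S)(x)$ for $x\in A_k\tens S_{kj}$ lands on exactly that block, so that the block decomposition of condition (i) is indexed precisely by the pairs $(j,k)$ occurring in (ii).
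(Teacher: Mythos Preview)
Your proposal is correct and follows essentially the same approach as the paper's proof, which is extremely terse: it cites the centrality of $\beta_{\s E}(\GC^2)$ in $\Lin(\s E)$ for the decomposition $F=F_1\oplus F_2$, reduces statement 1 to $\K(\s E)=\K(\s E_1)\oplus\K(\s E_2)$, declares statement 2 a ``straightforward restatement'' of (i), and obtains statement 3 by taking $k=j$ in (ii). Your write-up simply unpacks these steps in detail (and additionally treats countable generation, which the paper leaves implicit); there is no substantive difference in strategy.
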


\begin{proof}
We recall that $\beta_{\s E}(\GC^2)\subset{\cal Z}(\Lin(\s E))$ (cf.\ \S 3.2.3 (3.11) \cite{BC}). Hence, we have $[F,\,q_{\s E,j}]=0$ for $j=1,2$. Let $F_j:=\restr{F}{\s E_j}\in\Lin(\s E_j)$ for $j=1,2$. We have $F=F_1\oplus F_2$. The equivalence of the first statement follows from the relation $\K(\s E)=\K(\s E_1)\oplus\K(\s E_2)$ and the definitions, for instance we have $[\gamma(a),\,F]\xi=\sum_{j=1,2}[\gamma_j(q_{A,j}a),\, F_j]q_{\s E,j}\xi$ for all $\xi\in\s E$. Condition (ii) is just a straightforward restatement of condition (i). Statement 3 follows by taking $k=j$ in (ii) and by using statement 1.
\end{proof}

\begin{propdef}\label{propdef9}
With the notations and hypotheses of \ref{lem30}, for $j=1,2$ the map
\[
J_{\QG_j,\,{\cal G}}:\kk_{\cal G}(A,B)\rightarrow\kk_{\QG_j}(A_j,B_j)\,;\,[(\s E,\gamma,F)]\mapsto[(\s E_j,\gamma_j,F_j)]
\]
is a homomorphism of abelian groups.
\end{propdef}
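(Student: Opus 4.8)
`\textbf{Proof proposal.}` The plan is to verify that the construction $[(\s E,\gamma,F)]\mapsto[(\s E_j,\gamma_j,F_j)]$ is well defined on homotopy classes, additive, and lands in $\kk_{\QG_j}(A_j,B_j)$; the bulk of the work has already been done in the preceding lemmas. First I would note that by \ref{lem30} 3, if $(\s E,\gamma,F)$ is a $\cal G$-equivariant Kasparov $A$-$B$-bimodule then $(\s E_j,\gamma_j,F_j)$ is a $\QG_j$-equivariant Kasparov $A_j$-$B_j$-bimodule, where $(\s E_j,\gamma_j)$ is the $\QG_j$-equivariant $A_j$-$B_j$-bimodule coming from \ref{lem11} and \ref{propdef4}, and $F_j=\restr{F}{\s E_j}$ is well defined since $\beta_{\s E}(\GC^2)\subset{\cal Z}(\Lin(\s E))$ forces $[F,q_{\s E,j}]=0$. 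So the assignment sends ${\sf E}_{\cal G}(A,B)$ into ${\sf E}_{\QG_j}(A_j,B_j)$.

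Next I would check compatibility with unitary equivalence. If $\Phi:\s E\to\s E'$ is a $\cal G$-equivariant unitary intertwining the representations and the operators (cf.\ \ref{def3} 1), then by \ref{lem5} 1 and \ref{lem10} 1 we obtain $\QG_j$-equivariant unitaries $\Phi_j:\s E_j\to\s E_j'$ with $\Phi_j\circ\gamma_j(a)=\gamma_j'(a)\circ\Phi_j$ for all $a\in A_j$, and clearly $F_j'\circ\Phi_j=\Phi_j\circ F_j$ since $\Phi$ restricts to $\Phi_j$ on the summand $\s E_j$. Hence the map descends to a well-defined map on $\cal G$-equivariant unitary equivalence classes.

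For homotopy invariance, I would use that for a $\cal G$-C*-algebra $B$ one has canonical identifications $B[0,1]_j=B_j[0,1]$ and, for a $\cal G$-equivariant Hilbert $B$-module $\s E$, $\s E[0,1]_j=\s E_j[0,1]$, all compatible with the decompositions of the actions described in \S\ref{sectionHilbModColink}; moreover the evaluation maps ${\rm e}_t$ are $\cal G$-equivariant and restrict to the $\QG_j$-equivariant evaluation maps on the $j$-th summand. Therefore an element $x\in{\sf E}_{\cal G}(A,B[0,1])$ realizing a homotopy between $(\s E_0,F_0)$ and $(\s E_1,F_1)$ restricts, via $J_{\QG_j,{\cal G}}$ applied at the level of $\cal G$-equivariant Kasparov $A$-$B[0,1]$-bimodules, to a homotopy in ${\sf E}_{\QG_j}(A_j,B_j[0,1])$ between $(\s E_{0,j},F_{0,j})$ and $(\s E_{1,j},F_{1,j})$. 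This shows $J_{\QG_j,{\cal G}}$ factors through the homotopy relation and yields a well-defined map $\kk_{\cal G}(A,B)\to\kk_{\QG_j}(A_j,B_j)$.

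Finally, additivity is immediate: for Hilbert modules $\s E$ and $\s F$ over $B$ one has $(\s E\oplus\s F)_j=\s E_j\oplus\s F_j$ with $(F\oplus G)_j=F_j\oplus G_j$ and the direct-sum representation restricting to $\gamma_j\oplus\gamma_j'$, so $J_{\QG_j,{\cal G}}$ respects the direct-sum operation; since $\kk_{\cal G}(A,B)$ and $\kk_{\QG_j}(A_j,B_j)$ are abelian groups under this operation (and $J_{\QG_j,{\cal G}}$ sends degenerate bimodules to degenerate bimodules, hence the zero class to the zero class), $J_{\QG_j,{\cal G}}$ is a homomorphism of abelian groups. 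The only point requiring genuine attention is the verification that all the identifications $B[0,1]_j=B_j[0,1]$, $\s E[0,1]_j=\s E_j[0,1]$, and the restriction of $J_{\QG_j,{\cal G}}$ to bimodules over $B[0,1]$ are mutually compatible and natural in the evaluation maps; once that bookkeeping is in place the statement follows from \ref{lem30}, \ref{lem5} and \ref{lem11} with no further computation.
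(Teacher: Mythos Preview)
Your proof is correct and follows essentially the same approach as the paper. The only organizational difference is that the paper packages the homotopy-invariance step as a naturality lemma---showing that $J_{\QG_j,{\cal G}}$ commutes with pushforwards $g_\ast$ along any $\cal G$-equivariant $g:B\to C$, and then specializing to $g={\rm e}_t:B[0,1]\to B$---whereas you argue directly by identifying $\s E[0,1]_j=\s E_j[0,1]$; the content is the same.
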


\begin{proof}
We first prove that $J_{\QG_j,\,{\cal G}}$ is well defined. By \ref{lem5} 1 and \ref{lem10} 1, we have well-defined maps 
\[
J_{\QG_j,\,{\cal G}}:{\sf E}_{\cal G}(A,B)\rightarrow{\sf E}_{\QG_j}(A_j,B_j)\,;\,(\s E,\gamma,F)\mapsto(\s E_j,\gamma_j,F_j), \quad \text{for }  j=1,2.
\]
The fact that the map $J_{\QG_j,\,{\cal G}}$ factorizes over the quotient map ${\sf E}_{\cal G}(A,B)\rightarrow{\sf KK}_{\cal G}(A,B)$ will follow from the following result:

\begin{lem} Let $C$ be a third $\cal G$-$\Cstar$-algebra. Let $g:B\rightarrow C$ be a $\cal G$-equivariant *-homomorphism. We recall that $g$ induces a $\QG_j$-equivariant *-homomorphism $g_j:B_j\rightarrow C_j$ for $j=1,2$ (cf.\ \ref{lem7bis} 1). Then, the diagram
\begin{center}
$
\begin{tikzcd}
{\sf E}_{\cal G}(A,B) \arrow{r}{g_\ast} \arrow[swap]{d}{J_{\QG_j,\,{\cal G}}} & {\sf E}_{\cal G}(A,C) \arrow{d}{J_{\QG_j,\,{\cal G}}} \\
{\sf E}_{\QG_j}(A_j,B_j) \arrow{r}{(g_j)_\ast} & {\sf E}_{\QG_j}(A_j,C_j)
\end{tikzcd}
$
\end{center}
commutes for all $j=1,2$.
\end{lem}
The proof of the above lemma is effortless and the details are left to the reader. We recall that $B[0,1]$ is a $\cal G$-$\Cstar$-algebra (cf.\ \ref{def3} 4). Then, we apply the notations of \S \ref{sectionC*AlgColink} as follows:
\begin{itemize}
\item $B[0,1]_j:=\beta_{B[0,1]}(\varepsilon_j)B[0,1]=B_j[0,1]$, for $j=1,2$;
\item $\delta_{B_j[0,1]}^k:B_j[0,1]\rightarrow\M(B_k[0,1]\tens S_{kj})=\M(B_k\tens S_{kj})[0,1]$ for $j,k=1,2$, the *-homo\-morphism defined by $\delta_{B_j[0,1]}^k(f)(t):=\delta_{B_j}^k(f(t))$ for all $f\in B_j[0,1]$ and $t\in[0,1]$.
\end{itemize}
We recall that for $t\in[0,1]$ the evaluation map ${\rm e}_t:B[0,1]\rightarrow B$ is $\cal G$-equivariant. Moreover, for $j=1,2$, it is clear that the *-homomorphism $({\rm e}_t)_j:B_j[0,1]\rightarrow B_j$ is the evaluation at $t$. It then follows from the above lemma that the image of a homotopy of ${\sf E}_{\cal G}(A,B[0,1])$ by ${\sf E}_{\cal G}(A,B[0,1])\rightarrow{\sf E}_{\QG_j}(A_j,B_j[0,1])$ is a homotopy, which finally proves that the map $J_{\QG_j,\,{\cal G}}$ is well defined on ${\sf KK}_{\cal G}(A,B)$. The compatibility with the direct sum is straightforward.
\end{proof}

\begin{prop}\label{prop43}
Let $C$ be a third $\cal G$-$\Cstar$-algebra. For $j=1,2$, we have:
\begin{enumerate}
\item $J_{\QG_j,{\cal G}}(1_A)=1_{A_j}$;
\item for all $x\in\kk_{\cal G}(A,C)$ and $y\in\kk_{\cal G}(C,B)$, $J_{\QG_j,{\cal G}}(x\tens_C y)=J_{\QG_j,{\cal G}}(x)\tens_{C_j}J_{\QG_j,{\cal G}}(y)$ in $\kk_{\QG_j}(A_j,B_j)$.\qedhere
\end{enumerate}
\end{prop}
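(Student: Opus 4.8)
The plan is to unwind the definitions of $1_A$ and of the Kasparov product in $\kk_{\cal G}$ and in $\kk_{\QG_j}$, and to check that the ``restriction to the $j$-th block'' homomorphism $J_{\QG_j,{\cal G}}$ of \ref{propdef9} is compatible with both. Statement 1 is immediate: $1_A=[(A,\id_A,0)]$, where $A$ carries its tautological $\cal G$-equivariant Hilbert $A$-module structure, and by \ref{lem30}, \ref{lem5} and \ref{lem11} its image under $J_{\QG_j,{\cal G}}$ is the class of the $j$-th block $(A_j,\id_{A_j},0)$, which represents $1_{A_j}$.

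For statement 2, fix $(\s E_1,\gamma_1,F_1)\in{\sf E}_{\cal G}(A,C)$ and $(\s E_2,\gamma_2,F_2)\in{\sf E}_{\cal G}(C,B)$ representing $x$ and $y$, form the $\cal G$-equivariant Hilbert $A$-$B$-bimodule $\s E:=\s E_1\tens_{\gamma_2}\s E_2$ with left action $\gamma(a)=\gamma_1(a)\tens_{\gamma_2}1$ (cf.\ \ref{prop18}, \ref{propleftaction}), and choose $F\in F_1\#_{\cal G}F_2$, which is possible since $A$ is separable (\ref{Kasprod}); then $x\tens_C y=[(\s E,\gamma,F)]$, hence $J_{\QG_j,{\cal G}}(x\tens_C y)=[(q_{\s E,j}\s E,\gamma_j,F_j)]$ in the notation of \ref{lem30}. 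The key structural point is the canonical identification of $\QG_j$-equivariant Hilbert bimodules
\[
q_{\s E,j}\s E=(\s E_1)_j\tens_{(\gamma_2)_j}(\s E_2)_j,\qquad \gamma_j=(\gamma_1)_j\tens_{(\gamma_2)_j}1,
\]
which follows from $\beta_{\s E}(\varepsilon_j)=\beta_{\s E_1}(\varepsilon_j)\tens_{\gamma_2}1$ (cf.\ \ref{prop18}) together with the relations $\gamma_2(c)q_{\s E_2,j}=\gamma_2(q_{C,j}c)=q_{\s E_2,j}\gamma_2(c)$ of \ref{lem11} --- which let one rewrite, for $\xi_1\in(\s E_1)_j$, the $C$-balanced elementary tensor $\xi_1\tens_{\gamma_2}\xi_2$ as the $C_j$-balanced one $\xi_1\tens_{(\gamma_2)_j}q_{\s E_2,j}\xi_2$ --- and whose $\QG_j$-equivariance (and the matching of the isometries $\s V^j_j$) is read off from \ref{propdef4}, \ref{lem5} and \ref{lem11}. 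Under this identification, $q_{\s E,j}\s E$ with left action $\gamma_j$ is exactly the Hilbert bimodule underlying $J_{\QG_j,{\cal G}}(x)\tens_{C_j}J_{\QG_j,{\cal G}}(y)=[((\s E_1)_j,(\gamma_1)_j,(F_1)_j)]\tens_{C_j}[((\s E_2)_j,(\gamma_2)_j,(F_2)_j)]$.

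It then remains to identify the operator. By \ref{def8} and the well-definedness part of \ref{Kasprod} (applied to the regular locally compact quantum group $\QG_j$), it is enough to check that $F_j\in(F_1)_j\#_{\QG_j}(F_2)_j$, i.e.\ the three conditions of \ref{def7}. Condition (a) --- that $(q_{\s E,j}\s E,\gamma_j,F_j)\in{\sf E}_{\QG_j}(A_j,B_j)$ --- is \ref{lem30} 3, since $(\s E,\gamma,F)\in{\sf E}_{\cal G}(A,B)$ (condition (a) in \ref{def7}). For the connection condition (b): for $\xi_1\in(\s E_1)_j$ the operator $T_{\xi_1}\colon\s E_2\to\s E$ of \ref{def7} satisfies $q_{\s E,j}T_{\xi_1}=T_{\xi_1}$, so the corresponding $\QG_j$-level operator $T^{(j)}_{\xi_1}\colon(\s E_2)_j\to q_{\s E,j}\s E$ equals $q_{\s E,j}\,T_{\xi_1}\,\iota$, with $\iota\colon(\s E_2)_j\hookrightarrow\s E_2$ the corner inclusion; since $F$ commutes with $q_{\s E,j}$ and $F_2$ with $q_{\s E_2,j}$ (cf.\ \ref{defEqkaspbimod} 2), one gets
\[
T^{(j)}_{\xi_1}(F_2)_j-F_j\,T^{(j)}_{\xi_1}=q_{\s E,j}\,(T_{\xi_1}F_2-F\,T_{\xi_1})\,\iota\in\K\bigl((\s E_2)_j,\,q_{\s E,j}\s E\bigr),
\]
and similarly for the adjoint relation, so $F_j$ is an $(F_2)_j$-connection for $(\s E_1)_j$. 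For the positivity condition (c): $q_{\s E,l}$ being central in $\Lin(\s E)$ (cf.\ the proof of \ref{lem30}), one has direct-sum splittings $\Lin(\s E)=\bigoplus_{l=1,2}\Lin(q_{\s E,l}\s E)$ and $\K(\s E)=\bigoplus_{l=1,2}\K(q_{\s E,l}\s E)$, whence $\Lin(\s E)/\K(\s E)=\bigoplus_{l=1,2}\Lin(q_{\s E,l}\s E)/\K(q_{\s E,l}\s E)$; since $F_1\tens_{\gamma_2}1$ commutes with $q_{\s E,j}$ and restricts to $(F_1)_j\tens_{(\gamma_2)_j}1$, specializing the positivity of the image of $\gamma(a)[F_1\tens_{\gamma_2}1,F]\gamma(a)^*$ to $a\in A_j$ yields positivity of the image of $\gamma_j(a)[(F_1)_j\tens_{(\gamma_2)_j}1,F_j]\gamma_j(a)^*$ in $\Lin(q_{\s E,j}\s E)/\K(q_{\s E,j}\s E)$. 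Hence $F_j\in(F_1)_j\#_{\QG_j}(F_2)_j$, so $[(q_{\s E,j}\s E,\gamma_j,F_j)]=J_{\QG_j,{\cal G}}(x)\tens_{C_j}J_{\QG_j,{\cal G}}(y)$, which is statement 2; compatibility of $J_{\QG_j,{\cal G}}$ with the abelian group structures is already recorded in \ref{propdef9}.

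The only part requiring genuine care is the bookkeeping behind the module identification $q_{\s E,j}\s E=(\s E_1)_j\tens_{(\gamma_2)_j}(\s E_2)_j$ and the verification that it intertwines all the relevant data ($F$, $F_1\tens_{\gamma_2}1$, the operators $T_{\xi_1}$ and the $\QG_j$-actions / isometries $\s V^j_j$); once this identification is in place, conditions (a)--(c) of \ref{def7} descend to the $j$-th block by the purely formal manipulations above, and statement 1 is immediate.
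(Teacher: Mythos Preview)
Your proof is correct and follows essentially the same approach as the paper: both establish the canonical $\QG_j$-equivariant unitary identification of the $j$-th block of $\s E_1\tens_{\gamma_2}\s E_2$ with $(\s E_1)_j\tens_{(\gamma_2)_j}(\s E_2)_j$ (the paper does this via an explicit map $\Phi:\xi\tens_{\gamma_j}\eta\mapsto\xi\tens_{\gamma}\eta$ and the computation $q_{\s F,j}(\xi\tens_{\gamma}\eta)=q_{\s E,j}\xi\tens_{\gamma}q_{\s E',j}\eta$), and then verify the three conditions of \ref{def7} for $F_j$ by restricting the corresponding conditions for $F$ to the $j$-th corner. The arguments for the connection and positivity conditions are essentially identical to the paper's.
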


\begin{proof}
The first statement is straightforward. Let us write $x:=[(\s E,F)]\in\kk_{\cal G}(A,C)$ and $y:=[(\s E',F')]\in\kk_{\cal G}(C,B)$. Let $\gamma:C\rightarrow\Lin(\s E')$ be the left action of $C$ on $\s E'$. Let $\s F:=\s E\tens_{\gamma}\s E'$. We have $x\tens_C y=[(\s F,T)]$ for some $T\in F\#_{\cal G} F'\subset\Lin(\s F)$. In the following, we use the notations of \S \ref{sectionHilbModColink} and \ref{lem30} for the Kasparov bimodules $(\s E,F)$, $(\s E',F')$ and $(\s F,T)$. We have a well-defined $B_j$-linear isometric map
$
\Phi :\s E_j\tens_{\gamma_j}\s E'_j \rightarrow \s F_j \; ; \; \xi \tens_{\gamma_j} \eta \mapsto \xi\tens_{\gamma} \eta.
$
Let $\xi\in\s E$ and $\eta\in\s E'$. Let us write $\xi=\zeta c$ with $\zeta\in\s E$ and $c\in C$. We have
\begin{multline*}
q_{\s F,j}(\xi\tens_{\gamma}\eta)=q_{\s E,j}\xi\tens_{\gamma}\eta=(q_{\s E,j}\zeta)c\tens_{\gamma}\eta=(q_{\s E,j}\zeta)q_{C,j}c\tens_{\gamma}\eta=q_{\s E,j}\zeta\tens_{\gamma}\gamma(q_{C,j}c)\eta\\=q_{\s E,j}\zeta\tens_{\gamma}\gamma(c)q_{\s E',j}\eta=q_{\s E,j}\xi\tens_{\gamma}q_{\s E',j}\eta.
\end{multline*}
Therefore, the range of $\Phi $ contains the total subset $\{q_{\s F,j}(\xi\tens_{\gamma}\eta)\,;\,\xi\in\s E,\,\eta\in\s E'\}$ of $\s F_j$. Hence, $\Phi \in\Lin(\s E_j\tens_{\gamma_j}\s E'_j,\s F_j)$ and $\Phi $ is unitary. We have $\Phi ^*(\xi\tens_{\gamma}\eta)=q_{\s E,j}\xi\tens_{\gamma_j} q_{\s E',j}\eta$ for all $\xi\tens_{\gamma}\eta\in\s F_j$. It is clear that $\Phi $ is $\QG_j$-equivariant and intertwines the left actions of $A_j$.

\medbreak

Therefore, $(\s E_j\tens_{\gamma_j}\s E'_j,\Phi ^*T_j\Phi )$ is a $\QG_j$-equivariant Kasparov $A_j$-$B_j$-bimodule unitarily equiva\-lent to $(\s F_j,T_j)$. Hence, $[(\s E_j\tens_{\gamma_j}\s E'_j,\Phi ^*T_j\Phi )]=[(\s F_j,T_j)]$ in $\kk_{\QG_j}(A_j,B_j)$.

\medbreak

Since $T$ is a $F'$-connection for $\s E$, the operator $\Phi ^*T_j\Phi $ is a $F'_j$-connection for $\s E_j$. Indeed, for all $\xi\in\s E$ and $\eta\in\s E'$, we have $q_{\s F,j}T_{\xi}(\eta)=\Phi (q_{\s E,j}\xi\tens_{\gamma_j} q_{\s E',j}\eta)=\Phi T_{q_{\s E,j}\xi}q_{\s E',j}(\eta)$. Hence, $q_{\s F,j}T_{\xi}F'=\Phi T_{q_{\s E,j}\xi}F'_jq_{\s E',j}$. We also have $q_{\s F,j}T T_{\xi}=T_j\Phi T_{q_{\s E,j}\xi}q_{\s E',j}$. Hence, 
$$
T_{\xi}F'_j - \Phi ^*T_j\Phi  T_{\xi}=\Phi ^*q_{\s F,j}\restr{(T_{\xi}F'-TT_{\xi})}{\s E'_j}\in\K(\s E'_j,\s F_j) \quad \text{for all } \xi\in\s E_j.
$$
Let $\phi:A\rightarrow\Lin(\s E)$ and $\pi:A\rightarrow\Lin(\s F)\,;\,a\mapsto\phi(a)\tens_{\gamma} 1$ be the left actions of $A$. For $j=1,2$, we have $\phi_j:A_j\rightarrow\Lin(\s E_j)$ and $\pi_j:A_j\rightarrow\Lin(\s F_j)$. We have $q_{\s F,j}(F\tens_{\gamma}1)=\Phi (F_j\tens_{\gamma_j}1)\Phi ^*q_{\s F,j}$. Hence, we have
\begin{align*}
q_{\s F,j}(F\tens_{\gamma}1)T&=\Phi (F_j\tens_{\gamma_j}1)\Phi ^*T_jq_{\s F,j} \quad \text{and}\\ 
q_{\s F,j}T(F\tens_{\gamma}1)&=T_jq_{\s F,j}(F\tens_{\gamma}1)=T_j\Phi (F_j\tens_{\gamma_j}1)\Phi ^*q_{\s F,j}.
\end{align*}
Hence, 
\[
q_{\s F,j}[F\tens_{\gamma} 1,\,T]=(\Phi (F_j\tens_{\gamma_j}1)\Phi ^*T_j-T_j\Phi (F_j\tens_{\gamma_j}1)\Phi ^*)q_{\s F,j}=\Phi [F_j\tens_{\gamma_j}1,\,\Phi ^*T_j\Phi ]\Phi ^*q_{\s F,j}.
\] 
Therefore, for all $a\in A_j$ we have
\begin{align*}
\restr{\pi(a)[F\tens_{\gamma} 1,T]\pi(a)}{\s E_j}&=\pi_j(a)\Phi [F_j\tens_{\gamma_j}1,\, \Phi ^*T_j\Phi ]\Phi ^*\pi_j(a)\\
&=\Phi (\phi_j(a)\tens_{\gamma_j}1)[F_j\tens_{\gamma_j}1,\, \Phi ^*T_j\Phi ](\phi_j(a)\tens_{\gamma_j}1)\Phi ^*.
\end{align*} 
Hence, the image of
$
(\phi_j(a)\tens_{\gamma_j}1)[F_j\tens_{\gamma_j}1,\Phi ^*T_j\Phi ](\phi_j(a)\tens_{\gamma_j}1)
$
in $\Lin(\s E_j)/\K(\s E_j)$ is positive. Hence, $\Phi ^*T_j\Phi \in F_j\#_{\QG_j} F'_j$ and 
\[
[(\s E_j\tens_{\gamma_j}\s E'_j,\Phi ^*T_j\Phi )]=[(\s E_j,F_j)]\tens_{C_j}[(\s E'_j,F'_j)]=J_{\QG_j,{\cal G}}(x)\tens_{C_j}J_{\QG_j,{\cal G}}(y).\qedhere
\]
\end{proof}

\subsection{Induction of equivariant Kasparov bimodules}

We begin this paragraph with two technical lemmas that will be used in the proof of \ref{prop28}.

\begin{lem}\label{lem18}
Let $\QG$ be a locally compact quantum group. Let $A$, $A'$, ${B}$ and ${B}'$ be $\QG$-$\Cstar$-algebras. Let $({\s E},\phi)$ {\rm(}resp.\ $({\s E}',\phi')${\rm)} be a $\QG$-equivariant Hilbert $A$-${B}$ {\rm(}resp.\ $A'$-${B}')$-bimodule. Denote by $A_0$ {\rm(}resp.\ ${B}_0)$ the $\QG$-$\Cstar$-algebras $A\oplus A'$ {\rm(}resp.\ ${B}\oplus{B}')$. Denote by ${\s E}_0$ the Hilbert ${B}_0$-module ${\s E}\oplus{\s E}'$. Denote by $\phi_0$ the *-representation of $A_0$ on ${\s E}_0$ defined by $\phi_0(a\oplus a'):=\phi(a)\oplus\phi'(a')$, for all $a\in A$ and $a'\in A'$. Fix $T\in\Lin({\s E})$ and $T'\in\Lin({\s E}')$ and denote $T_0:=T\oplus T'\in\Lin({\s E}_0)$.
\begin{enumerate}
\item The triple $({\s E}_0,\phi_0,T_0)$ is a $\QG$-equivariant Kasparov $A_0$-${B}_0$-bimodule if, and only if, the triples $({\s E},\phi,T)$ and $({\s E}',\phi',T')$ are $\QG$-equivariant Kasparov bimodules. 
\item Denote by $p_{\rm g}:A_0\rightarrow A$, $p_{\rm g}':A_0\rightarrow A'$,  $p_{\rm d}:{B}_0\rightarrow{B}$ and $p_{\rm d}':{B}_0\rightarrow{B}'$ the canonical  surjections. Denote also by $i_{\rm g}:A\rightarrow A_0$, $i_{\rm g}':A'\rightarrow A_0$, $i_{\rm d}:{B}\rightarrow{B}_0$, $i_{\rm d}':{B}\rightarrow{B}_0$ the canonical injections. Assume the C*-algebras $A$ and $A'$ to be separable and $B$ and $B'$ to be $\sigma$-unital. If the conditions above hold true, then we have
\[
[i_{\rm g}]\tens_{A_0}[({\s E}_0,F_0)]\tens_{{B}_0} [p_{\rm d}]=[({\s E},F)],\quad
[i_{\rm g}']\tens_{A_0}[({\s E}_0,F_0)]\tens_{{B}_0} [p_{\rm d}']=[({\s E}',F')]
\]
\[
 \text{and} \quad [p_{\rm g}]\tens_A[({\s E},F)]\tens_{B}[i_{\rm d}]=[({\s E}_0,F_0)]=[p_{\rm g}']\tens_{A'}[({\s E}',F')]\tens_{B'}[i_{\rm d}']
\]
in $\kk_{\QG}(A,{B})$, $\kk_{\QG}(A',{B}')$ and $\kk_{\QG}(A_0,{B}_0)$ respectively.\qedhere
\end{enumerate}
\end{lem}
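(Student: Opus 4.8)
The plan is to prove the two parts in sequence, relying on the standard direct-sum decomposition of Kasparov bimodules and on the already-established behaviour of the Kasparov product under composition with $*$-homomorphisms (\ref{prop46}, \ref{prop22}).

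For part 1, first observe that $\K(\s E_0)=\K(\s E)\oplus\K(\s E')$ and $\Lin(\s E_0)=\Lin(\s E)\oplus\Lin(\s E')$ (the direct sum being an ideal inside the latter in the block-diagonal sense), and that $\phi_0$, $T_0$ are block-diagonal. Hence for $a=a\oplus a'\in A_0$ one has $[\phi_0(a\oplus a'),T_0]=[\phi(a),T]\oplus[\phi'(a'),T']$, and similarly for $\phi_0(a\oplus a')(T_0^2-1)$ and $\phi_0(a\oplus a')(T_0-T_0^*)$; such an element lies in $\K(\s E_0)$ if and only if each summand lies in the corresponding ideal. This gives the equivalence for the plain Kasparov conditions \eqref{eq20}. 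For the $\QG$-equivariance conditions of \ref{defEqkaspbimod}, one uses \ref{propdef10} to see that the action $(\beta_{\s E_0},\delta_{\s E_0})$ and the isometry $\s V_0$ are block-diagonal ($\s V_0=\s V\oplus\s V'$ under the identification from \eqref{ehmeq1} applied to $A$, $A'$ separately), so $\delta_{\K(\s E_0)}(T_0)=\delta_{\K(\s E)}(T)\oplus\delta_{\K(\s E')}(T')$ and $q_{\beta_{\s E_0}\alpha}=q_{\beta_{\s E}\alpha}\oplus q_{\beta_{\s E'}\alpha}$; condition 3 of \ref{defEqkaspbimod} then again splits as a direct sum, and condition 2 ($[T_0,\beta_{\s E_0}(n^{\mathrm o})]=0$) splits likewise. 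So $({\s E}_0,\phi_0,T_0)$ is a $\QG$-equivariant Kasparov bimodule iff both $({\s E},\phi,T)$ and $({\s E}',\phi',T')$ are. (The countable-generation hypothesis is preserved since $\K(\s E_0)$ is $\sigma$-unital iff $\K(\s E)$ and $\K(\s E')$ both are.)

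For part 2, I would compute each product using \ref{prop22} (the proposition following \ref{rk20}), which says $x\tens_A[\phi]=\phi_*(x)$ and $[\phi]\tens_B x=\phi^*(x)$ for an equivariant $*$-homomorphism $\phi$. Thus $[i_{\rm g}]\tens_{A_0}[({\s E}_0,F_0)]=i_{\rm g}^*([({\s E}_0,F_0)])=[({\s E}_0,\phi_0\circ i_{\rm g},F_0)]$, and $\phi_0\circ i_{\rm g}:A\to\Lin(\s E_0)$ is $a\mapsto\phi(a)\oplus 0$, i.e. it lands in $\Lin(\s E)$; so this Kasparov bimodule is $(\s E,\phi,F)\oplus(\s E',0,F')$, whose second summand is degenerate and hence represents $0$ in $\kk_{\QG}$. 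Composing on the right with $[p_{\rm d}]$ gives $(p_{\rm d})_*$ of this class; since $(p_{\rm d})_*$ kills the degenerate summand $(\s E',0,F')$ (it becomes $(\s E'\tens_{p_{\rm d}}B,0,\cdot)$, still degenerate) and sends $(\s E,\phi,F)$ to $(\s E\tens_{p_{\rm d}}B,\dots)$ which is unitarily equivalent to $(\s E,\phi,F)$ via $\xi\tens_{p_{\rm d}}b\mapsto\xi(i_{\rm d}^*\!\dots)$ — more simply, because $p_{\rm d}\circ i_{\rm d}=\id_B$, so $\s E\tens_{p_{\rm d}}B_0\tens_{?}\dots$; the cleanest route is to note $p_{\rm d}\circ i_{\rm d}=\id_B$ on the relevant corner and invoke that $i_{\rm d}^*$ is a one-sided inverse up to homotopy. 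I get $[i_{\rm g}]\tens_{A_0}[({\s E}_0,F_0)]\tens_{{B}_0}[p_{\rm d}]=[({\s E},F)]$; the primed identity is symmetric. For the last pair of identities, $[p_{\rm g}]\tens_A[({\s E},F)]=p_{\rm g}^*([({\s E},F)])=[({\s E},\phi\circ p_{\rm g},F)]$ with $\phi\circ p_{\rm g}(a\oplus a')=\phi(a)$, and then $\tens_B[i_{\rm d}]$ applies $(i_{\rm d})_*$, i.e. tensors $\s E$ with $B_0$ over $B$; using $i_{\rm d}:B\hookrightarrow B_0$ one identifies $\s E\tens_{i_{\rm d}}B_0$ with the Hilbert $B_0$-module $\s E\oplus 0$ inside $\s E_0=\s E\oplus\s E'$, with the left $A_0$-action $\phi_0$ restricted — giving exactly $({\s E}_0,\phi_0,F_0)$ up to adding the degenerate bimodule $(\s E',0,F')$, which is $0$. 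Hence $[p_{\rm g}]\tens_A[({\s E},F)]\tens_B[i_{\rm d}]=[({\s E}_0,F_0)]$, and the primed version is identical.

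The main obstacle I expect is bookkeeping in part 2: keeping straight the internal-tensor-product identifications $\s E\tens_{p_{\rm d}}B_0$, $\s E\tens_{i_{\rm d}}B_0$, and checking that the degenerate summands that appear really are degenerate \emph{as $\QG$-equivariant} Kasparov bimodules (one must verify the extra condition $(\gamma\tens\id_S)(x)(\delta_{\K(\s E')}(F')-F'\tens 1_S)=0$ for $\gamma=0$, which is trivial, and $[F',\beta_{\s E'}(n^{\mathrm o})]=0$, which holds since $(\s E',\phi',F')$ was already equivariant). Once one grants \ref{prop22}, \ref{prop46} and the identifications $\K(\s E\tens_\phi D)\cong\dots$ from the surrounding sections, every step reduces to a routine direct-sum manipulation, so no genuinely hard point remains.
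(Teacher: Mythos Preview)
Your treatment of part~1 is essentially the paper's: both arguments reduce everything to the block-diagonal decomposition $\K(\s E_0)=\K(\s E)\oplus\K(\s E')$ (and its tensor with ${\rm C}_0(\QG)$), so nothing more needs to be said there.

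For the first two identities in part~2 your two-step route (first apply $i_{\rm g}^*$, split off a degenerate summand, then apply $(p_{\rm d})_*$) is correct in spirit but more roundabout than the paper's, and your exposition wobbles exactly at the point you flagged. The paper computes the whole product at once: it writes
\[
[i_{\rm g}]\tens_{A_0}[(\s E_0,\phi_0,F_0)]\tens_{B_0}[p_{\rm d}]=
\bigl[(\s E_0\tens_{p_{\rm d}}B,\;a\mapsto\phi_0(i_{\rm g}(a))\tens_{p_{\rm d}}1,\;F_0\tens_{p_{\rm d}}1)\bigr]
\]
and then exhibits the explicit unitary equivalence $\s E_0\tens_{p_{\rm d}}B\to\s E$, $(\xi\oplus\xi')\tens_{p_{\rm d}}b\mapsto\xi b$, which simultaneously kills the $\s E'$-part (since $p_{\rm d}$ vanishes on $B'$) and identifies the rest with $(\s E,\phi,F)$. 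This single unitary replaces your separate discussion of degenerate summands and of ``$\s E\tens_{p_{\rm d}}B\cong\s E$'', and avoids the bookkeeping you anticipated.

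For the last identity your direct computation has a genuine gap. You correctly reach the $A_0$-$B_0$-bimodule $(\s E\tens_{i_{\rm d}}B_0,\phi\circ p_{\rm g},F\tens 1)$ and identify it with the summand $\s E\subset\s E_0$; but then you claim that adding the degenerate $(\s E',0,F')$ yields $(\s E_0,\phi_0,F_0)$. This is not so: in $(\s E_0,\phi_0,F_0)$ the element $a'\in A'\subset A_0$ acts on $\s E'$ by $\phi'(a')$, not by $0$, so
\[
(\s E,\phi\circ p_{\rm g},F)\oplus(\s E',0,F')\neq(\s E_0,\phi_0,F_0)
\]
as $A_0$-$B_0$-bimodules, and there is no reason for their classes to coincide. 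The paper does not attempt a direct computation here; instead it deduces the last line from the first two together with the identities $p_{\rm g}\circ i_{\rm g}=\id_A$, $p_{\rm g}'\circ i_{\rm g}'=\id_{A'}$, $p_{\rm d}\circ i_{\rm d}=\id_B$, $p_{\rm d}'\circ i_{\rm d}'=\id_{B'}$ via \ref{prop46}~2. If you want to follow the paper's strategy, you should abandon the direct computation for the last line and argue by substituting the already-proved first identities.
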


\begin{proof}
1, 3. The result is a straightforward consequence of the canonical identifications 
of C*-algebras $\K({\s E})\oplus\K({\s E}')=\K({\s E}_0)$, $(\K({\s E})\tens\text{C}_0(\QG))\oplus(\K({\s E}')\tens\text{C}_0(\QG))=\K({\s E}_0)\tens\text{C}_0(\QG)$.\hfill\break
2. By definition of the structures of $\QG$-$\Cstar$-algebra on $A_0$ and ${B}_0$, the maps defined above are $\QG$-equivariant *-homomorphisms. We have 
\[
[i_{\rm g}]\tens_{A_0}[({\s E}_0,\phi_0,F_0)]\tens_{{B}_0} [p_{\rm d}] =
[({\s E}_0\tens_{p_{\rm d}}{B},(a\mapsto\phi_0(i_{\rm g}(a))\tens_{p_{\rm d}}1),F_0\tens_{p_{\rm d}}1)].
\]
However, the triples $({\s E}_0\tens_{p_{\rm d}}{B},(a\mapsto\phi_0(i_{\rm g}(a))\tens_{p_{\rm d}}1),F_0\tens_{p_{\rm d}}1)$ and $({\s E},\phi,F)$ are unitarily equivalent via the map ${\s E}_0\tens_{p_{\rm d}}{B}\rightarrow{\s E}\,;\, (\xi\oplus\xi')\tens_{p_{\rm d}} b\mapsto \xi b$. Therefore, we obtain the relation $[i_{\rm g}]\tens_{A_0}[({\s E}_0,\phi_0,F_0)]\tens_{{B}_0} [p_{\rm d}]=[({\s E},\phi,F)]$. With a similar argument, we also prove that $[i_{\rm g}']\tens_{A_0}[({\s E}_0,\phi_0,F_0)]\tens_{{B}_0} [p_{\rm d}']=[({\s E}',\phi',F')]$. The last formula follows from the first two ones since we have $p_{\rm g}\circ i_{\rm g}=\id_{A}$, $p_{\rm g}'\circ i_{\rm g}'=\id_{A'}$, $p_{\rm d}\circ i_{\rm d}=\id_{{B}}$ and $p_{\rm d}'\circ i_{\rm d}'=\id_{{B}'}$ (cf.\ \ref{prop46} 2).
\end{proof}

Before stating the second technical lemma, we need to fix the notion of operators acting by factorization.

\begin{propdef}\label{defOpActFact}
Let $B$ be a C*-algebra. Let $H$ and $K$ be two Hilbert spaces. Let ${\s E}$ be a Hilbert $B$-module. We consider the Hilbert ${B}\tens\K(K)$-module ${\s E}\tens\K(K)$ and the Hilbert $B\tens\K(H)$-module ${\s E}\tens\K(H,K)$. If $F\in\Lin(\s E\tens\K(K))$, then there exists a unique operator $\widetilde{F}\in\Lin({\s E}\tens\K(H,K))$ such that
\[
\widetilde{F}(\xi\tens kT)=F(\xi\tens k)(1_{B}\tens T),\quad \text{for all } \xi\in\s E,\, k\in\K(K) \text{ and } T\in\K(H,K).
\]
The operator $\widetilde{F}$ will be referred to as the operator $F$ acting on ${\s E}\tens\K(H,K)$ by factorization and will sometimes be simply denoted by $F$. Furthermore, for all $F\in\K(\s E\tens\K(K))$ we have $\widetilde{F}\in\K(\s E\tens\K(H,K))$.
\end{propdef}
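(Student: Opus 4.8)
The plan is to produce $\widetilde F$ by transporting $F$ through canonical identifications of operator algebras, and then to read off the displayed formula by testing on rank-one operators.

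First I would record the relevant identifications. Equipped with the $\K(H)$-valued inner product $\langle S,T\rangle:=S^{*}T$ and the right action by composition, $\K(H,K)$ is a Hilbert $\K(H)$-module, and the rank-one operator $\theta_{S,T}$ on it acts by left multiplication by $ST^{*}\in\K(K)$; since $[\{ST^{*}\,;\,S,T\in\K(H,K)\}]=\K(K)$, this gives a canonical identification $\K(\K(H,K))=\K(K)$, with $\K(K)$ acting on $\K(H,K)$ by left multiplication. Using the standard fact $\K(\s E_{1}\tens\s E_{2})=\K(\s E_{1})\tens\K(\s E_{2})$ for external tensor products, we obtain
\[
\K(\s E\tens\K(K))=\K(\s E)\tens\K(K)=\K(\s E)\tens\K(\K(H,K))=\K(\s E\tens\K(H,K)),
\]
and, passing to multiplier algebras, $\Lin(\s E\tens\K(K))=\M(\K(\s E)\tens\K(K))=\Lin(\s E\tens\K(H,K))$. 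I would then define $\widetilde F$ to be the image of $F$ under this chain of isomorphisms. By construction $F\mapsto\widetilde F$ is an isomorphism of C*-algebras carrying $\K(\s E\tens\K(K))$ onto $\K(\s E\tens\K(H,K))$, which already yields the last assertion of the statement.

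Next I would check the displayed formula and uniqueness. The vectors $\xi\tens kT$ ($\xi\in\s E$, $k\in\K(K)$, $T\in\K(H,K)$) span a dense subspace of $\s E\tens\K(H,K)$, since $[\K(K)\K(H,K)]=\K(H,K)$ (an approximate unit of $\K(K)$ converges strictly to $1_{K}$) and $\s E\odot\K(H,K)$ is dense in $\s E\tens\K(H,K)$; as adjointable operators are bounded, agreement on this span forces equality, so the formula (if valid) determines $\widetilde F$, proving uniqueness. For the formula itself, I would note that under the identifications above an element $\theta_{\xi_{0},\xi_{1}}\tens m$ of $\K(\s E)\tens\K(K)$ acts on $\s E\tens\K(H,K)$ by $\xi''\tens S\mapsto\xi_{0}\langle\xi_{1},\xi''\rangle\tens mS$, so a direct computation gives $\widetilde F(\xi\tens kT)=\xi_{0}\langle\xi_{1},\xi\rangle\tens m(kT)=F(\xi\tens k)(1_{B}\tens T)$, where the right-hand side is read through the bounded linear map $\iota_{T}\colon\s E\tens\K(K)\to\s E\tens\K(H,K)$ determined by $\iota_{T}(\zeta\tens k')=\zeta\tens(k'T)$ — this is the (mildly abusive) meaning of ``$\cdot(1_{B}\tens T)$'' in the statement, and its boundedness is most transparently seen by realizing $\iota_{T}(\zeta)$ as $\zeta\tens_{B\tens\K(K)}(1_{B}\tens T)$ inside $(\s E\tens\K(K))\tens_{B\tens\K(K)}(B\tens\K(H,K))$. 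By linearity and continuity the formula extends to all $F\in\K(\s E)\tens\K(K)$; for general $F\in\Lin(\s E\tens\K(K))=\M(\K(\s E)\tens\K(K))$ I would pick an approximate unit $(e_{\lambda})$ of $\K(\s E)\tens\K(K)$, so that $Fe_{\lambda}\to F$ strictly and hence $\widetilde F(\xi\tens kT)=\lim_{\lambda}\widetilde{Fe_{\lambda}}(\xi\tens kT)=\lim_{\lambda}(Fe_{\lambda})(\xi\tens k)(1_{B}\tens T)=F(\xi\tens k)(1_{B}\tens T)$, using continuity of $\iota_{T}$ in the last step.

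I do not expect a genuine conceptual obstacle; the work lies entirely in keeping the three identifications — $\K(\K(H,K))=\K(K)$, $\K(\s E_{1}\tens\s E_{2})=\K(\s E_{1})\tens\K(\s E_{2})$, and passage to multipliers — mutually coherent, and in verifying that the operator produced by this abstract route is exactly the one described by the explicit formula (in particular, giving ``$\cdot(1_{B}\tens T)$'' an unambiguous, bounded meaning). An alternative route that I could use instead, and which makes the formula immediate, is to build $\widetilde F$ as the transport of $F\tens_{B\tens\K(K)}1$ through the canonical unitary from $(\s E\tens\K(K))\tens_{B\tens\K(K)}(B\tens\K(H,K))$ onto $\s E\tens\K(H,K)$ sending $(\xi\tens k)\tens(b\tens T)$ to $\xi b\tens kT$; here compactness of $\widetilde F$ for $F\in\K(\s E\tens\K(K))$ follows because the structure homomorphism $B\tens\K(K)\to\Lin(B\tens\K(H,K))$ takes values in $\K(B\tens\K(H,K))$, again by $\K(\K(H,K))=\K(K)$.
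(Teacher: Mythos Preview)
Your argument is correct, but it takes a different route from the paper. The paper proceeds by first showing directly that the formula is well defined: writing $\xi=\eta b$ and using an approximate unit $(u_i)$ of $\K(K)$, one computes $F(\xi\tens k)(1_B\tens T)=\lim_i F(\eta\tens u_i)(b\tens kT)$, which depends only on $kT$; adjointability and the fact that $F\mapsto\widetilde F$ is a *-homomorphism then follow by inspection, and compactness is obtained by checking that $\theta_{\xi\tens k,\,\eta\tens T_1T_2^*}$ is sent to $\theta_{\xi\tens kT_2,\,\eta\tens T_1}$. Your route instead constructs $\widetilde F$ abstractly via the chain $\Lin(\s E\tens\K(K))=\M(\K(\s E)\tens\K(K))=\M(\K(\s E)\tens\K(\K(H,K)))=\Lin(\s E\tens\K(H,K))$, and then verifies the formula on rank-one operators, extending by strict continuity. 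Your approach yields for free that $F\mapsto\widetilde F$ is a *-isomorphism (not merely a *-homomorphism) and makes the compactness statement immediate; the paper's approach is more elementary, needing neither the identification $\K(\s E_1\tens\s E_2)=\K(\s E_1)\tens\K(\s E_2)$ nor the passage to multipliers. One minor point: your identification $\K(\K(H,K))=\K(K)$ tacitly assumes $H\neq 0$; the degenerate case $H=0$ is of course trivial, and the paper makes the same assumption explicit when proving the compactness assertion.
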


\begin{proof}
We have $\K(H,K)=\K(K)\K(H,K)$. Indeed, $\K(H,K)$ is Hilbert $\K(K)$-module under the natural left action of $\K(K)$ by composition and the $\K(K)$-valued inner product defined by $\langle T,\, S\rangle:=T\circ S^*$ for $T,S\in\K(H,K)$. Let $(u_i)$ be an approximate unit of the C*-algebra $\K(K)$. Let us fix $\xi\in\s E$ and write $\xi=\eta b$ with $\eta\in\s E$ and $b\in B$. For all $k\in\K(K)$ and $T\in\K(H,K)$, we have
\[
F(\xi\tens k)(1_{B}\tens T)={\rm lim}_i\, F(\eta b\tens u_ik)(1_B\tens T)={\rm lim}_i\, F(\eta\tens u_i)(b\tens kT).
\]
Hence, $F(\xi\tens k)(1_{B}\tens T)=F(\xi\tens k')(1_{B}\tens T')$ for all $k,k'\in\K(K)$ and $T,T'\in\K(H,K)$ such that $kT=k'T'$. Thus, the map $\widetilde{F}$ is well defined. Moreover, it is easily seen that $\widetilde{F}\in\Lin({\s E}\tens\K(H,K))$ with $\widetilde{F}^*=\widetilde{F^*}$. Note also that the map $F\mapsto\widetilde{F}$ is a *-homomorphism. If the Hilbert space $H$ is nonzero, we have $\K(K)=[\K(H,K)\K(K,H)]$. However, for all $\xi,\eta\in\s E$ and $T_1,T_2\in\K(H,K)$ the image of $\theta_{\xi\tens k,\,\eta\tens T_1^{\vphantom{*}}T_2^*}$ by the *-homomorphism $F\mapsto\widetilde{F}$ is the operator $\theta_{\xi\tens kT_2,\,\eta\tens T_1}$.
\end{proof}

\begin{lem}\label{lem28}
Let $A$ and ${B}$ be two C*-algebras. Let $H$ and $K$ be two Hilbert spaces. Let ${\s E}$ be a Hilbert $A$-${B}$-bimodule. We consider the Hilbert $A\tens\K(K)$-${B}\tens\K(K)$-bimodule ${\s E}\tens\K(K)$ and the Hilbert $A\tens\K(K)$-${B}\tens\K(H)$-bimodule ${\s E}\tens\K(H,K)$. Let $F\in\Lin(\s E\tens\K(K))$ such that the pair $({\s E}\tens\K(K),F)$ is a Kasparov bimodule. If $\widetilde{F}\in\Lin({\s E}\tens\K(H,K))$ denotes the operator $F$ acting on ${\s E}\tens\K(H,K)$ by factorization, then the pair $({\s E}\tens\K(H,K),\widetilde{F})$ is a Kasparov bimodule.
\end{lem}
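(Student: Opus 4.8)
The plan is to deduce the three conditions of Definition~\ref{defkaspbimod} for $(\s E\tens\K(H,K),\widetilde F)$ directly from those for $(\s E\tens\K(K),F)$, by transporting them through the $*$-homomorphism $F\mapsto\widetilde F$ of \ref{defOpActFact}. First, if $H=0$ or $K=0$ then $\s E\tens\K(H,K)=0$ and there is nothing to prove, so assume $H,K\neq 0$. Write $\gamma$ for the left action of $A$ on $\s E$, $\gamma_0:A\tens\K(K)\rightarrow\Lin(\s E\tens\K(K))$ and $\widetilde\gamma:A\tens\K(K)\rightarrow\Lin(\s E\tens\K(H,K))$ for the canonical left actions of $A\tens\K(K)$. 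I would begin by recording two facts about the map $F\mapsto\widetilde F$: by \ref{defOpActFact} it is a $*$-homomorphism that maps $\K(\s E\tens\K(K))$ into $\K(\s E\tens\K(H,K))$, and it is moreover \emph{unital}, since the defining formula gives $\widetilde{1}(\xi\tens kT)=(\xi\tens k)(1_B\tens T)=\xi\tens kT$ while $\{\xi\tens kT\,;\,\xi\in\s E,\,k\in\K(K),\,T\in\K(H,K)\}$ spans a dense subspace of $\s E\tens\K(H,K)$ (because $\K(H,K)=\K(K)\K(H,K)$, as established in the proof of \ref{defOpActFact}).

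The key point is the identification $\widetilde\gamma(x)=\widetilde{\gamma_0(x)}$ for every $x\in A\tens\K(K)$. Both $x\mapsto\widetilde\gamma(x)$ and $x\mapsto\widetilde{\gamma_0(x)}$ are $*$-homomorphisms $A\tens\K(K)\rightarrow\Lin(\s E\tens\K(H,K))$, so it suffices to check equality on simple tensors $a\tens k_0$; applying the defining formula for $\widetilde{\cdot}$ together with the fact that the left action of $\K(K)$ on $\K(H,K)$ is composition, one gets $\widetilde{\gamma_0(a\tens k_0)}(\xi\tens kT)=\gamma(a)\xi\tens k_0(kT)=\widetilde\gamma(a\tens k_0)(\xi\tens kT)$, and since the simple tensors generate $A\tens\K(K)$ as a $*$-algebra the two maps coincide.

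Granting these, the verification is immediate: for $x\in A\tens\K(K)$ one has $[\widetilde\gamma(x),\,\widetilde F]=\widetilde{[\gamma_0(x),\,F]}$, $\widetilde\gamma(x)(\widetilde F^2-1)=\widetilde{\gamma_0(x)(F^2-1)}$ and $\widetilde\gamma(x)(\widetilde F-\widetilde F^*)=\widetilde{\gamma_0(x)(F-F^*)}$, using that $F\mapsto\widetilde F$ is a unital $*$-homomorphism and the previous identification. Since $(\s E\tens\K(K),F)$ is a Kasparov bimodule, the three operators $[\gamma_0(x),\,F]$, $\gamma_0(x)(F^2-1)$ and $\gamma_0(x)(F-F^*)$ belong to $\K(\s E\tens\K(K))$, hence their images belong to $\K(\s E\tens\K(H,K))$, which gives all three conditions of Definition~\ref{defkaspbimod}. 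It remains to note that $\s E\tens\K(H,K)$ is countably generated: $\K(\s E\tens\K(K))=\K(\s E)\tens\K(K)$ is $\sigma$-unital (being the compacts of a countably generated module), which forces $\K(\s E)$ to be $\sigma$-unital, i.e.\ $\s E$ countably generated, and then $\s E\tens\K(H,K)$ is the external tensor product of the countably generated $\s E$ with the Hilbert $\K(H)$-module $\K(H,K)$, which is countably generated since $H$ is separable. No serious difficulty arises here; the argument is essentially bookkeeping with external tensor products and left module structures, the only points requiring a little care being the unitality of $F\mapsto\widetilde F$ and its compatibility $\widetilde\gamma(x)=\widetilde{\gamma_0(x)}$ with the left actions.
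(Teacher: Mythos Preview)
Your proof is correct and follows the same approach as the paper's: identify the left action on $\s E\tens\K(H,K)$ with the factorization of the left action on $\s E\tens\K(K)$, and then transport the Kasparov conditions through the $*$-homomorphism $F\mapsto\widetilde F$ of \ref{defOpActFact}, using that it sends compacts to compacts. The paper's own proof is a two-line sketch that simply asserts these facts; you supply the details the paper omits, in particular the unitality of $F\mapsto\widetilde F$ (which is indeed needed to write $\widetilde F^2-1=\widetilde{F^2-1}$) and the countable generation of $\s E\tens\K(H,K)$.
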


\begin{proof}
Let $\gamma:A\rightarrow\Lin(\s E)$ be the left action of $A$ on $\s E$. The left action of $A\tens\K(K)$ on the Hilbert $B\tens\K(K)$-module $\s E\tens\K(K)$ is $\phi:=\gamma\tens\id_{\K(K)}:A\tens\K(K)\rightarrow\Lin(\s E\tens\K(K))$. It is clear that the left action of $A\tens\K(K)$ on the Hilbert $B\tens\K(H)$-module $\s E\tens\K(H,K)$ is the *-representation $\widetilde{\phi}:A\tens\K(K)\rightarrow\Lin(\s E\tens\K(H,K))$, where for $x\in A\tens\K(K)$ the operator $\widetilde{\phi}(x)$ is the operator $\phi(x)$ acting on $\s E\tens\K(H,K)$ by factorization. The proof follows from the last statement of \ref{defOpActFact} and the fact that the factorization map $F\mapsto\widetilde{F}$ is a *-homomorphism.
\end{proof}

In the following, we fix two $\QG_1$-C*-algebras $A_1$ and $B_1$. Let 
\begin{center}
$A_2:=\ind(A_1)$ \quad and \quad $B_2:=\ind(B_1)$
\end{center} 
be the induced $\QG_2$-C*-algebras. We also fix a $\QG_1$-equivariant Hilbert $A_1$-$B_1$-bimodule $(\s E_1,\gamma_1)$ (with a non-degenerate left action) and denote by 
\begin{center}
$(\s E_2,\gamma_2):=(\ind(\s E_1),\ind\gamma_1)$
\end{center}
the induced $\QG_2$-equivariant Hilbert $A_2$-$B_2$-bimodule.  Let us consider the $\cal G$-C*-algebras $A:=A_1\oplus A_2$ and $B:=B_1\oplus B_2$. We also equip the Hilbert C*-module $\s E:=\s E_1\oplus \s E_2$ with the structure of $\cal G$-equivariant Hilbert $A$-$B$-bimodule defined by the action $(\beta_{\s E},\delta_{\s E})$ of $\cal G$ (cf.\ \ref{prop16}) and the equivariant *-representation $\gamma:A\rightarrow\Lin(\s E)$ (cf.\ \ref{def10}). In what follows, we will make some obvious identifications without always mentioning them explicitly, {\it e.g.}\ $A_1=A_1\oplus\{0\}$. We will also use the notations and results of \S \ref{sectionDoubleCrossedProduct} concerning the objects associated with  $A$, $B$ and $\s E$.

\medbreak

Before recalling the definition of the homomorphisms $J_{\QG_k,\QG_j}:\kk_{\QG_j}(A_j,B_j)\rightarrow\kk_{\QG_k}(A_k,B_k)$ for $j,k=1,2$ with $j\neq k$ (cf.\ \S 4.5 \cite{BC}) we first have to fix some notations.

\begin{nbs} Let $\QG$ be a regular locally compact quantum group. Let ${\cal A}$ be a $\QG$-C*-algebra. By the Baaj-Skandalis duality theorem (cf.\ \cite{BS2}), we identify the $\QG$-C*-algebras $({\cal A}\rtimes\QG)\rtimes\widehat{\QG}$ and ${\cal A}\tens\K({\rm L}^2(\QG))$. Let ${\f b}_{\!\cal A}:=[({\cal A}\tens{\rm L}^2(\QG),0)]\in\kk_{\QG}({\cal A}\tens\K({\rm L}^2(\QG)),{\cal A})$ and ${\f a}_{\!\cal A}:=[({\cal A}\tens{\rm L}^2(\QG)^*,0)]\in\kk_{\cal G}({\cal A},{\cal A}\tens\K({\rm L}^2(\QG)))$ (cf.\ \cite{BS1}). 
\end{nbs} 

\begin{nbs}(cf.\ 3.50 c) and 3.51 \cite{BC})
For all $j,l,l'=1,2$, the Hilbert C*-module ${\cal B}_{ll',j,{\rm g}}$ (resp.\ ${\cal B}_{ll',j,{\rm d}}$) is a $\QG_j$-equivariant imprimitivity ${\cal B}_{l,j,{\rm g}}$-${\cal B}_{l',j,{\rm g}}$-bimodule (resp.\ $\QG_j$-equiva\-riant imprimitivity ${\cal B}_{l,j,{\rm d}}$-${\cal B}_{l',j,{\rm d}}$-bimodule) and we denote by ${\f c}_{ll',j,{\rm g}}$ (resp.\ ${\f c}_{ll',j,{\rm d}}$) the class of $({\cal B}_{ll',j,{\rm g}},0)$ (resp.\ $({\cal B}_{ll',j,{\rm d}},0)$) in $\kk_{\QG_j}({\cal B}_{l,j,{\rm g}},{\cal B}_{l',j,{\rm g}})$ (resp.\ $\kk_{\QG_j}({\cal B}_{l,j,{\rm d}},{\cal B}_{l',j,{\rm d}})$).
\end{nbs}

\begin{prop}(cf.\ 4.30 to 4.33 \cite{BC})
Let $F_1\in\Lin(\s E_1)$ such that the pair $(\s E_1,F_1)$ is $\QG_1$-equivariant Kasparov $A_1$-$B_1$-bimodule. We have:
\begin{enumerate}
\item the pair $({\cal E}_{1,2},(\id_{\K(\s E_2)}\tens R_{21})\delta_{\K(\s E_1)}^2(F_1))$ is a $\QG_1$-equivariant ${\cal B}_{1,2,{\rm g}}$-${\cal B}_{1,2,{\rm d}}$-bimodule (cf.\ \ref{not19} for the definition of $R_{21}$);
\item there exists an operator $F_2\in\Lin(\s E_2)$ such that: 
\smallbreak
a) $(\s E_2,F_2)$ is a $\QG_2$-equivariant Kasparov $A_2$-$B_2$-bimodule,
\smallbreak
b) in $\kk_{\QG_2}(A_2,B_2)$, we have 
\[
{\f b}_{A_2}\!\tens_{A_2}[(\s E_2,F_2)]\tens_{B_2}\!{\f a}_{B_2}={\f c}_{21,2,{\rm g}}\tens_{{\cal B}_{1,2,{\rm g}}}\![({\cal E}_{1,2},(\id_{\K(\s E_2)}\tens R_{21})\delta_{\K(\s E_1)}^2(F_1))]\tens_{{\cal B}_{1,2,{\rm d}}}\!{\f c}_{12,2,{\rm d}}.
\]
\item if $F_2,F_2'\in\Lin(\s E_2)$ satisfy the conditions a) and b) above, then $[(\s E_2,F_2)]=[(\s E_2,F_2')]$ in $\kk_{\QG_2}(A_2,B_2)$.
\end{enumerate}
If $x:=[(\s E_1,F_1)]\!\in\!\kk_{\QG_1}(A_1,B_1)$, let us denote by $J_{\QG_2,\QG_1}(x)$ the unique element $y\!\in\!\kk_{\QG_2}(A_2,B_2)$ satisfying the relation
\[
{\f b}_{A_2}\tens_{A_2}y\tens_{B_2}{\f a}_{B_2}={\f c}_{21,2,{\rm g}}\tens_{{\cal B}_{1,2,{\rm g}}}[({\cal E}_{1,2},(\id_{\K(\s E_2)}\tens R_{21})\delta_{\K(\s E_1)}^2(F_1))]\tens_{{\cal B}_{1,2,{\rm d}}}{\f c}_{12,2,{\rm d}}.
\]
Then, the map $J_{\QG_2,\QG_1}:\kk_{\QG_1}(A_1,B_1)\rightarrow\kk_{\QG_2}(A_2,B_2)$ is a homomorphism of abelian groups.
\end{prop}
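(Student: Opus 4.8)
The plan is to transpose, essentially verbatim, the construction of 4.30--4.33 \cite{BC} from the level of $\cal G$-C*-algebras to that of equivariant Kasparov bimodules, using the biduality machinery of \S\ref{sectionDoubleCrossedProduct} together with the Baaj--Skandalis duality theorem \cite{BS2} and the analogue of Lemma \ref{lem32} for the quantum group $\QG_2$. The organizing principle is that the right-hand side of the defining relation is manifestly well defined: the composite
\[
z_x:={\f c}_{21,2,{\rm g}}\tens_{{\cal B}_{1,2,{\rm g}}}\big[\big({\cal E}_{1,2},\,(\id_{\K(\s E_2)}\tens R_{21})\delta_{\K(\s E_1)}^2(F_1)\big)\big]\tens_{{\cal B}_{1,2,{\rm d}}}{\f c}_{12,2,{\rm d}}
\]
lies in $\kk_{\QG_2}({\cal B}_{2,2,{\rm g}},{\cal B}_{2,2,{\rm d}})$, and since ${\cal B}_{2,2,{\rm g}}=A_2\tens\K(\s H_{22})=A_2\tens\K({\rm L}^2(\QG_2))$ and ${\cal B}_{2,2,{\rm d}}=B_2\tens\K({\rm L}^2(\QG_2))$, the identification $({\cal A}\rtimes\QG_2)\rtimes\widehat{\QG_2}\simeq{\cal A}\tens\K({\rm L}^2(\QG_2))$ together with the fact that $y\mapsto{\f b}_{A_2}\tens_{A_2}y\tens_{B_2}{\f a}_{B_2}$ is an isomorphism of abelian groups (the $\QG_2$-version of Lemma \ref{lem32}, cf.\ \cite{BS1,BS2}) shows that $z_x={\f b}_{A_2}\tens_{A_2}y\tens_{B_2}{\f a}_{B_2}$ for a \emph{unique} $y\in\kk_{\QG_2}(A_2,B_2)$. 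Once parts (1)--(3) are established this $y$ is $J_{\QG_2,\QG_1}(x)$.

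For part (1) I would verify the conditions of Definition \ref{defkaspbimod} for the pair $\big({\cal E}_{1,2},(\id_{\K(\s E_2)}\tens R_{21})\delta_{\K(\s E_1)}^2(F_1)\big)$ directly from the hypothesis that $(\s E_1,\gamma_1,F_1)$ is a $\QG_1$-equivariant Kasparov $A_1$-$B_1$-bimodule. The point is that the faithful structure maps $\delta_{\K(\s E_1)}^2$, $\delta_{A_1}^2$, $\delta_{B_1}^2$ of \ref{propdef4}/\ref{prop13} transport the compactness relations $[\gamma_1(a),F_1]\in\K(\s E_1)$, $\gamma_1(a)(F_1^2-1)\in\K(\s E_1)$, $\gamma_1(a)(F_1-F_1^*)\in\K(\s E_1)$ faithfully, that $R_{21}\colon S_{21}\to\B(\s H_{12})$ is a faithful $*$-homomorphism, and that tensoring with $\K(\s H_{12})$ preserves the Kasparov property by Lemma \ref{lem28} (operators acting by factorization); composing these gives the corresponding relations on ${\cal E}_{1,2}$. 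The remaining equivariance conditions are automatic: by construction the operator $(\id_{\K(\s E_2)}\tens R_{21})\delta_{\K(\s E_1)}^2(F_1)$ is invariant for the relevant action, so conditions 2 and 3 of Definition \ref{defEqkaspbimod} hold by Remark \ref{rk13}(3).

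For part (2) I would extract $F_2$ by unwinding the Morita equivalences ${\f c}_{21,2,\bullet}$ and ${\f c}_{12,2,\bullet}$ through the explicit description of the double crossed product in \S\ref{sectionDoubleCrossedProduct}: there $\s D_2=\s E_2\tens\K(\s H_{12}\oplus\s H_{22})$, the corner $\s E_2\tens\K(\s H_{22})$ is identified with $(\s E_2\rtimes\QG_2)\rtimes\widehat{\QG_2}$ via biduality (Theorem \ref{BidualityTheo}, resp.\ \cite{BS2}), and the off-diagonal corners realize ${\f c}_{21,2,\bullet}$ and ${\f c}_{12,2,\bullet}$. Running the connection constructions of Lemmas \ref{lem13} and \ref{lem16} backwards along this chain, $z_x$ is rewritten as ${\f b}_{A_2}\tens_{A_2}[(\s E_2,F_2)]\tens_{B_2}{\f a}_{B_2}$ for an operator $F_2\in\Lin(\s E_2)$, and the same chain shows that $(\s E_2,F_2)$ is a $\QG_2$-equivariant Kasparov $A_2$-$B_2$-bimodule (the Kasparov conditions are preserved and reflected by the descent--biduality chain, using regularity of $\QG_2$), which gives (a), while (b) holds by construction. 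Part (3) is then formal: if $F_2$ and $F_2'$ both satisfy (a) and (b), the classes ${\f b}_{A_2}\tens_{A_2}[(\s E_2,F_2)]\tens_{B_2}{\f a}_{B_2}$ and ${\f b}_{A_2}\tens_{A_2}[(\s E_2,F_2')]\tens_{B_2}{\f a}_{B_2}$ both equal $z_x$, hence coincide, and applying the inverse of $y\mapsto{\f b}_{A_2}\tens_{A_2}y\tens_{B_2}{\f a}_{B_2}$ forces $[(\s E_2,F_2)]=[(\s E_2,F_2')]$ in $\kk_{\QG_2}(A_2,B_2)$. That $J_{\QG_2,\QG_1}$ is well defined on $\kk_{\QG_1}(A_1,B_1)$ (independent of the representative $(\s E_1,F_1)$) follows since $\delta_{\K(\s E_1)}^2$, $R_{21}$ and $-\tens\K(\s H_{12})$ are compatible with $\QG_1$-equivariant unitary equivalence and homotopy --- the argument of Theorem \ref{descent1}(1) adapts verbatim --- and additivity under direct sums is immediate, so $J_{\QG_2,\QG_1}$ is a homomorphism of abelian groups.

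The main obstacle is part (2): producing $F_2$ on the module $\s E_2$ \emph{itself}, and not merely up to a degenerate summand, which requires keeping track of the underlying Hilbert module through the composite Kasparov product $z_x$ and through the biduality isomorphism simultaneously --- this is where the concrete description of $\s D$ in \S\ref{sectionDoubleCrossedProduct} does the real work. A secondary technical point is the bookkeeping in part (1), where $\delta_{\K(\s E_1)}^2$ and $R_{21}$ must be manipulated inside the relative-multiplier formalism of \S\ref{sectionNotations} and \ref{propdef4}.
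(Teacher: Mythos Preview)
The paper gives no proof of this proposition: it is stated with the citation ``(cf.\ 4.30 to 4.33 \cite{BC})'' and is a recall of the construction from \cite{BC}; the next paragraph moves directly to defining $J_{\QG_1,\QG_2}$. So there is no in-paper argument against which to compare your proposal.

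That said, your sketch is broadly in line with how the result is obtained in \cite{BC} and with the surrounding machinery in the present paper. In particular, the verification you propose for part (1) is essentially carried out later in this paper as Lemma~\ref{lem29} (for the non-equivariant Kasparov conditions) and the paragraph following it (for invariance, hence the equivariant conditions via Remark~\ref{rk13}(3)); so you could simply cite those. Your argument for (3) via the $\QG_2$-analogue of Lemma~\ref{lem32} is exactly right and is the intended mechanism. For (2), the honest content of the \cite{BC} argument is closer to what the present paper does in Proposition~\ref{prop28}: one does not literally ``run the biduality chain backwards'' to manufacture $F_2$ on $\s E_2$; rather one shows that the right-hand side lands in the image of the Morita isomorphism and then \emph{chooses} any representative $(\s E_2,F_2)$ of the resulting class, with (3) guaranteeing independence of that choice. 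Your phrasing suggests something more constructive than is actually needed (or done), and your ``main obstacle'' is therefore less severe than you fear: one does not need $F_2$ on $\s E_2$ up to unitary equivalence, only up to homotopy and degenerate summands, and the isomorphism $y\mapsto{\f b}_{A_2}\tens y\tens{\f a}_{B_2}$ already delivers that.
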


In order to define the homomorphism $J_{\QG_1,\QG_2}:\kk_{\QG_2}(A_2,B_2)\rightarrow\kk_{\QG_1}(A_1,B_1)$, we first need to fix further objects.

\medbreak

We consider the induced $\QG_1$-C*-algebras
\begin{center}
$A_1':=\iind(A_2)$ \quad and \quad $B_1':=\iind(B_2)$.
\end{center}
We denote by $\s E'_1:=\iind(\s E_2)$ the induced $\QG_1$-equivariant Hilbert $A_1'$-$B_1'$-bimodule. Let us consider the $\cal G$-C*-algebras $A':=A_1'\oplus A_2$ and $B':=B_1'\oplus B_2$. We also equip the Hilbert C*-module $\s E':=\s E'_1\oplus \s E_2$ with the structure of $\cal G$-equivariant Hilbert $A'$-$B'$-bimodule defined by the action $(\beta_{\s E'},\delta_{\s E'})$ of $\cal G$ (cf.\ \ref{prop16}) and the left action $\gamma':A'\rightarrow\Lin(\s E')$ (cf.\ \ref{def10}). We will use the notations of \S \ref{sectionDoubleCrossedProduct} decorated with a prime concerning the objects associated with $A'$, $B'$ and $\s E'$.

\begin{nbs}
Let $\pi_{1,{\rm g}}:A_1\rightarrow A_1'$ and $\pi_{1,{\rm d}}:B_1\rightarrow B_1'$ be the $\QG_1$-equivariant *-isomorphisms defined in \ref{propind1}. We recall that we have $\cal G$-equivariant *-isomorphisms $A\rightarrow A',\, (a_1,a_2)\mapsto(\pi_{1,{\rm g}}(a_1),a_2)$ and $B\rightarrow B',\, (b_1,b_2)\mapsto(\pi_{1,{\rm d}}(b_1),b_2)$ (cf.\ \S 4.1 \cite{BC}), which then induce a $\cal G$-equivariant *-isomorphism between the bidual $\cal G$-C*-algebras associated with $A$ and $A'$ (resp.\ $B$ and $B'$) by applying the functoriality of the crossed product and the biduality theorem (cf.\ \ref{FunctorCrossedProd}, \ref{FunctorCrossedProdBis}, \ref{IsoTT} and \ref{BidualityTheo}). By restriction, for all $j,l=1,2$ we have two $\QG_j$-equivariant *-isomorphisms $f_{l,j,{\rm g}}:{\cal B}_{l,j,{\rm g}}\rightarrow{\cal B}'_{l,j,{\rm g}}$ and $f_{l,j,{\rm d}}:{\cal B}_{l,j,{\rm d}}\rightarrow{\cal B}'_{l,j,{\rm g}}$.
\end{nbs}

\begin{propdef}
Let $F_2\in\Lin(\s E_2)$ such that the pair $(\s E_2,F_2)$ is a $\QG_2$-equivariant Kasparov $A_2$-$B_2$-bimodule. Let $y:=[(\s E_2,F_2)]\in\kk_{\QG_2}(A_2,B_2)$. Let $J_{\QG_1,\QG_2}(y)$ be the unique element $x\in\kk_{\QG_1}(A_1,B_1)$ satisfying the relation
\[
{\f b}_{A_1}\!\tens_{A_1}\!x\tens_{B_1}\!{\f a}_{B_1}={\f c}_{12,1,{\rm g}}\tens_{{\cal B}_{2,1,{\rm g}}}\![f_{2,1,{\rm g}}]\tens_{{\cal B}'_{2,1,{\rm g}}}\![({\cal E}'_{2,1},(\id_{\K(\s E_1')}\tens R_{12})\delta_{\K(\s E_2)}^1(F_2))]\tens_{{\cal B}'_{2,1,{\rm d}}}\![f_{2,1,{\rm d}}^{-1}].
\]
Then, the map $J_{\QG_1,\QG_2}:\kk_{\QG_2}(A_2,B_2)\rightarrow\kk_{\QG_1}(A_1,B_1)$ is a homomorphism of abelian groups.
\end{propdef}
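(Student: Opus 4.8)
The plan is to mirror, under the exchange $\QG_1\leftrightarrow\QG_2$, the construction of $J_{\QG_2,\QG_1}$ given in the preceding proposition (modelled on §4.5 of \cite{BC}), the only new ingredient being the $\QG_1$-equivariant $*$-isomorphisms $f_{2,1,{\rm g}}$ and $f_{2,1,{\rm d}}$ relating the bidual pieces of $A$ (resp.\ $B$) to those of $A'=\iind(A_2)\oplus A_2$ (resp.\ $B'=\iind(B_2)\oplus B_2$). The first task is to check that the right-hand side of the defining relation is a bona fide Kasparov product, whose successive Morita and isomorphism classes compose (their sources and targets matching as in the formula for $J_{\QG_2,\QG_1}$), landing in $\kk_{\QG_1}({\cal B}_{1,1,{\rm g}},{\cal B}_{1,1,{\rm d}})$. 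For the middle factor $[({\cal E}'_{2,1},(\id_{\K(\s E_1')}\tens R_{12})\delta_{\K(\s E_2)}^1(F_2))]$ one uses the identification $\K(\s E_1')=\iind(\K(\s E_2))$ and the bimodule structures of Proposition \ref{prop11} together with the description of the double crossed product in §\ref{sectionDoubleCrossedProduct} to see that ${\cal E}'_{2,1}$ is a $\QG_1$-equivariant Hilbert ${\cal B}'_{2,1,{\rm g}}$-${\cal B}'_{2,1,{\rm d}}$-bimodule, while the fact that the operator satisfies the conditions of \ref{defEqkaspbimod} is inherited from $(\s E_2,F_2)\in{\sf E}_{\QG_2}(A_2,B_2)$: the operator-valued relations are transported through the $*$-homomorphism $\delta_{\K(\s E_2)}^1$ (this is where condition~3 of \ref{defEqkaspbimod} and the coassociativity relations of Proposition \ref{prop5bis} are used) and through the representation $R_{12}$, and $\QG_1$-equivariance follows as in Lemma \ref{lem30}.

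Next I would establish existence and uniqueness of $x$. Since $\s H_{11}={\rm L}^2(\QG_1)$ one has ${\cal B}_{1,1,{\rm g}}=A_1\tens\K$ and ${\cal B}_{1,1,{\rm d}}=B_1\tens\K$, and by the Baaj--Skandalis biduality theorem for $\QG_1$ — the quantum-group analogue of Lemma \ref{lem32}, namely ${\f b}_{\cal A}\tens_{\cal A}{\f a}_{\cal A}=1_{{\cal A}\tens\K}$ and ${\f a}_{\cal A}\tens_{{\cal A}\tens\K}{\f b}_{\cal A}=1_{\cal A}$ (cf.\ \cite{BS1,BS2}) — the map $x\mapsto{\f b}_{A_1}\tens_{A_1}x\tens_{B_1}{\f a}_{B_1}$ is an isomorphism of abelian groups $\kk_{\QG_1}(A_1,B_1)\to\kk_{\QG_1}({\cal B}_{1,1,{\rm g}},{\cal B}_{1,1,{\rm d}})$. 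Hence there is exactly one $x$ satisfying the stated relation, namely the image of the right-hand side under ${\f a}_{A_1}\tens(-)\tens{\f b}_{B_1}$, and we set $J_{\QG_1,\QG_2}(y):=x$.

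Then I would verify that $x$ is independent of the chosen representative $(\s E_2,F_2)$ of $y\in\kk_{\QG_2}(A_2,B_2)$. The assignment $(\s E_2,F_2)\mapsto({\cal E}'_{2,1},(\id_{\K(\s E_1')}\tens R_{12})\delta_{\K(\s E_2)}^1(F_2))$ clearly respects unitary equivalence, and it respects homotopies because induction commutes with the functor $(-)[0,1]$ — the module counterpart of Lemma \ref{lem17}, i.e.\ $\iind(\s E_2[0,1])=\iind(\s E_2)[0,1]$ — so that a homotopy in ${\sf E}_{\QG_2}(A_2,B_2[0,1])$ is carried to a homotopy in ${\sf E}_{\QG_1}({\cal B}'_{2,1,{\rm g}},{\cal B}'_{2,1,{\rm d}}[0,1])$; since the Kasparov product with the fixed classes ${\f c}_{12,1,{\rm g}}$, $[f_{2,1,{\rm g}}]$, $[f_{2,1,{\rm d}}^{-1}]$ preserves the homotopy relation, the two right-hand sides coincide in $\kk_{\QG_1}({\cal B}_{1,1,{\rm g}},{\cal B}_{1,1,{\rm d}})$, hence so do the two candidate $x$'s. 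Additivity of $J_{\QG_1,\QG_2}$ then follows from the compatibility of the construction with direct sums (Lemma \ref{lem18}, Proposition-Definition \ref{propdef10}) and the bilinearity of the Kasparov product.

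The genuinely delicate step — the rest being bookkeeping identical, after the swap $\QG_1\leftrightarrow\QG_2$, to the proof of the preceding proposition — is verifying that $(\id_{\K(\s E_1')}\tens R_{12})\delta_{\K(\s E_2)}^1(F_2)$ defines a $\QG_1$-equivariant Kasparov bimodule: this amounts to reconciling the $\QG_2$-equivariance of $(\s E_2,F_2)$, attached to the ``second leg'', with the $\QG_1$-structure produced by induction, attached to the ``first leg'', which is precisely what the structure theory of Lemma \ref{lem30} and the pentagon/coassociativity relations of Propositions \ref{prop34} and \ref{prop5bis} are designed to handle, and where regularity of $\QG_1,\QG_2$ — equivalently of $\cal G$ by Theorem \ref{theo7} — enters. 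The insertion of $[f_{2,1,{\rm g}}]$ and $[f_{2,1,{\rm d}}^{-1}]$ is forced only because $\iind(A_2)$ need not coincide on the nose with the $\QG_1$-part $A_1$ of the $\cal G$-algebra $A$, these isomorphisms serving to make the relevant biduality square commute.
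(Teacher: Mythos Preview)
Your approach is correct and matches the paper's. The paper gives no explicit proof for this Proposition-Definition: it is stated immediately after the analogous result for $J_{\QG_2,\QG_1}$ (itself attributed to 4.30--4.33 of \cite{BC}), with the intervening text setting up the primed objects $A',B',\s E'$ and the isomorphisms $f_{l,j,{\rm g}},f_{l,j,{\rm d}}$ precisely so that the construction can be mirrored under the swap $\QG_1\leftrightarrow\QG_2$. Your sketch spells out exactly this implicit argument --- well-definedness of the middle Kasparov bimodule, existence and uniqueness of $x$ via the Baaj--Skandalis biduality isomorphism $x\mapsto{\f b}_{A_1}\tens_{A_1}x\tens_{B_1}{\f a}_{B_1}$, homotopy invariance through functoriality of induction, and additivity --- and correctly isolates the role of $[f_{2,1,{\rm g}}]$, $[f_{2,1,{\rm d}}^{-1}]$ as the bridge between the bidual pieces of $A,B$ and those of $A',B'$.

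One small observation: when you trace sources and targets of the right-hand side, you should notice that the displayed formula in the paper appears to be missing a final factor $\tens_{{\cal B}_{2,1,{\rm d}}}{\f c}_{21,1,{\rm d}}$ (compare with the shape of the $J_{\QG_2,\QG_1}$ formula, which ends with ${\f c}_{12,2,{\rm d}}$); without it the right-hand side lands in $\kk_{\QG_1}({\cal B}_{1,1,{\rm g}},{\cal B}_{2,1,{\rm d}})$ rather than $\kk_{\QG_1}({\cal B}_{1,1,{\rm g}},{\cal B}_{1,1,{\rm d}})$. Your claim that the product lands in $\kk_{\QG_1}({\cal B}_{1,1,{\rm g}},{\cal B}_{1,1,{\rm d}})$ is the intended one, but it presupposes this correction.
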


\begin{lem}\label{lem19} For $j=1,2$, let $F_j\in\Lin(\s E_j)$. Let $F:=F_1\oplus F_2\in\Lin(\s E)$. The pair $(\s E,\gamma)$ is a Kasparov $A$-$B$-bimodule if, and only if, the pair $(\s E_j,F_j)$ is a Kasparov $A_j$-$B_j$-bimodule for $j=1,2$.
\end{lem}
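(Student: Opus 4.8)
The plan is to reduce everything to the manifest block‑diagonality of all the data relative to the orthogonal decompositions $A=A_1\oplus A_2$, $B=B_1\oplus B_2$ and $\s E=\s E_1\oplus\s E_2$. First I would record the structural facts already at hand: by construction (cf.\ \ref{def10}) the left action satisfies $\gamma(a_1,a_2)=\gamma_1(a_1)\oplus\gamma_2(a_2)$; the operator $F=F_1\oplus F_2$ is block‑diagonal by hypothesis; and $\s E_j$ is at once a Hilbert $A_j$‑module (via $\gamma_j$) and a Hilbert $B_j$‑module (cf.\ \ref{prop16}). The one point deserving a separate remark is the identification $\K(\s E)=\K(\s E_1)\oplus\K(\s E_2)$: since the ideals $B_1,B_2$ are orthogonal one has $\s E_1B_2=\s E_2B_1=0$ (from $\langle\xi b,\xi b\rangle=b^*\langle\xi,\xi\rangle b$) and $\langle\s E_1,\s E_2\rangle=0$, so there are no off‑diagonal rank‑one operators; this is the identification already used in the proof of \ref{lem30}.

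Granting these identifications, I would observe that each of the three relations defining a Kasparov $A$‑$B$‑bimodule (\ref{defkaspbimod}, display (\ref{eq20})) is block‑diagonal: for $a=(a_1,a_2)\in A$,
\[
[\gamma(a),F]=[\gamma_1(a_1),F_1]\oplus[\gamma_2(a_2),F_2],\qquad
\gamma(a)(F^2-1)=\gamma_1(a_1)(F_1^2-1)\oplus\gamma_2(a_2)(F_2^2-1),
\]
and likewise $\gamma(a)(F-F^*)=\gamma_1(a_1)(F_1-F_1^*)\oplus\gamma_2(a_2)(F_2-F_2^*)$. An element of $\K(\s E_1)\oplus\K(\s E_2)=\K(\s E)$ is compact iff each of its two components is, and $a_1,a_2$ range independently over $A_1,A_2$; hence the three relations hold for all $a\in A$ iff they hold for all $a_j\in A_j$, for each of $j=1,2$.

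It then remains to handle the countable generation requirement, namely that $\s E$ is countably generated over $B$ precisely when $\s E_1$ and $\s E_2$ are countably generated over $B_1$ and $B_2$ respectively. This I would obtain by applying the central projections $q_{\s E,j}=\beta_{\s E}(\varepsilon_j)$ to a countable generating set of $\s E$ to produce a countable generating set of $\s E_j$ (using the compatibility of $q_{\s E,j}$ with the right $B$‑action), and conversely by taking the union of countable generating sets of $\s E_1$ and $\s E_2$. Combining this with the previous paragraph yields the stated equivalence. I do not expect a genuine obstacle: the lemma is a formal consequence of the direct‑sum decomposition, the only slightly non‑obvious ingredient being $\K(\s E)=\K(\s E_1)\oplus\K(\s E_2)$, which is settled as above.
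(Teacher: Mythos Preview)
Your proposal is correct and follows essentially the same route as the paper: the paper's proof simply writes out the three block-diagonal identities $[\gamma(a),F]=\oplus_j[\gamma_j(a_j),F_j]$, $\gamma(a)(F^2-1)=\oplus_j\gamma_j(a_j)(F_j^2-1)$, $\gamma(a)(F-F^*)=\oplus_j\gamma_j(a_j)(F_j-F_j^*)$ and invokes $\K(\s E)=\K(\s E_1)\oplus\K(\s E_2)$. Your extra paragraph on countable generation is not needed here, since in the standing hypotheses of the section $(\s E_1,\gamma_1)$ is a $\QG_1$-equivariant Hilbert $A_1$-$B_1$-bimodule and $(\s E_2,\gamma_2)=\ind(\s E_1,\gamma_1)$ (cf.\ \ref{defbimod}, \ref{def5}), so $\s E_1$, $\s E_2$ and hence $\s E=\s E_1\oplus\s E_2$ are already countably generated.
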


\begin{proof}
For all $a=(a_1,a_2)\in A$, we have the relations $[\gamma(a),\, F]=\oplus_{j=1,2}[\gamma_j(a_j),\, F_j]$, $\gamma(a)(F^2-1)=\oplus_{j=1,2}\gamma_j(a_j)(F_j^2-1)$ and $\gamma(a)(F-F^*)=\oplus_{j=1,2}\gamma_j(a_j)(F_j-F_j^*)$. Therefore, the equivalence follows directly from $\K(\s E)=\K(\s E_1)\oplus\K(\s E_2)$.
\end{proof}

\begin{lem}\label{lem29}
For all $j,l,l'=1,2$, the pair $({\cal E}_{ll',j},(\id_{\K(\s E_j)}\tens R_{jl})\delta_{\K(\s E_l)}^j(F_l))$ is a Kasparov ${\cal B}_{l,j,{\rm g}}$-${\cal B}_{l',j,{\rm d}}$-bimodule.
\end{lem}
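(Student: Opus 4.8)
The plan is to reduce the statement to the diagonal case $l=l'$ by means of the factorization lemma \ref{lem28}, and then to identify the resulting triple with bimodules whose Kasparov property is already available: the proposition immediately preceding this lemma (together with its counterpart obtained by interchanging $\QG_1$ and $\QG_2$) when $l\neq j$, and the Hilbert-module descent for the quantum group $\QG_l$, read through Baaj--Skandalis duality, when $l=j$. Throughout one assumes, as in \ref{lem19}, that $(\s E_l,\gamma_l,F_l)$ is a $\QG_l$-equivariant Kasparov $A_l$-$B_l$-bimodule, and, invoking \ref{rk20}, that $\gamma_l$ is non-degenerate.

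First I would record that $G_{l,j}:=(\id_{\K(\s E_j)}\tens R_{jl})\delta_{\K(\s E_l)}^j(F_l)$ is a well-defined element of $\Lin({\cal E}_{l,j})$, where ${\cal E}_{l,j}={\cal E}_{ll,j}=\s E_j\tens\K(\s H_{lj})$. Indeed $\delta_{\K(\s E_l)}^j(F_l)$ lies in $\M(\K(\s E_j)\tens S_{jl})$, and since $R_{jl}$ is a faithful non-degenerate $*$-representation of $S_{jl}$ on $\s H_{lj}$ (being ${\rm Ad}(U_{jl})$ applied to $L_{jl}$, cf.\ \ref{not19}), the map $\id_{\K(\s E_j)}\tens R_{jl}$ extends to $\M(\K(\s E_j)\tens S_{jl})\to\Lin(\s E_j\tens\s H_{lj})$ and, using $[R_{jl}(S_{jl})\K(\s H_{lj})]=\K(\s H_{lj})$, carries $\widetilde{\M}(\K(\s E_j)\tens S_{jl})$ into $\Lin(\s E_j\tens\K(\s H_{lj}))=\M(\K({\cal E}_{l,j}))$. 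On ${\cal E}_{ll',j}=\s E_j\tens\K(\s H_{l'j},\s H_{lj})$ the operator $G_{l,j}$ then acts by factorization in the sense of \ref{defOpActFact} (with $K=\s H_{lj}$, $H=\s H_{l'j}$). Consequently, by \ref{lem28} it suffices to prove the assertion when $l'=l$, i.e.\ that $({\cal E}_{l,j},\gamma_j\tens\id_{\K(\s H_{lj})},G_{l,j})$ is a Kasparov ${\cal B}_{l,j,{\rm g}}$-${\cal B}_{l,j,{\rm d}}$-bimodule, the left action being the restriction of $\gamma_j\tens\id$ coming from the left action of $D_{\rm g}$ on $\s D$ (cf.\ \S\ref{sectionDoubleCrossedProduct}, \ref{propdef5}).

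For the diagonal case, I would distinguish $l\neq j$ from $l=j$. When $l\neq j$, say $l=1$ and $j=2$ (the case $l=2$, $j=1$ being symmetric under $\QG_1\leftrightarrow\QG_2$), this triple is exactly $({\cal E}_{1,2},(\id_{\K(\s E_2)}\tens R_{21})\delta_{\K(\s E_1)}^2(F_1))$, which is already asserted to be a $\QG_1$-equivariant — in particular, a — Kasparov ${\cal B}_{1,2,{\rm g}}$-${\cal B}_{1,2,{\rm d}}$-bimodule in the proposition immediately preceding this lemma. When $l=j$, $\delta_{\K(\s E_l)}^j=\delta_{\K(\s E_l)}$ is the genuine action of $\QG_l$ on $\K(\s E_l)$ and $R_{ll}$ is the right regular representation of $\QG_l$, so that, under the Baaj--Skandalis identifications $(\s E_l\rtimes\QG_l)\rtimes\widehat{\QG_l}=\s E_l\tens\K(\s H_{ll})={\cal E}_{l,l}$, $A_l\tens\K(\s H_{ll})={\cal B}_{l,l,{\rm g}}$ and $B_l\tens\K(\s H_{ll})={\cal B}_{l,l,{\rm d}}$, the operator $G_{l,l}$ corresponds to $(F_l\tens_{\pi}1)\tens_{\widehat\pi}1$; hence $({\cal E}_{l,l},G_{l,l})$ is a Kasparov bimodule by two applications of the descent construction (the quantum-group version of \ref{descent1} 1 together with \ref{lem16}). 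Compatibility with direct sums is immediate, and the degenerate version is handled in the same way.

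The part I expect to require real work — and which is the substance of 4.30--4.33 of \cite{BC} rather than of the present reduction — is the verification of the three compactness conditions for the diagonal triple in the inductive case $l\neq j$. The subtlety is that $R_{jl}(S_{jl})$ is not contained in $\K(\s H_{lj})$, so one cannot tensor the Kasparov conditions for $(\s E_l,\gamma_l,F_l)$ simply by the identity of $\s H_{lj}$; instead one must interleave the compact factor $k\in\K(\s H_{lj})$ occurring in elements $a'\tens k$ of ${\cal B}_{l,j,{\rm g}}$ (resp.\ ${\cal B}_{l',j,{\rm d}}$) with the strong-continuity relations $[\delta_{A_l}^j(A_l)(1_{A_j}\tens S_{jl})]=A_j\tens S_{jl}$ and $[\delta_{\K(\s E_l)}^j(\K(\s E_l))(1\tens S_{jl})]=\K(\s E_j)\tens S_{jl}$ (cf.\ \ref{actprop}, \ref{propdef4}, \ref{prop31}), the non-degeneracy $[R_{jl}(S_{jl})\K(\s H_{lj})]=\K(\s H_{lj})$, and the intertwining $\delta_{\K(\s E_l)}^j(\gamma_l(a))=(\gamma_j\tens\id_{S_{jl}})\delta_{A_l}^j(a)$, so as to bring every relevant product back into $\K({\cal E}_{l,j})$. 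No genuinely new idea is needed beyond \ref{lem28} and the results of \S\ref{sectionDoubleCrossedProduct}; the remaining effort is bookkeeping with the internal tensor products and the identifications $\M(\K(-)\tens S)=\Lin(-\tens\s H)$ and their $S_{jl}$/$\s H_{lj}$-refinements.
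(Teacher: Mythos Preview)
Your proposal is correct and follows essentially the same route as the paper: reduce the general case to the diagonal $l'=l$ via the factorization Lemma~\ref{lem28}, and for the diagonal case invoke the already-known instances (the Baaj--Skandalis descent for $l=j$, i.e.\ \cite{BS1}, and 4.30/4.34 of \cite{BC} for $l\neq j$). Your additional remarks on well-definedness of $G_{l,j}$ and on what the \cite{BC} argument entails are accurate elaborations but not required for the reduction itself.
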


\begin{proof}
If $l'=l$, we refer the reader to \cite{BS1} for $l=j$ and 4.30, 4.34 \cite{BC} for $l\neq j$. By applying Lemma \ref{lem28}, the general case follows from the case where $l'=l$. 
\end{proof}

{\setlength{\baselineskip}{1.35\baselineskip}
Actually, we can prove that the pair $({\cal E}_{ll',j},(\id_{\K(\s E_j)}\tens R_{jl})\delta_{\K(\s E_l)}^j(F_l))$ is a $\QG_j$-equivariant Kasparov ${\cal B}_{l,j,{\rm g}}$-${\cal B}_{l',j,{\rm d}}$-bimodule. Indeed, as above the case where $l'=l$ is already known (cf.\ \cite{BS1}, \cite{BC}). Moreover, the operator $(\id_{\K(\s E_j)}\tens R_{jl})\delta_{\K(\s E_l)}^j(F_l))\in\Lin({\cal E}_{l,j})$ is invariant (cf.\ 4.29 \cite{BC}). By a direct computation, we show that the operator $(\id_{\K(\s E_j)}\tens R_{jl})\delta_{\K(\s E_l)}^j(F_l))\in\Lin({\cal E}_{ll',j})$ is invariant (cf.\ \S \ref{sectionDoubleCrossedProduct} for the definitions).
\par}

\medbreak

In the following, we assume the C*-algebra $A_1$ to be separable. Hence, the C*-algebras $A_2$ and $A$ are separable (cf.\ \ref{lem27}).

\begin{prop}\label{prop28}
For $j=1,2$, let $F_j\in\Lin(\s E_j)$ such that $(\s E_j,F_j)$ is a $\QG_j$-equivariant Kasparov $A_j$-$B_j$-bimodule. Let $F:=F_1\oplus F_2\in\Lin(\s E)$. We have:
\newcounter{saveenum2}
\begin{enumerate}
\item the pair $(\s D,\pi_R(F))$ is a $\cal G$-equivariant Kasparov $D_{\rm g}$-$D_{\rm d}$-bimodule;
\item there exists $T\in\Lin(\s E)$ such that:
\begin{enumerate}[label=\alph*)]
\item the pair $(\s E,T)$ is a $\cal G$-equivariant Kasparov $A$-$B$-bimodule,
\item ${\f b}_A\tens_A[(\s E,T)]\tens_B{\f a}_B=
[(\s D,\pi_R(F))]$.
\end{enumerate}
\setcounter{saveenum2}{\value{enumi}}
\end{enumerate}
Moreover, we have:
\begin{enumerate}
\setcounter{enumi}{\value{saveenum2}}
\item if $T,T'\in\Lin(\s E)$ satisfy the conditions a) and b), then $[(\s E,T)]=[(\s E,T')]$ in $\kk_{\cal G}(A,B)$;
\item if $T\in\Lin(\s E)$ satisfies the conditions a) and b), then the class of $(\s E,T)$ in $\kk_{\cal G}(A,B)$ only depends on those of $(\s E_1,F_1)$ and $(\s E_2,F_2)$ in $\kk_{\QG_1}(A_1,B_1)$ and $\kk_{\QG_2}(A_2,B_2)$ respectively;
\item for $j=1,2$ and $T\in\Lin(\s E)$ satisfying the conditions a) and b), let $T_j\in\Lin(\s E_j)$ such that $T=T_1\oplus T_2$ (cf.\ \ref{lem30}), then the pair $(\s E_j,T_j)$ is a $\QG_j$-equivariant Kasparov $A_j$-$B_j$-bimodule and we have $[(\s E_j,T_j)]=[(\s E_j,F_j)]$ in $\kk_{\QG_j}(A_j,B_j)$.\qedhere
\end{enumerate}
\end{prop}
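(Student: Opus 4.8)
The plan is to push everything through the biduality machinery of \S\ref{sectionTT}--\S\ref{sectionTT2}, the block decomposition of \S\ref{sectionDoubleCrossedProduct}, and the Morita isomorphism of Lemma \ref{lem32}; note first that $\QG_1,\QG_2$ regular forces $\cal G$ regular (Theorem \ref{theo7}), so all of those sections apply. For statement 1, I would observe that $\pi_R(F)\in\Lin(\s D)$ is invariant (Lemma \ref{lem14}), so by Remark \ref{rk13} 3 it suffices to check that $(\s D,\pi_R(F))$ is a (non-equivariant) Kasparov $D_{\rm g}$-$D_{\rm d}$-bimodule. Using the description of \S\ref{sectionDoubleCrossedProduct}, $\s D=\bigoplus_j\s D_j$ with $\s D_j=\bigoplus_{l,l'}{\cal E}_{ll',j}$, and $D_{\rm g}$ (resp.\ $D_{\rm d}$) is the direct sum over $j$ of the $2\times 2$ matrix algebra on $({\cal B}_{1,j,{\rm g}},{\cal B}_{2,j,{\rm g}})$ (resp.\ $({\cal B}_{1,j,{\rm d}},{\cal B}_{2,j,{\rm d}})$). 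Since $F=F_1\oplus F_2$ and $\K(\s E)=\K(\s E_1)\oplus\K(\s E_2)$ we have $\delta_{\K(\s E)}(F)=\sum_{j,k}\pi_j^k(\delta_{\K(\s E_j)}^k(F_j))$, and a short computation from $\pi_R(F)=(\id_{\s E}\tens R)\delta_{\K(\s E)}(F)$, together with the fact that $R$ restricts to $R_{jl}$ on $S_{jl}$, shows that $\pi_R(F)$ preserves every block ${\cal E}_{ll',j}$ and restricts there to the operator $(\id_{\K(\s E_j)}\tens R_{jl})\delta_{\K(\s E_l)}^j(F_l)$ acting by factorization in the sense of \ref{defOpActFact}. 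By Lemma \ref{lem29} each such block is a Kasparov ${\cal B}_{l,j,{\rm g}}$-${\cal B}_{l',j,{\rm d}}$-bimodule; assembling the four blocks with fixed $j$ over the matrix algebras $D_{{\rm g},j},D_{{\rm d},j}$ (a routine matrix-algebra argument, Lemma \ref{lem28} handling the off-diagonal entries) yields a Kasparov $D_{{\rm g},j}$-$D_{{\rm d},j}$-bimodule, and Lemma \ref{lem18} 1 a Kasparov $D_{\rm g}$-$D_{\rm d}$-bimodule; with the invariance above, statement 1 follows.

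For statement 2, by \ref{lem13} and \ref{cor1} there are canonical $\cal G$-equivariant identifications of bimodules
\[
{\cal E}_{A,R}^*\tens_{D_{\rm g}}\s D\tens_{D_{\rm d}}{\cal E}_{B,R}={\cal E}_{A,R}^*\tens_{D_{\rm g}}{\cal E}_{\s E,R}=\s E
\]
(the first because $\s D\tens_{D_{\rm d}}{\cal E}_{B,R}={\cal E}_{\s E,R}$ by \ref{cor1}, the second because ${\cal E}_{\s E,R}={\cal E}_{A,R}\tens_{\gamma}\s E$ by \ref{lem13} 1 and $\gamma$ is non-degenerate, cf.\ \ref{lem13} 2). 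Then I would define $T\in\Lin(\s E)$ by declaring $(\s E,T)$ to be a representative of the Kasparov product ${\f a}_A\tens_{D_{\rm g}}[(\s D,\pi_R(F))]\tens_{D_{\rm d}}{\f b}_B\in\kk_{\cal G}(A,B)$ transported along the above identification; this product is legitimate by Theorem \ref{Kasprod} (here $A$ is separable), and by the identification it is represented by a triple on $\s E$ with $T$ a suitable connection, which is a). Statement b) follows from associativity of the Kasparov product together with ${\f b}_A\tens_A{\f a}_A=1_{D_{\rm g}}$ and ${\f b}_B\tens_B{\f a}_B=1_{D_{\rm d}}$ (Lemma \ref{lem32}):
\[
{\f b}_A\tens_A[(\s E,T)]\tens_B{\f a}_B=({\f b}_A\tens_A{\f a}_A)\tens_{D_{\rm g}}[(\s D,\pi_R(F))]\tens_{D_{\rm d}}({\f b}_B\tens_B{\f a}_B)=[(\s D,\pi_R(F))].
\]

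Statement 3 is then immediate: if $T,T'$ both satisfy a)--b), then ${\f b}_A\tens_A[(\s E,T)]\tens_B{\f a}_B={\f b}_A\tens_A[(\s E,T')]\tens_B{\f a}_B$ and the map $x\mapsto{\f b}_A\tens_A x\tens_B{\f a}_B$ is injective by Lemma \ref{lem32}. For statement 5, $(\s E_j,T_j)$ is a $\QG_j$-equivariant Kasparov $A_j$-$B_j$-bimodule by \ref{lem30} 3, hence $J_{\QG_j,{\cal G}}([(\s E,T)])=[(\s E_j,T_j)]$ by definition (\ref{propdef9}); applying the multiplicative homomorphism $J_{\QG_j,{\cal G}}$ (\ref{prop43}) to relation b) and identifying the $j$-th components of ${\f a}_A,{\f b}_A,{\f a}_B,{\f b}_B$ and of $[(\s D,\pi_R(F))]$ with the corresponding $\QG_j$-Morita classes and block bimodules (via \S\ref{sectionDoubleCrossedProduct} and \S 4.5 \cite{BC}) yields exactly the relation characterising $[(\s E_j,F_j)]$ in that reference, whence $[(\s E_j,T_j)]=[(\s E_j,F_j)]$. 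Finally statement 4 follows, since $[(\s E,T)]$ depends only on $[(\s D,\pi_R(F))]$ by statement 3, and the latter, by the block description of statement 1 and functoriality of the $\QG_j$-constructions, depends only on $[(\s E_1,F_1)]$ and $[(\s E_2,F_2)]$; the passage from $\kk$-classes to homotopies uses the $[0,1]$-versions of all the constructions involved, exactly as in Lemma \ref{lem17}. The main obstacle is the bookkeeping in statement 1 --- identifying $\pi_R(F)$ with the factorized block operators and assembling over the matrix algebras $D_{{\rm g},j},D_{{\rm d},j}$; a secondary delicate point is the identification in statement 5 of $J_{\QG_j,{\cal G}}$ on the bidual Morita classes.
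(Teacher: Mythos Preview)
Your treatment of statements 1--3 is essentially the paper's: invariance of $\pi_R(F)$ by \ref{lem14}, reduction to the non-equivariant Kasparov condition, block decomposition via \S\ref{sectionDoubleCrossedProduct}, Lemma \ref{lem29} on each block, and the Morita inverse of Lemma \ref{lem32}. (For statement 2 the paper literally writes ``direct consequences of Lemma \ref{lem32}''; your explicit module identification ${\cal E}_{A,R}^*\tens_{D_{\rm g}}\s D\tens_{D_{\rm d}}{\cal E}_{B,R}\simeq\s E$ is a welcome clarification of why the representative lives on $\s E$.)

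For statement 5, however, your route diverges from the paper's and carries a real gap. Applying $J_{\QG_j,{\cal G}}$ to b) gives
\[
[(({\cal E}_{A,R})_j,0)]\tens_{A_j}[(\s E_j,T_j)]\tens_{B_j}[(({\cal E}_{B,R}^*)_j,0)]=[(\s D_j,X_j)]
\]
in $\kk_{\QG_j}(D_{{\rm g},j},D_{{\rm d},j})$. But $({\cal E}_{A,R})_j=A_j\tens(\s H_{1j}\oplus\s H_{2j})$ is \emph{not} the $\QG_j$-Morita bimodule $A_j\tens\s H_{jj}$, and the relation from \S 4.5 \cite{BC} that you invoke characterises $J_{\QG_k,\QG_j}([(\s E_j,F_j)])$ via the \emph{off-diagonal} block ${\cal E}_{1,k}$, not $[(\s E_j,F_j)]$ itself; so ``yields exactly the relation characterising $[(\s E_j,F_j)]$'' is not justified without substantial extra work relating the full $2\times 2$ Morita bimodule to the corner ones. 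The paper's argument avoids all of this: since $(\s E,T)$ is $\cal G$-equivariant by a), Theorem \ref{theo3} (more precisely the computation in its proof) gives ${\f b}_A\tens_A[(\s E,T)]\tens_B{\f a}_B=[(\s D,\pi_R(T))]$; comparing with b) yields $[(\s D,\pi_R(F))]=[(\s D,\pi_R(T))]$ directly. Then $J_{\QG_j,{\cal G}}$ gives $[(\s D_j,X_j)]=[(\s D_j,Y_j)]$, Lemma \ref{lem18} 2 extracts the diagonal corner $[({\cal E}_{j,j},\ldots(F_j))]=[({\cal E}_{j,j},\ldots(T_j))]$, and the ordinary $\QG_j$-Morita from \cite{BS1} finishes.

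For statement 4 the paper does not appeal to ``$[0,1]$-versions of all constructions''; it uses the fact (5.11 (2) \cite{BS1}) that homotopic $\QG_j$-Kasparov bimodules become \emph{operator} homotopic after stabilisation by a degenerate bimodule, assembles the two operator homotopies into a path $F_t=F_{1,t}\oplus F_{2,t}$, observes that $t\mapsto\pi_R(F_t)$ is then a norm-continuous path of invariant operators making $(\s D,\pi_R(F_t))$ an operator homotopy, and concludes via statement 3. Your functoriality sketch would require checking that the induction and biduality constructions commute with $-[0,1]$ at the level of modules and operators, which is not set up anywhere in the paper.
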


\begin{proof}
{\setlength{\baselineskip}{1.1\baselineskip}
1. Let $X\in\Lin(\s D)$ be the operator defined by $X(\zeta):=\pi_R(F)\circ\zeta$ for all $\zeta\in\s D$. It suffices to prove that $(\s D,X)$ is a Kasparov $D_{\rm g}$-$D_{\rm d}$-bimodule (cf.\ \ref{lem14} and \ref{rk13} 3). This amounts again to proving that $(\s D_j,X_j)$ is a Kasparov $D_{{\rm g},j}$-$D_{{\rm d},j}$-bimodule for $j=1,2$ (cf.\ \ref{lem19} 1). However, this follows straightforwardly from \ref{lem29} and \ref{lem18} 1.\newline
2, 3. These statements are direct consequences of Lemma \ref{lem32}.\newline
4. For $j=1,2$, let $F_j,F_j'\in\Lin(\s E_j)$ such that $(\s E_j,F_j)$ and $(\s E_j,F_j')$ are $\QG_j$-equivariant Kasparov $A_j$-$B_j$-bimodules satisfying $[(\s E_j,F_j)]=[(\s E_j,F_j')]$ in $\kk_{\QG_j}(A_j,B_j)$. Let $F:=F_1\oplus F_2\in\Lin(\s E)$ and  $F':=F_1'\oplus F_2'\in\Lin(\s E)$. Let $T\in\Lin(\s E)$ (resp.\ $T'\in\Lin(\s E)$) be an operator satisfying the conditions a) and b) for $F$ (resp.\ $F'$). Let us prove that $[(\s E,T)]=[(\s E,T')]$ in $\kk_{\cal G}(A,B)$. For $j=1,2$, there exists a degenerate $\QG_j$-equivariant Kasparov $A_j$-$B_j$-bimodule $(\s F_j,X_j)$ such that $(\s E_j\oplus\s F_j,F_j\oplus X_j)$ and $(\s E_j\oplus\s F_j,F_j'\oplus X_j)$ are operator homotopic (cf.\ 5.11 (2) \cite{BS1}). In particular, for $j=1,2$ there exists an operator homotopy $(\s E_j,F_{j,t})_{t\in[0,1]}$ between $(\s E_j,F_j)$ and $(\s E_j,F_j')$. For $t\in[0,1]$, let $F_t:=F_{1,t}\oplus F_{2,t}\in\Lin(\s E)$. For $t\in[0,1]$, let $T_t\in\Lin(\s E)$ be an operator satisfying the conditions a) and b) for $F_t$. Then, $(\s D,\pi_R(T_t))_{t\in[0,1]}$ is an operator homotopy between $(\s D,\pi_R(T))$ and $(\s D,\pi_R(T'))$. Hence, $[(\s D,\pi_R(T))]=[(\s D,\pi_R(T'))]$. It then follows that ${\f b}_A\tens_A[(\s E,T)]\tens_B{\f a}_B={\f b}_A\tens_A[(\s E,T')]\tens_B{\f a}_B$. Hence, $[(\s E,T)]=[(\s E,T')]$ (cf.\ \ref{lem32}).\newline
5. For the fact that $(\s E_j,T_j)$ is a $\QG_j$-equivariant Kasparov $A_j$-$B_j$-bimodule for $j=1,2$, we refer to \S \ref{sectionColinkKasp}. Let $X,Y\in\Lin(\s D)$ be the operators defined by $X(\zeta):=\pi_R(F)\circ\zeta$ and $Y(\zeta):=\pi_R(T)\circ\zeta$ for all $\zeta\in\s D$. It follows from b) and \ref{theo3} that $[(\s D,X)]=[(\s D,Y)]$ in $\kk_{\cal G}(D_{\rm g},D_{\rm d})$. By composing by $J_{\QG_j,{\cal G}}:\kk_{\cal G}(D_{\rm g},D_{\rm d})\rightarrow\kk_{\QG_j}(D_{{\rm g},j},D_{{\rm d},j})$, we have $[(\s D_j,X_j)]=[(\s D_j,Y_j)]$ for all $j=1,2$ (cf.\ \S \ref{sectionColinkKasp}). Hence, we have (cf.\ \ref{lem18} 2)
\begin{center}
$[({\cal E}_{ll',j},(\id_{\K(\s E_j)}\tens R_{jl})\delta_{\K(\s E_l)}^j(F_l))]=[({\cal E}_{ll',j},(\id_{\K(\s E_j)}\tens R_{jl})\delta_{\K(\s E_l)}^j(T_l))]$\quad for all $j,l,l'=1,2$.
\end{center}
In particular, we have 
\begin{center}
$[({\cal E}_{j,j},(\id_{\K(\s E_j)}\tens R_{jj})\delta_{\K(\s E_j)}^j(F_j))]=[({\cal E}_{j,j},(\id_{\K(\s E_j)}\tens R_{jj})\delta_{\K(\s E_j)}^j(T_j))]$\quad  for $j=1,2$.
\end{center}
By composing by the isomorphism $x\mapsto{\f a}_{A_j}\tens_{D_{{\rm g},j}}x\tens_{D_{{\rm d},j}}{\f b}_{B_j}$, we obtain $[(\s E_j,F_j)]=[(\s E_j,T_j)]$ (cf.\ \cite{BS1}).\qedhere
\par}
\end{proof}

In virtue of \ref{prop28} 1-4, the following definition makes sense.

\begin{defin}\label{def11}
Let $j,k=1,2$ such that $j\neq k$. Let $F_j\in\Lin(\s E_j)$ such that the pair $(\s E_j,F_j)$ is a $\QG_j$-equivariant Kasparov $A_j$-$B_j$-bimodule. Let $x:=[(\s E_j,F_j)]\in\kk_{\QG_j}(A_j,B_j)$ and $J_{\QG_k,\QG_j}(x)=[(\s E_k,F_k)]$. Let $F:=F_1\oplus F_2\in\Lin(\s E)$. We denote by $J_{{\cal G},\QG_j}(x)$ the unique element $y\in\kk_{\cal G}(A,B)$ such that ${\f b}_A\tens_A y \tens_B{\f a}_B=[(\s D,\pi_R(F))]$.
For $j=1,2$, we have a well-defined homomorphism of abelian groups $J_{{\cal G},\QG_j}:\kk_{\QG_j}(A_j,B_j)\rightarrow\kk_{\cal G}(A,B)$.
\end{defin}

\begin{lem}\label{lem20}
With the notations and hypotheses of \ref{def11}, if the pair $(\s E,F)$ is a $\cal G$-equivariant Kasparov $A$-$B$-bimodule, then we have $J_{{\cal G},\QG_j}(x)=[(\s E,F)]$.
\end{lem}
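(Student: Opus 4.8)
The plan is to show that $[(\s E,F)]$ satisfies the relation characterising $J_{{\cal G},\QG_j}(x)$ in \ref{def11}, namely ${\f b}_A\tens_A[(\s E,F)]\tens_B{\f a}_B=[(\s D,\pi_R(F))]$ in $\kk_{\cal G}(D_{\rm g},D_{\rm d})$, and then invoke the uniqueness built into \ref{def11}. First I would arrange for the left action to be non-degenerate: replacing, for $j=1,2$, the pair $(\s E_j,F_j)$ by a non-degenerate representative of its class (cf.\ \ref{rk20}, or \ref{prop22} 2 applied to $\QG_j$) changes neither $x$ nor, by compatibility of induction with this replacement (cf.\ \S\ref{ActColinkHilb}), the induced $\QG_2$-data, hence neither the $\cal G$-equivariant Hilbert bimodule $(\s E,\gamma)$ up to unitary equivalence nor, by the well-definedness of $J_{{\cal G},\QG_j}$ recorded in \ref{def11}, the element $J_{{\cal G},\QG_j}(x)$; the hypothesis that $(\s E,F)$ be a $\cal G$-equivariant Kasparov $A$-$B$-bimodule is preserved. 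So we may assume $\gamma=\gamma_1\oplus\gamma_2:A\rightarrow\Lin(\s E)$ is non-degenerate.

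Now the key identity. Since $A_1$ is separable, $A_2=\ind(A_1)$ is separable by \ref{lem27}, so $A=A_1\oplus A_2$ is separable, a fortiori $\sigma$-unital. As $(\s E,\gamma,F)$ is a $\cal G$-equivariant Kasparov $A$-$B$-bimodule with non-degenerate left action, \ref{descent1} 1 gives $J_{\cal G}([(\s E,F)])=[(\s E\rtimes{\cal G},\,F\tens_{\pi_B}1)]$ and, applied a second time,
\[
J_{\widehat{\cal G}}\bigl(J_{\cal G}([(\s E,F)])\bigr)=\bigl[\bigl((\s E\rtimes{\cal G})\rtimes\widehat{\cal G},\;(F\tens_{\pi_B}1)\tens_{\widehat\pi}1\bigr)\bigr].
\]
By \ref{theo2} there is a $\cal G$-equivariant unitary equivalence $(\s E\rtimes{\cal G})\rtimes\widehat{\cal G}\rightarrow\s D$ over the canonical isomorphism $(A\rtimes{\cal G})\rtimes\widehat{\cal G}\rightarrow D_{\rm d}$, under which the operator $(F\tens_{\pi_B}1)\tens_{\widehat\pi}1$ corresponds to $\pi_R(F)\in\Lin(\s D)$ — this is the identification recorded in the proof of \ref{lem14} — the left action of $D_{\rm g}$ being the compatible one of \ref{propdef5}. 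Hence $J_{\widehat{\cal G}}\circ J_{\cal G}([(\s E,F)])=[(\s D,\pi_R(F))]$, while by the biduality theorem \ref{theo3} (valid since $A$ is $\sigma$-unital) $J_{\widehat{\cal G}}\circ J_{\cal G}([(\s E,F)])={\f b}_A\tens_A[(\s E,F)]\tens_B{\f a}_B$. Therefore ${\f b}_A\tens_A[(\s E,F)]\tens_B{\f a}_B=[(\s D,\pi_R(F))]$.

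Finally, by \ref{def11} the element $J_{{\cal G},\QG_j}(x)$ is the unique $y\in\kk_{\cal G}(A,B)$ with ${\f b}_A\tens_A y\tens_B{\f a}_B=[(\s D,\pi_R(F))]$, uniqueness following from the injectivity of $y\mapsto{\f b}_A\tens_A y\tens_B{\f a}_B$ (cf.\ \ref{lem32}); comparing with the identity just obtained gives $J_{{\cal G},\QG_j}(x)=[(\s E,F)]$. I expect the only delicate point to be the reduction of the first paragraph — checking that passing to non-degenerate representatives on the two legs, followed by induction, yields again an admissible "induced'' pair with the same classes — once a non-degenerate left action is secured, the rest is a bookkeeping assembly of \ref{descent1}, \ref{theo2}, \ref{lem14}, \ref{propdef5}, \ref{theo3} and \ref{lem32}, with no new estimate required.
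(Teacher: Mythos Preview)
Your proof is correct and follows the same route as the paper, which simply says ``This is a straightforward consequence of Theorem \ref{theo3}''; you have spelled out that consequence: identify $J_{\widehat{\cal G}}\circ J_{\cal G}([(\s E,F)])$ with $[(\s D,\pi_R(F))]$ via \ref{theo2} and \ref{lem14}, equate it with ${\f b}_A\tens_A[(\s E,F)]\tens_B{\f a}_B$ by \ref{theo3}, and conclude by the uniqueness in \ref{def11}. One remark: your opening reduction to a non-degenerate left action is unnecessary, since in the standing hypotheses before \ref{prop28} the representation $\gamma_1$ is already assumed non-degenerate and induction preserves non-degeneracy (cf.\ \ref{defHomInd}, \ref{def5}), so $\gamma=\gamma_1\oplus\gamma_2$ is non-degenerate from the outset; the rest of your argument is then exactly what is meant by ``straightforward consequence of \ref{theo3}''.
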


\begin{proof}
This is a straightforward consequence of Theorem \ref{theo3}.
\end{proof}

\begin{prop}\label{prop30} 
Let $j,k=1,2$ with $j\neq k$. We have:
\begin{enumerate}
\item $J_{\QG_j,{\cal G}}\circ J_{{\cal G},\QG_j}=\id_{\kk_{\QG_j}(A_j,B_j)}$;
\item $J_{\QG_k,{\cal G}}\circ J_{{\cal G},\QG_j}=J_{\QG_k,\QG_j}$;$\vphantom{\id_{\kk_{\QG_j}(A_j,B_j)}}$
\item $J_{\QG_k,\QG_j}\circ J_{\QG_j,{\cal G}}=J_{\QG_k,{\cal G}}$.\qedhere
\end{enumerate}
\end{prop}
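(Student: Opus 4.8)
The plan is to derive the three identities from the biduality calculus of \ref{prop28}: identities (1) and (2) come out almost formally, whereas (3) is a genuine statement about the block structure of the double crossed product established in \S\ref{sectionDoubleCrossedProduct}.

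First I would treat (1) and (2) together. Fix $j\neq k$ in $\{1,2\}$, let $x=[(\s E_j,F_j)]\in\kk_{\QG_j}(A_j,B_j)$, and choose $F_k\in\Lin(\s E_k)$ with $[(\s E_k,F_k)]=J_{\QG_k,\QG_j}(x)$, so that by \ref{prop28} both $(\s E_i,F_i)$, $i=1,2$, are $\QG_i$-equivariant Kasparov bimodules; put $F:=F_1\oplus F_2$. By \ref{prop28}\,(2) there is $T=T_1\oplus T_2\in\Lin(\s E)$ such that $(\s E,T)$ is a ${\cal G}$-equivariant Kasparov $A$-$B$-bimodule and ${\f b}_A\tens_A[(\s E,T)]\tens_B{\f a}_B=[(\s D,\pi_R(F))]$; by the injectivity of $y\mapsto{\f b}_A\tens_A y\tens_B{\f a}_B$ (\ref{lem32}) and the definition of $J_{{\cal G},\QG_j}$ in \ref{def11}, this forces $J_{{\cal G},\QG_j}(x)=[(\s E,T)]$. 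Now $J_{\QG_i,{\cal G}}([(\s E,T)])=[(\s E_i,T_i)]$ by \ref{propdef9} and \ref{lem30}, and $[(\s E_i,T_i)]=[(\s E_i,F_i)]$ in $\kk_{\QG_i}(A_i,B_i)$ by \ref{prop28}\,(5), for $i=1,2$. For $i=j$ this gives $J_{\QG_j,{\cal G}}(J_{{\cal G},\QG_j}(x))=x$, i.e.\ (1); for $i=k$, recalling $[(\s E_k,F_k)]=J_{\QG_k,\QG_j}(x)$, it gives $J_{\QG_k,{\cal G}}(J_{{\cal G},\QG_j}(x))=J_{\QG_k,\QG_j}(x)$, i.e.\ (2).

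For (3) I would first reduce it to an identity for the induction of Kasparov bimodules. Let $z=[(\s E,\gamma,F)]\in\kk_{\cal G}(A,B)$; since $\beta_{\s E}(\GC^2)\subset{\cal Z}(\Lin(\s E))$ one may write $F=F_1\oplus F_2$ with $F_i=\restr{F}{\s E_i}$, and by \ref{lem30}\,(3) each $(\s E_i,\gamma_i,F_i)$ is a $\QG_i$-equivariant Kasparov $A_i$-$B_i$-bimodule. By \ref{propdef9} one has $J_{\QG_j,{\cal G}}(z)=[(\s E_j,F_j)]$ and $J_{\QG_k,{\cal G}}(z)=[(\s E_k,F_k)]$, so (3) follows once one shows, for every such $z$, that
\[
J_{\QG_k,\QG_j}\big([(\s E_j,F_j)]\big)=[(\s E_k,F_k)]\quad\text{in }\kk_{\QG_k}(A_k,B_k).
\]
To prove this, observe that $(\s D,\pi_R(F))$ is a ${\cal G}$-equivariant Kasparov $D_{\rm g}$-$D_{\rm d}$-bimodule by \ref{prop28}\,(1), so $J_{\QG_k,{\cal G}}$ may be applied to its class. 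Using the decomposition of $\s D_k$ from \S\ref{sectionDoubleCrossedProduct} and \ref{lem18}\,(2) (as in the proof of \ref{prop28}\,(5)), cutting $J_{\QG_k,{\cal G}}([(\s D,\pi_R(F))])$ down to the corner ${\cal B}_{l,k,{\rm g}}\hookrightarrow D_{{\rm g},k}$, $D_{{\rm d},k}\twoheadrightarrow{\cal B}_{l',k,{\rm d}}$ produces $[({\cal E}_{ll',k},(\id_{\K(\s E_k)}\tens R_{kl})\delta_{\K(\s E_l)}^k(F_l))]$; in particular the $(k,k)$-corner equals $[({\cal E}_{kk,k},(\id\tens R_{kk})\delta_{\K(\s E_k)}^k(F_k))]$, which by the Baaj--Skandalis duality theorem for $\QG_k$ (the quantum-group analogue of \ref{theo3}, cf.\ \cite{BS2,BS1}) equals ${\f b}_{A_k}\tens_{A_k}[(\s E_k,F_k)]\tens_{B_k}{\f a}_{B_k}$. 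On the other hand, transporting the $(j,j)$-corner through the $\QG_k$-equivariant Morita equivalences ${\f c}_{kj,k,{\rm g}}$ and ${\f c}_{jk,k,{\rm d}}$ (cf.\ 3.50--3.51 \cite{BC}) gives, by the very definition of $J_{\QG_k,\QG_j}$, the element ${\f b}_{A_k}\tens_{A_k}J_{\QG_k,\QG_j}([(\s E_j,F_j)])\tens_{B_k}{\f a}_{B_k}$. These two transports coincide: under the unitary identifications ${\cal B}_{kj,k,{\rm g}}\tens_{{\cal B}_{j,k,{\rm g}}}{\cal E}_{jj,k}\cong{\cal E}_{kj,k}$ and ${\cal E}_{kj,k}\tens_{{\cal B}_{j,k,{\rm d}}}{\cal B}_{jk,k,{\rm d}}\cong{\cal E}_{kk,k}$ the operator $(\id\tens R_{kj})\delta_{\K(\s E_j)}^k(F_j)$ becomes, in the sense of \ref{defOpActFact} and \ref{lem28}, an operator on ${\cal E}_{kk,k}$ which is a compact perturbation of $(\id\tens R_{kk})\delta_{\K(\s E_k)}^k(F_k)$ — both being compact perturbations of $F_k\tens 1$, by \ref{lem30}\,(2)(ii) applied with the index pair $(j,k)$, the regularity of $\QG_k$ (so that $R_{kj}(S_{kj})\K=\K$) and the $\QG_k$-equivariant biduality. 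Hence the two sides agree after applying ${\f b}_{A_k}\tens_{A_k}(-)\tens_{B_k}{\f a}_{B_k}$, and the displayed identity follows by the injectivity in \ref{lem32}.

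The main obstacle is the bookkeeping in this last step: identifying $J_{\QG_k,{\cal G}}$ applied to the bidual Kasparov bimodule with its block decomposition over the corners of the linking-type algebras $D_{{\rm g},k}$ and $D_{{\rm d},k}$, and then matching the $(k,k)$- and $(j,j)$-corner operators through the Morita equivalences ${\f c}$ up to compact perturbation by means of the equivariance relation \ref{lem30}\,(2)(ii). This is the colinking-groupoid repackaging of the computations of \S 4.5 \cite{BC}; once it is in place, (3) — and, via \ref{lem20}, the fact that $J_{{\cal G},\QG_j}$ and $J_{\QG_j,{\cal G}}$ are mutually inverse — follow at once.
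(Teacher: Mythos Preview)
Your argument for (1) and (2) agrees with the paper's: both are immediate from \ref{prop28}\,(5), exactly as you write.

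For (3) you and the paper diverge. The paper simply asserts that (3) follows by substituting (2) into the left-hand side of (3), giving $J_{\QG_k,{\cal G}}\circ J_{{\cal G},\QG_j}\circ J_{\QG_j,{\cal G}}$, and then ``simplifying with (1)''. But (1) only yields $J_{\QG_j,{\cal G}}\circ J_{{\cal G},\QG_j}=\id$, not $J_{{\cal G},\QG_j}\circ J_{\QG_j,{\cal G}}=\id$; the latter is Theorem~\ref{theo4}, whose proof \emph{uses} (3). Read literally, the paper's deduction therefore only establishes (3) on the range of $J_{{\cal G},\QG_j}$, which at this stage is not yet known to be all of $\kk_{\cal G}(A,B)$. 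Your direct approach through the block decomposition of $(\s D_k,\pi_R(F)_k)$ is the honest argument, and it is what is actually needed to close this gap.

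One refinement to your sketch: the equality of the $(k,k)$-corner with the Morita transport of the $(j,j)$-corner is cleanest not as a bare compact-perturbation statement --- $F_k\tens 1$ need not itself be a Kasparov operator, and the compactness in \ref{lem30}\,(2)(ii) is only relative to the left action --- but as a linking-algebra fact. Since $D_{{\rm g},k}=A_k\tens\K(\s H_{1k}\oplus\s H_{2k})$ and $D_{{\rm d},k}$ are $2\times 2$ linking algebras for their diagonal corners ${\cal B}_{l,k,\bullet}$, the $2\times 2$ analogue of \ref{lem18}\,(2) shows that both corner classes determine, and are determined by, the single class $[(\s D_k,\pi_R(F)_k)]$ via the Morita classes ${\f c}$; comparing the two corners then gives exactly the displayed identity. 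With that adjustment your outline for (3) is correct.
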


\begin{proof}
The formulas 1 et 2 are immediate consequences of \ref{prop28} 5. The last statement follows by plugging the second formula in the left-hand side and by simplifying with the first one.
\end{proof}

We can state the main results of this paragraph.

\begin{thm}\label{theo4}
Let $j=1,2$. The maps
\[
J_{\QG_j,\,{\cal G}}:\kk_{\cal G}(A,B)\rightarrow\kk_{\QG_j}(A_j,B_j) \quad \text{and} \quad
J_{{\cal G},\QG_j}:\kk_{\QG_i}(A_j,B_j)\rightarrow\kk_{\cal G}(A,B)
\]
are isomorphisms of abelian groups inverse of each other.
\end{thm}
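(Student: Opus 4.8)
The plan is to derive the theorem formally from Proposition~\ref{prop30}, once the single identity $J_{{\cal G},\QG_j}\circ J_{\QG_j,{\cal G}}=\id_{\kk_{\cal G}(A,B)}$ has been established. Indeed, Proposition~\ref{prop30}~1 already records $J_{\QG_j,{\cal G}}\circ J_{{\cal G},\QG_j}=\id_{\kk_{\QG_j}(A_j,B_j)}$, and both $J_{\QG_j,{\cal G}}$ and $J_{{\cal G},\QG_j}$ are homomorphisms of abelian groups (cf.\ \ref{propdef9} and \ref{def11}); hence, if the reverse composition is the identity as well, the two maps are mutually inverse bijective homomorphisms, hence isomorphisms of abelian groups. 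So the whole task reduces to computing $J_{{\cal G},\QG_j}(J_{\QG_j,{\cal G}}(z))$ for an arbitrary $z\in\kk_{\cal G}(A,B)$.

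First I would fix a representative $z=[(\s E,\gamma,F)]$, where $(\s E,\gamma,F)$ is a $\cal G$-equivariant Kasparov $A$-$B$-bimodule with non-degenerate left action; this is legitimate by Remark~\ref{rk20}, using the standing separability of $A$ (recall $A_1$ is separable, whence so is $A$ by \ref{lem27}). Applying the classification of $\cal G$-equivariant Hilbert modules and bimodules (Theorem~\ref{theo9}, together with Lemma~\ref{lem11} and Proposition-Definition~\ref{def10} for the left-action and unitary-equivalence parts), and using that $A_k$ ($k\neq j$) is the $\QG_k$-C*-algebra induced from $A_j$ and similarly for $B$, I may assume — up to $\cal G$-equivariant unitary equivalence, which leaves the class of $(\s E,\gamma,F)$ unchanged — that $(\s E,\gamma)$ is built from $(\s E_j,\gamma_j)$ by the colinking/induction construction underlying Definition~\ref{def11}, that is, $\s E=\s E_1\oplus\s E_2$ with $\s E_k$ ($k\neq j$) induced from $\s E_j$. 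Writing $F=F_1\oplus F_2$ via Lemma~\ref{lem30}, each $(\s E_j,\gamma_j,F_j)$ is then a $\QG_j$-equivariant Kasparov $A_j$-$B_j$-bimodule and, by definition, $J_{\QG_j,{\cal G}}(z)=[(\s E_j,\gamma_j,F_j)]$.

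Then I would unwind Definition~\ref{def11} for $x:=[(\s E_j,F_j)]$. By Proposition~\ref{prop30}~3 (i.e.\ $J_{\QG_k,\QG_j}\circ J_{\QG_j,{\cal G}}=J_{\QG_k,{\cal G}}$), for $k\neq j$ one has $J_{\QG_k,\QG_j}(x)=J_{\QG_k,{\cal G}}(z)=[(\s E_k,\gamma_k,F_k)]$, so the operator ``$F=F_1\oplus F_2$'' entering Definition~\ref{def11} may be taken to be the operator of our chosen bimodule. Consequently $J_{{\cal G},\QG_j}(x)$ is the unique $y\in\kk_{\cal G}(A,B)$ with ${\f b}_A\tens_A y\tens_B{\f a}_B=[(\s D,\pi_R(F))]$; but since $(\s E,\gamma,F)$ is itself a $\cal G$-equivariant Kasparov $A$-$B$-bimodule, Lemma~\ref{lem20} identifies this $y$ with $[(\s E,F)]=z$. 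Hence $J_{{\cal G},\QG_j}(J_{\QG_j,{\cal G}}(z))=z$, and the proof is complete.

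The step I expect to be the main obstacle is the reduction carried out in the second paragraph. Definition~\ref{def11} is formulated for a bimodule already presented in the colinking form ``$\s E_j$ together with its induced companion'', so in order to apply it and Lemma~\ref{lem20} to an \emph{arbitrary} class of $\kk_{\cal G}(A,B)$ one genuinely needs Theorem~\ref{theo9} (and its bimodule companions \ref{lem11} and \ref{def10}) to bring an arbitrary representative into this form up to equivariant unitary equivalence, together with Proposition~\ref{prop28}~3--4 to guarantee that the resulting class is independent of the auxiliary choices (the representative of $x$, the operator $F_k$ realizing $J_{\QG_k,\QG_j}(x)=[(\s E_k,F_k)]$, and so on). Granting these inputs, all of which are available from earlier in the chapter, the remainder of the argument is purely formal.
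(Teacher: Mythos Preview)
Your proof is correct and follows the same line as the paper's: reduce to showing $J_{{\cal G},\QG_j}\circ J_{\QG_j,{\cal G}}=\id$ (the other composition being Proposition~\ref{prop30}~1), split $F=F_1\oplus F_2$, use Proposition~\ref{prop30}~3 to identify $J_{\QG_k,\QG_j}([(\s E_j,F_j)])=[(\s E_k,F_k)]$, and conclude via Lemma~\ref{lem20}. The paper's proof is terser and leaves the reduction to colinking form implicit---it simply begins ``Let $F\in\Lin(\s E)$\dots'' with $\s E$ the module fixed at the start of \S7.2; you correctly spell out that Theorem~\ref{theo9} (together with \ref{lem11} and \ref{def10}) is what justifies representing an arbitrary class on such a module.
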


\begin{proof}
Let $j,k=1,2$ with $j\neq k$. It remains to prove that $J_{{\cal G},\QG_j}\circ J_{\QG_j,{\cal G}} =\id_{\kk_{\cal G}(A,B)}$ (cf.\ \ref{prop30} 1). Let $F\in\Lin(\s E)$ such that the pair $(\s E,F)$ is a $\cal G$-equivariant Kasparov $A$-$B$-bimodule. Let $x:=[(\s E,F)]\in\kk_{\cal G}(A,B)$. We have $F=F_1\oplus F_2$ with $F_1\in\Lin(\s E_1)$ and $F_2\in\Lin(\s E_2)$. It follows from \ref{prop30} 3 that $J_{\QG_k,\QG_j}([(\s E_j,F_j)]=[(\s E_k,F_k)]$. By applying \ref{lem20}, we then obtain $J_{{\cal G},\QG_j}(J_{\QG_j,{\cal G}}(x))=J_{{\cal G},\QG_j}([(\s E_j,F_j)])=[(\s E,F)]=x$.
\end{proof}

We then obtain another proof of Th\'eor\`eme 4.36 \cite{BC}.

\begin{cor}\label{cor4}
The map $J_{\QG_2,\QG_1}:\kk_{\QG_1}(A_1,B_1)\rightarrow\kk_{\QG_2}(A_2,B_2)$ is an isomorphism of abelian groups and $(J_{\QG_2,\QG_1})^{-1}=J_{\QG_1,\QG_2}$.
\end{cor}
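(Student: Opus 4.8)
The plan is to deduce the corollary purely formally from Proposition \ref{prop30} and Theorem \ref{theo4}, with no further analytic input; the standing hypothesis that $A_1$ (hence $A_2$ and $A$) is separable is exactly what makes all the maps involved available.

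First I would record the two instances of Proposition \ref{prop30} 2 that we need: taking $j=1$, $k=2$ gives $J_{\QG_2,{\cal G}}\circ J_{{\cal G},\QG_1}=J_{\QG_2,\QG_1}$, and taking $j=2$, $k=1$ gives $J_{\QG_1,{\cal G}}\circ J_{{\cal G},\QG_2}=J_{\QG_1,\QG_2}$. By Theorem \ref{theo4}, for each $j=1,2$ the maps $J_{\QG_j,{\cal G}}:\kk_{\cal G}(A,B)\rightarrow\kk_{\QG_j}(A_j,B_j)$ and $J_{{\cal G},\QG_j}:\kk_{\QG_j}(A_j,B_j)\rightarrow\kk_{\cal G}(A,B)$ are isomorphisms of abelian groups that are inverse of each other. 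In particular $J_{\QG_2,\QG_1}=J_{\QG_2,{\cal G}}\circ J_{{\cal G},\QG_1}$ is a composite of two isomorphisms, hence an isomorphism of abelian groups.

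To identify its inverse I would simply compute, using the first displayed factorization, that $(J_{\QG_2,\QG_1})^{-1}=(J_{{\cal G},\QG_1})^{-1}\circ(J_{\QG_2,{\cal G}})^{-1}=J_{\QG_1,{\cal G}}\circ J_{{\cal G},\QG_2}$, where both equalities use that the relevant pairs are mutually inverse by Theorem \ref{theo4}; by the second displayed factorization this last composite is precisely $J_{\QG_1,\QG_2}$. Equivalently, one checks directly that $J_{\QG_1,\QG_2}\circ J_{\QG_2,\QG_1}=J_{\QG_1,{\cal G}}\circ\big(J_{{\cal G},\QG_2}\circ J_{\QG_2,{\cal G}}\big)\circ J_{{\cal G},\QG_1}=J_{\QG_1,{\cal G}}\circ J_{{\cal G},\QG_1}=\id_{\kk_{\QG_1}(A_1,B_1)}$ and symmetrically $J_{\QG_2,\QG_1}\circ J_{\QG_1,\QG_2}=\id_{\kk_{\QG_2}(A_2,B_2)}$, again invoking Theorem \ref{theo4} to collapse the inner composite to the identity of $\kk_{\cal G}(A,B)$ and then of $\kk_{\QG_1}(A_1,B_1)$ (resp.\ $\kk_{\QG_2}(A_2,B_2)$).

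There is essentially no obstacle here beyond bookkeeping: the content lies entirely in Theorem \ref{theo4} (and, behind it, in Theorem \ref{theo3}, Lemma \ref{lem32} and Proposition \ref{prop28}). The only point requiring care is to keep track of domains and codomains so that each composition is legitimate, and to note that the standing separability assumption on $A_1$ ensures that $J_{\QG_j,{\cal G}}$, $J_{{\cal G},\QG_j}$ and $J_{\QG_k,\QG_j}$ are all defined in the first place.
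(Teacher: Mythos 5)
Your proposal is correct and follows exactly the paper's route: the paper also deduces the corollary as an immediate consequence of Proposition \ref{prop30} 2 and Theorem \ref{theo4}, and your write-up merely makes the bookkeeping explicit.
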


\begin{proof}
This is an immediate consequence of \ref{prop30} 2 and \ref{theo4}.
\end{proof}

Let us fix a third $\QG_1$-C*-algebra $C_1$. Consider the induced $\QG_2$-C*-algebra $C_2:=\ind(C_1)$ and the $\cal G$-C*-algebra $C:=C_1\oplus C_2$.

\begin{prop}\label{prop44}
For $j=1,2$, we have:
\begin{enumerate}
\item $J_{{\cal G},\QG_j}(1_{A_j})=1_A$;
\item for all $x\in\kk_{\QG_j}(A_j,C_j)$ and $y\in\kk_{\QG_j}(C_j,B_j)$, $J_{{\cal G},\QG_j}(x\tens_{C_j}y)=J_{{\cal G},\QG_j}(x)\tens_C J_{{\cal G},\QG_j}(y)$ in $\kk_{\cal G}(A,B)$.\qedhere
\end{enumerate}
\end{prop}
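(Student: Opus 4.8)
The plan is to deduce both statements formally from Theorem~\ref{theo4}, together with the unitality and multiplicativity of the ``dual'' induction homomorphism already recorded in Proposition~\ref{prop43}. Indeed, by Theorem~\ref{theo4} the maps $J_{\QG_j,\,{\cal G}}$ and $J_{{\cal G},\QG_j}$ are mutually inverse isomorphisms of abelian groups, both between the pairs $(A,B)/(A_j,B_j)$ and --- applying the same constructions with $C$ in place of $B$ --- between $(A,C)/(A_j,C_j)$ and $(C,B)/(C_j,B_j)$; thus $J_{{\cal G},\QG_j}=(J_{\QG_j,\,{\cal G}})^{-1}$ in each case, and the inverse of a Kasparov-product-preserving, unit-preserving map has the same properties. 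The standing assumption that $A_1$ is separable (hence $A$ and the $A_j$ are separable, cf.\ \ref{lem27}) ensures that every Kasparov product below is defined and that all the maps $J_{{\cal G},\QG_j}$, $J_{\QG_j,\,{\cal G}}$ in play are well-defined (cf.\ \ref{def11}, \ref{prop28}).

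For statement~1, apply $J_{{\cal G},\QG_j}$ to the identity $J_{\QG_j,\,{\cal G}}(1_A)=1_{A_j}$ of Proposition~\ref{prop43}~1. Since $J_{{\cal G},\QG_j}\circ J_{\QG_j,\,{\cal G}}=\id$ on $\kk_{\cal G}(A,A)$ by Theorem~\ref{theo4}, the left-hand side is $1_A$, whence $J_{{\cal G},\QG_j}(1_{A_j})=1_A$.

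For statement~2, fix $x\in\kk_{\QG_j}(A_j,C_j)$ and $y\in\kk_{\QG_j}(C_j,B_j)$ and put $X:=J_{{\cal G},\QG_j}(x)\in\kk_{\cal G}(A,C)$, $Y:=J_{{\cal G},\QG_j}(y)\in\kk_{\cal G}(C,B)$. Applying $J_{\QG_j,\,{\cal G}}$ and using $J_{\QG_j,\,{\cal G}}\circ J_{{\cal G},\QG_j}=\id$ (Theorem~\ref{theo4}, also with $C$ in place of $B$) gives $J_{\QG_j,\,{\cal G}}(X)=x$ and $J_{\QG_j,\,{\cal G}}(Y)=y$. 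Then Proposition~\ref{prop43}~2, applied to the classes $X$ and $Y$, yields
\[
J_{\QG_j,\,{\cal G}}(X\tens_C Y)=J_{\QG_j,\,{\cal G}}(X)\tens_{C_j}J_{\QG_j,\,{\cal G}}(Y)=x\tens_{C_j}y .
\]
Applying $J_{{\cal G},\QG_j}$ and using once more that it is a left inverse of $J_{\QG_j,\,{\cal G}}$ on $\kk_{\cal G}(A,B)$, we obtain $X\tens_C Y=J_{{\cal G},\QG_j}(x\tens_{C_j}y)$, that is $J_{{\cal G},\QG_j}(x)\tens_C J_{{\cal G},\QG_j}(y)=J_{{\cal G},\QG_j}(x\tens_{C_j}y)$, as desired.

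No serious obstacle remains: the real content has been pushed into Theorem~\ref{theo4} (which itself rests on Proposition~\ref{prop30}, Lemma~\ref{lem20} and the biduality Theorem~\ref{theo3}) and into the multiplicativity statement Proposition~\ref{prop43}~2 (proved by a direct comparison of Kasparov products through the decomposition $\s E=\s E_1\oplus\s E_2$). The remaining checks are purely bookkeeping: that the domains and codomains of all the homomorphisms line up for the triples $(A,B,C)$ and their $\QG_j$-components, and that separability is available wherever a Kasparov product or an application of Proposition~\ref{prop43} requires it --- both of which follow from the running hypotheses.
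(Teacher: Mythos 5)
Your argument is correct and is exactly the paper's proof: the paper disposes of this proposition with the single line ``This follows from \ref{theo4} and \ref{prop43}'', and your proposal simply writes out the formal transport of unitality and multiplicativity through the inverse isomorphism $J_{{\cal G},\QG_j}=(J_{\QG_j,\,{\cal G}})^{-1}$ furnished by Theorem \ref{theo4}. The bookkeeping you flag (separability, applying Theorem \ref{theo4} to the pairs $(A,C)$, $(C,B)$ and $(A,A)$) is handled the same way, implicitly, in the paper.
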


\begin{proof}
This follows from \ref{theo4} and \ref{prop43}.
\end{proof}

\begin{prop}\label{prop45}
For $j,k=1,2$ with $j\neq k$, we have:
\begin{enumerate}
\item $J_{\QG_k,\QG_j}(1_{A_j})=1_{A_k}$;
\item for all $x\in\kk_{\QG_j}(A_j,C_j)$ and $y\in\kk_{\QG_j}(C_j,B_j)$, $J_{\QG_k,\QG_j}(x\tens_{C_j}y)=J_{\QG_k,\QG_j}(x)\tens_{C_k} J_{\QG_k,\QG_j}(y)$ in $\kk_{\QG_k}(A_k,B_k)$.\qedhere
\end{enumerate}
\end{prop}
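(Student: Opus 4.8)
The plan is to deduce everything from the already-established factorization $J_{\QG_k,\QG_j}=J_{\QG_k,{\cal G}}\circ J_{{\cal G},\QG_j}$ (cf.\ \ref{prop30} 2) together with the multiplicativity of the two "mixed" descent homomorphisms $J_{{\cal G},\QG_j}$ and $J_{\QG_k,{\cal G}}$ recorded in \ref{prop44} and \ref{prop43}. Since a composite of unit-preserving, product-preserving maps is again unit-preserving and product-preserving, the statement will follow by a short computation. Throughout I would use the standing hypothesis that $A_1$ is separable — hence so are $A_2$ and $A$ (cf.\ \ref{lem27}), which is exactly what is needed in order to apply the functoriality part of \ref{prop43}; the same applies to the third $\QG_1$-C*-algebra $C_1$ and the induced/direct-sum algebras $C_2$, $C$.

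First I would treat the unit: combining \ref{prop44} 1 with \ref{prop43} 1 gives
\[
J_{\QG_k,\QG_j}(1_{A_j})=J_{\QG_k,{\cal G}}\bigl(J_{{\cal G},\QG_j}(1_{A_j})\bigr)=J_{\QG_k,{\cal G}}(1_{A})=1_{A_k}.
\]
For the product, let $x\in\kk_{\QG_j}(A_j,C_j)$ and $y\in\kk_{\QG_j}(C_j,B_j)$, and recall that $J_{{\cal G},\QG_j}$ is defined on all the relevant groups with values in $\kk_{\cal G}(A,C)$ and $\kk_{\cal G}(C,B)$ respectively, where $A$, $B$, $C$ are the $\cal G$-C*-algebras associated with $A_j$, $B_j$, $C_j$ as in \ref{prop16} and \ref{def10}. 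Then, applying \ref{prop44} 2, then \ref{prop43} 2 to the triple $(A,C,B)$, and finally \ref{prop30} 2 once more,
\[
J_{\QG_k,\QG_j}(x\tens_{C_j}y)=J_{\QG_k,{\cal G}}\bigl(J_{{\cal G},\QG_j}(x)\tens_C J_{{\cal G},\QG_j}(y)\bigr)=J_{\QG_k,{\cal G}}(J_{{\cal G},\QG_j}(x))\tens_{C_k}J_{\QG_k,{\cal G}}(J_{{\cal G},\QG_j}(y))=J_{\QG_k,\QG_j}(x)\tens_{C_k}J_{\QG_k,\QG_j}(y),
\]
which is the asserted formula.

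There is no genuine obstacle here: the whole argument is a bookkeeping exercise chaining together \ref{prop30}, \ref{prop43} and \ref{prop44}. The only points requiring a bit of care are to keep track of which induced C*-algebra ($A$ versus $A_j$, $C$ versus $C_j$, and so on) carries which action at each stage, and to make sure that the separability assumption needed for the functoriality of the Kasparov product in \ref{prop43} is in force — which it is, by the standing hypotheses of this chapter. Alternatively, one could argue directly from the defining formula of $J_{\QG_k,\QG_j}$ — in terms of the Morita classes ${\f b}_{A_k}$, ${\f a}_{B_k}$, the ${\f c}$-classes, and the bimodule $({\cal E}_{1,2},(\id_{\K(\s E_2)}\tens R_{21})\delta_{\K(\s E_1)}^2(F_1))$ — using associativity and functoriality of the Kasparov product over the ${\f c}$-classes; but this route is considerably more laborious and offers nothing over the composition argument above.
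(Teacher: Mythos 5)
Your proof is correct and is exactly the argument the paper gives: factor $J_{\QG_k,\QG_j}=J_{\QG_k,{\cal G}}\circ J_{{\cal G},\QG_j}$ via \ref{prop30} 2 and chain the unit- and product-preservation statements of \ref{prop44} and \ref{prop43}. No further comment is needed.
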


\begin{proof}
This is a direct consequence of \ref{prop30} 2, \ref{prop43} and \ref{prop44}.
\end{proof}

\begin{nbs}
We denote by $\kk_{\cal G}$ (resp.\ $\kk_{\QG_j}$ for $j=1,2$) the category of separable $\cal G$ (resp.\ $\QG_j$)-C*-algebras whose set of arrows between two $\cal G$ (resp.\ $\QG_j$)-C*-algebras $A$ and $B$ is the equivariant Kasparov group $\kk_{\cal G}(A,B)$ (resp.\ $\kk_{\QG_j}(A,B)$).
\end{nbs}

\begin{thm}
We have:
\begin{enumerate}
\item for $j=1,2$, the correspondences $J_{\QG_j,{\cal G}}:\kk_{\cal G}\rightarrow\kk_{\QG_j}$ and $J_{{\cal G},\QG_j}:\kk_{\QG_j}\rightarrow\kk_{\cal G}$ are equivalences of categories inverse of each other;
\item the correspondences $J_{\QG_2,\QG_1}:\kk_{\QG_1}\rightarrow\kk_{\QG_2}$ and $J_{\QG_1,\QG_2}:\kk_{\QG_2}\rightarrow\kk_{\QG_1}$ are equivalences of categories inverse of each other.\qedhere
\end{enumerate}
\end{thm}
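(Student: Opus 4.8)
The plan is to verify, first, that all six maps in question really are functors between the categories $\kk_{\cal G}$, $\kk_{\QG_1}$, $\kk_{\QG_2}$, and then that the relevant pairs are mutually quasi-inverse. On objects the maps are already available: $J_{\QG_j,{\cal G}}$ sends $A=A_1\oplus A_2$ to $A_j$ (the object part of the functor recalled before \ref{actprop}), its candidate quasi-inverse $J_{{\cal G},\QG_j}$ sends $A_j$ to the ${\cal G}$-$\Cstar$-algebra $A_j\oplus{\rm Ind}_{\QG_j}^{\QG_k}(A_j)$ of \ref{prop37} (and its mirror for $j=2$), and $J_{\QG_2,\QG_1}$, $J_{\QG_1,\QG_2}$ are the induction correspondences of \ref{propind1}; separability is preserved throughout by \ref{lem27}. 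On morphisms the maps are the group homomorphisms $J_{\QG_j,{\cal G}}$ (cf.\ \ref{propdef9}), $J_{{\cal G},\QG_j}$ (cf.\ \ref{def11}) and $J_{\QG_k,\QG_j}$, and their compatibility with the Kasparov product and with the identity classes is exactly \ref{prop43}, \ref{prop44} and \ref{prop45}; so each is a functor. One also checks the routine point that $J_{{\cal G},\QG_j}$ and $J_{\QG_j,{\cal G}}$ send the class $[\phi]$ of an equivariant $*$-homomorphism to the class of the induced, respectively restricted, $*$-homomorphism (this follows from \ref{lem20} and \ref{theo3}), so that the object- and morphism-level descriptions genuinely assemble into functors.

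For item (1), fix $j\in\{1,2\}$ and write $k$ for the other index. Theorem \ref{theo4} says that for all separable $\cal G$-$\Cstar$-algebras $A,B$ the map $J_{\QG_j,{\cal G}}\colon\kk_{\cal G}(A,B)\to\kk_{\QG_j}(A_j,B_j)$ is a bijection with inverse $J_{{\cal G},\QG_j}$, so $J_{\QG_j,{\cal G}}$ is fully faithful, and so is $J_{{\cal G},\QG_j}$. It is surjective on objects, hence essentially surjective, since every separable $\QG_j$-$\Cstar$-algebra is the $j$-th component of the $\cal G$-$\Cstar$-algebra obtained from it by induction (\ref{prop37}). Moreover $J_{\QG_j,{\cal G}}\circ J_{{\cal G},\QG_j}$ is literally the identity functor of $\kk_{\QG_j}$: it is the identity on objects by construction and the identity on hom-sets by \ref{prop30}\,1. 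The composite $J_{{\cal G},\QG_j}\circ J_{\QG_j,{\cal G}}$ is not the identity on objects, but the $\cal G$-equivariant $*$-isomorphism $\id_{A_j}\oplus\widetilde{\pi}_k$ of \ref{prop4}, which identifies $A=A_j\oplus A_k$ with $A_j\oplus{\rm Ind}_{\QG_j}^{\QG_k}(A_j)=J_{{\cal G},\QG_j}(J_{\QG_j,{\cal G}}(A))$, furnishes for each $A$ an invertible element of $\kk_{\cal G}$; granting naturality of this family (discussed below), $J_{\QG_j,{\cal G}}$ and $J_{{\cal G},\QG_j}$ are inverse equivalences of categories.

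For item (2), I would note that on objects and on hom-sets $J_{\QG_2,\QG_1}=J_{\QG_2,{\cal G}}\circ J_{{\cal G},\QG_1}$ and $J_{\QG_1,\QG_2}=J_{\QG_1,{\cal G}}\circ J_{{\cal G},\QG_2}$ (the hom-set identities are \ref{prop30}\,2, the object-level ones being immediate from the definitions), so each is a functor and a composite of two equivalences from item (1), hence an equivalence. Then $J_{\QG_2,\QG_1}\circ J_{\QG_1,\QG_2}=J_{\QG_2,{\cal G}}\circ(J_{{\cal G},\QG_1}\circ J_{\QG_1,{\cal G}})\circ J_{{\cal G},\QG_2}$ is, via the natural isomorphism $J_{{\cal G},\QG_1}\circ J_{\QG_1,{\cal G}}\cong\id_{\kk_{\cal G}}$, naturally isomorphic to $J_{\QG_2,{\cal G}}\circ J_{{\cal G},\QG_2}=\id_{\kk_{\QG_2}}$ (\ref{prop30}\,1), and symmetrically for the other composite; hence they are inverse equivalences. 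As a sanity alternative, $J_{\QG_2,\QG_1}$ is fully faithful by \ref{cor4} and surjective on isomorphism classes of objects by \ref{propind1}.

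The only step that is not pure bookkeeping is the naturality of the isomorphisms $[\id_{A_j}\oplus\widetilde{\pi}_k]$ implementing $J_{{\cal G},\QG_j}\circ J_{\QG_j,{\cal G}}\cong\id$. Since $\kk_{\cal G}$-morphisms are Kasparov classes and not merely $*$-homomorphisms, I would deduce naturality from additivity and multiplicativity of the functors involved (\ref{prop43}\,2, \ref{prop44}\,2, \ref{theo3}): conjugating a Kasparov product by the classes $[\widetilde{\pi}_k]^{\pm1}$ commutes with forming Kasparov products, and on classes of $*$-homomorphisms naturality holds by functoriality of induction (\ref{defHomInd}, \ref{def10}) together with the intertwining relations defining $\widetilde{\pi}_k$ in \ref{prop4}. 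I expect this verification, mildly delicate but short, to be the main obstacle; everything else is assembling the already-established facts \ref{theo4}, \ref{cor4} and \ref{prop30} into categorical language.
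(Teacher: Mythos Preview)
Your approach is essentially the same as the paper's, which is extremely terse: it simply cites \ref{theo9}, \ref{prop44}, \ref{theo4} for statement 1 and \ref{theo10}, \ref{prop45}, \ref{cor4} for statement 2. You have unpacked exactly what these references provide (object-level correspondence, functoriality, bijectivity on hom-groups), so there is no substantive difference.

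Your one genuine concern, the naturality of the isomorphisms $[\id_{A_j}\oplus\widetilde{\pi}_k]$ implementing $J_{{\cal G},\QG_j}\circ J_{\QG_j,{\cal G}}\cong\id_{\kk_{\cal G}}$, is in fact automatic and need not be checked by hand. You have already observed that the other composite $J_{\QG_j,{\cal G}}\circ J_{{\cal G},\QG_j}$ is \emph{literally} the identity functor (on objects by construction, on morphisms by \ref{prop30}\,1), and that $J_{\QG_j,{\cal G}}$ is fully faithful (\ref{theo4}). It is a general categorical fact that if $F\circ G=\id$ and $F$ is fully faithful, then $G\circ F\cong\id$ naturally: for each object $X$ one has $F(GF(X))=F(X)$, hence a unique isomorphism $\eta_X:GF(X)\to X$ with $F(\eta_X)=\id_{F(X)}$, and naturality follows by applying faithfulness of $F$ to the square for any morphism $f:X\to Y$. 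Thus your ``mildly delicate'' verification is unnecessary, and the proof reduces entirely to the bookkeeping you have already done.
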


\begin{proof}
The first (resp.\ second) statement is just a restatement of \ref{theo9}, \ref{prop44} and \ref{theo4} (resp.\ \ref{theo10}, \ref{prop45} and \ref{cor4}).
\end{proof}

\section{Appendix}\label{appendix}

	\subsection[Normal linear forms, weights and operator-valued weights]{Normal linear forms, weights and operator-valued weights on von Neumann algebras \cite{Co2}}\label{integration}

\setcounter{thm}{0}

\numberwithin{thm}{subsection}
\numberwithin{prop}{subsection}
\numberwithin{lem}{subsection}
\numberwithin{cor}{subsection}
\numberwithin{propdef}{subsection}
\numberwithin{nb}{subsection}
\numberwithin{nbs}{subsection}
\numberwithin{rk}{subsection}
\numberwithin{rks}{subsection}
\numberwithin{defin}{subsection}
\numberwithin{ex}{subsection}
\numberwithin{exs}{subsection}
\numberwithin{noh}{subsection}

Let $M$ be a von Neumann algebra. Denote by $M_*$ (resp.\ $M_*^+$) the Banach space (resp.\ positive cone) of the normal linear forms (resp.\ positive normal linear forms) on $M$. Let $\omega\in M_*$ and $a,b\in M$. Denote by $a\omega\in M_*$ and $\omega b\in M_*$ the normal linear functionals on $M$ given for all $x\in M$ by: 
\[
(a\omega)(x):=\omega(xa);\quad (\omega b)(x):=\omega(bx).
\]
We have $a'(a\omega)=(a'a)\omega$ and $(\omega b)b'=\omega(bb')$, for all $a,a,b,b'\in M$. We also denote 
\[
a\omega b:=a(\omega b)=(a \omega) b ; \quad \omega_a:=a^*\omega a.
\]
If $\omega\in M_*^+$, then $\omega_a\in M_*^+$. Note that $(\omega_a)_b=\omega_{ab}$ for all $a,b\in M$. If $\omega\in M_*$ we define $\overline{\omega}\in M_*$\index[symbol]{oa@$\overline{\omega}$} by setting
\[
\overline{\omega}(x):=\overline{\omega(x^*)},\quad \text{for all } x\in M.
\]
Let $\s H$ be a Hilbert space and let us fix $\xi,\eta\in\s H$. Denote by $\omega_{\xi,\eta}\in\B(\s H)_*$\index[symbol]{ob@$\omega_{\xi,\eta}$} the normal linear form defined by
\[
\omega_{\xi,\eta}(x):=\langle\xi,\, x\eta\rangle,\quad \text{for all } x\in\B(\s H).
\]
Note that we have $\overline{\omega}_{\xi,\eta}=\omega_{\eta,\xi}$, $a\omega_{\xi,\eta}=\omega_{\xi,a\eta}$ and $\omega_{\xi,\eta}a=\omega_{a^*\xi,\eta}$ for all $a\in\B(\s H)$.

\begin{noh}{\it Tensor product of normal linear forms.} Let $M$ and $N$ be von Neumann algebras, $\phi\in M_*$ and $\psi\in N_*$. There exists a unique $\phi\tens\psi\in(M\tens N)_*$ such that $(\phi\tens\psi)(x\tens y)=\phi(x)\psi(y)$ for all $x\in M$ and $y\in N$. Moreover, $\|\phi\tens\psi\|\leqslant\|\phi\|\cdot\|\psi\|$. Actually, it is known that we have an (completely) isometric identification $M_*\widehat{\tens}_{\pi}N_*=(M\tens N)_*$, where $\widehat{\tens}_{\pi}$ denotes the projective tensor product of Banach spaces. In particular, any $\omega\in(M\tens N)_*$ is the norm limit of finite sums of the form $\sum_i\phi_i\tens\psi_i$, where $\phi_i\in M_*$ and $\psi_i\in N_*$.
\end{noh}

\begin{noh}{\it Slicing with normal linear forms.} We will also need to slice maps with normal linear forms. Let $M_1$ and $M_2$ be von Neumann algebras, $\omega_1\in (M_1)_*$ and $\omega_2\in(M_2)_*$. Therefore, the maps $\omega_1\odot\id:M_1\odot M_2\rightarrow M_1$ and $\id\odot\omega_2:M_1\odot M_2\rightarrow M_2$ extend uniquely to norm continuous normal linear maps $\omega_1\tens\id:M_1\tens M_2\rightarrow M_2$ and $\id\tens\omega_2:M_1\tens M_2\rightarrow M_1$. 
Let $\s H$ and $\s K$ be Hilbert spaces, for $\xi\in\s H$ and $\eta\in\s K$ we define 
$\theta_{\xi}\in\B(\s K,\s H\tens\s K)$ and $\theta'_{\eta}\in\B(\s H,\s H\tens\s K)$
by setting: 
\begin{center}
$\theta_{\xi}(\zeta):=\xi\tens\zeta$, \quad for all $\zeta\in\s K$;\quad $\theta'_{\eta}(\zeta):=\zeta\tens\eta$, \quad for all $\zeta\in\s H$.
\end{center} 
If $T\in\B(\s H\tens\s K)$, $\phi\in\B(\s K)_*$ and $\omega\in\B(\s H)_*$, then the operators $(\id\tens\phi)(T)\in\B(\s H)$ and $(\omega\tens\id)(T)\in\B(\s K)$ are determined by the formulas:
\begin{align*}
\langle\xi_1,\,(\id\tens\phi)(T)\xi_2\rangle &=\phi(\theta_{\xi_1}^*T\theta_{\xi_2}),\quad \xi_1,\,\xi_2\in\s H;\\[0.3cm]
\langle\eta_1,\,(\omega\tens\id)(T)\eta_2\rangle &=\omega(\theta_{\eta_1}^{\prime *}T\theta'_{\eta_2}),\quad \eta_1,\,\eta_2\in\s K.
\end{align*}
In particular, we have:
\[
(\id\tens\omega_{\eta_1,\eta_2})(T)=\theta_{\eta_1}^{\prime *}T\theta'_{\eta_2},\quad \eta_1,\,\eta_2\in\s K;\quad (\omega_{\xi_1,\xi_2}\tens\id)(T)=\theta_{\xi_1}^*T\theta_{\xi_2},\quad \xi_1,\,\xi_2\in\s H.
\]
Let us recall some formulas that will be used several times. For all $\phi\in\B(\s K)_*$, $\omega\in\B(\s H)_*$ and $T\in\B(\s H\tens\s K)$, we have:
\[
x(\id\tens\phi)(T)y=(\id\tens\phi)((x\tens 1)T(y\tens 1)),\ (y\omega x\tens\id)(T)=(\omega\tens\id)((x\tens 1)T(y\tens 1))
\]
for all $x,y\in\B(\s H)$;
\[
a(\omega\tens\id)(T)b=(\omega\tens\id)((1\tens a)T(1\tens b)), \ (\id\tens b\phi a)(T)=(\id\tens\phi)((1\tens a)T(1\tens b))
\]
for all $a,b\in\B(\s K)$. We also have 
\[
(\id\tens\phi)(T)^*=(\id\tens\overline{\phi})(T^*),\quad (\omega\tens\id)(T)^*=(\overline{\omega}\tens\id)(T^*),
\]
\[
(\phi\tens\id)(\Sigma_{\s H\tens\s K} T\Sigma_{\s K\tens\s H})=(\id\tens\phi)(T),\quad (\id\tens\omega)(\Sigma_{\s H\tens\s K} T\Sigma_{\s K\tens\s H})=(\omega\tens\id)(T),
\]
for all $T\in\B(\s H\tens\s K)$, $\phi\in\B(\s K)_*$ and $\omega\in\B(\s H)_*$.
\end{noh}

\begin{defin}\label{weight}
A weight $\varphi$ on $M$ is a map $\varphi:M_+\rightarrow[0,\infty]$ such that: 
\begin{itemize}
	\item for all $x,y\in M_+$, $\varphi(x+y)=\varphi(x)+\varphi(y)$;
	\item for all $x\in M_+$ and $\lambda\in\GR_+$, $\varphi(\lambda x)=\lambda\varphi(x)$.
\end{itemize}
We denote by $\f N_{\varphi}:=\{x\in M\,;\,\varphi(x^*x)<\infty\}$
\index[symbol]{na@$\f N_{\varphi}$, $\f M_{\varphi}^+$} 
the left ideal of square $\varphi$-integrable elements of $M$, $\f M_{\varphi}^+:=\{x\in M_+\,;\,\varphi(x)<\infty\}$ the cone of positive $\varphi$-integrable elements of $M$ and $\f M_{\varphi}:=\langle\f M_{\varphi}^+\rangle$ the space of $\varphi$-integrable elements of $M$.
\end{defin}	

\begin{defin}
Let $\varphi$ be a weight on $M$. The opposite weight of $\varphi$ is the weight $\varphi^{\rm o}$\index[symbol]{pn@$\varphi^{\rm o}$, opposite weight} on $M^{\rm o}$ given by $\varphi ^{\rm o}(x^{\rm o}):=\varphi(x)$ for all $x\in M_+$. Then, we have $\f N_{\varphi^{\rm o}}=(\f N_{\varphi}^*)^{\rm o}$, $\f M_{\varphi^{\rm o}}^+=(\f M_{\varphi}^+)^{\rm o}$ and $\f M_{\varphi^{\rm o}}=(\f M_{\varphi})^{\rm o}$.
\end{defin}

\begin{defin}\label{nsf}
A weight $\varphi$ on $M$ is called:
\begin{itemize}
	\item semi-finite, if $\f N_{\varphi}$ is $\sigma$-weakly dense in $M$;
	\item faithful, if for $x\in M_+$ the condition $\varphi(x)=0$ implies $x=0$;
	\item normal, if $\varphi(\sup_{i\in\cal I} x_i)=\sup_{i\in\cal I}\varphi(x_i)$ for all increasing bounded net $(x_i)_{i\in\cal I}$ of $M_+$.\qedhere
\end{itemize}
\end{defin}

From now on, we will mainly use normal semi-finite faithful (n.s.f.)\ weights. Fix a \nsf weight $\varphi$ on $M$.

\begin{defin}\label{GNS}
We define an inner product on $\f N_{\varphi}$ by setting
\[
\langle x,\,y\rangle_{\varphi}:=\varphi(x^*y),\quad \text{for all } x,\,y\in\f N_{\varphi}.
\]
We denote by $(\s H_{\varphi},\Lambda_{\varphi})$ the Hilbert space completion of $\f N_{\varphi}$ with respect to this inner product, where $\Lambda_{\varphi}:\f N_{\varphi}\rightarrow\s H_{\varphi}$ is the canonical map. There exists a unique unital normal *-representation $\pi_{\varphi}:M\rightarrow\B(\s H_{\varphi})$ such that
\[
\pi_{\varphi}(x)\Lambda_{\varphi}(y)=\Lambda_{\varphi}(xy),\quad \text{for all } x\in M \text{ and } y\in\f N_{\varphi}.
\]
The triple $(\s H_{\varphi},\pi_{\varphi},\Lambda_{\varphi})$ is called the \GNS construction for $(M,\varphi)$.
\end{defin}

\begin{rks}
The linear map $\Lambda_{\varphi}$ is called the \GNS map. We have that $\Lambda_{\varphi}(\f N_{\varphi})$ is dense in $\s H_{\varphi}$ and $\langle\Lambda_{\varphi}(x),\Lambda_{\varphi}(y)\rangle_{\varphi}=\varphi(x^*y)$ for all $x,y\in\f N_{\varphi}$. In particular, $\Lambda_{\varphi}$ is injective. Moreover, we also call $\pi_{\varphi}$ the \GNS representation.
\end{rks}

We recall below the main objects of the Tomita-Takesaki modular theory.

\begin{propdef}
Let $M$ be a von Neumann algebra and $\varphi$ a \nsf weight on $M$. The anti-linear map
$
\Lambda_{\varphi}(\f N_{\varphi}^*\cap\f N_{\varphi}) \rightarrow \Lambda_{\varphi}(\f N_{\varphi}^*\cap\f N_{\varphi}) \; ; \; \Lambda_{\varphi}(x) \mapsto \Lambda_{\varphi}(x^*)
$
is closable and its closure is a possibly unbounded anti-linear map ${\cal T}_{\varphi}:D({{\cal T}_{\varphi}})\subset\s H_{\varphi}\rightarrow\s H_{\varphi}$ such that $D({{\cal T}_{\varphi}})={\rm im}\,{{\cal T}_{\varphi}}$ and ${\cal T}_{\varphi}\circ{\cal T}_{\varphi}(x)=x$ for all $x\in D({{\cal T}_{\varphi}})$.\newline
Let
$
{\cal T}_{\varphi}=J_{\varphi}\nabla_{\varphi}^{1/2}
$
be the polar decomposition of ${\cal T}_{\varphi}$. The anti-unitary $J_{\varphi}:\s H_{\varphi}\rightarrow\s H_{\varphi}$ is called the modular conjugation for $\varphi$ and the injective positive self-adjoint operator $\nabla_{\varphi}$ is called the modular operator for $\varphi$.
\end{propdef}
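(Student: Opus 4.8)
The plan is to recognise this as the opening step of Tomita--Takesaki theory for the weight $\varphi$ and to reduce it to the structure theory of left Hilbert algebras. First I would note that $\Lambda_{\varphi}$ is injective (immediate from faithfulness: $\Lambda_{\varphi}(x)=0$ gives $\varphi(x^*x)=0$, hence $x=0$), so that on the dense subspace $\f A_0:=\Lambda_{\varphi}(\f N_{\varphi}^*\cap\f N_{\varphi})$ the formulas $\Lambda_{\varphi}(x)\cdot\Lambda_{\varphi}(y):=\Lambda_{\varphi}(xy)$ and $\Lambda_{\varphi}(x)^{\sharp}:=\Lambda_{\varphi}(x^*)$ are unambiguous; here $\f N_{\varphi}^*\cap\f N_{\varphi}$ is a $*$-subalgebra of $M$ since $\f N_{\varphi}$ is a left ideal and $\f N_{\varphi}^*$ a right ideal. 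The map $S_0\colon\Lambda_{\varphi}(x)\mapsto\Lambda_{\varphi}(x^*)$ is exactly the involution $\sharp$, and it is anti-linear because $\lambda\Lambda_{\varphi}(x)=\Lambda_{\varphi}(\lambda x)$ and $(\lambda x)^*=\overline{\lambda}\,x^*$. I would then verify that $\f A_0$ is a left Hilbert algebra: left multiplication by $\Lambda_{\varphi}(x)$ is the restriction of the bounded operator $\pi_{\varphi}(x)$; the product is associative with $\f A_0^2$ dense in $\s H_{\varphi}$ (from $\sigma$-weak density of $\f N_{\varphi}^*\cap\f N_{\varphi}$ together with a standard approximation argument); and the compatibility relation $\langle\xi\eta,\zeta\rangle=\langle\eta,\xi^{\sharp}\zeta\rangle$ holds by the direct computation $\varphi((xy)^*z)=\varphi(y^*(x^*z))$ for $x,y,z\in\f N_{\varphi}^*\cap\f N_{\varphi}$.

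The substantive point, and the step I expect to be the main obstacle, is the closability of $S_0=\sharp$. For this I would introduce the candidate adjoint $F_0$ on the set $D(F_0)$ of $\eta\in\s H_{\varphi}$ for which the anti-linear functional $\xi\mapsto\langle S_0\xi,\eta\rangle$ on $\f A_0$ is bounded, via $\langle\eta^{\flat},\xi\rangle:=\overline{\langle S_0\xi,\eta\rangle}$; the compatibility relation then shows that $S_0$ is contained in the anti-linear adjoint of $F_0$, so $S_0$ is closable as soon as $D(F_0)$ is dense. The density of $D(F_0)$ --- i.e.\ the existence of enough ``right bounded'' vectors --- is the genuinely analytic heart of the matter; for a general \nsf weight (rather than a state, where the cyclic vector makes this transparent) this is where the full left Hilbert algebra machinery enters (smoothing arguments, the commutation theorem for the associated left and right von Neumann algebras), and I would invoke it from the standard references (Takesaki, \emph{Theory of Operator Algebras~II}, Ch.~VIII, or Str\u{a}til\u{a}).

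Granting closability, set ${\cal T}_{\varphi}:=\overline{S_0}$. Since $S_0$ is an anti-linear involution of $\f A_0$ onto itself it satisfies $S_0=S_0^{-1}$, and applying the flip $(\xi,\eta)\mapsto(\eta,\xi)$ --- a homeomorphism of $\s H_{\varphi}\oplus\s H_{\varphi}$ --- to the graph of $S_0$ and passing to closures shows that the graph of ${\cal T}_{\varphi}$ is flip-invariant, i.e.\ ${\cal T}_{\varphi}={\cal T}_{\varphi}^{-1}$. This yields at once that ${\cal T}_{\varphi}$ is injective with dense range, $D({\cal T}_{\varphi})={\rm im}\,{\cal T}_{\varphi}$, and ${\cal T}_{\varphi}\circ{\cal T}_{\varphi}=\mathrm{id}$ on $D({\cal T}_{\varphi})$, which is the first assertion. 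For the polar decomposition, $\nabla_{\varphi}:={\cal T}_{\varphi}^*{\cal T}_{\varphi}$ is a positive self-adjoint operator, injective because ${\cal T}_{\varphi}$ is, so $\nabla_{\varphi}^{1/2}:=|{\cal T}_{\varphi}|$ (Borel functional calculus) is an injective positive self-adjoint operator; the anti-linear partial isometry $J_{\varphi}$ in ${\cal T}_{\varphi}=J_{\varphi}\,|{\cal T}_{\varphi}|$ has initial space $\overline{{\rm im}\,|{\cal T}_{\varphi}|}=\s H_{\varphi}$ and final space $\overline{{\rm im}\,{\cal T}_{\varphi}}=\s H_{\varphi}$, hence is an anti-unitary. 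Finally I would remark that the extra identities $J_{\varphi}^2=1$, $J_{\varphi}^*=J_{\varphi}$ and $J_{\varphi}\nabla_{\varphi}J_{\varphi}=\nabla_{\varphi}^{-1}$ --- not needed for the statement but used later in the paper --- follow from ${\cal T}_{\varphi}={\cal T}_{\varphi}^{-1}$ and the uniqueness of the polar decomposition.
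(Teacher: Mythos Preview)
Your proposal is correct and is in fact the standard route to Tomita--Takesaki theory for weights via left Hilbert algebras. Note, however, that the paper does not give its own proof of this statement: it appears in the appendix as a classical result recalled from the literature (the section is headed by a reference to \cite{Co2}), so there is nothing to compare against beyond observing that your outline matches the usual treatment in Takesaki or Str\u{a}til\u{a}.
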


\begin{propdef}
There exists a unique one-parameter group $(\sigma_t^{\varphi})_{t\in\GR}$ of automorphisms on $M$, called the modular automorphism group of $\varphi$, such that
\[
\pi_{\varphi}(\sigma_t^{\varphi}(x))=\nabla_{\varphi}^{{\rm i}t}\pi_{\varphi}(x)\nabla_{\varphi}^{-{\rm i}t},\quad \text{for all } t\in\GR \text{ and } x\in M.
\]
Then, for all $t\in\GR$ and $x\in M$ we have $\sigma_t^{\varphi}(x)\in\f N_{\varphi}$ and 
$\Lambda_{\varphi}(\sigma_t^{\varphi}(x))=\nabla_{\varphi}^{{\rm i}t}\Lambda_{\varphi}(x)$.
\end{propdef}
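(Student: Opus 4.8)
The plan is to recognise this as the Tomita--Takesaki theorem attached to the n.s.f.\ weight $\varphi$, and to deduce it from the theory of (left) Hilbert algebras. First I would equip $\f A:=\Lambda_{\varphi}(\f N_{\varphi}\cap\f N_{\varphi}^*)$ with the structure of a full left Hilbert algebra: the multiplication is $\Lambda_{\varphi}(x)\Lambda_{\varphi}(y):=\Lambda_{\varphi}(xy)$, the involution is the map $\Lambda_{\varphi}(x)\mapsto\Lambda_{\varphi}(x^*)$, whose closure is exactly the operator ${\cal T}_{\varphi}$ of the preceding proposition-definition, and its polar decomposition ${\cal T}_{\varphi}=J_{\varphi}\nabla_{\varphi}^{1/2}$ is precisely the one used there to define $J_{\varphi}$ and $\nabla_{\varphi}$. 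One checks that the left von Neumann algebra of $\f A$ (i.e.\ the von Neumann algebra generated by the left multiplication operators) coincides with $\pi_{\varphi}(M)$, and that $\pi_{\varphi}$ is a faithful, unital, normal $*$-homomorphism onto its image since $\varphi$ is faithful and normal.

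The core analytic input --- and the main obstacle --- is the Tomita theorem: the unitaries $\nabla_{\varphi}^{{\rm i}t}$, $t\in\GR$, normalise both $\f A$ and $\pi_{\varphi}(M)$, that is $\nabla_{\varphi}^{{\rm i}t}\f A=\f A$ and $\nabla_{\varphi}^{{\rm i}t}\pi_{\varphi}(M)\nabla_{\varphi}^{-{\rm i}t}=\pi_{\varphi}(M)$ (equivalently $J_{\varphi}\pi_{\varphi}(M)J_{\varphi}=\pi_{\varphi}(M)'$). This is the deep part of the theory and I would simply quote it from the standard references \cite{Co2} (it is proved there via the resolvent/contour-integral estimates for $(\lambda+\nabla_{\varphi})^{-1}\xi$ with $\xi\in\f A$ and analytic continuation of $t\mapsto\nabla_{\varphi}^{{\rm i}t}\xi$). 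Granting it, I define
\[
\sigma_t^{\varphi}:=\pi_{\varphi}^{-1}\circ{\rm Ad}(\nabla_{\varphi}^{{\rm i}t})\circ\pi_{\varphi}:M\rightarrow M,
\]
which makes sense because ${\rm Ad}(\nabla_{\varphi}^{{\rm i}t})$ preserves $\pi_{\varphi}(M)$ and $\pi_{\varphi}$ is faithful; each $\sigma_t^{\varphi}$ is then a normal $*$-automorphism of $M$, the relation $\sigma_{s+t}^{\varphi}=\sigma_s^{\varphi}\circ\sigma_t^{\varphi}$ follows from $\nabla_{\varphi}^{{\rm i}(s+t)}=\nabla_{\varphi}^{{\rm i}s}\nabla_{\varphi}^{{\rm i}t}$, and strong continuity of $t\mapsto\nabla_{\varphi}^{{\rm i}t}$ yields pointwise $\sigma$-weak continuity of $t\mapsto\sigma_t^{\varphi}$.

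For the last assertion, fix $x\in\f N_{\varphi}$. On the core $\f N_{\varphi}\cap\f N_{\varphi}^*$ one has $\nabla_{\varphi}^{{\rm i}t}\Lambda_{\varphi}(x)\in\f A$, so it equals $\Lambda_{\varphi}(y)$ for some $y\in\f N_{\varphi}\cap\f N_{\varphi}^*$; computing the left multiplication operator of this vector gives $\pi_{\varphi}(y)=\nabla_{\varphi}^{{\rm i}t}\pi_{\varphi}(x)\nabla_{\varphi}^{-{\rm i}t}=\pi_{\varphi}(\sigma_t^{\varphi}(x))$, hence $y=\sigma_t^{\varphi}(x)$ by injectivity of $\Lambda_{\varphi}$ and faithfulness of $\pi_{\varphi}$, i.e.\ $\Lambda_{\varphi}(\sigma_t^{\varphi}(x))=\nabla_{\varphi}^{{\rm i}t}\Lambda_{\varphi}(x)$; the general case $x\in\f N_{\varphi}$ follows by approximating $\Lambda_{\varphi}(x)$ by vectors of $\f A$, using that $\Lambda_{\varphi}$ is closed and that the automorphism $\sigma_t^{\varphi}$ satisfies $\sigma_t^{\varphi}(\f N_{\varphi})=\f N_{\varphi}$. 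Finally, uniqueness is immediate: any one-parameter automorphism group $(\sigma_t')_{t\in\GR}$ with $\pi_{\varphi}(\sigma_t'(x))=\nabla_{\varphi}^{{\rm i}t}\pi_{\varphi}(x)\nabla_{\varphi}^{-{\rm i}t}$ satisfies $\pi_{\varphi}\circ\sigma_t'=\pi_{\varphi}\circ\sigma_t^{\varphi}$, and faithfulness of $\pi_{\varphi}$ forces $\sigma_t'=\sigma_t^{\varphi}$.
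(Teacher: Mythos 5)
Your argument is correct in substance, but note that the paper offers no proof of this statement at all: it appears in the appendix (\S\ref{integration}) as a recollection of the standard Tomita--Takesaki theory, cited to \cite{Co2}. Your sketch is exactly the standard route one finds in the references: build the full left Hilbert algebra on $\Lambda_{\varphi}(\f N_{\varphi}\cap\f N_{\varphi}^*)$, quote Tomita's theorem ($\nabla_{\varphi}^{{\rm i}t}$ normalises $\pi_{\varphi}(M)$, $J_{\varphi}\pi_{\varphi}(M)J_{\varphi}=\pi_{\varphi}(M)'$) as the deep input, and define $\sigma_t^{\varphi}=\pi_{\varphi}^{-1}\circ{\rm Ad}(\nabla_{\varphi}^{{\rm i}t})\circ\pi_{\varphi}$. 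One small remark on your last paragraph: as phrased, you invoke $\sigma_t^{\varphi}(\f N_{\varphi})=\f N_{\varphi}$ to prove the extension to all of $\f N_{\varphi}$, which is circular since that inclusion is part of what is being established; but the circularity is harmless, because the approximation argument goes through directly --- take $x_i\in\f N_{\varphi}\cap\f N_{\varphi}^*$ with $x_i\to x$ $\sigma$-strongly$^*$ and $\Lambda_{\varphi}(x_i)\to\Lambda_{\varphi}(x)$, use normality of $\sigma_t^{\varphi}$ and the closedness of $\Lambda_{\varphi}$ for the $\sigma$-strong$^*$/norm topologies to conclude both that $\sigma_t^{\varphi}(x)\in\f N_{\varphi}$ and that $\Lambda_{\varphi}(\sigma_t^{\varphi}(x))=\nabla_{\varphi}^{{\rm i}t}\Lambda_{\varphi}(x)$ (equivalently, one may first establish the invariance $\varphi\circ\sigma_t^{\varphi}=\varphi$).
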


\begin{propdef}
The map $C_M:M\rightarrow M'\,;\, x \mapsto J_{\varphi}\pi_{\varphi}(x)^*J_{\varphi}$ is a normal unital *-antihomomorphism.\index[symbol]{cb@$C_M$}
\end{propdef}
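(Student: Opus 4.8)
The plan is to deduce every asserted property directly from the fundamental theorem of Tomita--Takesaki modular theory (cf.\ \cite{Co2}): under the standing identification of $M$ with $\pi_\varphi(M)\subset\B(\s H_\varphi)$, this theory furnishes the anti-unitary involution $J_\varphi$ with $J_\varphi=J_\varphi^*=J_\varphi^{-1}$ (in particular $J_\varphi^2=1_{\s H_\varphi}$) together with the commutation relation $J_\varphi M J_\varphi=M'$. Once these two inputs are granted, the rest of the argument is purely formal; there is no real obstacle, the only point worth a remark being the $\GC$-linearity of $C_M$.

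First I would check well-definedness, unitality and linearity. For $x\in M$ one has $\pi_\varphi(x)^*\in M$, hence $C_M(x)=J_\varphi\pi_\varphi(x)^*J_\varphi\in J_\varphi M J_\varphi=M'$, so $C_M$ does take values in $M'$; and $C_M(1)=J_\varphi 1_{\s H_\varphi}J_\varphi=J_\varphi^2=1_{\s H_\varphi}$, so $C_M$ is unital. Linearity holds because $C_M$ is the composite of the conjugate-linear adjoint operation $T\mapsto T^*$ with the conjugate-linear map $T\mapsto J_\varphi T J_\varphi$ (conjugation by the anti-unitary $J_\varphi$), and the composite of two conjugate-linear maps is $\GC$-linear; boundedness is clear since $\|C_M(x)\|=\|x\|$.

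Next I would establish the $*$-antihomomorphism property. Using $J_\varphi^2=1_{\s H_\varphi}$,
\[
C_M(xy)=J_\varphi(xy)^*J_\varphi=J_\varphi\,y^*x^*\,J_\varphi=(J_\varphi y^*J_\varphi)(J_\varphi x^*J_\varphi)=C_M(y)C_M(x),
\]
so products are reversed; for the $*$-relation I would invoke the standard identity $(J_\varphi T J_\varphi)^*=J_\varphi T^*J_\varphi$, valid for every $T\in\B(\s H_\varphi)$ because $J_\varphi$ is an anti-unitary involution, which applied to $T=\pi_\varphi(x)^*$ gives $C_M(x)^*=J_\varphi\pi_\varphi(x)J_\varphi=C_M(x^*)$. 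Finally, normality follows by writing $C_M$ as the composite of the normal $*$-representation $\pi_\varphi$, the $\sigma$-weakly continuous adjoint operation, and conjugation by the fixed anti-unitary $J_\varphi$ (again $\sigma$-weakly continuous); hence $C_M$ is $\sigma$-weakly continuous, i.e.\ normal. This exhausts the claims, so $C_M:M\to M'$ is a normal unital $*$-antihomomorphism.
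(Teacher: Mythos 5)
The paper states this Proposition-Definition without proof, as part of its appendix reviewing standard Tomita--Takesaki theory; your verification is the canonical argument, resting exactly on the two facts $J_\varphi^2=1_{\s H_\varphi}$ and $J_\varphi\pi_\varphi(M)J_\varphi=\pi_\varphi(M)'$, and every step (linearity as a composite of two conjugate-linear maps, anti-multiplicativity, the identity $(J_\varphi TJ_\varphi)^*=J_\varphi T^*J_\varphi$, and normality as a composite of $\sigma$-weakly continuous maps) is correct. Nothing is missing.
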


\begin{defin}
Let $N$ be a von Neumann algebra. The extended positive cone of $N$ is the set $N_+^{\ext}$ consisting of the maps $m:N_*^+\rightarrow[0,\infty]$, which satisfy the following conditions:\index[symbol]{nb@$N_+^{{\rm ext}}$, extended positive cone}
\begin{itemize}
	\item for all $\omega_1,\omega_2\in N_*^+$, $m(\omega_1+\omega_2)=m(\omega_1)+m(\omega_2)$;
	\item for all $\omega\in N_*^+$ and $\lambda\in\GR_+$, $m(\lambda\omega)=\lambda m(\omega)$;
	\item $m$ is lower semicontinuous with respect to the norm topology on $N_*$.\qedhere
\end{itemize}
\end{defin}

\begin{nbs}
Let $N$ be a von Neumann algebra.
\begin{enumerate}
	\item From now on, we will identify $N_+$ with its part inside $N_+^{\rm ext}$. Accordingly, if $m\in N_+^{\ext}$ and $\omega\in N_*^+$ we will denote by $\omega(m)$ the evaluation of $m$ at $\omega$.
	\item Let $a\in N$ and $m\in N_+^{\ext}$, we define $a^*m a\in N_+^{\ext}$ by setting
	$\omega(a^*ma):=a\omega a^*(m)$ for all $\omega\in N_*^+$.
If $m,n\in N_+^{\ext}$ and $\lambda\in\GR_+$, we also define $m+n\in N_+^{\ext}$ and $\lambda m\in N_+^{\ext}$ by setting
	$\omega(m+n):=\omega(m)+\omega(n)$ and $\omega(\lambda m):=\lambda\omega(m)$ for all $\omega\in N_*^+$.\qedhere
\end{enumerate}
\end{nbs}

\begin{defin}
Let $N\subset M$ be a unital normal inclusion of von Neumann algebras. An operator-valued weight from $M$ to $N$ is a map $T:M_+\rightarrow N_+^{\rm ext}$ such that:
\begin{itemize}
	\item for all $x,y\in M_+$, $T(x+y)=T(x)+T(y)$;
	\item for all $x\in M_+$, $\forall\lambda\in\GR_+$, $T(\lambda x)=\lambda T(x)$;
	\item for all $x\in M_+$ and $a\in N$, $T(a^*xa)=a^*T(x)a$.
\end{itemize}
Let $\f N_T:=\{x\in M\,;\,T(x^*x)\in N_+\}$, $\f M_T^+:=\{x\in M_+\,;\,T(x)\in N_+\}$ and ${\f M}_T:=\langle{\f M}_T^+\rangle$.\index[symbol]{nc@${\f N}_T$, ${\f M}_T^+$, ${\f M}_T$}
\end{defin}

\begin{defin}
Let $N\subset M$ be a unital normal inclusion of von Neumann algebras. An operator-valued weight $T$ from $M$ to $N$ is said to be:
\begin{itemize}
	\item semi-finite, if $\f N_T$ is $\sigma$-weakly dense in $M$;
	\item faithful, if for $x\in M_+$ the condition $T(x)=0$ implies $x=0$;
	\item normal, if for every increasing bounded net $(x_i)_{i\in\cal I}$ of elements of $M_+$ and $\omega\in N_*^+$, we have $\omega(T(\sup_{i\in\cal I} x_i))=\lim_{i\in\cal I}\omega(T(x_i))$.\qedhere
\end{itemize}
\end{defin}

Note that if $T:M_+\rightarrow N_+^{\ext}$ is an operator-valued weight, it extends uniquely to a semi-linear map $\overline{T}:M_+^{\ext}\rightarrow N_+^{\ext}$. This will allow us to compose \nsf operator-valued weights. Indeed, let $P\subset N\subset M$ be unital normal inclusions of von Neumann algebras. Let $S$ (resp.\ $T$) be an operator-valued weight from $N$ (resp.\ $M$) to $P$ (resp.\ $N$). We define an operator-valued weight from $M$ to $P$ by setting $(S\circ T)(x):=\overline{S}(T(x))$ for all $x\in N_+$.
	
	\subsection[Relative tensor product and fiber product]{Relative tensor product of Hilbert spaces and fiber product of von Neumann algebras}\label{tensorproduct}
	
In this paragraph, we will recall the definitions, notations and important results concerning the  relative tensor product and the fiber product which are the main technical tools of the theory of measured quantum groupoids. For more information, we refer the reader to \cite{Co}.

\medbreak

In the whole section, $N$ is a von Neumann algebra endowed with a \nsf weight $\varphi$. Let $\pi$ (resp.\ $\gamma$) be a normal unital *-representation of $N$ (resp.\ $N^{\rm o}$) on a Hilbert space $\cal H$ (resp.\ $\cal K$).

\paragraph{Relative tensor product.} The Hilbert space $\cal H$ (resp.\ $\cal K$) may be considered as a left (resp.\ right) $N$-module. Moreover, $\s H_{\varphi}$ is an $N$-bimodule whose actions are given by
\begin{center}
$x\xi:=\pi_{\varphi}(x)\xi$ \quad and \quad $\xi y:=J_{\varphi}\pi_{\varphi}(y^*)J_{\varphi}\xi$,\quad for all $\xi\in\s H_{\varphi}$ and $x,\,y\in N$.
\end{center}

\begin{defin}\label{defLeftBoundedVector}
We define the set of right (resp.\ left) bounded vectors with respect to $\varphi$ and $\pi$ (resp.\ $\gamma$) to be:
\begin{align*}
_{\varphi}(\pi,{\cal H})&:=\{\xi\in{\cal H}\,;\,\exists\,C\in\GR_+,\,\forall\,x\in\f N_{\varphi},\,\|\pi(x)\xi\|\leqslant C\|\Lambda_{\varphi}(x)\|\},\\[0.3cm]
\text{(resp.\ }({\cal K},\gamma)_{\varphi}&:=\{\xi\in{\cal K}\,;\,\exists\,C\in\GR_+,\,\forall\,x\in\f N_{\varphi}^*,\,\|\gamma(x^{\rm o})\xi\|\leqslant C\|\Lambda_{\varphi^{\rm o}}(x^{\rm o})\|\}\text{{\rm)}}.
\end{align*}

If $\xi\in{}_{\varphi}(\pi,{\cal H})$, we denote by $R^{\pi,\varphi}_{\xi}\in\B(\s H_{\varphi},{\cal H})$\index[symbol]{r@$R_{\xi}^{\pi}$, $L_{\eta}^{\gamma}$} (or simply $R^{\pi}_{\xi}$ if $\varphi$ is understood) the unique bounded operator such that 
\[
R^{\pi,\varphi}_{\xi}\Lambda_{\varphi}(x)=\pi(x)\xi, \quad \text{for all }x\in\f N_{\varphi}.
\]
Similarly, if $\xi\in({\cal K},\gamma)_{\varphi}$ we denote $L^{\gamma,\varphi}_{\xi}\in\B(\s H_{\varphi},{\cal K})$ (or simply $L_{\xi}^{\gamma}$ if $\varphi$ is understood) the unique bounded operator such that
\[
L^{\gamma,\varphi}_{\xi}J_{\varphi}\Lambda_{\varphi}(x^*)=\gamma(x^{\rm o})\xi, \quad \text{for all }x\in\f N_{\varphi}^*,
\]
where we have used the identification $\s H_{\varphi^{\rm o}}\rightarrow\s H_{\varphi}\,;\,\Lambda_{\varphi^{\rm o}}(x^{\rm o})\mapsto J_{\varphi}\Lambda_{\varphi}(x^*)$.
\end{defin}

Note that $\xi\in\cal K$ is left bounded with respect to $\varphi$ and $\gamma$ if, and only if, it is right bounded with respect to the \nsf weight $\varphi^{\rm c}:=\varphi\circ C_N^{-1}$\index[symbol]{po@$\varphi^{\rm c}$, commutant weight} on $N'$ and the normal unital *-representation $\gamma^{\rm c}:=\gamma\circ C_N^{-1}$ of $N'$. It is important to note that $(\K,\gamma)_{\varphi}$ (resp.\ $_{\varphi}(\pi,\cal H)$) is dense in $\cal K$ (resp.\ $\cal H$) (cf.\ Lemma 2 of \cite{Co}).

\medskip

If $\xi\in{}_{\varphi}(\pi,{\cal H})$ (resp.\ $\xi\in({\cal K},\gamma)_{\varphi}$), we have that $R^{\pi,\varphi}_{\xi}$ (resp.\ $L^{\gamma,\varphi}_{\xi}$) is left (resp.\ right) $N$-linear. Therefore, for all $\xi,\eta\in{}_{\varphi}(\pi,{\cal H})$ (resp.\ $({\cal K},\gamma)_{\varphi}$) we have
\[
(R^{\pi,\varphi}_{\xi})^*R^{\pi,\varphi}_{\eta}\in\pi_{\varphi}(N)'=C_N(N) \;\; \text{and} \;\;
R^{\pi,\varphi}_{\xi}(R^{\pi,\varphi}_{\eta})^*\in\pi(N)'
\]
\[
\text{(resp.\ }(L^{\gamma,\varphi}_{\xi})^*L^{\gamma,\varphi}_{\eta}\in\pi_{\varphi}(N)  \;\; \text{and} \;\; 
L^{\gamma,\varphi}_{\xi}(L^{\gamma,\varphi}_{\eta})^*\in\gamma(N^{\rm o})'\text{{\rm)}}.
\]

\begin{nbs}\label{not18}
(cf.\ 2.1 \cite{E05}) Let\index[symbol]{kf@$\K_{\pi}$, $\K_{\gamma}$}
\[
\K_{\pi,\varphi}:=[R^{\pi,\varphi}_{\xi} (R^{\pi,\varphi}_{\eta})^*\,;\,\xi,\eta\in{}_{\varphi}(\pi,{\cal H})]\quad
\text{(resp.\ }\K_{\gamma,\varphi}:=[L^{\gamma,\varphi}_{\xi}(L^{\gamma,\varphi}_{\eta})^*\,;\, \xi,\eta\in({\cal H},\gamma)_{\varphi}]\text{{\rm)}}.
\]
Note that $\K_{\pi,\varphi}$ {\rm(}resp.\ $\K_{\gamma,\varphi}${\rm )} is a weakly dense ideal of $\pi(N)'$ {\rm(}resp.\ $\gamma(N^{\rm o})'${\rm)} {\rm(}cf. Proposition 3 of \cite{Co}{\rm)}. If $\varphi$ is understood, we denote $\K_{\pi}$ {\rm(}resp.\ $\K_{\gamma})$ instead of $\K_{\pi,\varphi}$ {\rm(}resp.\ $\K_{\gamma,\varphi})$.
\end{nbs}

\begin{nbs}
Let $\xi,\eta\in{}_{\varphi}(\pi,{\cal H})$ {\rm(}resp.\ $({\cal K},\gamma)_{\varphi})$, we denote
\[
\langle\xi,\,\eta\rangle_{N^{\rm o}}:=C_N^{-1}((R^{\pi,\varphi}_{\xi})^*R^{\pi,\varphi}_{\eta})^{\rm o}\in N^{\rm o}\quad 
\text{(resp.\ }\langle\xi,\,\eta\rangle_N:=\pi_{\varphi}^{-1}((L^{\gamma,\varphi}_{\xi})^*L^{\gamma,\varphi}_{\eta})\in N\text{{\rm)}}.\qedhere
\]
\end{nbs}

\begin{prop}
For all $\xi , \eta \in {}_\varphi(\pi,{\cal H})$ {\rm(}resp.\ $\xi , \eta \in ({\cal K}, \gamma)_\varphi)$ and $y\in N$ analytic for $(\sigma_t^\varphi)_{t\in\GR}$,  we have:
\begin{enumerate}
	\item $\langle \xi , \eta \rangle_{N^{\rm o}}^* = \langle \eta, \xi \rangle_{N^{\rm o}}$ {\rm(}resp.\ $\langle \xi , \eta \rangle_{ N}^* = \langle \eta, \xi \rangle_{N}${\rm)};
	\item $\langle \xi , \eta y^{\rm o}\rangle_{N^{\rm o}} =  \langle \xi , \eta \rangle_{N^{\rm o}} \sigma_{{\rm i}/2}^\varphi(y)^{\rm o}$ {\rm(}resp.\ $\langle \xi , \eta y \rangle_N =  \langle \xi , \eta \rangle_N\sigma_{-{{\rm i}/2}}^\varphi(y)${\rm)}.\qedhere
\end{enumerate}
\end{prop}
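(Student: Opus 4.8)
The plan is to treat the two assertions separately, the first being essentially formal and the second being the classical "modularity" of the $N$-valued inner products (as in \cite{Co}).

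For assertion 1 the point is just that the operations involved are $*$-preserving. One has $(R^{\pi,\varphi}_{\xi})^*R^{\pi,\varphi}_{\eta}\in\pi_{\varphi}(N)'$ with $\bigl((R^{\pi,\varphi}_{\xi})^*R^{\pi,\varphi}_{\eta}\bigr)^*=(R^{\pi,\varphi}_{\eta})^*R^{\pi,\varphi}_{\xi}$; since $C_N^{-1}:\pi_{\varphi}(N)'\to N$ is the inverse of the normal unital $*$-antihomomorphism $C_N$ (cf.\ the Proposition-Definition above) it is itself a normal unital $*$-antihomomorphism, hence commutes with the adjoint, and passage to the opposite algebra satisfies $(a^{\rm o})^*=(a^*)^{\rm o}$. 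Composing these gives $\langle\xi,\eta\rangle_{N^{\rm o}}^*=\langle\eta,\xi\rangle_{N^{\rm o}}$. The left-bounded case is identical, with $C_N^{-1}$ replaced by the $*$-isomorphism $\pi_{\varphi}^{-1}:\pi_{\varphi}(N)\to N$ and no opposite algebra involved.

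For assertion 2 I would first recall the definition of the right action $\eta\mapsto\eta y^{\rm o}$ (resp.\ $\eta\mapsto\eta y$) of $N$ on the set of right-bounded (resp.\ left-bounded) vectors: this is where the hypothesis that $y$ is analytic for $(\sigma_t^{\varphi})_{t\in\GR}$ enters, as the construction — and the boundedness of the result — requires the non-real elements $\sigma_{{\rm i}/2}^{\varphi}(y)$, resp.\ $\sigma_{-{\rm i}/2}^{\varphi}(y)$. The crucial step is to translate this action to the level of the associated operators, i.e.\ to establish, for $y$ analytic,
\[
R^{\pi,\varphi}_{\eta y^{\rm o}}=R^{\pi,\varphi}_{\eta}\,C_N\bigl(\sigma_{{\rm i}/2}^{\varphi}(y)\bigr)
\qquad\text{and}\qquad
L^{\gamma,\varphi}_{\eta y}=L^{\gamma,\varphi}_{\eta}\,\pi_{\varphi}\bigl(\sigma_{-{\rm i}/2}^{\varphi}(y)\bigr),
\]
where $C_N(z)=J_{\varphi}\pi_{\varphi}(z)^*J_{\varphi}\in\pi_{\varphi}(N)'$. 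These identities follow from the Tomita--Takesaki commutation relations for $\varphi$ — concretely from $\nabla_{\varphi}^{{\rm i}t}\Lambda_{\varphi}(x)=\Lambda_{\varphi}(\sigma_t^{\varphi}(x))$, the polar-decomposition identity ${\cal T}_{\varphi}\Lambda_{\varphi}(x)=\Lambda_{\varphi}(x^*)$, and the standard formula $J_{\varphi}\pi_{\varphi}(\sigma_{{\rm i}/2}^{\varphi}(b))J_{\varphi}\Lambda_{\varphi}(x)=\Lambda_{\varphi}(xb)$ for $b$ analytic and $x,xb\in\f N_{\varphi}$ — applied on the core $\Lambda_{\varphi}(\f N_{\varphi})$ and extended by boundedness, together with the description of the $N$-bimodule structure of $\s H_{\varphi}$ ($x\xi=\pi_{\varphi}(x)\xi$, $\xi y=J_{\varphi}\pi_{\varphi}(y^*)J_{\varphi}\xi$).

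Granting this, assertion 2 is a short computation: substituting the first identity into the definition of $\langle\cdot,\cdot\rangle_{N^{\rm o}}$ and using that $C_N^{-1}$ is a $*$-antihomomorphism (so $C_N^{-1}(ST)=C_N^{-1}(T)C_N^{-1}(S)$) together with $(ab)^{\rm o}=b^{\rm o}a^{\rm o}$ in $N^{\rm o}$ gives
\[
\langle\xi,\eta y^{\rm o}\rangle_{N^{\rm o}}
=\bigl(\sigma_{{\rm i}/2}^{\varphi}(y)\,C_N^{-1}\bigl((R^{\pi,\varphi}_{\xi})^*R^{\pi,\varphi}_{\eta}\bigr)\bigr)^{\rm o}
=C_N^{-1}\bigl((R^{\pi,\varphi}_{\xi})^*R^{\pi,\varphi}_{\eta}\bigr)^{\rm o}\,\sigma_{{\rm i}/2}^{\varphi}(y)^{\rm o}
=\langle\xi,\eta\rangle_{N^{\rm o}}\,\sigma_{{\rm i}/2}^{\varphi}(y)^{\rm o},
\]
and likewise, substituting the second identity and using multiplicativity of $\pi_{\varphi}^{-1}$, $\langle\xi,\eta y\rangle_N=\langle\xi,\eta\rangle_N\,\sigma_{-{\rm i}/2}^{\varphi}(y)$. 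The main obstacle is precisely the middle paragraph: pinning down the operator identities with the \emph{exact} modular shift $\sigma_{\pm{\rm i}/2}^{\varphi}$ and on the correct side, which forces careful bookkeeping of the left/right conventions and of the asymmetry between the two halves of the statement (the $+{\rm i}/2$ versus $-{\rm i}/2$ reflecting that ${\cal H}$ carries a left action built from $\varphi$ while ${\cal K}$ carries a right action built from $\varphi^{\rm o}$), plus the attendant domain/core arguments that justify boundedness of $\eta y^{\rm o}$ and $\eta y$. All of this being classical, the rest is routine.
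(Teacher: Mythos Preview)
The paper does not give a proof of this proposition: it is stated without argument in the appendix as background material from Connes' spatial theory (the appendix refers globally to \cite{Co} for these facts). Your outline is a correct and standard way to supply the missing proof --- assertion~1 is the formal compatibility of the $*$-operation with $C_N^{-1}$ and the opposite-algebra involution, and assertion~2 reduces, as you say, to the operator identities $R^{\pi,\varphi}_{\eta y^{\rm o}}=R^{\pi,\varphi}_{\eta}\,C_N(\sigma_{{\rm i}/2}^{\varphi}(y))$ and $L^{\gamma,\varphi}_{\eta y}=L^{\gamma,\varphi}_{\eta}\,\pi_{\varphi}(\sigma_{-{\rm i}/2}^{\varphi}(y))$, which are exactly the Tomita--Takesaki right-multiplication formula $\Lambda_{\varphi}(xa)=J_{\varphi}\pi_{\varphi}(\sigma_{-{\rm i}/2}^{\varphi}(a))^*J_{\varphi}\Lambda_{\varphi}(x)$ transported to the bounded-vector picture. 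One small point worth making explicit when you write it up: the paper never defines the symbol $\eta y^{\rm o}$ for $\eta\in{}_{\varphi}(\pi,{\cal H})$ (there is no a priori right $N^{\rm o}$-action on ${\cal H}$), so you should state clearly that this right action on the set of bounded vectors is \emph{defined} by the operator identity above --- this is the convention of \cite{Co}, and it is what makes the analyticity hypothesis on $y$ necessary.
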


\begin{lem}
For all $\xi_1 ,\, \xi_2 \in{}_\varphi(\pi,{\cal H})$ and $\eta_1 ,\, \eta_2 \in ({\cal K}, \gamma)_\varphi$, we have 
\[
\langle \eta_1 ,\, \gamma(\langle \xi_1 ,\,  \xi_2 \rangle_{N^{\rm o}})\eta_2 \rangle_{\cal K} = 
\langle \xi_1 , \pi(\langle \eta_1 ,\,  \eta_2 \rangle_N)\xi_2 \rangle_{\cal H}.\qedhere
\]
\end{lem}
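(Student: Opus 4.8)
The conceptual point is that the relative tensor product ${\cal H}\otimes_{\varphi}{\cal K}$ of the right $N$-module $({\cal H},\pi)$ and the left $N$-module $({\cal K},\gamma)$ has two standard models — one built from the right bounded vectors of ${\cal H}$, one from the left bounded vectors of ${\cal K}$ — and that for $\xi\in{}_{\varphi}(\pi,{\cal H})$ and $\eta\in({\cal K},\gamma)_{\varphi}$ the two resulting elementary tensors coincide: $\lambda^{\pi}_{\xi}(\eta)=\rho^{\gamma}_{\eta}(\xi)$, where $\lambda^{\pi}_{\xi}:{\cal K}\to{\cal H}\otimes_{\varphi}{\cal K}$, $\zeta\mapsto\xi\otimes_{\varphi}\zeta$, satisfies $(\lambda^{\pi}_{\xi_1})^{*}\lambda^{\pi}_{\xi_2}=\gamma(\langle\xi_1,\xi_2\rangle_{N^{\rm o}})$, and $\rho^{\gamma}_{\eta}:{\cal H}\to{\cal H}\otimes_{\varphi}{\cal K}$, $\zeta\mapsto\zeta\otimes_{\varphi}\eta$, satisfies $(\rho^{\gamma}_{\eta_1})^{*}\rho^{\gamma}_{\eta_2}=\pi(\langle\eta_1,\eta_2\rangle_N)$. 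Forming $\langle\lambda^{\pi}_{\xi_1}(\eta_1),\lambda^{\pi}_{\xi_2}(\eta_2)\rangle$ on one side and $\langle\rho^{\gamma}_{\eta_1}(\xi_1),\rho^{\gamma}_{\eta_2}(\xi_2)\rangle$ on the other then yields the two members of the asserted identity. This compatibility of the two models is part of \cite{Co}; for completeness I would argue directly, along the following lines.

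\textbf{Step 1 (intertwining relations).} First I would show that $L^{\gamma}_{\eta}\,a=\gamma\bigl(C_N^{-1}(a)^{\rm o}\bigr)L^{\gamma}_{\eta}$ for all $a\in\pi_{\varphi}(N)'=C_N(N)$ and $\eta\in({\cal K},\gamma)_{\varphi}$, and dually $R^{\pi}_{\xi}\,c=\pi\bigl(\pi_{\varphi}^{-1}(c)\bigr)R^{\pi}_{\xi}$ for $c\in\pi_{\varphi}(N)$ and $\xi\in{}_{\varphi}(\pi,{\cal H})$. For the first, write $a=C_N(y)$; then for $x\in\f N_{\varphi}^{*}$ one has $aJ_{\varphi}\Lambda_{\varphi}(x^{*})=J_{\varphi}\pi_{\varphi}(y^{*})\Lambda_{\varphi}(x^{*})=J_{\varphi}\Lambda_{\varphi}\bigl((xy)^{*}\bigr)$ with $(xy)^{*}\in\f N_{\varphi}$ ($\f N_{\varphi}$ being a left ideal), so applying $L^{\gamma}_{\eta}$, its defining property, and $(xy)^{\rm o}=y^{\rm o}x^{\rm o}$ gives the relation on the dense subspace $J_{\varphi}\Lambda_{\varphi}(\f N_{\varphi})$, hence everywhere by boundedness; the dual relation follows symmetrically from $\pi_{\varphi}(y)\Lambda_{\varphi}(x)=\Lambda_{\varphi}(yx)$ and density of $\Lambda_{\varphi}(\f N_{\varphi})$.

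\textbf{Step 2 (approximation).} By polarization one may take $\xi_1=\xi_2=:\xi$ and $\eta_1=\eta_2=:\eta$. Fix a net $(e_{\lambda})$ of self-adjoint elements of $\f N_{\varphi}\cap\f N_{\varphi}^{*}$ with $0\le e_{\lambda}\le 1$ and $e_{\lambda}\to 1$ $\ast$-strongly (available since $\varphi$ is semifinite); then $R^{\pi}_{\xi}\Lambda_{\varphi}(e_{\lambda})=\pi(e_{\lambda})\xi\to\xi$ and $L^{\gamma}_{\eta}J_{\varphi}\Lambda_{\varphi}(e_{\lambda})=\gamma(e_{\lambda}^{\rm o})\eta\to\eta$ in norm, by normality of $\pi$ and $\gamma$. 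Substituting these approximations into $\langle\eta,\gamma(\langle\xi,\xi\rangle_{N^{\rm o}})\eta\rangle$ and $\langle\xi,\pi(\langle\eta,\eta\rangle_N)\xi\rangle$, setting $a:=(R^{\pi}_{\xi})^{*}R^{\pi}_{\xi}\in\pi_{\varphi}(N)'$ and $c:=(L^{\gamma}_{\eta})^{*}L^{\gamma}_{\eta}\in\pi_{\varphi}(N)$, and using Step~1, both sides become iterated limits over $(e_{\mu},e_{\lambda})$ of scalars in $\s H_{\varphi}$ built from $a$, $c$, $\Lambda_{\varphi}(e_{\lambda})$ and $J_{\varphi}\Lambda_{\varphi}(e_{\lambda})$; here one applies $R^{\pi}_{\xi}$ only to vectors $\Lambda_{\varphi}(e_{\lambda})$ and $L^{\gamma}_{\eta}$ only to vectors $J_{\varphi}\Lambda_{\varphi}(e_{\lambda})$, so as to stay in their natural domains and avoid $\sigma^{\varphi}_{\pm i/2}$.

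\textbf{Step 3 (matching --- the main obstacle).} It remains to check that the two iterated limits coincide. Here one uses that $a$ and $c$ commute (lying in $\pi_{\varphi}(N)'$ and $\pi_{\varphi}(N)$), and that the modular conjugation satisfies $J_{\varphi}^{2}=1$ and interchanges $\pi_{\varphi}(N)$ and $\pi_{\varphi}(N)'=C_N(N)$ by conjugation (indeed $C_N(y)=J_{\varphi}\pi_{\varphi}(y)^{*}J_{\varphi}$), so that the $J_{\varphi}$'s occurring in the left-hand computation can be absorbed past $a$ and $c$ and the scalar rewritten in the same shape as on the right; passing to the limit over $(e_{\mu},e_{\lambda})$ then gives the identity. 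I expect this modular bookkeeping --- together with the verification that the relevant limits exist and may be interchanged, which is forced because each approximation step replaces a \emph{fixed} object ($\xi$, $\eta$, or the scalar itself) by a norm-convergent net while $R^{\pi}_{\xi}$ and $L^{\gamma}_{\eta}$ are bounded --- to be where essentially all the work sits; Steps~1--2 are routine. Alternatively, one may simply invoke \cite{Co} for the compatibility of the two relative tensor product constructions.
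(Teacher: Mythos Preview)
The paper does not give its own proof of this lemma: it appears in \S\ref{tensorproduct} among results ``recalled'' from Connes's spatial theory, is stated with a tombstone but no argument, and the reader is referred to \cite{Co}. So there is nothing to compare your proposal against beyond that citation --- and your final sentence (``one may simply invoke \cite{Co}'') is precisely what the paper does.

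That said, your direct argument has a genuine gap in Step~3. Steps~1 and~2 are fine and lead, after your substitutions, to the two approximating scalars
\[
\langle \Lambda_{\varphi}(e_{\mu}),\, ac\,\Lambda_{\varphi}(e_{\lambda})\rangle
\quad\text{and}\quad
\langle J_{\varphi}\Lambda_{\varphi}(e_{\mu}),\, ca\,J_{\varphi}\Lambda_{\varphi}(e_{\lambda})\rangle,
\]
with $a=(R^{\pi}_{\xi})^{*}R^{\pi}_{\xi}\in\pi_{\varphi}(N)'$ and $c=(L^{\gamma}_{\eta})^{*}L^{\gamma}_{\eta}\in\pi_{\varphi}(N)$. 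You then claim the $J_{\varphi}$'s can be ``absorbed past $a$ and $c$'' to match the two. But since $J_{\varphi}$ is anti-unitary, the second scalar equals $\langle\Lambda_{\varphi}(e_{\mu}),\,J_{\varphi}caJ_{\varphi}\,\Lambda_{\varphi}(e_{\lambda})\rangle$, and matching with the first would require $J_{\varphi}(ac)J_{\varphi}=ac$. Conjugation by $J_{\varphi}$ swaps $\pi_{\varphi}(N)$ and $\pi_{\varphi}(N)'$, so it sends $ac$ to a product of the same shape, but there is no reason for it to \emph{fix} $ac$; indeed this fails already for $N$ a matrix algebra with non-tracial $\varphi$. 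The commutation $[a,c]=0$ alone does not rescue this. Thus your ``modular bookkeeping'' does not close: the two iterated limits certainly exist (they are the two sides of the asserted identity, by construction), but your sketch gives no mechanism to compare them.

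The standard route --- which you would find by unwinding \cite{Co} or Sauvageot's treatment --- avoids this trap by working with $\sigma^{\varphi}$-analytic elements rather than an arbitrary approximate unit, so that the twist $\sigma^{\varphi}_{-{\rm i}/2}$ can be used explicitly to pass between the $\Lambda_{\varphi}$ and $J_{\varphi}\Lambda_{\varphi}$ pictures; this is exactly the ingredient you tried to sidestep in Step~2.
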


\begin{defin}
The relative tensor product 
\[
\cst{\cal K}{\gamma}{\varphi}{\pi}{\cal H}\quad \text{(or simply denoted by }\reltens{\cal K}{\gamma}{\pi}{\cal H}\text{{\rm)}}
\]
is the Hausdorff completion of the pre-Hilbert space $({\cal K}, \gamma)_\varphi \odot {}_\varphi(\pi,{\cal H})$, whose inner product is given by
\[
\langle \eta_1 \otimes \xi_1 ,\, \eta_2 \otimes \xi_2 \rangle := \langle \eta_1 ,\, \gamma(\langle \xi_1 ,\,  \xi_2 \rangle_{N^{\rm o}})\eta_2 \rangle_{\cal K} = 
\langle \xi_1 ,\, \pi(\langle \eta_1 ,\,  \eta_2 \rangle_N)\xi_2 \rangle_{\cal H},
\]
for all $\eta_1,\,\eta_2\in({\cal K}, \gamma)_\varphi$ and $\xi_1,\xi_2\in{}_\varphi(\pi,{\cal H})$. If $\eta\in({\cal K}, \gamma)_\varphi$ and $\xi\in{}_\varphi(\pi,{\cal H})$, we will denote by
\[
\cst{\eta}{\gamma}{\varphi}{\pi}{\xi} \quad \text{(or simply }\reltens{\eta}{\gamma}{\pi}{\xi}\text{{\rm)}}
\]
the image of $\eta\tens\xi$ by the canonical map $({\cal K}, \gamma)_\varphi \odot {}_\varphi(\pi,{\cal H})\rightarrow\reltens{\cal K}{\gamma}{\pi}{\cal H}$ (isometric dense range).
\end{defin}

\begin{rks}
\begin{enumerate}
	\item By applying this construction to $(N^{\rm o},\varphi^{\rm o})$ instead of $(N,\varphi)$ we obtain the relative tensor product $\cst{\cal H}{\pi}{\varphi^{\rm o}}{\gamma}{\cal K}$.
	\item The relative tensor product $\reltens{\cal K}{\gamma}{\pi}{\cal H}$ is also the Hausdorff completion of the pre-Hilbert space $({\cal K},\gamma)_{\varphi}\odot{\cal H}$ (resp.\ ${\cal K}\odot{}_{\varphi}(\pi,{\cal H})$), whose inner product is given by:
	\begin{align*}	
	\langle \eta_1 \otimes \xi_1 ,\, \eta_2 \otimes \xi_2 \rangle & :=
	\langle \xi_1 ,\, \pi(\langle \eta_1 ,\,  \eta_2 \rangle_N)\xi_2 \rangle_{\cal H}\\[0.3cm]
	(\text{resp. }\langle \eta_1 \otimes \xi_1 ,\, \eta_2 \otimes \xi_2 \rangle & := \langle \eta_1 ,\, \gamma(\langle \xi_1 ,\,  \xi_2 \rangle_{N^{\rm o}})\eta_2 \rangle_{\cal K}).
	\end{align*}
	\item Moreover, for all $\eta\in{\cal K} ,\, \xi\in {}_\varphi(\pi,{\cal H})$ and $y\in N$ analytic for $(\sigma_t^\varphi)_{t\in\GR}$ we have
\[
\reltens{\gamma(y^{\rm o})\eta}{\gamma}{\pi}{\xi} =  \reltens{\eta}{\gamma}{\pi}{\pi(\sigma_{-{{\rm i}/2}}^\varphi(y))\xi}.
\qedhere\]
\end{enumerate}
\end{rks}

\begin{noh}
The relative flip map is the isomorphism $\sigma_{\varphi}^{\gamma\pi}$ from $\cst{\cal K}{\gamma}{\varphi}{\pi}{\cal H}$ onto $\cst{\cal H}{\pi}{\varphi^{\rm o}}{\gamma}{\cal K}$ given by:
\[
\sigma_{\varphi}^{\gamma\pi}(\cst{\eta}{\gamma}{\varphi}{\pi}{\xi}):=\cst{\xi}{\pi}{\varphi^{\rm o}}{\gamma}{\eta},\quad \text{for all } \xi\in({\cal K},\gamma)_{\varphi} \text{ and } \eta\in{}_{\varphi}(\pi,{\cal H})\quad \text{(or simply }\sigma_{\gamma\pi}\text{{\rm)}}.
\]
Note that $\sigma_{\varphi}^{\gamma\pi}$ is unitary and $(\sigma_{\varphi}^{\gamma\pi})^*=\sigma_{\varphi^{\rm o}}^{\pi\gamma}$. Then, we can define a relative flip *-homomorphism 
\[
\varsigma_{\varphi}^{\gamma\pi}:\B(\cst{\cal K}{\gamma}{\varphi}{\pi}{\cal H})\rightarrow\B(\cst{\cal H}{\pi}{\varphi^{\rm o}}{\gamma}{\cal K})\quad \text{(or simply denoted by } \varsigma_{\gamma\pi}\text{{\rm)}}
\]
by setting
$
\varsigma_{\varphi}^{\gamma\pi}(X):=\sigma_{\varphi}^{\gamma\pi}X(\sigma_{\varphi}^{\gamma\pi})^*
$
for all $X\in\B(\cst{\cal K}{\gamma}{\varphi}{\pi}{\cal H})$.\index[symbol]{sg@$\sigma_{\gamma\pi}$/$\varsigma_{\gamma\pi}$, relative flip map/*-homomorphism}
\end{noh}

\paragraph{Fiber product of von Neumann algebras.} We continue to use the notations of the previous paragraph.

\begin{propdef}\label{propdef2}
Let ${\cal K}_i$ and ${\cal H}_i$ be Hilbert spaces, and $\gamma_i:N^{\rm o}\rightarrow\B({\cal K}_i)$ and $\pi_i:N\rightarrow\B({\cal H}_i)$ be unital normal *-homomorphisms for $i=1,2$. Let $T\in\B({\cal K}_1,{\cal K}_2)$ and $S\in\B({\cal H}_1,{\cal H}_2)$ such that 
$T\circ\gamma_1(n^{\rm o})=\gamma_2(n^{\rm o})\circ T$ and $S\circ\pi_1(n)=\pi_2(n)\circ S$ for all $n\in N$.
Then, the linear map
\begin{center}
$
({\cal K}_1,\gamma_1)_{\varphi}\odot{}_{\varphi}(\pi_1,{\cal H}_1) \rightarrow \reltens{{\cal K}_2}{\gamma_2}{\pi_2}{{\cal H}_2} \; ; \; \xi\odot\eta \mapsto \reltens{T\xi}{\gamma_2}{\pi_2}{S\eta}
$
\end{center}
extends uniquely to a bounded operator 
$
_{\gamma_2}\reltens{T}{\gamma_1}{\pi_2}{S}_{\pi_1}\in\B(\reltens{{\cal K}_1}{\gamma_1}{\pi_1}{{\cal H}_1},\reltens{{\cal K}_2}{\gamma_2}{\pi_2}{{\cal H}_2})
$
(or simply denoted by $\reltens{T}{\gamma_1}{\pi_2}{S}$), whose adjoint operator is $_{\gamma_1}\reltens{T^*}{\gamma_2}{\pi_1}{S^*}_{\pi_2}$ {\rm(}or simply $\reltens{T^*}{\gamma_2}{\pi_1}{S^*})$. In particular, if $x\in\gamma(N^{\rm o})'$ and $y\in\pi(N)'$, then the linear map
\[
({\cal K},\gamma)_{\varphi}\odot{}_{\varphi}(\pi,{\cal H}) \rightarrow \reltens{\cal K}{\gamma}{\pi}{\cal H} \; ; \; \xi\odot\eta \mapsto \reltens{x\xi}{\gamma}{\pi}{y\eta}
\]
extends uniquely to a bounded operator on $\reltens{\cal K}{\gamma}{\pi}{\cal H}$ denoted by $\reltens{x}{\gamma}{\pi}{y}\in\B(\reltens{\cal K}{\gamma}{\pi}{\cal H})$.
\end{propdef}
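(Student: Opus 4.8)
The plan is to reduce the general statement to its two "one-sided" cases and then obtain $\reltens{T}{\gamma_1}{\pi_2}{S}$ as their composition. First I would record the compatibility of intertwiners with bounded vectors: if $T\circ\gamma_1(n^{\rm o})=\gamma_2(n^{\rm o})\circ T$ for all $n\in N$, then $T$ maps $(\mathcal{K}_1,\gamma_1)_\varphi$ into $(\mathcal{K}_2,\gamma_2)_\varphi$ and $L^{\gamma_2,\varphi}_{T\xi}=T\circ L^{\gamma_1,\varphi}_\xi$; dually, if $S\circ\pi_1(n)=\pi_2(n)\circ S$, then $S$ maps ${}_\varphi(\pi_1,\mathcal{H}_1)$ into ${}_\varphi(\pi_2,\mathcal{H}_2)$ and $R^{\pi_2,\varphi}_{S\eta}=S\circ R^{\pi_1,\varphi}_\eta$. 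Both are proved by evaluating on the dense subspaces $J_\varphi\Lambda_\varphi(\f N_\varphi^*)$, resp.\ $\Lambda_\varphi(\f N_\varphi)$, and using the norm estimate in the definition of bounded vectors. In particular the expression $\reltens{T\xi}{\gamma_2}{\pi_2}{S\eta}$ is a legitimate vector of $\reltens{\mathcal{K}_2}{\gamma_2}{\pi_2}{\mathcal{H}_2}$.

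The core step is the boundedness estimate, proved first for $\reltens{1_{\mathcal{K}_1}}{\gamma_1}{\pi_2}{S}$, realising $\reltens{\mathcal{K}_1}{\gamma_1}{\pi_1}{\mathcal{H}_1}$ as the completion of $\mathcal{K}_1\odot{}_\varphi(\pi_1,\mathcal{H}_1)$ with inner product $\langle\eta_1\otimes\xi_1,\eta_2\otimes\xi_2\rangle=\langle\eta_1,\gamma_1(\langle\xi_1,\xi_2\rangle_{N^{\rm o}})\eta_2\rangle$. Given a finite family $(\eta_i,\xi_i)_{1\leqslant i\leqslant n}$, the identity $R^{\pi_2}_{S\xi_i}=SR^{\pi_1}_{\xi_i}$ together with $S^*S\in\pi_1(N)'$ shows that the matrix $\bigl[(R^{\pi_1}_{\xi_i})^*S^*S\,R^{\pi_1}_{\xi_j}\bigr]_{i,j}$ has all entries in $\pi_\varphi(N)'=C_N(N)$ and, being equal to $B^*B$ with $B=\sum_j E_{1j}\otimes SR^{\pi_1}_{\xi_j}$, satisfies $\bigl[(R^{\pi_1}_{\xi_i})^*S^*S\,R^{\pi_1}_{\xi_j}\bigr]\leqslant\|S\|^2\bigl[(R^{\pi_1}_{\xi_i})^*R^{\pi_1}_{\xi_j}\bigr]$ in ${\rm M}_n(C_N(N))$; applying $C_N^{-1}$ and $(\cdot)^{\rm o}$ entrywise and then the normal (hence completely positive) representation $\gamma_1$, and pairing against $\bigoplus_i\eta_i$, yields $\|\sum_i\eta_i\otimes S\xi_i\|^2\leqslant\|S\|^2\|\sum_i\eta_i\otimes\xi_i\|^2$. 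The case $\reltens{T}{\gamma_1}{\pi_1}{1_{\mathcal{H}_1}}$ is entirely symmetric, using $\mathcal{K}_1$-bounded vectors, the operators $L^{\gamma_1}_{\eta_i}$, the inner product $\langle\xi_1,\pi_1(\langle\eta_1,\eta_2\rangle_N)\xi_2\rangle$ and $T^*T\in\gamma_1(N^{\rm o})'$. In each case the bounded extension exists and is unique since the canonical map from the algebraic tensor product has isometric dense range. The general operator $\reltens{T}{\gamma_1}{\pi_2}{S}$ is then defined as $\bigl(\reltens{T}{\gamma_1}{\pi_2}{1_{\mathcal{H}_2}}\bigr)\circ\bigl(\reltens{1_{\mathcal{K}_1}}{\gamma_1}{\pi_2}{S}\bigr)$ through the intermediate space $\reltens{\mathcal{K}_1}{\gamma_1}{\pi_2}{\mathcal{H}_2}$; on the algebraic tensor product it sends $\xi\odot\eta$ to $\reltens{T\xi}{\gamma_2}{\pi_2}{S\eta}$, which is the asserted property and also forces uniqueness.

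It then remains to identify the adjoint and derive the special case. For the one-sided operators a direct computation with the inner product formulas, using $\langle T\eta_1,\eta_2\rangle_N=\langle\eta_1,T^*\eta_2\rangle_N$ (a consequence of $L^{\gamma_1}_{T^*\eta_2}=T^*L^{\gamma_2}_{\eta_2}$) and $\langle S\xi_1,\pi_2(n)\xi_2\rangle=\langle\xi_1,\pi_1(n)S^*\xi_2\rangle$, gives $\bigl(\reltens{T}{\gamma_1}{\pi_1}{1_{\mathcal{H}_1}}\bigr)^*=\reltens{T^*}{\gamma_2}{\pi_1}{1_{\mathcal{H}_1}}$ and $\bigl(\reltens{1_{\mathcal{K}_1}}{\gamma_1}{\pi_2}{S}\bigr)^*=\reltens{1_{\mathcal{K}_1}}{\gamma_1}{\pi_1}{S^*}$; composing and using $(uv)^*=v^*u^*$ yields $\bigl(\reltens{T}{\gamma_1}{\pi_2}{S}\bigr)^*=\reltens{T^*}{\gamma_2}{\pi_1}{S^*}$. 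Finally, the last assertion is the special case $\mathcal{K}_1=\mathcal{K}_2=\mathcal{K}$, $\gamma_1=\gamma_2=\gamma$, $\mathcal{H}_1=\mathcal{H}_2=\mathcal{H}$, $\pi_1=\pi_2=\pi$, $T=x$, $S=y$: the intertwining hypotheses become $x\gamma(n^{\rm o})=\gamma(n^{\rm o})x$ and $y\pi(n)=\pi(n)y$ for all $n$, i.e.\ $x\in\gamma(N^{\rm o})'$ and $y\in\pi(N)'$, so the statement already proved applies and produces $\reltens{x}{\gamma}{\pi}{y}\in\B(\reltens{\mathcal{K}}{\gamma}{\pi}{\mathcal{H}})$.

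The main obstacle is the boundedness estimate, and within it the observation that the Gram-type matrices $\bigl[(R^{\pi_1}_{\xi_i})^*S^*S\,R^{\pi_1}_{\xi_j}\bigr]$ and $\bigl[(L^{\gamma_1}_{\eta_i})^*T^*T\,L^{\gamma_1}_{\eta_j}\bigr]$ have their entries in $C_N(N)$, respectively $\pi_\varphi(N)$ — this is precisely what makes the application of $C_N^{-1}$ (resp.\ $\pi_\varphi^{-1}$) followed by the amplified representation $\gamma_1$ (resp.\ $\pi_1$) meaningful and order-preserving. Everything else — the intertwining lemmas for the $R$ and $L$ operators, the density of bounded vectors, and the adjoint computation — is routine.
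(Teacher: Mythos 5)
Your proof is correct. Note that the paper itself gives no proof of this statement: it appears in the appendix as a recollection of the standard theory of the relative tensor product (Connes' spatial theory, cf.\ \cite{Co}), so there is nothing to compare against; your argument is essentially the standard one. The two key points are handled properly: (i) the intertwining relations propagate to the bounded-vector level ($T\xi\in({\cal K}_2,\gamma_2)_\varphi$ with $L^{\gamma_2}_{T\xi}=TL^{\gamma_1}_\xi$, and dually for $S$), which makes the elementary-tensor formula meaningful; and (ii) the boundedness estimate via the Gram matrices $\bigl[(R^{\pi_1}_{\xi_i})^*S^*S R^{\pi_1}_{\xi_j}\bigr]\leqslant\|S\|^2\bigl[(R^{\pi_1}_{\xi_i})^*R^{\pi_1}_{\xi_j}\bigr]$ in ${\rm M}_n(\pi_\varphi(N)')$. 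The one place requiring care — pushing this matrix inequality through to ${\cal K}_1$ — you treat correctly by observing that $c\mapsto\gamma_1\bigl((C_N^{-1}(c))^{\rm o}\bigr)$ is a (normal, unital) *-homomorphism $\pi_\varphi(N)'\to\B({\cal K}_1)$, hence completely positive; applying $C_N^{-1}$ alone entrywise would not preserve positivity of matrices, so insisting on the composite is essential and you do so. The factorization through the intermediate space $\reltens{{\cal K}_1}{\gamma_1}{\pi_2}{{\cal H}_2}$, the adjoint computation via $\langle T\eta_1,\eta_2\rangle_N=\langle\eta_1,T^*\eta_2\rangle_N$, and the specialization to $x\in\gamma(N^{\rm o})'$, $y\in\pi(N)'$ are all sound.
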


\begin{rk}
With the notations of \ref{propdef2}, let $T:{\cal K}_1\rightarrow{\cal H}_2$ and $S:{\cal H}_1\rightarrow{\cal K}_2$ be bounded antilinear maps such that $T\circ\gamma_1(n^{\rm o})^*=\pi_2(n)\circ T$ and $S\circ\pi_1(n)=\gamma_2(n^{\rm o})^*\circ S$ for all $n\in N$.
In a similar way, we define 
$
_{\pi_2}\reltens{T}{\gamma_1}{\gamma_2}{S}_{\pi_1}\in\B(\reltens{{\cal K}_1}{\gamma_1}{\pi_1}{{\cal H}_1},\reltens{{\cal H}_2}{\pi_2}{\gamma_2}{{\cal K}_2})
$ 
(or simply $\reltens{T}{\gamma_1}{\gamma_2}{S}$). Note that these notations are different from those used in \cite{E08,Le}.
\end{rk}

Let $M\subset\B({\cal K})$ and $P\subset\B({\cal H})$ be two von Neumann algebras. Let us assume that $\pi(N)\subset P$ and $\gamma(N^{\rm o})\subset M$.

\begin{defin}
The fiber product 
$
\fprod{M}{\gamma}{\pi}{P}
$
of $M$ and $P$ over $N$ is the commutant of 
$
\{\reltens{x}{\gamma}{\pi}{y}\,;\, x\in M',\,y\in P'\}\subset \B(\reltens{\cal K}{\gamma}{\pi}{\cal H}).
$
Then, $\fprod{M}{\gamma}{\pi}{P}$ is a von Neumann algebra. 
\end{defin}

Note that we have $\varsigma_{\gamma\pi}(\fprod{M}{\gamma}{\pi}{P})=\fprod{P}{\pi}{\gamma}{M}$.
We still denote by $\varsigma_{\gamma\pi}:\fprod{M}{\gamma}{\pi}{P}\rightarrow\fprod{P}{\pi}{\gamma}{M}$ the restriction of $\varsigma_{\gamma\pi}$ to $\fprod{M}{\gamma}{\pi}{P}$.

\begin{noh}\textit{Slicing with normal linear forms.} Now, let us recall how to slice with normal linear forms. For $\xi\in({\cal K},\gamma)_{\varphi}$ and $\eta\in{}_{\varphi}(\pi,{\cal H})$, we consider the following bounded linear maps:
\[
\lambda_{\xi}^{\gamma\pi}:{\cal H}\rightarrow\reltens{\cal K}{\gamma}{\pi}{\cal H},\; 
\zeta \mapsto \reltens{\xi}{\gamma}{\pi}{\zeta}  ; \quad
\rho_{\eta}^{\gamma\pi}:{\cal K}\rightarrow\reltens{\cal K}{\gamma}{\pi}{\cal H},\; 
\zeta \mapsto \reltens{\zeta}{\gamma}{\pi}{\eta}.
\]
Let $T\in\B(\reltens{{\cal K}}{\gamma}{\pi}{{\cal H}})$ and $\omega\in\B({\cal H})_*$ (resp.\ $\omega\in\B({\cal K})_*$). By using the fact that $({\cal K},\gamma)_{\varphi}$ (resp.\ $_{\varphi}(\pi,{\cal H})$) is dense in ${\cal H}$ (resp.\ ${\cal K}$), there exists a unique $(\fprod{\id}{\gamma}{\pi}{\omega})(T)\in\B({\cal K})$ (resp.\ $(\fprod{\omega}{\gamma}{\pi}{\id})(T)\in\B({\cal H})$) such that
\begin{align*}
\langle\xi_1,\,(\fprod{\id}{\gamma}{\pi}{\omega})(T)\xi_2\rangle &=\omega((\lambda_{\xi_1}^{\gamma\pi})^*T\lambda_{\xi_2}^{\gamma\pi}),\quad \text{for all } \xi_1,\,\xi_2\in({\cal K},\gamma)_{\varphi}\\[.5em]
\text{(resp. }\langle\eta_1,\,(\fprod{\omega}{\gamma}{\pi}{\id})(T)\eta_2\rangle &=\omega((\rho_{\eta_1}^{\gamma\pi})^*T\rho_{\eta_2}^{\gamma\pi}),\quad \text{for all } \eta_1,\,\eta_2\in{}_{\varphi}(\pi,{\cal H})\text{{\rm)}}.
\end{align*}
In particular, we have:
\begin{align*}
(\fprod{\id}{\gamma}{\pi}{\omega_{\eta_1,\eta_2}})(T)&=(\rho_{\eta_1}^{\gamma\pi})^*T\rho_{\eta_2}^{\gamma\pi}\in\B({\cal K}),\quad \text{for all } \eta_1,\,\eta_2\in{}_{\varphi}(\pi,{\cal H});\\[.5em]
(\fprod{\omega_{\xi_1,\xi_2}}{\gamma}{\pi}{\id})(T)&=(\lambda_{\xi_1}^{\gamma\pi})^*T\lambda_{\xi_2}^{\gamma\pi}\in\B({\cal H}),\quad \text{for all } \xi_1,\,\xi_2\in({\cal K},\gamma)_{\varphi}.
\end{align*}
If $x\in\fprod{M}{\gamma}{\pi}{P}$, then for all $\omega\in\B(\cal H)_*$ (resp.\ $\omega\in\B(\cal K)_*$) we have
$
(\fprod{\id}{\gamma}{\pi}{\omega})(x)\in M
$
(resp.\ $(\fprod{\omega}{\gamma}{\pi}{\id})(x)\in P$).
We refrain from writing the details but we can easily define the slice maps if $T$ takes its values in a different relative tensor product. Note that we can extend the notion of slice maps for normal linear forms to normal semi-finite weights.
\end{noh}

\paragraph{Fiber product over a finite-dimensional von Neumann algebra.} Now, let us assume that 
\[
N:=\bigoplus_{1\leqslant l\leqslant k}{\rm M}_{n_l}(\GC) \quad \text{and} \quad \varphi:=\bigoplus_{1\leqslant l\leqslant k}{\rm Tr}_l(F_l-),
\]
where $F_l$ is a positive invertible matrix of ${\rm M}_{n_l}(\GC)$ and ${\rm Tr}_l$ is the non-normalized trace on ${\rm M}_{n_l}(\GC)$. Denote by $(F_{l,i})_{1\leqslant i\leqslant n_l}$ the eigenvalues of $F_l$. 

\begin{propdef}($\S 7$ \cite{DC2})
The bounded linear map\index[symbol]{vh@$v_{\gamma\pi}$, canonical coisometry}
\[
v_{\varphi}^{\gamma\pi}:{\cal K}\tens{\cal H}\rightarrow\cst{\cal K}{\gamma}{\varphi}{\pi}{\cal H} \; ;\; \xi\tens\eta \mapsto \cst{\xi}{\gamma}{\varphi}{\pi}{\eta} \quad \text{(or simply denoted by }v_{\gamma\pi}\text{{\rm)}}
\]
is a coisometry if, and only if, we have $\sum_{1\leqslant i\leqslant n_l}F_{l,i}^{-1}=1$ for all $1\leqslant l\leqslant k$. 
\end{propdef}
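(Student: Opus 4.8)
The plan is to reduce the statement to the case of a single matrix block, to compute the positive operator $(v_{\varphi}^{\gamma\pi})^{*}v_{\varphi}^{\gamma\pi}$ explicitly in terms of matrix units, and to observe that $v_{\varphi}^{\gamma\pi}$ is a coisometry exactly when this operator is idempotent, which turns out to be the asserted trace condition. First I would record the reduction. Since $N=\bigoplus_{l}{\rm M}_{n_{l}}(\GC)$ and $\varphi=\bigoplus_{l}{\rm Tr}_{l}(F_{l}\cdot)$ split along the central units $1^{(l)}$, the spaces split as ${\cal H}=\bigoplus_{l}\pi(1^{(l)}){\cal H}$ and ${\cal K}=\bigoplus_{l}\gamma((1^{(l)})^{\rm o}){\cal K}$, the relative tensor product splits as the orthogonal sum of the relative tensor products over the individual blocks, and $v_{\varphi}^{\gamma\pi}=\bigoplus_{l}v_{\varphi_{l}}^{\gamma_{l}\pi_{l}}$ with $\varphi_{l}={\rm Tr}_{l}(F_{l}\cdot)$. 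Hence $v_{\varphi}^{\gamma\pi}$ is a coisometry iff each $v_{\varphi_{l}}^{\gamma_{l}\pi_{l}}$ is, and the condition $\sum_{i}F_{l,i}^{-1}=1$ is a block-by-block condition; so it suffices to treat $N={\rm M}_{n}(\GC)$, $\varphi={\rm Tr}(F\cdot)$. Conjugating $F$ by a unitary of $N$ (which acts unitarily on ${\cal H}$, ${\cal K}$ and on the relative tensor product, and does not change the spectrum of $F$) I may assume $F={\rm diag}(F_{1},\dots,F_{n})$. Note also that, $N$ being finite-dimensional, $\f N_{\varphi}=N$ and every vector is left/right bounded, so $v_{\varphi}^{\gamma\pi}$ is genuinely defined on all of ${\cal K}\tens{\cal H}$.

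The core of the proof is the computation of $v^{*}v$, where $v:=v_{\varphi}^{\gamma\pi}$. Let $(e_{ij})$ be the matrix units of $N$. Using $\sigma^{\varphi}_{t}={\rm Ad}(F^{{\rm i}t})$, hence $J_{\varphi}\Lambda_{\varphi}(x)=\Lambda_{\varphi}(F^{1/2}x^{*}F^{-1/2})$, a short computation shows that for $\eta,\eta'\in{\cal H}$ the operator $(R^{\pi}_{\eta})^{*}R^{\pi}_{\eta'}\in\pi_{\varphi}(N)'=C_{N}(N)$ equals $C_{N}(F^{-1/2}D_{\eta,\eta'}F^{-1/2})$, where $D_{\eta,\eta'}=\sum_{i,j}\langle\eta,\,\pi(e_{ji})\eta'\rangle\,e_{ij}$ is the density matrix of $a\mapsto\langle\eta,\,\pi(a)\eta'\rangle$; consequently $\langle\eta,\,\eta'\rangle_{N^{\rm o}}=(F^{-1/2}D_{\eta,\eta'}F^{-1/2})^{\rm o}$. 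Feeding this into the defining formula for the inner product of $\cst{\cal K}{\gamma}{\varphi}{\pi}{\cal H}$ gives, for all $\xi,\xi'\in{\cal K}$ and $\eta,\eta'\in{\cal H}$,
\[
\langle\cst{\xi}{\gamma}{\varphi}{\pi}{\eta},\,\cst{\xi'}{\gamma}{\varphi}{\pi}{\eta'}\rangle=\langle\xi,\,\gamma(\langle\eta,\,\eta'\rangle_{N^{\rm o}})\xi'\rangle=\sum_{i,j=1}^{n}(F_{i}F_{j})^{-1/2}\,\langle\xi,\,\gamma((e_{ij})^{\rm o})\xi'\rangle\,\langle\eta,\,\pi(e_{ji})\eta'\rangle,
\]
which is precisely $\langle\xi\tens\eta,\,\bigl(\sum_{i,j}(F_{i}F_{j})^{-1/2}\gamma((e_{ij})^{\rm o})\tens\pi(e_{ji})\bigr)(\xi'\tens\eta')\rangle$. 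Hence
\[
v^{*}v=q:=\sum_{i,j=1}^{n}(F_{i}F_{j})^{-1/2}\,\gamma((e_{ij})^{\rm o})\tens\pi(e_{ji}).
\]

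The conclusion is then purely formal. The operator $q$ is self-adjoint (re-index $i\leftrightarrow j$ and use $((e_{ij})^{\rm o})^{*}=(e_{ji})^{\rm o}$ and $\pi(e_{ji})^{*}=\pi(e_{ij})$); it is nonzero, because $v\neq 0$ (for $\eta\neq 0$ one has $D_{\eta,\eta}\neq 0$, so $\gamma(\langle\eta,\eta\rangle_{N^{\rm o}})\neq 0$ by faithfulness of $\gamma$, so $\cst{\xi}{\gamma}{\varphi}{\pi}{\eta}\neq 0$ for suitable $\xi$); and using $(e_{ij})^{\rm o}(e_{kl})^{\rm o}=\delta_{li}(e_{kj})^{\rm o}$ together with $e_{ji}e_{lk}=\delta_{il}e_{jk}$ a direct computation gives $q^{2}=\bigl(\sum_{i=1}^{n}F_{i}^{-1}\bigr)\,q$. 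A nonzero self-adjoint operator $q$ satisfying $q^{2}=cq$ with $c>0$ is a projection if and only if $c=1$. Finally, $v$ has dense range (the $\cst{\xi}{\gamma}{\varphi}{\pi}{\eta}$ span a dense subspace of $\cst{\cal K}{\gamma}{\varphi}{\pi}{\cal H}$ by construction), so if $v^{*}v$ is a projection then $v$ restricts to an isometry on its range and vanishes on the complement, whence its range is closed, hence everything, hence $v$ is a coisometry; conversely, if $v$ is a coisometry then $(v^{*}v)^{2}=v^{*}(vv^{*})v=v^{*}v$, so $v^{*}v$ is a projection. Therefore $v$ is a coisometry iff $v^{*}v=q$ is a projection iff $\sum_{i}F_{i}^{-1}=1$, and combining with the block reduction yields the stated equivalence.

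The step requiring the most care is the identity $(R^{\pi}_{\eta})^{*}R^{\pi}_{\eta'}=C_{N}(F^{-1/2}D_{\eta,\eta'}F^{-1/2})$: this is the only place where the weight $\varphi={\rm Tr}(F\cdot)$ — and in particular its modular conjugation $J_{\varphi}$ — genuinely intervenes, and getting the powers of $F$ in the correct positions (so that exactly the factor $(F_{i}F_{j})^{-1/2}$ appears, and the sum $\sum_{i}F_{i}^{-1}$ drops out of $q^{2}$) is the one honestly computational point. Everything after that is bookkeeping with matrix units and the general fact that $v$ has dense range.
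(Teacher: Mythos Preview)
The paper does not actually prove this statement; it is recorded with a bare citation to \S 7 of \cite{DC2}. Your argument is correct and is precisely the direct computation one expects: you identify $v^{*}v$ with the explicit operator $q=\sum_{i,j}(F_{i}F_{j})^{-1/2}\gamma((e_{ij})^{\rm o})\otimes\pi(e_{ji})$ (this is exactly the formula for $q_{\varphi}^{\gamma\pi}$ that the paper records in the very next proposition), check $q^{2}=(\sum_{i}F_{i}^{-1})\,q$, and combine with the density of the range of $v$ to conclude. One small wording slip: where you write ``$v$ restricts to an isometry on its range'' you mean on the range of the projection $p=v^{*}v$; the cleanest way to phrase that step is simply that $v^{*}v$ being a projection means $v$ is a partial isometry, hence has closed range, hence (by density) is surjective, hence $vv^{*}=1$.
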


In the following, we assume the above condition to be satisfied. 

\begin{propdef}\label{propcoiso}($\S 7$ \cite{DC2})
Let us denote\index[symbol]{qd@$q_{\gamma\pi}$, $q_{\pi\gamma}$} 
\[
q_{\varphi}^{\gamma\pi}:=(v_{\varphi}^{\gamma\pi})^*v_{\varphi}^{\gamma\pi} \quad (\text{or simply }q_{\gamma\pi}).
\]
Then, $q_{\varphi}^{\gamma\pi}$ is a self-adjoint projection of $\B({\cal K}\tens{\cal H})$ such that
\[
q_{\varphi}^{\gamma\pi}=\sum_{1\leqslant l\leqslant k}\sum_{1\leqslant i,j\leqslant n_l}F_{l,i}^{-1/2}F_{l,j}^{-1/2}\gamma(e_{ij}^{(l)\,{\rm o}})\tens\pi(e_{ji}^{(l)}),
\] 
where, for all $1\leqslant l\leqslant k$, $(e_{ij}^{(l)})_{1\leqslant i,j\leqslant n_l}$ is a system of matrix units (s.m.u.) diagonalizing $F_l$, {\it i.e.} $F_l=\sum_{1\leqslant i\leqslant n_l}F_{l,i}e_{ii}^{(l)}$. Moreover,
$
\fprod{M}{\gamma}{\pi}{P} \rightarrow q_{\varphi}^{\gamma\pi}(M\tens P)q_{\varphi}^{\gamma\pi} \; ; \; x \mapsto (v_{\varphi}^{\gamma\pi})^*xv_{\varphi}^{\gamma\pi}
$
is a unital normal *-isomorphism.$\vphantom{e_{ij}^{(l)}}$
\end{propdef}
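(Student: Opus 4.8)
The plan is to build on the preceding Proposition-Definition, which — under the standing hypothesis $\sum_{1\leqslant i\leqslant n_l}F_{l,i}^{-1}=1$ — asserts that $v:=v_\varphi^{\gamma\pi}\colon{\cal K}\tens{\cal H}\rightarrow\reltens{\cal K}{\gamma}{\pi}{\cal H}$ is a coisometry, i.e.\ $vv^*=1$. From this alone, $q:=q_\varphi^{\gamma\pi}=v^*v$ is at once a self-adjoint projection of $\B({\cal K}\tens{\cal H})$, since $q^*=q$ and $q^2=v^*(vv^*)v=v^*v=q$.

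To obtain the explicit formula I would exploit that, $N$ being finite-dimensional, $\f N_\varphi=N$, hence $({\cal K},\gamma)_\varphi={\cal K}$, ${}_\varphi(\pi,{\cal H})={\cal H}$, and $\reltens{\cal K}{\gamma}{\pi}{\cal H}$ is the Hausdorff completion of ${\cal K}\odot{\cal H}$ for the semi-definite form $\langle\eta_1\tens\xi_1,\eta_2\tens\xi_2\rangle=\langle\eta_1,\gamma(\langle\xi_1,\xi_2\rangle_{N^{\rm o}})\eta_2\rangle_{\cal K}$. Since $v(\eta\tens\xi)=\reltens{\eta}{\gamma}{\pi}{\xi}$, one has $\langle\eta_1\tens\xi_1,q(\eta_2\tens\xi_2)\rangle=\langle\reltens{\eta_1}{\gamma}{\pi}{\xi_1},\reltens{\eta_2}{\gamma}{\pi}{\xi_2}\rangle$, so everything reduces to computing $\langle\xi_1,\xi_2\rangle_{N^{\rm o}}=C_N^{-1}\big((R^{\pi,\varphi}_{\xi_1})^*R^{\pi,\varphi}_{\xi_2}\big)^{\rm o}$. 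Identifying ${\cal H}_\varphi=\bigoplus_l{\rm M}_{n_l}(\GC)$ with its GNS map $\Lambda_\varphi$, fixing for each $l$ a system of matrix units $(e_{ij}^{(l)})$ diagonalizing $F_l$, and writing $R_\xi^{\pi,\varphi}\Lambda_\varphi(x)=\pi(x)\xi$ in these coordinates, a direct computation expresses $(R^{\pi,\varphi}_{\xi_1})^*R^{\pi,\varphi}_{\xi_2}$ as a finite sum over $l,i,j$ involving the eigenvalues $F_{l,i}^{-1}$ and the $e_{ij}^{(l)}$; plugging this into the inner product and unwinding $C_N^{-1}$ and $\gamma$ identifies $q$ on elementary tensors with $\sum_{l,i,j}F_{l,i}^{-1/2}F_{l,j}^{-1/2}\gamma(e_{ij}^{(l){\rm o}})\tens\pi(e_{ji}^{(l)})$. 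Finally one checks this operator does not depend on the chosen s.m.u.: a change of diagonalizing s.m.u.\ for $F_l$ is implemented by a unitary $u_l\in{\rm M}_{n_l}(\GC)$ commuting with $F_l$, under which the expression $\sum_{i,j}F_{l,i}^{-1/2}F_{l,j}^{-1/2}e_{ij}^{(l){\rm o}}\tens e_{ji}^{(l)}$ is invariant.

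For the last assertion, since $v$ is a coisometry the map $\Phi\colon T\mapsto v^*Tv$ is a normal unital $*$-isomorphism of $\B(\reltens{\cal K}{\gamma}{\pi}{\cal H})$ onto $q\,\B({\cal K}\tens{\cal H})\,q=\B(q({\cal K}\tens{\cal H}))$, with inverse $S\mapsto vSv^*$ (multiplicativity and surjectivity use $vv^*=1$, and $\Phi(1)=v^*v=q$). For $x\in M'$ and $y\in P'$ we have $M'\subseteq\gamma(N^{\rm o})'$ and $P'\subseteq\pi(N)'$, so $\reltens{x}{\gamma}{\pi}{y}$ is defined and $v\circ(x\tens y)=(\reltens{x}{\gamma}{\pi}{y})\circ v$ by \ref{propdef2}; taking adjoints gives $v^*\circ(\reltens{x}{\gamma}{\pi}{y})=(x\tens y)\circ v^*$, whence $q(x\tens y)=(x\tens y)q$ and $\Phi(\reltens{x}{\gamma}{\pi}{y})=v^*v(x\tens y)=q(x\tens y)q$. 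Since $\fprod{M}{\gamma}{\pi}{P}$ is by definition the commutant of $\{\reltens{x}{\gamma}{\pi}{y}\,;\,x\in M',\,y\in P'\}$ in $\B(\reltens{\cal K}{\gamma}{\pi}{\cal H})$, the $*$-isomorphism $\Phi$ carries it onto the commutant of $\{q(x\tens y)q\,;\,x\in M',\,y\in P'\}$ inside $\B(q({\cal K}\tens{\cal H}))$. By the commutation theorem for von Neumann algebra tensor products the algebra generated by $\{x\tens y\,;\,x\in M',\,y\in P'\}$ is $M'\tens P'$, with commutant $M\tens P$; as $q\in M\tens P=(M'\tens P')'$, reduction by the projection $q$ identifies that relative commutant with $q(M\tens P)q$. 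Hence $\Phi$ restricts to a unital normal $*$-isomorphism $\fprod{M}{\gamma}{\pi}{P}\rightarrow q_\varphi^{\gamma\pi}(M\tens P)q_\varphi^{\gamma\pi}$, as claimed.

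The chief obstacle is the explicit formula for $q_\varphi^{\gamma\pi}$: it forces one to unwind the GNS construction of the finite-dimensional weighted algebra $(N,\varphi)$, compute the operators $R_\xi^{\pi,\varphi}$ and the $N^{\rm o}$-valued inner products in coordinates, and then verify independence of the diagonalizing system of matrix units. The remaining parts are formal consequences of the coisometry property together with standard reduction theory for von Neumann algebras.
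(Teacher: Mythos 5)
Your proof is correct. Note that the paper itself gives no argument for this Proposition-Definition — it is recalled verbatim from \S 7 of [DC2] — so there is nothing internal to compare against; your route (projection property from the coisometry $vv^*=1$, the explicit formula by computing $\langle\xi_1,\xi_2\rangle_{N^{\rm o}}=C_N^{-1}((R^{\pi,\varphi}_{\xi_1})^*R^{\pi,\varphi}_{\xi_2})^{\rm o}$ in the weighted GNS coordinates, and the last assertion via $v(x\tens y)=(\reltens{x}{\gamma}{\pi}{y})v$ for $x\in M'$, $y\in P'$ together with Tomita's commutation theorem and reduction by the projection $q\in M\tens P$) is exactly the standard one, and the coordinate computation you sketch does produce the stated coefficients $F_{l,i}^{-1/2}F_{l,j}^{-1/2}$.
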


\paragraph{Case of the non-normalized Markov trace.} In this paragraph, we take for $\varphi$ the non-normalized Markov trace on $\displaystyle{N=\oplus_{1\leqslant l\leqslant k}\,{\rm M}_{n_l}(\GC)}$, {\it i.e.\ }$\epsilon=\oplus_{1\leqslant l\leqslant k}\, n_l\cdot{\rm Tr}_l$.
From now on, the operators $q_{\epsilon}^{\gamma\pi}$, $q_{\epsilon^{\rm o}}^{\pi\gamma}$ $q_{\epsilon}^{\pi_1\pi_2}$ and $q_{\epsilon^{\rm o}}^{\gamma_1\gamma_2}$ will be simply denoted by $q_{\gamma\pi}$, $q_{\pi\gamma}$, $q_{\pi_1\pi_2}$ and $q_{\gamma_1\gamma_2}$. As a corollary of \ref{propcoiso}, we have:

\begin{prop}\label{Projection}
For all s.u.m.\ $(e_{ij}^{(l)})_{1\leqslant l\leqslant k,\, 1\leqslant i,j\leqslant n_l}$ of $N$, we have
\[
q_{\gamma\pi}=\sum_{1\leqslant l\leqslant k}n_l^{-1}\!\sum_{1\leqslant i,j\leqslant n_l}\!\gamma(e_{ij}^{(l){\rm o}})\tens\pi(e_{ji}^{(l)}) \;\; \text{and} \;\;
q_{\pi\gamma}=\sum_{1\leqslant l\leqslant k}n_l^{-1}\!\sum_{1\leqslant i,j\leqslant n_l}\!\pi(e_{ij}^{(l)})\tens\gamma(e_{ji}^{(l){\rm o}}).\qedhere
\]
\end{prop}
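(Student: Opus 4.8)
The plan is to obtain both identities as direct specializations of Proposition-Definition \ref{propcoiso} to the non-normalized Markov trace. First I would record that, writing $\epsilon = \bigoplus_{1\leqslant l\leqslant k} n_l\cdot{\rm Tr}_l$ in the form $\epsilon = \bigoplus_l {\rm Tr}_l(F_l\,-)$, one is forced to take $F_l = n_l\,1_{{\rm M}_{n_l}(\GC)}$, so that the eigenvalues are $F_{l,i} = n_l$ for all $1\leqslant i\leqslant n_l$. In particular the normalization condition $\sum_{1\leqslant i\leqslant n_l} F_{l,i}^{-1} = \sum_i n_l^{-1} = 1$ holds for every $l$, so $v_{\epsilon}^{\gamma\pi}$ is a coisometry and \ref{propcoiso} is applicable; moreover $F_{l,i}^{-1/2}F_{l,j}^{-1/2} = n_l^{-1}$ for all $i,j,l$.

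The key observation for the first formula is that, $F_l$ being a scalar matrix, the relation $F_l = \sum_{1\leqslant i\leqslant n_l} F_{l,i}e_{ii}^{(l)}$ holds for \emph{every} system of matrix units $(e_{ij}^{(l)})$ of $N$, so the hypothesis ``$(e_{ij}^{(l)})$ diagonalizes $F_l$'' in \ref{propcoiso} is vacuous in the present situation. Substituting $F_{l,i}^{-1/2}F_{l,j}^{-1/2} = n_l^{-1}$ into the formula of \ref{propcoiso} then gives, for an arbitrary s.m.u.,
\[
q_{\gamma\pi} = q_{\epsilon}^{\gamma\pi} = \sum_{1\leqslant l\leqslant k} n_l^{-1}\sum_{1\leqslant i,j\leqslant n_l} \gamma(e_{ij}^{(l){\rm o}})\tens\pi(e_{ji}^{(l)}),
\]
which is the first asserted identity.

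For the second formula I would apply \ref{propcoiso} to the pair $(N^{\rm o},\epsilon^{\rm o})$ in place of $(N,\varphi)$, with the normal unital $*$-representation $\pi$ of $N = (N^{\rm o})^{\rm o}$ playing the role of ``$\gamma$'' and $\gamma\colon N^{\rm o}\to\B({\cal K})$ playing the role of ``$\pi$''; since $\epsilon^{\rm o}$ is again a non-normalized Markov trace on $N^{\rm o}$, the preceding discussion applies verbatim and $q_{\epsilon^{\rm o}}^{\pi\gamma}$ is well defined. Given a s.m.u. $(e_{ij}^{(l)})$ of $N$, the elements $f_{ij}^{(l)} := (e_{ji}^{(l)})^{\rm o}$ form a s.m.u. of $N^{\rm o}$ with $(f_{ij}^{(l)})^{\rm o} = e_{ji}^{(l)}$ and $f_{ji}^{(l)} = (e_{ij}^{(l)})^{\rm o}$. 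Feeding this s.m.u. into the formula of \ref{propcoiso} (for $N^{\rm o}$) and re-indexing $i\leftrightarrow j$ then yields
\[
q_{\pi\gamma} = q_{\epsilon^{\rm o}}^{\pi\gamma} = \sum_{1\leqslant l\leqslant k} n_l^{-1}\sum_{1\leqslant i,j\leqslant n_l} \pi(e_{ij}^{(l)})\tens\gamma(e_{ji}^{(l){\rm o}}),
\]
as claimed. The only point requiring care — and hence the (modest) main obstacle — is the bookkeeping of opposite-algebra conventions: translating a system of matrix units of $N$ into one of $N^{\rm o}$ and unwinding the two layers of opposites occurring in ``$\gamma(e_{ij}^{(l){\rm o}})$'' once $N$ is replaced by $N^{\rm o}$; everything else is a routine substitution.
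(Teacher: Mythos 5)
Your argument is correct and is exactly the intended one: the paper states \ref{Projection} as an immediate corollary of \ref{propcoiso}, and your specialization (the $F_l$ are the scalar matrices $n_l 1$, so the diagonalization hypothesis is vacuous and $F_{l,i}^{-1/2}F_{l,j}^{-1/2}=n_l^{-1}$), together with the passage to $(N^{\rm o},\epsilon^{\rm o})$ for the second formula, is precisely the computation being left to the reader. The bookkeeping of the opposite-algebra conventions in your last paragraph is also handled correctly.
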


The following result is a slight generalization of \ref{Projection} to the setting of C*-algebras.

\begin{propdef}\label{ProjectionCAlg}(2.6 \cite{BC})
Let $A$, $B$ be two C*-algebras. We consider two non-degener\-ate *-homomorphisms $\gamma_A:N^{\rm o}\rightarrow\M(A)$ and $\pi_B:N\rightarrow\M(B)$. There exists a unique self-adjoint projection $q_{\gamma_A\pi_B}\in\M(A\tens B)$ {\rm(}resp.\ $q_{\pi_B\gamma_A}\in\M(B\tens A)${\rm)} such that
\begin{align*}
q_{\gamma_A\pi_B}&=\sum_{1\leqslant l\leqslant k}n_l^{-1}\sum_{1\leqslant i,j\leqslant n_l}\gamma_A(e_{ij}^{(l){\rm o}})\tens\pi_B(e_{ji}^{(l)})\\
\text{{\rm(}resp.\ }q_{\pi_B\gamma_A}&=\sum_{1\leqslant l\leqslant k}n_l^{-1}\sum_{1\leqslant i,j\leqslant n_l}\pi_B(e_{ij}^{(l)})\tens\gamma_A(e_{ji}^{(l){\rm o}})\text{{\rm)},}
\end{align*}
for all s.u.m.\ $(e_{ij}^{(l)})_{1\leqslant l\leqslant k,\, 1\leqslant i,j\leqslant n_l}$ of $N$.
\end{propdef}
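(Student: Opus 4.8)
The plan is to deduce the statement from its von Neumann algebraic counterpart, Proposition~\ref{Projection}, by realizing all the data on Hilbert spaces. Fix once and for all a system of matrix units $(e_{ij}^{(l)})_{1\leqslant l\leqslant k,\,1\leqslant i,j\leqslant n_l}$ of $N$ and let $q$ denote the element given by the displayed formula $q=\sum_{l}n_l^{-1}\sum_{i,j}\gamma_A(e_{ij}^{(l){\rm o}})\tens\pi_B(e_{ji}^{(l)})$; it is a finite sum, so it is well defined as soon as we know where it lives. Choose faithful non-degenerate representations $A\subset\B(\s K_A)$ and $B\subset\B(\s K_B)$, so that $\M(A)$ (resp.\ $\M(B)$) is the idealizer of $A$ in $\B(\s K_A)$ (resp.\ of $B$ in $\B(\s K_B)$), the C*-algebra $A\tens B$ acts non-degenerately on $\s K:=\s K_A\tens\s K_B$, and $\M(A\tens B)$ is the idealizer of $A\tens B$ in $\B(\s K)$. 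Since $N^{\rm o}$ and $N$ are unital and $\gamma_A,\pi_B$ are non-degenerate, these are unital $*$-homomorphisms, and being defined on finite-dimensional algebras they are automatically normal; thus $\gamma_A:N^{\rm o}\to\B(\s K_A)$ and $\pi_B:N\to\B(\s K_B)$ fall within the scope of \S\ref{tensorproduct}.

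Next I would invoke Proposition~\ref{Projection} for the non-normalized Markov trace $\varphi=\epsilon$, with $\gamma:=\gamma_A$ acting on $\s K_A$, $\pi:=\pi_B$ acting on $\s K_B$, and (say) $M:=\B(\s K_A)$, $P:=\B(\s K_B)$ in \ref{propcoiso}. It asserts precisely that the operator $q_{\gamma_A\pi_B}=(v_\epsilon^{\gamma_A\pi_B})^*v_\epsilon^{\gamma_A\pi_B}\in\B(\s K)$ is equal to $\sum_{l}n_l^{-1}\sum_{i,j}\gamma_A(e_{ij}^{(l){\rm o}})\tens\pi_B(e_{ji}^{(l)})$ for \emph{every} system of matrix units of $N$. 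Consequently $q$ is independent of the chosen s.u.m., and by \ref{propcoiso} it is a self-adjoint projection in $\B(\s K)$. This gives self-adjointness, idempotency and well-definedness; uniqueness is then immediate since the formula itself pins down the element. The analogous computation for $q_{\pi_B\gamma_A}$ is likewise furnished by the second formula in \ref{Projection} (equivalently, $q_{\pi_B\gamma_A}$ is the image of $q_{\gamma_A\pi_B}$ under the flip $\M(A\tens B)\to\M(B\tens A)$ after relabelling $i\leftrightarrow j$).

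It remains to see $q\in\M(A\tens B)$, i.e.\ that $q$ lies in the idealizer of $A\tens B$ in $\B(\s K)$. For $a\in A$ and $b\in B$, each summand $\gamma_A(e_{ij}^{(l){\rm o}})a\tens\pi_B(e_{ji}^{(l)})b$ belongs to $A\tens B$ because $\gamma_A(e_{ij}^{(l){\rm o}})a\in\M(A)A\subset A$ and $\pi_B(e_{ji}^{(l)})b\in\M(B)B\subset B$; hence $q(a\tens b)\in A\tens B$, and since the elementary tensors span a dense subspace and $q$ is bounded, $q(A\tens B)\subset A\tens B$. Applying this to $q^*=q$ yields $(A\tens B)q\subset A\tens B$, so indeed $q\in\M(A\tens B)$. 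The ``resp.'' statement is obtained verbatim with the roles of $A,B$ and of $\gamma_A,\pi_B$ interchanged.

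I do not expect a genuine obstacle here: there is no hard analytic step, and the main point is simply the correct bookkeeping in identifying $\M(A\tens B)$ with the idealizer of $A\tens B$ inside $\B(\s K_A\tens\s K_B)$ and checking that the finitely many multiplier-valued terms of $q$ leave $A\tens B$ invariant. As a self-contained alternative, the three assertions can also be verified by hand directly in $\M(A\tens B)$: self-adjointness follows from $(e_{ij}^{(l){\rm o}})^*=e_{ji}^{(l){\rm o}}$, $(e_{ij}^{(l)})^*=e_{ji}^{(l)}$ and relabelling; $q^2=q$ follows from the matrix-unit relations (the constraint forcing the two ``inner'' indices to agree produces a factor $n_l$ that cancels one $n_l^{-1}$); and independence of the s.u.m.\ follows by writing a second system as $f_{ij}^{(l)}=u_le_{ij}^{(l)}u_l^*$ for unitaries $u_l\in{\rm M}_{n_l}(\GC)$, expanding in matrix coefficients, and using $u_lu_l^*=1$.
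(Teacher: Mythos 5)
Your argument is correct; the paper itself states \ref{ProjectionCAlg} without proof (it is recalled from 2.6 of \cite{BC}) and presents it precisely as the C*-algebraic analogue of \ref{Projection}, which is exactly the reduction you carry out via faithful non-degenerate representations. Both that route and your direct matrix-unit verification are sound, and the only genuinely C*-algebraic point --- that the finite sum idealizes $A\tens B$ and hence lies in $\M(A\tens B)$ --- is handled correctly.
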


	\subsection{Unitary equivalence of Hilbert C*-modules}\label{UnitaryEq}

In the following, we recall the notion of morphism between Hilbert modules over possibly different $\Cstar$-algebras. 

\begin{defin}\label{def2} 
Let $A$ and $B$ be two $\Cstar$-algebras and $\phi:A\rightarrow B$ a *-homomorphism. Let $\s E$ and $\s F$ be two Hilbert $\Cstar$-modules over $A$ and $B$ respectively. A $\phi$-compatible operator from $\s E$ to $\s F$ is a linear map $\Phi:\s E\rightarrow\s F$ such that:
\begin{enumerate}[label=(\roman*)]
\item for all $\xi\in\s E$ and $a\in A$, $\Phi(\xi a)=\Phi(\xi)\phi(a)$;
\item for all $\xi,\eta\in\s E$, $\langle \Phi\xi,\, \Phi\eta\rangle=\phi(\langle\xi,\,\eta\rangle)$.
\end{enumerate}
Furthermore, if $\phi$ is a *-isomorphism and $\Phi$ is surjective, we say that $\Phi$ is $\phi$-compatible unitary operator (or a unitary equivalence over $\phi$) from $\s E$ onto $\s F$.
\end{defin}

\begin{rks}\label{rk4}
\begin{enumerate}
\item It follows from (ii) that $\Phi:\s E\rightarrow\s F$ is bounded and even isometric if $\phi$ is faithful. Indeed, we have $\|\langle\Phi\xi,\, \Phi\eta\rangle\|=\|\phi(\langle\xi,\,\eta\rangle)\|=\|\langle\xi,\,\eta\rangle\|$ for all $\xi,\eta\in\s E$. Then,  for all $\xi\in\s E$ we have
	 $
	 \|\Phi\xi\|^2=\|\langle\Phi\xi,\, \Phi\xi\rangle\|=\|\langle\xi,\, \xi\rangle\|=\|\xi\|^2
	 $.
In particular, if $\phi$ is a *-isomorphism and $\Phi$ is a $\phi$-compatible unitary operator, then $\Phi$ is bijective and the inverse map $\Phi^{-1}:\s F\rightarrow\s E$ is a $\phi^{-1}$-compatible unitary operator.
\item It is clear that $\id_{\s E}$ is a $\id_A$-compatible unitary operator. Let $A$, $B$ and $C$ be $\Cstar$-algebras and $\s E$, $\s F$ and $\s G$ be Hilbert modules over $A$, $B$ and $C$ respectively. Let $\phi:A\rightarrow B$ and $\psi:B\rightarrow C$ be *-homomorphisms (resp.\ *-isomorphisms). If $\Phi:\s E\rightarrow\s F$ is a $\phi$-compatible operator (resp.\ unitary operator) and $\Psi:\s F\rightarrow\s G$ a $\psi$-compatible operator (resp.\ unitary operator), then $\Psi\circ \Phi:\s E\rightarrow\s G$ is a $\psi\circ\phi$-compatible operator (resp.\ unitary operator).
\item Let $\Phi:\s E\rightarrow\s F$ be a unitary equivalence over a given *-isomorphism $\phi$. If $T\in\Lin(\s E)$, then the map $\Phi\circ T\circ\Phi^{-1}:\s F\rightarrow\s F$ is an adjointable operator whose adjoint operator is $\Phi^{-1}\circ T^* \circ \Phi$. We define a *-isomorphism $\Lin(\s E)\rightarrow\Lin(\s F)\,;\, T\mapsto\Phi\circ T\circ\Phi^{-1}$. Note that $\Phi\circ\theta_{\xi,\eta}\circ\Phi^{-1}=\theta_{\Phi\xi,\Phi\eta}$ for all $\xi,\eta\in\s E$. In particular,  for all $k\in\K(\s E)$ we have $\Phi\circ k\circ\Phi^{-1}\in\K(\s F)$. More precisely, the map $\K(\s E)\rightarrow\K(\s F)\,;\,k\mapsto\Phi\circ k\circ\Phi^{-1}$ is a *-isomorphism.\qedhere
\end{enumerate} 
\end{rks}

The notion of unitary equivalence defines an equivalence relation on the class consisting of all Hilbert $\Cstar$-modules (cf.\ \ref{rk4} 1, 2). Actually, this notion of morphism between Hilbert modules over possibly different $\Cstar$-algebra can be understood in terms of unitary adjointable operator between two Hilbert modules over the same $\Cstar$-algebra.

\begin{prop}\label{prop8}
Let $A$ and $B$ be two $\Cstar$-algebras and $\phi:A\rightarrow B$ a *-isomorphism. Let $\s E$ and $\s F$ be two Hilbert $\Cstar$-modules over $A$ and $B$ respectively. 
\begin{enumerate}
\item If $\Phi:\s E\rightarrow\s F$ is a surjective $\phi$-compatible unitary operator, then there exists a unique unitary adjointable operator $U\in\Lin(\s E\tens_{\phi}B,\s F)$ such that
$U(\xi\tens_{\phi}b)=\Phi(\xi)b$, for all $\xi\in\s E$ and $b\in B$.
\item Conversely, if $U\in\Lin(\s E\tens_{\phi}B,\s F)$ is a unitary, then there exists a unique $\phi$-compatible unitary operator $\Phi:\s E\rightarrow\s F$ such that $\Phi(\xi)b=U(\xi\tens_{\phi}b)$ for all $\xi\in\s E$ and $b\in B$.\qedhere
\end{enumerate}
\end{prop}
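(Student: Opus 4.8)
The plan is to prove Proposition~\ref{prop8} by the usual Hilbert-module bookkeeping: in part~1 build $U$ directly on the algebraic balanced tensor product, in part~2 recover $\Phi$ by evaluating $U$ against an approximate unit, and in each case read off the required properties from $\phi$-compatibility.

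For part~1, I would start from the $\phi$-compatible unitary $\Phi\colon\s E\to\s F$ and define on the algebraic balanced tensor product $\s E\odot_A B$ (with $B$ a left $A$-module via $b\mapsto\phi(a)b$) the linear map $\xi\odot b\mapsto\Phi(\xi)b\in\s F$. Using condition (ii) of Definition~\ref{def2} one computes
\[
\langle\Phi(\xi)b,\,\Phi(\eta)c\rangle_B=b^*\langle\Phi(\xi),\,\Phi(\eta)\rangle_B\,c=b^*\phi(\langle\xi,\,\eta\rangle_A)c=\langle\xi\tens_{\phi}b,\,\eta\tens_{\phi}c\rangle,
\]
so the map preserves inner products; hence it annihilates the null submodule, descends to $\s E\odot_A B$ equipped with its $B$-valued inner product, and extends by continuity to an isometry $U\colon\s E\tens_{\phi}B\to\s F$. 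Right $B$-linearity is immediate from $U((\xi\tens_{\phi}b)c)=\Phi(\xi)bc$. Since $\phi$ is a $*$-isomorphism, $\Phi$ is isometric (Remark~\ref{rk4}~1), hence $\Phi(\s E)=\s F$; then $U(\s E\tens_{\phi}B)\supseteq\Phi(\s E)B$, which is dense in $\s F$ (approximate unit of $B$), and an isometry of Hilbert modules has closed range, so $U$ is onto. A surjective isometry between Hilbert $C^*$-modules is automatically unitary and adjointable with $U^*=U^{-1}$, so $U$ is as required; uniqueness follows since elementary tensors are dense in $\s E\tens_{\phi}B$.

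For part~2, given a unitary $U\in\Lin(\s E\tens_{\phi}B,\s F)$, fix an approximate unit $(u_\lambda)$ of $A$, so that $(\phi(u_\lambda))$ is an approximate unit of $B$, and set $\Phi(\xi):=\lim_\lambda U(\xi\tens_{\phi}\phi(u_\lambda))$. This net is Cauchy because $U$ is isometric and $\|\xi\tens_{\phi}\phi(u_\lambda-u_{\lambda'})\|^2=\|(u_\lambda-u_{\lambda'})\langle\xi,\,\xi\rangle(u_\lambda-u_{\lambda'})\|\to 0$. Then $\Phi(\xi)b=\lim_\lambda U(\xi\tens_{\phi}\phi(u_\lambda)b)=U(\xi\tens_{\phi}b)$, using $\phi(u_\lambda)b\to b$ and continuity of $U$. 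From $U(\xi a\tens_{\phi}b)=U(\xi\tens_{\phi}\phi(a)b)$ together with the nondegeneracy of the right $B$-action on $\s F$ (if $\eta b=0$ for all $b\in B$ then $\eta=0$) one gets $\Phi(\xi a)=\Phi(\xi)\phi(a)$; and comparing $\langle\Phi(\xi)b,\,\Phi(\eta)c\rangle=\langle U(\xi\tens_{\phi}b),\,U(\eta\tens_{\phi}c)\rangle=b^*\phi(\langle\xi,\,\eta\rangle)c$ with $b^*\langle\Phi(\xi),\,\Phi(\eta)\rangle c$ yields $\langle\Phi(\xi),\,\Phi(\eta)\rangle=\phi(\langle\xi,\,\eta\rangle)$. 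Surjectivity of $\Phi$: it is isometric, hence has closed range, $\Phi(\xi)\phi(a)=\Phi(\xi a)\in\Phi(\s E)$ and $\phi$ is onto, so $\Phi(\s E)$ absorbs the right $B$-action, whence $[\Phi(\s E)B]=U(\s E\tens_{\phi}B)=\s F$ forces $\Phi(\s E)=\s F$. Uniqueness of $\Phi$ is again the nondegeneracy of the $B$-action. Finally I would note that the two constructions are mutually inverse. No step is genuinely hard here; the only point requiring care is that in part~2 one cannot embed $\s E$ into $\s E\tens_{\phi}B$ directly (no units), so $\Phi$ must be defined through an approximate unit and the limit argument checked, and one invokes the standard fact that a surjective Hilbert-module isometry is a unitary adjointable operator.
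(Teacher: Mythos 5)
Your proof is correct and is exactly the standard argument: the paper states Proposition \ref{prop8} without proof, and your construction of $U$ on the algebraic balanced tensor product in part 1, and the recovery of $\Phi$ via an approximate unit together with nondegeneracy of the right $B$-action in part 2, supply precisely the routine verifications being omitted. The only cosmetic remark is that in part 1 the surjectivity of $\Phi$ is part of the hypothesis (indeed of Definition \ref{def2}) rather than a consequence of isometry, but this does not affect the argument.
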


As an application of the above proposition, we can the state the following result.

\begin{propdef}\label{propdef6}
Let $A_1$, $B_1$, $A_2$ and $B_2$ be $\Cstar$-algebras, $\phi_1:A_1\rightarrow B_1$ and $\phi_2:A_2\rightarrow B_2$ *-isomorphisms. Let $\s E_1$, $\s F_1$, $\s E_2$ and $\s F_2$ be Hilbert $\Cstar$-modules over $A_1$, $B_1$, $A_2$ and $B_2$ respectively. Let $\Phi_1:\s E_1 \rightarrow \s F_1$ and $\Phi_2:\s E_2 \rightarrow \s F_2$ be unitary equivalences over $\phi_1$ and $\phi_2$ respectively. Then, the linear map
$
\s E_1 \odot \s E_2 \rightarrow \s F_1\tens \s F_2\; ;\; \xi_1 \tens \xi_2 \mapsto \Phi_1(\xi_1) \tens \Phi_2(\xi_2)
$
extends to a bounded linear map $\Phi_1\tens\Phi_2:\s E_1 \tens \s E_2\rightarrow\s F_1 \tens \s F_2$. Moreover, $\Phi_1\tens \Phi_2$ is a $\phi_1\tens\phi_2$-compatible unitary operator.
\end{propdef}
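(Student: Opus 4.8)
The statement is a routine verification, and the plan is to check the two conditions of Definition \ref{def2} directly, working first on the algebraic tensor product $\s E_1\odot\s E_2$ and then extending by continuity. Recall that, as a Hilbert $\Cstar$-module over the minimal tensor product $A_1\tens A_2$, the external tensor product $\s E_1\tens\s E_2$ is by construction the completion of $\s E_1\odot\s E_2$ with respect to the norm induced by the $A_1\tens A_2$-valued inner product determined by $\langle\xi_1\tens\xi_2,\,\eta_1\tens\eta_2\rangle=\langle\xi_1,\,\eta_1\rangle\tens\langle\xi_2,\,\eta_2\rangle$, the right action being $(\xi_1\tens\xi_2)(a_1\tens a_2)=\xi_1a_1\tens\xi_2a_2$. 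Also recall that, $\phi_1$ and $\phi_2$ being $\Cstar$-isomorphisms, the map $\phi_1\tens\phi_2:A_1\tens A_2\rightarrow B_1\tens B_2$ is a well-defined (isometric) $\Cstar$-isomorphism of the minimal tensor products.

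First I would observe that $(\xi_1,\xi_2)\mapsto\Phi_1(\xi_1)\tens\Phi_2(\xi_2)$ is $\GC$-bilinear, hence induces a well-defined $\GC$-linear map $\Phi_1\odot\Phi_2:\s E_1\odot\s E_2\rightarrow\s F_1\tens\s F_2$. The key computation is that for $x=\sum_i\xi_1^{(i)}\tens\xi_2^{(i)}\in\s E_1\odot\s E_2$ one has, using condition (ii) of Definition \ref{def2} for $\Phi_1$ and $\Phi_2$,
\[
\langle(\Phi_1\odot\Phi_2)(x),\,(\Phi_1\odot\Phi_2)(x)\rangle
=\sum_{i,j}\phi_1(\langle\xi_1^{(i)},\,\xi_1^{(j)}\rangle)\tens\phi_2(\langle\xi_2^{(i)},\,\xi_2^{(j)}\rangle)
=(\phi_1\tens\phi_2)(\langle x,\,x\rangle),
\]
where $\langle x,\,x\rangle\in A_1\odot A_2\subset A_1\tens A_2$. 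Since $\phi_1\tens\phi_2$ is isometric, this gives $\|(\Phi_1\odot\Phi_2)(x)\|=\|x\|$ with respect to the norm of $\s E_1\tens\s E_2$, so $\Phi_1\odot\Phi_2$ is isometric and extends uniquely to an isometry $\Phi_1\tens\Phi_2:\s E_1\tens\s E_2\rightarrow\s F_1\tens\s F_2$. Polarizing the identity above and passing to the limit (both sides are continuous in each variable) yields $\langle(\Phi_1\tens\Phi_2)(x),\,(\Phi_1\tens\Phi_2)(y)\rangle=(\phi_1\tens\phi_2)(\langle x,\,y\rangle)$ for all $x,y\in\s E_1\tens\s E_2$, which is condition (ii) of Definition \ref{def2} for the pair $(\Phi_1\tens\Phi_2,\phi_1\tens\phi_2)$.

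Next I would check condition (i): $(\Phi_1\tens\Phi_2)(x(a_1\tens a_2))=(\Phi_1\tens\Phi_2)(x)(\phi_1(a_1)\tens\phi_2(a_2))$. Using $\Phi_i(\xi_ia_i)=\Phi_i(\xi_i)\phi_i(a_i)$, this holds for $x=\xi_1\tens\xi_2$ an elementary tensor and an elementary $a_1\tens a_2$; it then extends by $\GC$-linearity in $x$ and in the element of $A_1\odot A_2$, and finally by continuity of the right actions and of $\Phi_1\tens\Phi_2$ to all $x\in\s E_1\tens\s E_2$ and all elements of $A_1\tens A_2$. Finally, for surjectivity: the range of $\Phi_1\tens\Phi_2$ contains every $\Phi_1(\xi_1)\tens\Phi_2(\xi_2)$, and since $\Phi_1$, $\Phi_2$ are surjective this is all of $\{\eta_1\tens\eta_2\,;\,\eta_i\in\s F_i\}$, whose linear span is dense in $\s F_1\tens\s F_2$; as $\Phi_1\tens\Phi_2$ is an isometry on the complete space $\s E_1\tens\s E_2$, its range is closed, hence equal to $\s F_1\tens\s F_2$. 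Thus $\Phi_1\tens\Phi_2$ is a $\phi_1\tens\phi_2$-compatible unitary operator.

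There is no genuine obstacle here; the only points requiring a little care are bookkeeping of the three kinds of tensor products involved (algebraic over $\GC$, minimal $\Cstar$-tensor product, and external Hilbert-module tensor product) and the invocation of the standard fact that $\phi_1\tens\phi_2$ is an isometric $\Cstar$-isomorphism of the minimal tensor products, which is what makes the norm estimate work. (Alternatively, one could deduce the result from Proposition \ref{prop8} by forming the external tensor product $U_1\tens U_2$ of the associated unitaries $U_i\in\Lin(\s E_i\tens_{\phi_i}B_i,\s F_i)$ and using the canonical identification $(\s E_1\tens_{\phi_1}B_1)\tens(\s E_2\tens_{\phi_2}B_2)\cong(\s E_1\tens\s E_2)\tens_{\phi_1\tens\phi_2}(B_1\tens B_2)$, but the direct verification above is shorter.)
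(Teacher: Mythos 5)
Your proof is correct; the paper states this result without proof, and your argument is exactly the standard verification one would supply: the identity $\langle(\Phi_1\odot\Phi_2)(x),\,(\Phi_1\odot\Phi_2)(x)\rangle=(\phi_1\tens\phi_2)(\langle x,\,x\rangle)$ on the algebraic tensor product, isometry of the injective *-homomorphism $\phi_1\tens\phi_2$ on the minimal tensor product, extension by continuity, and surjectivity from the closed range of an isometry together with density of the elementary tensors. Nothing is missing.
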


The notion of unitary equivalence can also be understood in terms of isomorphism between the associated linking $\Cstar$-algebras.

\begin{prop}\label{prop7}
Let $A$ and $B$ be two $\Cstar$-algebras and $\phi:A\rightarrow B$ a *-isomorphism. Let $\s E$ and $\s F$ be two Hilbert $\Cstar$-modules over $A$ and $B$ respectively. 
\begin{enumerate}
\item If $\Phi:\s E\rightarrow\s F$ is a $\phi$-compatible unitary operator, then there exists a unique *-homomorphism 
$
f:\K(\s E\oplus A)\rightarrow\K(\s F\oplus B)
$
such that 
$f\circ\iota_{\s E}=\iota_{\s F}\circ \Phi$ and $f\circ\iota_A=\iota_B\circ\phi$.
Moreover, $f$ is a *-isomorphism.
\item Conversely, let $f:\K(\s E\oplus A)\rightarrow\K(\s F\oplus B)$ be a *-isomorphism such that $f\circ\iota_A=\iota_B\circ\phi$. Then, there exists a unique map $\Phi:\s E\rightarrow\s F$ such that $f\circ\iota_{\s E}=\iota_{\s F}\circ \Phi$. Moreover, $\Phi$ is a $\phi$-compatible unitary operator.\qedhere
\end{enumerate}
\end{prop}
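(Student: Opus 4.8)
The plan is to exploit the matrix description of the linking algebra from Remarks \ref{rk3} together with the algebraic relations of Proposition \ref{prop32}.

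\emph{Part 1.} Given a $\phi$-compatible unitary $\Phi:\s E\to\s F$, I would define $f:\K(\s E\oplus A)\to\K(\s F\oplus B)$ on the canonical matrix decomposition $x=\left(\begin{smallmatrix}k&\xi\\\eta^*&a\end{smallmatrix}\right)$ (which is unique by Remarks \ref{rk3} 1) by
\[
f\begin{pmatrix}k&\xi\\\eta^*&a\end{pmatrix}:=\begin{pmatrix}\Phi\circ k\circ\Phi^{-1}&\Phi\xi\\(\Phi\eta)^*&\phi(a)\end{pmatrix},
\]
which makes sense because $\Phi\circ k\circ\Phi^{-1}\in\K(\s F)$ by Remarks \ref{rk4} 4. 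This is manifestly linear; that it is $*$-preserving and multiplicative follows from a direct computation using Proposition \ref{prop32} (e.g. the relation $\iota_{\s E}(\xi)^*\iota_{\s E}(\eta)=\iota_A(\langle\xi,\eta\rangle)$ transported via $\langle\Phi\xi,\Phi\eta\rangle=\phi(\langle\xi,\eta\rangle)$, together with $\Phi(k\xi)=(\Phi k\Phi^{-1})(\Phi\xi)$, $\Phi(\xi a)=\Phi(\xi)\phi(a)$, and $\theta_{\xi,\eta}\mapsto\theta_{\Phi\xi,\Phi\eta}$). Hence $f$ is a $*$-homomorphism, and the identities $f\circ\iota_A=\iota_B\circ\phi$ and $f\circ\iota_{\s E}=\iota_{\s F}\circ\Phi$ are read off the formula; uniqueness is then immediate since $\iota_A(A)\cup\iota_{\s E}(\s E)$ generates $\K(\s E\oplus A)$ (Proposition \ref{prop32} 4). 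Since $\Phi^{-1}$ is a $\phi^{-1}$-compatible unitary (Remarks \ref{rk4} 1), the same construction produces a $*$-homomorphism $g:\K(\s F\oplus B)\to\K(\s E\oplus A)$, and $g\circ f=\id$, $f\circ g=\id$ follow at the level of matrix entries; thus $f$ is a $*$-isomorphism.

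\emph{Part 2.} For the converse, the key point is to characterise the subspace $\iota_{\s E}(\s E)$ of $\K(\s E\oplus A)$ purely in terms of the algebra structure and the distinguished ideal $\iota_A(A)$, so that it is carried along by any $*$-isomorphism. First, Lemma \ref{ehmlem1} 2 characterises $\iota_{\K(\s E)}(\K(\s E))$ as the set of $x$ with $x\iota_A(a)=\iota_A(a)x=0$ for all $a\in A$; since $f(\iota_A(A))=\iota_B(\phi(A))=\iota_B(B)$ (using that $\phi$ is onto) and $f$ is a $*$-isomorphism, this yields $f(\iota_{\K(\s E)}(\K(\s E)))=\iota_{\K(\s F)}(\K(\s F))$. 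Next, one checks directly in the matrix picture that
\[
\iota_{\s E}(\s E)=[\iota_{\K(\s E)}(\K(\s E))\,\K(\s E\oplus A)\,\iota_A(A)],
\]
using $[\K(\s E)\s E]=\s E=[\s E A]$; applying the $*$-isomorphism $f$ gives $f(\iota_{\s E}(\s E))=[\iota_{\K(\s F)}(\K(\s F))\,\K(\s F\oplus B)\,\iota_B(B)]=\iota_{\s F}(\s F)$. Hence $f$ restricts to a bijection $\iota_{\s E}(\s E)\to\iota_{\s F}(\s F)$, and since $\iota_{\s E}$, $\iota_{\s F}$ are isometric linear bijections onto these subspaces, $\Phi:=\iota_{\s F}^{-1}\circ f\circ\iota_{\s E}:\s E\to\s F$ is a well-defined linear bijection, unique with $f\circ\iota_{\s E}=\iota_{\s F}\circ\Phi$ by injectivity of $\iota_{\s F}$. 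Finally, $\phi$-compatibility of $\Phi$ is obtained by applying $f$ to the relations $\iota_{\s E}(\xi a)=\iota_{\s E}(\xi)\iota_A(a)$ and $\iota_{\s E}(\xi)^*\iota_{\s E}(\eta)=\iota_A(\langle\xi,\eta\rangle)$ of Proposition \ref{prop32} and invoking $f\circ\iota_A=\iota_B\circ\phi$ with injectivity of $\iota_B$; combined with surjectivity of $\Phi$ this shows $\Phi$ is a $\phi$-compatible unitary operator.

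The bulk of the work is routine bookkeeping with matrix entries; the one genuinely structural step is the intrinsic description of $\iota_{\s E}(\s E)$ (and of $\iota_{\K(\s E)}(\K(\s E))$) inside $\K(\s E\oplus A)$, which is what allows the $*$-isomorphism $f$ of Part 2 to be pushed down to a map on the modules, and where Lemma \ref{ehmlem1} does the essential work. One could alternatively derive Part 1 from Proposition \ref{prop8} by passing to $\s E\tens_\phi B$, but the explicit matrix formula seems the most economical route.
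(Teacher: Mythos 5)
Your proof is correct and follows essentially the same route as the paper: Part 1 is exactly the matrix formula the paper writes down (cf.\ \ref{rk4} 3), and Part 2 rests, as in the paper, on the intrinsic characterization of the corners of the linking algebra from Lemma \ref{ehmlem1}. Your implementation of Part 2 via item 2 of that lemma together with the factorization $\iota_{\s E}(\s E)=[\iota_{\K(\s E)}(\K(\s E))\,\K(\s E\oplus A)\,\iota_A(A)]$ is a sound (and arguably more self-contained) way to transport the off-diagonal corner through $f$, since item 1 of the lemma as stated refers to $\iota_{\s E}(\s E)$ itself.
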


\begin{proof}
1. The *-homomorphism $f:\K(\s E\oplus A)\rightarrow\K(\s F\oplus B)$ is defined by (cf.\ \ref{rk4} 3):
\begin{center}
$
f\begin{pmatrix} k & \xi \\ \eta^* & a\end{pmatrix}:=\begin{pmatrix} \Phi\circ k\circ \Phi^{-1} & \Phi\xi \\ (\Phi\eta)^* & \phi(a)\end{pmatrix}\!,\quad \text{for all } k\in\K(\s E),\; \xi,\,\eta\in\s E \text{ and } a\in A.
$
\end{center}
2. This is a straightforward consequence of \ref{ehmlem1} 1.
\end{proof}

\begin{nb}\label{not3}
Let $A$, $B$ be $\Cstar$-algebras and $\s E$ and $\s F$ be two Hilbert $\Cstar$-modules over $A$ and $B$ respectively. Let $\phi:A\rightarrow B$ be a *-isomorphism and $\Phi:\s E\rightarrow\s F$ a $\phi$-compatible unitary operator. If $T\in\Lin(A,\s E)$, we define the map 
$
\widetilde{\Phi}(T):= \Phi\circ T\circ\phi^{-1}:B\rightarrow\s F.
$
By a straightforward computation, we show that $\widetilde{\Phi}(T)\in\Lin(B,\s F)$ whose adjoint operator is $\widetilde{\Phi}(T)^*=\phi\circ T^*\circ \Phi^{-1}$. We have a bounded linear map 
$
\widetilde{\Phi}:\Lin(A,\s E)\rightarrow\Lin(B,\s F),
$
which is an extension of $\Phi$ up to the canonical injections $\s E\rightarrow\Lin(A,\s E)$ and $\s F\rightarrow\Lin(B,\s F)$.
\end{nb}


\bigbreak

\bigbreak

\noindent{\sc Vrije Universiteit Brussel}, Vakgroep Wiskunde, Pleinlaan 2, B-1050 Brussel (Belgium).\hfill\break 
{\it E-mail addresses}: \texttt{jonathan.crespo@wanadoo.fr}, \texttt{jonathan.crespo@vub.ac.be}\hfill\break
Supported by the FWO grant G.0251.15N

\clearpage
\phantomsection
\addcontentsline{toc}{section}{Index of notations and symbols}
\printindex[symbol]



\begin{thebibliography}{35}


\addcontentsline{toc}{section}{References}

\bibitem{Ba95} {\sc S.\ Baaj}, Rep\'esentation r\'eguli\`ere du groupe quantique des d\'eplacements de Worono\-wicz, \textit{Ast\'erisque} {\bf 232} (1995), 11–49.

\bibitem{BC} {\sc S.\ Baaj} et {\sc J.\ Crespo}, \'Equivalence mono\"idale de groupes quantiques et $K$-théorie bivariante, to appear in {\it Bull.\ Soc.\ Math.\ France}.

\bibitem{BS1}  {\sc S.\ Baaj} et {\sc G.\ Skandalis}, C$^*$-alg\`ebres de Hopf et th\'eorie de Kasparov \'equivariante, {\it K-theory} {\bf 2} (1989), 683-721.

\bibitem{BS2}  {\sc S.\ Baaj} et {\sc G.\ Skandalis}, Unitaires multiplicatifs et dualit\'e pour les produits crois\'es de C$^*$-algèbres, \textit{Ann. Sci. \'Ec. Norm. Sup\'er.} $4^{\rm e}$ s\'erie, {\bf 26} (4) (1993), 425-488.
	
\bibitem{BSV} {\sc S.\ Baaj, G.\ Skandalis} and {\sc S.\ Vaes}, Non-semi-regular quantum groups coming from number theory, {\it Comm. Math. Phys.} {\bf 235} (1) (2003), 139-167.


\bibitem{BRV} {\sc J.\ Bichon}, {\sc A.\ De Rijdt} and {\sc S.\ Vaes}, Ergodic coactions with large multiplicity and monoidal equivalence of quantum groups, {\it Comm. Math. Phys.} {\bf 262} (2006), 703-728.	
	
\bibitem{Co} {\sc A.\ Connes}, On the spatial theory of von Neumann algebras, {\it J. Funct. Anal.} {\bf 35} (1980), 153-164.	

\bibitem{Co2} {\sc A.\ Connes}, Noncommutative Geometry, Academic Press, San Diego, CA, 1994.
	
\bibitem{CS} {\sc A.\ Connes} and {\sc G.\ Skandalis}, The longitudinal index theorem for foliations, {\it Publ. Res. Inst. Math. Sci.} {\bf 20} (6) (1984), 1139-1183.	

\bibitem{C} {\sc J.\ Crespo}, Monoidal equivalence of locally compact quantum groups and application to bivariant K-theory, {\it Ph.D.\ thesis, Université Blaise Pascal} (2015).

\bibitem{C2} {\sc J.\ Crespo}, Actions of measured quantum groupoid on a finite basis (arXiv preprint).

\bibitem{DC2} {\sc K.\ De Commer}, Monoidal equivalence for locally compact quantum groups, preprint: arXiv:math.OA/0804.2405v2.
	
\bibitem{DC} {\sc K.\ De Commer}, Galois coactions for algebraic and locally compact quantum groups, {\it Ph.D. thesis, Leuven, Katholieke Universiteit Leuven} (2009).

\bibitem{DC3} {\sc K.\ De Commer}, Galois coactions and cocycle twisting for locally compact quantum groups, {\it J.\ Operator theory} {\bf 66} (1) (2011), 59-106. 
	

\bibitem{DFY} {\sc K.\ De Commer, A.\ Freslon} and {\sc M.\ Yamashita}, CCAP for Universal Discrete Quantum Groups (with an appendix by S. Vaes), {\it Comm. Math. Phys.} {\bf 331} (2) (2014), 677-701.
	
\bibitem{RV}   {\sc A.\ De Rijdt} and {\sc N.\ Vander Vennet}, Actions of monoidally equivalent compact quantum groups and applications to probabilistic boundaries, {\it Ann.\ Inst.\ Fourier} {\bf 60} (1) (2010), 169-216.
	
\bibitem{E05}   {\sc M.\ Enock}, Quantum groupoids of compact type, {\it J. Inst. Math. Jussieu} {\bf 4} (2005), 29-133.
	
\bibitem{E08}   {\sc M.\ Enock}, Measured Quantum Groupoids in Action, {\it M\'em.\ Soc.\ Math. Fr.} {\bf 114} (2008), 1-150.


\bibitem{Kas1} {\sc G.\ G.\ Kasparov}, Hilbert C*-modules: theorems of Stinespring and Voiculescu, {\it J. Operator Theory} {\bf 4} (1) (1980), 133-150.

\bibitem{Kas3} {\sc G.\ G.\ Kasparov}, The operator $K$-functor and extensions of C$^*$-algebras, {\it Math USSR-Izv.} {\bf 16} (3) (1981), 513-572. Translated from {\it Izv.\ Akad.\ Nauk.\ SSSR.\ Ser.\ Mat.} {\bf 44} (1980), 571-636.

\bibitem{Kas2} {\sc G.\ G.\ Kasparov}, Equivariant KK-theory and the Novikov conjecture, {\it Invent. Math.} {\bf 91} (1) (1988), 147-201.

\bibitem{KV2}  {\sc J.\ Kustermans} and {\sc S.\ Vaes}, Locally compact quantum groups in the von Neumann algebraic setting, {\it Math. Scand.} {\bf 92} (1) (2003), 68-92.

\bibitem{LG} {\sc P.-Y.\ Le Gall}, Th\'eorie de Kasparov \'equivariante et groupoïdes I, {\it K-theory} {\bf 16} (1999), 361-390.

\bibitem{Le}   {\sc F.\ Lesieur}, Measured Quantum Groupoids, {\it M\'em.\ Soc.\ Math.\ Fr.} {\bf 109} (2007), 1-117.
	
\bibitem{M} {\sc R.\ Meyer}, Equivariant Kasparov theory and generalized homomorphisms, {\it K-theory} {\bf 21} (2000), 201–228. 
	
\bibitem{NT14} {\sc S.\ Neshveyev} and {\sc L.\ Tuset}, Deformation of C*-algebras by cocycles on locally compact quantum groups, {\it Adv. Math.} {\bf 254} (2014), 454-496.
	
\bibitem{Rie74} {\sc M.\ A.\ Rieffel}, Induced representations of C*-algebras, {\it Adv. Math.} {\bf 13} (2) (1974), 176-257.
	
\bibitem{Skand} {\sc G.\ Skandalis}, Some remarks on Kasparov theory, {\it J.\ Funct.\ Anal.} {\bf 56} (1984), 337-347.	
	
\bibitem{Tim1} {\sc T.\ Timmermann}, Pseudo-multiplicative unitaries and pseudo-Kac systems on C$^*$-modules, 05/2005, Dissertation, Preprint des SFB 478 M\"unster (394).

\bibitem{Tim2} {\sc T.\ Timmermann}, Coactions of Hopf C$^*$-bimodules, {\it J. Operator Theory} {\bf 68} (1) (2012), 19-66.

\bibitem{Vaes1} {\sc S.\ Vaes}, The unitary implementation of a locally compact quantum group action, {\it J. Funct. Anal.} {\bf 180} (2001), 426-480.
	
\bibitem{VV} {\sc S.\ Vaes} and {\sc N.\ Vander Vennet}, Identification of the Poisson and Martin boundaries of orthogonal discrete quantum groups, {\it J. Inst. Math. Jussieu} {\bf 7} (2008), 391-412.


\bibitem{Val} {\sc J.-M.\ Vallin}, Unitaire pseudo-multiplicatif associ\'e \`a un groupo\"ide. Applications \`a la moyennabilit\'e, {\it J. Operator Theory} {\bf 44} (2) (2000), 347-368.

\bibitem{Ve} {\sc R.\ Vergnioux}, KK-théorie équivariante et opérateur de Julg-Valette pour les groupes quantiques, {\it Ph.D.\ thesis, Université Paris 7 - Denis Diderot} (2002).

\bibitem{Ve2} {\sc R.\ Vergnioux}, $K$-amenability for amalgamated free products of amenable discrete quantum groups, {\it J.\ Funct.\ Anal.\ }{\bf 212} (2004), 206-221.

\bibitem{VeVo} {\sc R.\ Vergnioux} and {\sc C.\ Voigt}, The K-theory of free quantum groups, {\it Math. Ann.} {\bf 357} (1) (2013), 355-400.

\bibitem{V1} {\sc C.\ Voigt}, The Baum-Connes conjecture for free orthogonal quantum groups, {\it Adv. Math.} {\bf 227} (5) (2011), 1873-1913.

\end{thebibliography}
\end{document}